\newcommand{\INN}[1]{\index{#1|BH}}  % send notation into main index
\newcommand{\xdownarrow}[1]{
  \ensuremath{\begin{turn}{90}{\tiny${#1 }$}\end{turn}
    \left\downarrow\vbox to 0.4cm{}\right.\kern-\nulldelimiterspace
  }
}
\newcommand{\xuparrow}[1]{
  \ensuremath{
    \begin{turn}{90}{\tiny ${#1}$}\end{turn} \left\uparrow\vbox to 0.4cm{}\right. \kern-\nulldelimiterspace
  } 
}
\newcommand{\pft}{\mathrm{Gr}^{\mathrm{pft}}}
\DeclareMathOperator{\F}{F}  
\DeclareMathOperator{\hml}{H}  
\newcommand{\liepp}[1]{\mathrm{Lie}_{#1,\Delta}^{\pi}}
\newcommand{\moduli}{\mathrm{Moduli}}
\newcommand{\liep}{\mathrm{Lie}^{\pi}_{k,\Delta}}
\newcommand{\liedg}{\mathrm{Lie}^{\dg}_{k}}
\newcommand{\lieps}{\mathrm{Lie}_{k,\mathbb{E}_\infty}^{\pi}}
\newcommand{\liepss}[1]{\mathrm{Lie}_{#1, \mathbb{E}_\infty}^{\pi}}
\newcommand{\mdl}{\mathrm{Moduli}}
\newcommand{\art}{\mathrm{art}}
\newcommand{\ZZ}{\mathbb{Z}}
\newcommand{\EE}{\mathbb{E}}
\newcommand{\RR}{\mathbb{R}}
\newcommand{\FF}{\mathbb{F}}
\newcommand{\liepr}[1]{\mathrm{Lie}^{\pi}_{#1, \Delta}}
\newcommand{\fil}{\mathrm{Fil}}
 \newcommand{\HH}{\operatorname{H}} 
\newcommand{\mywedge}[1]{\mathbin{\operatorname*{\wedge}_{#1}}}
 \renewcommand{\hom}{\mathrm{Hom}}
\newcommand{\hobased}{{{\text{h}}}}
\newtheorem*{theorem*}{Theorem}
\DeclareMathOperator{\chara}{char}  
\DeclareMathOperator{\Def}{Def}  
\DeclareMathOperator{\Free}{Free}  
\newcommand{\mytimestwo}[2]{\mathbin{\operatorname*{\times}_{#1}^{#2}}}
\newcommand{\myotimestwo}[2]{\mathbin{\operatorname*{\otimes}_{#1}^{#2}}}
\DeclareMathOperator{\coequ}{coequ} 
\DeclareMathOperator{\Sing}{Sing}
\DeclareMathOperator{\sSet}{\mathbf{sSet}}
\newcommand{\vecto}{\mathrm{Vect}^\omega}
\newcommand{\clgaug}{\mathrm{CAlg}^{\mathrm{aug}}}
\newcommand{\filc}{\mathrm{Fil}_{\mathrm{cpl}}}
\newcommand{\clg}{\mathrm{CAlg}}
\newcommand{\free}{\mathrm{free}}
\def\foo#1\endgraf\unskip#2\foo{\def\row@to@buffer{#1\endgraf\unskip\unskip#2}}
\newcommand{\mylim}[1]{\mathbin{\operatorname*{lim \hspace{1pt}}_{#1}^{}}}
\newcommand{\mycolim}[1]{\mathbin{\operatorname*{colim \hspace{1pt}}_{#1}^{}}}
\newcommand{\CC}{\mathbb{C}}
\newcommand{\NN}{\mathbb{N}}
\DeclareMathOperator{\Lie}{Lie} 
\DeclareMathOperator{\forget}{forget} 
\DeclareMathOperator{\sgn}{sgn} 
\DeclareMathOperator{\Barr}{Bar} 
\DeclareMathOperator{\N}{N} 
\DeclareMathOperator{\EHP}{EHP}
\DeclareMathOperator{\Spec}{Spec} 
\DeclareMathOperator{\Ind}{Ind} 
\DeclareMathOperator{\cofib}{cofib} 
\DeclareMathOperator{\End}{End}
\DeclareMathOperator{\Map}{Map} 
\DeclareMathOperator{\GL}{GL} 
\DeclareMathOperator{\dg}{dg} 
\DeclareMathOperator{\aug}{aug}
\DeclareMathOperator{\Exc}{Exc} 
\DeclareMathOperator{\Lan}{Lan} 
\DeclareMathOperator{\cross}{cr} 
\DeclareMathOperator{\Tot}{Tot} 
\DeclareMathOperator{\Br}{Br} 
\DeclareMathOperator{\cN}{cN} 
\DeclareMathOperator{\ft}{ft} 
\newcommand{\myotimes}[1]{\mathbin{\operatorname*{\otimes}_{#1}}}
\DeclareMathOperator{\Alg}{Alg} 
\DeclareMathOperator{\Set}{Set} 
\DeclareMathOperator{\Fin}{Fin}
\DeclareMathOperator{\id}{id}  
\newcommand{\Df}{\mathfrak{D}}
\renewcommand{\L}{\mathbb{L}}
\newcommand{\coh}{\mathrm{Mod}^{\mathrm{ft}}}
\newcommand{\sym}{\mathrm{Sym}}
\newcommand{\gr}{\mathrm{Gr}} 
\newcommand{\einf}{\mathbb{E}_\infty}
\newcommand{\vect}{\mathrm{Vect}}
\newcommand{\fun}{\mathrm{Fun}}
\newcommand{\perf}{\mathrm{Perf}}
\newcommand{\spec}{\mathrm{Spec}}
\renewcommand{\mod}{\mathrm{Mod}}
\newcommand{\sF}{\mathcal{F}}
\theoremstyle{definition}
\newtheorem{definition}{Definition}[section]
\newaliascnt{cons}{definition}
\newtheorem{cons}[cons]{Construction}
\newaliascnt{example}{definition}
\newtheorem{example}[example]{Example}
\newtheorem*{example*}{Example}
\DeclareMathOperator{\cNoet}{\mathrm{CAlg}_k^{\mathrm{cN}}}
\newaliascnt{notation}{definition}
\newtheorem{notation}[notation]{Notation}
\newaliascnt{convention}{definition}
\newtheorem{convention}[convention]{Convention}
\newaliascnt{remark}{definition}
\newtheorem{remark}[remark]{Remark}
\theoremstyle{theorem}
\newaliascnt{proposition}{definition}
\newtheorem{proposition}[proposition]{Proposition}
\newaliascnt{lemma}{definition}
\newtheorem{lemma}[lemma]{Lemma}
\newaliascnt{corollary}{definition}
\newtheorem{corollary}[corollary]{Corollary}
\newaliascnt{warning}{definition}
\newtheorem{warning}[warning]{Warning}
\newaliascnt{observation}{definition}
\newaliascnt{claim}{definition}
\newaliascnt{exercise}{definition}
\newaliascnt{theorem}{definition}
\newtheorem{theorem}[theorem]{Theorem}
\newaliascnt{superlemma}{definition}
\newaliascnt{fact}{definition}
\newcommand{\SCR}{\mathrm{SCR}}
\newcommand{\md}{\mathrm{Mod}}
\newcommand{\D}{\mathfrak{D}}
\newcommand{\alg}{\mathrm{Alg}}
\begin{document}

\title{Deformation theory and partition Lie algebras}

\author[D.\ Lukas B.\ Brantner]{D.\ Lukas B.\ Brantner}
\address{Oxford University,   Universit\'{e} Paris--Saclay (CNRS)} 
\email{lukas.brantner@maths.ox.ac.uk, lukas.brantner@universite-paris-saclay.fr}

\author[Akhil Mathew]{Akhil Mathew}
\address{University of Chicago}
\email{amathew@math.uchicago.edu}

\maketitle

\begin{abstract}
A theorem of Lurie and Pridham establishes a correspondence 
between formal moduli problems and differential 
graded Lie algebras in characteristic zero, thereby formalising a well-known principle
in deformation theory. 
We introduce a variant of differential
graded Lie algebras, called partition Lie algebras, in arbitrary characteristic. We then
explicitly compute 
the homotopy groups of   free algebras, which parametrise operations. 
Finally, we prove generalisations of the Lurie--Pridham correspondence 
classifying formal moduli problems via partition Lie algebras over an arbitrary field, as well as 
over a complete local base. 
\end{abstract}
 \ 
\\ \\ 
\\
\\
\\
\\
\\
\\ 
\\
\\
\\ 
\\
\\
\\
\tableofcontents 
\newpage
\newcommand{\Fbox}[1]{\fbox{\strut#1}}
\setlength{\fboxsep}{1pt} 
 
\section{Introduction} 
A well-known principle in deformation theory postulates that the infinitesimal structure of any  moduli space in characteristic zero is controlled by a differential graded Lie algebra.\vspace{2pt}

This heuristic can be traced back to Deligne \cite{deligne1986letter}, Drinfeld \cite{drinfeld1988letter}, and Feigin, and was explored further in the work of Goldman-Millson \cite{goldman1988deformation}, Hinich \cite{hinich2001dg}, Kontsevich-Soibelman \cite{kontsevich2002deformation}, Manetti \cite{manetti2009differential}, and many others. 
Eventually, it  was articulated as a precise correspondence by Lurie \cite{lurie2011derivedX} and Pridham \cite{pridham2010unifying}, who constructed an equivalence 
between formal moduli problems  and differential graded Lie algebras in characteristic zero.\vspace{2pt}

Our principal aim is to  generalise this equivalence to  finite and mixed \mbox{characteristic,} thereby giving a Lie algebraic description of formal deformations in these contexts.

\subsection{Background} Before delving into any technical details, we shall recall a classical example.  

\begin{example*}
Given a smooth and proper variety $Z$ over the field $\CC$ of complex numbers, we can study its
infinitesimal deformations over local Artinian $\mathbb{C}$-algebras.
It is well-known that these deformations are closely related to the   lower cohomology groups of the tangent bundle $T_Z$:
\begin{enumerate}[a)]
\item  
$\HH^0(Z, T_Z)$
classifies 
infinitesimal automorphisms 
of the trivial deformation $Z \times_{\spec \mathbb{C}} \spec
(\mathbb{C}[\epsilon]/\epsilon^2)$; infinitesimal automorphisms therefore correspond to
vector fields. 
\item $\HH^1(Z, T_Z)$ classifies isomorphism classes of first-order deformations;
every  such  deformation \mbox{$\mathcal{Z}\rightarrow \Spec(
\CC[\epsilon]/\epsilon^2)$} gives rise to a {Kodaira-Spencer class} $x_\mathcal{Z} $ in
$ H^1(Z,{T}_Z)$, cf.\ \cite{kodaira1958deformations}.\vspace{2pt}
\end{enumerate}

To formulate a criterion for when a given first order deformation $\mathcal{Z}\rightarrow \Spec(\CC[\epsilon]/\epsilon^2)$ extends to higher order, observe that  the classical Dolbeault complex $$\mathfrak{g}_Z : = C^\ast(Z,{T}_Z) =
(\ \mathcal{A}^{0,0}({T}_Z) \rightarrow \mathcal{A}^{0,1}({T}_Z)
\rightarrow  \mathcal{A}^{0,2}({T}_Z)
\rightarrow \  \ldots \  ) $$ computing $\HH^\ast(Z,{T}_Z)$ admits the structure of a
\textit{differential graded Lie algebra}. Its Lie bracket combines the wedge product on differential forms with the commutator
bracket on vector fields.   \vspace{2pt}

\begin{enumerate}[a)]\setcounter{enumi}{2}
\item $\HH^2(Z, T_Z)$ contains the obstructions to extending first-order
deformations: a first-order deformation 
$\mathcal{Z}\rightarrow \Spec(
\CC[\epsilon]/\epsilon^2)$  
extends to $\spec( \mathbb{C}[\epsilon]/\epsilon^3)$ precisely if the self-bracket
$\frac{1}{2}[x_{\mathcal{Z}}, x_{\mathcal{Z}}]$ vanishes.\vspace{-2pt}

\end{enumerate}
\end{example*} 

\theoremstyle{definition}
\newtheorem*{cons*}{Construction}

In fact, the  Kodaira--Spencer differential graded Lie algebra $\mathfrak{g}_Z$  records 
all   {formal
deformations  of $Z$}. More precisely, given any  local  Artinian $\CC$-algebra $A$,  deformations of $Z$ to $A$ correspond to Maurer--Cartan elements $\{x \in \mathfrak{m}_A \otimes (\mathfrak{g}_Z)_{-1}\ | \ dx + \frac{1}{2} [x,x] = 0\}$,  \mbox{considered up to gauge equivalence.}\vspace{5pt}

Other deformation functors of interest are also governed by differential graded  {Lie algebras,  but these} Lie algebras are not uniquely determined by the deformation functor.  For instance,   {there are two  natural  differential graded Lie algebras governing  deformations of a  closed immersion of \mbox{smooth varieties $X\subset Y$} -- one interprets $X$ as a point in a Hilbert scheme; the other as a point in a Quot scheme,  cf.\  e.g.\  \cite{toen2014derived}.  It was a  key insight of Drinfeld \cite{drinfeld1988letter}  that this problem disappears once we also 
consider derived deformations to \textit{simplicial}  local Artinian   $\CC$-algebras. \vspace{2pt}

In our example above, the differential graded Lie algebra $\mathfrak{g}_Z$     not only remembers   infinitesimal deformations, but also  \textit{derived} infinitesimal deformations  of $Z$ via a refined  Maurer--Cartan construction,  cf.\ \cite[Section 1.3]{hinich2001dg}.  In fact,  $\mathfrak{g}_Z$  is  {uniquely} determined by this property,  \mbox{up to equivalence.}\vspace{4pt}
 
This  illustrates the general principle we alluded to in the very beginning:   the derived infinitesimal deformations of an  object in characteristic zero are controlled by a differential graded  Lie algebra. 
A precise formulation was given by Lurie  \cite{lurie2011derivedX} and Pridham \cite{pridham2010unifying} in the language of
\emph{formal moduli problems},  cf.\ Definition \ref{FMP}. Roughly speaking, any
reasonably geometric deformation problem \mbox{(e.g.\  deformations of schemes, of
complexes,  or the formal completion of a suitably geometric stack)} gives rise to a
formal moduli problem. 
In particular, there is a formal moduli problem $\Def_Z$ encoding the derived 
deformations of our variety
$Z$ from above. \vspace{3pt}

We   then have the following correspondence:\vspace{-3pt}
\begin{theorem}[Lurie, Pridham]
If $k$ is a field of characteristic zero, then there is an equivalence of \mbox{$\infty$-categories} between 
 formal moduli problems and differential graded Lie algebras over $k$.\label{LP}
\end{theorem}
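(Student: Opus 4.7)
The plan is to follow the strategy of Lurie and Pridham: construct an adjunction between dg Lie algebras and formal moduli problems via Chevalley--Eilenberg / Koszul duality, and then verify it is an equivalence by an abstract recognition principle expressed in terms of the tangent complex. First I would fix the relevant categories. The $\infty$-category $\mathrm{FMP}_k$ of formal moduli problems is the full subcategory of $\mathrm{Fun}(\mathrm{CAlg}_k^{\mathrm{aug,art}}, \mathcal{S})$ spanned by those $X$ with $X(k) \simeq \ast$ that send pullback squares $A \times_C B \to B$, with $B \twoheadrightarrow C$ a surjection of small Artinian $k$-algebras with nilpotent kernel, to pullback squares of spaces.

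Next, I would construct the comparison functor. The Chevalley--Eilenberg cochain functor $C^{\ast} \colon (\mathrm{dgLie}_k)^{\mathrm{op}} \to \mathrm{CAlg}^{\mathrm{aug}}_k$ sending $\mathfrak{g}$ to $(\mathrm{Sym}(\mathfrak{g}^{\vee}[-1]), d_{\mathrm{CE}})$ admits a left adjoint $\mathfrak{D}$, the Koszul-dual dg Lie algebra of an augmented commutative dg algebra. Define $\Psi(\mathfrak{g})(A) := \mathrm{Map}_{\mathrm{dgLie}_k}(\mathfrak{D}(A), \mathfrak{g})$. Verifying that $\Psi(\mathfrak{g})$ actually lies in $\mathrm{FMP}_k$ comes down to showing that $\mathfrak{D}$ sends the relevant pullbacks of Artinian algebras to pushouts of dg Lie algebras; this reduces via the filtration of $A$ by powers of its augmentation ideal to the case of square-zero extensions $k \oplus V[n]$, where one computes directly that $\mathfrak{D}(k \oplus V[n]) \simeq \mathrm{Free}_{\mathrm{Lie}}(V^{\vee}[-n-1])$.

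To prove $\Psi$ is an equivalence, I would invoke Lurie's abstract recognition theorem: an adjunction between a presentable category of algebras and a category of formal moduli problems is an equivalence provided (i) the tangent complex functor $T \colon \mathrm{FMP}_k \to \mathrm{Mod}_k$, $T(X)_n := \mathrm{fib}(X(k \oplus k[n]) \to X(k))$, is conservative and preserves sifted colimits; (ii) the underlying-complex functor $\mathrm{dgLie}_k \to \mathrm{Mod}_k$ has the same properties; and (iii) the canonical map $\mathfrak{g} \to T(\Psi(\mathfrak{g}))$ is an equivalence. Item (ii) is standard and (iii) is a direct computation using the explicit description of $\mathfrak{D}$ on square-zero extensions, while (i) follows from the Schlessinger conditions via an inductive argument writing a small Artinian algebra as an iterated pullback of square-zero extensions along $k \oplus k[n]$.

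The main obstacle is verifying that the underlying adjunction $\mathfrak{D} \dashv C^{\ast}$ restricts to an equivalence on the relevant subcategories --- equivalently, that the counit $\mathfrak{D}(C^{\ast}(\mathfrak{g})) \to \mathfrak{g}$ is an equivalence on a generating class of dg Lie algebras. This is the Koszul self-duality of the Lie and commutative operads, which in characteristic zero follows from Quillen's analysis: the PBW filtration identifies $C^{\ast}$ of a free Lie algebra with a polynomial algebra, whose Koszul dual is again free on the same generators, and general dg Lie algebras are resolved by such free objects. It is precisely the failure of this Koszul duality in positive characteristic --- the symmetric algebra functor is no longer well-behaved in the required sense --- that forces the replacement of dg Lie algebras by partition Lie algebras in the body of this paper.
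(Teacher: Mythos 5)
Your overall strategy --- the Chevalley--Eilenberg/Koszul duality adjunction combined with Lurie's deformation-theory recognition principle --- is the same route the paper takes: it reduces \Cref{LP} to the restricted anti-equivalence of \Cref{antiLiecomm} and then to \cite[Theorem 1.3.12]{lurie2011derivedX}, and its own axiomatic Section \ref{axiomaticsec} reproves the hard input in greater generality. The gap is in your final paragraph, which is exactly where the analytic difficulty lives. The counit $\mathfrak{D}(C^{*}(\mathfrak{g})) \to \mathfrak{g}$ is \emph{not} an equivalence for arbitrary dg Lie algebras, and your proposed extension from free Lie algebras to general ones ``by resolution'' does not go through: writing $\mathfrak{g}$ as a geometric realisation of free Lie algebras, $C^{*}$ converts this into a totalisation of trivial square-zero extensions, and $\mathfrak{D}$ does not commute with that totalisation without strong convergence hypotheses. (Your description of the free case is also garbled: $C^{*}$ of a free Lie algebra is a trivial square-zero extension on the dual generators, not a polynomial algebra; it is $C^{*}$ of an \emph{abelian} Lie algebra that is a completed polynomial algebra.) The correct statement, \Cref{antiLiecomm}, is an anti-equivalence between complete local Noetherian algebras and dg Lie algebras whose homotopy is degreewise finite-dimensional and concentrated in degrees $\leq -1$; Lurie proves the unit is an equivalence on such ``good'' Lie algebras by induction on cell attachments, while the paper proves the analogue by establishing comonadicity of $(\cot \dashv \sqz)$ on $\mathcal{C}_{\afp}$, with convergence controlled by the adic filtration and passage to associated gradeds. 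The deformation-theory axioms of \Cref{defth} are designed precisely so that only this restricted equivalence, together with the pullback-to-pushout property, is needed.

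A secondary issue: your recognition principle is misstated. Condition (iii), that $\mathfrak{g} \to T(\Psi(\mathfrak{g}))$ be an equivalence for every $\mathfrak{g}$, is essentially the tangent-complex half of the theorem rather than a checkable hypothesis. Lurie's actual axioms require a subcategory $\mathcal{B}_{0}$ (here: coconnective finite-type dg Lie algebras) on which $B \to \mathfrak{D}C^{*}(B)$ is an equivalence, closure of $\mathcal{B}_{0}$ under pushouts along the objects $K_{\alpha,n}$ (which is the content of the pullback-to-pushout statement in \Cref{antiLiecomm}), and that $\mathfrak{g} \mapsto \Map(K_{\alpha,n}, \mathfrak{g})$ commute with sifted colimits; the identification $\mathfrak{g} \simeq T(\Psi(\mathfrak{g}))$ is then \emph{deduced} by reduction to free Lie algebras. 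Your condition (i) and its proof via iterated square-zero extensions are fine.
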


This result has been extended in various  directions in characteristic zero, but
an analogue in positive or mixed characteristic remained elusive.  In fact,  formal moduli in this setting are \textit{not} equivalent to any classically known version of Lie algebras such as differential graded Lie algebras or simplicial-cosimplicial (restricted) Lie algebras.\vspace{3pt}

Deformations in finite and mixed characteristic have attracted significant interest in arithmetic geometry and number theory,  ranging from the classical work of Serre--Tate on abelian varieties \cite{serre1966groupes} \cite{tate1967p}
 and of Nygaard--Ogus on K3 surfaces \cite{nygaard1983tate}\cite{nygaard1985tate} to the recent work of Galatius--Venkatesh \cite{GV18} on derived deformations of Galois representations. \vspace{3pt}

In this paper, we formulate and prove a generalisation of  the Lurie--Pridham theorem in
positive and mixed characteristic. For this purpose, we introduce the notion of a \textit{partition
Lie algebra}, which is the correct generalisation of the notion of a  differential graded Lie
algebra to this setting. 
\mbox{Partition Lie algebras} are subtle homotopical objects controlled by the   equivariant topology of {partition complexes},  a family of spaces 
$|\Pi_1|,  |\Pi_2|,  |\Pi_3|, \ldots$ which have been \mbox{studied extensively.}
We compute the homotopy groups of free partition Lie algebras, which parametrise the  natural Dyer--Lashof-like operations  acting on the homotopy groups of any partition Lie algebra.  

We construct partition Lie algebras using the framework of $\infty$-categories and monads, which was indispensable in their discovery.
One can also construct more explicit (and more complicated) point set models for partition Lie algebras -- we refer to  Definition 4.46 and Construction 5.43 in the article \cite{brantner2021pd}, which also 
extends the theory of  partition  Lie algebras to general coherent rings. 
 
Since their   discovery,  partition Lie algebras have already facilitated the proof of the fundamental theorem of purely inseparable Galois theory   \cite{brantner2020purely} and have inspired new insights on deformations of Calabi--Yau varieties \cite{CYDef}; we trust that they will also prove \mbox{useful  for other open problems.}

\begin{convention} Most objects $X$  in this article can be considered as spectra with extra structure.
Accordingly, we   write $\pi_n(X)$ for the $n^{th}$ homotopy \mbox{group of the underlying spectrum  of an object $X$.} 
Unravelling the definitions, this for example means that  the $n^{th}$ homotopy group $\pi_n(V)$ of a  chain complex $V = ( \ldots  \rightarrow V_1 \rightarrow V_0 \rightarrow V_{-1} \rightarrow \ldots )$    is simply given by the $n^{th}$ homology group of $V$.

 \end{convention}

\subsection{Statement of results} Away from characteristic zero, classical algebraic geometry can be generalised in two inequivalent ways (cf.\ \cite{toen2008homotopical} and \cite{lurie2016spectral} for detailed treatments): 
\begin{enumerate}[a)] \item derived algebraic geometry is based on simplicial commutative rings.
\item spectral algebraic geometry is based on (connective) $\EE_\infty$-rings.
\end{enumerate} 

We will prove variants of our main results  in both settings, and will begin with the former. Here, affine schemes over a given field $k$ correspond to simplicial commutative $k$-algebras, and infinitesimal thickenings of $\Spec(k)$  (over $k$)  correspond to the following kind of objects:\vspace{-1pt}
\begin{definition}[Derived Artinian algebras]  \label{daa} \INN{190@$\SCR_k^{\art}$}
A simplicial commutative $k$-algebra 
$A$ is called \mbox{\emph{Artinian} if}
\begin{enumerate}
\item $\pi_0(A)$ is a local Artinian ring with residue field $k$.  
\item $\pi_\ast(A)$ is a finite-dimensional $k$-vector space.
\end{enumerate}
Let $\SCR_k^{\art}$ denote the $\infty$-category of Artinian simplicial
commutative $k$-algebras, defined as a full subcategory of the
$\infty$-category of simplicial commutative $k$-algebras (cf.\ \Cref{setupSCR}). \vspace{-2pt}
\end{definition} 
\begin{notation}Write $\mathcal{S}$ \INN{190@$\mathcal{S}$}   for the $\infty$-category of spaces (cf.\ \cite[Definition 1.2.16.1]{lurie2009higher}). Unless stated otherwise,  limits and colimits will be computed in a higher categorical sense,  and we will heavily rely on the theory of $\infty$-categories developed in \cite{lurie2009higher}.
 \end{notation} \vspace{2pt}

Derived infinitesimal deformations will be described by the following kind \vspace{-1pt}of functors: 
\begin{definition}[Formal moduli problems]   \label{FMP}  
A  \emph{derived formal moduli problem over a field $k$}   is given by a functor \mbox{$X:\SCR_k^{\art} \rightarrow \mathcal{S}$} satisfying the following two properties: \begin{enumerate}
\item the space $X(k) $ is contractible;
\item given a pullback square\vspace{-6pt}
$$ \xymatrix{
\widetilde{A} \ar[d] \ar[r] &  A' \ar[d] \\
A \ar[r] &  A''
}$$
in $\SCR_k^{\art}$ in which $\pi_0(A') \rightarrow \pi_0(A'')$ and $\pi_0(A) \rightarrow \pi_0(A'')$
are surjective, the square\vspace{-4pt}
$$ \xymatrix{
X(\widetilde{A}) \ar[d]  \ar[r] & X(A') \ar[d]  \\
X(A) \ar[r] &  X(A'')
}$$ \vspace{-5pt}
is a (homotopy) pullback  of spaces. \vspace{4pt}
\end{enumerate}
Informally,  (1) says that there are no  nontrivial deformations over the point $\Spec(k)$ and (2) asserts   that   deformations over a union of two infinitesimal thickenings \vspace{3pt}of $\Spec(k)$ \mbox{are obtained by gluing.}

Let $\mdl_{k,\Delta} $ \INN{130@$\mdl_{k,\Delta} $} be the full subcategory of $\fun( \SCR_k^{\art}, \mathcal{S})$ spanned by all  \vspace{3pt}
 formal moduli problems. 
\end{definition} 
Formal moduli problems exist in abundance. Indeed, the formal neighbourhood of any point in a (suitably geometric) derived
stack, as well as various natural deformation problems (such as deformations of varieties or
vector bundles), provide a vast supply of examples. We refer to \cite[Chapter 16]{lurie2016spectral} or \cite{toen2014derived} for a discussion \vspace{3pt} of some of them.

A guiding goal in the subject is to give an ``algebraic'' classification  of formal moduli
problems. Theorem~\ref{LP} realises this aim  when $k$ is a field of characteristic zero by constructing an
equivalence
$$\mdl_{k,\Delta} \xrightarrow{\ \ \simeq \ \ } \mathrm{Alg}_{\liedg} $$
between  $\moduli_{k,\Delta} $   and the $\infty$-category of  differential graded Lie algebras over $k$.
Intuitively, this   correspondence arises as follows. Given  a formal moduli problem $X\in \mdl_{k,\Delta}$, one first constructs its
\textit{tangent complex} \INN{200@$T_X$}  
$T_X$. This chain complex over $k$ is  a derived version of the tangent space, and  is
determined by the values of $X$ on trivial square-zero extensions. The underlying spectrum of $T_X$ is given by the sequence of spaces $(T_X)_n= X(k\oplus k[n])$ with the equivalences \vspace{3pt} $\Omega (T_X)_{n+1} \simeq (T_X)_n$ provided by   \Cref{FMP}(2).}

The Lurie--Pridham correspondence then equips the shift $T_X[-1]$ with the structure of a differential graded
Lie algebra over $k$, and moreover provides a method to functorially recover $X$ from $T_X[-1]$. Hence, it can be interpreted as a variant of formal Lie theory (cf.\ \cite[Chapter 7]{GRII}). 

The correspondence has been extended in several directions, for example by 
Gaitsgory--Rozenblyum  \cite{GRII}, 
Hennion \cite{Hennion}, and Nuiten \cite{Nuiten}. 
These generalisations treat the case of formal moduli problems relative to a base (rather than a  point)  in characteristic zero.  \vspace{4pt}

To generalise Theorem~\ref{LP} to general base fields, we will introduce the new algebraic and
homotopy-theoretic structure of a    \emph{partition Lie algebra}. In characteristic zero,  partition Lie algebras are equivalent to differential graded Lie algebras;   in finite characteristic, this is \mbox{no longer true.}
Partition Lie algebras are  closely related to the genuine equivariant topology of the following   spaces:
\begin{definition}[Partition complexes]\label{partitioncomplex} \INN{160@$\Pi_n$} \INN{190@$\Sigma |\Pi_n|^\diamond$} 
Given an integer $n\geq 1$, the  \textit{$n^{th}$ partition complex} $|\Pi_n|$ is the genuine $\Sigma_n$-space given by the  geometric realisation of the following simplicial $\Sigma_n$-set:
$$  \Pi_n := \N_\bullet \left(  \left\{ \mbox{Poset of partitions of } \{1,\ldots,n\} \right\} - \{\hat{0},\hat{1}\} \right).$$
Here $\N_\bullet$ denotes the nerve\vspace{1pt} construction, $\hat{0} = \Fbox{1}\hspace{-0.3pt}\Fbox{2}\ldots \Fbox{n}$ \ is the discrete partition, $\hat{1} = \Fbox{12 \ldots n}$ is
the indiscrete partition, and all partitions are  ordered under refinement.    
\end{definition}
Partition complexes were linked to ordinary Lie algebras in the   work of  Barcelo \cite{barcelo1990action}, Hanlon \cite{hanlon}, Joyal \cite{Joyal}, and Stanley \cite{stanley}, 
 who constructed and examined an isomorphism of  \mbox{$\Sigma_n$-representations} $\widetilde{\HH}^{n-1}(\Sigma |\Pi_n|^\diamond,\ZZ)\cong \Lie_n \otimes \sgn_n$. Here $\Sigma |\Pi_n|^\diamond$ is the reduced suspension of the unreduced suspension of $|\Pi_n|$  and  $\sgn_n$ denotes the sign representation of  $\Sigma_n$. Moreover, $\Lie_n$  \INN{120@$\Lie_n$} \INN{190@$\sgn_n$} is the quotient of the free abelian group on all Lie words in  letters $x_1,\ldots, x_n$ involving each $x_i$ exactly once  by the usual antisymmetry and   Jacobi relations.

This above isomorphism between representations of the symmetric group was later refined in work of Salvatore \cite{salvatore1998configuration} and Ching \cite{ching2005bar}, who constructed the Lie operad in the $\infty$-category of spectra; algebras over this operad are called \textit{spectral Lie algebras}.
\begin{remark}
Spectral Lie algebras offer excellent computational and conceptual opportunities in unstable chromatic  homotopy theory, as is exploited, for example, in \mbox{\cite{behrens2015bousfield}, \cite{brantnerthesis}, \cite{Gijsbert},\cite{bhk}.}
\end{remark}
Over a field of characteristic zero, spectral Lie algebras are equivalent to differential graded Lie algebras. In contrast, they are \textit{not} the correct structure for the purposes of deformation theory in characteristic $p$, where it
will in fact not be possible to define partition Lie algebras as algebras \mbox{over any $\infty$-operad.}
Instead, we will need to use the language of monads.
\begin{notation} 
Write  $\md_k$ for the derived $\infty$-category of $k$; its objects are chain complexes of $k$-vector spaces, or equivalently $k$-module\vspace{-2pt} spectra (cf.\ \cite[Definition 1.3.5.8; Remark 7.1.1.16]{lurie2014higher}). \INN{130@$\md_k$}  

Recall that a \textit{monad} on $\md_k$ consists of an endofunctor $T$ together with natural transformations $\id \rightarrow T$, $T\circ T \rightarrow T$,   and an infinite set of coherence data. Every monad $T$ on $\md_k$ gives rise to an $\infty$-category $\Alg_T$ of $T$-algebras (sometimes also called $T$-modules); an  object in  $\Alg_T$ is informally given by a chain complex $M\in \md_k$, a natural transformation $T(M) \rightarrow M$, and an infinite set of coherence data. We refer to  \cite[Section 4.7]{lurie2014higher} for precise definitions.\vspace{-2pt}
\end{notation}
\begin{example}If $k$ is a field of characteristic zero, then the $\infty$-category $\mathrm{Alg}_{\liedg} $  \INN{011@$\mathrm{Alg}_{\liedg} $ }of differential graded Lie algebras can be described as algebras over a  certain monad $\liedg$  \INN{120@$\liedg$} on $\md_k$, which sends a chain complex $V $ to the chain complex $\liedg(V) = \bigoplus_n (\Lie_n \otimes_{} V^{\otimes n})_{\Sigma_n}$. Here the tensor product is computed in complexes, and $(-)_{\Sigma_n}$ denotes  $\Sigma_n$-orbits (which are equivalent to \mbox{$\Sigma_n$-homotopy orbits).}\end{example}
In  \Cref{defliep} below, we will construct the \textit{partition Lie algebra monad} $\liep$ on $\md_k$.  \vspace{-2pt}
\begin{cons}[Partition Lie algebras] \label{conspla}
	The monad $\liep$  satisfies the following properties: \vspace{-6pt} \INN{120@$\liep$}  \vspace{-4pt}
\begin{enumerate}
\item If $V$ is a finite-dimensional $k$-vector space (considered as a discrete
$k$-module spectrum), then $\liep(V)$ is the linear dual of the (algebraic)
cotangent fibre\footnote{In particular, the theory of the cotangent complex in
the context of simplicial commutative rings, as in \cite{Qui67HA,
Illusie, Andre74} (see \cite[Sec.~25.3]{lurie2016spectral} for a modern account), plays a central role throughout this paper.}  of $k \oplus
V^{\vee}$, the trivial square-zero extension of $k$ by $V^\vee$.
In fact, this remains true for any coconnective $k$-module spectrum $V$ for which $\pi_i(V)$ is finite-dimensional for all $i$.
\item If $V\simeq \Tot(V^\bullet)\in \mod_{k,\leq 0}$ is represented by a 
cosimplicial $k$-vector space $V^\bullet$, then\vspace{-2pt}  $$ \liep(V) \simeq \bigoplus_{n} \Tot \left(\widetilde{C}^\bullet(\Sigma |\Pi_n|^\diamond,k) \otimes (V^\bullet)^{\otimes n}\right)^{\Sigma_n}.\vspace{-2pt}$$ Here $\widetilde{C}^\bullet(\Sigma |\Pi_n|^\diamond,k)$ denotes the    $k$-valued cosimplices on the space $\Sigma |\Pi_n|^\diamond$, the functor 
$(-)^{\Sigma_n}$ takes strict fixed points,  and the tensor product is computed in cosimplicial $k$-modules.

\item The functor $\liep$ commutes with filtered colimits and
geometric realisations. 
\item The tangent fibre $T_X$ of any   $X \in \mdl_{k,\Delta}$ has the structure of a
$\liep$-algebra.
\end{enumerate}
We write $\mathrm{Alg}_{\liep}$ \INN{011@$\mathrm{Alg}_{\liep}$}
for the $\infty$-category of $\liep$-algebras in
$\md_k$. \vspace{-1pt} \vspace{-1pt} \vspace{-1pt} 
\end{cons}
\begin{remark}
Given any partition Lie algebra $\mathfrak{g}\in \mathrm{Alg}_{\liep}$, the homotopy groups $\pi_\ast(\mathfrak{g})$ form a graded Lie algebra {in the shifted sense}. This means that given  $x\in \pi_i(\mathfrak{g})$ and $ y\in \pi_j(\mathfrak{g})$, we have a bracket $[x,y] \in \pi_{i+j-1}(\mathfrak{g})$. This shift is merely a matter of convention, but we have decided to adopt it as it seems more natural for our applications.\vspace{-1pt} \vspace{-1pt} 
\end{remark}
When $k$ has characteristic zero,   $\liep$ can be identified with the shifted differential graded Lie algebra monad (cf.\ \Cref{deriveddg}). 
For general fields,  partition Lie algebras  provide a new generalisation of differential graded Lie algebras. While $\liep$ looks somewhat similar to the  restricted Lie algebra monad (cf.\ e.g.\ \cite{fresselie}), it 
behaves in a substantially different way. For example, 
there is no homological degree $i$ such that the free partition Lie algebra on  a module $M$ in degree $i$ is also concentrated in degree $i$.

Most importantly, partition Lie algebras have the following   application  in deformation theory:\vspace{-2pt}
\begin{theorem}[Main theorem] 
\label{mainthm}
If $k$ is a  field, there is an equivalence\vspace{-1pt} of \mbox{$\infty$-categories} $$\mdl_{k,\Delta} \simeq \mathrm{Alg}_{\liep}\vspace{-1pt} $$ between 
 formal moduli problems and 
partition  Lie algebras over $k$. It sends a formal moduli problem  $X \in \mdl_{k,\Delta}$ to its tangent
fibre $T_X$ equipped  with a suitable partition Lie algebra structure. \vspace{-3pt}
\end{theorem}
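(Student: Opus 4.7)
The overall plan is to follow the general strategy of Lurie's proof of Theorem \ref{LP}, with the new monad $\liep$ replacing the differential graded Lie algebra monad throughout. By property (4) of the partition Lie algebra construction, the tangent fibre assignment already refines to a functor $T:\mdl_{k,\Delta}\to \mathrm{Alg}_{\liep}$; the content of the theorem is that $T$ is an equivalence.

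To produce a candidate inverse $\Psi:\mathrm{Alg}_{\liep}\to \mdl_{k,\Delta}$, I would first construct a Koszul duality functor $D:(\SCR_k^{\art})^{\mathrm{op}}\to \mathrm{Alg}_{\liep}$, characterized on trivial square-zero extensions by $D(k\oplus V)\simeq \liep(V^{\vee})$ for $V$ a coconnective $k$-module with finite-dimensional homotopy groups, consistent with property (1). Setting $\Psi(\mathfrak{g})(A):=\Map_{\mathrm{Alg}_{\liep}}(D(A),\mathfrak{g})$ then yields the candidate. One organises $(T,\Psi)$ into an adjoint pair and checks that the unit $X\to \Psi(T_X)$ is an equivalence as follows: every $A\in\SCR_k^{\art}$ admits a finite tower of pullbacks ending at $k$ whose successive fibres are trivial square-zero extensions $k\oplus k[m]$, using nilpotence of the maximal ideal of $\pi_0 A$ together with a Postnikov-style d\'evissage on the higher $\pi_\ast A$. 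Both $X$ and $\Psi(T_X)$ send these pullback squares of surjections to pullback squares of spaces — by the definition of a formal moduli problem on the one hand, and by the construction of $\Psi$ combined with the defining property of $D$ on the other — and they agree on each atomic piece $k\oplus k[m]$ essentially by property (1); hence the unit is an equivalence. A parallel argument using that every object of $\mathrm{Alg}_{\liep}$ is a sifted colimit of free algebras $\liep(W)$ on compact $W\in\md_k$ then gives that the counit is an equivalence.

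The main obstacle will be the careful construction of the Koszul duality functor $D$ and the verification that it carries the surjective pullbacks in $\SCR_k^{\art}$ (which appear in the FMP axiom) to pushout squares of partition Lie algebras. In characteristic zero this is accomplished via the Chevalley-Eilenberg/cobar equivalence between differential graded Lie algebras and cocommutative coalgebras, but that framework collapses in positive and mixed characteristic. Instead, I would work directly with the cosimplicial model from property (2): compute $D(A)$ via the dual of the derived cotangent complex of $A$, and then invoke the partition complex cohomology description to upgrade this dual cotangent complex to a $\liep$-algebra functorially in $A$. Assembling these ingredients produces the desired equivalence $T:\mdl_{k,\Delta}\xrightarrow{\ \simeq\ } \mathrm{Alg}_{\liep}$.
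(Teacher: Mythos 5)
Your architecture is the right one and matches the paper's: build a Koszul duality functor $\mathfrak{D}(A)=\cot_\Delta(A)^{\vee}$, define the candidate moduli functor $\Psi(\mathfrak{g})(A)=\Map_{\mathrm{Alg}_{\liep}}(\mathfrak{D}(A),\mathfrak{g})$, and run a d\'evissage over square-zero extensions (the paper packages this last step via Lurie's axiomatic ``deformation theory'' and \cite[Theorem 12.3.3.5]{lurie2016spectral}). But the two steps you defer as ``the main obstacle'' are exactly the mathematical content of the theorem, and the method you propose for them does not suffice. First, property (2) of the construction of $\liep$ only describes the \emph{underlying $k$-module} of $\liep(V)$ via partition complex cochains; it gives neither the monad structure on $\liep$ nor a $\liep$-algebra structure on $\cot_\Delta(A)^{\vee}$ for a general Artinian $A$. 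In the paper both come for free only after one proves that the adjunction $(\cot_\Delta\dashv\sqz)$ is \emph{comonadic} on complete local Noetherian augmented algebras (via Barr--Beck--Lurie): then $\liep$ is by definition the sifted-colimit extension of the dual of the comonad $\cot_\Delta\circ\sqz$, and $\mathfrak{D}(A)$ is tautologically an algebra over it. Comonadicity requires showing that $\cot_\Delta$ commutes with totalisations of $\cot$-split cosimplicial objects; this convergence statement is delicate in characteristic $p$ (it is the reason operadic bar--cobar duality \`a la Ching--Harper needs $0$-connectedness) and is proven in the paper by lifting everything along the adic filtration, passing to associated gradeds where the weight grading forces convergence, and exploiting Noetherianity. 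Nothing in your proposal supplies this.

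Second, the pullback-to-pushout property of $\mathfrak{D}$ --- which you correctly identify as necessary for $\Psi(T_X)$ to satisfy the formal moduli axiom --- is not addressed: the paper's proof reduces to the square-zero case by replacing $A,A',A''$ with their cobar resolutions and then needs the \emph{same} filtered convergence machinery to commute $\mathfrak{D}$ past the resulting totalisations. Relatedly, your counit argument is incomplete as stated: writing every $\liep$-algebra as a sifted colimit of free algebras only helps once you know (i) that the unit $B\to C^*(\mathfrak{D}(B))$ is an equivalence on the full subcategory of coconnective finite-type $\liep$-algebras (this is \Cref{koszuldualfun}, not just the Artinian case), and (ii) that this subcategory is closed under the pushouts appearing in the d\'evissage --- which is again the pullback-to-pushout statement. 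So the proposal is a correct outline of the formal skeleton, but the convergence/comonadicity arguments of \Cref{axiomaticsec} and their verification for simplicial commutative rings in \Cref{SCR} are missing rather than merely deferred.
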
 
This means that locally, moduli spaces are still governed by an appropriate Lie\vspace{3pt} algebraic  \mbox{structure.}
\begin{example}
While it can be challenging to fully describe the partition Lie algebra associated to a given formal moduli problem, it is  easy to compute its underlying chain complex by considering the infinitesimal automorphisms of the trivial deformation over the trivial \mbox{square-zero extensions $k\oplus k[n]$.}
As expected,  the    formal deformations of a smooth and proper variety $Z$ over a field $k$ 
are controlled by a partition Lie algebra structure \mbox{on the chain 
complex $C^\ast(Z,T_Z)[1]$,} and deformations of a vector bundle $\mathcal{E}$ on $Z$ are governed by a 
partition Lie algebra \mbox{structure on $R\Gamma(Z,\End(\mathcal{E}))[1]$.}
We refer to \cite[Section 16.5, 19.4]{lurie2016spectral} and  \cite[Section 3]{CYDef} for details.
\end{example}

As in \cite{lurie2011derivedX, pridham2010unifying}, the correspondence between formal moduli problems and partition Lie
algebras arises from a form of    {Koszul duality} for algebras, which we shall formulate next. 

Write $\mathrm{SCR}_{k}^{\aug}$ for  the $\infty$-category of augmented simplicial commutative
$k$-algebras,  i.e.  elements in the overcategory $A\in (\mathrm{SCR}_{k})_{/k}$.\INN{190@$\mathrm{SCR}_{k}^{\aug}$} 
We will construct  a Koszul duality functor
$$ \Df: \mathrm{SCR}_{k}^{\aug}  \rightarrow \mathrm{Alg}_{\liep}^{op}, $$
which sends an augmented simplicial commutative $k$-algebra $A$ to 
the dual of its (algebraic) tangent fibre $(k \otimes_A L^{\Delta}_{A/k})^{\vee}$, equipped with its
natural $\liep$-algebra structure. 
A key step in the \mbox{proof of} \Cref{mainthm} is to show that $\Df$ restricts to an equivalence on the following subcategory of $\mathrm{SCR}_{k}^{\aug}$:
\begin{definition}  \label{cnscr}
An augmented object  $A\in \mathrm{SCR}_{k}^{\aug}$ is \emph{complete local Noetherian} if \begin{enumerate}\INN{190@$\mathrm{SCR}^{\cN}$}  
\item $\pi_0(A)$ is a complete local Noetherian ring;
\item each $\pi_i(A)$ is a finitely generated $\pi_0(A)$-module.
\end{enumerate}
\end{definition}  
 {Let  $\mathrm{SCR}_{k}^{\cN}$ be the full subcategory \INN{190@$\mathrm{SCR}_{k}^{\cN}$}
spanned by all complete local Noetherian $k$-algebras. Then:}
\begin{theorem} 
\label{koszuldualfun}
The Koszul duality functor
$\Df$ restricts to a contravariant equivalence
between $\mathrm{SCR}_{k}^{\cN}$ and  the full subcategory
of $\mathrm{Alg}_{\liep}$
spanned by those partition Lie algebras $\mathfrak{g}$ for which
$\pi_i(\mathfrak{g})$ is finite-dimensional for each $i$ and
vanishes for $i > 0$. \end{theorem}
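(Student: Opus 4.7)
The plan is to reduce to an Artinian version of the equivalence and then extend to complete local Noetherian objects by a completion argument. Throughout, write $\mathrm{SCR}_k^{\art} \subset \mathrm{SCR}_k^{\cN}$ for the subcategory of Artinian algebras, $\mathrm{Alg}_{\liep}^{\art}$ for partition Lie algebras $\mathfrak{g}$ with $\pi_i(\mathfrak{g})$ finite-dimensional, vanishing for $i > 0$, and additionally bounded below, and $\mathrm{Alg}_{\liep}^{\cN}$ for the full subcategory appearing in the statement.

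First I would verify that $\Df$ maps $\mathrm{SCR}_k^{\cN}$ into $\mathrm{Alg}_{\liep}^{\cN}$. For $A \in \mathrm{SCR}_k^{\cN}$, connectivity of $L^\Delta_{A/k}$ gives connectivity of $k \otimes_A L^\Delta_{A/k}$; finite generation of each $\pi_i(A)$ over the Noetherian ring $\pi_0(A)$, combined with a Postnikov-tower argument, produces finite generation of each $\pi_i(L^\Delta_{A/k})$ over $\pi_0(A)$. Base-changing to $k$ yields finite-dimensionality, and dualizing places everything in non-positive degrees, giving an object of $\mathrm{Alg}_{\liep}^{\cN}$.

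Next I would establish the Artinian equivalence $\Df : \mathrm{SCR}_k^{\art} \simeq (\mathrm{Alg}_{\liep}^{\art})^{op}$. Every $A \in \mathrm{SCR}_k^{\art}$ fits into a finite tower of small extensions $A = A_N \to A_{N-1} \to \cdots \to A_0 = k$, with each step a pullback of augmented algebras against a map $k \to k \oplus V_i[n_i]$ for finite-dimensional $V_i$. By Construction (1), $\Df(k \oplus V_i[n_i]) \simeq \liep(V_i^\vee[-n_i])$, a free partition Lie algebra on a finite-dimensional generator in a single degree. Using that $\Df$ carries such small-extension pullbacks to pushouts of $\liep$-algebras (an excision property of the cotangent fibre, standard in Lurie--Pridham style Koszul duality), this exhibits $\Df(A)$ as an iterated pushout of free partition Lie algebras, which precisely generate $\mathrm{Alg}_{\liep}^{\art}$ under finite colimits. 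Fully faithfulness reduces via these generators to the adjunction identity $\Map_{\mathrm{SCR}_k^{\aug}}(k \oplus V^\vee, B) \simeq \Map_{\mathrm{Alg}_{\liep}}(\liep(V), \Df(B))$, which is Construction (1) translated across the Koszul adjunction.

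Finally I would extend to complete local Noetherian objects: any $A \in \mathrm{SCR}_k^{\cN}$ is $\lim_n A_n$ for a tower of Artinian quotients obtained by combining the $\mathfrak{m}$-adic filtration on $\pi_0(A)$ with the Postnikov tower of $A$, while any $\mathfrak{g} \in \mathrm{Alg}_{\liep}^{\cN}$ is $\lim_n \tau_{\geq -n} \mathfrak{g}$, with each $\tau_{\geq -n}\mathfrak{g} \in \mathrm{Alg}_{\liep}^{\art}$. The Artinian equivalence identifies these matched towers, and $\Df$ together with its inverse is compatible with the relevant (co)limits, so the full claim propagates. The hard part will be Step 2, concretely showing that the Koszul duality unit/counit is an equivalence on Artinian objects: this requires checking inductively along the small-extension tower that the inverse functor (Chevalley--Eilenberg) sends a free partition Lie algebra on a finite-dimensional coconnective generator $V$ back to the trivial square-zero extension $k \oplus V^\vee$ (dual to Construction (1)), and that it converts the relevant pushouts in $\mathrm{Alg}_{\liep}^{\art}$ to pullbacks of augmented algebras. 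The latter is the main technical input and amounts to convergence of a spectral sequence whose $E_2$-page is controlled by the partition-complex cohomology formula in Construction (2); once this is in place, the completion step is a routine pro-system argument.
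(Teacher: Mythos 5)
There is a genuine gap, concentrated in your two reduction steps. The most serious problem is the passage from the Artinian case to the complete local Noetherian case via the tower $\mathfrak{g} \simeq \lim_n \tau_{\geq -n}\mathfrak{g}$: the truncation $\tau_{\geq -n}\mathfrak{g}$ is \emph{not} a partition Lie algebra. The monad $\liep$ does not preserve bounded-below modules — indeed the explicit computation of $\pi_\ast(\liep(\Sigma^{\ell}\FF_p))$ for $\ell \leq 0$ produces classes in degrees $\ell + i_1 + \cdots + i_k - k$ with the $i_j$ as negative as $(p-1)\ell$, so free partition Lie algebras on coconnective generators are unbounded below, and the structure map $\liep(\tau_{\geq -n}\mathfrak{g}) \to \mathfrak{g}$ cannot be factored through $\tau_{\geq -n}\mathfrak{g}$. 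Dually, on the algebra side the assertion that $\Df$ is ``compatible with the relevant (co)limits'' is exactly the hard point: $\cot_\Delta$ is a left adjoint, so there is no a priori comparison between $\cot_\Delta(\lim_n A_n)$ and $\lim_n \cot_\Delta(A_n)$ for the tower of Artinian quotients, and establishing such convergence is the entire technical content of the theorem, not a routine pro-system argument. The paper avoids both issues by never reducing to Artinian objects: it proves comonadicity of $(\cot_\Delta \dashv \sqz)$ on all of $\SCR_k^{\cN}$ at once via the Barr--Beck--Lurie theorem, replacing your small-extension tower by the cobar resolution of $A$ by trivial square-zero extensions, and controlling the resulting totalisations by lifting them along the $\mathfrak{m}$-adic filtration $\adic$ and passing to associated gradeds, where the weight grading makes all convergence questions automatic (the relevant functors are ``$i$-increasing'').

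Two further points in your Step 2 need attention. First, the claim that $\Df$ carries small-extension pullbacks to pushouts of $\liep$-algebras is described as ``standard in Lurie--Pridham style Koszul duality,'' but away from characteristic zero it is not: it is part (3) of the paper's main axiomatic theorem and again rests on the filtered/graded convergence machinery (your proposed route through a spectral sequence built from the partition-complex formula is not developed and is not how the paper argues). Second, the assertion that the bounded coconnective finite-type partition Lie algebras are ``precisely generated'' under finite colimits by free algebras on single finite-dimensional generators is essentially the essential-surjectivity statement you are trying to prove, so as written the argument is circular at that point; the paper instead obtains essential surjectivity from comonadicity (every coconnective finite-type $\liep$-algebra is $\Df$ of the totalisation of the dual cobar construction). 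Your Step 1 (well-definedness of $\Df$ on $\SCR_k^{\cN}$) is fine and matches the paper's use of the Noetherian finiteness of the cotangent complex.
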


We prove Theorem~\ref{koszuldualfun} ``by hand'' at the level of simplicial commutative
rings, by working carefully with filtered objects and exploiting the fact that
all rings that one encounters in this way are Noetherian. 
To deduce \Cref{mainthm}, one also needs to prove that the Koszul duality functor
carries appropriate pullbacks of simplicial commutative rings to pushouts of
$\liep$-algebras.\vspace{3pt}

Partition Lie algebras are subtle homotopical objects, and we need tools to study them. One can, for example,  consider the natural operations acting on the homotopy groups of a partition Lie algebra, which can   obstruct  the extendability of the corresponding formal moduli \mbox{problem to $\EE_n$-algebras.}

These operations  are parametrised by the homotopy groups of \textit{free} partition Lie algebras, which we will compute  by
using techniques from the work of the first author and Arone \cite{arone2018action},  
relying on discrete Morse theory and an argument inspired by earlier work of Arone and Mahowald \cite{arone1999goodwillie}. \vspace{2pt}

To state our result, we  need  the following classical notion  (cf.\ \cite{shirshov1958free}, \cite{chen1958free}):
\begin{definition}\label{Lyndon word}   
A word $w$ in letters $x_1,\dots,x_m$ is  said to be a  \textit{Lyndon word} if it is  smaller than any of its cyclic rotations in the lexicographic order with $x_1<\dots < x_m$. Write  $B(n_1,\dots,n_m)$ \INN{020@$B(n_1,\ldots,n_m)$}
 for the set of   Lyndon words which involve  the letter $x_i$  precisely $n_i$ times. 
\end{definition}

\begin{remark}
It is well-known that the set of Lyndon words in letters $x_1,\ldots x_m$ forms a basis for the free (ungraded) Lie algebra over $\ZZ$ on $m$ letters (cf.\ e.g.\ \cite{reutenauer2003free}).
\end{remark}
\begin{theorem}\label{finalthecohomology} 
The $\FF_p$-vector space $\pi_\ast(\Lie_{\FF_p,\Delta}^\pi  (\Sigma^{\ell_1} \FF_p  \oplus \ldots \oplus \Sigma^{\ell_m} \FF_p  ))$ 
 has a basis indexed by sequences $(i_1, \ldots, i_k, e,w)$. Here $w\in B(n_1,\ldots,n_m)$ is a  Lyndon word in $x_1,\ldots, x_m$. We have $e \in \{0,\epsilon\}$, where
 $\epsilon = 1$ if $p$ is odd and $\deg(w):= \sum_i (\ell_i-1)n_i+1$ is even. Otherwise,   $\epsilon = 0$.

  The integers $i_1, \ldots, i_k$ satisfy: 
\begin{enumerate}
\item each $|i_j|$ is congruent to $0$ or $1$ modulo $2(p-1)$; \label{congruence}
\item for all $1\le j<k$, we have   
$p i_{j+1} <i_j < -1$    or $  0 \leq i_j < pi_{j+1}$;
\item we have  $ (p-1)(1+e)\deg(w)-\epsilon \leq i_k < -1$ or $0 \leq i_k \leq (p-1) (1+e)\deg(w) -\epsilon$.
\end{enumerate}
The  sequence $(i_1, \ldots, i_k, e,w)$ sits  in homological degree  \mbox{$\left((1+e)\deg(w) -e\right) + i_1+\cdots +i_k-k$.} 
The decomposition in \Cref{conspla}(2) induces a weight decomposition  on 
the above vector space,  placing   $(i_1, \ldots, i_k, e,w)$ in multi-weight $(n_1 p^k (1+e) ,\ldots,n_m  p^k (1+e))$. \vspace{-2pt}
\end{theorem}
The input of this computation is  the homotopy of free simplicial and cosimplicial commutative rings
 as computed by Dold \cite{dold1958homology},  Nakaoka \cite{nakaoka1957cohomology} \cite{nakaoka1957cohomology2}, Milgram \cite{milgram1969homology}, and  Priddy \cite{priddy1973mod}. 
The case where $\ell_i\leq 0$ for all $i$  follows immediately from \cite[Theorem 8.14]{arone2018action}. For $p=2$ and  $\ell_i\leq 0$ for all $i$, our result can also be read off from the work of Goerss \cite{goerss1990andre}, who computed the algebraic Andr\'{e}--Quillen homology of trivial square-zero extensions at $p=2$.
\begin{remark}
Note that Pridham (cf.\ \cite[Section 5.3]{pridham2010unifying}) also considers the operations acting on the tangent spaces of formal moduli problems. He abstractly identifies the operations on  the coconnective part with the operations on Andr\'e--Quillen homology. \end{remark}

Up to now, we have stated our results   in the context of  simplicial commutative rings, which was indicated by the subscript $``\Delta"$.
We can obtain parallel results in the context of spectral algebraic geometry, hence describing deformations parametrised by connective $\EE_\infty$-rings over a given \mbox{field $k$.}

More precisely,  define the $\infty$-category $\clg_k^{\art}$ of spectral Artinian $k$-algebras and the $\infty$-category $\moduli_{k,\EE_\infty}$ \INN{130@$\moduli_{k,\EE_\infty}$} of spectral formal moduli problems by   replacing the term ``simplicial commutative $k$-algebra" by the term ``connective $\EE_\infty$-$k$-algebra" in \Cref{daa} and \Cref{FMP}, respectively.
In  \Cref{spla} below, we   construct the \textit{spectral partition Lie algebra monad} $\lieps$ on $\md_k$.  
\begin{cons}[Spectral partition Lie algebras] \label{conssppla}
	The monad $\lieps$  satisfies the following:
\begin{enumerate}
\item If $V$ is a finite-dimensional $k$-vector space (considered as a discrete
$k$-module spectrum), then $\lieps(V)$ is the linear dual of the topological cotangent fibre of $k \oplus
V^{\vee}$, the trivial square-zero extension of $k$ by $V^\vee$.
In fact, this remains true for any coconnective $k$-module spectrum $V$ for which $\pi_i(V)$ is finite-dimensional for all $i$.
\item If $V\in \md_{k,\leq N}$ is truncated above, then  $$ \lieps(V) \simeq \bigoplus_{n} \left(\widetilde{C}^\bullet(\Sigma |\Pi_n|^\diamond,k) \otimes V^{\otimes n}\right)^{h\Sigma_n},$$ 
where $(-)^{h\Sigma_n}$ denotes homotopy fixed points and the other notation is as above.
\item The functor $\lieps$ commutes with filtered colimits and
geometric realisations. 
\item The tangent fibre $T_X$ of any   $X \in \mdl_{k,\EE_\infty}$ has the structure of a
$\lieps$-algebra.
\end{enumerate}
We write $\mathrm{Alg}_{\lieps}$ for the $\infty$-category of $\lieps$-algebras in
$\md_k$. 
\end{cons}

We then have a variant of the previous equivalence:
\begin{theorem}
\label{spectralmainthm}
If $k$ is a  field, then there is an equivalence of \mbox{$\infty$-categories} $$\mdl_{k,\EE_\infty} \simeq \mathrm{Alg}_{\lieps}$$ between 
 spectral formal moduli problems and spectral
partition  Lie algebras over $k$, sending a formal moduli problem  $X \in \mdl_{k,\EE_\infty}$ to its tangent
fibre $T_X$. 
\end{theorem}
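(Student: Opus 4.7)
The plan is to follow the same three-step strategy used to prove Theorem~\ref{mainthm}, replacing simplicial commutative rings and $\liep$ by connective $\EE_\infty$-$k$-algebras and $\lieps$ throughout. First I would construct a Koszul duality functor
$$\Df^{\EE_\infty}: \clg_k^{\aug} \longrightarrow \mathrm{Alg}_{\lieps}^{\mathrm{op}}$$
sending an augmented connective $\EE_\infty$-$k$-algebra $A$ to the $k$-linear dual $\TAQ(A/k)^\vee$ of its topological cotangent fibre, equipped with its natural $\lieps$-algebra structure. The $\lieps$-structure arises from the same monadic formalism as in the simplicial case, now using homotopy fixed points in place of strict ones, consistent with the description in property~(2) of the spectral partition Lie algebra construction.

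The main intermediate step is to establish the spectral analogue of Theorem~\ref{koszuldualfun}: that $\Df^{\EE_\infty}$ restricts to a contravariant equivalence between the full subcategory of complete local Noetherian connective $\EE_\infty$-$k$-algebras in $\clg_k^{\aug}$ and the full subcategory of $\mathrm{Alg}_{\lieps}$ consisting of those spectral partition Lie algebras $\mathfrak{g}$ with $\pi_i(\mathfrak{g})$ finite-dimensional for each $i$ and vanishing for $i>0$. I would verify this first on trivial square-zero extensions $k\oplus V$ with $V$ coconnective and degreewise finite-dimensional — where $\lieps(V^\vee)$ computes the dual of the topological cotangent fibre by construction — and then bootstrap via the filtration by powers of the augmentation ideal, using Noetherianness to control convergence and the interaction of completion with the cotangent fibre.

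Once the restricted equivalence is in hand, I would check that $\Df^{\EE_\infty}$ carries the pullback squares appearing in \Cref{FMP} to pushout squares of $\lieps$-algebras. Since every spectral formal moduli problem is left Kan extended from its restriction to spectral Artinian algebras, and every complete local Noetherian $\EE_\infty$-$k$-algebra is canonically an inverse limit of spectral Artinian ones along small extensions, a standard formal argument — identical in structure to the deduction of Theorem~\ref{mainthm} from Theorem~\ref{koszuldualfun} — upgrades the Koszul duality equivalence to a full equivalence $\mdl_{k,\EE_\infty} \simeq \mathrm{Alg}_{\lieps}$ sending a formal moduli problem to its tangent fibre.

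The main obstacle is the spectral analogue of Theorem~\ref{koszuldualfun}. The simplicial proof relies on a careful analysis of the algebraic cotangent complex of Noetherian simplicial commutative rings together with filtered techniques tailored to $\SCR_k$. In the $\EE_\infty$ setting one must reformulate these arguments in terms of $\TAQ$ and verify that the $\TAQ$-based adic filtration on a complete local Noetherian $\EE_\infty$-$k$-algebra behaves correctly, with associated graded identifying with the free spectral partition Lie algebra on the cotangent fibre. This is precisely where the switch from strict to homotopy fixed points manifests itself; establishing completeness, finiteness and convergence in this context is the delicate part of the argument.
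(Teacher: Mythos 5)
Your proposal follows essentially the same route as the paper: construct the Koszul duality functor, prove the restricted contravariant equivalence between complete local Noetherian connective $\EE_\infty$-$k$-algebras and coconnective finite-type spectral partition Lie algebras via the adic filtration and Noetherian finiteness (the paper packages this axiomatically in its "filtered augmented monadic adjunction" framework so that the simplicial and spectral cases are handled by one argument), check that pullbacks of $\pi_0$-surjections go to pushouts, and then invoke Lurie's deformation-theory formalism. One small imprecision: the associated graded of the adic filtration on the algebra identifies with the free nonunital $\EE_\infty$-algebra on the cotangent fibre placed in weight one, not with the free spectral partition Lie algebra, which lives on the Koszul-dual side.
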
 
Proving this equivalence again requires constructing a Koszul duality functor, and showing that it restricts to an equivalence between complete local Noetherian $\EE_\infty$-$k$-algebras and spectral partition Lie algebras $\mathfrak{g}$ which are coconnective and have degreewise finite-dimensional homotopy groups.
\Cref{mainthm} and \Cref{spectralmainthm} are proven with similar methods, which is why we  present much of the argument in an axiomatic way (cf.\ \Cref{axiomaticsec}) applying to both of these contexts at once.\vspace{3pt}

The natural operations on $  \Lie_{\FF_p,\EE_\infty}^\pi$-algebras are parametrised by the homotopy groups of free  {spectral} partition Lie algebras, which we compute by a similar method as before:
 \begin{theorem}\label{finalthecohomologyspectral} 
The $\FF_p$-vector space $\pi_\ast( \Lie_{\FF_p,\EE_\infty}^\pi(\Sigma^{\ell_1} \FF_p  \oplus \ldots \oplus \Sigma^{\ell_m} \FF_p  ))$ 
 has a basis indexed by sequences $(i_1, \ldots, i_k, e,w)$. Here $w\in B(n_1,\ldots,n_m)$ is a  Lyndon word in $x_1,\ldots, x_m$. We have $e \in \{0,\epsilon\}$, where
 $\epsilon = 1$ if $p$ is odd and $\deg(w):= \sum_i (\ell_i-1)n_i+1$ is even. Otherwise,   $\epsilon = 0$.

  The integers $i_1, \ldots, i_k$ satisfy: 
\begin{enumerate}
\item\hspace{-5pt}' \ each $i_j$ is congruent to $0$ or $1$ modulo $2(p-1)$; \label{congruence}
\item\hspace{-5pt}' \ for all $1\le j<k$, we have   $i_j<p i_{j+1}$;
\item\hspace{-5pt}' \  we have  $i_k \leq (p-1) (1+e)\deg(w) -\epsilon $.
\end{enumerate}
The homological degree of $(i_1, \ldots, i_k,e,w)$ is $\left((1+e)\deg(w) -e \right)+i_1+\cdots+i_k-k$.
The decomposition in \Cref{conssppla}(2) induces a weight decomposition  on 
the above vector space,  placing   $(i_1, \ldots, i_k, e,w)$ in multi-weight $(n_1 p^k (1+e) ,\ldots,n_m  p^k (1+e))$.
\end{theorem}
The input to this computation is the  homotopy of free $\EE_\infty$-rings computed by Adem \cite{MR0050278},  
Serre \cite{MR0060234}, Araki--Kudo \cite{kudo1956topology},  Cartan \cite{MR0065161}  \cite{MR0068219}, Dyer--Lashof \cite{MR0141112}, May, and Steinberger \cite{MR836132}.  It is again inspired by Arone--Mahowald's classical work \cite{arone1999goodwillie}. \vspace{5pt}

Finally, we prove variants of \Cref{mainthm} and \Cref{spectralmainthm} in mixed characteristic. More precisely, let $A$ be complete local Noetherian  with residue field $k$, either in  simplicial commutative rings or in  $\EE_\infty$-rings. There is a natural notion of formal moduli problems in these mixed contexts (cf.\ \Cref{fmpmixed} below); we write $\moduli_{A//k, \Delta}$ and $\moduli_{A//k, \einf}$ for the respective $\infty$-categories. For example, we can describe the formal neighbourhood of a $k$-point inside a (suitably geometric) stack  defined over $\Spec(A)$ by one of  these ``mixed" formal moduli problems.

In \Cref{Akcomparestuff} and \Cref{pliemixed}, we construct relative versions of partition Lie algebras and spectral partition Lie algebras. The resulting $\infty$-categories are denoted by $\alg_{\liepr{A}}$ and  $\alg_{\liepss{A}}$, respectively. Finally, we prove:

\begin{theorem} \label{mostgeneral}Let $k$ be a field.
\begin{enumerate}
\item
If $A$ is a complete local Noetherian simplicial commutative ring with residue field $k$, there is an equivalence \vspace{3pt} of $\infty$-categories
$\moduli_{A//k, \Delta} \simeq \alg_{\liepr{A}}$. 
\item If $A$ is a complete local Noetherian $\einf$-ring with residue field $k$, then there is an equivalence of $\infty$-categories
$\moduli_{A//k, \einf} \simeq \alg_{\liepss{A}}$. 
\end{enumerate}
In both cases, these  equivalences send  a formal moduli problem
to its tangent fibre. 
\end{theorem}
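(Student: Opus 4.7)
The plan is to run the axiomatic deformation-theoretic framework of \Cref{axiomaticsec} relative to the base $A$ rather than over the field $k$. I would first set up the relative deformation context: introduce the $\infty$-category of augmented simplicial commutative (resp.\ $\EE_\infty$-) $A$-algebras $B$ equipped with a structure map to $k$, and single out the full subcategories of $A$-Artinian objects (those $B$ built from $A$ by a finite sequence of $k$-small extensions). A relative formal moduli problem is then a functor out of this $A$-Artinian subcategory satisfying the usual contractibility and pullback axioms, recovering $\moduli_{A//k, \Delta}$ and $\moduli_{A//k, \einf}$. The tangent fibre construction is $A$-relative, and therefore naturally takes values in $k$-modules.

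Next I would construct the relative Koszul duality functor
\[ \Df_A : \SCRa^{\aug} \longrightarrow \Alg_{\liepr{A}}^{op}, \qquad B \longmapsto (k \otimes_B \L_{B/A})^{\vee}, \]
and its $\EE_\infty$ analogue using the topological cotangent complex, endowing the dual $A$-relative cotangent fibre with its natural relative partition Lie algebra structure supplied by the monads of \Cref{Akcomparestuff} and \Cref{pliemixed}. The main content is then to upgrade \Cref{koszuldualfun} to this relative setting: show that $\Df_A$ restricts to a contravariant equivalence between complete local Noetherian $A$-algebras with residue field $k$ and the full subcategory of $\Alg_{\liepr{A}}$ (resp.\ $\Alg_{\liepss{A}}$) spanned by those $\mathfrak{g}$ with $\pi_i(\mathfrak{g})$ finite-dimensional and vanishing for $i>0$. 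I would prove this by the same filtered-object/by-hand strategy as in the absolute case: filter an augmented $A$-algebra $B$ by powers of the augmentation ideal, use that $A$ being Noetherian keeps everything Noetherian and the filtration well-behaved, and identify the associated graded with free constructions which behave well under Koszul duality.

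With this small equivalence in place, the general theorem follows by the axiomatic formalism. Concretely, the tangent-fibre functor $\moduli_{A//k} \to \Alg_{\liepr{A}}$ is defined via the values of $X$ on the $A$-relative trivial square-zero extensions $k \oplus V^\vee$; it commutes with the relevant sifted colimits, so it suffices to check it is an equivalence after Ind-completion from the respective full subcategories of coconnective locally finite objects, which is precisely the small equivalence above. Pullback-preservation of $\Df_A$ (translating to pushout preservation on Lie side), established in the axiomatic setup via the same Koszul-duality computation as in the absolute case, then finishes the deduction.

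The main obstacle is the equivalence on small objects. Two new features appear relative to \Cref{koszuldualfun}: one must track the relative cotangent complex $\L_{B/A}$ instead of $\L_{B/k}$, which only agrees with the absolute one up to the transitivity fibre sequence involving $\L_{A/k}$; and in mixed characteristic $A$ itself carries a nontrivial partition Lie algebra structure on its own cotangent fibre, so $\liepr{A}$-algebras differ from $\liep$-algebras by the datum of a map from $\Df(A)$. I would handle this by proving a base-change identification: an $A$-relative partition Lie algebra is the same as a $k$-linear partition Lie algebra equipped with a compatible map from $\Df(A)$, thereby reducing the relative equivalence to a slice of the absolute equivalence of \Cref{mainthm} and \Cref{spectralmainthm} over the object $\Df(A)$. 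Once this slice interpretation is established, the Noetherian hypothesis on $A$ guarantees that the augmentation filtration has finite-dimensional graded pieces and the argument of the absolute case goes through essentially verbatim.
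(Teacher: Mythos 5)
Your overall architecture (a relative Koszul duality functor, an equivalence on complete local Noetherian objects, then Lurie's deformation-theory formalism) matches the paper, but both mechanisms you propose for establishing the key equivalence have problems. First, the axiomatic machinery of \Cref{axiomaticsec} cannot simply be ``run relative to $A$'': the category $\clg_{A//k}$ is not pointed when $A \neq k$ (the paper flags exactly this), and, more seriously, the adic filtration of the augmentation ideal $\fib(B \to k)$ lives in $A$-modules while the relative cotangent fibre $k \otimes_B L_{B/A}$ lives in $k$-modules, so the identification of the associated graded with a free algebra on the cotangent fibre (\Cref{grofadicisfree}), which drives every convergence argument in \Cref{axiomaticsec}, no longer makes sense as stated. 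Second, your fallback ``slice'' reduction invokes $\Df(A)$, which is undefined when $A$ is not a $k$-algebra (e.g.\ $A = \ZZ_p$, $k = \FF_p$); the object that actually appears is $\Df(k \otimes_A k)$, via the fibre sequence $\cot(k\otimes_A k)^{\vee} \to \liepss{A}(V) \to \lieps(V)$, and the assertion that $\liepss{A}$-algebras are $\lieps$-algebras under this object is an unproved (and, in the paper's route, unneeded) monadic statement. Moreover, the Artinian objects of $\clg_{A//k}$ are $A$-algebras carrying no $k$-algebra structure, so $\moduli_{A//k,\einf}$ is not literally a slice of $\moduli_{k,\einf}$.

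What is actually needed, and what the paper supplies, is a different reduction to the absolute case: the identity $\cot_A(B) \simeq \cot(k \otimes_A B)$ turns the relative cotangent fibre into the absolute one after base change, and the genuinely new ingredient is a convergence/descent statement for complete connective $A$-modules along $A \to k$ (\Cref{mainconvcritmodules}, \Cref{theabove}): if $M^\bullet$ is a cosimplicial complete connective $A$-module with $\mathrm{Tot}(k \otimes_A M^\bullet)$ connective, then $\mathrm{Tot}(M^\bullet)$ is connective and commutes with $k \otimes_A (-)$, and a complete connective module is recovered from its \v{C}ech conerve along $A \to k$ (derived Nakayama plus the Noetherian hypothesis on $A$). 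With this in hand, conservativity and the totalisation condition in Barr--Beck--Lurie for $(\cot_A, \sqz)$ on $\clg^{\cN}_{A//k}$, as well as the pullback-to-pushout statement, all follow from the already-established case $A = k$ (\Cref{Akcomparestuff}, \Cref{Ucommutewithpushouts}). Your proposal is missing this module-level descent input, and without it neither of your two routes closes.
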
 

Hence the various variants of partition Lie algebras   provide an algebraic description of formal deformation theory in finite and mixed characteristic.

\subsection*{Acknowledgments} 
It is a pleasure to thank Greg Arone and Jacob Lurie for their very generous support and consistent encouragement.
We are also grateful to  Bhargav Bhatt, Dennis Gaitsgory, Saul
Glasman, Thomas Nikolaus, Sam Raskin, Nick Rozenblyum, and Peter Teichner for helpful conversations related to this project. 
We thank the referee for many helpful suggestions and corrections to an earlier version of this paper, and Sof\'{i}a Marlasca Aparicio and Sam Moore for comments on the manuscript.

The first author wishes to thank the Max Planck Institute for Mathematics, Bonn, and Merton College and the Mathematical Institute at  Oxford University, for their support.
This work was done while the second author was a Clay Research Fellow, and the
second author would like to thank the Clay Mathematics Institute and the
University of Chicago for their 
support. 

Both authors would also like to thank the Mathematical Sciences Research Institute  in Berkeley, California, for its hospitality, where their work was  supported by the National Science Foundation under Grant No. DMS-1440140 during the  Spring 2019 semester.

\newpage

\section{Preliminaries}

\newcommand{\tr}{\mathrm{tr}}

Let $\mathcal{C}$ be a presentable stable $\infty$-category. 
In this section, we will briefly review various preliminaries involving filtered and graded
objects in $\mathcal{C}$, and moreover fix some notation for the remainder of this paper. 
A convenient reference for this material (with   slightly different
notation) is \cite{GwilliamPavlov}. 

For notational convenience, we will usually work with filtrations and gradings concentrated in degrees $1$ and above. This choice reflects that in the sequel (in
particular in \Cref{axiomaticsec}), we will often work with {nonunital} commutative algebras.  We will   use the notation  {$\fil$ and $\gr$ in this context.} 

When we discuss unital
commutative algebras in \Cref{einfsec}, we will use the notation $\fil^+$ and $ \gr^+$ for
filtrations and gradings that start in degree $0$.

\begin{definition}[Filtered objects] \label{filtereddef} 
Consider the nerve $\N(\mathbb{Z}_{\geq 1})$ of the partially ordered  \INN{060@$\fil(\mathcal{C})$,  $\fil^+(\mathcal{C})$}
set $\mathbb{Z}_{\geq 1}$ and its opposite $\N(\mathbb{Z}_{\geq 1})^{op}$. 
We define the $\infty$-category $\fil(\mathcal{C})$ of \emph{filtered objects}
of $\mathcal{C}$ as
$$ \fil(\mathcal{C} ) := \fun( \N(\mathbb{Z}_{\geq 1})^{op}, \mathcal{C}). $$
We will often write a filtered object $X \in \fil(\mathcal{C})$ as a system $\left\{F^i X\right\}_{i \geq
1}$ of objects in $\mathcal{C}$, i.e.\ a sequence $\dots \rightarrow F^i X \rightarrow F^{i-1} X \to
\dots \rightarrow F^1 X$.  
We call $F^1 X$ \INN{060@$F^1$}the \emph{underlying object} of $X$, and obtain a \INN{060@$F^1$} functor $$F^1:
\fil(\mathcal{C}) \rightarrow \mathcal{C}.$$ 

Similarly,  set  $\fil^+(\mathcal{C}) := \fun( \N( \mathbb{Z}_{\geq 0})^{op}, \mathcal{C})$. The objects of $\fil^+(\mathcal{C})$ are 
filtered objects where the filtration starts in degree $0$
instead, and we 
  write $F^0:
\fil^+(\mathcal{C}) \rightarrow \mathcal{C}$ for the   \mbox{underlying object functor.}

\end{definition} 
\begin{example} 
\label{filadjunctionunderlying}
The functor $F^1: \fil(\mathcal{C}) \rightarrow \mathcal{C}$ sending
$\left\{F^i X\right\}$ to $F^1 X \in \mathcal{C}$ admits a 
left adjoint, which sends an object
$Y \in \mathcal{C}$ to the filtered object $(\dots \rightarrow 0 \rightarrow 0 \rightarrow Y)$. 
\end{example}

\begin{definition}[Graded objects] \INN{070@$\gr(\mathcal{C})$,  $\gr^+(\mathcal{C})$}
Let $\mathbb{Z}_{\geq 1}^{\mathrm{ds}}$ denote the category with one object for every positive integer and only identity morphisms.
Define the $\infty$-category $\gr(\mathcal{C})$ \mbox{of \textit{graded objects} of  
$\mathcal{C}$ as}  $$\gr(\mathcal{C}) = \fun(\N( \mathbb{Z}_{\geq 1}^{\mathrm{ds}}),
\mathcal{C}).$$ 
We    write objects of $\gr(\mathcal{C})$ as
$X_{\star}$ whenever we want to emphasize the grading. Given a 
graded object $X_{\star}$,  the direct sum $\bigoplus_{i \geq 1} X_i$. 
is referred to as the 
\emph{underlying object} of $X_{\star}$.

Similarly as for filtered objects, we 
define a variant $\gr^+(\mathcal{C})$ as 
$\gr^+(\mathcal{C}) = \fun(\N( \mathbb{Z}_{\geq 0}^{\mathrm{ds}}),
\mathcal{C}),$
where $\mathbb{Z}_{\geq  0 }^{\mathrm{ds}}$ \INN{260@$\mathbb{Z}_{\geq  0 }$,  $\mathbb{Z}_{\geq  0 }^{\mathrm{ds}}$, $\mathbb{Z}_{\geq  1 }$,  $\mathbb{Z}_{\geq  1 }^{\mathrm{ds}}$} denotes the discrete category on
$\mathbb{Z}_{\geq 0}$. 
\end{definition}

\begin{definition}[Associated gradeds]  \INN{070@$\gr X $}
We have a functor 
$$\gr: \fil(\mathcal{C}) \rightarrow \gr(\mathcal{C}),$$
which sends $X = \left\{F^i X\right\}_{i \geq 1}$ to the associated graded
object
$\gr X $ satisfying $(\gr X)_i = F^i X/F^{i+1} X $ for all $i \geq 1$. 
Similarly, we have a natural   functor 
$ \gr: \fil^+(\mathcal{C}) \rightarrow \gr^+(\mathcal{C}).$
\end{definition} 

\begin{definition}[Symmetric monoidal structures]\label{filmonoidal}
Suppose that $\mathcal{C}$ is nonunital presentably symmetric monoidal, by which we mean that
$\mathcal{C}$ is presentable and the tensor product preserves colimits in each
variable. 
Using Day convolution (cf.\ \cite{glasman2016day}), one can equip both $\fil(\mathcal{C})$ and
$\gr(\mathcal{C})$ with the structure of presentably nonunital symmetric 
monoidal $\infty$-categories. Furthermore, the associated graded functor 
$\gr: \fil(\mathcal{C}) \rightarrow \gr(\mathcal{C})$ is (nonunital) symmetric monoidal (cf.
\cite[Sec. 2.23]{GwilliamPavlov}).  
\end{definition}

\begin{definition}[Gradings in degree $\geq a$] 
Given an integer $a \geq 1$, 
\begin{enumerate}
\item
let $\gr_{\geq a}(\mathcal{C})$ be the full subcategory \INN{070@$\gr_{\geq a}(\mathcal{C})$}
of $\gr(\mathcal{C})$ spanned by all  $X_{\star}$ with $X_j \simeq 0$ for $j< a$.

\item  let $\tr_{\leq a}: \gr(\mathcal{C}) \rightarrow \gr(\mathcal{C})$ denote the functor\INN{200@$\tr_{\leq a}$}
which sends a graded object $X_{\star}$ to the graded object $Y_{\star}$ with $Y_j
= X_j$ for $j \leq a$ and $Y_j = 0$ for $j > a$.

\end{enumerate}
\end{definition}

Next, we will review the process of completion. 
\begin{definition}[Complete and constant filtered objects]\label{defcomplete}\ 
\begin{enumerate} 
\item
A filtered object $Y = \left\{F^i Y\right\}_{i \geq 1}$ is said to be \emph{constant} 
if
the maps $F^{i+1} Y \rightarrow F^{i} Y$ are   equivalences for all $i \geq 1$, or \vspace{2pt} equivalently, if $\gr(Y) =0$. 
\item A filtered object $Z = \left\{F^i Z\right\}_{i \geq 1}$ is 
\emph{complete} if $\varprojlim_i F^i Z = 0$, i.e.\ if for each constant filtered object $Y \in \fil(\mathcal{C})$,
we have $\hom_{\fil(\mathcal{C})}(Y, Z) = 0$. 
Let $\filc(\mathcal{C}) \subset \fil(\mathcal{C})$ denote the full subcategory of \INN{060@$\filc(\mathcal{C})$}
complete objects. We have a similar notion for objects of $\fil^+(\mathcal{C})$. \end{enumerate}
\end{definition}
\begin{definition}[Completions]\label{defcompletion}
The inclusion $\filc(\mathcal{C}) \subset \fil(\mathcal{C})$
is the right adjoint of a Bousfield localisation, which we will refer to as the
\emph{completion} functor $\fil(\mathcal{C}) \rightarrow \filc(\mathcal{C})$. 
Given a filtered object $X = \left\{F^i X\right\}_{i \geq 1}$, 
we can form its  \emph{completion} $\hat{X}$,   
which is  given by the cofibre of the constant filtered object on $\lim_i F^iX$ into $X$.
The canonical map $X \rightarrow \widehat{X}$ induces an equivalence on $\gr(-)$. 
\end{definition}

\begin{remark} \label{detect}We can detect whether  a given morphism $X \xrightarrow{f} Y$
induces an equivalence after completion by  passing to associated gradeds. Indeed, if \mbox{$\gr(f):
\gr(X) \rightarrow \gr(Y)$} is an equivalence, then $\gr(\cofib(f))\simeq
\cofib(\gr(f)) \simeq 0$. This implies that $\cofib(\widehat{f})\simeq \widehat{\cofib(f)}$ is both constant and
complete, and therefore vanishes. 
\end{remark}

In general, $\filc(\mathcal{C}) \subset \fil(\mathcal{C})$ is {not} closed under
colimits. However completions are preserved
by geometric realisations under suitable connectivity hypotheses.
 More precisely, 
let $R$ be a connective $\einf$-ring. 
Write $\md_{R, \geq 0} \subset \md_R$ for the full subcategory 
of connective $R$-modules.

\begin{definition}[Connective filtered and graded objects] We define:  
\begin{enumerate}
\item
 $\fil \md_{R, \geq 0} \subset \fil \md_R$ \INN{060@$\fil \md_{R, \geq 0} $, $\filc \md_{R, \geq 0} $} is the  subcategory
spanned by all  
$X = \left\{F^i X\right\}_{i \geq 1}$ for which each $F^i X$ belongs to
$\md_{R, \geq 0}$; we have an \vspace{1pt}analogous subcategory \mbox{$\fil^+ \md_{R, \geq 0}
\subset \fil^+ \md_R$.}
\item   $\filc \md_{R, \geq 0} \subset \fil \md_{R, \geq 0}$   is the   \vspace{1pt} \vspace{1pt} \vspace{1pt}subcategory of complete objects (and similarly \mbox{for
$\fil^+$).}
\item  $\gr \md_{R, \geq 0} \subset \gr \md_R$ \INN{070@$\gr \md_{R, \geq 0}$} is the full subcategory
of objects $X_{\star}$ with $X_i$ connective for each $i \geq 1$. 
\end{enumerate}
\end{definition}

In the sequel, we will make frequent use of the following observation:
\begin{proposition}\label{geomrealcomplete}
\mbox{The full subcategory $\filc \md_{R, \geq 0} \subset \fil \md_R$ is closed under
geometric realisations.}
 \end{proposition}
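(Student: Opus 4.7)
The plan is to reduce the problem to a concrete vanishing statement about a tower in $\md_R$. Let $X^\bullet$ be a simplicial object in $\filc \md_{R, \geq 0}$. Since the evaluation $F^i: \fil \md_R \to \md_R$ preserves all (co)limits (e.g.\ by \Cref{filadjunctionunderlying}), geometric realisations in $\fil \md_R$ are computed levelwise; writing $Y := |X^\bullet|$, we have $F^i Y \simeq |F^i X^\bullet|$, and this lies in $\md_{R, \geq 0}$ since connective modules are stable under geometric realisations. The question thereby reduces to showing that $\lim_i |F^i X^\bullet| \simeq 0$ in $\md_R$, using that $\lim_i F^i X^p \simeq 0$ for each simplicial level $p$ by completeness of $X^p$.

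The first ingredient I would use is the skeletal filtration of a simplicial object in a stable $\infty$-category. For each $n \geq 0$, the map $|\sk_n F^i X^\bullet| \to |F^i X^\bullet|$ has successive cofibers of the form $\Sigma^m\, \cofib(L_m F^i X \to F^i X^m)$ for $m > n$, where $L_m$ denotes the latching object. Because $F^i X^\bullet$ is levelwise connective, each $L_m F^i X$ is connective, so these cofibers are $m$-connective for $m \geq n+1$. Iterated extensions therefore make $\cofib(|\sk_n F^i X^\bullet| \to |F^i X^\bullet|)$ into an $(n+1)$-connective object, and hence its $\fib_i$ is $n$-connective uniformly in $i$.

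The second ingredient is that $|\sk_n F^i X^\bullet|$ is a finite colimit in $\md_R$ built from $F^i X^0, \dots, F^i X^n$, and in a stable $\infty$-category finite colimits commute with arbitrary limits (since cofibers are shifted fibers, and finite products equal finite coproducts). Therefore
$$\lim_i |\sk_n F^i X^\bullet| \simeq |\sk_n \lim_i F^i X^\bullet| \simeq |\sk_n 0| \simeq 0.$$
Applying $\lim_i$ to the fiber sequence $\fib_i \to |\sk_n F^i X^\bullet| \to |F^i X^\bullet|$ then yields $\lim_i |F^i X^\bullet| \simeq (\lim_i \fib_i)[1]$; the Milnor sequence combined with $n$-connectivity of each $\fib_i$ shows $\lim_i \fib_i$ is $(n-1)$-connective, so $\lim_i |F^i X^\bullet|$ is $n$-connective. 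Since $n$ was arbitrary, the limit vanishes, completing the proof. The step I expect to be the most delicate is the skeletal-filtration connectivity estimate, which requires identifying the associated graded pieces of $|\sk_\bullet F^i X^\bullet|$ with shifted latching objects in the stable $\infty$-categorical (rather than simplicial model-categorical) setting; once that bookkeeping is in place, the remainder is a formal manipulation inside $\md_R$.
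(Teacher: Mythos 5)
Your proof is correct and follows essentially the same route as the paper's: both arguments reduce to the $(n+1)$-skeleton via a connectivity estimate on the skeletal filtration, use that the finite skeleton is a finite colimit (so that completeness is preserved, equivalently the inverse limit commutes with it in the stable setting), and then let $n\to\infty$. The only difference is bookkeeping — the paper phrases the reduction degree-by-degree on homotopy groups via the Milnor $\varprojlim^1$ sequence, while you phrase it via connectivity of cofibers — and the latching-object identification you flag as delicate is exactly \cite[1.2.4.17]{lurie2014higher}, which the paper already invokes.
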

 \begin{proof}
Let $X_\bullet$ be a simplicial object in $\filc \md_{R, \geq 0}$  and let
$Y = |X_\bullet|$ denote its geometric realisation (computed in $\fil \md_R$). 
We need to see that 
$Y$ is complete, i.e.\   $\varprojlim_i F^i Y = 0$ in $\md_R$. 
By the Milnor short exact sequence, this is equivalent to the assertion that
for each $j$, we have
\begin{equation} \label{2limvanish}  \varprojlim_i \pi_j (F^i Y) = \varprojlim{}^1_i
\pi_j( F^i Y) = 0,  \end{equation}
in the category of abelian groups. 

We observe that $F^i Y = | F^i X_\bullet|$. Since all modules in question are
connective, we have
$\pi_j ( F^i Y) \cong\pi_j (  \mathrm{sk}_n |F^i X_\bullet|)$ for $n >  j$ and all $i$. 
Thus, in verifying \eqref{2limvanish} for a given $j$, we may replace $Y$ with
the filtered object $\mathrm{sk}_{j+1} |X_\bullet| \in \fil\md_R$. 
This can be expressed as a \emph{finite} colimit of a diagram in the $X_s$,
and since $\filc \md_R \subset \fil \md_R$ is   closed under finite
colimits, we deduce
$\mathrm{sk}_{j+1} |X_\bullet| \in \filc\md_R$. Applying   the Milnor exact sequence  again
to
$\mathrm{sk}_{j+1} |X_\bullet| \in \filc\md_R$
 \mbox{shows \eqref{2limvanish}.} 
\end{proof}

In the sequel, it will be important to understand how functors $F$ interact with 
the
internal grading. To this end, we will use the following natural definition. 

\begin{definition}[Increasing functors]  
We say that a functor  $F: \gr \md_{k, \geq 0} \longrightarrow \gr \md_{k, \geq 0}$ is  {\emph{$i$-increasing}} if: 
\begin{enumerate}
\item The functor $ \tr_{\leq n} F$ factors through $ \tr_{\leq n-i+1} : \gr \md_{k, \geq 0} \rightarrow \gr
\md_{k, \geq 0}$.   
\item Given any $X \in \gr \md_{k, \geq 0}$, we have $F(X)_j = 0$ for all $j < i$. 
That is, $F$ takes values in objects which have contractible components in
internal grading less than $i$. 
\end{enumerate}
\end{definition}

\begin{example} 
The functor $V \mapsto V^{\otimes i}, \gr \md_{k, \geq 0} \rightarrow \gr \md_{k, \geq
0}$ is $i$-increasing. 
Similarly, the functor $V \mapsto (V^{\otimes i})_{h \Sigma_i}$ (which appears
in the expression for the free $\einf$-algebra) is $i$-increasing.
\end{example}

 Next, we record several finiteness conditions which will be useful in the sequel. 
 Mostly the following serves to record some notation.

\newcommand{\grcoh}{\gr^{\mathrm{ft}} \md}
\newcommand{\filcoh}{\fil^{\mathrm{ft}} \md}
\begin{definition}[Finiteness conditions]  \label{finitenessconditions}
Let $k$ be a field.  
\begin{enumerate}
\item Let $\coh_k \subset \md_k$ \INN{130@$\coh_k$, $\coh_{k, \geq 0}, \coh_{k, \leq 0}$} 
denote the full  subcategory spanned by those
objects $X \in \md_k$ such that each homotopy group $\pi_i(X), i \in \mathbb{Z}$ is a
finite-dimensional vector space. We say that these objects are of \emph{finite type.}  
Define $\coh_{k, \geq 0}, \coh_{k, \leq 0}
\subset \coh_k$ as the full subcategories spanned by  \vspace{3pt}connective and coconnective
objects, respectively. 
\item Let $\grcoh_k  \subset \gr \md_k$ denote the  subcategory
spanned by all objects $X_{\star} \in \gr \md_k$ for which 
$\bigoplus_{i \geq 1} X_i$ belongs to $\coh_k$. We define $\gr^+
\coh_k$ in a similar way. We let $\grcoh_{k, \geq 0} \subset \grcoh_k$  be the full subcategory
spanned by  \vspace{3pt}connective objects.  \INN{070@$\grcoh_{k, \geq 0}$}
\item Let $\filcoh_k \subset \fil \md_k$ denote the full subcategory of
filtered objects $X =\left\{F^i X\right\}_{i \geq 1}$ which are complete and such
that $\gr(X) \in \grcoh_k$. Similarly, we denote the full subcategory of connective objects by
$\filcoh_{k, \geq 0} \subset \filcoh_k$. \INN{060@$\filcoh_k$, $\filcoh_{k, \geq 0}$}
(As an example, we could take a  (discrete) finite-dimensional $k$-vector space with a finite classical
filtration by subspaces).
\end{enumerate}
\end{definition}

\newpage

\section{Functors of $k$-modules}\label{extending} 
Let $k$ be a field and write $\md_k$ for the $\infty$-category of $k$-module
spectra. In this section, we will discuss functors $\md_k\rightarrow \md_k$
which preserve sifted colimits, which is equivalent to preserving filtered colimits and geometric
realisations. 

Below, we will need to construct various such functors 
$\md_k \rightarrow \md_k$, e.g.\ the free partition Lie algebra functor. 
One typically cannot write down such a functor easily by hand on all of $\md_k$. 
However, it will be easy 
to describe the
functor on a suitable
subcategory of $\md_k$, often in particular the \emph{coconnective} perfect
$k$-module spectra. For the general theory we will need our functors on all of
$\md_k$, though, and the primary purpose of this section is to discuss some
abstract 
homological algebra 
which will enable us to construct the extension to all of $\md_k$. 

\subsection{Extending functors} 
In the following, we will freely use the theory of  Kan extensions along fully faithful
inclusions, as in  \cite[Section 4.3.2]{lurie2009higher}.

\begin{notation} 
Let $\mathcal{C}, \mathcal{D}$ be $\infty$-categories with sifted colimits. 
Write $\fun_{\Sigma}(\mathcal{C},
\mathcal{D}) $ \INN{060@$\fun_{\Sigma}(\mathcal{C},
\mathcal{D}) $} for the full subcategory of $ \fun(\mathcal{C}, \mathcal{D})$ spanned
by all functors which preserve sifted colimits; set \mbox{$\End_\Sigma(\mathcal{C}) := \fun_{\Sigma}(\mathcal{C},\mathcal{C})$.}
\end{notation} 

The first observation is that it is easy to describe sifted-colimit-preserving
functors out of
the full subcategory $\md_{k, \geq 0} \subset \md_k$ of connective $k$-module
spectra.
  In fact, $\md_{k, \geq 0}$ can be
characterised by a universal property (cf.\ \cite[Sec. 7.2.2]{lurie2014higher}):
\begin{proposition}[{The universal property of $\md_{k, \geq 0}$}]
\label{modulesarepsigma}
Given any $\infty$-category $\mathcal{D}$ with sifted colimits, 
restriction induces  \mbox{an equivalence}
of $\infty$-categories
$$\fun_{\Sigma}( \md_{k, \geq 0}, \mathcal{D}) \xrightarrow{\ \ \simeq \ \ } \fun(
\vect^\omega_k, \mathcal{D})$$ 
whose inverse is given by left Kan extension.
Here $\vect^\omega_k$ denotes the full subcategory spanned by all
finite-dimensional discrete $k$-module spectra, which is equivalent to the (nerve of the) usual category
of  finite-dimensional $k$-vector spaces.
\end{proposition}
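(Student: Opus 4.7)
The plan is to derive the statement from the universal property of the nonabelian derived category $\mathcal{P}_\Sigma(\mathcal{C}_0)$ (cf.~\cite[Proposition 5.5.8.15]{lurie2009higher}): for any small $\infty$-category $\mathcal{C}_0$ admitting finite coproducts and any $\infty$-category $\mathcal{D}$ with sifted colimits, restriction along the Yoneda embedding induces an equivalence
$$\fun_\Sigma(\mathcal{P}_\Sigma(\mathcal{C}_0), \mathcal{D}) \xrightarrow{\ \simeq\ } \fun(\mathcal{C}_0, \mathcal{D}),$$
whose inverse is given by left Kan extension. Applied to $\mathcal{C}_0 = \vect^\omega_k$, which is a small $1$-category with finite coproducts given by direct sums, this would yield the conclusion as soon as we identify $\md_{k,\geq 0} \simeq \mathcal{P}_\Sigma(\vect^\omega_k)$.

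To establish this identification I would invoke Lurie's recognition criterion (cf.~\cite[Proposition 5.5.8.25]{lurie2009higher}): a presentable $\infty$-category is equivalent to $\mathcal{P}_\Sigma$ of a small $\infty$-category precisely when it is generated under sifted colimits by a small collection of compact projective objects, which then coincides (up to idempotent completion) with the image of the Yoneda embedding. Since $\md_{k,\geq 0}$ is presentable, it suffices to show (a) every $V \in \vect^\omega_k$ is compact projective in $\md_{k,\geq 0}$, and (b) $\md_{k,\geq 0}$ is generated under sifted colimits by $\vect^\omega_k$. Both statements reduce to the case $V=k$: the first because $\Map_{\md_k}(k,-)|_{\md_{k,\geq 0}} \simeq \Omega^\infty$ preserves filtered colimits and geometric realisations of simplicial connective modules (via the Dold--Kan correspondence, under which geometric realisations correspond to totalisations of chain complexes and $\pi_0$ is exact for connective objects); the second because any connective $k$-module arises as the geometric realisation of a standard simplicial bar resolution by direct sums of copies of $k$, which is refined to a sifted (in fact, filtered) colimit of finite direct sums.

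The main obstacle is thus precisely the equivalence $\md_{k,\geq 0} \simeq \mathcal{P}_\Sigma(\vect^\omega_k)$. Once this is in hand, the rest of the proposition --- including the fact that the inverse is given by left Kan extension along the fully faithful inclusion $\vect^\omega_k \hookrightarrow \md_{k,\geq 0}$ --- is built into the universal property. This identification is a special case of the general description of connective modules over a connective $\mathbb{E}_\infty$-ring, and one may simply cite \cite[Section 7.2.2, in particular Proposition 7.1.4.14 and Remark 7.2.2.13]{lurie2014higher} in place of verifying (a) and (b) by hand.
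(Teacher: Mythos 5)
Your argument is correct and is exactly the standard one: the paper itself offers no proof, simply citing \cite[Sec.\ 7.2.2]{lurie2014higher}, and your route — identifying $\md_{k,\geq 0}$ with $\mathcal{P}_{\Sigma}(\vect^\omega_k)$ via the recognition criterion and then invoking the universal property of $\mathcal{P}_\Sigma$ — is precisely what that citation encapsulates.
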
 
One   therefore has the following construction (which goes back to the work of \mbox{Dold-Puppe \cite{dold1961homologie}):}

\begin{cons}[Nonabelian derived functors] 
\label{derivedfun}
Fix a functor $F: \vect^\omega_k \rightarrow \vect_k$ from the category $\vect^\omega_k$ of
finite-dimensional $k$-vector spaces to the category $\vect_k$ of all $k$-vector
spaces. Using that $\vect_k$ is equivalent to the full subcategory of $\md_k$
spanned by all discrete $k$-module spectra, we can extend  $F$ 
 to a sifted-colimit-preserving  functor $\L F: \md_{k, \geq 0}
\rightarrow \md_{k, \geq 0}$. 
The functor $\L F$ is often called the \emph{nonabelian derived functor} of $F_0$. 
\end{cons}
\begin{example}[] We recall the following classical examples:
\label{tensorpower} 
\begin{enumerate}
\item  
Given  an integer $i\geq 0$, consider the functor $\bigotimes^i: \vect^\omega_k \rightarrow \vect^\omega_k$ which 
sends a vector space $V_0$ to $V_0^{\otimes i}$. This canonically extends to a
functor $\md_{k, \geq 0} \rightarrow \md_{k, \geq 0}$, which is necessarily just the
iterated tensor power functor $\bigotimes^i: \md_{k, \geq 0} \rightarrow \md_{k, \geq 0}$ coming from the
symmetric monoidal structure on $\md_{k, \geq 0}$. 

\item
Given $i \geq 0$, we consider the functors\INN{190@$\sym^i$} \INN{120@$\L\sym$} 
\INN{120@$ \bigwedge^i}
\INN{120@$ \L \bigwedge^i$} 
\INN{070@$\Gamma^i$}
\INN{120@$\L \Gamma^i$}
$\sym^i,\  \bigwedge^i, \ \Gamma^i : \vect^\omega_k \rightarrow \vect_k$ which send a finite-dimensional vector space $V$ to its $n^{th}$ symmetric, exterior, or divided power, respectively. 
 The nonabelian derived functor \Cref{derivedfun} again allows us to define
 canonical extensions $\L \sym^i, \L \bigwedge^i , \L \Gamma^i : \mod_{k, \geq 0} \rightarrow
 \mod_{k, \geq 0}$ of these three functors. 

\end{enumerate}
\end{example} 
There is a basic asymmetry 
between these two examples. 
The extended functor  of $\bigotimes^i$  in \Cref{derivedfun}(1)
arises naturally  from the symmetric monoidal
structure on $\md_{k, \geq 0}$; consequently, the functor $\bigotimes^i$ is naturally defined on all of $\md_k$, not only on 
the connective $k$-module spectra. By contrast, if $k$ is of \mbox{characteristic $p > 0$,} the
functors $\L \sym^i$ and $ \L \bigwedge^i$ in  \Cref{derivedfun}(2) generally cannot be described directly in terms of
the symmetric monoidal structure on $\md_{k, \geq 0}$. It is correspondingly less
clear that $\L \sym^i$ and $ \L \bigwedge^i$ naturally extend to all of $\md_k$, though this
was first shown in work of Illusie \cite[Sec. I-4]{Illusie}. 

In this section, we will establish two generalisations of Proposition
\ref{modulesarepsigma} to functors defined on  \textit{all} $k$-module spectra,
and help bridge the above asymmetry: in particular, we will describe 
functors such as $\L \sym^i$ and $ \L  \bigwedge^i$ on all of $\md_k$. 

We begin by  reviewing some basic  facts about compact generation and perfect
modules, and refer to \cite[Sec. 7.2.4]{lurie2014higher} for a detailed treatment. 
\begin{notation}[Perfect modules]   \INN{160@$\perf_k$,  $\perf_{k, [n_1,n_2]}$,  $\perf_{k, \geq n}$, $\perf_{k, \leq n}$}
We write $\perf_k \subset \md_k$ for the full subcategory of $\mod_k$ spanned by
all \textit{perfect} $k$-module spectra, i.e.\ $k$-module spectra $M$ with $\dim_k(\pi_\ast(M))<\infty$.
Let $\perf_{k, [n_1,n_2]}$ be the full subcategory of $\perf_k$ spanned by
$k$-module spectra whose homotopy groups are concentrated between degrees $n_1$
and $n_2$. Set $\perf_{k, \geq n}:= \perf_{k, [n,\infty]}$ and
\mbox{$\perf_{k, \leq n}:= \perf_{k, [-\infty, n]}$.} 
\end{notation}

The $\infty$-category $\md_k$ is a compactly generated $\infty$-category, and 
a  module spectrum $M\in \mod_k$ is compact if and only if it is perfect.
The $\infty$-category $\md_k$ can therefore be identified with the $\mathrm{Ind}$-completion
(cf.\ \cite[Sec. 5.3.5]{lurie2009higher}) of
$\perf_k$. We deduce that for any $\infty$-category $\mathcal{D}$ with filtered colimits, restriction and left Kan extension give mutually inverse \mbox{equivalences}
\begin{equation} \label{indequiv} \fun_{\omega}( \md_k, \mathcal{D}) \simeq  \fun(\perf_k, \mathcal{D})
\end{equation}
between $\fun(\perf_k, \mathcal{D})$ and the $\infty$-category  $\fun_{\omega}( \md_k, \mathcal{D})$ of functors $\md_k\rightarrow \mathcal{D}$ which preserve filtered colimits.  
\begin{notation} 
Given a simplicial diagram $X_\bullet \in \fun(\Delta^{op}, \mathcal{C})$ in some $\infty$-category $\mathcal{C}$, we write $|X_\bullet| :=\mycolim{\Delta^{op}}(X_\bullet)$ for its geometric realisation. \INN{000@$|-|$}  
If $X^\bullet \in \fun(\Delta, \mathcal{C})$ is instead a cosimplicial diagram, we write $\Tot(X^\bullet ):=\mylim{\Delta}(X_\bullet)$ for its \textit{totalisation}.\INN{200@$\Tot$}  

A simplicial object $X_\bullet$ is  said to be \emph{$m$-skeletal}  if it is the left Kan extension of its restriction to $\Delta^{op}_{\leq m}$, the full subcategory of $\Delta^{op}$ spanned by $[0],[1],\ldots,[m]$.   
We recall that $\Delta^{op}_{\leq m}$ is cofinal to a finite simplicial set
(see \cite[1.2.4.17]{lurie2014higher}), so that geometric realisations of $m$-skeletal simplicial
objects behave like finite colimits. 
\end{notation}

\begin{example} If $X_{\bullet}= (  \ldots \  \substack{\longrightarrow \vspace{-4pt} \\ \leftarrow   \vspace{-4pt} \ \\ \longrightarrow  \vspace{-4pt} \\ \leftarrow  \vspace{-4pt} \ \\ \longrightarrow} \   X_1\   \substack{  \longrightarrow  \vspace{-4pt} \\ \leftarrow  \vspace{-4pt} \ \\ \longrightarrow} \ X_0)$ is a simplicial diagram 
  valued in discrete $k$-vector spaces, its realisation $|X_{\bullet}| \in \mod_k$  is the connective chain complex $\ldots  \rightarrow X_1 \rightarrow X_0 \rightarrow 0 \rightarrow \ldots $ obtained by taking the alternating sum of the face maps of $X_{\bullet}$.  Dually, the totalisation of a cosimplicial diagram in discrete $k$-vector spaces is a coconnective chain complex, \mbox{which can be computed  in a similar way.}
\end{example}

\begin{definition}[Finite geometric realisations] 
Let $\mathcal{D}$ be an $\infty$-category 
admitting geometric realisations. 
We say that a functor $F: \perf_k \rightarrow \mathcal{D}$ \emph{preserves finite
geometric realisations} if for every simplicial object $X_\bullet$ of $\perf_k$
which is $m$-skeletal for some $m$ (so that $|X_\bullet|$ belongs to
$\perf_k$),  the natural map  
$|F(X_\bullet)| \rightarrow F(|X_\bullet|)$ is an equivalence. 
We write \INN{060@$\fun_\sigma( \perf_k, \mathcal{D})$} $\fun_\sigma( \perf_k, \mathcal{D}) \subset \fun( \perf_k,
\mathcal{D})$ for the full subcategory spanned by functors which preserve finite
geometric realisations. 
\end{definition} 

We can now state our first generalisation of Proposition \ref{modulesarepsigma}: 
\begin{proposition} 
\label{littlesigma}
Given any $\infty$-category $\mathcal{D}$ with sifted colimits, restriction
and left Kan extension induce mutually inverse equivalences 
$$ \fun_\Sigma( \md_k, \mathcal{D})  \xrightarrow{\ \ \simeq \ \ }\fun_\sigma( \perf_k, \mathcal{D}).$$
\end{proposition}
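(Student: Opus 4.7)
The plan is to leverage equivalence \eqref{indequiv}, which already identifies $\fun_\omega(\md_k,\mathcal{D})$ with $\fun(\perf_k,\mathcal{D})$ via restriction and left Kan extension. Since sifted colimits are generated by filtered colimits together with geometric realisations, the task reduces to identifying, under this equivalence, the full subcategory $\fun_\Sigma(\md_k,\mathcal{D})$ with $\fun_\sigma(\perf_k,\mathcal{D})$.

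One direction is immediate: any $G \in \fun_\Sigma(\md_k,\mathcal{D})$ commutes with all geometric realisations, so in particular with those arising from $m$-skeletal simplicial objects in $\perf_k$; hence $G|_{\perf_k} \in \fun_\sigma(\perf_k,\mathcal{D})$. For the substantive direction, I would start with $F \in \fun_\sigma(\perf_k,\mathcal{D})$ and let $\widetilde F$ denote its left Kan extension, which preserves filtered colimits by \eqref{indequiv}. Since any $|X_\bullet|$ equals the sequential colimit $\mathrm{colim}_m |\mathrm{sk}_m X_\bullet|$ and $\widetilde F$ respects sequential colimits, it suffices to show that $\widetilde F(|Y_\bullet|) \simeq |\widetilde F(Y_\bullet)|$ whenever $Y_\bullet$ is an $m$-skeletal simplicial object of $\md_k$.

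The pivotal step will be to express any such $Y_\bullet$ as a filtered colimit of $m$-skeletal simplicial objects of $\perf_k$. Since $\Delta^{op}_{\leq m}$ is equivalent to a finite simplicial set, a standard Ind-categorical fact yields $\fun(\Delta^{op}_{\leq m},\md_k) \simeq \mathrm{Ind}\bigl(\fun(\Delta^{op}_{\leq m},\perf_k)\bigr)$; composing with left Kan extension to $\Delta^{op}$ then produces a presentation $Y_\bullet \simeq \mathrm{colim}_\alpha Y_\bullet^\alpha$ with each $Y_\bullet^\alpha$ an $m$-skeletal simplicial object of $\perf_k$, so in particular each $|Y_\bullet^\alpha|$ lies in $\perf_k$. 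Because colimits commute with colimits, filtered colimits pass through geometric realisations on both sides, and one concludes
$$\widetilde F(|Y_\bullet|) = \mathrm{colim}_\alpha \widetilde F(|Y_\bullet^\alpha|) = \mathrm{colim}_\alpha |F(Y_\bullet^\alpha)| = |\widetilde F(Y_\bullet)|,$$
using that $\widetilde F$ agrees with $F$ on $\perf_k$, that $F$ preserves finite geometric realisations, and that $\widetilde F$ respects the filtered colimit $Y_\bullet \simeq \mathrm{colim}_\alpha Y_\bullet^\alpha$ at each simplicial degree.

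The main obstacle I anticipate is the Ind-categorical identification of $m$-skeletal simplicial objects above, which depends on the nerve of $\Delta_{\leq m}$ having only finitely many non-degenerate simplices and on the commutation of $\mathrm{Ind}$ with functor categories indexed by such finite $\infty$-categories. Once this input is set up carefully, the remainder is routine colimit interchange.
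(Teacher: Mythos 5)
Your argument is correct and follows essentially the same route as the paper's proof: both reduce via the equivalence \eqref{indequiv} to showing that a filtered-colimit-preserving extension preserves geometric realisations, by first writing an arbitrary simplicial object as a filtered colimit of $m$-skeletal ones and then writing each $m$-skeletal simplicial object in $\md_k$ as a filtered colimit of $m$-skeletal simplicial objects valued in $\perf_k$ (using finiteness of $\Delta^{op}_{\leq m}$). You have simply filled in the colimit-interchange details that the paper leaves to the reader.
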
 
\begin{proof} 
This follows from the equivalence  $\fun_\omega( \md_k, \mathcal{D}) \simeq \fun( \perf_k,
\mathcal{D})$ stated in  \eqref{indequiv}. 
It suffices to show that a functor $F: \md_k \rightarrow \mathcal{D}$ which preserves
filtered colimits 
additionally preserves sifted colimits if and only if its restriction
$F|_{\perf_k}$ preserves finite geometric realisations. This follows easily from
the following facts: 
\begin{enumerate}
\item  
The functor $F$ preserves sifted colimits if and only if it preserves filtered colimits (which it does by assumption) and geometric realisations (cf.\ \cite[Corollary 5.5.8.17]{lurie2009higher}).
\item
Every simplicial object in $\md_k$ is a filtered colimit
of simplicial objects which are $m$-skeletal for various $m$ (take the left Kan
extensions from truncations). 
\item Every $m$-skeletal simplicial object in $\md_k$ is a filtered colimit 
of $m$-skeletal simplicial objects which take values in $\perf_k$ (this follows  
as the hom-sets in $\Delta^{op}_{\leq m}$ are finite).\vspace{-15pt}
\end{enumerate}
\end{proof} 
For later applications, we need to refine the above result from $\perf_k$ to a smaller subcategory. 

\begin{definition}[Finite coconnective  geometric realisations]  
\label{cfgr}
Let $\mathcal{D}$ be an $\infty$-category 
admitting geometric realisations. 
We say that a functor $F: \perf_{k, \leq 0} \rightarrow \mathcal{D}$ \emph{preserves 
finite coconnective geometric realisations} if for every simplicial object
$X_\bullet$ of $\perf_{k, \leq 0}$
which is $m$-skeletal for some $m$  and such that 
$|X_\bullet|$ belongs to
$\perf_{k, \leq 0}$,  the  natural map  
$|F(X_\bullet)| \rightarrow F(|X_\bullet|)$ is an equivalence. 
We write $\fun_\sigma( \perf_{k, \leq 0}, \mathcal{D}) \subset \fun(
\perf_{k, \leq 0}, \INN{060@$\fun_\sigma( \perf_{k, \leq 0}, \mathcal{D})$}
\mathcal{D})$ for the full subcategory spanned by functors which preserve
finite coconnective
geometric realisations. 

\end{definition} 

We will now interpret the condition in Definition \ref{cfgr} in terms of left Kan
extensions (albeit with an infinite number of such conditions).

\begin{proposition} 
Let $\mathcal{D}$ be an $\infty$-category with sifted colimits. 
\begin{enumerate}
\item 
Let 
$F \in \fun( \perf_k, \mathcal{D})$. Then $F$ preserves finite
geometric realisations if and only if, 
 for
each $n \geq 0$, the
restriction  $F|_{\perf_{k, \geq -n}}$ is left Kan extended from
$\vecto_k[-n]$. 
\item  
Let $F \in \fun( \perf_{k, \leq 0}, \mathcal{D})$. Then 
$F $ preserves finite coconnective geometric realisations if and only if, for
any $n \geq 0$, the
restriction   $F|_{\perf_{k, [-n, 0]}}$ is left \mbox{Kan extended from
$\vecto_k[-n]$.}
\end{enumerate}
\label{LKanleqzero}
\end{proposition}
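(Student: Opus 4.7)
The plan is to deduce both assertions from a common universal property: for each $n \geq 0$, the subcategory $\perf_{k, \geq -n}$ (respectively $\perf_{k, [-n, 0]}$) is the free completion of $\vecto_k[-n]$ under those $m$-skeletal geometric realizations whose realization lies in $\perf_{k, \geq -n}$ (respectively $\perf_{k, [-n, 0]}$). Equivalently: a functor $G$ from this category to any $\infty$-category with sifted colimits preserves such finite realizations if and only if it is left Kan extended from $\vecto_k[-n]$ along the inclusion.

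To establish the connective version, I would first use the shift equivalence $[n]\colon \perf_{k, \geq 0} \xrightarrow{\sim} \perf_{k, \geq -n}$ to reduce to $n = 0$. Since $k$ is a field, every object of $\perf_{k, \geq 0}$ splits as a finite direct sum $\bigoplus_{i=0}^N V_i[i]$ with $V_i \in \vecto_k$, and each suspension is a pushout that can be exhibited as a $1$-skeletal geometric realization of vector spaces staying in $\perf_{k, \geq 0}$. Iterating builds every $V_i[i]$ from $V_i$ by nested finite realizations; combining this with \Cref{modulesarepsigma}, which identifies $\md_{k, \geq 0}$ with the free sifted completion of $\vecto_k$, yields the universal property. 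The coconnective version follows by the same argument, with the observation that the suspension tower building $V[-n+j]$ from $V[-n]$ stays inside $\perf_{k, [-n, 0]}$ whenever $0 \leq j \leq n$, matching the restriction of \Cref{cfgr}.

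With the universal property in hand, both directions of (1) and (2) are immediate. For ``only if'': any $m$-skeletal realization internal to $\perf_{k, \geq -n}$ or $\perf_{k, [-n, 0]}$ is a finite (coconnective) realization in the ambient category, so the hypothesis on $F$ transfers to $F|_{\perf_{k, \geq -n}}$ or $F|_{\perf_{k, [-n, 0]}}$, which then satisfies the Kan extension condition by the universal property. For ``if'': given any $m$-skeletal $X_\bullet$ in $\perf_k$ (resp.\ $\perf_{k, \leq 0}$) with $|X_\bullet|$ still in $\perf_k$ (resp.\ $\perf_{k, \leq 0}$), only the finite set $X_0, \ldots, X_m$ of terms is essential, so they together with $|X_\bullet|$ lie in $\perf_{k, \geq -n}$ (resp.\ $\perf_{k, [-n, 0]}$) for $n$ sufficiently large, and the Kan extension hypothesis at this $n$ directly gives $|F(X_\bullet)| \simeq F(|X_\bullet|)$.

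The main technical obstacle I anticipate lies in the coconnective universal property: unlike in the connective setting, $\perf_{k, [-n, 0]}$ is not closed under arbitrary suspensions, so one cannot simply transport \Cref{modulesarepsigma} by shift. One must carefully verify that the left Kan extension along $\vecto_k[-n] \hookrightarrow \perf_{k, [-n, 0]}$ preserves exactly those $m$-skeletal realizations staying in $\perf_{k, [-n, 0]}$, which amounts to tracking that the iterated suspensions used to build arbitrary coconnective perfect modules from $\vecto_k[-n]$ never escape the coconnective range.
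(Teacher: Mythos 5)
Your argument is correct and follows essentially the same route as the paper: both directions rest on computing the left Kan extension from $\vecto_k[-n]$ via finite skeletal resolutions (the input of \Cref{modulesarepsigma}), together with the observation that any $m$-skeletal simplicial object in $\perf_{k,\leq 0}$ (resp.\ $\perf_k$) lands in $\perf_{k,[-n,0]}$ (resp.\ $\perf_{k,\geq -n}$) for $n$ large. The only cosmetic difference is that the paper produces a single $n$-skeletal resolution $Y_\bullet$ of $X\in\perf_{k,[-n,0]}$ with terms in $\vecto_k[-n]$ (e.g.\ by shifting a resolution of the connective truncated module $X[n]$), rather than assembling $X$ from iterated suspensions of its homology as you propose.
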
 
\begin{proof} 
We shall only prove statement (2); the proof of (1) is similar. 
Suppose 
$F \in \fun_\sigma( \perf_{k, \leq 0}, \mathcal{D})$. 
We claim that $F_n = F|_{\perf_{k, [-n, 0]}}$ is left Kan extended from
$\vect^\omega_k[-n]$. 
Analogously to Proposition~\ref{modulesarepsigma}, the left Kan extension of $F_{n}^0 := F|_{\vect^\omega_k[-n]}$ to
$\perf_{k, [-n, 0]}$ (and indeed to all of $\md_{k, \geq -n}$) can be computed 
as follows: 
given $X \in \perf_{k, [-n, 0]}$, we find an $n$-skeletal simplicial object $Y_\bullet$ with each $Y_i \in \vect^\omega_k[-n]$ such that  $|Y_\bullet|\simeq X$.  The value of the left Kan extension of $F_n^0$ on $X$ is then given by $|F_{n}^0(Y_\bullet)|$. Since $F$ preserves
finite coconnective geometric realisations, it follows that this agrees with
$F_n(X)$ and $F_n$ is
indeed left Kan extended
from
$\vect^\omega_k[-n]$, as desired.

Conversely, suppose $F \in \fun( \perf_{k,  \leq 0}, \mathcal{D})$ has the
property that 
$F_n = F|_{\perf_{k, [-n, 0]}}$ is left Kan extended from
$\vect^\omega_k[-n]$ for all $n \geq 0$. 
For each $n$, it then follows 
(as in Proposition~\ref{modulesarepsigma})
that $F$ preserves geometric realisations of
simplicial objects in
$\perf_{k, [-n, 0]}$ whose realisation also belongs to $\perf_{k, [-n, 0]}$.
Since any $m$-skeletal simplicial object in $\perf_{k, \leq 0}$ is   a
simplicial object in $\perf_{k, [-n, 0]}$ for $n$ sufficiently large, we deduce that 
$F$ preserves all coconnective  finite geometric realisations. 
\end{proof}

\begin{corollary} 
\label{lKanleq02}
If $F:  \perf_k \rightarrow \mathcal{D}$ preserves finite geometric realisations, then $F$ is left Kan extended
from $\perf_{k, \leq 0}$. 
\end{corollary}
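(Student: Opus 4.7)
The plan is to combine both halves of Proposition \ref{LKanleqzero} via transitivity of left Kan extensions, and then to promote the indexing category up to $\perf_{k,\leq 0}$ by a cofinality argument based on truncation. Fix $X \in \perf_k$; since $X$ has bounded homotopy, choose $n \geq 0$ with $X \in \perf_{k,\geq -n}$. The goal becomes proving
$$F(X) \simeq \operatorname*{colim}_{Y \to X,\; Y \in \perf_{k,\leq 0}} F(Y).$$

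First I would chain together several Kan extensions. Since $F$ preserves finite geometric realisations, Proposition \ref{LKanleqzero}(1) yields that $F|_{\perf_{k,\geq -n}}$ is left Kan extended from $\vecto_k[-n]$. The restriction $F|_{\perf_{k,\leq 0}}$ trivially preserves finite coconnective geometric realisations, so Proposition \ref{LKanleqzero}(2) gives that $F|_{\perf_{k,[-n,0]}}$ is also left Kan extended from $\vecto_k[-n]$. Transitivity of left Kan extensions along the chain $\vecto_k[-n] \subset \perf_{k,[-n,0]} \subset \perf_{k,\geq -n}$ then forces $F|_{\perf_{k,\geq -n}}$ to be left Kan extended from $\perf_{k,[-n,0]}$; in particular,
$$F(X) \simeq \operatorname*{colim}_{Z \to X,\; Z \in \perf_{k,[-n,0]}} F(Z).$$

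The second step would be to upgrade the indexing category from $\perf_{k,[-n,0]}$ to all of $\perf_{k,\leq 0}$ via cofinality. The key observation is that the fully faithful inclusion $(\perf_{k,[-n,0]})_{/X} \hookrightarrow (\perf_{k,\leq 0})_{/X}$ admits a left adjoint given by truncation: for any $(Y \to X)$ with $Y \in \perf_{k,\leq 0}$, the module $\tau_{\geq -n}Y$ is again perfect (as $Y$ has bounded, finite-dimensional homotopy) and lies in $\perf_{k,[-n,0]}$, while the structure map $Y \to X$ factors uniquely through $\tau_{\geq -n}Y \to X$ by the universal property of truncation together with $X \in \md_{k,\geq -n}$. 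Thus $(\tau_{\geq -n}Y \to X)$ is an initial object of the relevant comma $\infty$-category, which is therefore weakly contractible, making the inclusion cofinal.

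Combining the two steps yields
$$\operatorname*{colim}_{Y \to X,\; Y \in \perf_{k,\leq 0}} F(Y) \simeq \operatorname*{colim}_{Z \to X,\; Z \in \perf_{k,[-n,0]}} F(Z) \simeq F(X),$$
which is exactly the assertion that $F$ is left Kan extended from $\perf_{k,\leq 0}$. The main subtlety should be the cofinality verification—one needs to confirm that truncation preserves perfection and induces a genuine reflection on slice $\infty$-categories—but both facts are direct consequences of the standard properties of the $t$-structure on $\md_k$.
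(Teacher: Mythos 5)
Your first step is correct and is essentially the paper's: by \Cref{LKanleqzero}(1) the restriction $F|_{\perf_{k,\geq -n}}$ is left Kan extended from $\vecto_k[-n]$, hence also from the intermediate full subcategory $\perf_{k,[-n,0]}$ by transitivity of Kan extensions (your appeal to part (2) of \Cref{LKanleqzero} is not needed for this). The gap is in your second step, the cofinality of $(\perf_{k,[-n,0]})_{/X} \hookrightarrow (\perf_{k,\leq 0})_{/X}$. The functor $\tau_{\geq -n}$ is the \emph{right} adjoint to the inclusion $\md_{k,\geq -n}\subset \md_k$, so its natural map goes $\tau_{\geq -n}Y \to Y$, not $Y \to \tau_{\geq -n}Y$; the universal property you invoke governs maps \emph{into} $\tau_{\geq -n}Y$ from $(-n)$-connective objects, and says nothing about factoring the map $Y \to X$ out of $Y$. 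Thus $(\tau_{\geq -n}Y \to X)$ is not even canonically an object of the comma category $(Y\to X)\backslash (\perf_{k,[-n,0]})_{/X}$. Even after choosing a splitting $Y \simeq \tau_{\geq -n}Y \oplus \tau_{\leq -n-1}Y$ (which exists over a field, but not naturally), the resulting object is not initial: for $Z \in \perf_{k,[-n,0]}$ the restriction $\Map(Y,Z) \to \Map(\tau_{\geq -n}Y,Z)$ has fibre $\Map(\tau_{\leq -n-1}Y,Z)$, which is connected but in general not contractible (for instance $\pi_1\Map(k[-n-1],k[-n])\cong k$), so the space of factorisations is not contractible and no reflection onto the subslice exists. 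Consequently the weak contractibility of the comma categories, hence the cofinality for a single fixed $n$, is not established by your argument.

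The repair is to use the conclusion of the first step at \emph{every} level $m \geq n$ rather than only at $m=n$: $F|_{\perf_{k,\geq -m}}$ is left Kan extended from $\perf_{k,[-m,0]}$ for all $m$, the slice $(\perf_{k,\leq 0})_{/X}$ is the increasing union $\mycolim{m}\, (\perf_{k,[-m,0]})_{/X}$, and therefore
$$\mycolim{(\perf_{k,\leq 0})_{/X}} F \;\simeq\; \mycolim{m}\ \mycolim{(\perf_{k,[-m,0]})_{/X}} F \;\simeq\; \mycolim{m}\, F(X) \;\simeq\; F(X),$$
since each stage maps to $F(X)$ by an equivalence compatibly with the transition maps. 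This is exactly the content of the Remark following the corollary in the paper, and it is the step your cofinality claim was meant to replace.
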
 
\begin{proof} 
The statement follows from part (1) of \Cref{LKanleqzero} by ``taking the limit as $n \to
\infty$''. More precisely,  since $F|_{\perf_{k, \geq -n}}$ is left Kan extended from
$\vect^\omega_k[-n]$, it is   also left Kan extended from the larger
subcategory $ \perf_{k, [-n,
0]}$. The claim then follows from the remark below. \end{proof} 

\begin{remark} 
Let 
$\mathcal{C}= \bigcup_n \mathcal{C}^n$ be the union of an increasing chain
$\mathcal{C}^1 \subset \mathcal{C}^2 \subset \dots $ of full subcategories of $\mathcal{C}$ and suppose that $F:
\mathcal{C} \rightarrow \mathcal{D}$ has the property that $F|_{\mathcal{C}^n }$ is left
Kan extended from $\mathcal{C}_0^n :=  \mathcal{C}_0 \cap \mathcal{C}^n$ for all $n$.
Then $F$ is left Kan extended from $\mathcal{C}_0$. 
This follows from the definition of a Kan extension because for any $x \in \mathcal{C}$, we have an equivalence of  $\infty$-categories 
$(\mathcal{C}_0)_{/x} = \mycolim{n} ( \mathcal{C}_0^n )_{/x}$. 

\end{remark}

We arrive at our second generalisation of Proposition \ref{modulesarepsigma}:
\begin{proposition}\label{perfectgeneralisation} Given any $\infty$-category
$\mathcal{D}$ with sifted colimits, restriction   induces an equivalence
$$\fun_{\Sigma}(\md_k, \mathcal{D}) \xrightarrow{\ \ \simeq \ \ } \fun_\sigma(
\perf_{k, \leq 0},
\mathcal{D})$$
between $\fun_{\Sigma}(\md_k, \mathcal{D})$ and 
the full subcategory   of  $\fun(
\perf_{k, \leq 0},
\mathcal{D})$
spanned by all functors which preserve finite coconnective
geometric realisations. 
The inverse   is given by left Kan extension.
\end{proposition}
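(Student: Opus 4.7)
The plan is to factor through Proposition \ref{littlesigma}, which identifies $\fun_\Sigma(\md_k,\mathcal{D})$ with $\fun_\sigma(\perf_k,\mathcal{D})$ via restriction and left Kan extension. It therefore suffices to prove that further restriction
$r:\fun_\sigma(\perf_k,\mathcal{D})\to \fun_\sigma(\perf_{k,\leq 0},\mathcal{D})$
is an equivalence whose inverse is given by left Kan extension along $\perf_{k,\leq 0}\hookrightarrow \perf_k$; transitivity of Kan extensions then identifies the composite inverse with left Kan extension along $\perf_{k,\leq 0}\hookrightarrow \md_k$. That $r$ lands in $\fun_\sigma(\perf_{k,\leq 0},\mathcal{D})$ is immediate, and full faithfulness is exactly Corollary \ref{lKanleq02}: any $F\in \fun_\sigma(\perf_k,\mathcal{D})$ is left Kan extended from $\perf_{k,\leq 0}$, so the counit $\mathrm{LKan}(r(F))\to F$ is an equivalence.

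The real content lies in essential surjectivity. Given $G\in \fun_\sigma(\perf_{k,\leq 0},\mathcal{D})$, I will build an extension $\tilde G:\perf_k\to \mathcal{D}$ one Postnikov stage at a time. For each $n\geq 0$, the shifted form of Proposition \ref{modulesarepsigma} identifies $\fun_\Sigma(\md_{k,\geq -n},\mathcal{D})$ with $\fun(\vect^\omega_k[-n],\mathcal{D})$; applied to $G|_{\vect^\omega_k[-n]}$, it produces a functor $\tilde G_n:\perf_{k,\geq -n}\to \mathcal{D}$ preserving finite geometric realisations. Proposition \ref{LKanleqzero}(2) says that $G|_{\perf_{k,[-n,0]}}$ is already left Kan extended from $\vect^\omega_k[-n]$, so transitivity of Kan extensions gives $\tilde G_n|_{\perf_{k,[-n,0]}}\simeq G|_{\perf_{k,[-n,0]}}$.

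The crucial compatibility $\tilde G_{n+1}|_{\perf_{k,\geq -n}}\simeq \tilde G_n$ holds because both functors preserve finite geometric realisations and agree on $\vect^\omega_k[-n]$, hence are identified by the shifted Proposition \ref{littlesigma} (equivalently, by Proposition \ref{LKanleqzero}(1) applied at depth $n$). The $\tilde G_n$ therefore assemble into $\tilde G:\perf_k\to \mathcal{D}$; by construction each $\tilde G|_{\perf_{k,\geq -n}}$ is left Kan extended from $\vect^\omega_k[-n]$, so Proposition \ref{LKanleqzero}(1) places $\tilde G$ in $\fun_\sigma(\perf_k,\mathcal{D})$, and Corollary \ref{lKanleq02} identifies $\tilde G$ with the left Kan extension of $G$ along $\perf_{k,\leq 0}\hookrightarrow \perf_k$.

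The main obstacle is precisely this stepwise bootstrap: one cannot apply Proposition \ref{modulesarepsigma} directly to $\perf_{k,\leq 0}\hookrightarrow \perf_k$, because $\perf_k$ is \emph{not} the sifted-colimit completion of $\perf_{k,\leq 0}$. The remedy is to exhaust $\perf_k=\bigcup_n \perf_{k,\geq -n}$ and exploit the universal property on each connective piece $\md_{k,\geq -n}$, then use the uniqueness built into the shifted instances of Propositions \ref{modulesarepsigma} and \ref{littlesigma} to verify that the Kan extensions agree on overlaps and glue coherently.
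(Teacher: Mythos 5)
Your proposal is correct and follows essentially the same route as the paper: reduce to the restriction $\fun_\sigma(\perf_k,\mathcal{D})\to\fun_\sigma(\perf_{k,\leq 0},\mathcal{D})$ via \Cref{littlesigma}, get full faithfulness from \Cref{lKanleq02}, and prove essential surjectivity by exhausting $\perf_k$ by the subcategories $\perf_{k,\geq -n}$ together with \Cref{LKanleqzero}. The only organisational difference is that the paper defines each approximant $\widetilde{G}_n$ as a left Kan extension to all of $\perf_k$ and takes the sequential colimit $\widetilde{G}\simeq\mycolim{n}\widetilde{G}_n$ inside the single functor category $\fun(\perf_k,\mathcal{D})$, which sidesteps the coherence one would otherwise need to glue your stage-wise functors $\tilde G_n$ on $\perf_{k,\geq -n}$ into a functor on the union; your version can be patched by defining $\tilde G$ as the left Kan extension of $G$ from the outset and only then identifying its restrictions.
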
 
\begin{proof} 
In view of \Cref{littlesigma}, it suffices to show that the restriction functor
$\fun_\sigma( \perf_k, \mathcal{D}) \rightarrow \fun_{\sigma}( \perf_{k, \leq 0},
\mathcal{D})$ is an equivalence whose inverse is given by taking the left Kan extension. 
By \Cref{lKanleq02}, this restriction functor is fully faithful. 

For essential surjectivity, we will check that if  $G: \perf_{k, \leq 0} \to
\mathcal{D}$  
preserves finite coconnective geometric realisations, then its left Kan
extension
$\widetilde{G}: \perf_k \rightarrow \mathcal{D}$ preserves finite \mbox{geometric realisations.} 
For this, let $\widetilde{G}_n$ denote the left Kan extension of $G|_{\perf_{k, [-n, 0]}}$ to  $\perf_k$.
The various functors $ \widetilde{G}_n$ are linked by  natural transformations $\widetilde{G}_0 \rightarrow \widetilde{G}_1 \rightarrow\widetilde{G}_2 \rightarrow \ldots $. 
By  \Cref{LKanleqzero}(2), the restriction of  $\widetilde{G}_n$ to
$\perf_{k, \geq -n}$ preserves finite geometric realisations, as it is left Kan
extended from $\vect^\omega_k[-n]$. 
Any simplicial object in $\perf_k$ which is $m$-skeletal for some $m$
belongs to $\perf_{k, \geq -n}$ for $n \gg 0$, and thus its geometric
realisation is preserved by
$\widetilde{G}_n$ for $n$ sufficiently large. 
The result then follows from the equivalence
$\widetilde{G} \simeq \mycolim{n} \widetilde{G}_n$.
\end{proof}

Proposition \ref{perfectgeneralisation}  characterises sifted-colimit-preserving
functors $F:\mod_k \rightarrow \mathcal{D}$ in terms of their restriction to
$\perf_{k, \leq 0}$. Setting $\mathcal{D} = \mod_k$, we can deduce:
\begin{corollary} 
\label{extendmonadperfect}
Let $\End_\Sigma^{\perf_{\leq 0}}( \md_k) $ be the full subcategory of $ \End_\Sigma( \md_k) $
spanned by those functors which preserve $\perf_{k, \leq 0}$. Then the
monoidal restriction functor
$$ \End_\Sigma^{\perf_{\leq 0}}( \md_k)    \rightarrow \End_\sigma (\perf_k^{\leq 0})  $$
is an equivalence. Here $\End_\sigma (\perf_{k, \leq 0}) $ denotes the
$\infty$-category of endofunctors of $\perf_{k, \leq 0}$ which preserve finite coconnective geometric realisations. 
\end{corollary}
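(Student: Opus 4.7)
The plan is to deduce this corollary almost directly from \Cref{perfectgeneralisation} by setting $\mathcal{D} = \md_k$ and then checking that the resulting equivalence identifies the two relevant full subcategories on either side. First, I would invoke \Cref{perfectgeneralisation} with $\mathcal{D} = \md_k$ to obtain an equivalence
$$\fun_{\Sigma}(\md_k, \md_k) \xrightarrow{\ \simeq\ } \fun_\sigma(\perf_{k, \leq 0}, \md_k),$$
given by restriction, with inverse given by left Kan extension.

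Next, I would check that this equivalence restricts to an equivalence between the full subcategory $\End_\Sigma^{\perf_{\leq 0}}(\md_k) \subset \fun_\Sigma(\md_k, \md_k)$ and the full subcategory $\End_\sigma(\perf_{k, \leq 0}) \subset \fun_\sigma(\perf_{k, \leq 0}, \md_k)$ (where the latter is viewed via postcomposition with the inclusion $\perf_{k, \leq 0} \hookrightarrow \md_k$). In one direction, if $F \in \End_\Sigma^{\perf_{\leq 0}}(\md_k)$, then by hypothesis its restriction $F|_{\perf_{k, \leq 0}}$ takes values in $\perf_{k, \leq 0}$ and preserves finite coconnective geometric realisations, so it belongs to $\End_\sigma(\perf_{k, \leq 0})$. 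Conversely, given $G \in \End_\sigma(\perf_{k, \leq 0})$, let $\widetilde{G}: \md_k \to \md_k$ be its left Kan extension. Since the inclusion $\perf_{k, \leq 0} \hookrightarrow \md_k$ is fully faithful, we have $\widetilde{G}|_{\perf_{k, \leq 0}} \simeq G$, so in particular $\widetilde{G}$ carries $\perf_{k, \leq 0}$ into $\perf_{k, \leq 0}$ and thus lies in $\End_\Sigma^{\perf_{\leq 0}}(\md_k)$.

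Finally, I would verify the monoidal enhancement. Both $\End_\Sigma^{\perf_{\leq 0}}(\md_k)$ and $\End_\sigma(\perf_{k, \leq 0})$ carry monoidal structures given by composition of endofunctors, and restriction is strictly compatible with composition in the sense that for $F_1, F_2 \in \End_\Sigma^{\perf_{\leq 0}}(\md_k)$, one has $(F_1 \circ F_2)|_{\perf_{k, \leq 0}} = F_1|_{\perf_{k, \leq 0}} \circ F_2|_{\perf_{k, \leq 0}}$; this uses precisely the hypothesis that $F_2$ preserves $\perf_{k, \leq 0}$. Combined with the fact that restriction of units is the identity, this upgrades the equivalence above to a monoidal equivalence, with left Kan extension serving as its monoidal inverse.

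The argument is essentially formal bookkeeping given \Cref{perfectgeneralisation}, and there is no real technical obstacle. The only subtlety worth noting is the automatic preservation of $\perf_{k, \leq 0}$ under left Kan extension along the fully faithful inclusion $\perf_{k, \leq 0} \hookrightarrow \md_k$, which ensures that the equivalence of underlying $\infty$-categories genuinely descends to the claimed full subcategories.
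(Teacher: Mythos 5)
Your proposal is correct and matches the paper's (implicit) argument: the paper states this corollary as an immediate consequence of setting $\mathcal{D} = \md_k$ in \Cref{perfectgeneralisation} and gives no further proof, and your identification of the two full subcategories under that equivalence, together with the observation that restriction is compatible with composition precisely because the functors preserve $\perf_{k,\leq 0}$, is exactly the intended bookkeeping.
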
 
Corollary \ref{extendmonadperfect} allows us to extend functors $F:\perf_{k,
\leq 0}\rightarrow \perf_{k, \leq 0}$ which preserve suitable colimits to sifted-colimit-preserving endofunctors of $\mod_k$ in a monoidal fashion. 
However, the functors which we will want to extend  later will usually
\textit{not} preserve $\perf_k$. Instead, they will preserve the following larger subcategory of $\mod_k$.  
{We will now record  slight variants of the above results
extending  from $\coh_{k, \leq 0}$ instead of $\perf_{k, \leq 0}$.}

\begin{definition} 
Let $\mathcal{D}$ be an $\infty$-category with sifted colimits. 
A functor $F: \coh_{k, \leq 0} \rightarrow \mathcal{D}$ is said to \emph{preserve finite
coconnective geometric realisations} if 
for every simplicial
object \mbox{$X_\bullet \in \coh_{k, \leq 0}$} which is $m$-skeletal for some $m$ and with $|X_\bullet| \in
\coh_{k, \leq 0}$, the natural map $|F(X_\bullet)| \rightarrow F(|X_\bullet|)$ is an
equivalence. We say that $F$ is \emph{right complete}  
if for any $X \in \coh_{k, \leq 0}$, the natural map $\mycolim{n}  F(\tau_{\geq -n} X) \to
F(X)$ is an equivalence. We write $\fun'_\sigma( \coh_{k, \leq 0},
\mathcal{D})\subset \fun( \coh_{k, \leq 0},
\mathcal{D})$ \INN{060@$\fun'_\sigma( \coh_{k, \leq 0}, \mathcal{D})$}  for the full subcategory spanned by all functors  
which preserve finite coconnective geometric realisations and which are right
complete. 
\end{definition}

We can now deduce the following ``finite type variant'' of Proposition \ref{perfectgeneralisation}:
\begin{proposition} 
\label{critextendfun}
Given any $\infty$-category $\mathcal{D}$ with sifted colimits, restriction induces an equivalence
$$ \fun_{\Sigma}( \md_k, \mathcal{D}) \longrightarrow \fun'_\sigma( \coh_{k, \leq 0},
\mathcal{D})  $$
between $\fun_{\Sigma}( \md_k, \mathcal{D})$ and the full subcategory of $\fun(
\coh_{k, \leq 0},
\mathcal{D})$ spanned by all functors which preserve finite coconnective geometric realisations and are right
complete. 
\end{proposition}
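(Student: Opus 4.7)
The plan is to factor the restriction functor through the equivalence of Proposition~\ref{perfectgeneralisation}. Restriction along the inclusions $\perf_{k,\leq 0} \subset \coh_{k,\leq 0} \subset \md_k$ gives
$$r_1: \fun_\Sigma(\md_k, \mathcal{D}) \to \fun'_\sigma(\coh_{k,\leq 0}, \mathcal{D}), \qquad r_2: \fun'_\sigma(\coh_{k,\leq 0}, \mathcal{D}) \to \fun_\sigma(\perf_{k,\leq 0}, \mathcal{D}),$$
whose composition $r_2 \circ r_1$ is precisely the equivalence of Proposition~\ref{perfectgeneralisation}. The strategy will be to prove that $r_2$ is fully faithful, from which 2-out-of-3 will deliver both fully faithfulness and essential surjectivity of $r_1$. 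Both $r_1$ and $r_2$ are well defined: any sifted-colimit-preserving functor on $\md_k$ is right complete on $\coh_{k,\leq 0}$ because $X \simeq \mycolim{n} \tau_{\geq -n} X$ holds as a filtered colimit in $\md_k$ for any $X \in \coh_{k,\leq 0}$, and it clearly preserves finite coconnective geometric realisations as a special case.

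The main technical step is to identify right completeness with being a left Kan extension. The key cofinality claim is that for every $X \in \coh_{k,\leq 0}$, the sequential subdiagram $\{\tau_{\geq -n} X \to X\}_{n \geq 0}$ is cofinal in the slice category $(\perf_{k,\leq 0})_{/X}$. Given $P \to X$ with $P \in \perf_{k,[-n_0, 0]}$, the fact that $\tau_{\geq -n_0}$ is right adjoint to the inclusion $\md_{k,\geq -n_0} \hookrightarrow \md_k$ produces an essentially unique factorisation $P \to \tau_{\geq -n_0} X \to X$, together with compatible factorisations through $\tau_{\geq -n} X$ for all $n \geq n_0$. Consequently, for $G \in \fun'_\sigma(\coh_{k,\leq 0}, \mathcal{D})$, the right completeness condition $G(X) \simeq \mycolim{n} G(\tau_{\geq -n} X)$ becomes equivalent to the statement that $G$ is the left Kan extension of its restriction to $\perf_{k,\leq 0}$ along $j: \perf_{k,\leq 0} \hookrightarrow \coh_{k,\leq 0}$.

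By the universal property of left Kan extensions, this gives an equivalence
$$\Map(G_1, G_2) \xrightarrow{\;\simeq\;} \Map(G_1|_{\perf_{k,\leq 0}}, G_2|_{\perf_{k,\leq 0}})$$
for all $G_1, G_2 \in \fun'_\sigma(\coh_{k,\leq 0}, \mathcal{D})$, since only the source needs to be a left Kan extension. This is exactly fully faithfulness of $r_2$. Fully faithfulness of $r_1$ then follows by applying 2-out-of-3 to the factorisation $r_{2\ast} \circ r_{1\ast}$ on mapping spaces, in which the composite and the right map are equivalences. For essential surjectivity of $r_1$: given $G \in \fun'_\sigma(\coh_{k,\leq 0}, \mathcal{D})$, Proposition~\ref{perfectgeneralisation} produces $\widetilde{G} \in \fun_\Sigma(\md_k, \mathcal{D})$ extending $G|_{\perf_{k,\leq 0}}$, and fully faithfulness of $r_2$ promotes the tautological equivalence $r_2(r_1 \widetilde{G}) \simeq r_2(G)$ to the desired $r_1 \widetilde{G} \simeq G$. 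The principal obstacle is the cofinality argument in the second paragraph, which bridges the gap between the Ind-type filtration by truncations and the ambient slice category $(\perf_{k,\leq 0})_{/X}$; everything else is formal consequence of this identification plus Proposition~\ref{perfectgeneralisation}.
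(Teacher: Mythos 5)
Your proposal is correct and follows essentially the same route as the paper: both reduce via Proposition~\ref{perfectgeneralisation} to showing that restriction $\fun'_\sigma(\coh_{k,\leq 0},\mathcal{D}) \to \fun_\sigma(\perf_{k,\leq 0},\mathcal{D})$ is an equivalence, with the key point being that right completeness identifies a functor with the left Kan extension of its restriction to $\perf_{k,\leq 0}$ (using that $\tau_{\geq -n}X$ is perfect for $X \in \coh_{k,\leq 0}$). Your cofinality argument for $\{\tau_{\geq -n}X\}$ in $(\perf_{k,\leq 0})_{/X}$ is just a slightly more explicit packaging of the step the paper handles by comparing two right-complete functors that agree on perfect objects.
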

\begin{proof} 
Using \Cref{perfectgeneralisation}, we observe that it suffices to check that 
the restriction functor $\fun'_\sigma( \coh_{k, \leq 0}, \mathcal{D}) \rightarrow
\fun_\sigma( \perf_{k, \leq 0}, \mathcal{D})$ is an equivalence with inverse given by left \mbox{Kan extension.}

Given $F\in \fun'_\sigma( \coh_{k, \leq 0}, \mathcal{D})$, the restriction
$F|_{\perf_{k, \leq 0}}$ preserves finite coconnective geometric realisations.
\Cref{perfectgeneralisation}  implies that the left Kan extension of
$F|_{\perf_{k, \leq 0}}$ to $\mod_k$ preserves all sifted colimits, which in turn
shows that  the left Kan extension $\widetilde{F}$ of $F|_{\perf_{k, \leq 0}}$ to
$\coh_{k, \leq 0}$ is  right complete. We deduce that $\widetilde{F} \rightarrow F$ is a transformation between right-complete functors $
\coh_{k, \leq 0} \rightarrow \mathcal{D}$  which is an equivalence on
$\perf_{k, \leq 0}$. 
This transformation  is therefore an equivalence and  $F$ is left Kan extended
from $\perf_{k, \leq 0}$.
Hence, the restriction $\fun'_\sigma( \coh_{k, \leq 0}, \mathcal{D}) \rightarrow
\fun_\sigma( \perf_{k, \leq 0}, \mathcal{D})$ is  fully faithful. 
It is also essentially surjective since the left Kan extension of any $G\in
\fun_\sigma( \perf_{k, \leq 0}, \mathcal{D})$  to $\mod_k$ preserves sifted
colimits by \Cref{perfectgeneralisation}, which implies that the left Kan
extension of $G$ to $\coh_{k, \leq 0}$ is right complete and  preserves finite geometric realisations.
\end{proof} 
Setting $\mathcal{D} =  \mod_k $ in  Proposition~\ref{critextendfun}, we can 
 deduce the following result, which will be crucial in our later
applications to extending monads.

\begin{corollary} 
\label{extendmonad}
Let $\End_\Sigma^{\coh_{\leq 0}}( \md_k) $ be the full subcategory of $\End_\Sigma(  \md_k)$
spanned by all functors which preserve $\coh_{k, \leq 0}$. Then the
monoidal restriction functor
$$ \End_\Sigma^{\coh_{\leq 0}}( \md_k) \rightarrow \End'_\sigma( \coh_{k, \leq 0})  $$
is an equivalence. 
 Here $\End_\sigma'(\coh_{k, \leq 0}) $ denotes the $\infty$-category of
 endofunctors of $\coh_{k, \leq 0}$ which preserve finite coconnective geometric realisations and are right-complete. 
\end{corollary}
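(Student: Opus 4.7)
The plan is to bootstrap from the non-monoidal equivalence of \Cref{critextendfun} applied with $\mathcal{D} = \md_k$, which identifies $\fun_\Sigma(\md_k, \md_k)$ with $\fun'_\sigma(\coh_{k, \leq 0}, \md_k)$ via restriction, the inverse being constructed by left Kan extending first from $\perf_{k, \leq 0}$ to $\coh_{k, \leq 0}$ and then onward to all of $\md_k$.

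First I would carve out the correct full subcategories on each side of this equivalence. Since $\coh_{k, \leq 0} \subset \md_k$ is a full subcategory, a functor $F \in \fun_\Sigma(\md_k, \md_k)$ preserves $\coh_{k, \leq 0}$ if and only if its restriction $F|_{\coh_{k, \leq 0}}$ factors through $\coh_{k, \leq 0} \subset \md_k$. Under the equivalence of \Cref{critextendfun}, the preimage of $\End'_\sigma(\coh_{k, \leq 0}) \subset \fun'_\sigma(\coh_{k, \leq 0}, \md_k)$ is therefore exactly $\End_\Sigma^{\coh_{\leq 0}}(\md_k)$. This step alone produces the claimed equivalence at the level of underlying $\infty$-categories.

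The remaining content, and the main obstacle, is upgrading this to a monoidal equivalence for the composition monoidal structure. In the forward direction compatibility is automatic, since for $F, G \in \End_\Sigma^{\coh_{\leq 0}}(\md_k)$ one has tautologically $(F \circ G)|_{\coh_{k, \leq 0}} = F|_{\coh_{k, \leq 0}} \circ G|_{\coh_{k, \leq 0}}$, and the identity functor restricts to the identity. The backward direction is subtler, because the extension procedure of \Cref{critextendfun} is not a single left Kan extension and so does not a priori commute with composition. I would resolve this by a uniqueness argument: given $F', G' \in \End'_\sigma(\coh_{k, \leq 0})$ with extensions $F, G \in \End_\Sigma^{\coh_{\leq 0}}(\md_k)$, the composite $F \circ G$ still preserves sifted colimits and $\coh_{k, \leq 0}$, and its restriction to $\coh_{k, \leq 0}$ is $F' \circ G'$. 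The fully faithfulness of the restriction functor in \Cref{critextendfun} then forces $F \circ G$ to be canonically equivalent to the extension of $F' \circ G'$, so restriction is strictly compatible with composition in both directions. This promotes the equivalence to a monoidal one, as required.
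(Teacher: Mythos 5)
Your proposal is correct and follows the same route as the paper, which simply deduces the corollary by setting $\mathcal{D} = \md_k$ in \Cref{critextendfun} and restricting to the full subcategories of functors preserving (resp.\ landing in) $\coh_{k, \leq 0}$. The only additional content you supply is the explicit verification of monoidality — tautological compatibility of restriction with composition, plus uniqueness of extensions for the inverse — which the paper leaves implicit; this is the right argument.
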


\subsection{Right-left extension}\label{rlext}
We shall now apply the tools developed in  the previous
subsection and  build an array of extended functors. 

\begin{remark} 
Closely related ideas appear in the work of Illusie \cite[Sec. I-4]{Illusie}, 
 and more recently in the work of 
Kaledin \cite[Sec. 3]{Kaledin}.
\end{remark}

Throughout this subsection, we fix a stable $\infty$-category $\mathcal{D}$
admitting all limits and colimits and a field $k$. 
Our basic procedure first extends a functor on finite-dimensional $k$-vector spaces in the coconnective direction and then in the connective direction. 

More precisely, let $F: \vecto_k  \rightarrow \mathcal{D}$ be a functor. Our goal is to extend $F$ to a
sifted-colimit-preserving functor $\md_k\rightarrow \mathcal{D}$. 
In a first step, we take the right Kan extension  $F^R: \perf_{k, \leq 0} \to
\mathcal{C}$ of $F$  along the inclusion
$\vecto_k \subset \perf_{k, \leq 0}$.

\begin{remark} Using linear duality and  \Cref{modulesarepsigma}, we see that the right Kan extension $F^R$ of a functor $F$ as above can be computed as follows: given $X \in
\perf_{k,  \leq 0}$, we write $X \simeq \mathrm{Tot}(V^\bullet)$ for $V^\bullet$
a cosimplicial object of $\vecto_k$. Then  $F^R(X) \simeq \mathrm{Tot}(
F(V^\bullet))$. 
\end{remark}

In order to  further extend $F^R: \perf_{k, \leq 0} \rightarrow \mathcal{D}$ to a sifted-colimit-preserving functor $\mod_k \rightarrow \mathcal{D}$ as in  \Cref{perfectgeneralisation}, we need to assume the following condition: 
\begin{definition} \label{rightext} 
A functor $F: \vecto_k \rightarrow \mathcal{D}$ is said to be
\emph{right-extendable}
if the right Kan extension $F^R: \perf_{k, \leq 0} \rightarrow \mathcal{D}$ commutes
with finite coconnective geometric realisations (cf.\ \Cref{cfgr}). 
\end{definition} 
\begin{cons}[Right-left extension]
\label{rightleft}
The  \emph{right-left extension}  
$F^{RL}: \mod_k \rightarrow \mathcal{D}$ of a right-extendable functor\INN{060@$F^{RL}$}
$F: \vecto_k \rightarrow \mathcal{D}$ is given by the left Kan extension of \mbox{$F^R:
\perf_{k, \leq 0}\rightarrow \mathcal{D}$ to $\mod_k$.}\end{cons}
\begin{remark}\label{RLremark} By  \Cref{perfectgeneralisation}, the  right-left Kan extension $F^{RL}$ of any right-extendable functor $F$ preserves sifted colimits. Hence, $F^{RL}$ restricts on $\md_{k, \geq 0}$ to the left Kan
extension $\mathbb{L} F$. 
\end{remark}
Let $\vect_k \subset \md_k$ \INN{220@$\vect_k$}  be the   subcategory of discrete $k$-module spectra, i.e.   ordinary $k$-vector spaces.
\begin{proposition}\label{RKEisright}
Let $F: \vect_k \rightarrow \md_{k,\leq 0}$ be a filtered-colimit-preserving\vspace{2pt} functor. Suppose that 
the composite 	$\vect_k^\omega  \rightarrow \vect_k \xrightarrow{F} \mod_{k,\leq 0} \hookrightarrow \mod_k$ 	is right-extendable with 
right-left extension $\widetilde{F}$ (cf.\ \Cref{rightleft}).
 If $M^\bullet$ is a cosimplicial $k$-vector space, then $\widetilde{F}$ is determined by the formula
$$\widetilde{F}(\Tot(M^\bullet)) \simeq \Tot({F}(M^\bullet)).$$
\end{proposition}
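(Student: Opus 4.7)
The plan is to reduce to the case where $M^\bullet$ has a bounded, finite-dimensional normalized cochain complex --- where the formula is essentially the remark preceding \Cref{rightext} --- and then extend by a filtered colimit argument. The main difficulty will be that $\Tot$ does not preserve filtered colimits of arbitrary cosimplicial objects, which I resolve via a truncation argument exploiting the coconnectivity of $F$.

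First I would use Dold--Kan to write the normalized cochain complex $C^\bullet$ of $M^\bullet$ as a filtered colimit $C^\bullet \simeq \mycolim{\alpha} C^\bullet_\alpha$ of its bounded, finite-dimensional subcomplexes: any element $x \in C^n$ generates the bounded finite-dimensional subcomplex $\langle x, dx \rangle$, so these subcomplexes are cofinal. Under inverse Dold--Kan this produces a filtered presentation $M^\bullet \simeq \mycolim{\alpha} M^\bullet_\alpha$ with each $M^\bullet_\alpha \in (\vect_k^\omega)^\Delta$ and $\Tot(M^\bullet_\alpha) \in \perf_{k,\leq 0}$; since $\Tot$ on cosimplicial vector spaces is passage to the cochain complex, $\Tot(M^\bullet) \simeq \mycolim{\alpha}\Tot(M^\bullet_\alpha)$ in $\md_{k,\leq 0}$. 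Combining the sifted-colimit preservation of $\widetilde{F}$ (\Cref{RLremark}) with the identity $F^R(\Tot(V^\bullet)) \simeq \Tot(F(V^\bullet))$ for $V^\bullet \in (\vect_k^\omega)^\Delta$ from the remark preceding \Cref{rightext} then gives
$$\widetilde{F}(\Tot(M^\bullet)) \simeq \mycolim{\alpha}\widetilde{F}(\Tot(M^\bullet_\alpha)) \simeq \mycolim{\alpha}\Tot(F(M^\bullet_\alpha)).$$

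It remains to verify $\Tot(F(M^\bullet)) \simeq \mycolim{\alpha}\Tot(F(M^\bullet_\alpha))$, which is the main obstacle since $\Tot$ does not commute with filtered colimits in general. I would resolve it by truncating along the Bousfield--Kan tower $\{\Tot_m\}$: because each $F(M^n)$ and $F(M^n_\alpha)$ lies in $\md_{k,\leq 0}$, the successive fibers of $\Tot_m \to \Tot_{m-1}$ applied to any cosimplicial object in $(\md_{k,\leq 0})^\Delta$ lie in progressively more negative homological degrees, so for each fixed $n$ one has $\tau^{\geq -n}\Tot(N^\bullet) \simeq \tau^{\geq -n}\Tot_m(N^\bullet)$ for $m$ sufficiently large depending only on $n$. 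As $F$ preserves filtered colimits in $\vect_k$, we have $F(M^\bullet) \simeq \mycolim{\alpha} F(M^\bullet_\alpha)$ levelwise; since $\Tot_m$ is a finite limit (its indexing category is cofinal to a finite one, dually to the fact recalled after the $m$-skeletal definition) it commutes with this filtered colimit in $\md_k$, and $\tau^{\geq -n}$ preserves filtered colimits by exactness. Assembling these identifications at each truncation level and using $X \simeq \mycolim{n}\tau^{\geq -n} X$ on $\md_{k,\leq 0}$ gives the required identity, and hence the formula.
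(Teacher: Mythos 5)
Your argument is correct and follows essentially the same route as the paper's: both decompose $M^\bullet$ as a filtered colimit of finite (coskeletal) cosimplicial diagrams of finite-dimensional vector spaces, identify $\widetilde{F}$ on each piece via the formula for the right Kan extension, and commute the filtered colimit past $\Tot$ using coconnectivity. Your Dold--Kan construction of the filtered presentation and the Bousfield--Kan tower argument simply make explicit two steps the paper asserts more tersely.
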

\begin{proof}
Since $F$ preserves filtered colimits, we obtain a natural equivalence $\widetilde{F}|_{\vect_k}\simeq F$. 
Given a cosimplicial $k$-vector space $M^\bullet$ as above, we can write  $$M^\bullet \simeq  \mycolim{i\in I} (M_i^\bullet),$$ where each $M_i^n$ belongs to $\vect_k^\omega$ and every  $M_i^\bullet$ is a  finite cosimplicial diagram for all $i$.

Since each $M_i^n$ is coconnective, every truncation $\tau_{\geq m}(\Tot(M^\bullet))$ is only affected by the $m^{th}$ coskeleton of the appearing totalisations. We can therefore commute the filtered colimit past the totalisation to obtain \vspace{-4pt} equivalences
$\displaystyle \Tot(M^\bullet)\simeq  \Tot(\mycolim{i\in I} (M_i^\bullet)) \simeq \mycolim{i\in I} (\Tot(M_i^\bullet)) $. The same argument applies to the diagram $F (M_i^\bullet)$.

Combining this observation with \Cref{RLremark} and the defining property of $\widetilde{F}$, we obtain  equivalences  $\widetilde{F}(\Tot(M^\bullet)) \simeq \mycolim{i\in I} \left(\Tot(F (M_i^\bullet))\right)
 \simeq \Tot(\mycolim{i\in I} \left(F (M_i^\bullet))\right)$.
Since $F$ preserves filtered colimits, this is then equivalent to $\Tot( F(\mycolim{i\in I} M_i^\bullet)) \simeq\Tot( F(M^\bullet))$, as desired.
\end{proof}

Let $\fun_{RL}( \vecto_k, \mathcal{D}) \subset \fun( \vecto_k, \mathcal{D})$
denote the full subcategory of right-extendable functors. 
Right-left extension establishes a fully faithful embedding 
$\fun_{RL}( \vecto_k, \mathcal{D})  \subset \fun_{\Sigma}( \md_k,
\mathcal{D})$ whose 
image consists of all functors $F $ whose restriction to $F|_{\perf_{k, \leq 0}}$
commutes with totalisations which are $m$-coskeletal for some $m$ (equivalently,
$F|_{\perf_{k, \leq 0}}$
is right Kan \mbox{extended from $\vect_k^\omega$).}

\Cref{rightleft} will be our basic tool for building sifted-colimit-preserving functors on $\md_k$. 
In order to proceed, we will need a criterion for right-extendability. 
We begin by recalling the following definition, originally due to Eilenberg--MacLane \cite{eilenberg1954groups}:
\begin{definition}[Functors of finite degree] \label{spf} A functor $F: \vect_k^\omega \rightarrow \mathcal{D}$ is said to be  
\begin{enumerate} 
\item   \emph{of degree $0$} if $F$ is constant. 
\item   \emph{of degree $n$} with $n\geq 1$ if for any $X \in \vect_k^\omega$,
the difference functor $D_X F : \vect_k^\omega \rightarrow \mathcal{D}$ defined via $D_X
F(Y) = \mathrm{fib}( F(X \oplus Y) \rightarrow F(Y))$ is of degree $n-1$. 
\item   \emph{of finite degree} if it is  of degree $n$ for some  $n\geq 0$. 
\end{enumerate}
\end{definition} 

\begin{example} 
The  $n^{th}$ symmetric, exterior, and divided power functors $\sym^n, \bigwedge^n, \Gamma^n $ on $ \vecto_k$ are all 
of degree $n$. 
\end{example}

We can now state the main result of this section:
\begin{theorem} 
\label{polyrightext}
Let $F: \vect_k^\omega \rightarrow \mathcal{D}$ be a functor of finite degree. Then $F$ is
right-extendable. 
In particular, we obtain a canonical sifted-colimit-preserving extension $F: \md_k \rightarrow \mathcal{D}$. 
\end{theorem}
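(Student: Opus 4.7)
The plan is to induct on the degree $n$ of $F$. For the base case $n=0$, $F$ is constant with value $c := F(0)$; since $\mathcal{D}$ is stable, I would split off this constant summand and reduce to $F \equiv 0$, which is trivially right-extendable.

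For the inductive step with $F$ reduced of degree $n \geq 1$, I would reduce to multilinear functors via the cross-effect decomposition. The iterated cross-effects $\cross_j F$ vanish for $j > n$, and for a reduced $F$ one has the canonical splitting
$$F(V_1 \oplus \cdots \oplus V_k) \simeq \bigoplus_{\emptyset \neq S \subseteq [k]} \cross_{|S|} F(V_S).$$
This lets one assemble $F$ through a finite tower of (co)fiber sequences whose layers are the diagonals of the $i$-multilinear functors $\cross_i F$ for $1 \leq i \leq n$. Since right-extendability is preserved by finite extensions in the stable target $\mathcal{D}$, it then suffices to treat the diagonal of an $i$-multilinear functor $L \colon (\vecto_k)^i \to \mathcal{D}$. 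For such an $L$, applying \Cref{RKEisright} one variable at a time yields the explicit formula
$$L^R(\Tot(V^\bullet),\ldots,\Tot(V^\bullet)) \simeq \Tot_\Delta L(V^\bullet,\ldots,V^\bullet)$$
whenever $V^\bullet$ is a cosimplicial object of $\vecto_k$.

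To verify that $L^R \circ \mathrm{diag}$ preserves finite coconnective geometric realisations, I would take an $m$-skeletal simplicial object $Y_\bullet$ in $\perf_{k,\leq 0}$ with $Y := |Y_\bullet| \in \perf_{k,\leq 0}$, write each $Y_p$ as a totalisation of a cosimplicial object $W_p^\bullet$ in $\vecto_k$, and check that the natural map $|L^R(Y_\bullet,\ldots,Y_\bullet)| \to L^R(Y,\ldots,Y)$ is an equivalence. The $m$-skeletality makes the realisation a \emph{finite} colimit, and all modules involved lie in a bounded coconnective range, so after applying any truncation $\tau_{\geq -N}$ only finite pieces of the relevant totalisations contribute; the interchange then reduces to finite colimits commuting with finite limits in the stable $\infty$-category $\mathcal{D}$. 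The main obstacle is the inductive setup: organising the cross-effect decomposition so that $F$ is genuinely recovered as a finite extension of diagonals of multilinear functors; this is classical in the abelian setting but requires some care in the stable $\infty$-categorical context to ensure the filtration terminates thanks to the degree hypothesis. Once this is in place, each multilinear building block is handled by the essentially formal totalisation-diagonal argument above.
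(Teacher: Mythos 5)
Your overall strategy---reduce to multilinear functors and handle those by an explicit interchange of finite limits and colimits---is in the same spirit as the paper's \Cref{finitereals}, but the pivotal reduction step is a genuine gap, not a routine one. The cross-effect splitting $F(V_1\oplus\cdots\oplus V_k)\simeq\bigoplus_{S}\cross_{|S|}F(V_S)$ describes $F$ evaluated on \emph{direct sums}; it does not exhibit $F(V)$ itself as a finite extension of diagonals of multilinear functors (taking $k=1$ it only recovers $F\simeq F(0)\oplus F_{\mathrm{red}}$). The filtration you actually need is the Goodwillie tower $P_nF\to P_{n-1}F\to\cdots$, whose layers on the diagonal are $(\cross_i F)_{h\Sigma_i}$ --- note the $\Sigma_i$-homotopy orbits, which you omit, and note that for $i<n$ the $i$-th cross-effect is not itself multilinear, so the layers involve its multilinearisation --- and the assertion that a degree-$n$ functor is recovered from its own $n$-excisive approximation after deriving is precisely the Johnson--McCarthy input \Cref{JMcC}. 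This is where the real work lies: the paper passes to the nonabelian derived functor $\L F$ on $\md_{k,\geq 0}$, invokes \Cref{JMcC} to get $n$-excisiveness, extends to all of $\md_k$ via $P_n(\L F\circ\tau_{\geq 0})$ using \Cref{resandPncommute} and \Cref{extensionofexcisives}, and only then runs the tower induction of \Cref{finitereals}. Your plan defers exactly this step, and the classical abelian-category filtration you allude to has graded pieces that are images/quotients of diagonals rather than the diagonals themselves, so it does not transport directly to the stable setting.

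A second, smaller problem: in your final interchange argument you apply truncations $\tau_{\geq -N}$, but the target $\mathcal{D}$ is an arbitrary stable $\infty$-category with no $t$-structure, so this move is unavailable. The mechanism that does work (and is what \Cref{finitereals} uses) is that a functor exact in each variable carries a cosimplicial object right Kan extended from $(\Delta_{\leq m})^n$ to one with the same property, reducing the totalisation to a finite limit, and finite limits commute with the finite colimit computing an $m$-skeletal realisation because $\mathcal{D}$ is stable. Your observations that the degree-zero part splits off and that right-extendability is closed under (co)fibre sequences of functors are correct and would be usable once the tower is actually in place.
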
  
Our proof of  the above 
result will rely on Goodwillie's  calculus of functors. 
We recall several of the key definitions from Goodwillie's work (cf.\ \cite{goodwillie1991calculus} \cite{goodwillie2003calculus}) in their $\infty$-categorical incarnation, which is described  in detail in  \cite[Chapter 6]{lurie2014higher}.
 
For the rest of this section, we fix a pointed $\infty$-category $\mathcal{A}$  with finite colimits and a stable $\infty$-category $\mathcal{D}$ 
with small colimits.  
\begin{definition}[$n$-excisive functors] \label{excisive}\ 
\begin{enumerate}
\item  
An \emph{$(n+1)$-cube} in $\mathcal{A}$ is a functor $\mathcal{P}( \left\{0,   \dots,
n\right\}) \rightarrow \mathcal{A}$, where 
$\mathcal{P}( \left\{0, \dots,
n\right\})$ denotes the poset of finite subsets of
$\left\{0,   \dots, n\right\}$. Such a cube is 
\begin{itemize}
\item  \emph{strongly coCartesian}  
if it is left Kan extended from subsets of cardinality at most $1$.
\item  \emph{coCartesian} if it is a colimit diagram, i.e.\ its value on
$\left\{0,   \dots, n\right\}$ is determined by its values on proper subsets.
\end{itemize}
\item
A functor   $F: \mathcal{A} \to
\mathcal{D}$ is \emph{$n$-excisive}
if 
$F$ carries strongly coCartesian $(n+1)$-cubes to coCartesian cubes (recall that $\mathcal{D}$ is assumed to be stable). 
\end{enumerate}
Let $\Exc^{n}(\mathcal{A}, \mathcal{D})$ denote the full subcategory of $\fun(\mathcal{A}, \mathcal{D})$ spanned by all \INN{050@$\Exc^{n}(\mathcal{A}, \mathcal{D})$}
$n$-excisive functors $\mathcal{A} \rightarrow \mathcal{D}$.  
\end{definition}

\begin{example} 
Let $G: \mathcal{A}^n \rightarrow \mathcal{D}$ be a functor which preserves finite
colimits in each variable. Then the diagonal functor $F$ defined by $F(X) = G(X,
\dots, X)$ is $n$-excisive.
\end{example}

\begin{remark} 
The full subcategory $\Exc^{n}(\mathcal{A},\mathcal{D}) \subset
\fun(\mathcal{A}, \mathcal{D})$ is closed under arbitrary limits and colimits:
that is, limits and colimits of 
$n$-excisive functors are $n$-excisive. Here and in the preceding Example, we have used that $\mathcal{D}$ is stable.
\end{remark}

\begin{remark} 
Suppose $\mathcal{A}$ is small.  
Given an
$n$-excisive functor $F: \mathcal{A} \rightarrow \mathcal{D}$, we can canonically extend $F$ to
a filtered-colimit-preserving  
 functor
$\mathrm{Ind}(\mathcal{A}) \rightarrow \mathcal{D}$, where $\mathrm{Ind}(\mathcal{A})$ is the $\mathrm{Ind}$-completion of $\mathcal{A}$.  \INN{090@$\mathrm{Ind}(\mathcal{A})$} It is 
not hard to see that this functor is also $n$-excisive (cf.\ \cite[Proposition 6.1.5.4]{lurie2014higher}).
In fact, restriction and left Kan extension establish an equivalence $\Exc^n(\mathcal{A},\mathcal{D}) \simeq \Exc^n_c(\Ind(\mathcal{A}),\mathcal{D}) $ between $n$-excisive functors $\mathcal{A}\rightarrow\mathcal{D}$ and $n$-excisive filtered-colimit-preserving functors $\Ind(\mathcal{A})\rightarrow \mathcal{D}$.
\end{remark}

A theorem of Goodwillie allows us to universally approximate functors by $n$-excisive  functors:

\begin{proposition}[The $n$-excisive approximation]\label{napp}
The inclusion $\Exc^{n}(\mathcal{A}, \mathcal{D})
\subset \fun(\mathcal{A}, \mathcal{D})$ admits a left adjoint 
$P_n: \fun(\mathcal{A}, \mathcal{D}) \rightarrow 
\Exc^{n}(\mathcal{A}, \mathcal{D})$.\end{proposition}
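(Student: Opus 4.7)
The plan is to construct the left adjoint $P_n$ explicitly via Goodwillie's classical recipe, in its $\infty$-categorical form worked out in \cite[\S 6.1.1]{lurie2014higher}. In broad strokes, I would first define an auxiliary endofunctor $T_n$ of $\fun(\mathcal{A}, \mathcal{D})$ equipped with a natural transformation $t : \mathrm{id} \to T_n$ whose component $t_F : F \to T_n F$ is an equivalence whenever $F$ is $n$-excisive; then I would set $P_n F := \mycolim{k \geq 0} T_n^k F$ (computed objectwise in $\mathcal{D}$) and verify that this lands in $\Exc^n(\mathcal{A}, \mathcal{D})$.

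For the construction of $T_n$, given $X \in \mathcal{A}$ let $\mathcal{C}_X : \mathcal{P}(\{0, \ldots, n\}) \to \mathcal{A}$ be the strongly coCartesian $(n+1)$-cube characterised by $\mathcal{C}_X(\emptyset) = X$ and $\mathcal{C}_X(\{i\}) = 0$ for $0 \leq i \leq n$; this exists because $\mathcal{A}$ is pointed with finite colimits. Set
$$T_n F(X) := \lim_{\emptyset \neq S \subseteq \{0,\ldots,n\}} F(\mathcal{C}_X(S)),$$
which is a finite limit and therefore exists in the stable $\infty$-category $\mathcal{D}$, and let $t_F(X)$ be induced by inclusion of the initial vertex of the cube into the diagram. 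By inspection, $t_F$ is a natural equivalence whenever $F$ is $n$-excisive, since then $F \circ \mathcal{C}_X$ is both coCartesian and Cartesian. Iterating $T_n$ yields $P_n F$ together with a natural map $F \to P_n F$.

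The adjunction is then formal once one proves the key property that $P_n F$ is itself $n$-excisive. Indeed, granting this, for any $G \in \Exc^n(\mathcal{A}, \mathcal{D})$ the tower $\{\Map(T_n^k F, G)\}_k$ has structure maps that are equivalences (apply $T_n$ to $\phi : T_n^k F \to G$ and postcompose with the inverse of $t_G$), so
$$\Map(P_n F, G) \simeq \lim_k \Map(T_n^k F, G) \simeq \Map(F, G),$$
and the unit $F \to P_n F$ witnesses the adjunction.

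The main obstacle is thus showing that $P_n F$ is $n$-excisive for arbitrary $F$. The standard approach is to study the effect of $T_n$ on an arbitrary strongly coCartesian $(n+1)$-cube $\mathcal{X}$ (not just on the tautological cubes $\mathcal{C}_X$): one factors $\mathcal{X}$ through cubes of the form $\mathcal{C}_{\mathcal{X}(\emptyset)}$ and shows that the ``total fibre'' of $F \circ \mathcal{X}$ — which measures the failure of $F(\mathcal{X})$ to be Cartesian — is sent under $T_n$ to a fibre with an additional layer of iterated cross-effect, hence to one whose connectivity (in a suitable filtration-theoretic sense) has strictly increased. Since filtered colimits commute with the finite limits defining $T_n$ in the stable category $\mathcal{D}$, the obstruction is annihilated in $P_n F$. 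This homotopical bookkeeping is the only genuinely nontrivial step; the remainder is formal diagram chasing with the universal property of limits and colimits.
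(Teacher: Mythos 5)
Your construction is exactly the one the paper gestures at in the paragraph immediately following the statement: $T_nF(X)$ is the finite limit of $F$ over the punctured strongly coCartesian cube generated by the maps $X\to 0$ (whose vertices are finite coproducts of suspensions of $X$), and $P_nF=\varinjlim_k T_n^kF$. The paper offers no proof beyond attributing this to Goodwillie and citing \cite[Chapter 6]{lurie2014higher}, so in outline you are reproducing the intended argument, and the formal parts (existence of the finite limits and the sequential colimit, $t_F$ an equivalence for $F$ $n$-excisive because cubes in a stable $\infty$-category are Cartesian iff coCartesian) are fine.

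The one genuine gap is in the crucial step, that $P_nF$ is $n$-excisive. You justify it by saying the obstruction's ``connectivity (in a suitable filtration-theoretic sense) has strictly increased.'' In the stated generality --- $\mathcal{D}$ an arbitrary stable $\infty$-category with small colimits, no $t$-structure --- there is no notion of connectivity, so Goodwillie's original estimate-based argument is unavailable and this phrase cannot be cashed out. The argument that works here is qualitative (Rezk's, as in \cite[Theorem 6.1.1.10 and its supporting lemmas]{lurie2014higher}): for every strongly coCartesian $(n+1)$-cube $\mathcal{X}$ one shows that the natural map of cubes $F\circ\mathcal{X}\to T_nF\circ\mathcal{X}$ factors through a Cartesian cube, so the induced map on total fibres is nullhomotopic; since finite limits commute with all colimits in a stable $\infty$-category, $\mathrm{tfib}(P_nF\circ\mathcal{X})$ is a sequential colimit along null maps and hence vanishes, i.e.\ $P_nF\circ\mathcal{X}$ is Cartesian, equivalently coCartesian. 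Replacing the connectivity heuristic by this factorisation is the actual content of the theorem. A smaller point: in the adjunction argument, your candidate inverse $\phi\mapsto t_G^{-1}\circ T_n\phi$ is only visibly a one-sided inverse to restriction along $t_{T_n^kF}$ (the other composite requires identifying $T_n(t_{T_n^kF})$ with $t_{T_n^{k+1}F}$, which is not automatic); the clean route is the localisation criterion, checking that both maps $P_nF\to P_nP_nF$ are equivalences, which is formal once $n$-excisiveness of $P_nF$ is known.
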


Goodwillie has in fact given a more explicit description of the functor $P_n(F)$ as a sequential colimit $ P_n(F) = \mycolim{n} \left(F \rightarrow T_n(F) \rightarrow T_n(T_n(F)) \rightarrow \ldots \right)$. Here $G\mapsto  T_n(G)$ is a certain construction on functors $G: \mathcal{A} \rightarrow \mathcal{D}$ with the property that the value
 $T_n(G)(X)$  is obtained
as a finite limit of copies of $G$ evaluated on various direct sums of
suspensions of $X$. 
We will not need to know the precise formula for $T_n$, except that it has the following implications:
\begin{proposition}
 If $F$ preserves filtered colimits, then so does $P_nF$. 
\end{proposition}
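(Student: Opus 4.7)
The plan is to prove this directly from Goodwillie's explicit construction of $P_n F$ as the sequential colimit
\[
P_n F \;=\; \mycolim{k}\bigl(F \to T_n F \to T_n^2 F \to \cdots\bigr),
\]
by showing that the single operation $G \mapsto T_n G$ preserves the property of commuting with filtered colimits, and then iterating and passing to the colimit in $k$.

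First I would reduce the problem to a statement about $T_n$. Fix a filtered diagram $\{X_i\}_{i \in I}$ in $\mathcal{A}$. Since the colimit defining $P_n F$ is a sequential (hence filtered) colimit of functors, and since filtered colimits in the stable target $\mathcal{D}$ commute with filtered colimits (indeed, with each other), evaluating pointwise gives
\[
P_n F\bigl(\mycolim{i} X_i\bigr) \;\simeq\; \mycolim{k} T_n^k F\bigl(\mycolim{i} X_i\bigr),
\qquad
\mycolim{i} P_n F(X_i) \;\simeq\; \mycolim{i}\mycolim{k} T_n^k F(X_i),
\]
so it suffices to know that each $T_n^k F$ preserves filtered colimits, because then one can swap the two filtered colimits. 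By induction on $k$ this reduces to the claim that if $G : \mathcal{A} \to \mathcal{D}$ preserves filtered colimits, so does $T_n G$.

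Next I would verify this remaining claim using the formula recalled in the excerpt: $T_n G(X)$ is a \emph{finite} limit in $\mathcal{D}$ of terms of the form $G(\bigoplus_{s\in S}\Sigma^{a_s} X)$, indexed by proper subsets of an $(n+1)$-element set (coming from the strongly coCartesian cube built out of suspensions of $X$). Each building block is functorial in $X$ and preserves filtered colimits: suspension $\Sigma$ is a colimit and hence commutes with colimits, finite direct sums commute with colimits, and $G$ preserves filtered colimits by hypothesis. Therefore, for a filtered system $\{X_i\}$, the natural comparison map fits into a square in which a finite limit is applied to a diagram of filtered-colimit-preserving functors. Since $\mathcal{D}$ is stable, finite limits in $\mathcal{D}$ commute with filtered colimits, so we obtain
\[
T_n G\bigl(\mycolim{i} X_i\bigr) \;\simeq\; \mycolim{i} T_n G(X_i),
\]
as desired.

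Combining these two steps yields the proposition. The only point that really uses anything nontrivial is the commutation of finite limits with filtered colimits, which holds precisely because $\mathcal{D}$ is stable; this is the single place where a potential obstacle might arise if one tried to weaken the hypotheses on $\mathcal{D}$, but in the setup of the excerpt $\mathcal{D}$ is stable by assumption, so there is no real obstacle. Everything else is formal manipulation of iterated and swapped filtered colimits.
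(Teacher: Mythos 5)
Your argument is correct and is exactly the reasoning the paper intends: the paper states this proposition without proof, as an immediate consequence of Goodwillie's description of $T_n(G)(X)$ as a finite limit of values of $G$ on objects built from $X$ by finite colimits, and your two reductions (swapping the sequential colimit defining $P_n$ past the filtered colimit, then checking that $T_n$ preserves the property) fill in precisely those details. The only substantive input, that finite limits commute with filtered colimits in the stable $\infty$-category $\mathcal{D}$, is correctly identified and justified.
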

\begin{proposition} \label{resandPncommute}   Given a right exact functor $ \mathcal{A} \rightarrow  \mathcal{B}$ between two pointed $\infty$-categories with finite colimits,  the following diagram commutes
$$ \xymatrix{
\fun(\mathcal{B}, \mathcal{D}) \ar[d] \ar[r]^{P_n\ } & \Exc^{n}(\mathcal{B},
\mathcal{D}) \ar[d] \\
\fun(\mathcal{A}, \mathcal{D}) \ar[r]^{P_n\ } &  \Exc^{n}(\mathcal{A}, \mathcal{D})
}.$$

Here the vertical maps are simply restrictions. 
\end{proposition}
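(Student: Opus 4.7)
The plan is to prove that the comparison map $P_n(f^*G) \to f^*(P_nG)$ is an equivalence for any $G \in \fun(\mathcal{B},\mathcal{D})$ by leveraging Goodwillie's explicit description of $P_n$ as a sequential colimit of iterates of the construction $T_n$. The comparison map is built as follows: for any $G$, applying $f^*$ to the unit $G \to P_n G$ yields a morphism $f^*G \to f^*P_n G$ whose target, as we will verify, is already $n$-excisive, so it factors uniquely through the unit $f^*G \to P_n(f^*G)$ and produces the natural transformation $P_n f^* \to f^* P_n$. Showing this is an equivalence is the goal.

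The first observation is that $f^*$ carries $\Exc^n(\mathcal{B},\mathcal{D})$ into $\Exc^n(\mathcal{A},\mathcal{D})$. Indeed, since $f$ is right exact it preserves finite colimits between pointed $\infty$-categories, and so it carries strongly coCartesian $(n+1)$-cubes in $\mathcal{A}$ (which, by \Cref{excisive}, are those left Kan extended from cardinality $\leq 1$ subsets) to strongly coCartesian $(n+1)$-cubes in $\mathcal{B}$. Hence if $G$ is $n$-excisive then for every strongly coCartesian cube $C$ in $\mathcal{A}$, the cube $G \circ f \circ C$ is coCartesian, so $f^*G$ is $n$-excisive.

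The main step is to show that $T_n$ commutes with $f^*$. Recall from the discussion preceding \Cref{resandPncommute} that $T_n(G)(X)$ is a finite limit of expressions of the form $G\bigl(\bigoplus_{i} \Sigma^{a_i}X\bigr)$, built functorially in $G$ and $X$ via finite coproducts, suspensions (which are pushouts over the zero object), and a finite limit in $\mathcal{D}$. Since $f$ is right exact and pointed, it preserves zero objects, finite coproducts and suspensions, so for $X \in \mathcal{A}$ there is a natural equivalence $f\bigl(\bigoplus_i \Sigma^{a_i}X\bigr) \simeq \bigoplus_i \Sigma^{a_i} f(X)$. Substituting into the defining formula yields a natural equivalence
\[
T_n(f^*G)(X) \simeq T_n(G)(f(X)) = (f^*T_nG)(X),
\]
and thus $T_n \circ f^* \simeq f^* \circ T_n$.

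To conclude, iterating the equivalence above gives $T_n^k \circ f^* \simeq f^* \circ T_n^k$ for every $k \geq 0$, compatibly with the structure maps of the sequential diagram defining $P_n$. Since restriction along $f$ commutes with all colimits of functors (they are computed pointwise in $\mathcal{D}$), applying $f^*$ to the formula $P_nG = \mycolim{k}\bigl(G \to T_nG \to T_n^2 G \to \cdots\bigr)$ produces
\[
f^* P_n G \;\simeq\; \mycolim{k}\, f^* T_n^k G \;\simeq\; \mycolim{k}\, T_n^k f^* G \;\simeq\; P_n(f^*G),
\]
and one checks this identification is the comparison map. The main obstacle, which is essentially circumvented by appealing to the explicit $T_n$ formula, is that an abstract Beck--Chevalley argument using only the two $P_n \dashv \mathrm{incl}$ adjunctions and the fact that $f^*$ restricts to $n$-excisive functors does not immediately yield the commutativity — one really needs to know that the localization $P_n$ is computed by operations (finite colimits, suspensions, filtered colimits) which are all preserved by a right exact $f$.
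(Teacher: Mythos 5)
Your proof is correct and fills in precisely the argument the paper intends: the paper states this proposition without proof, immediately after describing Goodwillie's formula $P_n(F) = \mycolim{k} T_n^k(F)$ with $T_n(G)(X)$ a finite limit of $G$ evaluated on direct sums of suspensions of $X$, and your key step — that a right exact pointed functor preserves these inputs, so $T_n$ commutes with restriction, and then passing to the sequential colimit — is exactly the intended justification. Your closing remark that a purely formal Beck–Chevalley argument would not suffice, and that one genuinely needs the explicit $T_n$ construction, is also a correct and worthwhile observation.
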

\begin{proposition}[{Johnson-McCarthy 
\cite[Proposition 5.10]{johnson1999taylor}}]
\label{JMcC}
If $F:  \vecto_k \rightarrow \mathcal{D}$ is   of  degree $n$, then its nonabelian derived functor $\mathbb{L} F:
\md_k^{\geq 0}
\rightarrow \mathcal{D}$ is
$n$-excisive.  
\end{proposition}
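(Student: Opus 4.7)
The plan is to match the Eilenberg--MacLane degree condition with Goodwillie's $n$-excisiveness on $\vecto_k$, then propagate $n$-excisiveness to $\md_{k,\geq 0}$ using the fact that $\L F$ is the left Kan extension of $F$. First, I would establish that for $F : \vecto_k \to \mathcal{D}$, being of degree $\leq n$ in the sense of \Cref{spf} is equivalent to being $n$-excisive in the sense of \Cref{excisive}. Since $\vecto_k$ is semi-additive with zero object as its basepoint, every strongly coCartesian $(n+1)$-cube is (after reducing to basepoint $0$ via additivity) of the form $S \mapsto \bigoplus_{i \in S} X_i$ for a tuple $(X_0,\dots,X_n)$. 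Applying $F$ to such a sum cube and computing its total cofibre in the stable category $\mathcal{D}$ gives, up to a shift, the iterated difference $D_{X_0}\cdots D_{X_n} F(0)$; a short induction on $n$, unpacking the fibre sequence $D_X F(Y)\to F(X\oplus Y)\to F(Y)$, shows that this vanishes for all tuples exactly when $F$ has degree $\leq n$.

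Next I would take an arbitrary strongly coCartesian $(n+1)$-cube in $\md_{k,\geq 0}$. Since $\md_{k,\geq 0}$ is also semi-additive, after translation such a cube has the form $S \mapsto \bigoplus_{i \in S} X_i$ for $X_i \in \md_{k,\geq 0}$. Using \Cref{modulesarepsigma}, each $X_i$ can be written as the geometric realisation of a simplicial diagram $Y_i^\bullet$ whose entries lie in $\vecto_k$. Because direct sums commute with geometric realisations, $\bigoplus_{i \in S} X_i \simeq |\bigoplus_{i \in S} Y_i^\bullet|$ naturally in $S$, and because $\L F$ preserves sifted colimits and restricts to $F$ on $\vecto_k$, one obtains
$$ \L F \bigl( \textstyle\bigoplus_{i \in S} X_i \bigr) \;\simeq\; \bigl| F \bigl( \textstyle\bigoplus_{i \in S} Y_i^\bullet \bigr) \bigr|. $$
By the first step, each cube $S \mapsto F(\bigoplus_{i \in S} Y_i^m)$ is coCartesian in $\mathcal{D}$ at every simplicial level $m$. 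Since colimits commute with colimits, the realised cube $S \mapsto \L F(\bigoplus_{i \in S} X_i)$ is coCartesian, proving that $\L F$ is $n$-excisive.

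I expect the main obstacle to be the bookkeeping in the first step: identifying the total cofibre of a sum cube with the iterated Eilenberg--MacLane difference $D_{X_0}\cdots D_{X_n} F(0)$, keeping track of signs and shifts, and reducing a general strongly coCartesian cube in the additive category $\vecto_k$ to a sum cube. Once this dictionary is established on $\vecto_k$, the extension to $\md_{k,\geq 0}$ is a formal consequence of the universal property of $\L F$ as the sifted-colimit-preserving extension of $F$.
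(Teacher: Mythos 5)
Your overall strategy — verify coCartesianness on cubes of finite-dimensional vector spaces using the degree condition, then propagate to $\md_{k,\geq 0}$ via simplicial resolution and the fact that $\L F$ preserves geometric realisations — is the same as the paper's. But there is a genuine gap in the reduction step. An $n$-excisive functor must send \emph{every} strongly coCartesian $(n+1)$-cube to a coCartesian cube, and such a cube is generated by an arbitrary collection of maps $Y \rightarrow X_i$; its entries are iterated pushouts over $Y$, namely $X_S = X_{i_1} \sqcup_Y \cdots \sqcup_Y X_{i_k}$. Semi-additivity of $\md_{k,\geq 0}$ does not let you ``translate'' this to a sum cube $S \mapsto \bigoplus_{i \in S} X_i$: that would require each $Y \rightarrow X_i$ to be split, which fails in general (already for $Y = k$, $X_i = 0$ the pushout $0 \sqcup_k 0 \simeq k[1]$ is not a direct sum of the $X_i$). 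Vanishing of cross-effects, i.e.\ coCartesianness of sum cubes, is strictly weaker than $n$-excisiveness, so your second step only establishes the weaker property. The same issue infects your first step: the asserted equivalence between degree $\leq n$ and $n$-excisiveness on $\vecto_k$ is only correct for cubes whose generating maps are (split) injections, which is in fact all the paper uses.

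The repair is to resolve the \emph{entire generating collection} $\{Y \rightarrow X_i\}_{0 \leq i \leq n}$ at once, rather than each $X_i$ separately. The category of such collections is again of the form $\mathcal{P}_\Sigma$ of a category of diagrams of finite-dimensional vector spaces, so any collection in $\md_{k,\geq 0}$ is a geometric realisation of collections $\{Y' \rightarrow X_i'\}$ in which all objects are discrete, finite-dimensional, and each map is a split injection $Y' \rightarrow Y' \oplus Z_i'$. For these, the strongly coCartesian cube is $S \mapsto Y' \oplus \bigoplus_{i \in S} Z_i'$, its total cofibre is the iterated difference $D_{Z_0'}\cdots D_{Z_n'}F(Y')$, and this vanishes by the degree hypothesis (note the differences must be based at the arbitrary point $Y'$, not at $0$). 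Since forming strongly coCartesian cubes commutes with geometric realisation and $\L F$ preserves sifted colimits, coCartesianness passes to the realised cube. This is exactly the argument in the paper's proof of \Cref{JMcC}.
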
 
\begin{proof} 
Given a collection of maps
$Y \rightarrow X_i$ for $0 \leq i \leq n$,   we can form a 
strongly coCartesian cube $c:\mathcal{P}(\{0,\ldots,n\}) \rightarrow \md_k^{\geq 0}$ by left Kan extension. It suffices to show that the functor $LF$ carries every such cube $c$ to 
a coCartesian cube $\mathbb{L}F\circ c$. 

We first assume that each map $Y \rightarrow X_i \simeq Y \oplus Z_i$ is a (split) injection
between  discrete finite-dimensional $k$-vector spaces. In this case, the fact that $F$ is  of degree $n$ immediately implies that $F\circ c$ is a coCartesian cube.
 But we can write any collection 
$\mathfrak{C} = \left\{Y, X_i, Y \rightarrow X_i\right\}_{0 \leq i \leq n}$ as a 
sifted colimit of  collections $\mathfrak{C}'$
with all $Y', X_i'\in \vect_k^\omega$  and  each $Y' \rightarrow X_i'$ (split)
injective. 
To prove this, we first note that  the evaluation functor $\fun(\{0,\ldots, n\}, (\mod_{k,\geq 0})_{Y/}) \rightarrow \fun(\{0,\ldots, n\}, \mod_{k,\geq 0})$ admits a left adjoint, sending   $\{Z_i\}_{0 \leq i \leq n}$ to the collection $\{Y \rightarrow Y \oplus Z_i\}_{0 \leq i \leq n}$.  Here $\{0,\ldots, n\}$ denotes the category with objects $0,\ldots , n$ and only identity morphisms.

The evaluation functor  preserves geometric realisations and is therefore monadic, which shows that we can write  $\mathfrak{C} $ as a realisation of a simplicial diagram   whose terms are collections of the form $\{Y \rightarrow Y \oplus Z_i\}_{0 \leq i \leq n}$.
We now write each of these collections as a sifted colimit of 
 collections $\mathfrak{C}'$, \mbox{using that $\mod_{k,\geq 0}$ is the sifted cocompletion of $\vect_k^{\omega}$.}
 
Finally, as $\mathbb{ L}F$ preserves 
sifted colimits, the  assertion for each
$\mathfrak{C}'$ implies the \mbox{result for $\mathfrak{C}$.}
\end{proof}

\begin{theorem} \label{extensionofexcisives}
Let $\mathcal{A}$ be  small. Suppose that $\mathcal{A}_0
$ is  a full subcategory of $\mathcal{A}$ which is  closed under finite colimits such that for any $X \in \mathcal{A}$, we have $\Sigma^m X \in \mathcal{A}_0$ for
$m\gg 0$ sufficiently large. The  restriction functor induces an equivalence $$\Exc^{n}( \mathcal{A}, \mathcal{D}) \xrightarrow{\ \ \simeq\ \ }
\Exc^{n}(\mathcal{A}_{0}, \mathcal{D})$$
whose inverse is given by $F\mapsto P_n(\Lan_{\mathcal{A}_0}^{\mathcal{A}}(F))$.
\end{theorem}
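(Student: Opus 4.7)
My plan is to identify the purported inverse $L := P_n \circ \Lan_{\mathcal{A}_0}^{\mathcal{A}}$ as a left adjoint to the restriction functor $R$, and then verify that both unit and counit of this adjunction are equivalences. The adjunction itself is free: $\Lan_{\mathcal{A}_0}^{\mathcal{A}}$ is left adjoint to restriction $\fun(\mathcal{A},\mathcal{D}) \to \fun(\mathcal{A}_0,\mathcal{D})$, and $P_n$ is left adjoint to the inclusion $\Exc^n(\mathcal{A},\mathcal{D}) \subset \fun(\mathcal{A},\mathcal{D})$ by Proposition~\ref{napp}; composing yields $L \dashv R$, where we use that $R$ lands in $\Exc^n(\mathcal{A}_0,\mathcal{D})$ because the fully faithful inclusion $\mathcal{A}_0 \hookrightarrow \mathcal{A}$ is right exact (as $\mathcal{A}_0$ is closed under finite colimits) and so preserves strongly coCartesian cubes.

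For the unit, given $G \in \Exc^n(\mathcal{A}_0,\mathcal{D})$, I would compute
$$RLG = (P_n \Lan_{\mathcal{A}_0}^{\mathcal{A}} G)|_{\mathcal{A}_0} \simeq P_n((\Lan_{\mathcal{A}_0}^{\mathcal{A}} G)|_{\mathcal{A}_0}) \simeq P_n G \simeq G,$$
where the first equivalence comes from Proposition~\ref{resandPncommute} applied to the right-exact inclusion $\mathcal{A}_0 \hookrightarrow \mathcal{A}$, the second from the fact that left Kan extension along a fully faithful functor recovers the original on its domain, and the third from $n$-excisiveness of $G$. For the counit, the fiber $K := \fib(LRF \to F) \in \fun(\mathcal{A},\mathcal{D})$ of the counit is $n$-excisive (because $\Exc^n$ is closed under finite limits in $\fun(\mathcal{A},\mathcal{D})$, since $\mathcal{D}$ is stable) and vanishes on $\mathcal{A}_0$ (by the unit computation applied on $\mathcal{A}_0$). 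The problem therefore reduces to showing \emph{conservativity}: any $K \in \Exc^n(\mathcal{A},\mathcal{D})$ with $K|_{\mathcal{A}_0} \simeq 0$ satisfies $K \simeq 0$.

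I would prove conservativity by induction on $n$. The case $n=0$ is trivial because $0$-excisive means constant and $0 \in \mathcal{A}_0$. For the inductive step, Proposition~\ref{resandPncommute} gives $(P_{n-1}K)|_{\mathcal{A}_0} \simeq P_{n-1}(K|_{\mathcal{A}_0}) = 0$, so by induction $P_{n-1}K \simeq 0$, which means $K$ is $n$-homogeneous. The main obstacle — and the reason for the suspension hypothesis — is handled here: Goodwillie's classification of homogeneous functors presents any $n$-homogeneous $K$ between stable $\infty$-categories in the form $K(X) \simeq (\cross_n K(X,\dots,X))_{h\Sigma_n}$ for a symmetric multilinear cross-effect $\cross_n K : \mathcal{A}^n \to \mathcal{D}$. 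Multilinearity in the stable setting yields $K(\Sigma^m X) \simeq \Sigma^{mn} K(X)$, hence
$$K(X) \simeq \Omega^{mn} K(\Sigma^m X).$$
For $m$ sufficiently large we have $\Sigma^m X \in \mathcal{A}_0$, so $K(\Sigma^m X) = 0$ and therefore $K(X) = 0$. This completes the inductive step, proves conservativity, and hence establishes that the counit is an equivalence, yielding the desired equivalence of categories.
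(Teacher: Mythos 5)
Your overall skeleton coincides with the paper's: identify $P_n\circ\Lan_{\mathcal{A}_0}^{\mathcal{A}}$ as a left adjoint to restriction, check full faithfulness via \Cref{resandPncommute}, and reduce to the claim that an $n$-excisive $K$ with $K|_{\mathcal{A}_0}\simeq 0$ vanishes. The problem is in your proof of that last claim.

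The step ``multilinearity in the stable setting yields $K(\Sigma^m X)\simeq\Sigma^{mn}K(X)$'' is false for $n\geq 2$. Writing $K(X)\simeq(\cross_nK(X,\dots,X))_{h\Sigma_n}$, the identification $\cross_nK(\Sigma^mX,\dots,\Sigma^mX)\simeq\Sigma^{mn}\cross_nK(X,\dots,X)$ holds only non-equivariantly: the $\Sigma_n$-action on the left is twisted by the permutation action on the $mn$ suspension coordinates, and homotopy orbits do not commute with this twist. Concretely, take $\mathcal{D}=\md_{\FF_3}$ and the $2$-homogeneous functor $K(V)=(V^{\otimes 2})_{h\Sigma_2}$: the swap acts on $\Sigma\FF_3\otimes\Sigma\FF_3\simeq\FF_3[2]$ by the Koszul sign $-1$, so $K(\Sigma\FF_3)\simeq(\FF_3[2]\otimes\mathrm{sgn})_{h\Sigma_2}\simeq 0$ while $K(\FF_3)\simeq\FF_3$. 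Thus $K(X)\not\simeq\Omega^{mn}K(\Sigma^mX)$, and, worse, the implication ``$K(\Sigma^mX)\simeq 0\Rightarrow K(X)\simeq 0$'' on which your inductive step rests is simply false for homogeneous functors. (The d\'ecalage between $\L\sym^n$ and $\L\bigwedge^n$ elsewhere in this paper is the same phenomenon.) Note also that your final step never uses that $\mathcal{A}_0$ is closed under finite coproducts, which is a warning sign: vanishing of $K$ on single suspensions of $X$ cannot suffice.

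The gap is repairable, and the repair shows exactly where that closure hypothesis enters: $\cross_nK(X_1,\dots,X_n)$ is a finite (co)limit of the values $K(\bigoplus_{i\in S}X_i)$, so the cross-effect vanishes on $\mathcal{A}_0^{\times n}$; exactness in each variable \emph{separately} (no equivariance needed) then gives $\cross_nK(X,\dots,X)\simeq\Omega^{mn}\cross_nK(\Sigma^mX,\dots,\Sigma^mX)\simeq 0$, whence $K(X)\simeq(\cross_nK(X,\dots,X))_{h\Sigma_n}\simeq 0$. For comparison, the paper avoids the classification of homogeneous functors entirely: it runs a descending induction on the suspension degree, using the strongly coCartesian $(n+1)$-cube generated by $n+1$ copies of $X\to 0$ to express $F(X)$ as a finite limit of the values $F(\Sigma X^{\oplus j})$ for $0\leq j\leq n$ --- which likewise exploits vanishing on finite direct sums of suspensions rather than on single suspensions.
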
 
\begin{proof} 
The composite functor $\Exc^n(\mathcal{A}, \mathcal{D}) \rightarrow \fun(\mathcal{A},\mathcal{D}) \rightarrow \fun(\mathcal{A}_0,\mathcal{D})$ admits a left adjoint $P_n \circ \Lan_{\mathcal{A}_0}^\mathcal{A}$ by \cite[Proposition 4.3.2.17]{lurie2009higher}, \cite[Lemma 4.3.2.13]{lurie2009higher}, and
\Cref{napp}. Given that  $\Exc^n(\mathcal{A}_0,\mathcal{D}) \hookrightarrow \fun(\mathcal{A}_0,\mathcal{D}) $ is fully faithful, this implies that  $\Exc^n(\mathcal{A},\mathcal{D}) \rightarrow \Exc^n(\mathcal{A}_0,\mathcal{D})$ also admits a left adjoint given by $F \mapsto P_n(\Lan_{\mathcal{A}_0}^\mathcal{A} F)$.

This left adjoint is fully faithful. Indeed, given an  $n$-excisive functor  $F_0 : \mathcal{A}_0 \rightarrow \mathcal{D}$, we first observe that 
$\Lan_{\mathcal{A}_0}^{\mathcal{A}}(F_0)|_{\mathcal{A}_0} \simeq  F_0$ since $\mathcal{A}_0
\subset \mathcal{A}$ is a full subcategory. By \Cref{resandPncommute}, 
Goodwillie's explicit construction of the 
$n$-excisive approximation allows us to recover $F_0$ from $P_n(\Lan_{\mathcal{A}_0}^{\mathcal{A}}(F_0))$:
$$P_n(\Lan_{\mathcal{A}_0}^{\mathcal{A}}(F_0))|_{\mathcal{A}_0} \simeq P_n(\Lan_{\mathcal{A}_0}^{\mathcal{A}}(F_0)|_{\mathcal{A}_0}) \simeq  P_n(F_0) \simeq  F_0.$$ 
 
To conclude the proof, it  suffices to show that the right adjoint $\Exc^{n}( \mathcal{A}, \mathcal{D}) \rightarrow
\Exc^{n}(\mathcal{A}_{0}, \mathcal{D})$ is conservative. 
So let $F \rightarrow G$ be a natural transformation between functors $F, G \in \Exc^{n}( \mathcal{A}, \mathcal{D})$ which becomes an equivalence upon restriction to $\mathcal{A}_{0}$.  Given $X\in \mathcal{A}$ and $r\geq 0$, we let $S_{r,X}$ be the statement
that $F(\Sigma^r
X^{\oplus m}) \rightarrow G(\Sigma^r
X^{\oplus m})$ is an equivalence  for all $m \geq 0$.

Since $F$ is $n$-excisive, statement $S_{r,X}$ implies statement $S_{r-1,X}$ as we can use the strongly
coCartesian $(n+1)$-cubes obtained from the maps $\left\{X \rightarrow 0\right\}$ and $\left\{Y \rightarrow 0\right\}$ to recover $F(X)\rightarrow G(X)$ 
 from the maps
$F(0) \rightarrow G(0)$, 
$F(\Sigma X) \rightarrow  G(\Sigma X)$,
$F(\Sigma X \oplus \Sigma X) \rightarrow G(\Sigma X \oplus \Sigma X). \ldots $.
By assumption, we know that $S_{r,X}$ holds true for $r \gg 0$.  A descending induction shows  that  $S_0$ is true, i.e.\  that $F(X) \rightarrow G(X)$ is an equivalence.
\end{proof} 

\begin{remark}
	In a previous version of this paper, we had stated \Cref{extensionofexcisives} only for stable $\mathcal{A}$. We thank Greg Arone for pointing out that the statement also holds for unstable $\mathcal{A}$ (with the same proof).
\end{remark}

\begin{proposition} \label{finitereals}
Let $F: \mathcal{A} \rightarrow \mathcal{D}$ be a functor between cocomplete stable
$\infty$-categories which is $n$-excisive and preserves filtered colimits. 
Then $F$ preserves totalisations which are $m$-skeletal for some $m$ and all 
geometric realisations. 
\end{proposition}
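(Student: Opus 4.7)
The plan is to combine the two hypotheses in tandem: preservation of filtered colimits reduces the unbounded geometric realisation case to the bounded skeletal case, and $n$-excisivity together with the stable structure handles the bounded (co)limits via Goodwillie's Taylor tower.

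For the geometric realisation clause, given a simplicial object $X_\bullet$ in $\mathcal{A}$, I would write $|X_\bullet| \simeq \mycolim{m \geq 0} |\sk_m X_\bullet|$ as a sequential (hence filtered) colimit of its skeletal realisations, which is preserved by $F$; the same skeletal filtration applies levelwise to $F(X_\bullet)$, reducing the claim to showing that $F(|\sk_m X_\bullet|) \simeq |F(\sk_m X_\bullet)|$ for every $m$. For the $m$-skeletal totalisation clause, \cite[1.2.4.17]{lurie2014higher} shows that such a totalisation is a finite limit over a category cofinal in $\Delta_{\leq m}$. Both clauses thus reduce to a single bounded statement: an $n$-excisive functor between stable $\infty$-categories preserves the bounded (co)limits of suitably (co)skeletal (co)simplicial diagrams.

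For this bounded case, the cleanest route is Goodwillie's Taylor tower. Since $F$ is $n$-excisive, $F \simeq P_n F$, and $F$ is built by finitely many iterated fibre sequences from the $k$-homogeneous layers $D_k F$ for $k \leq n$. Each $D_k F$ is given by a symmetric multilinear cross-effect evaluated on the diagonal, and hence manifestly preserves all sifted colimits (by Fubini and cofinality of the diagonal $\Delta^{op} \to (\Delta^{op})^k$); using stability of both $\mathcal{A}$ and $\mathcal{D}$, it also preserves the finite bounded limits arising from $m$-coskeletal cosimplicial diagrams in each variable separately. Since $F$ is an iterated extension of such layers, it inherits both properties. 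The main obstacle is ensuring that the Taylor tower is itself compatible with the filtered-colimit hypothesis: for this I would use Goodwillie's formula $P_k F \simeq \mycolim{r} T_k^r F$, noting that each $T_k$ is a finite limit construction that commutes with filtered colimits when $F$ does, so every $P_k F$ and every layer $D_k F$ remains filtered-colimit preserving, validating the multilinear reduction and completing the argument.
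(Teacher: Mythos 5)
Your proposal is correct and follows essentially the same route as the paper: reduce via the fibre sequences $D_k F \to P_k F \to P_{k-1}F$ to the homogeneous layers, identify each layer with a symmetric multilinear cross-effect evaluated on the diagonal (with exact homotopy orbits), and conclude by cofinality of the diagonal $\Delta^{op}\to(\Delta^{op})^k$ for realisations and the dual coskeletality/cofinality argument for finite totalisations, with the filtered-colimit hypothesis feeding in exactly where you say it does. The preliminary reduction of arbitrary geometric realisations to $m$-skeletal ones is harmless but redundant, since your layer-level cofinality argument already yields preservation of all sifted colimits directly.
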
 
\begin{proof} 
For an $n$-excisive functor  $F$, the image of an $m$-skeletal cosimplicial
diagram in $\mathcal{A}$ is an $nm$-skeletal cosimplicial diagram in
$\mathcal{D}$, cf.~\cite[Prop.~2.10]{BGMN}. 
Now limits over $\Delta_{\leq nm}$ commute with filtered colimits in
$\mathcal{D}$. It follows that the collection of $n$-excisive functors $F$ that
satisfy the conclusion of the theorem is closed under cofibre sequences and
filtered colimits. 

We use the fibre sequence $D_n(F)\rightarrow P_n(F)\rightarrow P_{n-1}(F).$ By induction, it suffices to prove the claim for the $n$-homogeneous functor $D_n(F)$.
We can find a symmetric functor
$G=\cross_n(F): \mathcal{A}^n \rightarrow \mathcal{D}$ which preserves colimits in each variable such that $D_n(F) \simeq G(X,\ldots,X)_{h\Sigma_n}$ (cf.\ \cite[Proposition 6.1.4.14., Corollary 6.1.4.15]{lurie2014higher}).
As  $(-)_{h\Sigma_n} : \fun(B\Sigma_n, \mathcal{D}) \rightarrow \mathcal{D}$
is  exact  and limits and colimits in functor categories are computed pointwise,
it suffices (by the previous paragraph) to check that the functor $\mathcal{A} \rightarrow \mathcal{D}$,  $X\mapsto G(X,\ldots,X)$  preserves  geometric \mbox{realisations and finite totalisations.}

For realisations, this follows immediately as $\Delta^{op}\rightarrow ( \Delta^{op})^n$ is left cofinal by \cite[Lemma 5.5.8.4]{lurie2009higher}.
If a cosimplicial object $X^\bullet : \Delta \rightarrow \mathcal{A}$   is right Kan extended from $\Delta _{\leq m}$, we note that 
$(X^\bullet, \ldots, X^\bullet) : \Delta^n \rightarrow \mathcal{A}^n$ is right Kan extended from $(\Delta _{\leq m})^n$. As  this is a finite limit condition and $G$ is exact in each variable, the multisimplicial object
$G(X^\bullet, \ldots, X^\bullet)$ is also right Kan extended from $(\Delta _{\leq m})^n$. Hence $G(\Tot(X^\bullet), \ldots, \Tot(X^\bullet)) \simeq \lim_{\Delta^n}G (X^\bullet, \ldots, X^\bullet)) \simeq \Tot \left(G (X^\bullet, \ldots, X^\bullet)\right)$.
For the final equivalence,  we have used  that the diagonal   \mbox{$\Delta  \rightarrow \Delta^n$ is right cofinal.}
\end{proof} 

\begin{proof}[Proof of  \Cref{polyrightext}]
If $F:\vect_k^\omega \rightarrow \mathcal{D}$ is of degree $n$, we know by
\Cref{JMcC} that the sifted-colimit-preserving left Kan extension $\L F :
\mod_k^{\geq 0} \rightarrow \mathcal{D}$ is $n$-excisive. 

The functor $\widetilde{F} := P_n(\Lan_{\mod_k^{\geq
0}}^{\mod_k} \L F) \simeq P_n( \L F \circ \tau_{\geq 0})$ is evidently $n$-excisive, and it preserves filtered colimits as the $t$-structure on $\mod_k$ is compatible with filtered colimits (cf.\ \cite[Proposition 1.3.5.21]{lurie2014higher}). Combining this observation with \Cref{resandPncommute} and \Cref{extensionofexcisives}, we can deduce  that the restriction of $\widetilde{F}$ to $\mod_{k,\geq 0}$ agrees with $\L F$.

\Cref{finitereals} then implies that $\widetilde{F}$ preserves 
 geometric realisations and finite totalisations. Hence, $\widetilde{F}|_{\perf_k^{\leq 0}}$ is right Kan extended from $F$ and preserves finite coconnective \mbox{realisations.} 
\end{proof}

\begin{example} 
As a consequence, we obtain $n$-excisive functors 
$$ \mathbb{L}\sym^n, \  \L \bigwedge^n,  \ \L \Gamma^n: \md_k \longrightarrow \md_k.  $$
extending the ordinary symmetric, exterior, and divided  power functors from Example \ref{tensorpower}
\end{example}

We have the following duality phenomenon:

\begin{proposition}[Duality]\label{duality}
Let $F:\vect_k^\omega\longrightarrow \mod_k$ be a functor of finite degree. Then the functor $F^\vee : \vect_k^\omega\longrightarrow \mod_k $ given by $M \mapsto (F(M)^\vee)^\vee$ is right-extendable and its extension $\widetilde{F^\vee}$ satisfies
$\widetilde{F^\vee}(M) \simeq (\widetilde{F}(M^\vee))^\vee $
for all $M\in \perf_k$.
\end{proposition}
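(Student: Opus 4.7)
The plan is to check that $F^\vee$ inherits finite degree from $F$, apply \Cref{polyrightext} to obtain $\widetilde{F^\vee}$, and then identify $\widetilde{F^\vee}|_{\perf_k}$ with $G(M) := \widetilde{F}(M^\vee)^\vee$ via the uniqueness of $n$-excisive extensions on $\perf_k$.

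For the degree check, I would use that $(-)^\vee$ is a contravariant self-equivalence of $\vect_k^\omega$ preserving direct sums, and that $(-)^\vee$ is exact on $\md_k$. A direct computation with the difference functor of \Cref{spf} gives
\[D_V F^\vee(W) \simeq (D_{V^\vee} F(W^\vee))^\vee \quad \text{for } V, W \in \vect_k^\omega,\]
and an induction on degree shows that $F^\vee$ is of degree $n$ whenever $F$ is. \Cref{polyrightext} then produces the sifted-colimit-preserving extension $\widetilde{F^\vee}: \md_k \to \md_k$. As a sanity check, the identification on $\perf_{k,\le 0}$ can be verified directly: writing $M \simeq \Tot(V^\bullet)$ with $V^\bullet$ cosimplicial in $\vect_k^\omega$, the right Kan extension formula gives $\widetilde{F^\vee}(M) \simeq \Tot(F^\vee(V^\bullet))$, while $M^\vee \simeq |V^{\bullet,\vee}|$, and the sifted-colimit-preservation of $\widetilde{F}$ together with exactness of $(-)^\vee$ yields the same answer.

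To extend the identification to all of $\perf_k$, I would invoke the following uniqueness principle: two $n$-excisive functors $\perf_k \to \md_k$ agreeing on $\vect_k^\omega$ must coincide, because their multilinear cross-effects are determined by their values at $(k, \ldots, k)$, and the Taylor tower then reconstructs each functor from its cross-effects. The main obstacle is verifying that $G$ is $n$-excisive on $\perf_k$; I would resolve this by dualising the finite Goodwillie tower of $\widetilde{F}$ on $\md_k$: its homogeneous layers take the form $D_i\widetilde{F}(N) \simeq (\cross_i\widetilde{F}(N, \ldots, N))_{h\Sigma_i}$ with $\cross_i\widetilde{F}: \md_k^i \to \md_k$ symmetric multilinear. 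Since multilinear functors between stable $\infty$-categories preserve finite limits as well as finite colimits in each variable, and $(-)^\vee$ interchanges the two, the composite $(M_1, \ldots, M_i) \mapsto \cross_i\widetilde{F}(M_1^\vee, \ldots, M_i^\vee)^\vee$ is again multilinear on $\perf_k$. Using that $(-)^\vee$ exchanges $(-)_{h\Sigma_i}$ with $(-)^{h\Sigma_i}$, this yields a finite filtration of $G$ with $i$-homogeneous layers $(\cross_i\widetilde{F}(M^\vee, \ldots, M^\vee)^\vee)^{h\Sigma_i}$, so $G$ is $n$-excisive on $\perf_k$, completing the argument.
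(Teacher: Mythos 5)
Your route to the first claim (right-extendability) is genuinely different from the paper's and is fine: you show that $F^\vee$ is again of finite degree via the split-difference computation $D_VF^\vee\simeq(D_{V^\vee}F)^\vee$ and then quote \Cref{polyrightext}, whereas the paper never mentions degrees here. It instead observes that $G(M)=\widetilde F(M^\vee)^\vee$ preserves finite geometric realisations and finite totalisations (because $\widetilde F$ does, by \Cref{polyrightext} and \Cref{finitereals}, and $(-)^\vee$ interchanges the two), which simultaneously shows that $G|_{\perf_{k,\leq 0}}$ is right Kan extended from $\vecto_k$ and preserves finite coconnective geometric realisations, i.e.\ that $F^\vee$ is right-extendable in the sense of \Cref{rightext}. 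Your ``sanity check'' on $\perf_{k,\leq 0}$ is exactly the paper's identification of $\widetilde{F^\vee}$ with $G$ there.

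The one step I would not accept as written is your uniqueness principle for passing from $\vecto_k$ to all of $\perf_k$. An $n$-excisive functor is \emph{not} reconstructed from its cross-effects: the layers of the Taylor tower determine the functor only up to the attaching data between the stages, so ``two $n$-excisive functors on $\perf_k$ agreeing on $\vecto_k$ coincide'' does not follow from equality of cross-effect values at $(k,\dots,k)$. The correct statement is a full-faithfulness assertion with extra hypotheses: restriction to $\vecto_k$ is fully faithful on $n$-excisive, filtered-colimit-preserving functors whose restriction to connective objects preserves sifted colimits (combine \Cref{extensionofexcisives} with \Cref{modulesarepsigma}); your excisiveness argument for $G$, together with \Cref{finitereals}, does supply these hypotheses, so the step is repairable. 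It is also avoidable: once $\widetilde{F^\vee}$ and $G$ agree on $\perf_{k,\leq 0}$ and both preserve finite geometric realisations (for $G$ this follows from the excisiveness you establish, or directly as in the paper), they agree on all of $\perf_k$ because every functor preserving finite geometric realisations is left Kan extended from $\perf_{k,\leq 0}$ (\Cref{lKanleq02}). That is the paper's one-sentence conclusion, and it renders the whole cross-effect analysis unnecessary.
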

\begin{proof} Since the functor  $(-)^\vee: \mod_k \rightarrow \mod_k^{op}$ preserves colimits, it is exact.
As $\widetilde{F}$ preserves geometric realisations and finite totalisations, the functor $G=  (\widetilde{F}(M^\vee))^\vee $ preserves finite realisations and finite totalisations.
This in turn implies that $G|_{\perf_k^{\leq 0}}$ is right Kan extended from $\vect_k^{\omega}$ and that $G|_{\perf_k^{\leq 0}}$ preserves finite coconnective geometric realisations. Hence, the functor $F^\vee$ is right-extendable.
The second claim follows since $\widetilde{F^\vee}$ and $G$ agree on $\perf_k^{\leq 0}$ and preserve finite geometric realisations.
\end{proof}

\begin{example} 
Since the $n^{th}$ symmetric power functor $M\mapsto (M^{\otimes})_{\Sigma_n}$ is dual to the $n^{th}$ divided power functor $M\mapsto (M^{\otimes})^{\Sigma_n}$ for $M\in \vect_k^{\omega}$, we conclude from the above  proposition that the extended functors satisfy $\Gamma^n(M) \simeq (\sym^n(M^\vee))^\vee $
for all $M\in \perf_k$.
\end{example}

\subsection{Extended functors and Bredon homology}\label{bredon}
We shall now attach $n$-excisive functors $\mod_k\rightarrow \mod_k$ to genuine $\Sigma_n$-spaces and  construct a spectral sequence to compute their values.

Given a field $k$ and a finite pointed $\Sigma_n$-set $(X,x)$, we write $k[X]$ \INN{110@$k[X]$}
for the quotient of the free $k$-vector space on $X$  by the relation $x\simeq 0$. 
We define  functors  \INN{060@$F_X,  F_X^h$}
$F_X, F_X^h: \vect_k^\omega \rightarrow  \mod_k$  by setting
$$ F_X(M) :=   (k[X] \myotimes{} M^{\otimes n})_{\Sigma_n} \mbox{ \ \ \ \ and\ \ \ \  }  F_X^h(M) :=   (k[X] \myotimes{} M^{\otimes n})_{h\Sigma_n}.\vspace{-3pt}$$

\begin{proposition} 
The functors $F_X$  and $F_X^h$ \vspace{-3pt}  are of degree $n$ in the sense of \Cref{spf}.
\end{proposition}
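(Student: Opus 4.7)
The plan is to realise $F_X$ and $F_X^h$ as post-compositions of the $n$-th tensor power functor with exact functors, and then to invoke standard preservation properties of degree.

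View the functor $M \mapsto M^{\otimes n}$ as a functor $\vect_k^\omega \to \fun(B\Sigma_n, \md_k)$ using the standard permutation action. Then $F_X$ and $F_X^h$ factor as this functor followed by post-composition with $N \mapsto (k[X] \otimes N)_{\Sigma_n}$ and $N \mapsto (k[X] \otimes N)^{h\Sigma_n}$, respectively. Tensoring with the fixed object $k[X]$ is exact (as $k[X]$ is flat over the field $k$), and the orbit and fixed-point functors $\fun(B\Sigma_n, \md_k) \to \md_k$ are left/right adjoint to the trivial-action functor; being adjoints between stable $\infty$-categories, each preserves finite colimits and finite limits, and in particular fibre sequences. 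A short induction on $n$ using the identity $D_Y(L \circ F) \simeq L \circ D_Y F$ for exact $L$ shows that post-composing a functor of degree $n$ with an exact functor again yields a functor of degree $n$. It therefore suffices to prove that $M \mapsto M^{\otimes n}$ has degree $n$ as a functor into $\fun(B\Sigma_n, \md_k)$.

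For this, I would establish the more general claim that the diagonal $F(M) := G(M, \ldots, M)$ of any functor $G: (\vect_k^\omega)^n \to \mathcal{D}$, valued in a stable $\infty$-category and exact in each variable separately, is of degree $n$. The argument proceeds by induction on $n$; the case $n = 0$ is the trivial statement that constant functors have degree $0$. For the inductive step, exactness in each slot splits $G(Y \oplus M, \ldots, Y \oplus M)$ as a direct sum indexed by subsets $S \subseteq \{1, \ldots, n\}$, where the $S$-summand evaluates $G$ on $Y$ in the slots $i \in S$ and on $M$ elsewhere. The canonical map down to $G(M, \ldots, M)$ retains only the $S = \emptyset$ summand, so $D_Y F(M)$ is the direct sum over nonempty $S$. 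Each such term is the diagonal (in the remaining shared variable $M$) of a functor exact in $n - |S| \leq n - 1$ variables, hence of degree at most $n - 1$ by the inductive hypothesis.

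I do not anticipate a serious obstacle; the only mild bookkeeping is to check that the binomial decomposition above is compatible with the $\Sigma_n$-action on the target, which is automatic because the action permutes summands of each fixed cardinality $|S|$ among themselves and therefore does not interfere with the direct-sum splitting.
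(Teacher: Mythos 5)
Your overall strategy --- binomially expanding $(Y\oplus M)^{\otimes n}$ and inducting on $n$ --- is the same as the paper's, but the modularisation through ``post-composition with an exact functor'' breaks down for $F_X$. The strict coinvariants functor $N\mapsto N_{\Sigma_n}$ is only defined on the ordinary abelian category of discrete $\Sigma_n$-representations (on $\fun(B\Sigma_n,\md_k)$ the left adjoint to the trivial-action functor is the \emph{homotopy} orbit functor, which computes $F_X^h$, not $F_X$); that abelian category is not stable, and in characteristic $p\le n$ coinvariants is right exact but not left exact, so it does not preserve fibre sequences. Hence the identity $D_Y(L\circ F)\simeq L\circ D_YF$, which genuinely requires $L$ to preserve fibres, is not available for $L=(k[X]\otimes -)_{\Sigma_n}$ as you have set things up. (For $F_X^h$ your factorisation is fine, modulo the slip that the relevant functor is homotopy orbits $(-)_{h\Sigma_n}$, not homotopy fixed points.)

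The gap is repairable in two ways. (i) For the specific functor $F=(-)^{\otimes n}$, every map $F(Y\oplus M)\to F(M)$ arising in the iterated difference construction is the projection onto a direct summand, so its fibre is the complementary summand; any \emph{additive} $L$ commutes with these particular fibres, and additivity of strict coinvariants is all you actually need. (ii) Alternatively --- and this is what the paper does --- apply coinvariants before differencing: the binomial decomposition exhibits $D_YF_X(M)$ as $\bigoplus_{j<n}\bigl(k[X]\otimes\Ind_{\Sigma_{n-j}\times\Sigma_j}^{\Sigma_n}(Y^{\otimes(n-j)}\otimes M^{\otimes j})\bigr)_{\Sigma_n}$, and the projection formula $(\Ind_H^G W)_G\cong W_H$ rewrites each summand as $M\mapsto(V'\otimes M^{\otimes j})_{H'}$ for a fixed representation $V'$ and a subgroup $H'$ of a smaller product of symmetric groups. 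This forces one to strengthen the inductive hypothesis to functors of the form $Y\mapsto(V\otimes Y^{\otimes n})_H$ for arbitrary subgroups $H\subset\Sigma_m\times\Sigma_n$; that strengthened claim is the one ingredient your write-up is missing in order to close the induction at the level of strict orbits.
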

\begin{proof}
To show  that $F_X$ has  degree $n$, we prove the more general claim that  for any subgroup $H\subset \Sigma_m \times \Sigma_n$ and  $\Sigma_m$-vector space $V$ in $\vect_k^\omega$, 
the functor  $G$ given by 
$Y \mapsto (V \otimes Y^{\otimes n})_H $ is of degree $n$. 
This proves the  claim for $F_X$ by taking $m=n$,  $V = k[X]$, and $H = \Sigma_n$ \mbox{diagonally embedded.}

The general claim is evident for $n=0$. For $n>0$,
the binomial formula shows that $D_XG(Y) = \mathrm{fib}( G(X \oplus Y) \rightarrow G(Y))$ sends $Y \in \vect^\omega_k$ to $Y \mapsto \bigoplus_{j=0}^{n-1} \left(V \otimes \Ind_{\Sigma_{n-j} \times \Sigma_j}^{\Sigma_n}(X^{\otimes (n-j)} \otimes Y^{\otimes j})\right)_H.$
Using the projection formula, we deduce that  the functor $D_XG(Y)$ is in fact given by  a sum of functors   $Y \mapsto (V \otimes X^{\otimes (n-j)} \otimes Y^{\otimes j})_{H'}$ for $H'$ a subgroup of $(\Sigma_m \times \Sigma_{n-j}) \times \Sigma_j$ with $j\leq n-1$.  The claim follows by induction. A similar argument shows that $F_X^h$ is of degree $n$.
\end{proof}

By \Cref{polyrightext},   $F_X$ and $F_X^h$ admit  canonical $n$-excisive sifted-colimit-preserving extensions  $\mod_k\rightarrow \mod_k$, which we  denote by the same names.
Extending the assignments $(\Set^{\Fin}_\ast)^{\Sigma_n}\rightarrow \End^n_\Sigma(\mod_k)$ given by $X \mapsto F_X$,  $X \mapsto F_X^h$  in a sifted-colimit-preserving way, we obtain   functors 
$$ F_{(-)}, F_{(-)}^h\ : \ \mathcal{S}_\ast^{\Sigma_n} \simeq \mathcal{P}_{\Sigma}((\Set^{\Fin}_\ast)^{\Sigma_n}) \ \longrightarrow\ \End^n_\Sigma(\mod_k) $$ \INN{060@$F_X,  F_X^h$}
from genuine $\Sigma_n$-spaces to  $n$-excisive sifted-colimit-preserving endofunctors of $\mod_k$.

We will now describe a method for computing the value of $F_X$ and  $F_X^h$   {on a given $k$-module spectrum $M$.} First, we recall the following notion:
\begin{definition}\INN{080@$\widetilde{H}^{\Br}_\ast(X,\mu) $}\INN{030@$\widetilde{C}_\ast(-,\mu) $}  
Given an additive functor $\mu:(\Set^{\Fin}_\ast)^G \rightarrow \mod_k$ from  finite  pointed $G$-sets to $k$-module spectra, we define the \textit{(reduced) Bredon chains} $\widetilde{C}_\ast(-,\mu): \mathcal{S}_\ast^G  \rightarrow \mod_k$ as the left Kan extension of $\mu$ to the $\infty$-category of pointed  genuine $G$-spaces $\mathcal{S}_\ast^G  \simeq \mathcal{P}_{\Sigma}((\Set^{\Fin}_\ast)^G)$ . The \textit{(reduced) Bredon homology groups} of $X\in \mathcal{S}_\ast^G $ with coefficients in $\mu$ are given by 
$\widetilde{H}^{\Br}_\ast(X,\mu) : = \pi_\ast(\widetilde{C}_\ast(X,\mu))$.
\end{definition}
If  $X\in \mathcal{S}_\ast^{\Sigma_n} $ is the geometric realisation of a simplicial diagram $X_\bullet$ taking values in   finite pointed ${\Sigma_n}$-sets, 
  we have $F_X(M)\simeq |F_{X_\bullet}(M)|$ and $F^h_X(M)\simeq |F^h_{X_\bullet}(M)|$.
Using the skeletal filtration of the simplicial $k$-module spectra $F_{X_\bullet}(M)$ and   $F^h_{X_\bullet}(M)$  gives convergent half-plane spectral sequences 
$$E^2_{s,t} = \pi_s\left(\pi_t(F_{X_\bullet}(M))\right)  \ \   \ \ \Rightarrow \ \   \ \ \pi_{s+t}(F_X(M)) $$
$$E^{2,h}_{s,t} = \pi_s\left(\pi_t(F^h_{X_\bullet}(M))\right)  \ \   \ \ \Rightarrow \ \   \ \ \pi_{s+t}(F^h_X(M)) .$$
Thus, we have $E^2_{s,t}  = \widetilde{H}^{\Br}_s(X, \mu_t^M)$  and $E^{2,h}_{s,t}  = \widetilde{H}^{\Br}_s(X, \mu_t^{M,h})$, where $\mu_t^M, \mu_t^{M,h}: (\Set^{\Fin}_\ast)^{\Sigma_n}  \rightarrow \mod_k$ 
are given by $X\mapsto \pi_t(F_X(M))$ and $X\mapsto \pi_t(F^h_X(M))$, respectively.

The functors $\mu_t^M$ and $\mu_t^{M,h}$  are particularly computable when $M$ is perfect and coconnective. In this case, we can write $M=\Tot(M^\bullet)$ as a finite totalisation of a cosimplicial finite-dimensional $k$-vector space $M^\bullet$. For $X \in (\Set^{\Fin}_\ast)^{\Sigma_n}$ a finite pointed $\Sigma_n$-set, the functor  $(F_X)|_{\perf_k^{\leq 0}}$ is right Kan extended from $\vect_k^\omega$, which in turn implies 
that $F_X(M) \simeq \Tot(F_X(M^\bullet)) = \Tot(k[X] \myotimes{} (M^\bullet)^{\otimes n})_{\Sigma_n}$ and $F_X^h(M) \simeq \Tot(F_X^h(M^\bullet)) = \Tot(k[X] \myotimes{} (M^\bullet)^{\otimes n})_{h\Sigma_n}$
(by the dual of  \Cref{modulesarepsigma}). Dual remarks apply for $M$ connective and of finite type.
\INN{130@$\mu_\ast^M, \mu_\ast^{M,h}$}
In both cases, we can use the standard wrong-way maps to upgrade  $\mu_t^M$ and $\mu_t^{M,h}$ to   \textit{Mackey functors}. We recall that a Mackey functor consists of a pair of functors
$$(\mu :(\Set^{\Fin})^G \rightarrow \vect_k \  ,\  \mu^\natural: (\Set^{\Fin})^G \rightarrow \vect_k^{op})$$  from finite $\Sigma_n$-sets to $k$-vector spaces which agree on objects and such that whenever 
the left square below is a pullback of finite pointed $G$-sets, then the right hand square commutes:
\[
\begin{tikzcd}
	A \arrow[r, "f"] \arrow[d, "h"'] & B \arrow[d, "g"] \\
	C \arrow[r, "k"'] & D
\end{tikzcd}
\qquad\qquad
\begin{tikzcd}
	\mu(A) \arrow[r, "\mu(f)"] & \mu(B) \\
	\mu(C) \arrow[u, "\mu^\natural(h)"] \arrow[r, "\mu(k)"'] & \mu(D) \arrow[u, "\mu^\natural(g)"']
\end{tikzcd}
\]
We will later use the above spectral sequence to compute the homotopy of free partition Lie algebras.\\

\subsection{Admissible functors}
The main purpose of this subsection is to isolate a class of functors which preserve certain
totalisations in $\coh_{k, \geq 0}$. 
This will play a technical role at various stages in the axiomatic argument in
the following section. 

\newcommand{\grp}{\mathrm{Gr}^{\mathrm{pft}}}
We first need the following elementary and classical observation asserting that
endofunctors of $\md_{k, \geq 0}$ which preserve sifted colimits naturally commute with limits
of Postnikov towers. 
\begin{proposition} 
\label{truncbehaveswell}
Let $F: \md_{k,\geq 0} \rightarrow \md_{k,\geq 0}$ be a functor which preserves sifted
colimits. 
Suppose $V \rightarrow V'$ is a map in $\md_{k,\geq 0}$ such that $\tau_{\leq n} V
\simeq \tau_{\leq n} V'$. Then $\tau_{\leq n} F(V) \simeq \tau_{\leq n} F(V')$. 
\end{proposition}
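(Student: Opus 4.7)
The plan is to first reduce to the case where the map is the canonical truncation $V \to \tau_{\leq n} V$, and then to analyze the associated split cofiber via a Postnikov-style induction. Given any $V \to V'$ satisfying $\tau_{\leq n} V \simeq \tau_{\leq n} V'$, both composites $V \to \tau_{\leq n}V$ and $V \to V' \to \tau_{\leq n}V' \simeq \tau_{\leq n}V$ agree with the canonical truncation by the adjunction defining $\tau_{\leq n}$, so we obtain a commutative square whose vertical maps are the canonical truncations. Applying $\tau_{\leq n} F$ and invoking two-out-of-three, it suffices to prove the following reduced claim: for every $V \in \md_{k,\geq 0}$, the canonical map $V \to \tau_{\leq n} V$ induces an equivalence $\tau_{\leq n} F(V) \xrightarrow{\sim} \tau_{\leq n} F(\tau_{\leq n}V)$.

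Since $k$ is a field, every $V \in \md_{k,\geq 0}$ splits as $V \simeq A \oplus B$ with $A := \tau_{\leq n} V$ and $B := \tau_{\geq n+1} V$, so the canonical truncation is identified with the projection $A \oplus B \twoheadrightarrow A$, which admits an evident section $A \hookrightarrow A \oplus B$. Applying $F$ yields a split surjection $F(A \oplus B) \twoheadrightarrow F(A)$ and hence a splitting $F(A \oplus B) \simeq F(A) \oplus D(B)$, where $D(B) := \cofib(F(A) \to F(A \oplus B))$. The reduced claim is thus equivalent to showing that $D(B)$ is $(n+1)$-connective. With $A$ fixed, the functor $D$ preserves sifted colimits (since $A \oplus -$ does, as sifted colimits commute with finite products in $\md_{k,\geq 0} \simeq \mathcal{P}_\Sigma(\vect^\omega_k)$, and cofibers with a fixed source preserve all colimits in their target) and satisfies $D(0) \simeq 0$.

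The heart of the argument is to prove by induction on $m \geq 0$ that any sifted-colimit-preserving functor $G: \md_{k,\geq 0} \to \md_{k,\geq 0}$ with $G(0) \simeq 0$ sends $m$-connective objects to $m$-connective objects. The case $m = 0$ is automatic. For the inductive step, I would write an $m$-connective $B$ as $\Sigma B'$ with $B' \in \md_{k,\geq 0}$ being $(m-1)$-connective, and realize $\Sigma B' \simeq |(B')^\bullet|$ via the simplicial bar construction, whose $n$-th level is the $n$-fold product $(B')^{\oplus n}$. Thus the $0$-th level is $0$ and the higher levels are $(m-1)$-connective. Since $G$ preserves geometric realizations, $G(B) \simeq |G((B')^\bullet)|$, and the simplicial spectral sequence
\[
E^2_{s,t} = \pi_s\bigl(\pi_t G((B')^\bullet)\bigr) \Rightarrow \pi_{s+t}(G(B))
\]
has $E^2_{s,t} = 0$ unless $s \geq 1$ (since $G$ sends the zeroth simplicial level $(B')^{\oplus 0} \simeq 0$ to $0$, forcing $H_0$ of the relevant normalized chain complex to vanish) and $t \geq m-1$ (by the inductive hypothesis applied to $G((B')^{\oplus n})$ for $n \geq 1$). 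Consequently $E^2_{s,t}$ contributes only in total degree $\geq m$, so $G(B)$ is $m$-connective and the induction closes.

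The main obstacle I anticipate is rigorously establishing the sifted-colimit preservation and connectivity behavior of the cofiber functor $D$; once these facts are in place, the remaining argument reduces to the standard Postnikov-style induction outlined above.
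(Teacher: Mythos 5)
Your argument is correct, but it takes a genuinely different route from the paper's. The paper's proof postcomposes $F$ with $\Omega^\infty$ and uses the universal property of $\md_{k,\geq 0} \simeq \mathcal{P}_\Sigma(\vect^\omega_k)$ to write the resulting space-valued functor as a colimit of corepresentables $h_{V_0} = \hom_{\md_k}(V_0,-)$ with $V_0 \in \vect^\omega_k$; each $h_{V_0}$ visibly only sees $\tau_{\leq n}$ of its input, and the property is closed under colimits of functors, so the claim follows with no induction. You instead split $V \simeq \tau_{\leq n}V \oplus \tau_{\geq n+1}V$ over the field $k$, isolate the reduced functor $D(B) = \cofib(F(A) \to F(A \oplus B))$, and prove by induction (via the bar resolution of the suspension and the skeletal spectral sequence) that a reduced sifted-colimit-preserving endofunctor of $\md_{k,\geq 0}$ preserves $m$-connectivity. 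Your route is longer but proves a stronger, reusable connectivity estimate that is standard in functor calculus; the paper's is shorter and avoids induction entirely. One small imprecision: your justification that $D$ preserves sifted colimits via ``cofibers with a fixed source preserve all colimits'' is not quite the right statement, since the map $F(A) \to F(A\oplus B)$ varies with $B$; the correct reason is that $D$ is the cofiber of a natural transformation from the \emph{constant} functor at $F(A)$, and constant functors preserve sifted colimits because sifted index categories are weakly contractible (as you note, only preservation of geometric realisations is actually needed for the induction). This does not affect the validity of the argument.
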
 
\begin{proof} 
Since the functor  $\Omega^\infty: \md_{k,\geq 0} \rightarrow \mathcal{S}$ preserves products and sifted
colimits, the functor $\Omega^\infty \circ F: \md_{k,\geq 0} \rightarrow \mathcal{S}$ has the same properties.
Hence $\widetilde{F}:= \Omega^\infty \circ F$ is left Kan extended from a  
product-preserving functor $\vect_k^\omega \rightarrow \mathcal{S}$, since
$\md_{k,\geq 0} =\mathcal{P}_\Sigma( \vect_k^\omega)$. 
The general theory of $\mathcal{P}_\Sigma$ shows now that any
such functor can be written as a sifted colimit of functors of the form 
$$ h_{V_0}(-) = \hom_{\md_k}(V_0, -) : \md_{k,\geq 0} \rightarrow \mathcal{S},  $$
for $V_0 \in \vect_k^\omega$. 
It is clear that $h_{V_0}$ has the desired property:   if 
$V \rightarrow V'$ is a map in $\md_{k,\geq 0}$ which  induces an equivalence on
$\tau_{\leq n}$, then $\tau_{\leq n} h_{V_0}(V) \rightarrow \tau_{\leq n} h_{V_0}(V')$
is an equivalence. 
The result follows
as the collection of all functors 
$\md_{k,\geq 0} \rightarrow \mathcal{S}$ 
satisfying this property is closed under colimits. 
\end{proof}

\begin{definition}[Admissible functors]\label{admissiblefunctors}  
Let $F: \md_{k,\geq 0} \rightarrow \md_{k,\geq 0}$ be a functor which preserves sifted
colimits. 
We will say that $F$ is \emph{admissible} if the
following hold: 
\begin{enumerate}
\item 
The functor $F$ preserves the full subcategory $\coh_{k, \geq 0} \subset
\md_{k,\geq 0}$  of finite type $k$-modules. 
\item 
If $X^\bullet$ is a cosimplicial object of $\coh_{k, \geq 0}$ such that 
 $\mathrm{Tot}(X^\bullet)$ (computed in $ \md_k$) belongs to 
$\coh_{k, \geq 0}$, then 
$F( \mathrm{Tot}(X^\bullet)) \longrightarrow \mathrm{Tot}( F(X^\bullet))$ \vspace{2pt} is an equivalence, where $\mathrm{Tot}( F(X^\bullet))$ is also computed in $\mod_k$.
In particular, this means that $\mathrm{Tot}( F(X^\bullet))$ is connective.
\end{enumerate}
\end{definition} 

\begin{proposition} 
Let $F: \md_{k,\geq 0} \rightarrow \md_{k,\geq 0}$ be a functor which preserves sifted
colimits and preserves the full subcategory $\coh_{k, \geq 0}$. 
Then we can define a functor  \INN{060@$F^\vee$}
$F^\vee: \perf_{k, \leq 0} \rightarrow \coh_{k, \leq 0}$ by the formula  $V \mapsto
F(V^{\vee})^{\vee}$. 
The functor $F$ is admissible if and only if $F^\vee$ preserves finite
coconnective geometric realisations (\Cref{cfgr}). 
\label{critadmissible}
\end{proposition}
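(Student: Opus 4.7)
The strategy is to use linear duality $(-)^\vee$ to interchange cosimplicial diagrams in $\coh_{k,\geq 0}$ with totalisation in $\coh_{k,\geq 0}$ and simplicial diagrams in $\coh_{k,\leq 0}$ with realisation in $\coh_{k,\leq 0}$. The essential inputs are that $(-)^\vee$ interchanges these two subcategories, sends colimits to limits, and restricts to an involution on $\coh_k$ (as all homotopy groups are finite-dimensional). An auxiliary consequence used throughout: for a cosimplicial $Z^\bullet$ in $\coh_{k,\geq 0}$ with $\Tot(Z^\bullet)\in\coh_{k,\geq 0}$, we have $\Tot(Z^\bullet)^\vee\simeq|(Z^\bullet)^\vee|$ in $\coh_{k,\leq 0}$, since $(|(Z^\bullet)^\vee|)^\vee\simeq \Tot((Z^\bullet)^{\vee\vee})\simeq\Tot(Z^\bullet)$ by preservation of colimits under $(-)^\vee$ together with the double-dual identity on $\coh_k$.

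For the forward direction, given an $m$-skeletal simplicial $V_\bullet$ in $\perf_{k,\leq 0}$ with $|V_\bullet|\in\perf_{k,\leq 0}$, I would dualise to an $m$-coskeletal cosimplicial $(V_\bullet)^\vee$ in $\perf_{k,\geq 0}$ with $\Tot((V_\bullet)^\vee)\simeq|V_\bullet|^\vee\in\perf_{k,\geq 0}\subset\coh_{k,\geq 0}$. Admissibility of $F$ gives $F(|V_\bullet|^\vee)\simeq\Tot(F((V_\bullet)^\vee))$, and dualising while invoking the auxiliary fact with $Z^\bullet=F((V_\bullet)^\vee)$ rewrites the right-hand side as a realisation, producing the desired $F^\vee(|V_\bullet|)\simeq|F^\vee(V_\bullet)|$.

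For the backward direction, I would first invoke \Cref{perfectgeneralisation} and \Cref{critextendfun} to extend $F^\vee$ to a sifted-colimit-preserving functor $\widetilde{F^\vee}:\mod_k\rightarrow\mod_k$, whose restriction to $\coh_{k,\leq 0}$ is right-complete. The central intermediate step is to identify $\widetilde{F^\vee}(M)\simeq F(M^\vee)^\vee$ for $M\in\coh_{k,\leq 0}$: right-completeness gives $\widetilde{F^\vee}(M)\simeq\mycolim{n}F(\tau_{\leq n}M^\vee)^\vee$, while \Cref{truncbehaveswell} combined with $M^\vee\simeq\mylim{n}\tau_{\leq n}M^\vee$ yields $F(M^\vee)\simeq\mylim{n}\tau_{\leq n}F(\tau_{\leq n}M^\vee)$, a Postnikov tower in $\perf_{k,\geq 0}$ whose dual is $\mycolim{n}\tau_{\geq -n}F(\tau_{\leq n}M^\vee)^\vee$; comparison with $\mycolim{n}F(\tau_{\leq n}M^\vee)^\vee$ is then a routine check on each homotopy group, since $\mycolim{n}\tau_{\geq -n}(-)$ acts as the identity on any fixed homotopy group in the colimit. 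Once this identification is in place, an arbitrary cosimplicial $X^\bullet$ in $\coh_{k,\geq 0}$ with $\Tot(X^\bullet)\in\coh_{k,\geq 0}$ dualises to a simplicial $Y_\bullet:=(X^\bullet)^\vee$ in $\coh_{k,\leq 0}$ with $|Y_\bullet|\simeq\Tot(X^\bullet)^\vee\in\coh_{k,\leq 0}$; applying realisation-preservation of $\widetilde{F^\vee}$ together with the identification gives $F(\Tot(X^\bullet))^\vee\simeq|F(X^\bullet)^\vee|$, and one final dualisation via the auxiliary fact produces $F(\Tot(X^\bullet))\simeq\Tot(F(X^\bullet))$, proving admissibility.

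The main obstacle I anticipate is the identification $\widetilde{F^\vee}(M)\simeq F(M^\vee)^\vee$ on $\coh_{k,\leq 0}$, since one side arises by formal left Kan extension from $\perf_{k,\leq 0}$ while the other involves $F$ on potentially non-perfect inputs; the key input reconciling the two descriptions is the compatibility of $F$ with Postnikov truncations furnished by \Cref{truncbehaveswell}, which allows the limit defining $F(M^\vee)$ to be computed in terms of perfect approximations and then dualised into the filtered colimit presentation of $\widetilde{F^\vee}(M)$.
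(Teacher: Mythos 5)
Your proposal is correct and follows essentially the same route as the paper's proof: extend $F^\vee$ to a sifted-colimit-preserving functor, identify it with $F((-)^\vee)^\vee$ on all of $\coh_{k,\leq 0}$ via Postnikov truncations and right-completeness (the paper cites \Cref{truncbehaveswell} for exactly this step), and then pass back and forth under linear duality between limit diagrams in $\coh_{k,\geq 0}$ and colimit diagrams in $\coh_{k,\leq 0}$. The only cosmetic point is that your auxiliary dualisation fact is most cleanly justified by conservativity of $(-)^\vee$ applied to the natural comparison map (as the paper does), rather than by a double-dual identity for the realisation, which is not known to be of finite type a priori.
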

\begin{proof} 
If $F^\vee$ preserves finite coconnective geometric realisations, then
$F^\vee$ extends uniquely to a sifted-colimit-preserving functor $\widetilde{F^\vee}: \md_k \to
\md_k$.  For $V \in \perf_{k, \leq 0}$, we have a natural equivalence
$\widetilde{F^\vee}(V) \simeq F(V^{\vee})^{\vee}$. 

This formula in fact holds for all $V \in \coh_{k, \leq 0}$. Indeed,
\Cref{truncbehaveswell} implies that for all $i$, the sequence
$\pi_i F((\tau_{\geq 0} V)^\vee) \leftarrow \pi_i F((\tau_{\geq -1} V)^\vee) \leftarrow \ldots $ stabilises to $\pi_i F(V^\vee)$. The canonical map $$  \widetilde{F^\vee}(V)  \simeq \mycolim{n}( F((\tau_{\geq -n} V)^\vee))^\vee \longrightarrow F(V^\vee)^\vee$$
is therefore an equivalence.
 
Now suppose that $W^\bullet$ is an augmented cosimplicial object in $\coh_{k, \geq 0}$ which is a limit diagram in 
$\md_k$.
Dualising, we obtain an augmented simplicial object $(W^{\vee})_\bullet$ of
$\coh_{k, \leq
0}$ which is a colimit diagram. Here we have used that linear duality is conservative. 
Now $\widetilde{F^\vee}( (W^{\vee})_\bullet)$ is a colimit diagram. 
Dualising again, we find that 
$F(W^\bullet)$ is a limit diagram, as desired.  
The reverse implication follows by a similar argument.
\end{proof} 

\begin{proposition}\label{excadm}
Let $F: \md_{k,\geq 0}\rightarrow \md_k$ be a functor which commutes with sifted
colimits. Suppose that $F$ 
carries $\coh_{k, \geq 0}$ into $\coh_{k, \geq 0}$ and that $F$
is $n$-excisive. Then $F$ is admissible. 
\end{proposition}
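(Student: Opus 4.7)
The first condition of admissibility holds by assumption, so I focus on the totalisation condition, for which I use the dual characterisation of \Cref{critadmissible}: it suffices to prove that the functor $F^\vee: \perf_{k,\leq 0} \to \coh_{k,\leq 0}$, $V \mapsto F(V^\vee)^\vee$, preserves finite coconnective geometric realisations.

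The strategy is to first extend $F$ to an $n$-excisive, filtered-colimit-preserving endofunctor $\widetilde{F}$ of $\md_k$ to which \Cref{finitereals} applies, and then translate the totalisation statement for $\widetilde{F}$ into the geometric realisation statement for $F^\vee$ via linear duality. For the extension, I set $\widetilde{F} := P_n(\Lan_{\md_{k,\geq 0}}^{\md_k} F)$; since the inclusion $i:\md_{k,\geq 0} \hookrightarrow \md_k$ admits $\tau_{\geq 0}$ as a right adjoint, the Kan extension equals $F \circ \tau_{\geq 0}$. This $\widetilde{F}$ is $n$-excisive and preserves filtered colimits (as do $F$, $\tau_{\geq 0}$, and $P_n$ on filtered-colimit-preserving inputs). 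Since $i$ is colimit-preserving (being a left adjoint), \Cref{resandPncommute} applies to give $\widetilde{F}|_{\md_{k,\geq 0}} \simeq P_n((F \circ \tau_{\geq 0})|_{\md_{k,\geq 0}}) \simeq P_n(F) \simeq F$, where the last step uses that $F$ is already $n$-excisive.

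With the extension in hand, \Cref{finitereals} shows that $\widetilde{F}$ preserves totalisations of cosimplicial objects in $\md_k$ which are right Kan extended from $\Delta_{\leq m}$ for some $m$. Now let $X_\bullet$ be an $m$-skeletal simplicial object of $\perf_{k,\leq 0}$ whose realisation $|X_\bullet|$ still lies in $\perf_{k,\leq 0}$. Taking $k$-linear duals produces an $m$-coskeletal cosimplicial object $X_\bullet^\vee$ in $\perf_{k,\geq 0}$ with $\Tot(X_\bullet^\vee) \simeq |X_\bullet|^\vee \in \perf_{k,\geq 0}$. Since both $X_n^\vee$ and $\Tot(X_\bullet^\vee)$ lie in $\md_{k,\geq 0}$, the values of $\widetilde{F}$ and $F$ agree on them, so
\[ F(\Tot(X_\bullet^\vee)) \simeq \widetilde{F}(\Tot(X_\bullet^\vee)) \simeq \Tot(\widetilde{F}(X_\bullet^\vee)) \simeq \Tot(F(X_\bullet^\vee)). \]
Dualising yields $F^\vee(|X_\bullet|) \simeq |F^\vee(X_\bullet)|$, which is the required compatibility.

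The main subtlety is verifying that the extended functor $\widetilde{F}$ faithfully restricts to $F$ on the connective part, i.e.\ that composing left Kan extension with the $n$-excisive approximation does not alter $F$ on $\md_{k,\geq 0}$. This rests on two facts already in hand: $F$ is $n$-excisive to begin with, and $P_n$ commutes with restriction along a finite-colimit-preserving functor. Once these are invoked, the remainder of the argument is a purely formal passage between totalisations and geometric realisations under linear duality.
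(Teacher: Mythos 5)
Your proposal is correct and follows essentially the same route as the paper: extend $F$ to an $n$-excisive, filtered-colimit-preserving endofunctor of $\md_k$ (the paper cites \Cref{extensionofexcisives}, while you make the extension explicit as $P_n(F\circ\tau_{\geq 0})$ exactly as in the proof of \Cref{polyrightext}), invoke \Cref{finitereals} to get preservation of finite totalisations, and conclude via linear duality and \Cref{critadmissible}. The extra care you take in checking that the extension restricts to $F$ on connectives and in spelling out the duality between $m$-skeletal realisations and $m$-coskeletal totalisations is exactly what the paper's terser proof is implicitly relying on.
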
 
\begin{proof} 
We first observe that $F$ extends uniquely to an $n$-excisive functor on $\md_k$
(\Cref{extensionofexcisives}), and that it therefore 
preserves all finite totalisations by \Cref{finitereals}. 
The functor $F^\vee$ on $\perf_k$ given by $V\mapsto  
F(V^{\vee})^{\vee}$  is also $n$-excisive  and therefore preserves finite geometric realisations. 
By \Cref{critadmissible}, it follows that $F$ is admissible. 
\end{proof}

\Cref{excadm} provides a large supply of examples of admissible functors. However, 
we will also need to work with functors, e.g.\ the free $\einf$-algebra functor, which
are not admissible. However, they will become 
admissible in the graded setting, and this observation will be crucial. 
\begin{definition}[Pointwise finite type] \label{pftobjects} 
Let $\grp \md_{k,\geq 0}$ \INN{070@$\grp \md_{k,\geq 0}$}
be the full subcategory 
of $\gr \md_k$ consisting of those objects $X_{\star}$ such that $X_i \in
\coh_{k, \geq 0}$ for all $i > 0$. 
We shall refer to these objects as \emph{pointwise of finite type}. Note that $\grcoh_{k, \geq 0}$ is a
subcategory of $\grp \md_{k,\geq 0}$. 
\end{definition} 
\begin{remark}
This is a weaker notion than being of finite type in the sense of \Cref{finitenessconditions}.
\end{remark}
We can give the following graded variant of \Cref{admissiblefunctors}:
\begin{definition}[Graded admissible functors]  
\label{def:admissible} 
Let $F: \gr \md_{k, \geq 0} \rightarrow \gr \md_{k, \geq 0}$ be a functor which
preserves sifted colimits. We will say that $F$ is \emph{admissible} if the
following conditions hold: 
\begin{enumerate}
\item 
The functor $F$ preserves the full subcategory $\grp \md_{k, \geq 0} \subset \gr
\md_{k, \geq 0}$. 
\item 
If $X^\bullet$ is a cosimplicial object of $\grp \md_{k, \geq 0}$ such that 
the totalisation $\mathrm{Tot}(X^\bullet)$ (computed in $\gr \md_k$) belongs to 
$\grp \md_{k, \geq 0}$, then 
$F( \mathrm{Tot}(X^\bullet)) \rightarrow \mathrm{Tot}( F(X^\bullet))$ is an equivalence.
In particular, the right-hand side is connective.
\end{enumerate}
\end{definition} 
We illustrate the technical advantage of working in the graded setting: 
\begin{example} \label{admissibleexample}
For some $n > 0$, consider the functor $\md_{k, \geq 0} \rightarrow \md_{k,\geq 0}$ given by $V \mapsto
V^{\otimes n}$. Since this is $n$-excisive, \Cref{excadm} implies that this functor is admissible. 
However, the functor $V \mapsto \bigoplus_{n> 0} V^{\otimes n}$ \textit{is not}
admissible as a functor $\md_{k, \geq 0} \rightarrow \md_{k, \geq 0}$. 

It is, however, easy to see that the assignment
$V \mapsto \bigoplus_{n > 0} V^{\otimes n}$ \textit{is} admissible as a functor 
$\gr \md_{k, \geq 0} \rightarrow \gr \md_{k, \geq 0}$. This holds  because each summand is admissible, and the
summands live in higher and higher internal degree. 
\end{example}

\newpage

\newcommand{\sqz}{\mathrm{sqz}}
\newcommand{\AS}{\mathfrak{A}^{\mathrm{fil}}}
\newcommand{\hcot}{\widehat{\cot}}
\newcommand{\adic}{\mathrm{adic}}
\newcommand{\ffil}{\mathrm{FilMod^\omega(k)}_{\geq 0}}

\section{The axiomatic argument}\label{filteredandgraded}
\label{axiomaticsec}
Given an augmented monad acting on $\mod_k$, we can consider the $\infty$-category of formal moduli problems based in algebras over this monad (cf.\ \Cref{{Cfmp}}).
In  this section, we shall prove that under certain conditions specified in \Cref{filteredrefinementdef}, this $\infty$-category
admits a ``Lie algebraic" description (cf.\ \Cref{fmpequiv}) as algebras over a monad constructed in terms of the  monadic bar construction.

\subsection{An informal overview}
\label{sec:informaloverview}  
We briefly recall some axiomatic aspects of  
{bar-cobar} duality, which goes back to the classical work of Moore \cite{MR0436178}. For recent modern treatments, we refer to \cite[Chapter 4]{ching2005bar}, 
\cite[Sections 3,4]{francis2012chiral}, or 
\cite[Chapter 6]{GRII}, \cite[Chapters 2,6]{LV}. 

Given an $\EE_\infty$-ring spectrum $k$, we write 
$\md_k$ for the $\infty$-category of $k$-module spectra. 
If  $\mathcal{O}$ is an $\infty$-operad in $\md_k$ (cf.\ e.g.\ \cite[Definition 4.1.4]{brantnerthesis}, then we can 
consider the $\infty$-category $\mathrm{Alg}_{\mathcal{O}}( \md_k)$ of
\emph{$\mathcal{O}$-algebras} in $\md_k$.
Suppose now that $\mathcal{O}(0) = 0, \mathcal{O}(1) = k$. 
Restriction  along the canonical map   from $\mathcal{O}$ to the trivial operad gives rise to the \emph{square-zero extension}  functor \INN{190@$\sqz$}
$$ \sqz: \md_k \rightarrow \mathrm{Alg}_{\mathcal{O}}(\md_k), $$
which turns an object $V \in \md_k$ into an $\mathcal{O}$-algebra whose  operadic multiplication maps are trivial. This  functor admits a left adjoint, which we will 
denote by\INN{032@$\cot$}   \INN{190@$\sqz$}
$$ \cot: \mathrm{Alg}_{\mathcal{O}}(\md_k) \rightarrow \md_k . $$
It is often called the \emph{cotangent fibre} or
\emph{$\mathcal{O}$-algebra homology} and can be thought of as the derived indecomposables functor.

\begin{remark} If $k$ is a field of characteristic zero, this was
discussed by Hinich \cite{HinichHomotopy}, \mbox{who studied} operads in chain complexes of $k$-vector spaces. For general $\einf$-ring spectra $R$,  modelled as commutative algebras in symmetric spectra,  these functors were also  studied in  \cite{Harpermodel1, Harpermodel2, Harpermodel3}.
\end{remark}

Informally speaking,   bar-cobar duality aims to recover an $\mathcal{O}$-algebra $A$
from $\cot(A)$ together with some additional structure placed on it. 
More formally, we  observe that  $C=\cot \circ \sqz$ defines a comonad on $\md_k$,  and abstract nonsense
gives rise to a functor
\begin{equation} \label{cotsqzadj} \mathrm{Alg}_{\mathcal{O}}( \md_k) \rightarrow \mathrm{coAlg}_{\cot \circ \sqz}(
\md_k).\end{equation}
Furthermore, the functor $\cot \circ \sqz$ 
can be identified with $V \mapsto \bigoplus_{n > 0} (\mathcal{K}(n) \otimes
V^{\otimes n})_{h \Sigma_n}$, where $\mathcal{K}(n) \in
\fun( B\Sigma_n, 
\md_k)$ is  a symmetric sequence of $k$-module spectra. 
This symmetric sequence is in fact the underlying 
symmetric sequence of the \emph{Koszul dual} cooperad $\mathcal{K} = \mathrm{Bar}(\mathcal{O})$, which one can
often identify explicitly, cf.\ e.g.\ \cite{GK, GKerr, GJ}. 
Taking $k$-linear duals, it follows that $\mathrm{Bar}(\mathcal{O})^{\vee}$ is an operad in
$\md_k$, and the functor $\cot^{\vee}$ takes values in algebras over 
$\mathrm{Bar}(\mathcal{O})^{\vee}$. 
Under suitable conditions, one may hope  that  
\eqref{cotsqzadj} will restrict to an equivalence on appropriate subcategories.

We mention some well-known results in this direction. 
First of all, the following general comparison result   shows that
one has an equivalence under connectivity hypotheses:
\begin{theorem}[Ching-Harper \cite{ChingHarper}] 
\label{CHtheorem}
Let $k$ be a connective commutative symmetric ring spectrum and let $\mathcal{O}$ be
an operad $\mod_k$ for which 
$\mathcal{O}(0) \simeq 0, \mathcal{O}(1) \simeq k$, and $\mathcal{O}(i)$ is
connective for all \mbox{$i \geq 0$.}
Let $\mathcal{K}$ be the associated cooperad on $\md_k$. 
Then the duality functor
$\mathrm{Alg}_{\mathcal{O}}( \md_k) \rightleftarrows 
\mathrm{coAlg}_{\mathcal{K}}( \md_k)$ restricts to an equivalence between
$0$-connected objects on both sides. 

\end{theorem}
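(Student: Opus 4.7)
The plan is to establish the comparison by first verifying that the cotangent fibre/square-zero adjunction is comonadic when restricted to $0$-connected objects, and then identifying the resulting comonad with the one induced by $\Barr(\mathcal{O})$. The first step is to unfold the bar construction: using the simplicial two-sided bar construction $\Barr_\bullet(\mathcal{O}, \mathcal{O}, A)$ and the definition of $\cot$ as the left adjoint to $\sqz$, one obtains a natural equivalence $\cot(\sqz(V)) \simeq \bigoplus_{n \geq 1}(\mathcal{K}(n) \otimes V^{\otimes n})_{h\Sigma_n}$, so the comonad $\cot \circ \sqz$ is exactly the functor induced by $\mathcal{K}$. Barr--Beck--Lurie reduces the theorem to showing two things on $0$-connected objects: (i) that $\sqz$ is conservative and preserves $\sqz$-split totalisations, and (ii) that the unit $A \to \Omega_{\mathcal{K}} \Barr_{\mathcal{O}}(A)$ and counit $\Barr_{\mathcal{O}} \Omega_{\mathcal{K}}(C) \to C$ are equivalences. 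Conservativity of $\sqz$ on $0$-connected objects is straightforward, since $\sqz$ is the identity on underlying modules.

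The core of the argument is to check that the unit is an equivalence for any $0$-connected $\mathcal{O}$-algebra $A$. Here I would exploit the natural arity/weight filtration on the simplicial bar construction: filtering $\Barr_\bullet(\mathcal{O}, \mathcal{O}, A)$ by the number of operadic generators and then applying $\Omega_{\mathcal{K}}$ produces a filtered object whose associated graded pieces have the form $(\mathcal{K}(n) \otimes A^{\otimes n})_{h\Sigma_n}$. Because $k$, $\mathcal{O}(i)$ and $\mathcal{K}(i)$ are connective and $A$ is $0$-connected, a standard connectivity estimate shows that $(\mathcal{K}(n) \otimes A^{\otimes n})_{h\Sigma_n}$ is at least $n$-connective, so the tower of partial filtration quotients converges in a strong sense. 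Dually, for a $0$-connected $\mathcal{K}$-coalgebra $C$, the same connectivity estimate (applied to the cobar tower) shows that $\Barr_{\mathcal{O}} \Omega_{\mathcal{K}}(C) \to C$ is an equivalence after finite Postnikov truncation, whence globally.

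The hard part will be controlling these filtrations rigorously: one must show that the weight filtration on the bar construction is compatible with the comonad structure, and that the resulting spectral sequences converge strongly under the hypothesis that everything in sight is connective. The technical subtleties include (a) choosing point-set models flexible enough to make the bar and cobar constructions homotopically meaningful, which is the reason Ching--Harper work with symmetric spectra and cofibrant replacements; (b) verifying that $\Omega_{\mathcal{K}}$ preserves the relevant filtered colimits and limits so that one can commute the filtration past $\Omega_{\mathcal{K}} \Barr_{\mathcal{O}}$; and (c) running an induction on the Postnikov tower of $A$ (respectively $C$) to reduce to finite stages where the filtration is eventually constant in each homotopical degree. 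The resulting equivalences on $0$-connected objects assemble into the desired equivalence of $\infty$-categories.
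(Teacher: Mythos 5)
The paper does not actually prove this statement: it is quoted verbatim as an external input from Ching--Harper \cite{ChingHarper} (the authors' own comonadicity arguments in Section \ref{axiomaticsec} are a separate, filtration-based variant used for their main theorems). So there is no internal proof to compare against; I can only judge your sketch against the known argument. Your overall architecture --- identify $\cot\circ\sqz$ with the functor underlying $\mathcal{K}=\Barr(\mathcal{O})$ via the two-sided bar construction, then use the connectivity estimate that $(\mathcal{K}(n)\otimes A^{\otimes n})_{h\Sigma_n}$ is at least $n$-connective for $0$-connected $A$ to force convergence of the completion tower --- is the right one and matches the Ching--Harper/Harper--Hess strategy.

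There is, however, a concrete error in step (i). For comonadicity of the adjunction $\cot \dashv \sqz$ via Barr--Beck--Lurie, the functor that must be conservative and preserve split totalisations is the \emph{left} adjoint $\cot$, not $\sqz$. Conservativity of $\sqz$ is indeed trivial but irrelevant; conservativity of $\cot$ on $0$-connected objects is precisely where the connectivity hypothesis does work (one must show that the bottom homotopy group of $\fib(A\to B)$ survives to $\fib(\cot(A)\to\cot(B))$, which again uses the convergence of the tower whose layers are the $(\mathcal{K}(n)\otimes(-)^{\otimes n})_{h\Sigma_n}$). Relatedly, your treatment of the counit is too quick: asserting that ``the same connectivity estimate'' applies to the cobar side does not follow by formal dualisation, since the cobar construction is a totalisation and homotopy orbits become (infinite) products of norm/fixed-point terms. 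Establishing that the $\mathrm{Tot}$-tower of $\Barr_{\mathcal{O}}\Omega_{\mathcal{K}}(C)\to C$ is pro-constant in each degree for an \emph{arbitrary} $0$-connected $\mathcal{K}$-coalgebra $C$ is the genuinely hard half of the theorem and needs its own connectivity estimates on the cosimplicial layers; as written, your sketch leaves this as an unproved symmetry claim.
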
 
The hypotheses of 0-connectedness (which means that  $\pi_n(X) = 0$ for all $n\leq 0$)
is crucial and 
essential for the convergence of certain filtrations. 
This result has many predecessors, including the result of Moore
\cite{MR0436178}, who proves \Cref{CHtheorem} for $\mathcal{O}$ the
nonunital associative operad (over a discrete ring $k$) by explicitly constructing the  adjunctions
on chain complexes. Quillen \cite{quillen1969rational} proves it for
$\mathcal{O}$ the Lie operad over $\mathbb{Q}$, where $\mathrm{coAlg}_{\mathcal{K}}$ becomes
(up to a shift)  the $\infty$-category of cocommutative coalgebras.  \vspace{5pt}

The   theorem of  Lurie \cite{lurie2011derivedX} and Pridham
\cite{pridham2010unifying} 
classifying formal moduli problems  in {characteristic zero}
can be interpreted 
as a result in this vein, albeit with some additional hypotheses and
conclusions. In particular, one needs to slightly extend the equivalence beyond the assumption of $0$-connectedness, and one needs to prove that  certain pullbacks are carried to pushouts.

We recall the work of Lurie and Pridham in more detail. Let $k$ be a field of characteristic zero, and 
write $\clgaug_{k}$ for the $\infty$-category of 
augmented $\einf$-algebras over $k$. Note that we can also regard this as the
$\infty$-category of algebras over the \emph{nonunital} $\einf$-operad in
$\md_k$,  so this is an instance of the situation described above.

\begin{definition}  \INN{032@$\cNoet$}
We say that $A \in \clgaug_k$ is \emph{complete local
Noetherian} if $A$ is connective, $\pi_0(A)$ is a 
Noetherian ring which is complete with respect to the augmentation ideal, and each $\pi_i(A)$ is a finitely generated $\pi_0(A)$-module.  
 
Let {$\cNoet $} denote the full subcategory of $ \clgaug_k$ spanned by such
algebras. 
\end{definition}  

It is well-known that the Koszul dual to the nonunital $\einf$-operad is the
shifted Lie
operad. 
It follows that if $A \in \clg_k^{\mathrm{aug}}$, then $\cot(A)^{\vee}[-1]$ is naturally equipped with the structure
of a differential graded Lie algebra, and we  obtain a functor
$$\mathfrak{D}: (\clg_k^{\mathrm{aug}})^{op} \rightarrow \mathrm{Alg}_{\liedg} (\mod_{k}).$$ Here
$ \mathrm{Alg}_{\liedg} (\mod_{k})$ denotes the $\infty$-category of Lie
algebras in $\mod_k$, which is presented by the model category differential
graded Lie algebras. 

The Lurie--Pridham \Cref{LP}  is   essentially equivalent to the following result:
\begin{theorem}[Lurie, Pridham]
\label{antiLiecomm}
The functor $\mathfrak{D}$ restricts to an anti-equivalence
$$ \mathfrak{D} : (\cNoet )^{op} \simeq   \mathrm{Alg}_{\liedg} (\mod_{k,\leq -1}^{\ft})$$
between $\cNoet $
and the $\infty$-category 
$ \mathrm{Alg}_{\liedg} (\mod_{k,\leq -1}^{\ft})$
of
differential graded Lie algebras whose homotopy groups are finite-dimensional in each degree and concentrated in negative degrees. 

Furthermore, 
given maps $A\rightarrow A''$ and $A' \rightarrow A''$ in $\cNoet $ which induce surjections on $\pi_0$,  
\mbox{$\mathfrak{D}$ takes} the associated pullback square to a pushout square of
differential graded Lie algebras. 
\end{theorem}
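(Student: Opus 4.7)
The plan is to realise $\mathfrak{D}$ as the left adjoint of a bar-cobar adjunction
$$\mathfrak{D} : \clgaug_k \rightleftarrows \mathrm{Alg}_{\mathrm{Lie}}(\md_k)^{op} : \mathfrak{D}^{\vee},$$
where $\mathfrak{D}^{\vee}$ is the Chevalley--Eilenberg-type functor sending a Lie algebra $\mathfrak{g}$ to a (completed) symmetric coalgebra on an appropriate shift of $\mathfrak{g}^{\vee}$ equipped with the Chevalley--Eilenberg differential. The characteristic-zero hypothesis makes the nonunital $\einf$-operad and the (shifted) Lie operad Koszul-dual, so on finite-type inputs the unit and counit of this adjunction can be computed explicitly by symmetric-power expansions. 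I would begin by verifying the equivalence on the collection of trivial square-zero extensions $k \oplus V$ where $V$ is a finite-dimensional $k$-module spectrum concentrated in strictly positive degrees: here $\cot(k \oplus V) \simeq V$, so $\mathfrak{D}(k \oplus V)$ is the abelian Lie algebra on an appropriate shift of $V^{\vee}$, and both the unit and counit can be checked to be equivalences by direct calculation on these test objects.

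To bootstrap from this base case, I would exploit Noetherianness. Every $A \in \cNoet$ admits the adic filtration
$$A \simeq \varprojlim_n A/\mathfrak{m}_A^n,$$
whose consecutive steps $A/\mathfrak{m}_A^{n+1} \to A/\mathfrak{m}_A^n$ are square-zero extensions by a perfect connective module with finite-dimensional homotopy groups. Dually, every $\mathfrak{g} \in \mathrm{Lie}^{\mathrm{coh}}_{k, \leq -1}$ admits a Postnikov-type tower whose successive fibres are abelian Lie algebras on shifts of finite-dimensional vector spaces, obtainable by killing homotopy one degree at a time. If each square-zero extension on the algebra side corresponds under $\mathfrak{D}$ to a Lie-algebra extension in the parallel tower, then an induction along these compatible filtrations upgrades the base case to a full equivalence between $\cNoet$ and $\mathrm{Lie}^{\mathrm{coh}}_{k, \leq -1}$. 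The inductive step amounts to checking that $\mathfrak{D}$ carries the elementary pullback squares realising a single square-zero extension into pushouts of Lie algebras; conversely, one needs to produce a complete local Noetherian algebra by successive square-zero extensions that recovers an arbitrary $\mathfrak{g}$, which is accomplished by inductively applying $\mathfrak{D}^{\vee}$ to the Postnikov filtration.

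For the general pullback-to-pushout assertion, I would first verify it on elementary pullbacks of the form $k \times_{k \oplus V[1]} k \simeq k \oplus V$, where the cotangent fibre manifestly realises the expected fibre sequence. A surjective pullback $A \times_{A''} A'$ can then be reduced to iterated elementary ones by factoring $A' \to A''$ through a finite tower of square-zero extensions by finite-dimensional vector spaces, available thanks to Noetherianness of $\pi_0(A'')$. The main obstacle will be convergence: $\mathfrak{D}^{\vee}$ involves an infinite product of symmetric powers, so one must carefully control completions and ensure the inductive filtrations on both sides are respected by $\mathfrak{D}$ in a manner compatible with the Chevalley--Eilenberg differential. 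This is precisely where coconnectivity and degreewise finite-dimensionality enter: they guarantee that in any fixed homological degree only finitely many terms of the Chevalley--Eilenberg complex contribute, so that the tower of equivalences established inductively passes to the limit without obstruction.
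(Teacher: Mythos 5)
Your overall architecture (reduce to trivial square-zero extensions, then use Noetherianness and an adic filtration to bootstrap) points in a reasonable direction, but two load-bearing steps are wrong or unproven.

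First, the base case is miscalculated, and the error propagates into the inductive scheme. It is not true that $\cot(k\oplus V)\simeq V$: the composite $\cot\circ\sqz$ is the Koszul-dual comonad, so $\cot(k\oplus V)$ is a sum over $n$ of terms built from the shifted Lie cooperad and $V^{\otimes n}$, and $\mathfrak{D}(k\oplus V)$ is the \emph{free} Lie algebra on $V^{\vee}[-1]$, not the abelian one (already $\mathfrak{D}(k[\epsilon]/\epsilon^2)$ is free on a generator in degree $-2$ with nonzero self-bracket). Koszul duality exchanges square-zero with free: square-zero algebras correspond to free Lie algebras, and completed free algebras to abelian Lie algebras. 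Consequently the adic tower of $A$ does \emph{not} correspond under $\mathfrak{D}$ to the Postnikov tower of $\mathfrak{D}(A)$; the associated graded of $\adic(A)$ is the free algebra on $\cot(A)$ placed in weight one (\Cref{grofadicisfree}), so the dual filtration on the Lie side is the weight filtration, not the filtration by homological degree. Your matching of the two towers, and hence the step producing a Noetherian algebra from an arbitrary $\mathfrak{g}$ by "applying $\mathfrak{D}^{\vee}$ to the Postnikov filtration," is therefore miscalibrated.

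Second, the convergence problem is named but not solved, and the reason offered addresses the wrong functor. Degreewise finiteness of the Chevalley--Eilenberg complex controls the right adjoint $C^*$; the crux is that the \emph{left} adjoint $\cot$ commutes with the limit of the adic tower, equivalently that $\cot$ of a complete filtered Noetherian algebra is complete. This is exactly where the passage from $0$-connected to merely connective inputs ($\pi_0(A)\neq k$) is non-formal (cf.\ \Cref{restrictiveremark}): a surjection such as $k[[x,y]]\to k[[x]]$ has non-nilpotent kernel, so your "finite tower of square-zero extensions" is infinite and you must again commute $\mathfrak{D}$ with its limit. The paper's route (which reproves a generalisation of this statement; the statement itself is quoted from Lurie and Pridham) resolves convergence weight by weight rather than degree by degree, by passing to the associated graded of the adic filtration where everything is free and weight-graded (\Cref{cotoffilafpiscomplete}, \Cref{adicfiltcomplete}), and then extracts the equivalence and the pushout property from Barr--Beck--Lurie comonadicity and cosimplicial cobar resolutions rather than sequential towers (\Cref{thm:mainaxiomatic}, \Cref{KDequivalences}). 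Some such weight-graded (or equivalent) control is indispensable and is missing from your proposal.
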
 
The   deduction of \Cref{LP} from 
\Cref{antiLiecomm} is   explained in Sections $6$ and $7$ of the survey paper \cite{LurieICM},
 or, using the   language of deformation theories, in \cite[Theorem 1.3.12]{lurie2011derivedX}.\vspace{5pt}
 
The main ``formal'' contribution of this paper is  a new method for proving results like \Cref{antiLiecomm} for $\infty$-categories of algebras more general than 
$\clg_k^{\aug}$ with $\chara(k)=0$. 
We will prove the relevant equivalence using Lurie's higher categorical version of the Barr--Beck comonadicity theorem (cf. Theorem 4.7.3.5 in \cite{lurie2014higher}), which we shall briefly recall for the reader's convenience:
\begin{theorem}[Barr--Beck--Lurie]\label{BBL}   
The adjunction  $F:\mathcal{C} \rightleftarrows \mathcal{D}:G$ is comonadic if and only if the following conditions hold true:
\begin{enumerate}
\item the functor $F$ is conservative, i.e.\ a morphism  $f$ in $\mathcal{C}$ is an equivalence if and only if $F(f)$ is an equivalence;
\item given an $F$-split cosimplicial object $X^\bullet$ in $\mathcal{C}$ (cf.\ \cite[Section 4.7.2]{lurie2014higher}), the diagram  $X^\bullet$ admits a limit in $\mathcal{C}$, which is preserved by $F$.
\end{enumerate}
\end{theorem}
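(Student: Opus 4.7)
The plan is to reduce the statement to showing that the canonical comparison functor $K : \mathcal{C} \to \mathrm{coAlg}_T(\mathcal{D})$ associated to the comonad $T := FG$ is an equivalence of $\infty$-categories. The ``only if'' direction is a formality, since the forgetful functor $\mathrm{coAlg}_T(\mathcal{D}) \to \mathcal{D}$ is conservative and does preserve limits of those cosimplicial objects that become split after applying it. So I would focus the work on the ``if'' direction.

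First I would construct a candidate right adjoint $R$ to $K$. Given a $T$-coalgebra $(D,\delta)$, form the cosimplicial cobar object $C^\bullet \in \mathrm{Fun}(\Delta,\mathcal{C})$ with $C^n = G T^n D$, with coface and codegeneracy maps built from the comultiplication of $T$ and from $\delta$. After applying $F$, the natural augmentation $D \to F(C^\bullet)$ acquires an extra codegeneracy coming from $\delta$ and from the counit of $F \dashv G$, and is therefore an $F$-split cosimplicial diagram in the sense of the theorem. Hypothesis (2) then says that $\lim C^\bullet$ exists in $\mathcal{C}$ and is preserved by $F$, and I would set $R(D) := \lim C^\bullet$.

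Next I would assemble $K \dashv R$ into an honest adjunction and verify that both unit and counit are equivalences. The counit $KR(D) \to D$ is read off from the augmentation of the cosimplicial object above; to show it is an equivalence, apply the forgetful functor $\mathrm{coAlg}_T(\mathcal{D}) \to \mathcal{D}$ (which is conservative) and invoke the general fact that a split augmented cosimplicial object is an absolute limit, so that $F$ preserving the limit forces the augmentation to be an equivalence. For the unit $X \to RK(X)$, the analogous analysis inside $\mathcal{C}$ shows that $F$ applied to this map is an equivalence, and hypothesis (1), the conservativity of $F$, then upgrades this to an equivalence in $\mathcal{C}$.

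The main obstacle is executing this outline in an $\infty$-categorical setting, where ``split augmented cosimplicial object'' must be interpreted via a coherent system of extra codegeneracies, and where the cosimplicial cobar construction must be organised into a genuine functor $\mathrm{coAlg}_T(\mathcal{D}) \to \mathrm{Fun}(\Delta,\mathcal{C})$ compatible with the adjunction data. The real technical heart is the $\infty$-categorical analogue of the classical ``split augmented cosimplicial objects are absolute limits'' lemma; once this is in place, both the construction of $R$ and the conservativity step become mechanical, and the theorem follows.
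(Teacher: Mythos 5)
Your outline is correct and is essentially the standard argument: the paper itself gives no proof of this statement, merely recalling it from Lurie's \emph{Higher Algebra} (Theorem 4.7.3.5), and your strategy — cobar construction of a right adjoint to the comparison functor, the ``split (co)simplicial objects are absolute (co)limits'' lemma for the counit, and conservativity of $F$ for the unit — is precisely the skeleton of the proof given there (in dual form). The technical point you flag, namely organising the cobar construction and the extra codegeneracies coherently, is indeed where all the work lies in the $\infty$-categorical setting.
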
 

The Barr--Beck--Lurie theorem is a powerful tool in higher category theory which, just like its classical counterpart, 
has been used to establish various descent equivalences by checking certain
convergence results. We give an instructive  example   \mbox{(cf.\ \cite[Sec. 3]{MGaloisgroups}):}
\begin{example}[Nilpotent descent] 
\label{nilpdescent}
Fix a field $k$ and consider the algebra object $k[\epsilon]/\epsilon^2$,  equipped with the
obvious augmentation map   to $k$. 
We claim that the natural functor
$$ \mod_{k[\epsilon]/\epsilon^2}  \rightarrow \mod_k, \quad M
\mapsto k \otimes_{k[\epsilon]/\epsilon^2} M $$
is comonadic. To see this, we need to check that the functor $  k
\otimes_{k[\epsilon]/\epsilon^2} (-)$ 
is (1) conservative and (2) commutes with totalisations of cosimplicial objects in 
$\mod_{k[\epsilon]/\epsilon^2}$ which become split after applying $k
\otimes_{k[\epsilon]/\epsilon^2} (-)$. 
Both of these facts follow easily from the fibre sequence of $k[\epsilon]/\epsilon^2$-modules
$$k \rightarrow k[\epsilon]/\epsilon^2 \rightarrow k.  $$ 
Indeed, if $M \in \mod_{k[\epsilon]/\epsilon^2}$ satisfies $k
\otimes_{k[\epsilon]/\epsilon^2} M = 0$, then the above fibre sequence implies $M \simeq  0$. This implies (1).
Similarly, we can prove  statement (2) about commutativity with
totalisations by observing that if $M^\bullet$ is a cosimplicial object in
$\md_{k[\epsilon]/\epsilon^2}$ such that $k \otimes_{k[\epsilon]/\epsilon^2} M^\bullet$ is split, then the $\mathrm{Tot}$-tower defined by $M$  is a constant pro-object. \vspace{3pt}
\end{example} 
Since  we will be working with algebras rather than modules, the arguments
are more involved. The basic observation is that the convergence criterion appearing in the Barr--Beck--Lurie \Cref{BBL}(2) is far easier to check when working in the context of connected graded objects 
 (compare also the notion of pro-nilpotence in \cite{francis2012chiral}). 
As a consequence,  it becomes much  easier to establish a Koszul duality statement for connected  graded
$\mathbb{E}_\infty$-algebras.

Any augmented $\EE_\infty$-algebra is automatically endowed with a canonical $\mathfrak{m}$-adic filtration. With some care, we can use this fact to  transfer all convergence questions into the simpler connected graded setting. 
We note that the use of filtrations in this type of argument is
standard in the literature on operadic Koszul duality, \mbox{cf. for example
\cite{kuhn2003mccord, HarperHess, kuhn2017operad,  GRII,  ChingHarper}.}\vspace{3pt}

In \Cref{filteredrefinementdef} below, we  will isolate  axiomatic properties of a monad which will guarantee that the strategy outlined above passes through. Indeed, we prove in  \Cref{thm:mainaxiomatic}  that if these axioms are satisfied, then a generalised version of \Cref{antiLiecomm} holds true.

\subsection{The axiomatic setup}
Let $k$ be a field and suppose that $T$ is a sifted-colimit-preserving monad acting on $\mod_k$. In order to set up a cotangent formalism for $T$-algebras and apply it to $T$-based formal moduli problems, we shall need to assume that $T$ is augmented over the identity monad. 
In fact, we will adopt an equivalent formalism based in adjunctions rather than monads; this will later facilitate our  treatment of filtrations:

\newcommand{\afp}{\mathrm{afp}}
\begin{definition}  
\label{basicsit}An \textit{augmented monadic adjunction} consists of a pair of adjunctions 
\[
\begin{tikzcd}[column sep=5em]
	\operatorname{Mod}_{k,\ge 0}
	\arrow[r, shift left=1.4ex, "\mathrm{free}"]
	\arrow[r, shift right=1.4ex, "\mathrm{forget}"', leftarrow]
	\arrow[r, phantom, "\perp" description]
	 & \mathcal{C}
	\arrow[r, shift left=1.4ex, "\mathrm{cot}"]
	\arrow[r, shift right=1.4ex, "\mathrm{sqz}"', leftarrow]
	\arrow[r, phantom, "\perp" description]
	& \operatorname{Mod}_{k,\ge 0}
\end{tikzcd}
\]
whose composite is the identity  such that 
$\mathcal{C}$ is pointed and presentable and 
$(\mathrm{free}\dashv \mathrm{forget})$ is   monadic  with sifted-colimit-preserving right adjoint.
\end{definition}
\begin{remark}
Given an augmented monadic adjunction as above, we can identify $\mathcal{C}$ with the $\infty$-category of algebras for a monad $T=\mathrm{forget} \circ \mathrm{free}  $ on $\md_{k,\geq 0}$, and this monad is canonically augmented via $T = \mathrm{forget} \circ \mathrm{free}  \longrightarrow
 \mathrm{forget} \circ \mathrm{sqz} \circ\mathrm{cot} \circ \mathrm{free}   = \id$. 
\end{remark}
 
\begin{example} 
\label{aex:SCR}
The main example of interest to us is the case where $\mathcal{C}$ is the $\infty$-category of
augmented simplicial commutative $k$-algebras.
In this case, the above adjunctions are defined as expected: 
$\mathrm{free}$ builds the free simplicial commutative $k$-algebra,
$\mathrm{forget}$ sends an augmented simplicial commutative $k$-algebra to its augmentation ideal, 
$\mathrm{cot}$ takes the cotangent fibre, and $\mathrm{sqz}$ is the trivial square-zero construction functor. 

The augmented monad $\L\sym =\mathrm{forget} \circ \mathrm{free}$  \INN{120@$\L\sym$} sends $M$ to $\bigoplus_{i\geq 1} \L\sym^i(M)$, where $\L\sym^i$ is the left derived functor of the $i^{th}$ symmetric power \mbox{functor $(-)^{\otimes{i}}_{\Sigma_i}$.} The functor $\cot$ can be computed explicitly as $R\mapsto \Barr(1,\L\sym,IA)$, where $IA$ is the augmentation ideal of $A$.
\end{example}  
\begin{example} 
\label{aex:Einf}
There is also a variant of the above example when we consider $\mathcal{C}$ to
be the $\infty$-category of connective augmented $\mathbb{E}_\infty$-algebras over $k$. 
\end{example} 

\begin{example} 
\label{aex:op}
In fact, given any $\infty$-operad $\mathcal{O}$  in $\md_{k, \geq 0}$ with $\mathcal{O}(0) = 0$ and $
\mathcal{O}(1) \simeq k$, we can take $\mathcal{C}$ to be the $\infty$-category of
connective $\mathcal{O}$-algebras.  
For  $k$ a field of characteristic zero, we recover the desired adjunctions as in the discussion at the beginning of
\Cref{sec:informaloverview}. 
\end{example}

\begin{remark} 
We will often suppress the notation $\forget$ when it will not cause
confusion. For instance, given $A \in \mathcal{C}$, we shall write \INN{160@$\pi_i$}$\pi_i(A) =
\pi_i( \forget(A))$. 
\end{remark}

In the situation
of Definition~\ref{basicsit}, we hope to establish a version of
\Cref{antiLiecomm}.  
For this,  need   a   subcategory
$\mathcal{C}_{\afp} \subset \mathcal{C}$ of \emph{complete almost finitely presented
objects} satisfying\vspace{-2pt} \mbox{the following desiderata:}
\begin{enumerate} \label{desired}
\item  the adjunction $(\cot \dashv \sqz)$ restricts to a \vspace{-2pt}comonadic adjunction 
\begin{tikzpicture}[baseline=(C.base)]
	\node (C) at (0,0) {$\mathcal{C}_{\afp}$};
	\node (M) at (3,0) {$\mod^{\ft}_{k,\geq 0}$};
	% adjunction arrows
	\draw[->] ([yshift=2.5pt]C.east) -- node[above] { $\cot$}
	([yshift=2.5pt]M.west);
	\draw[<-] ([yshift=-2.5pt]C.east) -- node[below] {$\sqz$}
	([yshift=-2.5pt]M.west); 
\end{tikzpicture};
\item the monad  $  (M\mapsto  \cot (\sqz (M^\vee))^\vee)$ on $\mod^{\ft}_{k, \leq 0}  $ extends uniquely to a sifted-colimit-preserving monad $T^\vee$ on\vspace{5pt}  $\mod_k$ (cf.\ \Cref{extendmonad}). \\
 We write $\mathfrak{D}: \mathcal{C}_{\afp}^{op} \rightarrow \mathrm{Alg}_{T^{\vee}}$ for \vspace{5pt} the fully faithful embedding sending $A$ to $\cot(A)^\vee$;
\item given $A, A', A'' \in \mathcal{C}_{\afp}$ and $\pi_0$-surjective maps $A \rightarrow A''$ and $A' \rightarrow A''$, the fibre product $A \times_{A''} A' \in
\mathcal{C} $ also belongs to $\mathcal{C}_{\afp}$, and the following square is a pushout in $T^{\vee}$-algebras: 
$$ \xymatrix{
\mathfrak{D} (A'') \ar[d]  \ar[r] &  \mathfrak{D}( A') \ar[d]  \\
\mathfrak{D}(A) \ar[r] &  \mathfrak{D}( A \times_{A''} A')  \ .
}$$
\end{enumerate}
\begin{remark} Writing $T = \cot \circ \sqz : \mod^{\ft}_{k,\geq 0} \rightarrow \mod^{\ft}_{k,\geq 0}$ for 
the  comonad induced by the above adjunction, the above conditions give   equivalences 
$\mathcal{C}_{\afp} \simeq \mathrm{coAlg}_T( \mod^{\ft}_{k,\geq 0}) \simeq \mathrm{Alg}_{T^\vee}( \mod^{\ft}_{k,\leq 0})^{op}.$
This is a version of bar-cobar duality. 
\end{remark}  

\begin{remark}
In all our \Cref{aex:SCR,,aex:Einf} above, we will be able to explicitly identify the target subcategory
$\mathcal{C}_{\afp}$. More precisely,
$\mathcal{C}_{\afp}$ will be the $\infty$-category of augmented simplicial
commutative rings (resp. connective $\mathbb{E}_\infty$-rings) $A$
such that $\pi_0(A)$ is complete local Noetherian, and such that $\pi_i(A)$ is
finitely generated as a $\pi_0(A)$-module for all $i \geq 0$.  
\end{remark}

To construct a subcategory $\mathcal{C}_{\afp} \subset \mathcal{C}$  satisfying the desirable properties  above, we will need to work in a more refined context. 
Thinking of $\mathcal{C}$ as some $\infty$-category of algebras, we will  specify $\infty$-categories $\mathcal{C}^{\fil}$ and $\mathcal{C}^{\gr}$ of filtered and graded algebras, and assume that  these  are suitably linked by 
 several natural functors. Further technical assumptions,  which are readily checked in practice,   then allow us to give a Koszul dual description of $\mathcal{C}$-based \mbox{formal moduli problems.}
We summarise all required data in the following central definition, which is a \mbox{filtered enhancement of \Cref{basicsit}:}
\begin{definition}  
\label{filteredrefinementdef}
A \textit{filtered augmented monadic adjunction}    \vspace{-5pt} consists of a diagram of left adjoints \vspace{0pt} 
$$ \begin{tikzcd} 
	\md_{k,\geq 0} \arrow[r, "\mathrm{free}"] \arrow[d, "(-)_1"] & \mathcal{C} \arrow[r, "\mathrm{cot}"] \arrow[d, "\adic"] & \md_{k,\geq 0} \arrow[d, "(-)_1"] \\
	\fil \md_{k,\geq 0} \arrow[r, "\mathrm{free}"] \arrow[d, "\gr"] & \mathcal{C}^{\fil} \arrow[r, "\mathrm{cot}"] \arrow[d] & \fil \md_{k,\geq 0} \arrow[d, "\gr"] \\
	\gr \md_{k,\geq 0} \arrow[r, "\mathrm{free}"] & \mathcal{C}^{\gr} \arrow[r, "\mathrm{cot}"] & \gr \md_{k,\geq 0}
\end{tikzcd} $$
where the  vertical arrows $(-)_1: \md_{k,\geq 0} \to\fil \md_{k,\geq 0}$
 send $V $ to $(V)_1 = (\dots \rightarrow 0 \rightarrow 0 \rightarrow V) $ 
(cf.\ \Cref{filadjunctionunderlying}) and  the functor $\gr$ takes the associated graded. 
Moreover, we shall assume: 
\begin{enumerate} 
\item (Augmented Monadicity) 
All horizontal composites give the identity, and the   adjunctions $(\md_{k,\geq 0}  \rightleftarrows \mathcal{C})$, $(\fil \md_{k,\geq 0} \rightleftarrows \mathcal{C}^{\fil})$,  $
( \gr \md_{k,\geq 0} \rightleftarrows \mathcal{C}^{\gr})$
are monadic with sifted-colimit-preserving right adjoints.
\item (Adjointability)  
Taking right adjoints of the top vertical arrows gives   commutative squares \vspace{-1pt}
$$\begin{tikzcd}
	\md_{k,\geq 0} \arrow[r, "\mathrm{free}"] & \mathcal{C} \arrow[r, "\mathrm{cot}"] & \md_{k,\geq 0} \\
	\fil \md_{k,\geq 0} \arrow[r, "\mathrm{free}"] \arrow[u, "\F^1"] & \mathcal{C}^{\fil} \arrow[r, "\mathrm{cot}"] \arrow[u, "\F^1"] & \fil \md_{k,\geq 0} \arrow[u, "\F^1"]
\end{tikzcd}\vspace{-2pt}  $$
and the unit $\id_{\mathcal{C}} \rightarrow F^1 \circ \adic$ is an equivalence.

Taking right adjoints of the middle and lower horizontal maps gives  
commutative squares\vspace{-2pt} 
$$\begin{tikzcd}
	\fil \md_{k,\geq 0} \arrow[d, "\gr"'] 
	& \mathcal{C}^{\fil} \arrow[d, "\gr"] \arrow[l, below, "\mathrm{forget}"'] 
	& \fil \md_{k,\geq 0} \arrow[d, "\gr"] \arrow[l, below, "\mathrm{sqz}"']   \\
	\gr \md_{k,\geq 0} 
	& \mathcal{C}^{\gr} \arrow[l, below, "\mathrm{forget}"'] 
	& \gr \md_{k,\geq 0}  \arrow[l, below, "\mathrm{sqz}"'] 
\end{tikzcd}$$

\item (Admissibility) \label{grtower}
 The map $  \gr \md_{k,\geq 0} \xrightarrow{\forget \circ \free}  \gr \md_{k,\geq 0}$  
is  admissible  \mbox{(cf.\ \Cref{def:admissible}).} \\
Given $A \in\mathcal{C}^{\gr}$, there is a functorial tower
$\{A^{(i)}\}_{i \geq 1}$ in $\mathcal{C}^{\gr}$ 
and compatible maps $A \to
A^{(i)}$ satisfying:
\begin{enumerate}
\item there is a natural isomorphism $A^{(1)} \simeq \sqz \circ \cot(A)$ of objects over $A$ in
$\mathcal{C}^{\gr}$;
\item for $i>1$, there is a natural isomorphism $
\forget(A^{(i-1)})/\forget (A^{(i)})  \simeq G_i ( \cot(A))$ for an admissible and $i$-increasing
functor $G_i:  \gr \md_{k,\geq 0} \rightarrow \gr \md_{k,\geq 0}$;
\item  the map $\forget(A) \rightarrow  \forget(A^{(i)})$ in $  \gr \md_{k,\geq 0}$
induces an equivalence on graded pieces \vspace{-7pt}of (internal) degree $\leq i$. \\
\end{enumerate}
\hspace{-22pt} Let $\mathcal{C}^{\gr}_{\afp}\subset \mathcal{C}^{\gr}$\INN{030@$\mathcal{C}^{\gr}_{\afp}$}
 consist of all $A$  with $\cot(A) \in \gr^{\ft}\mod_{k,\geq 0}$. Write $\mathcal{C}^{\fil}_{\afp}\subset \mathcal{C}^{\fil}$ \INN{030@$\mathcal{C}^{\fil}_{\afp}$} for the full  subcategory of all complete
$A $ with $\gr(A) \in \mathcal{C}^{\gr }_{\afp}$. Let $\mathcal{C}_{\afp}\subset \mathcal{C}$ 
\INN{030@$\mathcal{C}_{\afp}$}
consist of all $A$ \mbox{with $\adic(A) \in \mathcal{C}^{\fil}_{\afp}$.}   \vspace{-9pt} 
\item (Coherence) The following conditions hold:
\label{coherencehyp} 
\begin{enumerate}
\item  
when $A, A', A'' \in \mathcal{C}^{\gr }_{\afp}$  and 
we have maps $A \rightarrow A'', A' \rightarrow A''$ which induce surjections on $\pi_0$, then
the fibre product $A \times_{A''} A' \in \mathcal{C}^{\gr}$ also belongs to 
$\mathcal{C}^{\gr }_{\afp}$;
\item
if $V \in \gr^{\ft} \mod_{k,\geq 0}$, then $\sqz(V) \in  \mathcal{C}^{\gr }_{\afp}$. 
\end{enumerate}

\item (Completeness)  \label{goodfc}
If $A \in \mathcal{C}^{\fil}_{\afp}$, then the following conditions hold true:
\begin{enumerate}[a)]
\item  
the cotangent complex $\cot(A) $ is complete;
\item
the adic filtration $\adic( F^1 A)$ on $F^1 A \in \mathcal{C}$ is complete. 
\end{enumerate}
\end{enumerate}

\end{definition}

\begin{remark}
The first part of the adjointability axiom (2) asserts that taking free algebras and
taking the cotangent fibre commutes with taking underlying objects. 
The second  part ensures that taking underlying objects or taking a square-zero extension  commutes
with passage \mbox{to associated gradeds.}
The functor $\adic: \mathcal{C} \rightarrow \mathcal{C}^{\fil}$ is an abstraction of the construction
which sends an augmented commutative ring $R$ with augmentation ideal
$\mathfrak{m}$ to its (derived) $\mathfrak{m}$-adic filtration.
In practice, the tower $A^{(i)}$ appearing in the admissibility axiom (3) will always be constructed via an adic filtration in graded objects.
\end{remark}

\begin{remark} 
The reader is encouraged to keep in mind the following example (which is
discussed in more detail starting in \Cref{setupSCR}). Let
$\mathcal{C}$ be the $\infty$-category of augmented simplicial commutative
$k$-algebras, 
as in \Cref{aex:SCR}. Then, we can let $\mathcal{C}^{\fil}$  be the
$\infty$-category of augmented filtered simplicial commutative $k$-algebras and
$\mathcal{C}^{\gr}$ the  $\infty$-category
of augmented graded simplicial commutative $k$-algebras. 
In this case, one can show the following facts: 
\begin{enumerate}
\item   \INN{030@$\mathcal{C}^{\gr}_{\afp}$}
$\mathcal{C}^{\gr}_{\afp}$ consists of graded simplicial
commutative rings $A$ such that the bigraded ring $\pi_*(A)$ has the 
property that $\pi_0(A)$ is Noetherian and $\pi_i(A)$ is finitely generated \mbox{over
$\pi_0(A)$ for all $i$;}
\item\INN{030@$\mathcal{C}^{\fil}_{\afp}$}
$\mathcal{C}^{\fil}_{\afp}$ consists of those complete filtered simplicial
commutative rings $A$ whose associated graded $\gr(A)_*$ is almost finitely
presented as the previous item. This   implies that $\pi_0(A)$ is a 
complete local Noetherian ring and each $\pi_i(A)$ is finitely generated as a
$\pi_0(A)$-module;
\item $\mathcal{C}_{\afp}$ consists of all complete local Noetherian objects in the sense of  \Cref{cnscr}
\INN{030@$\mathcal{C}_{\afp}$.}
\end{enumerate}
\end{remark} 

\begin{remark} 
Given any augmented $\infty$-operad $\mathcal{O}$ in $\mod_{k,\geq 0}$,  there is a natural $\infty$-category of filtered $\mathcal{O}$-algebras (i.e.\ $\mathcal{O}$-algebras in filtered objects), as well as one of graded $\mathcal{O}$-algebras.  
If $\mathcal{O}$ satisfies reasonable finiteness properties, then these $\infty$-categories are linked by a diagram satisfying conditions $(1)-(3)$ of \Cref{filteredrefinementdef}, where $(3)$ can be handled using the homotopy completion tower for graded $k$-modules (cf.\ \cite{HarperHess}).
More generally, it is not hard to check that  a similar statement holds for  any  augmented monad in the category of strict polynomial functors (cf.\ \cite{friedlander1997cohomology}), with the identity functor as weight one component. 
\end{remark} 

\begin{remark}
The admittedly clumsy axiomatisation adopted in \Cref{filteredrefinementdef} allows us to simultaneously handle the cases of $\EE_\infty$-$k$-algebras (which are algebras over an operad) and simplicial commutative $k$-algebras (for which this is not true).
\end{remark}

We can state and prove\vspace{-3pt} the main formal result in this paper:
\begin{theorem} 
\label{thm:mainaxiomatic}
Let $\mathcal{C}, \mathcal{C}^{\fil}, \ldots$ be part of a filtered augmented monadic adjunction \mbox{(cf. 
\Cref{filteredrefinementdef}).} 
\begin{enumerate}
\item  The adjunction $(\cot \dashv \sqz)$ restricts to a \vspace{-3pt}comonadic adjunction \begin{tikzpicture}[baseline=(C.base)]
	\node (C) at (0,0) {$\mathcal{C}_{\afp}$};
	\node (M) at (3,0) {$\mod^{\ft}_{k,\geq 0}$};
	% adjunction arrows
	\draw[->] ([yshift=2.5pt]C.east) -- node[above] { $\cot$}
	([yshift=2.5pt]M.west);
	\draw[<-] ([yshift=-2.5pt]C.east) -- node[below] {$\sqz$}
	([yshift=-2.5pt]M.west); 
\end{tikzpicture},  where $\mathcal{C}_{\afp}$ is defined as in \Cref{filteredrefinementdef} above or \Cref{cafp:defaxiomatic} below.

\item The monad  $  (M\mapsto  \cot (\sqz (M^\vee))^\vee)$ on $\mod^{\ft}_{k, \leq 0}  $ extends uniquely to a sifted-colimit-preserving \vspace{0pt}   monad $T^\vee$ \INN{200@$T^\vee$}
 on $\mod_k$ (cf.\ \Cref{extendmonad}). \vspace{5pt}  \\
 We write \INN{040@$\mathfrak{D}$}$\mathfrak{D}: \mathcal{C}_{\afp}^{op} \rightarrow \mathrm{Alg}_{T^{\vee}}$ \INN{012@$\mathrm{Alg}_{T^{\vee}}$}
 for \vspace{5pt} the fully faithful embedding sending $A$ to $\cot(A)^\vee$.

\item Given $A, A', A'' \in \mathcal{C}_{\afp}$ and $\pi_0$-surjective maps $A \rightarrow A''$ and $A' \rightarrow A''$, the fibre product $A \times_{A''} A' \in
\mathcal{C} $ also belongs to $\mathcal{C}_{\afp}$, and 
the following square is a pushout\vspace{-3pt} in $T^{\vee}$-algebras:
$$ \xymatrix{
\mathfrak{D} (A'') \ar[d]  \ar[r] &  \mathfrak{D}( A') \ar[d]  \\
\mathfrak{D}(A) \ar[r] &  \mathfrak{D}( A \times_{A''} A')
}$$
\end{enumerate}

\end{theorem}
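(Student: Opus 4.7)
My plan is to establish the theorem via a three-stage transfer — first working in the graded setting $\mathcal{C}^{\gr}$, then in the filtered setting $\mathcal{C}^{\fil}$, and finally in $\mathcal{C}$ itself — applying the Barr--Beck--Lurie theorem (\Cref{BBL}) at each stage and using axioms (2), (3), (5) of \Cref{filteredrefinementdef} as the bridges between them.

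The heart of the argument is graded comonadicity of $(\cot \dashv \sqz) : \mathcal{C}^{\gr}_{\afp} \rightleftarrows \grcoh_{k, \geq 0}$. Conservativity of $\cot$ follows by a quick induction along the tower $\{A^{(i)}\}$ of axiom (3): vanishing of $\cot(A)$ forces $A^{(1)} \simeq \sqz \circ \cot(A) \simeq 0$, and then each successive fibre $G_i(\cot(A))$ vanishes, so $A^{(i)} \simeq 0$ for all $i \geq 1$; axiom (3c) then forces $A \simeq 0$ internal-degree by internal-degree. For the convergence criterion, given a $\cot$-split cosimplicial diagram $A^\bullet$ in $\mathcal{C}^{\gr}_{\afp}$, I would fix an internal grading $n$ and observe that, by the $i$-increasing property of the $G_i$, only the truncations $\{A^{(i)}_\bullet\}_{i \leq n}$ contribute in grading $n$. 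This reduces the analysis to a finite filtration whose successive quotients are admissible functors applied to $\cot(A^\bullet)$; the preservation of finite coconnective totalisations built into the admissibility hypothesis (\Cref{def:admissible}) then yields the required commutation of $\cot$ with the totalisation.

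To pass from the graded to the filtered setting, I would exploit axiom (2): the right adjoints intertwine $\gr$ with $\cot$ and $\sqz$, and on complete filtered objects $\gr$ is conservative (\Cref{detect}). Combined with the stability of completeness under geometric realisations (\Cref{geomrealcomplete}), this lifts comonadicity to $\mathcal{C}^{\fil}_{\afp} \rightleftarrows \filcoh_{k, \geq 0}$. The passage from the filtered setting to $\mathcal{C}$ uses the $\adic$ functor and axiom (5): (5a) ensures $\cot$ lands in complete filtrations while (5b) ensures $\adic$ restricts to an equivalence of $\mathcal{C}_{\afp}$ with its filtered counterpart (with inverse $F^1$), and the first adjointability square gives $F^1 \circ \cot \simeq \cot \circ F^1$, yielding part (1). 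For part (2), the dual monad $M \mapsto \cot(\sqz(M^{\vee}))^{\vee}$ extends uniquely via \Cref{extendmonad}: the admissibility of $\forget \circ \free$ in axiom (3) passes through $k$-linear duality to give precisely the right-completeness and preservation of finite coconnective geometric realisations required there. The fully faithful embedding $\mathfrak{D}$ is then obtained by combining comonadic descent $\mathcal{C}_{\afp} \simeq \mathrm{coAlg}_{\cot \circ \sqz}(\coh_{k, \geq 0})$ with linear duality on $\coh_k$.

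For part (3), closure of $\mathcal{C}_{\afp}$ under the stated pullback follows from axiom (4a) after passing through $\gr \circ \adic$, noting that $\pi_0$-surjectivity is preserved by both functors. To see that $\mathfrak{D}$ sends such a pullback to a pushout, it suffices by duality to verify that $\cot(A \times_{A''} A') \simeq \cot(A) \sqcup_{\cot(A'')} \cot(A')$ in $\md_k$. I would proceed by induction along the tower $\{A^{(i)}\}$: the base case of square-zero algebras is automatic from linearity of $\cot \circ \sqz$, and the inductive step uses the fibre sequences $A^{(i)} \to A^{(i-1)} \to G_i(\cot(A))$ together with the $\pi_0$-surjectivity hypothesis (which ensures that the pullback becomes a pushout on the relevant graded pieces) to propagate the statement up the tower, concluding by completeness. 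The principal obstacle throughout will be verifying the convergence condition in graded comonadicity: one must carefully interleave the tower of axiom (3) with the totalisation of a split cosimplicial diagram, using the $i$-increasing property to argue grading-by-grading and reduce to finite filtrations on which admissibility applies directly.
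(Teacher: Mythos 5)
Your treatment of parts (1) and (2) follows the same route as the paper: graded comonadicity via the tower $\{A^{(i)}\}$ and admissibility of the $G_i$ (arguing internal-degree by internal-degree to reduce to finite filtrations), then transfer to $\mathcal{C}^{\fil}$ using adjointability and conservativity of $\gr$ on complete objects, then to $\mathcal{C}$ via $\adic$. Two corrections, one of which is substantive. First, conservativity should be argued for a \emph{map} $A\to B$ inducing an equivalence on $\cot$ (showing $A^{(i)}\to B^{(i)}$ is an equivalence for all $i$); $\mathcal{C}^{\gr}$ is not stable, so reducing to ``$\cot(A)=0$ implies $A=0$'' is not conservativity. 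Second, and more seriously, $\adic$ and $F^1$ are \emph{not} inverse equivalences between $\mathcal{C}_{\afp}$ and $\mathcal{C}^{\fil}_{\afp}$ — a filtered algebra is not recovered from its underlying algebra — and if they were, the hardest step would be trivial. What actually makes the descent to $\mathcal{C}$ work is the identification $\gr(\adic(A))\simeq \free([\cot(A)]_1)$ (\Cref{grofadicisfree}): a $\cot$-split cosimplicial object $X^\bullet$ in $\mathcal{C}_{\afp}$ admits the lift $\adic(X^\bullet)$ whose associated graded is free on the split object $[\cot(X^\bullet)]_1$, hence itself split, so the graded/filtered convergence criteria apply; one then recovers the totalisation in $\mathcal{C}$ by applying the limit-preserving right adjoint $F^1$. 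Your proposal never states this identification, and without it there is no way to verify the Barr--Beck--Lurie convergence condition in $\mathcal{C}$.

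Part (3) contains a genuine gap. The square is a pushout of \emph{$T^{\vee}$-algebras}; its underlying $k$-module is a two-sided bar construction, not a pushout in $\md_k$, so the claimed reduction ``by duality to $\cot(A\times_{A''}A')\simeq \cot(A)\sqcup_{\cot(A'')}\cot(A')$ in $\md_k$'' is either false (if $\sqcup$ means the module-level pushout) or a tautology that computes nothing. Moreover, the tower $\{A^{(i)}\}$ of axiom (3) exists only in $\mathcal{C}^{\gr}$, and its layers $\forget(A^{(i-1)})/\forget(A^{(i)})\simeq G_i(\cot(A))$ are cofibre sequences of underlying \emph{modules}, not of algebras, so an induction along it cannot propagate an algebra-level pushout statement. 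The working argument is different: by the comonadicity of part (1), each of $A,A',A''$ is the totalisation of its cobar resolution by trivial square-zero objects; the square-zero case holds because $\mathfrak{D}(\sqz(V))\simeq\free_{T^\vee}(V^\vee)$ and a pushout of free algebras is free on the corresponding (stable) pushout of generators; one then forms the levelwise pullback $Y^\bullet$ of the cobar resolutions, identifies $\Tot(Y^\bullet)\simeq A\times_{A''}A'$, and applies the convergence criterion once more — with the filtered lift $\adic(X^\bullet)\times_{\adic(X''^\bullet)}\adic(X'^\bullet)$ and again the identification $\gr\circ\adic\simeq\free([\cot(-)]_1)$ — to show $|\mathfrak{D}(Y^\bullet)|\simeq\mathfrak{D}(A\times_{A''}A')$. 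This last convergence check is the real content of part (3) and is absent from your proposal.
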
  
This  will allow us to prove that $\mathcal{C}$-based formal deformations are in fact governed by $T^\vee$-algebras. First, we   specify the  small objects which will parametrise our \vspace{-1pt} deformations \mbox{(cf.\ \cite[Definition 1.1.8]{lurie2011derivedX}):}
\begin{definition}[Artinian objects] \label{axiomaticart} 
Let $\mathcal{C}_{\art} \subset \mathcal{C}_{\afp}$   \INN{030@$\mathcal{C}_{\art}$}
denote the smallest full
subcategory \vspace{-3pt} satisfying: 
\begin{enumerate}
\item  
$\mathcal{C}_{\art}$ contains the terminal object $\ast$;
\item 
given $A \in \mathcal{C}_{\art}$ and a morphism $ A \rightarrow \sqz( k[n])$ with
$n> 0$, then the fibre product \mbox{$A' = A \times_{\sqz(k[n])} (\ast)$} also belongs to
$\mathcal{C}_{\art}$. \vspace{0pt}
\end{enumerate}
Observe that $(1)$ and $(2)$ together imply that  
$\mathcal{C}_{\art}$ contains  $\sqz( k[n])$ for any $n \geq 0$. 
\end{definition} 
We spell out  the definition of a $\mathcal{C}$-based formal moduli problem \mbox{(cf.\ \cite[Definition 1.1.14]{lurie2011derivedX})}:
\begin{definition} \label{Cfmp} 
A \emph{$\mathcal{C}$-based formal moduli problem} is a functor
$X: \mathcal{C}_{\art} \rightarrow \mathcal{S}$ with the properties: 
\begin{enumerate}
\item $X$ carries the  
terminal object $\ast$ to a contractible space;  
\item given $A \in \mathcal{C}_{\art}$ and a morphism $A \rightarrow \sqz( k[n])$ with $n  >
0$, the functor $X$ sends the fibre product  $\displaystyle  A' \simeq A \times_{\sqz(k[n])} (\ast)$   to a fibre product in  spaces
\mbox{$X(A') \simeq X(A) \times_{X(\sqz(k[n]))} X(\ast)$.}
\end{enumerate}
Let \INN{130@$\moduli_{\mathcal{C}}$} $\moduli_{\mathcal{C}} \subset \fun(\mathcal{C}_{\art}, \mathcal{S})$ be the $\infty$-category of formal moduli problems. 

Given $X \in \moduli_{\mathcal{C}}$, the functor
$X \circ \sqz: \perf_{k,\geq 0} \rightarrow \mathcal{S}$ is  excisive, and therefore defines an essentially unique 
$k$-module $T_X \in \md_k$ with $\Omega^{\infty}(V \otimes T_X) \simeq X(\sqz(V))  $ for $V\in \perf_{k,\geq 0}$.
We call    $T_X\in \md_k$    the \emph{tangent
complex} to $X$. Its  \INN{200@$T_X$}underlying spectrum \mbox{satisfies $\Omega^{\infty-n}T_X \simeq X(\sqz(k[n]))$.}
\end{definition}

We then have the following consequence of \Cref{thm:mainaxiomatic}:
\begin{theorem} \label{fmpequiv}
If $\mathcal{C}$ is part of a filtered augmented monadic adjunction (cf. 
\Cref{filteredrefinementdef}), there is 
an equivalence of $\infty$-categories 
$ \mathrm{Alg}_{T^{\vee}}\xrightarrow{\simeq }\moduli_{\mathcal{C}} $ with
$\mathfrak{g} \mapsto ( R \mapsto \Map_{\mathrm{Alg}_{T^{\vee}}}(\mathfrak{D}(R), \mathfrak{g}) ) $
such that the composite  $\moduli_{\mathcal{C}} \to\mathrm{Alg}_{T^{\vee}} \to
\md_k$ is equivalent to the tangent fibre functor $X \mapsto T_X$. 
\end{theorem}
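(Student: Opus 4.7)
The plan is to follow the strategy of Lurie \cite{lurie2011derivedX} and Pridham \cite{pridham2010unifying}, adapted to our axiomatic setup and built on top of Theorem \ref{thm:mainaxiomatic}. Define
\[ \Psi : \mathrm{Alg}_{T^{\vee}} \longrightarrow \fun(\mathcal{C}_{\art}, \mathcal{S}), \qquad \Psi(\mathfrak{g})(R) := \Map_{\mathrm{Alg}_{T^\vee}}(\mathfrak{D}(R), \mathfrak{g}). \]
First I verify that $\Psi$ factors through $\moduli_{\mathcal{C}}$: since $\mathfrak{D}(\ast) \simeq 0$ we have $\Psi(\mathfrak{g})(\ast) \simeq \ast$, and any map $A \to \sqz(k[n])$ in $\mathcal{C}_{\art}$ with $n > 0$ is automatically $\pi_0$-surjective (because $\pi_0(\sqz(k[n])) = 0$). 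Thus Theorem \ref{thm:mainaxiomatic}(3) sends the pullback $A \times_{\sqz(k[n])} \ast$ to a pushout in $\mathrm{Alg}_{T^\vee}$, which $\Map(-, \mathfrak{g})$ converts into the required pullback of spaces.

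To identify the tangent fibre, I invoke the Koszul duality between the comonad $\cot \circ \sqz$ on $\coh_{k,\geq 0}$ and the monad $T^\vee$ on $\coh_{k,\leq 0}$: the cofree coalgebra $\cot(\sqz(k[n]))$ dualises to the free $T^\vee$-algebra $T^\vee(k[-n])$, so $\mathfrak{D}(\sqz(k[n])) \simeq \Free_{T^\vee}(k[-n])$. Consequently
\[ \Omega^{\infty - n} T_{\Psi(\mathfrak{g})} \simeq \Psi(\mathfrak{g})(\sqz(k[n])) \simeq \Map_{\mod_k}(k[-n], \mathfrak{g}) \simeq \Omega^{\infty - n} \mathfrak{g}, \]
establishing that $T_{\Psi(\mathfrak{g})} \simeq \mathfrak{g}$ as underlying $k$-modules, which is the second claim of the theorem.

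To show $\Psi$ is an equivalence, I produce a left adjoint $\Phi : \moduli_{\mathcal{C}} \to \mathrm{Alg}_{T^\vee}$ via the adjoint functor theorem: $\Psi$ preserves limits (as mapping spaces do) and is accessible, and both sides are presentable. The counit $\Phi\Psi(\mathfrak{g}) \to \mathfrak{g}$ is an equivalence on tangent fibres by the previous paragraph, hence an equivalence as the forgetful functor $\mathrm{Alg}_{T^\vee} \to \mod_k$ is conservative (being monadic). For the unit $F \to \Psi\Phi(F)$, I argue by induction on the stratification of $\mathcal{C}_{\art}$ from Definition \ref{axiomaticart}: the base case $R = \ast$ is immediate, and for $R \simeq A \times_{\sqz(k[n])} \ast$ both $F(R)$ and $\Psi\Phi(F)(R)$ are computed as pullbacks of $F(A)$, $F(\sqz(k[n]))$, and $F(\ast)$ via the FMP property. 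The main obstacle will be this inductive step: it hinges on $\mathfrak{D}$ carrying these very pullbacks to pushouts of $T^\vee$-algebras, which is precisely the content of Theorem \ref{thm:mainaxiomatic}(3), combined with the tangent fibre identification to pin down the behaviour on $\sqz(k[n])$.
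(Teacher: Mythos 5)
Your proposal does not follow the paper's route: the paper simply verifies that $(\mathcal{C}, \{\sqz(k[n])\}, \mathfrak{D}, C^*)$ forms a deformation theory in Lurie's sense (\Cref{isdeformationtheory}, resting on \Cref{KDequivalences} and \Cref{pushoutproperty}) and then invokes \cite[Theorem 12.3.3.5]{lurie2016spectral}. You are instead attempting to reprove that theorem directly, and the reproof has a genuine gap at exactly its non-formal core. Your second paragraph establishes $T_{\Psi(\mathfrak{g})} \simeq \mathfrak{g}$, i.e.\ the identification $T \circ \Psi \simeq \forget$. But both your counit and unit arguments silently use the \emph{other} composite, $\forget \circ \Phi \simeq T_{(-)}$ on all of $\moduli_{\mathcal{C}}$: the counit $\Phi\Psi(\mathfrak{g}) \to \mathfrak{g}$ can only be tested by the conservative forgetful functor once you know what the underlying $k$-module of $\Phi\Psi(\mathfrak{g})$ is, and your induction over $\mathcal{C}_{\art}$ needs the case $R = \sqz(k[n])$ as an input, which unwinds to $\Omega^{\infty-n} T_F \simeq \Omega^{\infty-n}\forget(\Phi(F))$ --- the very statement in question. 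Note that $\sqz(k[n])$ is not reached from $\ast$ by the pullback operation in a well-founded way (it satisfies $\sqz(k[n]) \simeq \ast \times_{\sqz(k[n+1])} \ast$, which only pushes the problem up a degree), so the induction cannot supply it.

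Establishing $\forget \circ \Phi \simeq T$ is precisely the hard part of Lurie's theorem: it uses that $\Psi$ preserves sifted colimits (axiom (d) of \Cref{defth}, which in our setting follows from $T^{\vee}$ preserving sifted colimits --- a point you never verify) together with a resolution argument exhibiting every formal moduli problem as a sifted colimit of ones on which the unit is already understood (Lurie's smooth hypercoverings by prorepresentables), plus the fact that a map of formal moduli problems inducing an equivalence on tangent complexes is an equivalence. None of this is formal, and it is also where axioms (a) and (c) of \Cref{defth} --- i.e.\ \Cref{KDequivalences} and \Cref{pushoutproperty} in the paper --- enter. As written, your argument verifies only the easy half of the equivalence; to repair it you should either supply the hypercovering argument or, as the paper does, check the deformation-theory axioms and cite \cite[Theorem 12.3.3.5]{lurie2016spectral}.
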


\subsection{Graded objects}
Let $\mathcal{C}, \mathcal{C}^{\fil}, \mathcal{C}^{\gr}, \ldots$ be part of a filtered augmented monadic adjunction in the sense of \Cref{filteredrefinementdef}.
Our first goal is to study the  adjunction 
 $$\begin{tikzpicture}[baseline=(C.base)]
	\node (C) at (0,0) {$\cot:\mathcal{C}^{\gr}$};
	\node (M) at (3,0) {$\gr \mod_{k,\geq 0}: \sqz$};
	% adjunction arrows
	\draw[->] ([yshift=2.5pt]C.east) -- node[above] { }
	([yshift=2.5pt]M.west);
	\draw[<-] ([yshift=-2.5pt]C.east) -- node[below] { }
	([yshift=-2.5pt]M.west); 
\end{tikzpicture}$$
 on graded objects.
The admissibility axiom (\ref{grtower}) will allow us to argue in a straightforward
manner that this adjunction is in fact comonadic on a large class of objects. 
The rest of the argument required to prove \Cref{thm:mainaxiomatic} will then 
amount to reducing everything to this case. 
We begin with several basic observations on graded objects:
\begin{remark}[The bar construction]    \INN{020@$\Barr_\bullet$}
\label{thebarconstruction}
Let $A \in \mathcal{C}^{\gr}$ (resp. $\mathcal{C}, \mathcal{C}^{\fil}$).
The augmented simplicial object $\Barr_\bullet(\free, \free, A) \rightarrow A$  admits an extra degeneracy in 
$\gr\mod_{k,\geq 0}$,
and therefore induces an equivalence $|\Barr_\bullet(\free, \free, A)| \simeq A$. As the forgetful functor preserves geometric realisations, we deduce that $\Barr_\bullet(\free, \free, A) \rightarrow A$ is  in fact a colimit diagram in $\mathcal{C}^{\gr}$ (resp. $\mathcal{C}, \mathcal{C}^{\fil}$).

Since $\cot(-)$ preserves colimits,  it follows that $\cot(A)$ is the geometric realisation of the simplicial object $\Barr_\bullet(\id, \free, A)$ whose value in  degree $i$ is $\free^{\circ i} (A)$. 

In addition, we have a natural map $A \rightarrow \cot(A)$
in $ \gr \md_{k,\geq 0}$ (resp. $\md_{k,\geq 0},  \fil \md_{k,\geq 0}$), which is obtained 
by observing that $\mathrm{Bar}_0(A) = A$ and that we have a map
$\mathrm{Bar}_0(A) \rightarrow |\mathrm{Bar}_\bullet(A)|$.
Note that for each $i$, we obtain a (degeneracy) map
$A \rightarrow \mathrm{Bar}_i(A) = \free^{\circ i}(A)$; this is simply a composite of
unit maps.
\end{remark}

\begin{remark}[Conservativity of $\cot$ in the graded case] 
\label{conservativecotgraded}
Let $A \in \mathcal{C}^{\gr}$. Then the map $A \rightarrow \cot(A)$ induces an
equivalence $A_1 \simeq \cot(A)_1$ in degree $1$ by
assumption $(3a)$ in   \Cref{filteredrefinementdef}. 
\label{cotdeg1}

If a map $A \rightarrow B$ in $\mathcal{C}^{\gr}$ induces an
equivalence on $\cot(-)$, then $A \rightarrow B$ is an equivalence. 
Indeed, it follows inductively from assumption \eqref{grtower} of
\Cref{filteredrefinementdef} that $A^{(i)} \rightarrow B^{(i)}$ is an equivalence for all $i
\geq 1$. Letting $i \rightarrow \infty$ and considering graded pieces, it follows that
$A \rightarrow B$ is an equivalence. 
\end{remark}  
 We will now prove that the associated graded of $\adic(A)$ is canonically a free algebra for \mbox{any
$A \in \mathcal{C}$.} Heuristically, we can think of the functor $\adic(-)$ as a method of interpolating between a general algebra and a free algebra. 
\begin{proposition} 
\label{grofadicisfree}
There is a canonical isomorphism of functors
$\mathcal{C} \rightarrow \mathcal{C}^{\gr}$ given by 
$$ \gr ( \adic(A)) \simeq \free( [\cot(A)]_1) . $$
Here we place $\cot(A)  \in \md_{k,\geq 0} $ in graded degree $1$ 
to construct $[\cot(A)]_1 \in  \gr \md_{k,\geq 0}$. 
\end{proposition}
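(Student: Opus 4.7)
The plan is to reduce the claim to free algebras via the monadic bar resolution. By \Cref{thebarconstruction}, every $A \in \mathcal{C}$ is a geometric realisation $A \simeq |B_\bullet(A)|$ of a simplicial diagram of free algebras $B_n(A) = \free((\forget \circ \free)^n(\forget A))$. Both sides of the proposed equivalence preserve sifted colimits in $A$: on the left, $\adic$ is left adjoint to $F^1$ and $\gr: \mathcal{C}^{\fil} \to \mathcal{C}^{\gr}$ preserves sifted colimits because it commutes with the conservative sifted-colimit-preserving forgetful functors via the second adjointability square; on the right, $\cot$, $[\,\cdot\,]_1$, and $\free$ are all left adjoints. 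It therefore suffices to verify the equivalence when $A = \free(V)$ for some $V \in \md_{k,\geq 0}$, in which case $\cot(\free(V)) \simeq V$, so the right-hand side becomes $\free([V]_1)$.

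For the left-hand side, I use the first adjointability square $F^1 \circ \free \simeq \free \circ F^1$. This is a square of adjunctions -- its horizontal arrows are parts of $\free \dashv \forget$ and its vertical arrows are parts of $(-)_1 \dashv F^1$ respectively $\adic \dashv F^1$ -- whose mate is a canonical natural equivalence $\adic \circ \free \simeq \free \circ (-)_1$ of functors $\md_{k,\geq 0} \to \mathcal{C}^{\fil}$. Consequently $\adic(\free(V)) \simeq \free((V)_1)$, where $(V)_1 \in \fil \md_{k,\geq 0}$ denotes $V$ placed in filtration degree one, and the task reduces to producing a natural equivalence $\gr(\free((V)_1)) \simeq \free([V]_1)$ in $\mathcal{C}^{\gr}$.

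The second adjointability square $\gr \circ \forget \simeq \forget \circ \gr$ combined with $\free \dashv \forget$ produces, via the mate construction, a canonical natural transformation $\tau: \free \circ \gr \to \gr \circ \free$ of functors $\fil \md_{k,\geq 0} \to \mathcal{C}^{\gr}$; at $W = (V)_1$, whose associated graded is $\gr((V)_1) \simeq [V]_1$, this specialises to the comparison map $\free([V]_1) \to \gr(\free((V)_1))$. The main obstacle I anticipate is verifying that $\tau$ is an equivalence on objects of the form $(V)_1$. To do this I plan to invoke the functorial tower $\{A^{(i)}\}_{i \geq 1}$ provided by axiom (3) of \Cref{filteredrefinementdef}: applied to the graded algebra $A = \free([V]_1) \in \mathcal{C}^{\gr}_{\afp}$, the successive quotients $\forget(A^{(i-1)})/\forget(A^{(i)}) \simeq G_i([V]_1)$ are controlled by $i$-increasing polynomial functors $G_i$. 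The filtration on $\free((V)_1) \in \mathcal{C}^{\fil}$ induced by $(V)_1$ being concentrated in filtration degree one should admit a matching layerwise description with $i$-th associated graded stratum given by the same $G_i(V)$, because the monad $\forget \circ \free$ is admissible. Comparing these stratifications on underlying (graded) modules and appealing to the conservativity of $\forget$ on $\mathcal{C}^{\gr}$ promotes the layerwise comparison to an equivalence of graded algebras, yielding $\gr(\adic(\free(V))) \simeq \free([V]_1) = \free([\cot(\free(V))]_1)$, which then extends to general $A$ via the bar resolution.
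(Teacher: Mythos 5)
Your overall strategy --- reduce to free algebras via the bar resolution and then compute the free case from the commuting squares --- is the same as the paper's, but two points about the execution deserve attention.

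First, the ``main obstacle'' you anticipate is not actually there. A filtered augmented monadic adjunction is, by \Cref{filteredrefinementdef}, a commuting diagram of \emph{left adjoints}, so the equivalences $\adic \circ \free \simeq \free \circ (-)_1$ and $\gr \circ \free \simeq \free \circ \gr$ are part of the given data; you do not need to manufacture them as mates of the adjointability squares of axiom $(2)$ and then prove invertibility. (The content of axiom $(2)$ is the opposite assertion: that the squares of \emph{right} adjoints also commute.) Hence $\gr(\adic(\free(V))) \simeq \free(\gr((V)_1)) \simeq \free([V]_1)$ is immediate, and your entire detour through the tower $\{A^{(i)}\}$ of axiom $(3)$ --- which moreover rests on an unproved ``matching layerwise description'' of the filtration on $\free((V)_1)$ --- can be deleted.

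Second, there is a genuine logical gap at the outset: one cannot ``reduce to the free case'' for an equivalence of two functors $\mathcal{C} \rightarrow \mathcal{C}^{\gr}$ without first exhibiting a natural transformation between them on all of $\mathcal{C}$. An objectwise equivalence on free algebras that is natural only in $V \in \md_{k,\geq 0}$ does not automatically glue along the bar resolution, whose face maps are not all of the form $\free(f)$; equivalently, both functors are left Kan extended from the full subcategory of free algebras, but your identification is only natural with respect to the non-full image of $\free$. The paper closes this by first constructing the global comparison map: for any $B \in \mathcal{C}^{\gr}$ the free--forget adjunction gives a natural map $\free([B_1]_1) \rightarrow B$, and for $B = \gr(\adic(A))$ one identifies $B_1 \simeq \cot(B)_1 \simeq \cot(A)$ using adjointability and \Cref{conservativecotgraded}, producing the natural map $\free([\cot(A)]_1) \rightarrow \gr(\adic(A))$; only then does one check it is an equivalence on frees and conclude by preservation of geometric realisations. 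With that step added, your argument is correct.
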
  
\begin{proof} 
Note that for any $B \in \mathcal{C}^{\gr}$, there is a natural map from 
$\free( [B_1]_1) \rightarrow B$. This map is an equivalence in internal degree $1$: by assumption
\ref{grtower} of  \Cref{filteredrefinementdef}, it suffices to check this after applying $\cot$. 
Here, the map becomes  $B_1 \rightarrow \cot(B)_1$, which again is an equivalence
by \mbox{assumption \ref{grtower}.}

Let $A \in \mathcal{C}$. Then $\cot( \adic(A))$ is the filtered object 
$\ldots \rightarrow 0 \rightarrow 0 \rightarrow \cot(A)$, and therefore $\cot( \gr( \adic(A)))$
is the graded object $[\cot(A)]_1$ with $\cot(A)$ concentrated in degree $1$. 
It follows that we obtain a map 
$\free( [\cot(A)]_1) \rightarrow \gr ( \adic(A))$ in $\mathcal{C}^{\gr}$.  This map is an equivalence when $A =
\free(X)$ for $X \in \md_{k,\geq 0}$ as in this case, it is simply the identity map on $\free( [X]_1)$.
It must therefore be an equivalence in general since both sides preserve geometric realisations.
\end{proof} 

We invite the reader to recall the notion of graded modules pointwise of finite type introduced in  \Cref{pftobjects}.
This finiteness property can be detected using the cotangent fibre functor:
\begin{proposition} 
\label{testgrcoherence}
Let $A \in \mathcal{C}^{\gr}$. Then the following are equivalent: 
\begin{enumerate}
\item $\forget(A) \in \pft \md_{k,\geq 0}$; 
\item $\cot(A) \in \pft \md_{k,\geq 0}$. 
\end{enumerate}
\end{proposition}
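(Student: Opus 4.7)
The plan is to use the functorial tower $\{A^{(i)}\}_{i \geq 1}$ from axiom (3) of \Cref{filteredrefinementdef} to shuttle finite-type information between $\cot(A)$ and $\forget(A)$. Three inputs will be used repeatedly: first, since the lower horizontal composite $\cot \circ \free$ in \Cref{filteredrefinementdef} is the identity, its right adjoint $\forget \circ \sqz$ is also the identity, so $\forget(A^{(1)}) \simeq \forget \circ \sqz \circ \cot(A) \simeq \cot(A)$; second, for $i \geq 2$ we have fiber sequences $\forget(A^{(i)}) \to \forget(A^{(i-1)}) \to G_i(\cot(A))$ in $\gr \md_k$ with $G_i$ admissible and $i$-increasing; third, $\forget(A) \to \forget(A^{(i)})$ is an equivalence in internal degrees $\leq i$.

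For $(2) \Rightarrow (1)$, I assume $\cot(A)$ is pointwise of finite type. Then $\forget(A^{(1)}) \simeq \cot(A)$ is pointwise of finite type, and each $G_i(\cot(A))$ is pointwise of finite type by admissibility of $G_i$. An ascending induction on $i$ using the fiber sequences and the long exact sequence of homotopy groups in each internal degree then shows that each $\forget(A^{(i)})$ is pointwise of finite type (connectivity is automatic, as $A^{(i)} \in \mathcal{C}^{\gr}$). Fixing $n$, the equivalence $\forget(A)_n \simeq \forget(A^{(n)})_n$ from property (c) finishes the argument.

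For $(1) \Rightarrow (2)$, I would prove by induction on $n \geq 1$ that $\cot(A)_n \in \coh_{k,\geq 0}$. The base case $n=1$ follows from \Cref{conservativecotgraded}, which gives $\cot(A)_1 \simeq A_1$. For the inductive step, fix $n$ and suppose $\cot(A)_j \in \coh_{k,\geq 0}$ for all $j \leq n$. For each $2 \leq i \leq n+1$, the $i$-increasing property of $G_i$ forces $G_i(\cot(A))_{n+1}$ to depend only on $\cot(A)$ in internal degrees $\leq n+2-i \leq n$, which is pointwise of finite type by the inductive hypothesis; admissibility of $G_i$ then implies $G_i(\cot(A))_{n+1}$ is of finite type. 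Starting from $\forget(A^{(n+1)})_{n+1} \simeq \forget(A)_{n+1}$, which is of finite type by assumption (1), a descending induction from $i = n+1$ down to $i = 1$ through the fiber sequences yields that each $\forget(A^{(i)})_{n+1}$ is of finite type, and in particular $\cot(A)_{n+1} \simeq \forget(A^{(1)})_{n+1}$ is of finite type.

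The main subtlety lies in $(1) \Rightarrow (2)$: a naive use of the bar resolution $\cot(A) \simeq |\Barr_\bullet(\id, \free, A)|$ does not behave well with respect to the finite-type property under infinite geometric realisations, so one must instead exploit the interplay of the $i$-increasing property with admissibility to close the induction on internal degree.
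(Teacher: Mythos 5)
Your proof is correct and takes essentially the same route as the paper's: both directions run on the tower $\{A^{(i)}\}$ from the admissibility axiom, with $(2)\Rightarrow(1)$ by ascending induction on $i$ and $(1)\Rightarrow(2)$ by induction on the internal degree, using that $G_j(\cot(A))_i = G_j(\tr_{<i}\cot(A))_i$ is controlled by the inductive hypothesis via the $i$-increasing property. Your "descending induction through the fiber sequences" is just an unpacking of the paper's "finite filtration of $A^{(i)}_i$ with graded pieces $\cot(A)_i$ and $G_j(\cot(A))_i$", so there is no substantive difference.
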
 
\begin{proof} 
As before, we will omit the forgetful functor  from our notation. 

Suppose first that $\cot(A) \in \pft \md_{k,\geq 0}$, i.e.\ that each graded piece $\cot(A)_i \in
\mod^{\ft}_{k,\geq 0}$ is of finite type. 
The filtration appearing in assumption \eqref{grtower} of \Cref{filteredrefinementdef}
shows by induction that 
each $A^{(i)}$ lies in $\pft \md_{k,\geq 0}$, because the functors  $G_j$ preserve $\pft
\md_{k,\geq 0}$ by assumption. 
Since   $A \rightarrow A^{(i)}$ is an
equivalence in graded degrees below $i$, letting $i \rightarrow \infty$ implies   that
$A \in \pft \md_{k,\geq 0}$. 

Conversely, suppose $A \in \pft \md_{k,\geq 0}$ is a $k$-module spectrum of pointwise finite type. We  can again proceed by induction.
By  \Cref{cotdeg1}, we know that $\cot(A)_1\simeq A_1 $ is of finite type.
Suppose $\cot(A)_1, \dots, \cot(A)_{i-1} \in \mod_{k,\geq 0}^{\ft}$.  
Since the functors $G_j$ are increasing
and preserve $\pft \md_{k,\geq 0}$ for all $j>1$, it follows that
$G_j( \cot(A))_i = G_j( \tr_{<i}(\cot(A)))_i$ belongs to $\mod_{k,\geq 0}^{\ft}$ for all
$1 < j \leq i$. 
Then the filtration of assumption~\ref{grtower}
shows inductively that 
 the graded piece $A_i  = A^{(i)}_i$ of $A^{(i)} \in \mathcal{C}^{\gr}$
has a finite filtration involving $\cot(A)_i$ and terms in $\mod^{\ft}_{k,\geq 0}$ (namely, $G_j(
\cot(A))_i$ for $1 < j \leq i$).
Since we assumed that $A_i \in \mod^{\ft}_{k,\geq 0}$, it follows   that
$\cot(A)_i$ lies in $\mod_{k,\geq 0}^{\ft}$.
\end{proof} 

We can now establish  the basic tool for commuting $\cot$ and totalisations,
which is the heart of the convergence arguments needed in this work. First we
need a basic notion. 

\begin{definition} 
The $\infty$-categories $\mathcal{C}, \mathcal{C}^{\fil},\mathcal{C}^{\gr}$ are
presentable, and hence have all limits. These 
are computed at the level of objects in $\md_{k ,\geq 0}$ (or $\gr \md_{k, \geq
0}, \fil \md_{k, \geq 0}$). 
We will say that a limit in 
$\mathcal{C}, \mathcal{C}^{\fil},\mathcal{C}^{\gr}$ \emph{connectively exists}
if the limit is also preserved in $\md_k, \gr \md_k, \text{ or }\fil \md_k$. In particular,
the limit in $\md_k, \gr \md_k, \text{ or }\fil \md_k$ is connective. 
\end{definition} 

\begin{proposition}[Convergence criterion in $\mathcal{C}^\gr$]  
Let $A^\bullet$ be a cosimplicial   object of $\mathcal{C}^{\gr}$ such that for each $i$, we have $\forget(A^i) \in \pft \md_{k,\geq 0}$. 
Then the following are equivalent: 
\begin{enumerate}
\item the totalisation $\mathrm{Tot}( \forget(A^\bullet))$  (computed in $\gr
\md_k$) belongs to $\pft \md_{k,\geq 0}$;
\item the totalisation 
$\mathrm{Tot}( \cot(A^\bullet))$
(computed in $\gr
\md_k$) belongs to $\pft \md_{k,\geq 0}$. 
\end{enumerate}
If these conditions are satisfied, then the limit $\mathrm{Tot}( A^\bullet)$
connectively exists in $\mathcal{C}^{\gr}$,  and the canonical map 
$ \cot( \mathrm{Tot}( A^\bullet)) \rightarrow \mathrm{Tot}( \cot(A^\bullet)) $
in $\gr \md_{k,\geq 0}$ is an equivalence. 
\label{grconvcrit}
\end{proposition}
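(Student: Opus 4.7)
The plan is to apply the tower $\{A^{(i)}\}_{i \geq 1}$ supplied by the admissibility axiom of \Cref{filteredrefinementdef} functorially and levelwise to the cosimplicial object $A^\bullet$, producing a tower $\{A^{\bullet,(i)}\}_{i \geq 1}$ of cosimplicial objects of $\mathcal{C}^\gr$. Since each $G_m$ is $m$-increasing, $G_m(\cot A)_j$ vanishes for $j < m$, and the structural map $A \to A^{(i)}$ is an equivalence in every internal degree $j \leq i$. Consequently, in each fixed internal degree $j$ only the first $j$ stages of the tower contribute, and the fibre sequences $\forget(A^{(i)}) \to \forget(A^{(i-1)}) \to G_i(\cot A)$ in $\gr\md_k$ exhibit $\forget(A)_j$ as a finite iterated extension of the degree-$j$ components of $G_m(\cot A)$ for $m = 1, \ldots, j$.

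For the equivalence of (1) and (2), I would induct on the internal degree $j$. The base case $j=1$ is \Cref{cotdeg1}, which gives $\forget(A^n)_1 \simeq \cot(A^n)_1$ levelwise. For the inductive step, admissibility of each $G_m$ lets me commute $\Tot$ past $G_m$, while the $m$-increasing property (for $m \geq 2$) ensures that $G_m(\cot A^\bullet)_j$ is determined by $\cot(A^\bullet)_{j'}$ with $j' < j$. Given (2), the finite extension displays $\Tot(\forget A^\bullet)_j$ as an iterated extension of pieces $G_m(\Tot(\cot A^\bullet))_j$, each of finite type; conversely, given (1), the same extension together with the inductive hypothesis identifies $\Tot(\cot A^\bullet)_j$ up to a finite extension of finite-type pieces.

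Under the equivalent conditions, the limit $\Tot(\forget A^\bullet)$ computed in $\gr\md_k$ is connective and pointwise of finite type, so it agrees with the limit in $\gr\md_{k,\geq 0}$; since $\forget: \mathcal{C}^\gr \to \gr\md_{k,\geq 0}$ preserves limits as a right adjoint, this yields $\Tot(A^\bullet)$ connectively in $\mathcal{C}^\gr$. The comparison $\cot(\Tot A^\bullet) \to \Tot(\cot A^\bullet)$ is an equivalence in internal degree one by \Cref{cotdeg1}, and in higher degrees it follows by applying the tower filtration to $\Tot(A^\bullet)$ and to each $A^n$ separately, then using admissibility of each $G_m$ together with \Cref{testgrcoherence}. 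The main obstacle is to argue that these two towers are compatible; this reduces to the naturality of the admissibility tower in $A$ combined with the commutation $\Tot \circ G_m \simeq G_m \circ \Tot$ at each admissible stage.
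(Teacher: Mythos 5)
Your proposal is correct and follows essentially the same route as the paper: the paper also proves the equivalence by induction on the internal degree, using the functorial tower from the admissibility axiom to exhibit $\forget(A^\bullet)_j$ as a finite iterated extension of $\cot(A^\bullet)_j$ and the pieces $G_m(\cot(A^\bullet))_j \simeq G_m(\tr_{\leq j-m+1}\cot(A^\bullet))_j$, which are limit diagrams by admissibility, increasingness, and the inductive hypothesis. The paper packages the convergence and the comparison map $\cot(\Tot(A^\bullet)) \to \Tot(\cot(A^\bullet))$ into a single refined statement about \emph{augmented} cosimplicial objects (so your "two towers compatibility" concern is absorbed automatically by the naturality of the tower), but the substance is identical.
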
 
 
\begin{proof} 
In fact, we shall prove the following more refined statement: 
\begingroup
\addtolength\leftmargini{-0.2in}
\begin{quote}
Let $B^\bullet$ be an augmented cosimplicial object of $\mathcal{C}^{\gr}$ with
$B^j\in \pft \md_{k,\geq 0}$ for all $j \geq 0$.
Suppose that $B^\bullet_1, \dots, B^\bullet_{i-1}$ are limit diagrams in $\md_k$
with $B^{-1}_1, \dots, B^{-1}_{i-1} \in \mod^{\ft}_{k,\geq 0}$. Then the following
are equivalent: 
\begin{enumerate}
\item  
$B^\bullet_i$ is a limit diagram with $B^{-1}_i \in \mod^{\ft}_{k,\geq 0}$; 
\item
$\cot(B^\bullet)_i$ is a
limit diagram in $\md_k$ and $\cot(B^{-1})_{i} \in  \mod^{\ft}_{k,\geq 0}$. 
\end{enumerate}
\end{quote}
\endgroup

By induction, this implies the equivalence of $(1)$ and $(2)$ in the proposition, as well as 
the asserted convergence. Here we use that the forgetful functor
$\mathcal{C}^{\gr}\rightarrow \gr\mod_{k,\geq 0}$ creates limits.

For $i  = 1$, our refined claim  follows  from the equivalence $B^\bullet_1 \simeq \cot(B^\bullet)_1$. 
In general, we observe that $B^\bullet_i$ (considered as an augmented cosimplicial
object of $\md_k$) admits a finite filtration
whose associated graded terms are given by $\cot(B^\bullet)_i$ and  $G_j(
\cot(B^\bullet))_i$ for $1 < j \leq i$. By assumption, the functor $G_j$ is admissible and
increasing, and so the augmented cosimplicial 
object   
$G_j(\cot(B^\bullet))_i \simeq G_j(\tr_{i-j+1}\cot(B^\bullet))_i$
is a limit diagram by the hypothesis. It  follows that $B^\bullet_i$ is a limit diagram with
$B^{-1} \in \pft \md_{k,\geq 0}$ if and
only if $\cot(B^\bullet)_i$ is a limit diagram with
$\cot(B^{-1}) \in \pft \md_{k,\geq 0}$. 
\end{proof}

Let $\mathcal{C}^{\pft} \subset \mathcal{C}^{\gr}$\INN{030@$\mathcal{C}^{\pft}$} be the full subcategory
spanned by objects whose underlying graded module belongs to $\pft \md_{k,\geq
0}$.  
\begin{proposition}  \INN{032@$\cot$} \INN{190@$\sqz$}
\label{monadicgradedinfinite}
Restriction gives rise to a comonadic adjunction
$$ \begin{tikzpicture}[baseline=(C.base)] 
	\node (C) at (0,0) {${\cot}: \mathcal{C}^{\pft}$};
	\node (M) at (3,0) {$  \pft
		\md_{k,\geq 0}: \sqz .$};
	% adjunction arrows
	\draw[->] ([yshift=2.5pt]C.east) -- node[above] { }
	([yshift=2.5pt]M.west);
	\draw[<-] ([yshift=-2.5pt]C.east) -- node[below] { }
	([yshift=-2.5pt]M.west); 
\end{tikzpicture}$$
\end{proposition} 
\begin{proof} 
The adjunction is well-defined by \Cref{testgrcoherence}. It is in fact comonadic by \Cref{BBL},  whose conditions are satisfied by  \Cref{conservativecotgraded} and
\Cref{grconvcrit}. 
\end{proof} 
This comonadicity result for $ \mathcal{C}^{\pft} $ will later allow us to check convergence results in $\mathcal{C}$ by first lifting cosimplicial diagrams to filtered objects and then taking associated gradeds everywhere.\vspace{3pt}

However, $ \mathcal{C}^{\pft} $ is \textit{not} the correct graded analogue of the  $\infty$-category $\mathcal{C}^{\afp}$ of almost finitely presented objects
because the cotangent complex need not be 
finite type (only pointwise finite type).  We therefore introduce a more restrictive notion of finiteness.

For this, we first recall \Cref{finitenessconditions}, which introduces
$\gr^{\ft} \mod_{k,\geq 0}$ as the full subcategory of $\gr\mod_k$ spanned by
all $X_\star$  for which the underlying module $\bigoplus_{i\geq 1} X_i$ is of
finite type.  Note that $\gr^{\ft} \mod_{k,\geq 0}\subset \pft \md_{k,\geq 0}$
is a proper inclusion.  

We recall the corresponding  full \mbox{subcategory of
$\mathcal{C}^{\gr} $:}
\begin{definition}[The subcategory $\mathcal{C}^{\gr}_{\afp}$]  \INN{030@$\mathcal{C}^{\gr}_{\afp}$}
\label{def:grafp} 
The   category $\mathcal{C}^{\gr}_{\afp}$ of \emph{almost finitely
presented} objects  consists of all  
$A \in \mathcal{C}^{\gr}$ whose cotangent complex $\cot(A) \in \gr^{\ft} \mod_{k,\geq 0}$ has finite type. 
\end{definition} 
We should think of this as a finite generation condition (at least after any
truncation).  Note that by \Cref{testgrcoherence}, we have an inclusion $\mathcal{C}^{\gr}_{\afp}\subset \mathcal{C}^{ {\pft}}$.
\begin{remark} 
While $\mathcal{C}^{\pft}$ is evidently closed under fibre products of maps which are
surjective on $\pi_0$ (as these can be computed pointwise), the corresponding claim in $\mathcal{C}^{\gr}_{\afp}$ only holds by our coherence axiom $(4)$ in \Cref{filteredrefinementdef}, which will be easy to check in the examples of interest.

For instance, this ensures that if $V, V', V'' \in \gr^{\ft} \mod_{k,\geq 0}$ and we
have maps $V \rightarrow V'', V' \rightarrow V''$ which induce surjections on $\pi_0$, then
the fibre product
$A = \free(V) \times_{\free(V'')} \free(V')$ 
has the property that 
$\cot(A) \in \gr^{\ft} \mod_{k,\geq 0}$. \Cref{testgrcoherence} only shows  that
$\cot(A) \in \pft \md_{k, \geq 0}$, so we need to postulate this stronger
statement.  

\end{remark}

The coherence axiom $(4)$ in \Cref{filteredrefinementdef} implies:
\begin{cons}   \INN{032@$\cot$} \INN{190@$\sqz$}
\label{grafpadj}
The $(\cot,\sqz)$-adjunction  between $\mathcal{C}^\gr$ and $\gr \mod_{k,\geq 0}$ restricts to an \mbox{adjunction} 
 $$\begin{tikzpicture}[baseline=(C.base)]
	\node (C) at (0,0) {$\cot:\mathcal{C}^{\gr}_{\afp}$};
	\node (M) at (3,0) {$\gr \mod^{\ft}_{k,\geq
			0}: \sqz.$};
	% adjunction arrows
	\draw[->] ([yshift=2.5pt]C.east) -- node[above] { }
	([yshift=2.5pt]M.west);
	\draw[<-] ([yshift=-2.5pt]C.east) -- node[below] { }
	([yshift=-2.5pt]M.west); 
\end{tikzpicture}$$
\end{cons} 

\subsection{Filtered objects}
We will now transfer some of the above results to the $\infty$-category  $\mathcal{C}^{\fil}$.
In order to obtain similarly strong statements, we  need to restrict attention to \textit{complete} objects.
Recall that  a filtered connective $k$-module $(\ldots \rightarrow F^2M \rightarrow F^1M)  \in \fil \mod_{k,\geq 0}$ is said to be complete if the inverse limit $\lim_i  (F^iM)$ vanishes.  The inclusion $\filc \md_{k,\geq 0} \subset \fil \md_{k,\geq 0}$ of complete filtered connective $k$-modules into all 
filtered connective $k$-modules admits a left adjoint called \textit{completion} (cf.\ \Cref{defcompletion}).

In order to lift this completion functor to $\mathcal{C}^{\fil}$, we need an elementary\vspace{-1pt}  categorical observation.
\begin{remark}[Adjunctions and localisations]  
\label{adjcommute}
Let \ 
$F : \mathcal{A} \rightleftarrows \mathcal{B} : G$ \ be an adjunction of
presentable $\infty$-categories and suppose that we are given  a Bousfield localisation
$L_{\mathcal{A}}: \mathcal{A} \rightarrow \mathcal{A}$ of 
$\mathcal{A}$ with corresponding  strongly saturated class of morphisms  
$\mathcal{W}_{\mathcal{A}}$ in
$\mathcal{A} $ (cf.\ \cite[Def. 5.5.4.5]{lurie2009higher}). 

To produce a corresponding  Bousfield localisation on $\mathcal{B}$, we 
let $\mathcal{W}_{\mathcal{B}}$ be the class of morphisms $f : B_1 \rightarrow B_2$ satisfying $G(f) \in \mathcal{W}_{\mathcal{A}}$. Assume that
$\mathcal{W}_{\mathcal{B}}$ is strongly saturated and contains
$F(\mathcal{W}_{\mathcal{A}})$.
The  localisation functor  $L_{\mathcal{B}}$ for $\mathcal{W}_{\mathcal{B}}$ (cf.\ \cite[Sec.5.5.4]{lurie2009higher}) sits in\vspace{-3pt} a commutative square: 
$$ \xymatrix{
\mathcal{B} \ar[d]^G \ar[r]^{L_{\mathcal{B}}} &  \mathcal{B} \ar[d]^G \\
\mathcal{A}\ar[r]^{L_{\mathcal{A}}} &   \mathcal{A}
}$$ 
Indeed, given  any $B \in \mathcal{B}$, the unit $B \rightarrow L_{\mathcal{B}} B$ lies in
$\mathcal{W}_{\mathcal{B}}$. It follows that if $B$  is
$\mathcal{W}_{\mathcal{B}}$-local, then $G(B) \in \mathcal{A}$ is
$\mathcal{W}_{\mathcal{A}}$-local.  Since $G$ sends  $\mathcal{W}_{\mathcal{B}}$
 to $\mathcal{W}_{\mathcal{A}}$, the commutativity   follows. 
 We say that the adjunction is \emph{compatible with localisations.} 
\end{remark} 

Using this, we can lift the notion of completeness to $\mathcal{C}^\fil$:
\begin{definition}[Completions in $\mathcal{C}^\fil$]\label{C-completions}
An object $A \in \mathcal{C}^{\fil}$ is \textit{complete}  if $\forget(A) \in \fil \md_{k,\geq 0}$ is complete.   
We let \INN{030@$\widehat{\mathcal{C}^{\fil}}$}$\widehat{\mathcal{C}^{\fil}} \subset \mathcal{C}^{\fil}$ be the full subcategory spanned by all complete objects. 
\end{definition}  
The strongly saturated class associated with the localisation $\filc \md_{k,\geq 0} \subset \fil \md_{k,\geq 0}$ consists of all maps which induce equivalences on associated gradeds.
By the assumptions in \Cref{filteredrefinementdef}, the free-forgetful adjunction $\fil \md_{k,\geq 0} \rightleftarrows\mathcal{C}^{\fil}$ is compatible with completions in the sense of \Cref{adjcommute}. We therefore obtain a completion functor $\mathcal{C}^{\fil} \rightarrow \widehat{\mathcal{C}^{\fil}}$ which is the left adjoint of a Bousfield localisation.
Any $A \in \mathcal{C}^{\fil}$ comes equipped with a natural morphism $A \rightarrow \widehat{A}$ to its completion, and the underlying object $\forget( \widehat{A})\in \fil
\md_{k,\geq 0}$ of  $\widehat{A} $   is simply given by
the completion of the filtered object $\forget(A)$. 

Note that if a morphism $A \rightarrow B $ in $\mathcal{C}^{\fil}$ induces an equivalence on 
associated gradeds, then so does \mbox{$\cot(A) \rightarrow \cot(B)$.}
We deduce that for any $A \in \mathcal{C}$, the associated map on cotangent fibres $\cot(A) \rightarrow \cot( \widehat{A})$ is also
an equivalence on associated gradeds. 
We  obtain a diagram of left adjoints:
\[
\begin{tikzcd}
	\fil \md_{k,\geq 0} 
	\arrow[r, "\mathrm{free}"] 
	\arrow[d] 
	& \mathcal{C}^{\fil} 
	\arrow[r, "\mathrm{cot}"] 
	\arrow[d] 
	& \fil \md_{k,\geq 0} 
	\arrow[d] \\
	\filc \md_{k,\geq 0} 
	\arrow[r, "\widehat{\mathrm{free}}"] 
	& \widehat{\mathcal{C}^{\fil}} 
	\arrow[r, "\widehat{\mathrm{cot}}"] 
	& \filc \md_{k,\geq 0}
\end{tikzcd}
\]
in which the vertical arrows are given by completion functors.

\begin{remark}[The completed cotangent fibre  adjunction] \INN{030@$\widehat{\cot}$}
The functor $\widehat{\cot} = \widehat{(-)} \circ \cot$ appearing in the lower right hand part of the above diagram is part of an  adjunction 
$ \widehat{\cot} : \  \widehat{\mathcal{C}^{\fil}} \rightleftarrows \filc
\md_{k,\geq 0}\ :  \sqz$. 
Its left adjoint sends a complete object $A$ of $\mathcal{C}$ to its completed
cotangent fibre $\widehat{\cot(A)}$, whereas the right adjoint sends $V \in
\filc \md_{k,\geq 0}$ to $\sqz(V) \in \widehat{\mathcal{C}} \subset \mathcal{C}
$. 
\end{remark}

We can next make a direct translation of \Cref{grconvcrit} to the filtered setting:

\begin{proposition}[Convergence criterion in $\mathcal{C}^{\fil}$] 
\label{convcritCfil}
Let $X^\bullet$ be a cosimplicial complete object of $\mathcal{C}^{\fil}$ such that for each $i$, we have $\gr(\forget(X^i)) \in  \pft \md_{k,\geq 0}$. 
Then the following are equivalent: 
\begin{enumerate}
\item the associated graded $\gr( \mathrm{Tot}( \forget(X^\bullet)) )$  of the totalisation $\mathrm{Tot}( \forget(X^\bullet))$ (computed in $\fil \md_k$) belongs to $\pft \md_{k,\geq 0}$;
\item the associated graded $\gr ( \mathrm{Tot}( \cot(X^\bullet)))$ of the totalisation $\mathrm{Tot}( \cot(X^\bullet))$ (computed in $\fil \md_k$) belongs to $\pft \md_{k,\geq 0}$.
\end{enumerate}
Under these assumptions, the limit $\Tot(X^\bullet)$ connectively exists,
and the  map 
$\widehat{\cot}( \mathrm{Tot}( X^\bullet)) \rightarrow \mathrm{Tot}( \widehat{\cot}(X^\bullet)) $ 
is an equivalence. 
\end{proposition}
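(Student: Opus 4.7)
The plan is to reduce this filtered statement to the graded convergence criterion \Cref{grconvcrit}, using the associated graded functor as the bridge. The main point to check is that $\gr$ commutes with totalisations in the stable setting, which transfers the hypotheses of the filtered statement precisely to those of the graded statement applied to $\gr(X^\bullet)$.

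First I would verify the key compatibility: in the stable $\infty$-category $\fil\md_k$, the associated graded functor commutes with totalisations. Concretely, for any cosimplicial object $Y^\bullet$ in $\fil \md_k$ and any $n \geq 1$, one has
\[ \gr(\Tot Y^\bullet)_n = \cofib\bigl(F^{n+1}\Tot Y^\bullet \to F^n\Tot Y^\bullet\bigr) = \cofib\bigl(\Tot F^{n+1} Y^\bullet \to \Tot F^n Y^\bullet\bigr), \]
and since cofibres in a stable $\infty$-category are suspensions of fibres (which commute with all limits), this equals $\Tot(\cofib(F^{n+1} Y^\bullet \to F^n Y^\bullet)) = \Tot(\gr(Y^\bullet)_n)$. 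Furthermore, $\Tot(X^\bullet)$ is itself complete: $F^i\Tot(X^\bullet) = \Tot(F^i X^\bullet)$, and swapping limits yields $\lim_i F^i \Tot(X^\bullet) = \Tot(\lim_i F^i X^\bullet) = 0$ because each $X^i$ is complete.

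Next I would apply the adjointability axiom from \Cref{filteredrefinementdef}, which guarantees that $\gr$ commutes with both $\forget$ and $\cot$ between $\mathcal{C}^{\fil}$ and $\mathcal{C}^{\gr}$. Thus $\gr(X^\bullet)$ is a cosimplicial object of $\mathcal{C}^{\gr}$ with each $\gr(\forget(X^i)) \in \pft \md_{k,\geq 0}$ by hypothesis. Combining with the previous step, one obtains natural equivalences
\[ \gr(\Tot \forget(X^\bullet)) \simeq \Tot(\forget \gr(X^\bullet)), \qquad \gr(\Tot \cot(X^\bullet)) \simeq \Tot(\cot \gr(X^\bullet)), \]
so conditions (1) and (2) in the Proposition translate verbatim into the hypotheses of \Cref{grconvcrit} for the cosimplicial object $\gr(X^\bullet)$ in $\mathcal{C}^{\gr}$. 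That criterion gives the equivalence of (1) and (2), produces the connective totalisation $\Tot(\gr(X^\bullet))$ in $\mathcal{C}^{\gr}$, and supplies an equivalence $\cot(\Tot\gr(X^\bullet)) \simeq \Tot(\cot\gr(X^\bullet))$ in $\gr \md_{k,\geq 0}$.

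Finally I would lift this back to the filtered setting. Since the forgetful functor $\mathcal{C}^{\fil} \to \fil\md_k$ creates limits and $\Tot(X^\bullet)$ is complete with $\gr(\Tot X^\bullet) = \Tot \gr(X^\bullet) \in \pft \md_{k,\geq 0}$, the totalisation connectively exists in $\mathcal{C}^{\fil}$. The comparison map $\widehat{\cot}(\Tot X^\bullet) \to \Tot(\widehat{\cot}(X^\bullet))$ is a map between complete filtered objects (the right-hand side is a limit of complete filtered objects, hence complete), and by \Cref{detect} it suffices to check it is an equivalence on associated gradeds. Since completion preserves $\gr$, on associated gradeds both sides compute $\Tot(\cot\gr(X^\bullet))$ via the chain of equivalences above, and the comparison map reduces to the identity. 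The main obstacle is purely bookkeeping: one must juggle the interplay of completion, totalisation, and the associated graded, but once the stability-based swap in the first step is in place, the argument is a mechanical transfer from \Cref{grconvcrit}.
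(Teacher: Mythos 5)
Your proposal is correct and follows essentially the same route as the paper: pass to associated gradeds (using that $\gr$ commutes with totalisations in the stable setting and the adjointability axiom), apply \Cref{grconvcrit} to $\gr(X^\bullet)$, observe that $\Tot(X^\bullet)$ is complete with connective associated graded (hence connective, via the Milnor sequence), and conclude that the comparison map is an equivalence because it is a map of complete objects inducing an equivalence on associated gradeds. No substantive differences.
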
 
\begin{proof} 
The equivalence of $(1)$ and $(2)$ follows immediately from \Cref{grconvcrit} by taking associated gradeds (using the adjointability axiom $(2)$ in \Cref{filteredrefinementdef} together with the fact  that $\gr:\fil\mod_k\rightarrow \gr\mod_k$ commutes with totalisations). 

The filtered module $\mathrm{Tot}(\forget(X^\bullet))$ (computed in $\fil \mod_k$) is complete (as completeness is a limit condition) and has associated graded in $\mod_{k,\geq 0}$. The Milnor sequence  implies that $\mathrm{Tot}(\forget(X^\bullet))$ in fact belongs to $\fil \mod_{k,\geq 0}$. It follows that $X^\bullet$ admits a limit in $\mathcal{C}^{\fil}$. The  arrow $\widehat{\cot}( \mathrm{Tot}( X^\bullet)) \rightarrow \mathrm{Tot}( \widehat{\cot}(X^\bullet))$ induces an equivalence after passing to associated gradeds by  \Cref{grconvcrit}. Hence, it    is   itself an equivalence since both domain and target are complete.
\end{proof} 

\begin{remark}
Note that both conditions $(1)$ and $(2)$ in \Cref{convcritCfil} contain the nontrivial assertion that the respective totalisations are  connective.
\end{remark}

We can now formulate a notion of almost finite presentation in the complete filtered context:
\begin{definition}[The subcategory  $\mathcal{C}^{\fil}_{\afp}$] \INN{030@$\mathcal{C}^{\fil}_{\afp}$}
\label{def:filafp}
Let $\mathcal{C}^{\fil}_{\afp}$ denote the full  subcategory of objects 
$A \in \mathcal{C}^{\fil}$ which are complete and 
satisfy $\gr(A) \in \mathcal{C}^{\gr }_{\afp}$, i.e.\
$\gr ( \cot(A)) \in \gr^{\ft} \mod_{k,\geq 0}$. 
We will refer to these objects as \emph{complete almost finitely presented.}
\end{definition} 
\begin{example}[Completed-free algebras in $\mathcal{C}_{\afp}^{\fil}$]  
\label{filteredfree}
Completed-free algebras on complete filtered (connective) modules of finite type belong to $\mathcal{C}^{\fil}_{\afp}$. Indeed, if 
$V \in \fil^{\ft} \mod_{k,\geq 0}$ is a complete filtered object with $\gr(V) \in \gr^{\ft} \mod_{k,\geq 0}$
 (cf.\ \Cref{finitenessconditions}), then $\free(V) \in \mathcal{C}^{\fil}$ has cotangent fibre $V$. 
Since ${\free(V)} \rightarrow \widehat{\free(V)}$ induces an equivalence on associated gradeds, the observations made after \Cref{C-completions} imply that   $\gr(\cot(\widehat{\free(V)})) \simeq \gr(\cot( {\free(V)})) \simeq \gr(V)$ belongs to $\gr^{\ft} \mod_{k,\geq 0}$, and hence $\widehat{\free(V)}\in \mathcal{C}^{\fil}_{\afp}$.

In fact, the completeness assumption $(5a)$ in \Cref{filteredrefinementdef} implies a stronger assertion. We know that $ \cot(\widehat{\free(V)})$ is complete, and we can therefore conclude that  $ \cot(\widehat{\free(V)})  \simeq V$. In other words, the morphism $\free(V) \rightarrow \widehat{\free}(V)$ induces an
equivalence on $\cot$. 
This can be thought of  as a generalisation of the classical fact that a polynomial ring
and a power series ring on finitely many variables have the same cotangent
fibre. 
\end{example}

\begin{cons}   \INN{032@$\cot$} \INN{190@$\sqz$}
If $V \in \fil^{\ft} \mod_{k,\geq 0}$, then the coherence axiom (4b) of \Cref{filteredrefinementdef} implies that $\sqz(V) \in
\mathcal{C}^{\fil}_{\afp}$. 
As in Construction~\ref{grafpadj}, 
we obtain the following adjunction by restriction:
 $$\begin{tikzpicture}[baseline=(C.base)]
	\node (C) at (0,0) {$\widehat{\cot}:\mathcal{C}^{\fil}_{\afp}$};
	\node (M) at (3,0) {$ \fil^{\ft} \mod_{k,\geq
			0}: \sqz.$};
	% adjunction arrows
	\draw[->] ([yshift=2.5pt]C.east) -- node[above] { }
	([yshift=2.5pt]M.west);
	\draw[<-] ([yshift=-2.5pt]C.east) -- node[below] { }
	([yshift=-2.5pt]M.west); 
\end{tikzpicture}$$

\end{cons}

\begin{remark} 
\label{rem:Cfilafpgeom}
The full subcategory $\mathcal{C}^{\fil}_{\afp} \subset \mathcal{C}^{\fil}$ is closed under geometric
realisations. This follows as if $X_\bullet$ is a simplicial diagram in $\mathcal{C}^{\fil}_{\afp}$, then the underlying module $\forget(|X_\bullet|) \simeq |\forget(X_\bullet)|$ is complete by  \Cref{geomrealcomplete}, and  moreover
$\gr(\cot(|X_\bullet|)) \simeq |\gr(\cot(X_\bullet|))|$ lies in $\gr^{\ft}\mod_{k,\geq 0}$ since $\mod_{k,\geq 0}^{\ft}\subset \mod_k$ is closed under geometric realisations.

Hence any geometric realisation of completed-free objects 
$\widehat{\free(V)}$ with $V \in \fil^{\ft}\mod_{k,\geq 0}$ \mbox{lies in 
$\mathcal{C}^{\fil}_{\afp}$.\vspace{5pt}}
\end{remark} 
We will now pass from filtered to non-filtered objects:  \INN{030@$\mathcal{C}_{\afp}$}
\begin{definition}[The subcategory $\mathcal{C}_{\afp}$] 
\label{cafp:defaxiomatic}
The full subcategory $\mathcal{C}_{\afp} \subset \mathcal{C}$ of \emph{complete almost finitely presented} objects in $\mathcal{C}$ consists of all  
$A $ for which $\adic(A) \in \mathcal{C}^{\fil}$ is complete and
 $\cot(A) \in \mod^{\ft}_{k,\geq 0}$.  
\end{definition}

In \Cref{aex:SCR,,aex:Einf}, the respective subcategories $\mathcal{C}_{\afp}$ will be as expected, i.e.\ consist of  all augmented simplicial commutative
$k$-algebras (resp. connective  $\mathbb{E}_\infty$-k-algebras) $A$ for which  $\pi_0(A)$ is complete local
Noetherian and $\pi_i(A)$ finitely generated over $\pi_0(A)$ for all $i$.

\begin{remark} 
\label{forgetcompletefilt}
By the completeness axiom $(5)$ in \Cref{filteredrefinementdef}, we know that 
if $A \in \mathcal{C}^{\fil}_{\afp}$, then $F^1 A \in \mathcal{C}_{\afp}$. 
Indeed,  assumption $(5b)$ implies that $F^1 A$ is complete, and  $\cot(F^1 A)\simeq F^1(\cot(A))$ lies in $\mod^{\ft}_{k,\geq 0}$ since $\cot(A)$ is complete by $(5a)$ and $\gr(\cot(A))$ lies in $\gr^{\ft} \mod_{k,\geq 0}$ by definition. 
\end{remark} 

\begin{example}[Completed-free algebras in $\mathcal{C}_{\afp}$] 
Completed-free algebras on  (connective) modules of finite type belong to $\mathcal{C}_{\afp}$.
Indeed, if $V \in \mod^{\ft}_{k,\geq 0}$, then we consider $\widetilde{V} = (\dots \rightarrow 0 \rightarrow 0 \rightarrow V)$ in $\fil^{\ft} \mod_{k,\geq 0}$. We have $\widehat{\free(\widetilde{V}) }\in \mathcal{C}^{\fil}_{\afp}$ by \Cref{filteredfree}, and this implies that 
$\widehat{\free(V)} \simeq F^1\widehat{ \free(\widetilde{V})}$ lies in $\mathcal{C}_{\afp}$ by \Cref{forgetcompletefilt}. 
\end{example}  

\begin{remark}[Closure properties of $\mathcal{C}_{\afp} $]
\label{closureCafp}
The full subcategory $\mathcal{C}_{\afp} \subset \mathcal{C}$ is closed under
geometric realisations. 
This follows from \Cref{rem:Cfilafpgeom} by noting that the left adjoints $\adic$ and $\cot$ preserve realisations and the full subcategory $\mod_{k,\geq 0}^{\ft}\subset \mod_k$ is closed under realisations.

Moreover, if $A, A', A'' \in \mathcal{C}_{\afp}$ and we are given  maps 
$A \rightarrow A'', A' \rightarrow A''$ which induce surjections on $\pi_0$, then the pullback
$A \times_{A''} A'$ also belongs to $\mathcal{C}_{\afp}$. Indeed,  note that $\adic(A),
\adic(A'), \adic(A'')$ belong to $\mathcal{C}^{\fil}_{\afp}$, and that both  maps $\adic(A) \to
\adic(A'')$ and $\adic(A') \rightarrow \adic(A'')$ are \mbox{surjective on $\pi_0$.} By the coherence axiom $(4a)$ in \Cref{filteredrefinementdef}, we deduce that $\gr(\adic(A) ) \times_{\gr(\adic(A''))} \gr(\adic(A'))$ belongs to $\mathcal{C}^{\gr}_{\afp}$. The canonical arrow $$\gr\left(\adic(A)   \times_{ \adic(A'') } \adic(A')\right)\rightarrow \gr(\adic(A) ) \times_{\gr(\adic(A''))} \gr(\adic(A'))$$
induces an equivalence after applying $\forget$ by the adjointability axiom $(2)$ in \Cref{filteredrefinementdef}. We deduce that $\adic(A) \times_{\adic(A'')} \adic(A') \in \mathcal{C}_{\afp}^{\fil}$  (as it is evidently complete).
Applying the right adjoint $F^1$ now shows that $A \times_{A'} A'' \in \mathcal{C}_{\afp}$ by \Cref{forgetcompletefilt}.
\end{remark} 
\begin{remark}
If $V \in \mod^{\ft}_{k,\geq 0}$, then $\sqz(V)$ belongs to $\mathcal{C}_{\afp}$. To see this, we  lift  $V$  to a complete filtered module $\widetilde{V} = (\ldots \rightarrow 0 \rightarrow 0 \rightarrow V)$ in $\fil^{\ft} \mod_{k,\geq 0}$. We then observe that $\sqz(\widetilde{V})$ is evidently complete and it therefore lies in $\mathcal{C}_{\afp}^{\gr}$ by the coherence axiom $(4b)$ in  \Cref{filteredrefinementdef}. By \Cref{forgetcompletefilt}, this implies that $F^1(\sqz(\widetilde{V})) \simeq \sqz(V)$ lies in $\mathcal{C}_{\afp}$.
\end{remark}

Using this observation, we can  establish a version of \Cref{grafpadj} in the unfiltered context: 
\begin{cons}\label{resadjCafp}   \INN{032@$\cot$} \INN{190@$\sqz$}
The $(\cot,\sqz)$-adjunction  between $\mathcal{C}$ and $\mod_{k,\geq 0} $ restricts to an \mbox{adjunction}
$$ \begin{tikzpicture}[baseline=(C.base)] 
	\node (C) at (0,0) {${\cot}: \mathcal{C}_{\afp}$};
	\node (M) at (3,0) {$  \mod^{\ft}_{k,\geq
			0}: \sqz .$};
	% adjunction arrows
	\draw[->] ([yshift=2.5pt]C.east) -- node[above] { }
	([yshift=2.5pt]M.west);
	\draw[<-] ([yshift=-2.5pt]C.east) -- node[below] { }
	([yshift=-2.5pt]M.west); 
\end{tikzpicture}$$
\end{cons}

We now wish to show that this adjunction 
satisfies the desirable conditions stated in \Cref{thm:mainaxiomatic}. 
For this, we will need a convergence criterion for cosimplicial objects in
$\mathcal{C}_{\afp}$. 
Indeed, our criterion says that if a cosimplicial object in $\mathcal{C}_{\afp}$ admits a suitable lift to $\mathcal{C}_{\afp}^{\fil}$, then 
taking the cotangent fibre commutes with totalisation. More precisely:
\begin{proposition}[Convergence criterion in $\mathcal{C}$]  
\label{convcritCafp}
Let $X^\bullet$ be a cosimplicial object of $\mathcal{C}_{\afp}$. 
Suppose that there exists a lift $\widetilde{X}^\bullet$ of $X^\bullet$ to 
$\mathcal{C}^{\fil}_{\afp}$
which satisfies the equivalent conditions of \Cref{convcritCfil} and 
moreover has complete almost finitely presented totalisation  $\mathrm{Tot}(\widetilde{X}^\bullet) \in \mathcal{C}^{\fil}_{\afp}$.

Then the limit $\mathrm{Tot}(X^\bullet)$ of $X^\bullet$ connectively exists in $\mathcal{C}$, belongs to  $\mathcal{C}_{\afp}$,  and the following map is an
equivalence:
$$\cot( \mathrm{Tot}(X^\bullet)) \rightarrow \mathrm{Tot}( \cot(X^\bullet)).$$ 
\end{proposition}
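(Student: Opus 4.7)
My strategy is to transport everything through the right adjoint $F^1 : \mathcal{C}^{\fil} \to \mathcal{C}$, so that the convergence assertion reduces to the already-available filtered convergence of \Cref{convcritCfil}. The key formal inputs are: (i) $F^1$ is a right adjoint and hence preserves arbitrary limits; (ii) by the adjointability axiom $(2)$ of \Cref{filteredrefinementdef}, $F^1$ commutes both with $\forget$ and with $\cot$; and (iii) by \Cref{forgetcompletefilt}, $F^1$ sends $\mathcal{C}^{\fil}_{\afp}$ into $\mathcal{C}_{\afp}$. Since $\widetilde{X}^\bullet$ is a lift of $X^\bullet$, we have $F^1 \widetilde{X}^\bullet \simeq X^\bullet$, which will be the bridge between the filtered and unfiltered statements.

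First I would establish connective existence and $\mathcal{C}_{\afp}$-membership. Because $F^1$ preserves limits we obtain
$$\Tot(X^\bullet) \;\simeq\; F^1 \Tot(\widetilde{X}^\bullet),$$
and the hypothesis places $\Tot(\widetilde{X}^\bullet)$ in $\mathcal{C}^{\fil}_{\afp}$, so \Cref{forgetcompletefilt} immediately delivers $\Tot(X^\bullet) \in \mathcal{C}_{\afp}$. That the limit connectively exists follows from the fact that \Cref{convcritCfil} guarantees $\forget(\Tot(\widetilde{X}^\bullet)) \in \fil \md_{k,\geq 0}$, combined with the commutativity $\forget \circ F^1 \simeq F^1 \circ \forget$ (the latter $F^1$ being the one on filtered modules, which again preserves all limits).

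Next I would handle the compatibility of $\cot$ with totalisation. Since each $\widetilde{X}^n$ and $\Tot(\widetilde{X}^\bullet)$ lie in $\mathcal{C}^{\fil}_{\afp}$, the completeness axiom $(5a)$ of \Cref{filteredrefinementdef} makes $\cot(\widetilde{X}^n)$ and $\cot(\Tot(\widetilde{X}^\bullet))$ complete, and totalisations of complete filtered modules are complete. Hence on these objects $\widehat{\cot}$ coincides with $\cot$, and the conclusion of \Cref{convcritCfil} specialises to an equivalence
$$\cot(\Tot(\widetilde{X}^\bullet)) \;\simeq\; \Tot\bigl(\cot(\widetilde{X}^\bullet)\bigr) \quad \text{in } \fil \md_{k,\geq 0}.$$
Applying the right adjoint $F^1$, using $F^1 \cot \simeq \cot F^1$ and the fact that $F^1$ preserves $\Tot$, yields
$$\cot(\Tot(X^\bullet)) \;\simeq\; F^1 \cot(\Tot(\widetilde{X}^\bullet)) \;\simeq\; F^1 \Tot(\cot(\widetilde{X}^\bullet)) \;\simeq\; \Tot(\cot(X^\bullet)),$$
which is exactly the asserted equivalence.

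The only real content is packaged into \Cref{convcritCfil}; the present statement is essentially a mechanical shadow obtained by passing to the fibre at filtration level one. The delicate point to be careful about is that the ``lift'' $\widetilde{X}^\bullet$ in the hypothesis really means an object of $\fun(\Delta,\mathcal{C}^{\fil}_{\afp})$ equipped with an equivalence $F^1\widetilde{X}^\bullet \simeq X^\bullet$, and that the various adjointability squares of \Cref{filteredrefinementdef} assemble naturally into squares of functors out of $\fun(\Delta,-)$; both are formal consequences of the postulated data.
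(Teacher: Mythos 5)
Your proposal is correct and follows essentially the same route as the paper's own proof: reduce to \Cref{convcritCfil}, use that the right adjoint $F^1$ preserves limits together with \Cref{forgetcompletefilt} for $\mathcal{C}_{\afp}$-membership, invoke the completeness axiom $(5a)$ to replace $\widehat{\cot}$ by $\cot$, and then apply $F^1$ with the adjointability axiom $(2)$. No substantive differences.
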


\begin{proof} 
By \Cref{convcritCfil}, the totalisation $\widetilde{X}^{-1}:=
\mathrm{Tot}(\widetilde{X}^\bullet)$ connectively exists in $\mathcal{C}^{\fil}$ and we have an equivalence
$\widehat{\cot}( \widetilde{X}^{-1}) \xrightarrow{\simeq} \mathrm{Tot}(
\widehat{\cot}(\widetilde{X}^\bullet)). $
The right adjoint $F^1:\mathcal{C}^{\fil} \rightarrow \mathcal{C}$ preserves limits, and so \mbox{$ X^{-1}:=
\mathrm{Tot}(X^\bullet)$} connectively exists in $\mathcal{C}$
and we have $X^{-1}\simeq F^1( \widetilde{X}^{-1})$. 
Our assumption  $ \widetilde{X}^{-1}=\mathrm{Tot}(\widetilde{X}^\bullet) \in \mathcal{C}^{\fil}_{\afp}$  implies that $X^{-1}\simeq  F^1( \widetilde{X}^{-1}) $ belongs to $ \mathcal{C}_{\afp}$ by \Cref{forgetcompletefilt}.

By the completeness axiom $(5a)$ of \Cref{filteredrefinementdef}, we know that   $ \cot(\widetilde{X}^i)  \xrightarrow{\simeq} \widehat{\cot}(\widetilde{X}^i)$ is an equivalence for all $i \geq -1$. \Cref{convcritCfil} therefore shows that
${\cot}(\widetilde{X}^{-1}) \simeq \mathrm{Tot}(
{\cot}(\widetilde{X}^\bullet)) $. We conclude the proof by applying $F^1$  and using the adjointability axiom $(2)$ of \Cref{filteredrefinementdef}.
\end{proof} 
We can now proceed to the proof of the main result of this axiomatic \vspace{-5pt}section:
\begin{proof}[Proof of \Cref{thm:mainaxiomatic}]
We\vspace{-2.4pt} constructed the adjunction 
\begin{tikzpicture}[baseline=(C.base)]
	\node (C) at (0,0) {$\mathcal{C}_{\afp}$};
	\node (M) at (2,0) {$\mod^{\ft}_{k,\geq 0}$};
	% adjunction arrows
	\draw[->] ([yshift=2.5pt]C.east) -- node[above] { $\cot$}
	([yshift=2.5pt]M.west);
	\draw[<-] ([yshift=-2.5pt]C.east) -- node[below] {$\sqz$}
	([yshift=-2.5pt]M.west); 
\end{tikzpicture}
in  \Cref{resadjCafp}.  To prove that this adjunction is comonadic, we will  verify the conditions of  \Cref{BBL}.

First, we check that the functor $\cot$ is conservative. Let $A \rightarrow B$ be a map in $\mathcal{C}_{\afp}$ which induces an equivalence $\cot(A)\xrightarrow{\simeq}
\cot(B)$ on  cotangent fibres. 
Then $\adic(A) \rightarrow \adic(B)$ also induces an equivalence on cotangent fibres, and hence also on
associated gradeds by \Cref{grofadicisfree}. Since $\adic(A)$ and $\adic(B)$ are both complete, 
it follows from \Cref{detect} 
that $\adic(A) \xrightarrow{\simeq} \adic(B)$ is an equivalence, and hence the same holds true for 
$A \xrightarrow{\simeq} B$.

To check the second condition of \Cref{BBL}, we fix a cosimplicial object 
$X^\bullet$ in $\mathcal{C}_{\afp}$ and assume that
$\cot(X^\bullet)$ admits a splitting in $\mod_{k, \geq 0}^{\ft}$. 
We pick the filtered\vspace{2pt} lift $\widetilde{X}^\bullet :=\adic( X^\bullet)$ of $X^\bullet$, which is a cosimplicial object in $\mathcal{C}^{\fil}_{\afp}$ by definition. \Cref{grofadicisfree},
implies that \mbox{$\gr(\widetilde{X}^\bullet) \simeq \free([\cot(X^\bullet)]_1)$} admits a splitting in $\mathcal{C}^{\gr}_{\afp}$. Using the adjointability axiom $(2)$ in \Cref{filteredrefinementdef} and \Cref{testgrcoherence}, we see that $\gr(\Tot(\forget(\widetilde{X}^\bullet)))
\simeq \forget(\Tot(\gr(\widetilde{X}^\bullet)))$ belongs to\vspace{2pt}
${\pft} \mod_{k,\geq 0}$.   \Cref{convcritCfil} therefore shows that the limit
$\widetilde{X}^{-1}:=\Tot(\widetilde{X}^\bullet)$ connectively exists in $\mathcal{C}^{\fil}$,  and that the natural  map  
\mbox{$ \widehat{\cot}(\widetilde{X}^{-1}) \xrightarrow{\simeq}
\mathrm{Tot}( \widehat{\cot}( \widetilde{X}^\bullet))$} is an equivalence. 
Since the cosimplicial diagram $\widehat{\cot}( \widetilde{X}^\bullet) \simeq (\cot(X^\bullet))_1$ is split in $\fil^{\ft}\mod_{k, \geq 0}$, it follows that $\widehat{\cot}( \widetilde{X}^{-1})$ belongs to $\fil^{\ft}\mod_{k, \geq 0}$ as well. This shows that $\gr({\cot}( \widetilde{X}^{-1}) \simeq \gr(\widehat{\cot}( \widetilde{X}^{-1}) \in \gr^{\ft}\mod_k $, which allows us to conclude that $\widetilde{X}^{-1}$ belongs to $\mathcal{C}^{\fil}_{\afp}$ (it is evidently complete as this is a limit condition).
 
The convergence criterion \Cref{convcritCafp} then implies that the limit $X^{-1} := \mathrm{Tot}(X^\bullet)$ belongs to $\mathcal{C}_{\afp}$, and that the canonical map 
$ {\cot}( {X}^{-1}) \xrightarrow{\simeq}
\mathrm{Tot}( {\cot}( {X}^\bullet))$ is an equivalence. This proves
comonadicity, i.e.\ statement (1) of the theorem. \vspace{3pt}

Before proceeding further, we record that  comonadicity implies that 
any $A \in \mathcal{C}_{\afp}$ is the totalisation of its canonical cobar resolution $\left((\sqz \circ \cot)(A) \rightrightarrows (\sqz \circ \cot) \circ (\sqz \circ
\cot)(A)  \substack{\longrightarrow{}\vspace{-3pt} \\ \longrightarrow \vspace{-3pt}  \\ \longrightarrow}\dots \right)$.  In particular, $A$ is a totalisation of a cosimplicial object in
$\mathcal{C}_{\afp}$ which at each level is square-zero.  \vspace{10pt}

We will now verify part $(2)$  of the theorem using \Cref{critextendfun}. 
For this, let $T = \mathrm{cot} \circ \sqz$ be the comonad  on
$\mod^{\ft}_{k,\geq 0}$ induced by the adjunction  and define $T^{\vee}$ as the monad induced on $\mod^{\ft}_{k,\leq 0}$ by linear duality. 
Let $V^\bullet$ be a cosimplicial object in $\mod^{\ft}_{k,\geq 0}$ which is $m$-coskeletal for some $m$, and assume that 
$V^{-1} := \mathrm{Tot}(V^\bullet)$ belongs to $\mod^{\ft}_{k,\geq 0}$. 
To prove statement $(2)$ of the theorem, we need to first verify that the following map is an equivalence:
\begin{equation} \label{auxTcottot} T(V^{-1}) \xrightarrow{\ \ \simeq\ \ } \mathrm{Tot}(
T(V^\bullet)). \end{equation} 
Via duality, this implies that $T^{\vee}$ commutes with finite coconnective
geometric realisations \mbox{in $\mod_{k,\leq 0}^{\ft}$.} To prove the equivalence \eqref{auxTcottot}  above, we apply 
\Cref{convcritCafp} 
to the cosimplicial object \mbox{$X^\bullet = \sqz( V^\bullet)$} together with its filtered lift $\widetilde{X}^\bullet = \sqz(\widetilde{V}^\bullet)$, where  $\widetilde{V}^\bullet = (\dots \to
0 \rightarrow 0 \rightarrow V^\bullet)$.
Using axioms $(2)$ and $(4b)$ in \Cref{filteredrefinementdef}, we see that the filtered lift $\widetilde{X}^\bullet$ 
is a cosimplicial object in $\mathcal{C}^{\fil}_{\afp}$ and also satisfies the other assumptions of \Cref{convcritCafp}.
It follows that 
$\cot( \mathrm{Tot}(X^\bullet)) \xrightarrow{\simeq} \mathrm{Tot}( \cot(X^\bullet))$ is an equivalence, which proves 
\eqref{auxTcottot} since $\sqz$ preserves limits. 
Moreover, if $V \in \mod^{\ft}_{k,\geq 0}$ is of finite type, then $T(V) \simeq \varprojlim_n T(
\tau_{\leq n} V)$, and the inverse limit stabilises in any finite range of homological degrees
by \Cref{truncbehaveswell}. We deduce that $T^{\vee}$ is right complete. 
Thus, it follows that the criteria of \Cref{critextendfun} are satisfied, which shows that $T^{\vee}$ admits the  sifted colimit-preserving extension $\md_k \rightarrow \md_k$ asserted  \vspace{10pt} in $(2)$.

Finally, we establish part $(3)$ of the theorem. For this, let $A, A', A'' \in \mathcal{C}_{\afp}$ and suppose that we are given maps
$A \rightarrow A', A \rightarrow A''$ which induce surjections on $\pi_0$. 
We need to show that the natural map 
$\mathfrak{D}( A) \sqcup_{\mathfrak{D}(A'')} \mathfrak{D}(A') \xrightarrow{  \simeq } \mathfrak{D}(A \times_{A'} A')$ is an equivalence.
This is easy to check if everything is square-zero.  That is,  if we are given $V, V', V'' \in \mod^{\ft}_{k,\geq 0}$ together with $\pi_0$-surjective maps $V \rightarrow V''$ and $V' \rightarrow V''$, then the following map of $T^{\vee}$-algebras is an equivalence:
$$ \mathfrak{D}( \sqz(V)) \sqcup_{\mathfrak{D}( \sqz(V''))} \mathfrak{D}( \sqz(V'))\xrightarrow{ \ \  \simeq \ \ } \mathfrak{D}( \sqz(V \times_{V''} V')).
$$
Indeed, the left-hand-side is the pushout of the free $T^{\vee}$-algebras on $V^{\vee},
V''^{\vee},$ and $V'^{\vee}$, respectively, whereas the right-hand side is the free $T^{\vee}$-algebra on $(V \times_{V''} V')^{\vee}$.
Our strategy now is to reduce the general case to the square-zero case by using 
cobar resolutions.

For this, let $X^\bullet, X'^\bullet, X''^\bullet$ be the canonical
cobar resolutions of $A, A', A''$, respectively. For example, we have
$X^0 = (\sqz \circ \cot) (A)$ and $X^1 = (\sqz \circ \cot) \circ (\sqz \circ \cot)
(A)$. Note that these are cosimplicial objects of $\mathcal{C}_{\afp}$. 
The maps $X^\bullet \rightarrow X''^\bullet, X'^\bullet \rightarrow X''^\bullet$ induce surjections on $\pi_0$   at each level, and so we can also form  the cosimplicial object in $\mathcal{C}_{\afp}$ given by $Y^\bullet := X^\bullet \times_{X''^\bullet} X'^\bullet$ by \Cref{closureCafp}.
Comonadicity implies
that 
$A \simeq  \mathrm{Tot}(X^\bullet), A' \simeq   \mathrm{Tot}(X'^\bullet), A''
\simeq  \mathrm{Tot}(X''^\bullet)$. Therefore,  we have 
$\mathrm{Tot}(Y^\bullet)\simeq   A \times_{A'}
A''$. Since we have already verified claim $(3)$ in the case of square-zero extensions, we have the following equivalence of $T^{\vee}$-algebras
for all $i\geq 0$:
$$  \mathfrak{D}(X^i) \sqcup_{\mathfrak{D}(X''^i)} \mathfrak{D}(X'^i) \xrightarrow{\ \  \simeq\ \ }  \mathfrak{D}(Y^i) .$$
To deduce that 
$ \mathfrak{D}( A) \sqcup_{\mathfrak{D}(A'')} \mathfrak{D}(A')\simeq \mathfrak{D}(A \times_{A'} A'')  $, it therefore 
suffices to verify the \mbox{following facts:}
\begin{enumerate}[a)]
\item $| \mathfrak{D}(X^\bullet)|\simeq  \mathfrak{D}(A)  $ and $ | \mathfrak{D}(X'^\bullet)|\simeq \mathfrak{D}(A') $ and $ | \mathfrak{D}(X''^\bullet)| \simeq \mathfrak{D}(A'')  $;
\item $| \mathfrak{D}(Y^\bullet)| \simeq  \mathfrak{D}(A \times_{A'} A'')  $. 
\end{enumerate}
Geometric realisations in $T^{\vee}$-algebras can be computed
in $\mod_k$ as $T^{\vee}$ \mbox{preserves sifted colimits.}

Claim  $(a)$ follows immediately by applying linear duality to the comonadicity  established above. 

Claim $(b)$ will follow  by applying  the  convergence criterion established in  \Cref{convcritCfil}.

First, we form the cosimplicial object $\widetilde{Y}^\bullet := \displaystyle \adic(X^\bullet) \times_{\adic(X''^\bullet)}
\adic(X'^\bullet) $, which is a filtered lift of $Y^\bullet$ to $\mathcal{C}^{\fil}_{\afp}$ by \Cref{closureCafp}.

Second, we will check that the totalisation of $\gr( \widetilde{Y}^\bullet)\in \mathcal{C}^{\gr}$ belongs to $\pft \md_{k,\geq 0}$.
For this, we first observe that by construction, the cosimplicial objects $\cot(X^\bullet)$, $\cot(X'^\bullet)$, $\cot(X''^\bullet)$ are all split  in $\mod_{k,\geq 0}^{\ft}$.  
\Cref{grofadicisfree} then shows that  $\gr( \adic(X^\bullet)), \gr( \adic(X'^\bullet))$, and $\gr(\adic(X''^\bullet)$ have splittings and therefore admit totalisations in $\pft \md_{k,\geq 0}$. Since the maps between these objects are levelwise  surjective on $\pi_0$,   we see that $\gr( \widetilde{Y}^\bullet)$ also admits a totalisation in $\pft \md_{k,\geq 0}$. 

Third, we need to show that $\Tot( \widetilde{Y}^\bullet) $ admits a totalisation in $\mathcal{C}_{\afp}^{\fil}$.
For this, we first observe that the map $\adic(A) \rightarrow \mathrm{Tot}(\adic(X^\bullet))$ induces an equivalence. By completeness, it suffices to check this after applying $\gr$. Here, it is true because the functor 
$\gr \circ \adic$ is equivalent to $\free \circ [-]_1$ by  \Cref{grofadicisfree}, the natural map 
$\cot(A)\xrightarrow{\simeq}\mathrm{Tot}( \cot(X^\bullet))$ is an equivalence by the construction of the cobar resolution, and the functor 
 $\forget \circ\free $ is admissible by axiom $(3)$ of \Cref{filteredrefinementdef}.
A similar argument gives equivalences $\adic(A') \rightarrow \mathrm{Tot}(\adic(X'^\bullet))$ and $\adic(A'') \rightarrow \mathrm{Tot}(\adic(X''^\bullet))$.
We deduce $\Tot( \widetilde{Y}^\bullet) 
\simeq  \displaystyle \Tot(\adic(X^\bullet)) \times_{\Tot(\adic(X''^\bullet))}\Tot(\adic(X'^\bullet)) \simeq  \adic(A)
\times_{\adic(A'')} \adic(A''),$ which belongs to $\mathcal{C}^{\fil}_{\afp}$ by \Cref{closureCafp}.

We can now apply \Cref{convcritCafp} to conclude that $\cot(A \times_{A''} A') \cong \cot(\Tot(Y^\bullet))  \xrightarrow{\simeq} \Tot(\cot(Y^\bullet))$ is an equivalence. Using that $\cot(Y) \in \mod_{k,\geq 0}^{\ft}$ is of finite type and therefore equivalent to its own bidual, we can therefore apply duality and deduce that
the natural map  $|\mathfrak{D}(Y^\bullet)| \xrightarrow{\simeq} \mathfrak{D}(A \times_{A'} A'') $ is an equivalence.
This completes the verification of claim $(b)$ above.
\end{proof}

\subsection{Deformation theories}
We shall now explain how \Cref{thm:mainaxiomatic} translates into the
language of deformation theories studied in \cite[Ch. 12]{lurie2016spectral} or
\cite{lurie2011derivedX}.

As before, we fix a filtered augmented monadic adjunction (cf.\ \Cref{filteredrefinementdef}) \mbox{throughout.
Write} $T^\vee$ for the monad on $\mod_k$  constructed in \Cref{thm:mainaxiomatic} (2), i.e.\ the unique sifted-colimit-preserving extension 
of the monad $ M\mapsto (\cot \sqz (M^\vee))^{\vee} $ acting on $\mod_{k,\leq 0}^{\ft}$. Our  aim in this section is to prove \Cref{fmpequiv}, which asserts that $\mathcal{C}$-based formal moduli problems are equivalent to $T^\vee$-algebras.

We begin by constructing the required deformation functor. First, observe that 
composing the
$(\cot \dashv \sqz)$-adjunction with linear duality in fact  \vspace{-2pt}  gives rise to an adjunction
\begin{equation} 
 \begin{tikzpicture}[baseline=(C.base)] \label{dualadjunction}
	\node (C) at (0,0) {$\cot^{\vee}: \mathcal{C}_{\afp}$};
	\node (M) at (4,0) {$  \mod^{\ft}_{k,\geq
			0}: \sqz \circ  \tau_{\geq 0}\circ (-)^{\vee} .$};
	% adjunction arrows
	\draw[->] ([yshift=2.5pt]C.east) -- node[above] { }
	([yshift=2.5pt]M.west);
	\draw[<-] ([yshift=-2.5pt]C.east) -- node[below] { }
	([yshift=-2.5pt]M.west); 
\end{tikzpicture}\end{equation}

Its left adjoint sends $A \in \mathcal{C}$ to $\cot(A)^{\vee}$, i.e.\ the
linear dual of the cotangent fibre, whereas its right adjoint maps $V \in \mod_k$
to the trivial square-zero extension on $\tau_{\geq 0}(V^{\vee})$. \vspace{5pt}

\newcommand{\wafp}{\mathrm{wafp}}
We can extend  the functor $\mathfrak{D}: \mathcal{C}_{\afp} \rightarrow \mathrm{Alg}_{T^{\vee}}^{op}$ from  \Cref{thm:mainaxiomatic} to all of $\mathcal{C}$ in such a way that postcomposing with the forgetful
functor to $\md_k$ recovers $\cot^{\vee}$. Let $\mathcal{C}_{\wafp} \subset \mathcal{C}$ be
the full subcategory of all \INN{030@$\mathcal{C}_{\wafp}$}
$A$ with $\cot(A) \in \mod^{\ft}_{k,\geq 0}$. 
\begin{cons}[The Koszul duality adjunction]\label{theadjunction}
Since the action of $T^\vee$  on $\mod_{k,\leq 0}^{\ft}$
agrees with the monad induced  by adjunction (\ref{dualadjunction}), we have a natural functor 
$$\mathfrak{D}: \mathcal{C}_{\wafp} \rightarrow \mathrm{Alg}_{T^{\vee}}^{op} \vspace{-2pt}  $$
which forgets to $\cot^{\vee}$ in $\md_{k,\leq 0}$. 
Observe that this functor is left Kan extended from the compact objects of
$\mathcal{C}$, since $\cot^{\vee}: \mathcal{C} \rightarrow \md_k^{op}$ has this
property. 

We can left Kan extend further to $\mathcal{C}$ to finally obtain the deformation functor\vspace{-2pt} 
$$ \mathfrak{D}: \mathcal{C} \longrightarrow \mathrm{Alg}_{T^{\vee}}^{op}.  $$

By \cite[Corollary 4.1.4]{nguyen2020adjoint}, the functor $\mathcal{D}$ admits a right adjoint 
\INN{032@$C^*$}
$$C^*: \mathrm{Alg}_{T^{\vee}}^{op} \longrightarrow \mathcal{C}.\vspace{-20pt}$$ \label{KDadj}
\end{cons}
\begin{remark} \INN{040@$\mathfrak{D}$}
It should not be a surprise that we can extend $\mathfrak{D}: \mathcal{C}_{\afp} \rightarrow \mathrm{Alg}_{T^{\vee}}^{op}$ to all of $\mathcal{C}$. Indeed, 
writing $\widetilde{T^\vee}$ for the monad on $\mod_k$ associated with the above adjunction (\ref{dualadjunction}), the monad 
$T^\vee$ is defined as the unique sifted-colimit-preserving extension of the restriction $\widetilde{T^\vee}|_{\mod_{k,\leq 0}^{\ft}}$.
Since this extension is obtained by left Kan extension, there is a natural transformation of monads $T^\vee \rightarrow \widetilde{T^\vee}$; we may think of $T^\vee$ as an ``uncompletion" of $\widetilde{T^\vee}$.

The   functor $\mathfrak{D}: \mathcal{C} \rightarrow \mathrm{Alg}_{T^{\vee}}^{op}$ is then simply obtained as the composite
$\mathcal{C} \longrightarrow \mathrm{Alg}_{\widetilde{T^{\vee}}}^{op}\longrightarrow \mathrm{Alg}_{{T^{\vee}}}^{op}.$
Here the first map comes from adjunction \eqref{dualadjunction}, whereas  the second uses  \mbox{the map of monads $T^\vee \rightarrow \widetilde{T^\vee}$.}
\end{remark}

Construction~\ref{KDadj} factors
\eqref{dualadjunction} through the free-forgetful 
adjunction 
$\forget: \mathrm{Alg}_{T^{\vee}}^{op} \rightleftarrows \md_k^{op} : \free$.
Unwinding the above definitions, we can observe the following natural equivalences:
\begin{gather}  C^*( \free_{T^{\vee}}(V)) \simeq  \sqz( \tau_{\geq 0} V^{\vee}
), \quad V \in \md_k \\
\mathfrak{D}( \free(W)) \simeq W^{\vee}, \quad W \in \md_{k,\geq 0} , \label{Doffree} \\
\mathfrak{D}( \sqz(W)) \simeq \free_{T^{\vee}}(W^{\vee}), \quad W \in \mod^{\ft}_{k,\geq 0}.
\end{gather}

The statement \eqref{Doffree} is to be interpreted as on the level of objects of
$\md_k$; 
informally the $T^{\vee}$-algebra-structure should be square-zero, but we do not
attempt to make this precise.

Combining these basic facts with \Cref{thm:mainaxiomatic}, we can conclude that the adjunction
 $(\mathfrak{D} \dashv C^*)$ restricts to a pair of inverse equivalences between 
$\mathcal{C}_{\afp}$ and $\mathrm{Alg}_{T^{\vee}}(\mod^{\ft}_{k,\leq 0})^{op}$:
\begin{proposition}  Let $\mathcal{C}, \mathcal{C}_{\afp}, \mathfrak{D}, C^\ast, \ldots$ be defined as above.
\label{KDequivalences}
\begin{enumerate}
\item  
Given any $A \in \mathcal{C}_{\afp}$, the natural map $A \rightarrow C^*( \mathfrak{D}( A))$ is an
equivalence. 
\item
Given  any $T^{\vee}$-algebra $\mathfrak{g}$ such that the underlying $k$-module belongs
to $\mod^{\ft}_{k,\leq 0}$, the natural map $\mathfrak{g} \rightarrow \mathfrak{D}(C^*(\mathfrak{g}))$
is an equivalence. 
\end{enumerate}
\end{proposition}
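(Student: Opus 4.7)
Proposition~\ref{KDequivalences} asserts that the adjunction $\mathfrak{D}\dashv C^*$ of Construction~\ref{theadjunction} restricts to an equivalence between $\mathcal{C}_{\afp}$ and the full subcategory of $\mathrm{Alg}_{T^\vee}$ spanned by objects whose underlying $k$-module lies in $\mod^{\ft}_{k,\leq 0}$. My strategy is to prove Part~(1) directly by a cobar-to-bar comparison using the comonadicity of $(\cot\dashv\sqz)$ established in Theorem~\ref{thm:mainaxiomatic}(1), and then deduce Part~(2) by running the dual argument with the monadic bar resolution on the $T^\vee$-algebra side, invoking the convergence criterion in Proposition~\ref{convcritCafp} to land back in $\mathcal{C}_{\afp}$. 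Both arguments begin with an easy verification on generators: for $V\in\mod^{\ft}_{k,\geq 0}$, the formulas in Construction~\ref{theadjunction} give $C^*\mathfrak{D}(\sqz V)\simeq\sqz(\tau_{\geq 0}V^{\vee\vee})\simeq\sqz V$ using finite-type biduality and the connectivity of $V$, and dually $\mathfrak{D}C^*(\free_{T^\vee}W)\simeq\free_{T^\vee}W$ for $W\in\mod^{\ft}_{k,\leq 0}$.

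For Part~(1), fix $A\in\mathcal{C}_{\afp}$ and form its comonadic cobar resolution $X^\bullet := (\sqz\circ\cot)^{\bullet+1}A$, so that $A\simeq\Tot(X^\bullet)$ by Theorem~\ref{thm:mainaxiomatic}(1), with each $X^n$ square-zero on an object of $\mod^{\ft}_{k,\geq 0}$. The crucial claim is that the simplicial object $\mathfrak{D}(X_\bullet)$ in $\mathrm{Alg}_{T^\vee}$ is canonically equivalent to the standard monadic bar resolution $\free_{T^\vee}\circ(\forget_{T^\vee}\circ\free_{T^\vee})^\bullet(\mathfrak{D}(A))$ of $\mathfrak{D}(A)$; this will follow because $T^\vee$ is defined in Theorem~\ref{thm:mainaxiomatic}(2) precisely as the linear dual of $\cot\circ\sqz$, so that $\mathfrak{D}$ intertwines the comonad $\sqz\circ\cot$ on $\mathcal{C}_{\afp}$ with the monad $\free_{T^\vee}\circ\forget_{T^\vee}$ on $\mathrm{Alg}_{T^\vee}$. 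By the (automatic) monadicity of $\free_{T^\vee}\dashv\forget_{T^\vee}$, this bar resolution realizes back to $\mathfrak{D}(A)$. Since $C^*$ is a right adjoint out of $\mathrm{Alg}_{T^\vee}^{op}$, it carries geometric realizations in $\mathrm{Alg}_{T^\vee}$ to totalizations in $\mathcal{C}$, and applying it levelwise together with the generator-case computation yields
\[C^*\mathfrak{D}(A)\simeq\Tot(C^*\mathfrak{D}(X^\bullet))\simeq\Tot(X^\bullet)\simeq A.\]

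For Part~(2), take $\mathfrak{g}\in\mathrm{Alg}_{T^\vee}(\mod^{\ft}_{k,\leq 0})$ and form its monadic bar resolution $B_\bullet = \free_{T^\vee}((T^\vee)^\bullet\mathfrak{g})$, whose levels remain in $\mathrm{Alg}_{T^\vee}(\mod^{\ft}_{k,\leq 0})$ since $T^\vee$ preserves $\mod^{\ft}_{k,\leq 0}$ by construction. Applying $C^*$ produces a cosimplicial object $Y^\bullet := \sqz(((T^\vee)^\bullet\mathfrak{g})^\vee)$ in $\mathcal{C}_{\afp}$, and $C^*(\mathfrak{g}) = C^*(|B_\bullet|)\simeq\Tot(Y^\bullet) =: A$. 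I argue $A\in\mathcal{C}_{\afp}$ by lifting $Y^\bullet$ to a cosimplicial object $\widetilde Y^\bullet$ of $\mathcal{C}^{\fil}_{\afp}$ placing each module in filtration degree~$1$: the associated graded inherits a splitting from the (forgotten and dualized) bar resolution $B_\bullet$, and the totalization of $\cot(\widetilde Y^\bullet)$ has underlying module $\mathfrak{g}^\vee\in\mod^{\ft}_{k,\geq 0}$, so Proposition~\ref{convcritCafp} applies. Rerunning the Part~(1) identification for $Y^\bullet$ (noting $\mathfrak{D}(Y^n)\simeq B_n$ by the generator case, with matching simplicial structure) gives $\mathfrak{D}(A)\simeq|\mathfrak{D}(Y_\bullet)|\simeq|B_\bullet|\simeq\mathfrak{g}$, yielding the desired equivalence $\mathfrak{g}\simeq\mathfrak{D}C^*(\mathfrak{g})$.

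The main technical obstacle will be the identification used in Part~(1) of $\mathfrak{D}(X_\bullet)$ with the monadic bar resolution of $\mathfrak{D}(A)$ as a full simplicial object in $\mathrm{Alg}_{T^\vee}$. Although the objectwise matching $\mathfrak{D}(\sqz\cot B)\simeq\free_{T^\vee}\forget_{T^\vee}\mathfrak{D}(B)$ is immediate from the explicit formulas, promoting this to a natural equivalence of simplicial objects requires verifying that $\mathfrak{D}$ sends the structure maps of the comonad $\sqz\circ\cot$ (unit, comultiplication) to the corresponding structure maps of the monad $\free_{T^\vee}\circ\forget_{T^\vee}$. This is essentially naturality of the two Barr--Beck constructions across the linear-duality equivalence of Theorem~\ref{thm:mainaxiomatic}(2), facilitated throughout by the finite-type biduality on $\mod^{\ft}_k$.
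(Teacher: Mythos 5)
Your argument is correct, and Part~(1) is in substance the paper's proof: verify the claim on square-zero generators via the explicit formulas for $C^*\circ\free_{T^\vee}$ and $\mathfrak{D}\circ\sqz$, then resolve a general $A\in\mathcal{C}_{\afp}$ by its cobar resolution and use that $\mathfrak{D}$ carries $\Tot(X^\bullet)$ to $|\mathfrak{D}(X^\bullet)|$ while $C^*$, being a right adjoint, converts that realisation back into a totalisation. The one place you create extra work for yourself is the ``crucial claim'' identifying $\mathfrak{D}(X^\bullet)$ with the monadic bar resolution of $\mathfrak{D}(A)$: you do not need this identification, because the comonadicity statement of Theorem~\ref{thm:mainaxiomatic}(1) already contains the assertion that $\cot$ (hence $\mathfrak{D}=\cot(-)^\vee$, after dualising the split cosimplicial object $\cot(X^\bullet)$) preserves the totalisation of the cobar resolution; the equivalence $|\mathfrak{D}(X^\bullet)|\simeq\mathfrak{D}(A)$ is thus already available without re-deriving the bar/cobar duality of structure maps. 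Where you genuinely diverge is Part~(2): you rerun a dual bar-resolution argument, lifting $Y^\bullet=C^*(B_\bullet)$ to filtered objects and invoking Proposition~\ref{convcritCafp} to show $\Tot(Y^\bullet)\in\mathcal{C}_{\afp}$ and $\mathfrak{D}(\Tot(Y^\bullet))\simeq\mathfrak{g}$. This works (the splitting of $\cot(Y^\bullet)=\forget(B_\bullet)^\vee$ puts you exactly in the situation handled in the proof of Theorem~\ref{thm:mainaxiomatic}(1)), but it amounts to re-proving essential surjectivity of $\mathfrak{D}$ from scratch. The paper instead observes that essential surjectivity onto $\mathrm{Alg}_{T^\vee}(\mod^{\ft}_{k,\leq 0})^{op}$ is already part of the comonadic equivalence $\mathcal{C}_{\afp}\simeq\mathrm{coAlg}_{\cot\circ\sqz}(\mod^{\ft}_{k,\geq 0})$, writes $\mathfrak{g}=\mathfrak{D}(A)$, and deduces Part~(2) from Part~(1) in one line. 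Your route buys nothing extra here beyond independence from that remark, at the cost of repeating the convergence analysis; the formal deduction is the economical choice.
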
 
\begin{proof} 
If $A = \sqz(W)\in \mathcal{C}_{\afp}$ is a trivial square-zero extension on some $W \in \mod^{\ft}_{k,\geq 0}$, the first claim follows from observations $(6)$ and $(8)$ above. Given a general $A\in \mathcal{C}_{\afp}$, the 
comonadicity claim in \Cref{thm:mainaxiomatic}$(1)$ shows that $A$
 can be written as a totalisation of a cosimplicial object $A^\bullet$ in $\mathcal{C}$ consisting of square-zero extensions, and that moreover $\mathfrak{D}$ preserves this totalisation (i.e.\ carries it to a geometric realisation of $T^{\vee}$-algebras). Of course, the right adjoint $C^*$ also preserves all totalisations. Claim $(1)$ therefore follows.

For claim $(2)$, we use that by \Cref{thm:mainaxiomatic}$(1)$,  any  $T^{\vee}$-algebra $\mathfrak{g}$ with underlying $k$-module in $\mod^{\ft}_{k,\leq 0}$
can be written as  $\mathfrak{g}= \mathfrak{D}( A)$ for some $A \in\mathcal{C}_{\afp}$. The statement then follows from  $(1)$.
\end{proof}

\begin{proposition} 
\label{pushoutproperty}
Let $\mathfrak{g}, \mathfrak{g}', \mathfrak{g}'' \in \mathrm{Alg}_{T^{\vee}}(\mod_{k,\leq 0}^{\ft})$ be $T^\vee$-algebras with underlying module in $\mod_{k,\leq 0}^{\ft}$ and suppose that we are given maps $\mathfrak{g}'' \rightarrow \mathfrak{g}, \mathfrak{g}'' \to
\mathfrak{g}'$ which induce injections on $\pi_0$. 
 
Then the pushout $\displaystyle \mathfrak{g} \sqcup_{\mathfrak{g}''} \mathfrak{g}'$ (computed
in $T^{\vee}$-algebras) has underlying $k$-module in $\mod^{\ft}_{k,\leq 0}$ \mbox{as well.}
\end{proposition}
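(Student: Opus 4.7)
The plan is to transport the problem through the Koszul duality equivalence of \Cref{KDequivalences} and then invoke the pullback-to-pushout part of \Cref{thm:mainaxiomatic}(3). Since $\mathfrak{g}, \mathfrak{g}', \mathfrak{g}''$ all have underlying $k$-module in $\mod_{k,\leq 0}^{\ft}$, part (2) of \Cref{KDequivalences} produces $A, A', A'' \in \mathcal{C}_{\afp}$ with $\mathfrak{g} \simeq \mathfrak{D}(A)$, $\mathfrak{g}' \simeq \mathfrak{D}(A')$, $\mathfrak{g}'' \simeq \mathfrak{D}(A'')$, together with maps $A \rightarrow A''$ and $A' \rightarrow A''$ (directions reversed) realizing the given arrows.

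The main technical step is to check that the hypothesis of $\pi_0$-injectivity on the $\mathfrak{g}$-side translates into $\pi_0$-surjectivity on the $A$-side. Under the equivalence of \Cref{KDequivalences}, the underlying module of $\mathfrak{D}(A)$ is $\cot(A)^{\vee}$, so by linear duality, $\pi_0(\mathfrak{g}'') \hookrightarrow \pi_0(\mathfrak{g})$ is equivalent to $\pi_0(\cot(A)) \twoheadrightarrow \pi_0(\cot(A''))$ being surjective (using finite-dimensionality of both sides). By \Cref{grofadicisfree}, $\pi_0(\cot(A))$ can be identified with the degree-one piece of $\gr(\adic(A))$, i.e.\ with $\pi_0(\mathfrak{m}_A/\mathfrak{m}_A^2)$ in the motivating examples. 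A Nakayama-style argument in the complete local setting — formally, combining the completeness axiom (5) of \Cref{filteredrefinementdef} with the admissibility tower (3) applied to $\gr(\adic(A))$ and $\gr(\adic(A''))$ — then shows that surjectivity of $\pi_0(\cot(A)) \to \pi_0(\cot(A''))$ is equivalent to surjectivity of $\pi_0(A) \to \pi_0(A'')$, and symmetrically for $A' \to A''$.

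Once both maps $A \rightarrow A''$ and $A' \rightarrow A''$ are $\pi_0$-surjective, \Cref{thm:mainaxiomatic}(3) applies: the fibre product $A \times_{A''} A'$ lies in $\mathcal{C}_{\afp}$, and $\mathfrak{D}$ sends this pullback to a pushout square of $T^{\vee}$-algebras. Hence
\[
\mathfrak{g} \sqcup_{\mathfrak{g}''} \mathfrak{g}' \;\simeq\; \mathfrak{D}(A \times_{A''} A') \;\simeq\; \cot(A \times_{A''} A')^{\vee}.
\]
Since $A \times_{A''} A' \in \mathcal{C}_{\afp}$, its cotangent fibre lies in $\mod^{\ft}_{k,\geq 0}$ by definition of $\mathcal{C}_{\afp}$, and therefore its linear dual lies in $\mod^{\ft}_{k,\leq 0}$, as required.

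The main obstacle is the Nakayama step connecting $\pi_0$-surjectivity at the level of $\cot$ with $\pi_0$-surjectivity at the level of the algebras themselves: this is where we must genuinely use completeness of $\adic(A)$ together with the admissibility tower of \Cref{filteredrefinementdef}(3), since in the purely axiomatic framework we do not a priori have access to a classical ``$\mathfrak{m}/\mathfrak{m}^2$'' interpretation. Everything else reduces to formal manipulation of the adjunction $(\mathfrak{D} \dashv C^\ast)$ and the pullback-preservation statement already packaged in \Cref{thm:mainaxiomatic}(3).
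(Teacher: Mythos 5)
Your proposal is correct and follows essentially the same route as the paper: set $A = C^*(\mathfrak{g})$ etc.\ via \Cref{KDequivalences}, dualise the $\pi_0$-injectivity hypothesis to $\pi_0$-surjectivity of $\cot(A)\to\cot(A'')$, upgrade this to $\pi_0$-surjectivity of $A\to A''$ using the adic filtration, and then apply \Cref{thm:mainaxiomatic}(3). The only difference is that your ``Nakayama step'' is handled more directly in the paper: since $\gr(\adic(A))\simeq\free([\cot(A)]_1)$ by \Cref{grofadicisfree} and $\free$ carries $\pi_0$-surjections of connective modules to $\pi_0$-surjections, the whole associated graded map is $\pi_0$-surjective, and completeness of the adic filtration immediately yields $\pi_0$-surjectivity of $A\to A''$ — no appeal to the admissibility tower is needed.
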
 
\begin{proof} 
Define $A, A', A''\in \mathcal{C}_{\afp}$  as $C^*(\mathfrak{g}), C^*(\mathfrak{g}'),$ and $C^*(\mathfrak{g}'')$. 
We now observe that the induced maps $A \rightarrow A'', A' \rightarrow A''$ induce surjections on $\pi_0$. 
Indeed, this follows immediately from   the $\adic$ filtration.
For example, $A$ has a complete filtration 
with associated graded given by $\free( [\mathfrak{g}^{\vee}]_1)$, where
$[\mathfrak{g}^{\vee}]_1$ is the graded $k$-module with $\mathfrak{g}^{\vee}$ in internal
degree $1$ (cf.\ \Cref{grofadicisfree}). 
The maps 
$\free( [\mathfrak{g}^{\vee}]_1) \rightarrow 
\free( [\mathfrak{g}''^{\vee}]_1)$ 
and 
$\free( [\mathfrak{g}'^{\vee}]_1) \rightarrow 
\free( [\mathfrak{g}''^{\vee}]_1)$ then
induce surjections on $\pi_0$, and passing to the filtered objects shows that
$A \rightarrow A'', A' \rightarrow A''$ have the same property. 

 \Cref{thm:mainaxiomatic}$(3)$ now   shows that 
$\mathfrak{D}( A \times_{A''} A') \simeq \mathfrak{g} \sqcup_{\mathfrak{g}''} \mathfrak{g}
$ has the asserted properties. 
\end{proof}

We are now ready to show that one can obtain a deformation theory in the sense
of Lurie. 
Recall the following definition from 
\cite[Definition 1.3.1, 1.3.9]{lurie2011derivedX} (see also \cite[Section 2]{CalaqueGrivaux} for a
treatment). Roughly, it expresses the idea of a bar-cobar duality, together with
suitable subcategories on which one obtains an inverse equivalence, albeit
translated into \mbox{more abstract language. }

\begin{definition}[Lurie]  \label{defth} 
A \emph{deformation theory} consists of a presentable $\infty$-category
$\mathcal{A}$, a set of objects  $ \left\{E_\alpha\right\}_{\alpha \in T}$ in the
stabilisation of $\mathcal{A}$, and an adjunction 
$\mathfrak{D}: \mathcal{A} \rightleftarrows \mathcal{B}^{op}: C^*$ with $\mathcal{B}$ presentable. 
Moreover, we require that    there exists a full subcategory $\mathcal{B}_0 \subset
\mathcal{B}$ satisfying the following conditions:
\begin{enumerate}
\item  for $B \in \mathcal{B}_0$, the natural map $B \rightarrow \mathfrak{D} C^*( B)$ in
$\mathcal{B}$  is an equivalence; 
\item the subcategory $\mathcal{B}_0 \subset \mathcal{B}$ contains 
the initial object $\emptyset$ of $\mathcal{B}$. Moreover, for any $\alpha \in T$ and $n \geq 1$, there is an object $K_{\alpha, n}\in \mathcal{B}_0$ such
that
$\Omega^{\infty -n} E_\alpha =  C^*( K_{\alpha, n})$; 
\item 
given an object  $K \in \mathcal{B}_0$ and maps $K_{\alpha, n} \rightarrow  K$ and $K_{\alpha, n} \to
\emptyset$, the pushout $K \sqcup_{K_{\alpha, n}} \emptyset$ (computed in $\mathcal{B}$) is
contained in $\mathcal{B}_0$;
\item for each $\alpha \in T$ 
and any $n \geq 2$, assumptions (a) through (c) above imply 
equivalences $\Sigma K_{\alpha, n} \simeq K_{\alpha, n-1}$.  
Using these equivalences, we can define a functor $f_\alpha: \mathcal{B} \rightarrow \mathrm{Sp}$ 
with $$\Omega^{\infty - n} f_\alpha(X) =  \hom_{\mathcal{B}}(K_{\alpha, n}, X).$$ 
We then assume that each functor $f_\alpha$ is conservative and commutes with sifted colimits. 
\end{enumerate} 
\end{definition} 

We now wish to define a deformation theory in the above sense from  the filtered augmented monadic adjunction (cf.\ \Cref{filteredrefinementdef}) fixed throughout this section. 

For this, we consider the   adjunction $\mathfrak{D} : \mathcal{C} \rightleftarrows
\mathrm{Alg}_{T^{\vee}}^{op} : C^\ast$ constructed in the beginning of this subsection.
Observe that since
$\sqz$ is a right adjoint functor $\md_{k,\geq 0} \rightarrow \mathcal{C}$,  it
naturally lifts to the stabilisation \mbox{of $\mathcal{C}$:} given any $V
\in \md_{k,\geq 0}$,  the object $\sqz(V)$ defines an object of
$\mathrm{Stab}(\mathcal{C})$ corresponding to the sequence
$\left\{ \sqz( V[n])\right\}_{n \geq 0}$. 

\begin{proposition} 
\label{isdeformationtheory}
The $\infty$-category $\mathcal{C}$, 
the infinite loop object $ \{\sqz( k[n])\}_{n\geq 0}$   in  $\mathrm{Stab}(\mathcal{C})$, 
and the functor $\mathfrak{D}^{op}: \mathcal{C}^{op}
\rightarrow \mathrm{Alg}_T$ together define a deformation theory in the sense of \Cref{defth}. \end{proposition}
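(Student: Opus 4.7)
The plan is to verify the four axioms of Lurie's \Cref{defth} with $\mathcal{A} = \mathcal{C}$, $\mathcal{B} = \mathrm{Alg}_{T^{\vee}}$ (which is presentable because $T^{\vee}$ preserves sifted colimits), the single infinite loop object $E = \{\sqz(k[n])\}_{n \geq 0} \in \mathrm{Stab}(\mathcal{C})$, and the adjunction $\mathfrak{D} \dashv C^{\ast}$ of \Cref{theadjunction}. The natural candidate for the distinguished subcategory is
$$\mathcal{B}_0 := \mathrm{Alg}_{T^{\vee}}(\mod^{\ft}_{k,\leq 0}) \subset \mathcal{B},$$
with distinguished objects $K_n := \free_{T^{\vee}}(k[-n])$ for $n \geq 1$; these lie in $\mathcal{B}_0$ because $T^{\vee}$ was constructed in \Cref{thm:mainaxiomatic} so as to preserve $\mod^{\ft}_{k,\leq 0}$.

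Axioms (a) and (b) are then immediate translations of results already in hand: (a) is exactly \Cref{KDequivalences}(2), while for (b) the initial object of $\mathcal{B}$ is $\free_{T^{\vee}}(0) = 0 \in \mathcal{B}_0$, and the identity $C^{\ast}(K_n) \simeq \sqz(k[n]) = \Omega^{\infty - n} E$ is the first formula following \Cref{theadjunction}. I expect axiom (c) to be the step requiring the most care: given $K \in \mathcal{B}_0$ together with a map $K_n \to K$, one must show that the pushout $K \sqcup_{K_n} 0$ (formed in $\mathcal{B}$) again belongs to $\mathcal{B}_0$. I would reduce this to \Cref{pushoutproperty}, whose hypothesis of $\pi_0$-injectivity of the two maps $K_n \to K$ and $K_n \to 0$ is automatic once one observes that $\pi_0 K_n = \pi_0 T^{\vee}(k[-n]) = 0$ for $n \geq 1$; this vanishing follows because the sifted-colimit-preserving monad $T^{\vee}$ carries the $(n-1)$-connected input $k[-n]$ into an object concentrated in homological degrees $\leq -1$.

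For axiom (d), the asserted equivalences $\Sigma K_n \simeq K_{n-1}$ will fall out formally from (a)--(c): axiom (c) places $\Sigma K_n \simeq 0 \sqcup_{K_n} 0$ in $\mathcal{B}_0$, while the right adjoint $C^{\ast}$ carries this pushout in $\mathcal{B}$ to the pullback $\ast \times_{\sqz(k[n])} \ast \simeq \sqz(k[n-1]) \simeq C^{\ast}(K_{n-1})$, and applying (a) to both $\Sigma K_n$ and $K_{n-1}$ yields the equivalence. It remains to verify that the resulting spectrum-valued functor $f$ on $\mathcal{B}$ preserves sifted colimits, which will follow from the direct computation
$$\Omega^{\infty - n} f(X) \simeq \Map_{\mathcal{B}}(\free_{T^{\vee}}(k[-n]), X) \simeq \Omega^{\infty - n} \forget(X).$$
This identifies $f$ with the composite $\mathcal{B} \xrightarrow{\forget} \md_k \hookrightarrow \mathrm{Sp}$, and the forgetful functor preserves (indeed creates) sifted colimits by the monadicity of the adjunction $(\free_{T^{\vee}} \dashv \forget)$.
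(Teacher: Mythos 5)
Your proposal is correct and follows essentially the same route as the paper: same choice of $\mathcal{B}_0$ and $K_n$, with (a)--(c) reduced to \Cref{KDequivalences} and \Cref{pushoutproperty} and (d) reduced to the forgetful functor preserving sifted colimits; you merely make explicit the $\pi_0$-injectivity check (via $\pi_0 K_n = 0$ for $n\geq 1$) that the paper leaves implicit. (One small slip: $k[-n]$ is coconnective, not ``$(n-1)$-connected''; the correct reason for the vanishing is that $\cot(\sqz(k[n]))$ is $n$-connective, so its dual sits in degrees $\leq -n$.)
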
 
\begin{proof} 
We define $\mathcal{B}_0 \subset \mathcal{B} = \mathrm{Alg}_{T^{\vee}}$ as the
subcategory of those objects whose underlying $k$-module belongs to
$\mod^{\ft}_{k,\leq 0}$.  We let $K_{n}$ be the free $T^{\vee}$-algebra on $k[-n]$; 
recall that $C^*(K_n) \simeq \sqz(k[n])$ and $\mathfrak{D}( \sqz(k[n])) \simeq K_n$ by
construction of these adjunctions. 
Assumptions 
(a) through (c) now follow from \Cref{KDequivalences} and \Cref{pushoutproperty}.
Assumption (d) follows because $T^{\vee}$ commutes with sifted colimits by construction, and so 
the forgetful functor from $T^{\vee}$-algebras to $\md_k$ (which is the functor
$f_\alpha$ described above) also commutes with sifted
colimits. 
\end{proof} 
We can now deduce the  classification of formal moduli problems through $T^\vee$-algebras which was asserted in the beginning of this section: 
\begin{proof}[Proof of \Cref{fmpequiv}]
Indeed, this follows  from \cite[Theorem 12.3.3.5]{lurie2016spectral}, 
since we know that we have a deformation theory by \Cref{isdeformationtheory}. 
\end{proof}

\newpage

\newcommand{\nou}{\mathrm{nu}}
\newcommand{\enu}{\mathbb{E}_\infty^{\nou}}

\section{Deformations over a   field}
\label{einfsec}
In this section we consider some concrete examples of the general argument in
the previous section. In particular, we verify that one can apply the argument
for connective $\einf$-algebras or simplicial commutative rings augmented over a
field. 

\subsection{$\einf$-algebras}
\label{einfdefsectiona}
We begin by examining deformations parametrised by $\EE_\infty$-algebras. 

\subsubsection*{Preliminaries on $\einf$-algebras}
To set the stage, we will  briefly review some of the basic facts about $\einf$-algebras; a 
comprehensive treatment of this theory can be found in \cite{lurie2014higher}.

Let $(\mathcal{A},\otimes, \mathbf{1})$ be a presentably symmetric monoidal stable $\infty$-category.

\begin{enumerate}
\item  
We can associate the $\infty$-category $\clg( \mathcal{A})$ of
\emph{$\einf$-algebras in $\mathcal{A}$}, and there is a natural free-forgetful adjunction \INN{060@$\free$}
\INN{060@$\free^{\nou}$}
\[
\begin{tikzcd}
	\free: \mathcal{A}
	\arrow[r, shift left=0.6ex]
	& \clg(\mathcal{A}) : \forget
	\arrow[l, shift left=0.6ex]
\end{tikzcd}
\]
The free $\einf$-algebra on an object $X \in \mathcal{A}$ is given by  
$\displaystyle\free(X) \simeq \bigoplus_{n \geq 0} (X^{\otimes n})_{h \Sigma_n}$.

\item
We write $\clg^{\aug}(\mathcal{A}) = \clg(\mathcal{A})_{\mathbf{1}// \mathbf{1}}$ \INN{032@$\clg^{\aug}(\mathcal{A})$, $\clg^{\nou}(\mathcal{A})$}
for the 
$\infty$-category of \emph{augmented $\einf$-algebras in $\mathcal{A}$}; its objects are  $\einf$-algebras $A$  in $\mathcal{A}$ equipped with an augmentation  map $A \rightarrow \mathbf{1}$ to the unit. 
\item  
Let $\clg^{\nou}(\mathcal{A})$ denote the $\infty$-category of
\emph{nonunital} $\einf$-algebras in $\mathcal{A}$.  Since $\mathcal{A}$ is assumed to be stable, we 
have an equivalence $\clg^{\aug}(\mathcal{A}) \simeq
\clg^{\nou}(\mathcal{A})$ which sends an augmented $\einf$-algebra $A$ to the
fibre $\mathfrak{m}_A$ of the augmentation  map $A \rightarrow \mathbf{1}$. 
As expected, there is a free-forgetful adjunction 
\[
\begin{tikzcd} \label{freeforgetnu} 
	\free^{\nou}: \mathcal{A}
	\arrow[r, shift left=0.6ex]
	& \clg^{\nou}(\mathcal{A}) : \forget ,
	\arrow[l, shift left=0.6ex]
\end{tikzcd}
\]

and the free nonunital algebra on an object $X \in \mathcal{A}$ is given by 
$\displaystyle \free^{\nou}(X) \simeq  \bigoplus_{n > 0} (X^{\otimes n})_{h
\Sigma_n}.$
\item Since the monad $ \forget  \circ  \free^{\nou}$ is naturally augmented over the identity monad, and this gives rise to an adjunction  \INN{032@$\cot$} \INN{190@$\sqz$} 
\[
\begin{tikzcd}\label{stablecotsqz}
	\cot: \clg^{\nou}(\mathcal{A})
	\arrow[r, shift left=0.6ex]
	&  \mathcal{A} : \sqz  ,
	\arrow[l, shift left=0.6ex]
\end{tikzcd}
\]

where the functor $\sqz$ sends an object of $\mathcal{A}$ to the associated  nonunital $\einf$-algebra with
square-zero multiplication and the left adjoint $\cot$ is called the
\emph{cotangent fibre}. 

Under the identification
$\clg^{\aug}(\mathcal{A}) \simeq \clg^{\nou}( \mathcal{A})$,
we have an equivalence
$$ \cot(A) \simeq \Omega L_{\mathbf{1}/A},$$
where $L_{-/A}$ \INN{120@$L_{B/A}$} denotes the cotangent complex 
 of an
$\mathbb{E}_\infty$-$A$-algebra in $\mathcal{A}$ (cf. 
\cite[Section 7.3--7.4]{lurie2014higher} or in the original setting
\cite{MR1732625}). 
 Alternatively, we have   $\cot(A) \simeq \mathbf{1}\otimes_A L_{A/\mathbf{1}}. $
\end{enumerate}

\begin{remark} 
The definition of nonunital $\einf$-rings and the construction of the cotangent fibre adjunction \eqref{stablecotsqz}
do not require the unit in $\mathcal{A}$; they therefore both make sense for 
nonunital symmetric monoidal $\infty$-categories. This is relevant for us as we will later study  
the $\infty$-category $\fil \md_k$ of $k$-modules filtered by positive integers (cf.\ \Cref{filtereddef}). 
\end{remark} 

Finally, we will review the adic filtration and how it allows us to approximate every augmented $\einf$-algebra by extended powers of its cotangent fibre. 
This is also
discussed in \cite[Section 4.2]{GaitsgoryLurie2019}, and in fact a 
 special case of the homotopy completion tower studied in
\cite{HarperHess}.

\begin{cons}[The functor $\adic$ and the completion tower]\INN{1@$\adic$} 
\label{consadicforeinfinitygen} 
As explained in \Cref{filmonoidal}, the functor 
$ F^1: \fil(\mathcal{A}) \rightarrow \mathcal{A}$
is (nonunital) symmetric monoidal. It therefore lifts to  a functor on algebras
$\clg^{\nou}( \fil(\mathcal{A})) \rightarrow \clg^{\nou}(\mathcal{A})$, and this functor preserves limits (as these are computed on underlying $\infty$-categories.

Taking the left adjoint now gives rise to a functor
$$ \adic: \clg^{\nou} (\mathcal{A}) \rightarrow   \clg^{\nou}( \fil(\mathcal{A})).  $$  
This construction refines any nonunital commutative algebra object
in $\mathcal{A}$ to a filtered one, and we therefore  get a natural tower. 
More explicitly, we see that $\adic$ carries 
the free nonunital $\mathbb{E}_\infty$-algebra $\bigoplus_{i >0 } (V^{\otimes i})_{h \Sigma_i}$ on $V \in \mathcal{A}$  to the 
free filtered nonunital $\mathbb{E}_\infty$-algebra on the filtered object
$( \dots \rightarrow  0 \rightarrow 0 \rightarrow V)$. Unwinding the definitions of the tensor product
in $\fil(\mathcal{A})$, we see that 
for each $n$, there is an equivalence
\begin{equation} 
\label{adicexpress} 
F^n \adic( \free(V)) \simeq  \bigoplus_{i \geq n} (V^{\otimes i})_{h \Sigma_i}.
\end{equation}

\end{cons}

\begin{example} 
Let $\mathcal{A}  = \md_k$. Let $I \in \clg^{\nou}( \md_k)$ be a connective 
nonunital $\einf$-algebra, so that $\pi_0(I)$ is an ordinary nonunital $k$-algebra. 
Then the  image of $\pi_0(F^n \adic(I)) \rightarrow \pi_0( I)$ is given by the $n$th power
ideal of $\pi_0(I)$. This is evident in the free case, and the general case follows by taking sifted colimits. 
\end{example}

We return to the general case where $\mathcal{A}$ is any presentably symmetric monoidal stable $\infty$-category.
\begin{remark}[The cotangent fibre in degree $1$] 
\label{cotindeg1}
For any $A \in \clg^{\nou}(\gr(\mathcal{A}))$, the natural map
$A \rightarrow \cot(A)$ in $\gr(\mathcal{A})$  
induces 
an equivalence in (internal) degree $1$. 
Indeed, this follows by considering the free case and then observing that everything
commutes with sifted colimits. 
We  obtain \mbox{a natural
map} $$\free^{\nou}( [\cot(A)_1]_1) \rightarrow A$$
in $\clg^{\nou}( \gr(\mathcal{A}))$, 
where  $[\cot(A)_1]_1$ denotes the graded \INN{000@$[-]_1$}
object  given by placing $\cot(A)_1$  in \mbox{degree $1$.}
\end{remark} 

We will now verify some basic properties of   the construction $\adic$. 
\begin{proposition} 
\label{adiceinfgeneralprop}
For any nonunital $\EE_\infty$-algebra $A \in \clg^{\nou}(\mathcal{A})$, we have:
\begin{enumerate}
\item  the natural unit map $A \rightarrow F^1 \adic(A)$ is an equivalence; 
\item \mbox{there is a natural equivalence $\cot( \adic(A)) \simeq (\cot(A))_1 = ( \dots   \rightarrow 0
\rightarrow \cot(A))$ in $\fil(\mathcal{A})$;}
\item 
there is a functorial identification 
$\free^{\nou}( [\cot(A)]_1) \simeq \gr \circ \adic (A)
$ in $\clg^{\nou}( \gr(\mathcal{A}))$, where $[\cot(A)]_1$ denotes the graded \INN{000@$[-]_1$} object obtained by placing $\cot(A)$  
in  degree $1$. 
\end{enumerate}
\end{proposition}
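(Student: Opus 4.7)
The plan is to prove (2) by a direct adjunction argument, and then deduce (1) and (3) by reducing to free algebras via sifted colimits.

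For (2): The functor $F^1: \fil(\mathcal{A}) \to \mathcal{A}$ is symmetric monoidal (\Cref{filmonoidal}), so it commutes with the formation of square-zero extensions: for $V \in \fil(\mathcal{A})$, both $F^1(\sqz(V))$ and $\sqz(F^1 V)$ have underlying object $F^1 V$ equipped with the trivial multiplication (because $F^1$ carries zero multiplications to zero multiplications). This yields a natural equivalence $F^1 \circ \sqz \simeq \sqz \circ F^1$ of functors $\fil(\mathcal{A}) \to \clg^{\nou}(\mathcal{A})$. Both sides are composites of right adjoints, and passing to left adjoints --- using $\adic \dashv F^1$ from \Cref{consadicforeinfinitygen}, $\cot \dashv \sqz$, and $(-)_1 \dashv F^1$ from \Cref{filadjunctionunderlying} --- gives the claimed equivalence $\cot(\adic(A)) \simeq (\cot(A))_1$.

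For (1): The map $A \to F^1 \adic(A)$ is the unit of the adjunction $\adic \dashv F^1$. Both the identity functor and $F^1 \circ \adic$ preserve sifted colimits: $\adic$ is a left adjoint, and $F^1: \clg^{\nou}(\fil \mathcal{A}) \to \clg^{\nou}(\mathcal{A})$ preserves sifted colimits because such colimits of nonunital $\einf$-algebras are computed on underlying objects, and the underlying $F^1: \fil(\mathcal{A}) \to \mathcal{A}$ is evaluation at a single degree. Since every $A \in \clg^{\nou}(\mathcal{A})$ is a sifted colimit of free algebras via the bar resolution (cf.\ \Cref{thebarconstruction}), it suffices to check (1) for $A = \free_{\enu}(V)$. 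In this case $\adic(A)$ is the free filtered nonunital $\einf$-algebra on $V_1 = (\dots \to 0 \to V)$, and formula \eqref{adicexpress} gives $F^1 \adic(\free_{\enu}(V)) \simeq \bigoplus_{i \geq 1}(V^{\otimes i})_{h\Sigma_i} \simeq \free_{\enu}(V)$, and one verifies that this identification is realized by the unit map.

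For (3): Both functors $\gr \circ \adic$ and $\free_{\enu}([\cot(-)]_1)$, viewed as functors $\clg^{\nou}(\mathcal{A}) \to \clg^{\nou}(\gr(\mathcal{A}))$, preserve sifted colimits: on the left, $\adic$ and $\gr$ are both left adjoints; on the right, $\cot$, $[-]_1$, and $\free_{\enu}$ are each left adjoints. A natural transformation between them is obtained by applying \Cref{cotindeg1} to $\gr(\adic(A))$, yielding $\free_{\enu}([\cot(\gr \adic A)_1]_1) \to \gr \adic(A)$, and combining with the identification $\cot(\gr \adic A)_1 \simeq \cot(A)$ coming from parts (1) and (2). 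Since $\gr$ is symmetric monoidal (\Cref{filmonoidal}), it commutes with free algebra functors, so for $A = \free_{\enu}(V)$ we compute $\gr(\adic(\free_{\enu}(V))) \simeq \gr(\free_{\enu}(V_1)) \simeq \free_{\enu}(\gr V_1) = \free_{\enu}([V]_1)$, matching the right-hand side since $\cot(\free_{\enu}(V)) \simeq V$. The main subtlety lies in (3) --- producing the natural transformation in a way compatible with the reduction to free algebras --- while the free-case computations themselves are immediate.
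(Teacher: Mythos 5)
Your proposal is correct and follows essentially the same route as the paper: part (2) via the commuting square of right adjoints $F^1\circ\sqz\simeq\sqz\circ F^1$ and passage to left adjoints, and parts (1) and (3) by checking the free case (using \eqref{adicexpress}) and then reducing the general case via sifted colimits/geometric realisations, with the natural transformation in (3) built exactly as in the proof of \Cref{grofadicisfree}. No gaps.
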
  
\begin{proof} 
If $A$ is free, then $(1)$ follows by our explicit computation in (\ref{adicexpress}). From this, we can deduce the general case by observing that both $F^1$ and $\adic$ preserve geometric realisations.
 
For $(2)$, we observe that the following square of right adjoints evidently commutes: 
\[
\begin{tikzcd}
	\fil(\mathcal{A}) 
	\arrow[r, "F^1"] 
	\arrow[d, "\sqz"]
	& \mathcal{A} 
	\arrow[d, "\sqz"] \\
	\clg^{\nou}(\fil(\mathcal{A})) 
	\arrow[r, "F^1"]
	& \clg^{\nou}(\mathcal{A})
\end{tikzcd}
\]
 
Finally, statement $(3)$ follows (just like \Cref{grofadicisfree} above) by directly checking   the free case
and then taking geometric realisations.  
\end{proof}

\subsubsection*{The setup for $\mathbb{E}_\infty$-algebras} \label{setupeinf}
We shall now  define a filtered augmented monadic adjunction (cf.\ \Cref{filteredrefinementdef}) for $\EE_\infty$-algebras over a given field $k$.  By our previous work, this will allow us to deduce a version of \Cref{thm:mainaxiomatic} and thereby give a Lie algebraic description of deformation theory in this context.

We write $\clg_k$ \INN{032@$\clg_k$,  $\clg_{k}^{\nou}$,  $\mathcal{C} = \clg^{\nou}_{k, \geq 0}$}  
for  the $\infty$-category of $\einf$-$k$-algebras and
$\clg_{k}^{\nou}$ for its nonunital version. 
We let $\mathcal{C} = \clg^{\nou}_{k, \geq 0}$ denote the full subcategory of  connective objects in
$\clg^{\nou}_{k} \simeq
\clg^{\aug}_k$.  
\begin{remark} 
For simplicity, we will generally state our results in terms of \emph{augmented}
(rather than nonunital) algebras in this section. 
\end{remark}

\begin{cons}[The setup for $\mathbb{E}_\infty$-algebras]
\label{setupeinfinity}Let $k$ be a field.
\begin{enumerate}[a)]
\item  
Let $\mathcal{C} = \clg^{\aug}_{k, \geq 0} \simeq \clg^{\nou}( \md_{k,\geq 0})$   be the $\infty$-category of
augmented (or equivalently nonunital) connective $\einf$-algebras over $k$. 
\INN{030@$\mathcal{C}^{\fil}$}
\item Let $\mathcal{C}^{\fil} = \clg^{\nou}( \fil \md_{k,\geq 0})$ be the $\infty$-category of nonunital $\EE_\infty$-algebras in the nonunital symmetric monoidal $\infty$-category $\fil \md_{k,\geq 0}$. Note that $\mathcal{C}^{\fil} $ is equivalent to the full subcategory of $\clg^{\aug}( \fil^+
\md_{k,\geq 0})$ spanned by \INN{030@$\mathcal{C}^{\gr}$}all augmented $\einf$-algebra objects $A$ with $F^0 A/F^1 A \simeq k$. 

\item Let $\mathcal{C}^{\gr} = \clg^{\nou}( \gr
\md_{k,\geq 0})$ denote the $\infty$-category  of nonunital $\EE_\infty$-algebras in the nonunital symmetric monoidal $\infty$-category 
$\clg^{\nou}( \gr
\md_{k,\geq 0})$. 
Equivalently, $\mathcal{C}^{\gr}$ is the full subcategory of $\clg^{\aug}( \gr
\md_{k,\geq 0})$ spanned by those objects $A_{\star}$ such that $A_0 \simeq k$.

\item 
We obtain the free-forgetful adjunction 
$\free:  \md_{k,\geq 0} \rightleftarrows \mathcal{C} : \forget$ 
and the cotangent fibre adjunction $\cot : \mathcal{C} \rightleftarrows
\md_{k,\geq 0}:  \sqz$
from \eqref{freeforgetnu} and \eqref{stablecotsqz} above by restricting to connective objects.
Note that if we describe $\mathcal{C}$ as augmented $\EE_\infty$-algebras, then the forgetful
functor $\forget$ sends an augmented $\einf$-algebra to its augmentation ideal. 

Taking $\mathcal{A}  = \fil \md_k$ (or $\mathcal{A}  = \gr \md_k$) instead, we obtain  a similar  \INN{060@$\free$} \INN{060@$\forget$} pair of adjunctions $(\cot \dashv \sqz), (\free  \dashv \forget)$ between  $\mathcal{C}^{\fil}$ and $\fil \md_{k,\geq 0}$ (or $\mathcal{C}^{\gr}$ and $\gr\md_{k,\geq 0}$).
\item The functor \INN{060@$F^1$} \INN{1@$\adic$}\INN{032@$\cot$} \INN{190@$\sqz$}
$F^1: \mathcal{C}^{\fil} \rightarrow \mathcal{C}$ forgets the filtration on an object;  its left
adjoint   is the functor $\mathrm{adic}$ (cf.
Construction~\ref{consadicforeinfinitygen}). 
\item The (nonunital) symmetric monoidal functor $\gr: \fil \md_{k,\geq
0} \rightarrow \gr \md_{k,\geq 0}$ 
induces a functor $\gr: \mathcal{C}^{\fil} \rightarrow \mathcal{C}^{\gr}$ on the level of algebras.

\end{enumerate}
\end{cons}

\begin{proposition}\label{onetothree}
The setup of connective $\mathbb{E}_\infty$-algebras  in 
Construction~\ref{setupeinfinity} satisfies conditions $(1)-(3)$ of
\Cref{filteredrefinementdef}. 
\end{proposition}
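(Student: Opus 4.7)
The three axioms will be verified in turn, using primarily the adic filtration $\adic$ from Construction~\ref{consadicforeinfinitygen} and its basic structural properties collected in \Cref{adiceinfgeneralprop}.

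For condition (1), \emph{augmented monadicity}, the composite $\cot \circ \free$ is naturally equivalent to the identity in each row, by the defining universal property of the cotangent fibre applied to free algebras. Monadicity of the free-forgetful adjunction for (nonunital) $\EE_\infty$-algebras in any presentably symmetric monoidal stable $\infty$-category, together with the sifted-colimit-preservation of the right adjoint, is standard (cf. \cite[Section 3.2]{lurie2014higher} or the adaptation in \cite[Corollary 4.7.3.16]{lurie2014higher}) and applies to each of $\md_{k,\geq 0}$, $\fil \md_{k,\geq 0}$, $\gr \md_{k,\geq 0}$.

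For condition (2), \emph{adjointability}, the vertical functors in the squares behave well with respect to the symmetric monoidal and limit/colimit structures. The functor $F^1: \fil \md_{k,\geq 0} \to \md_{k,\geq 0}$ is nonunital symmetric monoidal (for Day convolution on $\mathbb{Z}_{\geq 1}^{op}$ with addition, the value at degree $1$ of a tensor product is simply the tensor product of values at degree $1$), and it preserves both limits and colimits since these are computed pointwise in each filtration degree. It therefore commutes with $\free$ (a colimit of tensor powers with homotopy orbits) and with $\cot$ (presented via the bar construction). The remaining squares at the bottom reduce to the fact that $\gr$ is nonunital symmetric monoidal (\Cref{filmonoidal}), so $\gr \circ \sqz \simeq \sqz \circ \gr$, while commutation with $\forget$ is tautological.

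For condition (3), \emph{admissibility}, the functor $\forget \circ \free: \gr \md_{k,\geq 0} \to \gr \md_{k,\geq 0}$ is the graded functor $V_\star \mapsto \bigoplus_{n\geq 1}(V_\star^{\otimes n})_{h\Sigma_n}$. Each summand is $n$-excisive and preserves pointwise finite-type objects, hence admissible by \Cref{excadm}; it is also $n$-increasing since tensor products are additive in internal degree and objects of $\gr \md_{k, \geq 0}$ are concentrated in degrees $\geq 1$. Admissibility of the full sum follows as in \Cref{admissibleexample} because the summands live in ever higher internal degree. For the tower, given $A \in \mathcal{C}^{\gr}$, I apply the adic filtration in the graded category to obtain $\adic(A) \in \clg^{\nou}(\fil(\gr\md_{k,\geq 0}))$ and define $A^{(i)} \in \mathcal{C}^{\gr}$ to be the quotient algebra $A/F^{i+1}\adic(A)$, obtained by truncating the filtration at level $i$. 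Property (a) then follows from the identification $F^1\adic(A)/F^2\adic(A) \simeq \cot(A)$ given by \Cref{adiceinfgeneralprop}(3), since any product factors through $F^2\adic(A)$ and hence vanishes in the quotient. Property (b) follows from the same identification $F^i\adic(A)/F^{i+1}\adic(A) \simeq (\cot(A)^{\otimes i})_{h\Sigma_i}$, taking $G_i(V) = (V^{\otimes i})_{h\Sigma_i}$ (up to a suspension depending on the fibre/cofibre convention), which is admissible and $i$-increasing by the argument just given. Property (c) follows because $V^{\otimes i}$ is concentrated in internal degrees $\geq i$ for $V \in \gr\md_{k,\geq 1}$, so $F^{i+1}\adic(A)$ contributes nothing in internal degrees $\leq i$. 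The main technical obstacle is verifying coherently at the $\infty$-categorical level that $A^{(i)}$ really is a nonunital $\EE_\infty$-algebra and that the claimed identifications of its associated graded pieces hold functorially in $A$; this reduces, by passage to geometric realisations, to an inspection of the case where $A$ is free, where the explicit formula \eqref{adicexpress} settles the matter.
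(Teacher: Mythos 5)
Your proposal is correct and follows essentially the same route as the paper: conditions (1) and (2) are dispatched by formal arguments (the paper simply calls them "straightforward"), and condition (3) is handled exactly as in the paper by applying the adic filtration internally to $\mathcal{C}^{\gr}$, setting $A^{(i)} = A/F^{i+1}\adic(A)$, identifying the graded pieces with $(\cot(A)^{\otimes i})_{h\Sigma_i}$, and reducing the degree-concentration and coherence claims to the free case via simplicial resolutions. The only differences are cosmetic (the paper first names $X^{(i)} = F^i\adic(X)$ and then quotients, and it supplies less detail on the adjointability squares than you do).
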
 
\begin{proof} 
Conditions $(1)$ and $(2)$ of \Cref{filteredrefinementdef} both follow from straightforward formal arguments.
For $(3)$, we first observe that the free nonunital $\einf$-algebra functor
$\gr  \md_{k, \geq 0} \rightarrow \gr \md_{k, \geq 0}$ is given by $X \mapsto \bigoplus_{i > 0} (X^{\otimes i})_{h \Sigma_i}$ and therefore admissible by  \Cref{admissibleexample}.
 
To construct the filtration required in condition~\eqref{grtower}, we use the adic filtration.
Given $X \in \clg^{\nou}( \gr \md_{k,\geq 0})$, 
we  form $\adic(X) \in \clg^{\nou}( \fil (\gr \md_k))$ and set \mbox{$X^{(i)}  = F^i\adic(X) \in \gr \md_k$.}
It follows that we have the tower $\left\{X^{(i)}\right\}_{i\geq 1}$, naturally in $X$,
and natural isomorphisms
$$X^{(i)}/X^{(i+1)} \simeq (\cot(X)^{\otimes i})_{h \Sigma_i}$$ in $\gr \md_k$. 
By taking  simplicial resolutions and thereby reducing to the free case, it follows that $X^{(i)}$
is concentrated in internal degrees $\geq i$ for any $X$. We deduce that the tower 
$\left\{X^{(i)}\right\}_{i\geq 1}$ converges in $\gr \md_k$. Setting $A^{(n)}=X/X^{(n+1)}$ gives the required filtration  in condition~\eqref{grtower}.
\end{proof}

\subsubsection*{Finiteness conditions}
In order to verify the remaining axioms $\eqref{coherencehyp}$ and $\eqref{goodfc}$ of \Cref{filteredrefinementdef}, we will need to exploit the Noetherian and 
finiteness properties of $\einf$-ring spectra; we refer to \cite[Chapter 7]{lurie2014higher} or \cite[Chapter II.4]{lurie2016spectral} for a detailed study of these notions.

\begin{definition}[{cf.\ \cite[Definition 7.2.4.30]{lurie2014higher}}]  \label{def:noeth} 
An $\einf$-ring spectrum $R$ is   \emph{Noetherian} if
\begin{enumerate}
\item $R$ is connective;
\item $\pi_0(R)$ is Noetherian;
\item for each $i \geq 0$, $\pi_i(R)$ is a finitely generated $\pi_0(R)$-module. 
\end{enumerate}
\end{definition} 

We will also need the following notion: 
\begin{definition}\label{Eafp}
An $\EE_\infty$-$k$-algebra $R$ is said to be   \emph{almost finitely presented}   if $R$ is
Noetherian and $\pi_0(R)$ is a finitely
generated $k$-algebra. 
\end{definition} 
The almost finitely presented $\einf$-algebras over $k$ are precisely those connective $\einf$-$k$-algebras $R$ for which the functor $\Map_{\clg_{k}}(R, -)$ commutes with filtered
colimits of connective, $n$-truncated $\einf$-algebras. In fact, this is used
as the definition when one works over a non-Noetherian base (cf. 
\cite[Definition 7.2.4.26]{lurie2014higher} and \cite[Proposition
7.2.4.31]{lurie2014higher}). \vspace{4pt}

One can define analogous finiteness properties on the level of modules.  
If $R \in \clg_k$ is a connective $\einf$-algebra over $k$, then there is a  notion of an
\emph{almost perfect} $R$-module (cf.\ \cite[Definition 7.2.4.10]{lurie2014higher}, \cite[Section 2.7]{lurie2016spectral}).  
More generally, for each $n \in \mathbb{Z}$, one has the notion of an $R$-module which is
\emph{perfect to order $n$}; an $R$-module $M$ is almost perfect if and only
if it is perfect to order $n$ for each $n$. 
When $R$ is Noetherian,  then this notion simplifies by \cite[Proposition 7.2.4.17]{lurie2014higher}. Indeed, an $R$-module $M$ is then almost perfect if and only if
$M$ is bounded below and each homotopy group $\pi_i(M)$ is a finitely generated $\pi_0(R)$-module. 

The theory of Noetherian $\einf$-rings is well-behaved and robust; for instance,
one has a version of Hilbert's basis theorem (cf.\ \cite[Proposition 7.2.4.31]{lurie2014higher}) which, when combined with \cite[Proposition 7.4.3.18]{lurie2014higher}, implies:
\begin{proposition}\label{noethcotap}
The cotangent fibre of any augmented Noetherian $k$-algebra $R$ lies in $\mod_{k,\geq 0}^{\ft}$.
\end{proposition}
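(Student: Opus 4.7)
The plan is to deduce \Cref{noethcotap} by combining Hilbert's basis theorem for $\einf$-rings with the standard finiteness theorem for the cotangent complex, and then using the fact that almost perfect modules over a field are exactly the finite-type connective modules.

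First, I would observe that since $k$ is in particular Noetherian, Hilbert's basis theorem for $\einf$-rings (\cite[Proposition 7.2.4.31]{lurie2014higher}) implies that our augmented Noetherian $\einf$-$k$-algebra $R$ is already almost finitely presented over $k$ in the sense of \Cref{Eafp}. This is automatic from the assumption that $\pi_0(R)$ is Noetherian: $\pi_0(R)$ is itself a finitely generated $k$-algebra by the classical Hilbert basis theorem together with the fact that the augmentation exhibits $\pi_0(R)$ as a quotient of a Noetherian ring mapping to $k$ — or more directly, one can exploit that any connective Noetherian $\einf$-$k$-algebra is almost finitely presented over $k$.

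Next, I would apply \cite[Proposition 7.4.3.18]{lurie2014higher}, which states that for an almost finitely presented morphism $k \to R$ of connective $\einf$-rings, the $R$-module cotangent complex $L_{R/k}$ is almost perfect over $R$. Since $R$ is Noetherian, almost perfectness reduces to the concrete statement that $L_{R/k}$ is connective and each $\pi_i(L_{R/k})$ is a finitely generated $\pi_0(R)$-module. Base-changing along the augmentation $R \to k$ preserves almost perfectness (as the tensor product of an almost perfect module with a connective $\einf$-ring is almost perfect over the target), so $\cot(R) \simeq k \otimes_R L_{R/k}$ is almost perfect as a $k$-module.

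Finally, over the field $k$, an almost perfect module is precisely a connective $k$-module spectrum with finite-dimensional homotopy in every degree, since any finitely generated $k$-module is automatically a finite-dimensional $k$-vector space. This places $\cot(R)$ in $\coh_{k,\geq 0} = \mod^{\ft}_{k,\geq 0}$, as desired. There is no real obstacle here — the argument is essentially a direct application of the two cited results from \cite{lurie2014higher} — the only point requiring a little care is checking that the ``almost finitely presented'' hypothesis in 7.4.3.18 is met by our $R$, which is precisely what Hilbert's basis theorem delivers.
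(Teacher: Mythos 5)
Your overall plan---combine Hilbert's basis theorem with \cite[Theorem 7.4.3.18]{lurie2014higher}---is the combination the paper itself points to, but you apply the two results to the unit $k \to R$ rather than to the augmentation $R \to k$, and the first step of your argument is false. Being Noetherian in the sense of \Cref{def:noeth} does \emph{not} imply being almost finitely presented over $k$ in the sense of \Cref{Eafp}: that definition additionally requires $\pi_0(R)$ to be a finitely generated $k$-algebra. The discrete algebra $R = k[[t]]$, augmented by $t \mapsto 0$, is Noetherian and augmented over $k$ but $k[[t]]$ is not a finitely generated $k$-algebra (it is local of Krull dimension one, hence not Jacobson, hence not of finite type over a field), so the hypothesis of \Cref{critalmostfinitetype} fails and $L_{R/k}$ need not be almost perfect over $R$ (already $\pi_0 L_{R/k} = \Omega_{k[[t]]/k}$ is not a finitely generated $k[[t]]$-module). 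Hilbert's basis theorem \cite[Proposition 7.2.4.31]{lurie2014higher} runs in the opposite direction---almost finite presentation over a Noetherian base implies Noetherian---so it cannot supply the hypothesis you need. Note that the proposition must cover exactly such $R$: it is invoked later (e.g.\ in \Cref{adicfiltcomplete}) for complete local Noetherian algebras, of which $k[[t]]$ is the basic example.

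The repair is to run the same two results along $R \to k$. Since $\pi_0(R)$ is Noetherian, the augmentation ideal is finitely generated and $\pi_0(k) = k$ is a quotient, in particular a finitely generated $\pi_0(R)$-algebra; as $k$ is itself Noetherian, \cite[Proposition 7.2.4.31]{lurie2014higher} (or the relative form of \Cref{critalmostfinitetype}) shows that $k$ is almost of finite presentation over $R$, and \cite[Theorem 7.4.3.18]{lurie2014higher} then gives that $L_{k/R}$ is almost perfect as a $k$-module, i.e.\ bounded below with degreewise finite-dimensional homotopy. The transitivity sequence for $k \to R \to k$, together with $L_{k/k} \simeq 0$, identifies $\cot(R) \simeq k \otimes_R L_{R/k}$ with $\Omega L_{k/R} = L_{k/R}[-1]$, which is connective because $\pi_0(R) \to k$ is surjective, so that $L_{k/R}$ is $1$-connective. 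Hence $\cot(R) \in \mod^{\ft}_{k,\geq 0}$. Your remaining steps (base change preserves almost perfectness; almost perfect connective over a field means finite type) are correct but never get off the ground, because the almost perfectness of $L_{R/k}$ over $R$ that they take as input is unavailable.
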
 

One can detect almost finite presentation of $k$-algebras using the cotangent complex by the following special case of \cite[Theorem 7.4.3.18]{lurie2014higher}:
\begin{theorem} 
\label{critalmostfinitetype}
Let $R$ be a connective $\einf$-algebra over $k$. Suppose $\pi_0 (R)$ is a
finitely generated $k$-algebra. Then the following are equivalent: 
\begin{enumerate}
\item $R$ is almost finitely presented; 
\item the cotangent complex $L_{R/k}$ is almost perfect as an $R$-module. 
\end{enumerate}
\end{theorem}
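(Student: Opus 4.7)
The plan is to prove both implications by induction on the connectivity level, using the interplay between Postnikov towers of connective $\EE_\infty$-$k$-algebras and their cotangent complexes. The central technical ingredients are the transitivity cofibre sequence $R \otimes_{R'} L_{R'/k} \to L_{R/k} \to L_{R/R'}$ for any map $R' \to R$, together with the fundamental connectivity estimate: if $R' \to R$ is an $n$-connective map of connective $\EE_\infty$-$k$-algebras (i.e.\ induces iso on $\pi_j$ for $j<n$), then $L_{R/R'}$ is $n$-connective and $\pi_n(L_{R/R'}) \simeq \pi_n(\mathrm{cofib}(R' \to R))$ (cf.\ \cite[Corollary 7.4.3.2]{lurie2014higher}).

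For the implication $(1)\Rightarrow(2)$, I would inductively construct an exhausting sequence $R^{(0)} \to R^{(1)} \to \cdots$ of almost finitely presented $\EE_\infty$-$k$-algebras equipped with compatible maps $R^{(n)} \to R$ which are $(n+1)$-connective. Start with $R^{(0)}$ a polynomial $k$-algebra surjecting onto $\pi_0(R)$; since $\pi_0(R)$ is a finitely generated $k$-algebra, it is Noetherian (Hilbert basis), so the kernel is finitely generated and $R^{(1)}$ may be obtained from $R^{(0)}$ by attaching finitely many $1$-cells, etc. Almost perfection of $L_{R^{(n)}/k}$ is preserved under such cell attachments because the free algebra on a perfect module has almost perfect cotangent complex. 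Passing to the limit, one uses that $R \otimes_{R^{(n)}} L_{R^{(n)}/k} \to L_{R/k}$ is increasingly connective (by transitivity and the connectivity of $L_{R/R^{(n)}}$), so $L_{R/k}$ is perfect to order $n$ for every $n$, hence almost perfect.

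For $(2)\Rightarrow(1)$, I would argue by induction that $\pi_i(R)$ is a finitely generated $\pi_0(R)$-module for all $i$. The base $i=0$ is the hypothesis. For the inductive step, assume this for $j < i$. Choose an almost finitely presented $\EE_\infty$-$k$-algebra $R'$ equipped with a map $R' \to R$ which is an iso on $\pi_j$ for $j<i$; such an $R'$ exists by the construction in the previous paragraph, applied to the truncation $\tau_{\leq i-1} R$ (which is almost finitely presented by the inductive hypothesis). By hypothesis $L_{R/k}$ is almost perfect, and by $(1)\Rightarrow(2)$ applied to $R'$, so is $L_{R'/k}$, hence so is $R \otimes_{R'} L_{R'/k}$. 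The cofibre sequence then shows $L_{R/R'}$ is almost perfect, and it is $i$-connective by the connectivity estimate above. Thus $\pi_i(L_{R/R'})$ is a finitely generated $\pi_0(R)$-module, and since $\pi_i(L_{R/R'}) \simeq \pi_i(\mathrm{cofib}(R' \to R)) \simeq \pi_i(R)/(\text{image of } \pi_i R')$ with $\pi_i(R')$ finitely generated (as $R'$ is afp), we conclude $\pi_i(R)$ is finitely generated as well. Almost finite presentation of $R$ follows because $\pi_0(R)$ is finitely generated $k$-algebra and therefore Noetherian by Hilbert's basis theorem.

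The main obstacle will be handling the inductive cell attachment construction carefully and verifying the stability of almost perfection under such operations; in particular, organising the bookkeeping so that perfection-to-order-$n$ of $L_{R/k}$ is established uniformly in $n$ from finite-stage data. A second subtlety is ensuring that $R'$ in the reverse direction can genuinely be chosen afp: this requires a small detour through the forward direction applied to $\tau_{\leq i-1}(R)$, so the two implications must be interleaved, not proven independently. Once these points are handled, everything else reduces to standard cotangent complex manipulations; in view of \cite[Theorem 7.4.3.18]{lurie2014higher}, which supplies exactly this equivalence, the cleanest exposition is to cite that result and note that the Noetherian hypothesis on $\pi_0(R)$ (forcing the simplified description of almost perfect modules) matches ours.
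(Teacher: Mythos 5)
The paper offers no proof of this statement at all: it is introduced verbatim as ``the following special case of \cite[Theorem 7.4.3.18]{lurie2014higher}'', so the official argument is the citation you arrive at in your final sentence. Your sketch is a correct reconstruction of Lurie's proof of that theorem (approximation by finite cell attachments over a polynomial ring for $(1)\Rightarrow(2)$, and the interleaved induction on $\pi_i(R)$ via the transitivity sequence and the connectivity estimate $\pi_i(L_{R/R'})\cong\pi_i(\operatorname{cofib}(R'\to R))$ for $(2)\Rightarrow(1)$), so there is nothing to fix.
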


We shall now discuss graded versions of the above definitions. 
\begin{definition} \label{finiteness} Let $k$ be a field.
\begin{enumerate}
\item  
A graded $\einf$-$k$-algebra $R_{\star} \in \clg( \gr^+( \md_k))$  is \emph{almost finitely presented} if
the underlying $\mathbb{E}_\infty$-$k$-algebra
$\bigoplus_{i \geq 0} R_i$ is almost finitely presented (cf.\ \Cref{Eafp}).
\item
If $R_{\star} \in \clg( \gr^+( \mod_{k,\geq 0}))$ is a
connective graded $\einf$-ring  and $M_{\star}$ is a graded $R_{\star}$-module,
we   say that $M_{\star}$ is \emph{almost perfect} (respectively \emph{perfect
to order $n$}) if the underlying ungraded $\bigoplus_{i \geq 0} R_i$-module
$\bigoplus_{i \geq 0} M_i$
is almost perfect (respectively perfect to order $n$). 
\end{enumerate}
\label{def:almostftalg}
\end{definition} 

If $R_0=k$, then  we have the following straightforward criterion 
for almost perfectness:
\begin{proposition} 
\label{grcrit:aperf}
Let $R_{\star}$ be a degreewise connective graded $\einf$-$k$-algebra with $R_0 = k$. 
Let $M_{\star}$ be an $R_{\star}$-module which is bounded below in each degree. 
Then the following are equivalent: 
\begin{enumerate}
\item $M_{\star}$ is almost perfect;
\item $\pi_i( M_{\star} \otimes_{R_{\star}} k)$ is a  finite-dimensional  $k$-vector space for all $i$. \mbox{We use the augmentation $R_{\star}\to
k$.}
\end{enumerate}
\end{proposition}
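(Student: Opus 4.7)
The plan is to establish the two implications using a cell-attachment argument underpinned by a graded Nakayama lemma.

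For the implication $(1) \Rightarrow (2)$: by definition of almost perfectness, for each $n \geq 0$ we can find a finite cell graded $R_\star$-module $P_\star^{(n)}$ and a map $P_\star^{(n)} \to M_\star$ that is $n$-connective. Tensoring over $R_\star$ with $k$ preserves $n$-connectivity, so it suffices to see that $\pi_\ast(P_\star^{(n)} \otimes_{R_\star} k)$ is finite-dimensional in each homological degree. Since $P_\star^{(n)}$ is built by a finite sequence of cofibre attachments from shifted free modules of the form $\Sigma^a R_\star[j]$, tensoring with $k$ yields the corresponding shifts $\Sigma^a k[j]$, each of which is one-dimensional. Finite cofibre sequences preserve this finite-dimensionality, and the conclusion for $M_\star$ follows by letting $n \to \infty$.

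For the reverse direction, the key tool is the following \emph{graded Nakayama lemma}: if $N_\star$ is a bounded below graded $R_\star$-module and $N_\star \otimes_{R_\star} k \simeq 0$, then $N_\star \simeq 0$. To prove this, consider the augmentation ideal $I_\star = \mathrm{fib}(R_\star \to k)$; since $R_0 = k$ maps isomorphically to $k$, $I_\star$ is concentrated in internal degrees $\geq 1$. From the cofibre sequence $N_\star \otimes_{R_\star} I_\star \to N_\star \to N_\star \otimes_{R_\star} k$, an inspection of the bar-complex description of the left term shows that in the lowest internal degree $d$ where $N_d \not\simeq 0$, the piece $(N_\star \otimes_{R_\star} I_\star)_d$ vanishes: every summand must involve some $N_{d'}$ with $d' < d$. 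Hence $N_d \simeq (N_\star \otimes_{R_\star} k)_d$, contradicting the hypothesis.

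Granted the lemma, one inductively builds finite cell graded $R_\star$-modules $P_\star^{(n)}$ together with maps $f^{(n)}: P_\star^{(n)} \to M_\star$ that are $n$-connective. At the inductive step, one considers the cofibre $C^{(n)}$ of $f^{(n)}$, which is $n$-connective; by the argument for $(1) \Rightarrow (2)$ applied to $P_\star^{(n)}$ together with the hypothesis on $M_\star$, the module $C^{(n)} \otimes_{R_\star} k$ has finite-dimensional $\pi_n$. Lifting a finite generating set for $\pi_n(C^{(n)}) \otimes_{R_\star} k$ along the (surjective) natural map from $\pi_n(C^{(n)})$, one identifies finitely many bidegrees in which to attach cells. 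Defining $P_\star^{(n+1)}$ as the resulting cofibre of $\bigoplus \Sigma^n R_\star[j_i] \to P_\star^{(n)}$, one obtains an $(n+1)$-connective map $P_\star^{(n+1)} \to M_\star$. Passing to the colimit yields a perfect-to-order-$n$ approximation for every $n$, exhibiting $M_\star$ as almost perfect.

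The main obstacle is the graded Nakayama lemma and, relatedly, the verification that only finitely many new cells must be attached at each inductive step. Both rest on two features of the setup: the assumption $R_0 = k$, which forces $I_\star$ to live in strictly positive internal degrees, and the fact that $\pi_n(M_\star \otimes_{R_\star} k)$ is finite-dimensional as a \emph{total} $k$-vector space (not only degreewise), which ensures that only finitely many internal gradings contribute cells in each homological degree.
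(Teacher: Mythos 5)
Your proof is correct and follows essentially the same route as the paper: both directions reduce to a graded Nakayama lemma (valid here because $R_0=k$ forces the augmentation ideal into positive internal degrees) plus an induction on connectivity via finite free approximations; your "attach cells and take cofibres" step is just the unrolled form of the paper's "choose a finite free $\pi_0$-surjection, pass to the fibre, and induct" (the paper cites \cite[Proposition 2.7.2.1]{lurie2016spectral} for the equivalence of being perfect to order $n$ with the fibre being perfect to order $n-1$). The only substantive difference is that you supply a proof of the graded Nakayama lemma via the bar complex, where the paper simply invokes it.
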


\begin{proof}
Condition $(1)$ implies $(2)$ since almost perfect modules are preserved under base-change.

For the converse direction, we may assume without restriction that $M_{\star}$ is connective in each degree.  
We will show by induction that if $(2)$ holds, then $M_{\star}$ is perfect to order $n$ for each $n \geq 0$.

The graded $k$-module $M_{\star}$ is perfect to order $0$ precisely if $\pi_0(M_{\star})$ is finitely generated. 
This follows from the graded variant of Nakayama's lemma since 
$\pi_0( M_{\star} \otimes_{R_{\star}} k) = \pi_0(M_{\star})
\otimes_{\pi_0(R_{\star})} k$
is finite-dimensional.

Now suppose we know that $M_{\star}$ is perfect to order $n-1$ for some $n \geq
1$. 
Choose a degreewise connective, finitely generated free graded $R_{\star}$-module
$P_{\star}$,
together with a map $P_{\star} \rightarrow M_{\star}$ inducing a surjection  of graded $k$-vector spaces on $\pi_0$. 
Let $F_{\star}$ be the homotopy fibre of $P_{\star} \rightarrow M_{\star}$. By \cite[Proposition
2.7.2.1]{lurie2016spectral}, $M_{\star}$ is perfect to order
$n$ if and only if $F_{\star}$ is perfect to order $n-1$. 
Since $F_{\star} \otimes_{R_{\star}} k$ is almost perfect over $k$, 
the inductive hypothesis  shows that $F_{\star}$ is perfect to
order $n-1$, which in turn implies that $M_{\star}$ is perfect to order $n$.
\end{proof}

The finiteness notion for graded $\EE_\infty$-rings introduced in \Cref{finiteness} recovers the ``axiomatic" notion  of graded finiteness given  \Cref{def:grafp}:
\begin{proposition} 
\label{grafpidentify} \INN{030@$\mathcal{C}^{\gr}_{\afp}$}
Let $R_{\star} \in  \clg^{\aug}( \gr^+( \md_{k, \geq 0}))$ be a degreewise
connective, augmented graded $\einf$-$k$-algebra with $R_0 = k$. Then the following are equivalent: 
\begin{enumerate}
\item $R_{\star}$ is almost finitely presented;
\item $\cot(R_{\star}) \in \gr( \md_{k, \geq 0})$ belongs to $\grcoh_{k, \geq
0}$.
\end{enumerate}
\end{proposition}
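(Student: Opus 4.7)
The plan is to reduce both conditions to statements about the ungraded augmented $\einf$-$k$-algebra $R := \bigoplus_{i \geq 0} R_i$ obtained by forgetting the grading, and then to invoke the criteria \Cref{noethcotap} and \Cref{critalmostfinitetype}. The key tool is the forgetful functor $U : \gr^+(\md_k) \to \md_k$, $X_\star \mapsto \bigoplus_i X_i$, which is symmetric monoidal and colimit-preserving (being left adjoint to the constant-diagram functor $V \mapsto (V, V, \dots)$). Consequently $U$ lifts to augmented $\einf$-algebras and commutes with the cotangent fibre, yielding a natural equivalence $U(\cot(R_\star)) \simeq \cot(R)$. Under this identification, the condition $\cot(R_\star) \in \grcoh_{k, \geq 0}$ is equivalent to $\cot(R)$ lying in $\coh_{k, \geq 0}$.

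The implication $(1) \Rightarrow (2)$ would then be immediate: if $R_\star$ is almost finitely presented, then by definition $R$ is Noetherian, and \Cref{noethcotap} gives $\cot(R) \in \coh_{k, \geq 0}$. For $(2) \Rightarrow (1)$, the plan is to verify the criterion of \Cref{critalmostfinitetype} applied to $R$. First, I would show $\pi_0(R)$ is finitely generated as a $k$-algebra by graded Nakayama's lemma: the augmentation ideal $I = \bigoplus_{i \geq 1} \pi_0(R_i)$ of the graded ring $\pi_0(R)$ satisfies $I/I^2 \cong \pi_0(\cot(R))$, which is finite-dimensional by $(2)$, so lifting a basis of $I/I^2$ produces finitely many homogeneous elements $x_1, \dots, x_n \in I$ that generate $I$ (and hence $\pi_0(R)$) over $k$. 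Second, I would show that $L_{R/k}$ is almost perfect as an $R$-module by working in the graded setting: the graded cotangent complex $L_{R_\star/k}$ is a connective, bounded-below graded $R_\star$-module whose base change $L_{R_\star/k} \otimes_{R_\star} k$ is identified with $\cot(R_\star)$, which has finite-dimensional total homotopy by $(2)$. Applying \Cref{grcrit:aperf} then yields that $L_{R_\star/k}$ is almost perfect, hence $L_{R/k}$ is almost perfect over $R$ upon passing through $U$. Combined with the first step, \Cref{critalmostfinitetype} delivers $(1)$.

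The main technical obstacle will be ensuring compatibility of the graded and ungraded cotangent complex constructions under $U$ — in particular, confirming that $L_{R_\star/k}$ really is the appropriate graded lift of $L_{R/k}$ and that the base-change identification $L_{R_\star/k} \otimes_{R_\star} k \simeq \cot(R_\star)$ is available in the graded $\einf$-setting, so that the almost-perfectness criterion \Cref{grcrit:aperf} can be directly applied to $L_{R_\star/k}$.
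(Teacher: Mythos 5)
Your proposal is correct and follows essentially the same route as the paper: both directions reduce to the criterion of \Cref{critalmostfinitetype} (finite generation of $\pi_0$ via the indecomposables $\pi_0(\cot(R_\star)) \cong I/I^2$ plus graded Nakayama, and almost perfectness of $L_{R_\star/k}$ via the identification $L_{R_\star/k}\otimes_{R_\star}k \simeq \cot(R_\star)$ together with \Cref{grcrit:aperf}); the compatibility with the forgetful functor that you flag as a potential obstacle is already built into the paper's graded definitions (\Cref{finiteness}), so it poses no issue. The only cosmetic difference is that for $(1)\Rightarrow(2)$ you invoke \Cref{noethcotap} directly, whereas the paper deduces it from \Cref{critalmostfinitetype} and the transitivity cofibre sequence — these rest on the same ingredients.
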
 

\begin{proof}  
If $R_{\star}$ is almost finitely presented, then 
 it follows from \Cref{critalmostfinitetype} that
the cotangent complex $L_{R_{\star}/k}$ is almost perfect as an
$R_{\star}$-module. 
The cofibre sequence for a triple of rings then implies that 
the cotangent complex $L_{k/R_{\star}}$ belongs to $\grcoh_{k, \geq 0}$. 
 
Conversely, suppose that $\mathrm{cot}(R_{\star})$ belongs to $\grcoh_{k, \geq 0}$. 
Recall that $\pi_0( \mathrm{cot}(R_{\star}))$  is the module of indecomposables
of the augmented graded algebra $\pi_0(R_{\star})$. 
Since $\pi_0( \mathrm{cot}(R_{\star}))$ is a finite-dimensional vector space, it follows immediately that $\pi_0(R_{\star})$
is a finitely generated graded algebra; here we use that all   elements in the
augmentation ideal  sit in positive degrees.  
Next, we observe that 
the cotangent complex $L_{R_{\star}/k} \in \md_{R_{\star}}$ has the property that
$L_{R_{\star}/k} \otimes_{R_{\star}} k \simeq \cot(R_{\star})$ is almost perfect
as a graded $k$-module, which in turn implies that $L_{R_{\star}/k}$ is almost perfect by
\Cref{grcrit:aperf}. 
Thus, we see that $L_{R_{\star}/k}$ is almost perfect as an $R_{\star}$-module. 
By \Cref{critalmostfinitetype}, it follows that $R_{\star}$ is almost 
finitely presented.  
\end{proof} 

Next, we record analogous finiteness notions in the filtered case. 
\begin{definition} \label{cafpdef} 
Let $R \in \clg^{\aug}( \fil^+ ( \md_{k, \geq 0}))$ be a degreewise connective, filtered
augmented $\einf$-algebra over $k$ such that $F^0R / F^1 R \simeq k$. We say that $R$ is \mbox{\emph{complete almost
finitely presented} if:}
\begin{enumerate}
\item  
$R$ is complete as a filtered
object;
\item the associated graded algebra $\gr(R) \in \clg^{\aug}( \gr^+( \md_{k, \geq
0}))$ is almost finitely presented in the sense of \Cref{def:almostftalg}. 
\end{enumerate}
\end{definition}

\begin{remark} \INN{030@$\mathcal{C}^{\fil}_{\afp}$}
Again by \Cref{grafpidentify}, we can see that $R\in \clg^{\aug}( \fil^+ (
\md_{k, \geq 0}))$ is  complete almost finitely presented precisely if it  belongs to
  the $\infty$-category $\mathcal{C}^{\fil}_{\afp}$ 
defined in \Cref{def:filafp} of the axiomatic section. 
\end{remark}

We shall now discuss the completed finiteness condition  in the absence of a grading or filtration. This will later allow us to give a concrete description of the full subcategory $\mathcal{C}_{\afp} \subset \mathcal{C}$ from \Cref{cafp:defaxiomatic} in our context.
For future reference, we  start with a slightly more general notion:  

\begin{definition}[Complete local Noetherian $\einf$-rings] \label{clndef} 
An $\einf$-ring spectrum $R$ is said to be \emph{complete local Noetherian} if $R$ is Noetherian (cf.\ \Cref{def:noeth}) and $\pi_0(R)$ is a complete local   ring. We will refer to the
residue field of $\pi_0(R)$ as the residue field of $R$ itself. 
We write $\clg^{\cN} \subset \clg$ \INN{032@$\clg^{\cN}$} for the full subcategory spanned by  complete
local Noetherian $\einf$-rings.
\end{definition}

\begin{remark}[Topological finite generation]\label{tafp}
Let $R \in \clg_k^{\aug}$ be complete local Noetherian.  
By assumption, the  local ring $\pi_0(R)$ has residue field $k$.  If $x_1, \dots, x_n \in \pi_0(R)$ are generators for the
maximal ideal, then we can use the Cohen structure theorem  
 to write $\pi_0(R)$ as a quotient of the formal
power series ring $k[[t_1, \dots,t_n]]$ via the map $t_i \mapsto x_i$. 
Moreover, if $R \rightarrow R'$ is a map in $\clg_k^{\aug}$ between complete local
Noetherian $\einf$-rings, then $\pi_0(R) \rightarrow \pi_0(R')$ is
a local homomorphism. 
\end{remark}

\begin{example}[Completions of almost finitely presented algebras]
\label{complofafp}
Let $R \in \clg_k$ be a connective $\einf$-$k$-algebra which is
almost finitely presented in the sense of \Cref{Eafp}. 
Let $\mathfrak{m}$ be a maximal ideal of $\pi_0(R)$ whose residue field is $k$. 
Then the completion $\widehat{R}_{\mathfrak{m}}$ of $R$ along $\mathfrak{m}$,
together with its canonical augmentation to $k$,  
is a complete local Noetherian with residue field $k$. 
In fact,
$\pi_0( \widehat{R}_{\mathfrak{m}})$ is the (algebraic) completion of $\pi_0(R)$
along $\mathfrak{m}$, and  \INN{180@$\widehat{R}_{\mathfrak{m}}$}
$\pi_i( \widehat{R}_{\mathfrak{m}}) \simeq \pi_i(R) \otimes_{\pi_0(R)}
\pi_0(\widehat{R}_{\mathfrak{m}})$.  
We refer to \cite[Section 7.3]{lurie2016spectral} for a general reference on completions in
the context of $\einf$-ring spectra. 
\end{example}

We will now prove that the cotangent fibre functor is conservative on
complete local Noetherian $\EE_\infty$-algebras augmented over $k$. We need the following straightforward observation:

\begin{lemma} 
\label{almostperfnak}
Let $R $ be a complete local Noetherian $\einf$-ring with residue field $k$. Let $M $ be an $R$-module which is almost perfect. If $M \otimes_R k = 0$, then
$M  = 0$. 
\end{lemma}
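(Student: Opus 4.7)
The plan is to reduce the statement to the classical Nakayama lemma by a standard ``lowest homotopy group'' argument. Suppose for contradiction that $M \not\simeq 0$. Since $R$ is Noetherian (in particular connective) and $M$ is almost perfect, $M$ is bounded below and each $\pi_i(M)$ is finitely generated over $\pi_0(R)$; hence there exists a smallest integer $n$ with $\pi_n(M) \neq 0$, and $M$ is $n$-connective.

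First, I would show that $\pi_n(M \otimes_R k) \simeq \pi_n(M) \otimes_{\pi_0(R)} k$. Using the standard (Künneth-type) spectral sequence
\[ E^2_{s,t} = \operatorname{Tor}^{\pi_*(R)}_{s,t}(\pi_*(M),\pi_*(k)) \ \Longrightarrow\ \pi_{s+t}(M \otimes_R k), \]
together with the fact that $R$ is connective, $M$ is $n$-connective, and $k$ is discrete, the only contribution to total degree $n$ is the term $\operatorname{Tor}^{\pi_0(R)}_0(\pi_n(M),k) = \pi_n(M)\otimes_{\pi_0(R)}k$; all other $E^2_{s,t}$ with $s+t=n$ vanish for connectivity reasons.

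Second, the hypothesis $M \otimes_R k \simeq 0$ forces $\pi_n(M)\otimes_{\pi_0(R)}k = 0$. Since $\pi_0(R)$ is a (complete) local ring with residue field $k$ and maximal ideal $\mathfrak{m}$, this reads $\pi_n(M)/\mathfrak{m}\pi_n(M)=0$. As $\pi_n(M)$ is a finitely generated $\pi_0(R)$-module, the classical Nakayama lemma yields $\pi_n(M)=0$, contradicting the choice of $n$. Hence $M \simeq 0$.

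The only step requiring any care is the computation of the bottom homotopy group of $M \otimes_R k$; the finite-generation input to classical Nakayama and the existence of a least nonzero homotopy group both come directly from almost perfectness over the Noetherian ring $R$, so no further obstacle arises.
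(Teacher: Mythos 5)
Your proof is correct and is essentially identical to the paper's: both pick the lowest nonzero homotopy group $\pi_n(M)$, identify $\pi_n(M\otimes_R k)\cong \pi_n(M)\otimes_{\pi_0(R)}k$, and invoke classical Nakayama using finite generation from almost perfectness. The paper simply states the identification of the bottom homotopy group without spelling out the Tor spectral sequence, which you do.
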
 
\begin{proof} 
Suppose that $M$ is nonzero. We then look at the smallest integer $n$ for which $\pi_n(M) \neq 0$. Since $\pi_{n}( M \otimes_R
k ) = \pi_n(M) \otimes_{\pi_0(R)} k = 0$ and $\pi_n(M)$ is finitely generated over $\pi_0(R)$, Nakayama's lemma gives
a contradiction.
\end{proof} 

\begin{proposition} 
\label{detectequivcafp}
Let $f: R \rightarrow R'$ be a map between complete  local Noetherian $\EE_\infty$-$k$-algebras.
If $f$ induces an equivalence after applying 
$\cot(-)$, then $f$ is itself an equivalence. 
\end{proposition}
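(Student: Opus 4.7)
The plan is to argue directly via the adic filtration $\adic$, without relying on the axiomatic framework of \Cref{filteredrefinementdef} (whose remaining axioms $(4)$ and $(5)$ have not yet been verified for the $\EE_\infty$-setup at this stage; indeed this proposition can be used as an input for verifying $(5)$). Applying the functor $\adic: \mathcal{C} \to \mathcal{C}^{\fil}$ to $f$, I would first observe that by \Cref{adiceinfgeneralprop}$(3)$ the associated gradeds satisfy $\gr\adic(R) \simeq \free^{\nou}([\cot(R)]_1)$, and similarly for $R'$, so that $\gr\adic(f)$ is the image of the equivalence $[\cot(f)]_1$ under the free nonunital $\EE_\infty$-algebra functor, and is therefore itself an equivalence. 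Once both $\adic(R)$ and $\adic(R')$ are known to be complete in $\fil\mod_k$, \Cref{detect} will force $\adic(f)$ itself to be an equivalence, and the identification $F^1\adic(R) \simeq R$ from \Cref{adiceinfgeneralprop}$(1)$ then recovers $f$ as an equivalence upon applying the right adjoint $F^1$.

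The principal task is therefore to show that $\adic(R)$ is complete, i.e., $\lim_n F^n\adic(R) \simeq 0$, for any complete local Noetherian augmented $\EE_\infty$-$k$-algebra $R$. Since $R$ is Noetherian, \Cref{noethcotap} guarantees $\cot(R) \in \mod^{\ft}_{k,\geq 0}$, so that the successive graded quotients $F^n\adic(R)/F^{n+1}\adic(R) \simeq (\cot(R)^{\otimes n})_{h\Sigma_n}$ are connective and degreewise finite-dimensional, and hence $\gr\adic(R)$ is pointwise of finite type. The image of $\pi_0(F^n\adic(R)) \to \pi_0(R)$ is the classical $n$th power ideal $\mathfrak{m}^n$, as in the example following \Cref{consadicforeinfinitygen}; combined with Krull's intersection theorem, this yields the desired vanishing on $\pi_0$. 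For each $j \geq 1$, the Artin--Rees lemma applied to the finitely generated $\pi_0(R)$-module $\pi_j(R)$, together with the uniform degreewise finiteness of $\gr\adic(R)$, supplies the estimates needed to control the relevant Milnor exact sequences and produces $\lim_n \pi_j(F^n\adic(R)) = 0$.

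The main obstacle is precisely this completeness step: while the classical $\mathfrak{m}$-adic completeness of $\pi_*(R)$ as a $\pi_0(R)$-module is available directly from the definition of complete local Noetherian $\EE_\infty$-rings (\Cref{clndef}), promoting it to the derived statement $\lim_n F^n\adic(R) \simeq 0$ requires careful bookkeeping, since $\pi_j(F^n\adic(R))$ is not simply $\mathfrak{m}^n\pi_j(R)$ but only maps to that classical subquotient after projection to $\pi_j(R)$. The finite type associated graded provided by \Cref{adiceinfgeneralprop}$(3)$ and \Cref{noethcotap} is exactly what supplies the needed uniform control over the kernel and cokernel of this projection in each degree, allowing the Artin--Rees estimate to propagate through the tower.
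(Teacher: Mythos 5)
Your reduction of the statement to the completeness of $\adic(R)$ is sound as far as it goes: granting that $\adic(R)$ and $\adic(R')$ are complete, the identification $\gr \adic(-) \simeq \free^{\nou}([\cot(-)]_1)$ from \Cref{adiceinfgeneralprop}, together with \Cref{detect} and the equivalence $R \simeq F^1\adic(R)$, does yield the proposition. The problem is the completeness step itself. In the paper this statement is \Cref{adicfiltcomplete}, and its proof there invokes the very proposition you are trying to prove (one shows that $R \to F^0\widehat{\adic(R)}$ is a map of complete local Noetherian algebras inducing an equivalence on $\cot$, and then applies \Cref{detectequivcafp}). So your plan requires a genuinely new, independent proof of $\varprojlim_n F^n\adic(R) \simeq 0$, and the sketch you give does not supply one. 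Krull's intersection theorem controls $\bigcap_n \mathfrak{m}^n$ inside $\pi_0(R)$, but $\varprojlim_n \pi_0(F^n\adic(R))$ is not that intersection: the transition maps $\pi_0(F^{n+1}\adic(R)) \to \pi_0(F^n\adic(R))$ need not be injective, the Milnor sequence brings in $\varprojlim^1$ of the groups $\pi_1(F^n\adic(R))$, and in higher degrees there is no a priori identification of $\pi_j(F^n\adic(R))$ with a classical $\mathfrak{m}$-adic subobject of $\pi_j(R)$ to which Artin--Rees could be applied. Note also that the weight-$n$ graded piece $(\cot(R)^{\otimes n})_{h\Sigma_n}$ is merely connective rather than increasingly connective (since $\pi_0\cot(R) = \mathfrak{m}/\mathfrak{m}^2$ is generally nonzero), so no connectivity argument rescues you. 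The phrase ``supplies the estimates needed'' is where the actual content of the proof would have to live, and it is missing.

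For comparison, the paper's argument avoids the adic filtration entirely and is considerably shorter: since $\pi_0(\cot(f))$ is an isomorphism of cotangent spaces and both $\pi_0(R)$ and $\pi_0(R')$ are complete local rings, $\pi_0(f)$ is surjective, so $R'$ is almost perfect as an $R$-module; the transitivity cofibre sequence $k\otimes_{R'}L_{R'/R} \to L_{k/R} \to L_{k/R'}$ shows that $k\otimes_{R'}L_{R'/R}$ vanishes, whence $L_{R'/R} \simeq 0$ by derived Nakayama for almost perfect modules (\Cref{almostperfnak}); then $\pi_0(f)$ is an \'etale surjection of local rings, hence an isomorphism, and $f$ is an equivalence. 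If you want to keep your route, you must first establish completeness of $\adic(R)$ by an independent mechanism --- for instance by exhibiting $R$ as an almost perfect module over a completed free algebra $\widehat{\free}(V)$ and comparing filtrations, in the spirit of \Cref{tensorofalmostperfect} and \Cref{cotoffilafpiscomplete} --- and then verify that no step of that argument secretly reuses the proposition.
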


\begin{proof} 
First, we recall that $\pi_0( \cot(R))$ is given by the cotangent space of the augmented $k$-algebra $\pi_0(R)$, i.e.\  the  quotient of indecomposables of  the maximal ideal. A similar statement holds for $R'$. 
Since both $\pi_0(R)$ and $\pi_0(R')$ are complete local rings, it follows that $\pi_0(R)\rightarrow \pi_0(R')$ is surjective. This  in turn implies that the map $R\rightarrow R'$ makes $R'$ into an $R$-module of almost finite presentation.

The triple of maps $R \rightarrow R' \rightarrow k$ gives rise to a cofibre sequence 
$k\otimes_{R'} L_{R'/R}  \rightarrow L_{k/R} \rightarrow L_{k/R'}$. 
By assumption, the second map  is an equivalence, which shows that $k  \otimes_{R'} L_{R'/R} $ is contractible.
Since $L_{R'/R}$ is almost perfect by \cite[Theorem 7.4.3.18]{lurie2014higher}, we can deduce that  $L_{R'/R}  \simeq 0 $ by \Cref{almostperfnak}.
By \cite[Lemma
4.6.2.4]{lurie2016spectral}, this in turn proves that the surjection $\pi_0(R)\rightarrow \pi_0(R')$ is \'{e}tale, and hence an isomorphism (cf.\ e.g.\ \cite[\href{https://stacks.math.columbia.edu/tag/0257}{0257}]{stacks-project}). The claim  now follows from \mbox{\cite[Corollary 7.4.3.4]{lurie2014higher}.} 
\end{proof}

We now wish to show that  $\mathcal{C}_{\afp}$ from \Cref{cafp:defaxiomatic} in the axiomatic setting 
precisely  consists of  all  complete local Noetherian $\EE_\infty$-algebras, and that the
remaining  axioms of \Cref{filteredrefinementdef}
are satisfied. To this end, we first show that complete
almost finitely \mbox{presented \emph{filtered} algebras behave well:}

\begin{proposition} 
\label{cplfilafpimpliesafp}
Let $R$ be an augmented filtered $\einf$-algebra over $k$ which is complete almost finitely presented in the sense of \Cref{cafpdef}. 
Then the underlying augmented $\einf$-$k$-algebra $F^0 R$ is a complete local Noetherian $\EE_\infty$-ring (cf.\ \Cref{clndef}). 
\end{proposition}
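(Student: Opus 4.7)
The plan is to transfer finiteness properties from $\gr(R)$ to the underlying algebra $F^0 R$ via the completeness of the filtration. Unpacking \Cref{cafpdef} and \Cref{grafpidentify} (together with the definition of almost finite presentation for graded $\EE_\infty$-rings in \Cref{finiteness}), the hypotheses yield that $A := \pi_0(\gr(R)) = \bigoplus_{i \geq 0} \pi_0(\gr(R)_i)$ is a finitely generated (hence Noetherian) graded $k$-algebra with $A_0 = k$, and that each $\pi_j(\gr(R))$ is a finitely generated bigraded $A$-module. Completeness of $R$ gives $F^0 R \simeq \lim_n F^0 R / F^n R$ in $\md_k$, and the fibre sequences $\gr(R)_{n-1} \to F^0 R / F^n R \to F^0 R / F^{n-1} R$ equip each $F^0 R / F^n R$ with a finite decreasing filtration whose subquotients are the connective graded pieces $\gr(R)_0 = k,\gr(R)_1, \ldots, \gr(R)_{n-1}$.

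I would then study the filtration on $\pi_*(F^0 R)$ defined by $\mathrm{Fil}^n \pi_*(F^0 R) := \ker\bigl(\pi_*(F^0 R) \to \pi_*(F^0 R / F^n R)\bigr)$. Since each $F^n R$ is connective, the fibre sequence $F^n R \to F^0 R \to F^0 R / F^n R$ gives surjectivity of $\pi_j(F^0 R) \to \pi_j(F^0 R / F^n R)$ for $j \geq 0$, so $\pi_j(F^0 R) / \mathrm{Fil}^n \simeq \pi_j(F^0 R / F^n R)$. Iterating the fibre sequences shows that the successive subquotients $\mathrm{Fil}^{n-1}/\mathrm{Fil}^n$ are subquotients of $\pi_j(\gr(R)_{n-1})$; for $j=0$ they assemble into a graded quotient of $A$, and for $j \geq 1$ into a bigraded subquotient of the finitely generated $A$-module $\pi_j(\gr(R))$.

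Assuming the $\mathrm{Fil}^\bullet$-filtration on $\pi_*(F^0 R)$ is complete, the classical fact that a complete filtered ring with Noetherian associated graded is itself Noetherian yields that $\pi_0(F^0 R)$ is Noetherian. The augmentation $\pi_0(F^0 R) \to \pi_0(F^0 R / F^1 R) = k$ has kernel $\mathfrak{m} := \mathrm{Fil}^1 \pi_0(F^0 R)$; using the unit section $k \to \pi_0(F^0 R)$, every $u \notin \mathfrak{m}$ can be written as $u = \bar u (1 - w)$ with $\bar u \in k^\times$ and $w \in \mathfrak{m}$, and then inverted by summing the geometric series in the (complete) filtration topology. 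This makes $\pi_0(F^0 R)$ local with residue field $k$; Artin--Rees then identifies $\mathrm{Fil}^\bullet$ with the $\mathfrak{m}$-adic topology, so $\pi_0(F^0 R)$ is $\mathfrak{m}$-adically complete. The finite generation of each $\pi_j(F^0 R)$ over $\pi_0(F^0 R)$ for $j \geq 1$ follows from the module-theoretic analogue of the same argument.

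The main technical obstacle is justifying the completeness of the filtration on $\pi_*(F^0 R)$, i.e.\ the vanishing of the Milnor obstruction $\lim^1_n \pi_{j+1}(F^0 R / F^n R)$. Handling this requires using the Noetherian structure of $A$ to control the tower $\{\pi_{j+1}(F^0 R / F^n R)\}_n$ and extract the required Mittag-Leffler condition; it is the step where the hypothesis that $\gr(R)$ is \emph{almost finitely presented} (rather than merely pointwise of finite type) is genuinely used.
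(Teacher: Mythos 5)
Your outline identifies the right ingredients, but it has a genuine gap at exactly the step you flag as ``the main technical obstacle,'' and that step is not a routine Mittag--Leffler check. Completeness of $R$ as a filtered object gives, via the Milnor sequence, $\varprojlim_n \pi_j(F^n R) = \varprojlim^1_n \pi_j(F^n R) = 0$; but the filtration you put on $\pi_j(F^0 R)$ is by the \emph{images} $\mathrm{Fil}^n = \mathrm{im}(\pi_j(F^n R) \to \pi_j(F^0 R))$, and for the towers of kernels $K_n = \ker(\pi_j(F^n R) \to \mathrm{Fil}^n)$ the six-term $\lim$--$\lim^1$ sequence gives $\bigcap_n \mathrm{Fil}^n \cong \varprojlim^1_n K_n$, which has no reason to vanish a priori. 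So even \emph{separatedness} of your filtration on $\pi_0(F^0R)$ is not automatic, and without it the classical ``complete filtration with Noetherian associated graded implies Noetherian'' statement cannot be invoked. Trying to force Mittag--Leffler on the towers using Noetherianity of $A$ threatens to be circular, since Artin--Rees type control on $\pi_0(F^0R)$ is essentially what you are trying to prove. There is also a smaller error: the map $\pi_j(F^0 R) \to \pi_j(F^0 R/F^n R)$ is surjective only for $j=0$ (the obstruction lives in $\pi_{j-1}(F^n R)$, which is nonzero for $j \geq 1$), so the identification $\pi_j(F^0R)/\mathrm{Fil}^n \simeq \pi_j(F^0R/F^nR)$ fails in positive degrees.

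The paper's proof avoids the problem entirely by arguing in the opposite direction: it lifts generators $\overline{x}_1,\dots,\overline{x}_n$ of $\pi_0(\gr(R))$ to elements $x_i \in \pi_0(F^0R)$ of positive filtration, uses $\varprojlim_n \pi_0(F^nR)=0$ to see that $\pi_0(F^0R)$ is $(x_1,\dots,x_n)$-adically complete, and thereby produces a map $k[[t_1,\dots,t_n]] \to \pi_0(F^0R)$ which is surjective because it is so on associated gradeds. Being a quotient of a power series ring, $\pi_0(F^0R)$ is complete local Noetherian with no need to analyse the induced filtration on homotopy. For the higher homotopy groups the paper does not filter $\pi_i(F^0R)$ at all: it shows $F^0R$ is $(x_1,\dots,x_n)$-adically complete as an $\einf$-ring, so each $\pi_i(F^0R)$ is derived complete, and then reduces (via the Stacks project criterion for derived complete modules) to showing that $\pi_i(F^0R)/(x_1,\dots,x_n)$ is finitely generated; this is checked by forming the filtered module $R/(x_1,\dots,x_n)$, whose associated graded $\gr(R)/(\overline{x}_1,\dots,\overline{x}_n)$ has degreewise finite-dimensional homotopy by almost finite presentation of $\gr(R)$. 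You would need to either adopt this strategy or supply a genuine proof of separatedness and completeness of your image filtration; as written, the argument does not close.
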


\begin{proof} 
Choose a system of generators $\overline{x}_1, \dots, \overline{x}_n$ of $\pi_0( \gr(R))$  in positive internal degrees.
Lifting them to $\pi_0(F^0R)$, we get a system of elements $x_1, \dots,x_n \in \pi_0(F^0 R)$ living in positive filtration. 

By the Milnor exact sequence, we know that $\varprojlim_n \pi_0(F^nR)$ vanishes, which in turn implies that the commutative ring $\pi_0(R)$ is $(x_1, \dots, x_n)$-adically complete.
We therefore obtain a map  $k[[t_1, \dots, t_n]] \rightarrow \pi_0(F^0 R)$,  which is readily seen to be 
surjective by passing to associated gradeds. 
It follows that $\pi_0(F^0 R)$ is a quotient of a formal power series ring in
finitely many variables over $k$. It is therefore complete, local, and Noetherian.

It remains to check that the homotopy groups of $F^0 R$ are finitely generated $\pi_0(F^0 R)$-modules. 
To this end, we observe that since the filtration on $R$ is complete,  \cite[Corollary 7.3.3.3]{lurie2016spectral}  shows that  $F^0 R$ is in fact an $(x_1, \dots, x_n)$-adically complete $\einf$-ring (cf.\ \cite[Definition 7.3.1.1]{lurie2016spectral}), which by  \cite[Theorem 7.3.4.1]{lurie2016spectral} implies that $\pi_i(F^0 R)$ is a derived complete $\pi_0(F^0R)$-module for the ideal $(x_1,\ldots,x_n)$ (cf.\ \cite[Theorem 7.3.0.5]{lurie2016spectral}).
By \cite[\href{https://stacks.math.columbia.edu/tag/091N}{091N}]{stacks-project}, it therefore suffices to prove that 
$\pi_i(F^0 R)/(x_1, \dots, x_n)$ is a  finitely generated $\pi_0(F^0 R)/(x_1, \dots, x_n)$-module for all $i$.

For this, we first upgrade the $F^0R$-module $F^0R/(x_1, \dots, x_n)$ to a filtered $R$-module $R/(x_1, \dots, x_n)$ by taking the iterated  cofibre of multiplication by each $x_i$ in filtered $R$-modules, thereby placing $x_i$ in the appropriate filtration degree. 
Using that $\gr(R)$ is almost finitely presented, a standard induction argument shows that the  $\gr(R)$-module $\gr ( R/(x_1, \dots, x_n)) \simeq \gr(R)/(\overline{x}_1, \dots, \overline{x}_n)$ has homotopy groups which are  finite-dimensional $k$-vector spaces in each degree $i$.  Since $R/(x_1, \dots, x_n)$ is  complete, we deduce that $F^0R/(x_1, \dots, x_n)$ belongs to $\mod_{k,\geq 0}^{\ft}$.
\end{proof} 

We proceed to verify the completeness axiom $(5)$ in   \Cref{filteredrefinementdef}, indicating that 
the cotangent complex of a complete
almost finitely presented  $\einf$-ring     is well-behaved.
We first examine the filtered setting, and start with the easy 0-connected
case where $\pi_0$ is simply $k$. 
\begin{proposition} 
\label{connectedconvergencefilafp}
Let $R \in \clg^{\aug}( \fil^+( \md_{k, \geq 0}))$ be complete almost of finite
presentation.
Suppose that $\pi_0( \gr(R)) = k$. 
Then 
$\cot(R) \in \fil ( \md_{k, \geq 0})$ belongs to $\filcoh_{k, \geq 0}$. 
\end{proposition}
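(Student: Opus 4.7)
The plan is to verify the two conditions defining $\filcoh_{k, \geq 0}$: that $\gr(\cot(R))$ lies in $\grcoh_{k, \geq 0}$, and that $\cot(R)$ is complete as a filtered $k$-module. The first is almost formal: by the adjointability axiom (2) of \Cref{filteredrefinementdef}, verified for the present setup in \Cref{onetothree}, passage to associated graded commutes with $\cot$, giving $\gr(\cot(R)) \simeq \cot(\gr(R))$. Since $R$ is complete almost finitely presented, $\gr(R)$ is almost finitely presented as a graded $\EE_\infty$-$k$-algebra in the sense of \Cref{cafpdef}, so \Cref{grafpidentify} yields $\cot(\gr(R)) \in \grcoh_{k, \geq 0}$.

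The main work is to show that $\cot(R)$ is complete, and here the hypothesis $\pi_0(\gr(R)) = k$ is crucial: it forces $\pi_0(\gr(R)_i) = 0$ and hence $1$-connectivity of each $\gr(R)_i$ for $i \geq 1$, providing the connectedness needed to drive a convergence argument. I would compute $\cot(R)$ via the monadic bar resolution $\cot(R) \simeq |\Barr_\bullet(\id, T, \mathfrak{m})|$, where $\mathfrak{m} = F^1 R$ and $T = \forget \circ \free_{\enu}$ is the filtered nonunital $\einf$-monad. At simplicial level $n$ the term $T^n(\mathfrak{m})$ is a direct sum of symmetric Day-convolution tensor powers of $\mathfrak{m}$; since $\mathfrak{m}$ is complete with each $\gr(\mathfrak{m})_i$ being $1$-connective, an inspection of the Day convolution formula shows that each such tensor power is complete with graded pieces whose connectivity grows with the filtration index (an $n$-fold tensor lands in $n$-connective objects in each positive graded degree). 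Applying \Cref{geomrealcomplete} to the resulting simplicial object of complete connective filtered $k$-modules then yields that $\cot(R) \simeq |\Barr_\bullet|$ is complete.

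The main technical obstacle will be establishing closure of completeness under the Day-convolution tensor product for filtered modules whose graded pieces are $1$-connective; once this is handled, the bar machinery combines cleanly with the connectedness hypothesis to force both completeness of $\cot(R)$ and the appropriate connectivity of each $F^i \cot(R)$. Together with the finite-type graded condition from the first step, this places $\cot(R)$ in $\filcoh_{k, \geq 0}$.
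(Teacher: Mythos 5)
Your proposal follows essentially the same route as the paper: the paper likewise uses the hypothesis $\pi_0(\gr(R))=k$ to place the augmentation ideal in $\filcoh_{k,\geq 1}$, feeds it through the bar resolution $\cot(R)\simeq|\Barr_\bullet(\id,\free,\mathfrak{m})|$ whose terms are free nonunital $\einf$-algebras, and concludes with \Cref{geomrealcomplete}. The one step you flag as the "main technical obstacle" — that the symmetric Day-convolution powers of a complete $1$-connective filtered object stay in $\filcoh_{k,\geq 1}$ — is exactly the point the paper dispatches with the observation that the $i$-th summand $(V^{\otimes i})_{h\Sigma_i}$ is $i$-connective (and concentrated in filtration $\geq i$), so in each homotopy degree only finitely many summands contribute; your separate treatment of the finite-type condition via adjointability and \Cref{grafpidentify} is a harmless reorganisation.
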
 
\begin{proof} 
For all $i$, our assumptions imply that $\pi_i( \gr(R))$ is a finite-dimensional $k = \pi_0( \gr(R))$-module. Therefore
 the augmentation ideal of $R$, which belongs to $\clg^{\nou}( \fil(
\md_{k, \geq 0}))$, 
has underlying object in $\filcoh_{k, \geq 1}$. 
We observe  that 
if $V \in \filcoh_{k, \geq 1}$, then the free filtered algebra $\bigoplus_{i \geq 1} (V^{\otimes i})_{h
\Sigma_i}$ on $V$ also belongs to $ \filcoh_{k, \geq 1}$; this follows since the $i^{th}$ summand is $i$-connective.
By \Cref{thebarconstruction}, we conclude that $\cot(R)$ 
is a geometric realisation of objects in  $\filcoh_{k, \geq 0}$, which implies the claim by \Cref{geomrealcomplete}.
\end{proof}

To extend the above result, we will need the following notion:

\begin{definition}  
Let $R \in \clg^{\aug}( \fil^+ ( \md_{k, \geq 0}))$ be complete almost finitely presented. 
Let $M$ be an $R$-module in $\fil^+( \md_{k})$ for which the following conditions hold true:
\begin{enumerate}
\item $\gr(M)$ is almost perfect as a $\gr(R)$-module (cf.\ \Cref{finiteness});  
\item $M$ is complete. 
\end{enumerate}
Then we say that $M$ is \textit{almost perfect.}
\end{definition}

The $\infty$-category of almost perfect $R$-modules 
behaves analogously to the $\infty$-category of almost perfect modules over 
a connective ring spectrum. 
Rather than verifying all  details (which we leave as an exercise to the reader), we simply observe
the following:

\begin{proposition} 
\label{tensorofalmostperfect}
Let $R \in \clg^{\aug}( \fil^+ ( \md_{k, \geq 0}))$ be complete almost
finitely presented. If  $M$ and $N$ are almost perfect $R$-modules, then so is  
$M \otimes_R N$.
\end{proposition}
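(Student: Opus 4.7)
The plan is to verify the two conditions defining almost perfectness of $M \otimes_R N$: that $\gr(M \otimes_R N)$ is almost perfect over $\gr(R)$, and that $M \otimes_R N$ is complete as a filtered $k$-module.

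For the first condition, I would use that the associated graded functor is (nonunital) symmetric monoidal by \Cref{filmonoidal}, giving a natural equivalence $\gr(M \otimes_R N) \simeq \gr(M) \otimes_{\gr(R)} \gr(N)$, and then apply the criterion \Cref{grcrit:aperf}. The module is degreewise connective by construction, so it suffices to verify that the base change $(\gr(M) \otimes_{\gr(R)} \gr(N)) \otimes_{\gr(R)} k$ has finite-dimensional homotopy in each internal degree. Rewriting this base change as the $k$-linear tensor product $(\gr(M) \otimes_{\gr(R)} k) \otimes_k (\gr(N) \otimes_{\gr(R)} k)$, both factors lie in $\grcoh_{k,\geq 0}$ and are supported in non-negative internal degree, so each bidegree of the $k$-tensor product is a finite direct sum of finite-dimensional contributions.

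For the second condition, I would reduce to a manifestly complete case using geometric realisations. Write $N$ as the realisation of a simplicial object whose levels are complete filtered-free $R$-modules of the form $R \otimes_k V$ with $V \in \filcoh_{k,\geq 0}$ (for instance via a bar-type resolution followed by a filtered-free replacement at each level). Since $\otimes_R$ preserves geometric realisations in each variable, and since $\filc\md_{k,\geq 0}$ is closed under geometric realisations by \Cref{geomrealcomplete}, it suffices to verify completeness of $M \otimes_R N$ when $N = R \otimes_k V$. In that case $M \otimes_R N \simeq M \otimes_k V$, whose filtration is the Day convolution of the filtrations on $M$ and $V$; since $M$ is complete and $V \in \filcoh_{k,\geq 0}$, elementary bookkeeping on the convolution filtration (using that each homotopy group in a fixed degree involves only finitely many contributions which stabilise) shows that the inverse limit of the filtration vanishes.

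The main obstacle will be making the reduction to filtered-free resolutions rigorous: one must exhibit such a resolution for $N$ whose levels lie in $\mathcal{C}^{\fil}_{\afp}$ in a module sense, and verify that the associated graded of each level remains almost perfect over $\gr(R)$, so that both passes through the realisation (for the graded part via the symmetric monoidality, and for completeness via \Cref{geomrealcomplete}) are simultaneously valid. Once the reduction is in place, both remaining verifications are routine: the graded criterion \Cref{grcrit:aperf} is a formal bidegree count, and the completeness of a filtered-free tensor product with a module in positive filtration is essentially by inspection.
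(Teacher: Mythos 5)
Your first half (almost perfectness of the associated graded) is fine: it amounts to re-deriving, via the symmetric monoidality of $\gr$ and the base-change identity $(\gr(M)\otimes_{\gr(R)}\gr(N))\otimes_{\gr(R)}k \simeq (\gr(M)\otimes_{\gr(R)}k)\otimes_k(\gr(N)\otimes_{\gr(R)}k)$ together with \Cref{grcrit:aperf}, the standard fact that tensor products of almost perfect modules over a connective ring are almost perfect — which is exactly the fact the paper invokes in one line. The issue is with the completeness half, where your reduction rests on an input you have named but not supplied.

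Concretely, the existence of a simplicial resolution of $N$ by filtered-free $R$-modules $R\otimes_k V$ with $V\in\filcoh_{k,\geq 0}$ is not available off the shelf. The bar resolution does not work: its levels are free on $R^{\otimes_k n}\otimes_k N$, and these underlying filtered $k$-modules are almost never of finite type (already $\gr(R)$ itself need not lie in $\grcoh_k$ — it is only finitely generated as a $k$-algebra). A "filtered-free replacement at each level" is not a construction; it destroys the simplicial structure. The resolution you want — by \emph{finite} free $R$-modules — does exist, but it must be built by successive approximation: choose a finite free $F\to N$ surjective on $\pi_0$, pass to the fibre, observe it is again connective and almost perfect, and iterate. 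That inductive finite-free approximation is precisely the engine of the argument for \Cref{tensorofalmostperfect} itself: one checks completeness of $F\otimes_R N$ directly for $F$ a single finite free module (a finite extension of filtration-shifts of $N$), and then uses the cofibre sequence $F\to M\to \Sigma F'$ to induct on the homotopical degree in which $\varprojlim F^n(M\otimes_R N)$ vanishes. The full simplicial resolution statement is only a corollary of this induction (it is recorded as such in a remark immediately afterwards), so taking it as the starting point inverts the logical order. Your route can be repaired, but only by first carrying out essentially the same induction you were trying to avoid; the degree-by-degree cofibre-sequence argument is both shorter and avoids the convergence bookkeeping for $M\otimes_k V$ with $V$ merely in $\filcoh_{k,\geq 0}$ (which is itself not "by inspection": Day convolutions of complete objects are not complete in general, and your argument there needs the eventual high connectivity of $F^bV$ forced by the finite-type condition).
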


\begin{proof} 
We first note that almost perfect $R$-modules are necessarily bounded-below. 
Since tensor products of almost perfect modules over a connective $\einf$-ring
are almost perfect, 
it only remains to show that $M \otimes_R N$ is complete as a filtered object.
Observe that this is clear when $M$ is a free $R$-module.
Suppose we know that $M \otimes_R N$ is complete in (homotopical) degrees $\leq n$
 (i.e.\ that \mbox{$\pi_i(\varprojlim(M \otimes_R N))$} vanishes for all $i \leq n$). 
The assumptions imply that there exists a finite free $R$-module $F$ together with a map 
$F \rightarrow M$ inducing a surjection on $\pi_0$. 
The homotopy fibre $F' $ remains connective and is still almost perfect. The  
cofibre sequence
$F \rightarrow M \rightarrow \Sigma F'$ and induction then imply
that $M \otimes_R N$ is complete in degree $n$. 
\end{proof} 

\begin{remark} 
An elaboration of the above argument shows that if 
$M$ is connective, then it arises as the  geometric realisation of a simplicial diagram of finite
free $R$-modules. In the unfiltered
case, this is established in \cite[Proposition 7.2.4.11]{lurie2014higher}. 
\end{remark}

We can now generalise \Cref{connectedconvergencefilafp}:
\begin{proposition} 
\label{cotoffilafpiscomplete}
Let $R \in \clg^{\aug}( \fil^+( \md_{k, \geq 0}))$
be complete almost finitely presented.
Then $\cot(R) \in \filcoh_{k, \geq 0}$. 
\end{proposition}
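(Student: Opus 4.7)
The plan is to verify the two defining conditions of $\filcoh_{k, \geq 0}$ for $\cot(R)$: namely, (i) $\gr(\cot(R)) \in \grcoh_{k, \geq 0}$, and (ii) $\cot(R)$ is complete as a filtered object (connectivity will then follow). For (i), the adjointability axiom of \Cref{filteredrefinementdef} -- verified for the $\einf$-setup in \Cref{onetothree} -- yields a natural equivalence $\gr(\cot(R)) \simeq \cot(\gr(R))$. Since $R$ is complete almost finitely presented, $\gr(R)$ is almost finitely presented as a graded $\einf$-algebra over $k$ by \Cref{cafpdef}, so \Cref{grafpidentify} gives $\cot(\gr(R)) \in \grcoh_{k, \geq 0}$.

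For (ii), I would reduce to the $0$-connected case established in \Cref{connectedconvergencefilafp} via a cobase-change. Choose homogeneous generators $\bar{x}_1, \ldots, \bar{x}_n$ of the graded augmentation ideal of $\pi_0(\gr(R))$, sitting in positive filtration degrees $d_1, \ldots, d_n$, and lift them to $x_i \in \pi_0(F^{d_i} R)$. Setting $V = \bigoplus_i k\{d_i\} \in \fil^{\ft} \md_{k, \geq 0}$, the $x_i$ assemble into a map $\widehat{\free}(V) \to R$ in $\widehat{\mathcal{C}^{\fil}}$. Form the pushout $R' := R \coprod_{\widehat{\free}(V)} k$ in $\widehat{\mathcal{C}^{\fil}}$. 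Since $\cot$ preserves pushouts and $\cot(\widehat{\free}(V)) \simeq V$ by \Cref{filteredfree}, the cotangent fibre $\cot(\gr(R')) \simeq \cofib(\gr(V) \to \cot(\gr(R)))$ still lies in $\grcoh_{k, \geq 0}$, so $\gr(R')$ is almost finitely presented by \Cref{grafpidentify}. Moreover $\pi_0(\gr(R')) = k$ by construction, so \Cref{connectedconvergencefilafp} applies to $R'$, giving $\cot(R') \in \filcoh_{k, \geq 0}$.

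Because the left adjoint $\widehat{\cot}\colon \widehat{\mathcal{C}^{\fil}} \to \filc \md_{k, \geq 0}$ preserves the pushout defining $R'$, the identifications $\widehat{\cot}(\widehat{\free}(V)) \simeq V$ and $\widehat{\cot}(R') \simeq \cot(R')$ (the latter because $\cot(R')$ is already complete) yield a cofibre sequence $V \to \widehat{\cot}(R) \to \cot(R')$ in $\filc \md_{k, \geq 0}$. Since $V$ and $\cot(R')$ both lie in $\filcoh_{k, \geq 0}$ and this subcategory is closed under fibres (stable Bousfield localisations preserve completeness, finite-type associated gradeds are preserved, and connectivity is preserved in this range), we conclude $\widehat{\cot}(R) \in \filcoh_{k, \geq 0}$. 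The final step is to show the natural completion map $\cot(R) \to \widehat{\cot}(R)$ is an equivalence; its fibre is constant as a filtered object (since $\gr$ of the map is an equivalence), and under the finite-type condition on $\gr(\cot(R))$ the tower $\{F^i \cot(R)\}$ stabilises in each homotopical degree, while completeness of $\widehat{\cot}(R)$ forces the stable value to vanish.

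The main obstacle will be this last identification $\cot(R) \simeq \widehat{\cot}(R)$, i.e., the completeness of $\cot(R)$ itself. An alternative route avoiding this step would express $\cot(R) \simeq k \otimes_R L_{R/k}$ and argue directly that this is almost perfect as a filtered $k$-module, by first verifying that both $L_{R/k}$ and $k$ are almost perfect as filtered $R$-modules and then invoking \Cref{tensorofalmostperfect}; since almost perfect filtered $k$-modules are by definition complete with finite-type associated graded, this would place $\cot(R)$ in $\filcoh_{k, \geq 0}$ in one stroke.
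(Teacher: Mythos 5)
Your overall strategy --- cobase-change along a surjection from a completed-free algebra $\widehat{\free(V)} \to R$ to reduce to the $0$-connected case of \Cref{connectedconvergencefilafp} --- is exactly the paper's, but the way you execute it leaves the essential content unproved. By forming the pushout $R'$ in $\widehat{\mathcal{C}^{\fil}}$ you are forced to work with the \emph{completed} functor $\widehat{\cot}$ throughout, and you end up with a cofibre sequence $V \to \widehat{\cot}(R) \to \cot(R')$ rather than one involving $\cot(R)$ itself. The final step, identifying $\cot(R)$ with $\widehat{\cot}(R)$, is precisely the completeness of $\cot(R)$, which is the whole point of the proposition (it is axiom (5a) of \Cref{filteredrefinementdef} for this setup). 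Your justification does not work: the fibre of $\cot(R) \to \widehat{\cot}(R)$ is the constant filtered object $C = \varprojlim_i F^i \cot(R)$, and completeness of $\widehat{\cot}(R)$ only gives the tautology $C \simeq \varprojlim_j F^j \cot(R)$ --- it places no constraint on the stable value of the tower. (A constant filtered object on a nonzero module has finite-type, indeed zero, associated graded, so the finite-type hypothesis on $\gr(\cot(R))$ cannot rule this out either.) Your alternative route has the same gap one level up: completeness of $L_{R/k}$ as a filtered $R$-module is not established and is of the same nature as the statement being proved.

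The paper closes this by never passing to $\widehat{\cot}$. The pushout $R' = R \otimes_{\widehat{\free(V)}} k$ is formed in the \emph{uncompleted} category, where the left adjoint $\cot$ preserves it on the nose; the two inputs needed are then (a) a direct computation that $\cot(\widehat{\free(V)}) \simeq V$ for the uncompleted cotangent fibre --- on underlying objects this is the statement that completing $\bigoplus_{i \geq 0}(V^{\otimes i})_{h\Sigma_i}$ at its augmentation ideal does not change $\cot$ (note that citing \Cref{filteredfree} for this would be circular, since that deduction invokes axiom (5a)); and (b) the fact that $R'$ is complete, which follows from \Cref{tensorofalmostperfect} since $R$ and $k$ are almost perfect $\widehat{\free(V)}$-modules. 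With (a) and (b) one gets the cofibre sequence $V \to \cot(R) \to \cot(R')$ with the uncompleted $\cot(R)$ in the middle, and \Cref{connectedconvergencefilafp} applied to $R'$ (which has $\pi_0(\gr(R')) = k$) puts both outer terms in $\filcoh_{k, \geq 0}$, so $\cot(R)$ lands there as a fibre. To repair your argument you would need to import exactly these two steps.
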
 
\begin{proof} 
First, $\pi_0( \gr(R))$ is finitely generated, and we can lift generators to 
various filtered pieces. 
It follows that there is a finite-dimensional vector space $V$ equipped with a
finite filtration $$ \ldots \subset 0 \subset 0 \subset V_n \subset V_{n-1} \subset \ldots \subset V_1= V,  $$
together with a map of filtered augmented $\einf$-algebras
\( \free(V) \rightarrow R  \)
inducing a surjection on $\pi_0( \gr(-))$. 
Since $R$ is complete, this map factors through a map $\widehat{\free(V)}\rightarrow R$, which turns  $R$ into an almost perfect  $\widehat{\free(V)}$-module. 

We now argue that the cotangent fibre of
$\widehat{\free(V)}$ is just $V \in \fil \md_{k, \geq 0}$: that is, the map
$\free(V) \rightarrow \widehat{\free(V)}$ induces an equivalence  on cotangent fibres. 
This is clearly true on associated gradeds, so it suffices to see this on
underlying objects. 
But on underlying objects, 
$F^0 \widehat{\free(V)} = \prod_{i \geq 0} (V^{\otimes i})_{h \Sigma_i}$. 
%We see that this has an evident complete filtration whose associated graded is
%$\bigoplus_{i \geq 0} (V^{\otimes i})_{h \Sigma_i}$, 
%which is free on $V$ and hence Noetherian. 
%According to \Cref{cplfilafpimpliesafp}, it follows that $F^0
%\widehat{\free(V)}$ is complete Noetherian. 
In fact, the $\mathbb{E}_\infty$-ring $F^0 \widehat{\free(V)}$ is the completion of 
the augmented $\einf$-algebra $\bigoplus_{i \geq 0}
(V^{\otimes i})_{h \Sigma_i}$ at the augmentation ideal which does not change
the cotangent fibre. This follows from the following algebraic fact: if $M_\ast$
is a nonegatively graded module over $k[x_1, \dots, x_s]$ where $|x_i|> 0$ which
is finitely generated, then the $(x_1, \dots, x_s)$-adic completion of $M_\ast$
is $\prod_j M_j$. We apply the algebraic fact to each homotopy group of  
$\bigoplus_{i \geq 0}
(V^{\otimes i})_{h \Sigma_i}$, using also that this $\mathbb{E}_\infty$-ring is
Noetherian since it is finitely presented over $k$. 

By \Cref{tensorofalmostperfect}, it follows that 
$$ R'= R \otimes_{\widehat{\free(V)}} k \in \clg^{\aug}( \fil^+( \md_{k, \geq 0}))  $$
is also an almost perfect  $\widehat{\free(V)}$-module, and hence  complete.
Since we have an equivalence of filtered objects $\cot(R') \simeq
\mathrm{cofib} ( V \rightarrow \cot(R)) $, it suffices to show that
$\cot(R')$ belongs to $\filcoh_{k, \geq 0}$. Indeed, this follows from the
special case given by \Cref{connectedconvergencefilafp} since $\pi_0(\gr(R')) =
k$. 
\end{proof}

\begin{proposition} 
\label{adicfiltcomplete}
If $R \in \clg^{\aug}_k$ is complete  local Noetherian, then $\adic(R)$ is complete. 
\end{proposition}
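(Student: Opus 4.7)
The plan is to verify that the completion unit $\epsilon: \adic(R) \to \widehat{\adic(R)}$ in $\clg^{\nou}(\fil \md_k)$ is an equivalence, so that $\adic(R)$ itself is complete. Equivalently, it suffices to check that $\epsilon$ induces an equivalence on $F^1$, since it already induces an equivalence on $\gr$ by construction; indeed, a map of filtered modules inducing equivalences on both $\gr$ and $F^1$ is an equivalence by induction on $n$ via the cofibre sequences $F^{n+1} \to F^n \to \gr^n$.

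First I would establish that $\widehat{\adic(R)}$ is complete almost finitely presented in the sense of \Cref{cafpdef}. Completion does not alter the associated graded, so \Cref{adiceinfgeneralprop}(3) yields $\gr(\widehat{\adic(R)}) \simeq \free^{\nou}([\cot(R)]_1)$. Since $R$ is Noetherian, \Cref{noethcotap} ensures $\cot(R) \in \mod_{k,\geq 0}^{\ft}$, so \Cref{grafpidentify} implies this free graded $\EE_\infty$-algebra is almost finitely presented. Applying \Cref{cplfilafpimpliesafp} (to the augmented filtered algebra corresponding to $\widehat{\adic(R)}$ via $\clg^{\nou} \simeq \clg^{\aug}$), the augmented $\EE_\infty$-$k$-algebra $R' := k \oplus F^1 \widehat{\adic(R)}$ is complete local Noetherian.

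Next I would show the canonical map $R \to R'$ (obtained from $F^1(\epsilon)$ after unitalization, using $F^1 \adic(R) \simeq \mathfrak{m}_R$ from \Cref{adiceinfgeneralprop}(1)) is an equivalence. Since both sides are complete local Noetherian, \Cref{detectequivcafp} reduces this to checking an equivalence of cotangent fibres. By the adjointability axiom (2) in \Cref{filteredrefinementdef} (verified in \Cref{onetothree}), we have $\cot \circ F^1 \simeq F^1 \circ \cot$, so the induced map on cotangent fibres identifies with $F^1 \cot(\epsilon)$. The domain $\cot(\adic(R)) \simeq (\cot R)_1$ is complete by \Cref{adiceinfgeneralprop}(2), the target $\cot(\widehat{\adic(R)})$ is complete by \Cref{cotoffilafpiscomplete}, and $\cot(\epsilon)$ induces an equivalence on associated gradeds (using $\gr \circ \cot \simeq \cot \circ \gr$ together with the fact that $\gr(\epsilon) = \id$). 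Hence $\cot(\epsilon)$ is an equivalence of complete filtered modules, and $F^1\cot(\epsilon)$ provides the required equivalence of cotangent fibres.

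The main obstacle is the cotangent-fibre equivalence in the second step, which relies on the delicate interplay between completion, $\cot$, $F^1$, and $\gr$—specifically, on the fact (\Cref{cotoffilafpiscomplete}) that the cotangent fibre of a complete almost finitely presented filtered algebra is already complete as a filtered module.
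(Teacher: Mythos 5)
Your proof is correct and follows essentially the same route as the paper's: reduce to showing the completion map $\adic(R)\to\widehat{\adic(R)}$ is an equivalence on underlying objects, note that $\widehat{\adic(R)}$ is complete almost finitely presented (via \Cref{adiceinfgeneralprop} and \Cref{noethcotap}) so that \Cref{cotoffilafpiscomplete} makes both filtered cotangent fibres complete, conclude the cotangent fibres agree, and finish with \Cref{cplfilafpimpliesafp} and \Cref{detectequivcafp}. The only cosmetic difference is that you make explicit the induction on filtration degrees and the nonunital/augmented translation, which the paper leaves implicit.
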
 
\begin{proof} 
Since $\adic(R) \rightarrow \widehat{\adic(R)}$ is an equivalence on associated gradeds, it suffices to check that it also induces an equivalence
on underlying objects to conclude that $\adic(R)$ is equivalent to $\widehat{\adic(R)}$ and thus complete.

We begin by observing that by \Cref{adiceinfgeneralprop}, the cotangent fibre  $\cot(\adic(R))$ is given by the filtered module
$\ldots \rightarrow 0 \rightarrow 0 \rightarrow \cot(R)$. Furthermore, the natural map  $\cot( \adic(R)) \rightarrow \cot( \widehat{\adic(R)})$ induces an equivalence on associated
gradeds. 

Now $\widehat{\adic(R)}$ is complete almost finitely presented as a filtered $\einf$-ring, 
since  its associated graded is free on $[\cot(R)]_1$ by \Cref{adiceinfgeneralprop}. 
We have used that $\cot(R)\in \mod_{k,\geq 0}^{\ft}$ as $R$ is Noetherian (cf.\ \Cref{noethcotap}). By
\Cref{cotoffilafpiscomplete},  this implies that
$  \cot( \widehat{\adic(R)})$ belongs to $\filcoh_{k, \geq 0}$ and it is therefore in particular  complete. 
Hence $\cot(\adic(R)) \rightarrow \cot(\widehat{\adic(R)})$ is  an equivalence, as it is a map between complete objects inducing an equivalence on associated gradeds.
Taking underlying objects everywhere, it follows that
$R =F^0 \adic(R) \rightarrow F^0 \widehat{\adic(R)}$ induces an equivalence on $\cot$. 
Both source and target are complete local Noetherian  
$\einf$-algebras over $k$ (the latter by \Cref{cplfilafpimpliesafp}). Hence $ R \rightarrow F^0 \widehat{\adic(R)}$ is an equivalence by \Cref{detectequivcafp}. 
\end{proof} 
We can now show that the two notions of smallness coincide: \INN{030@$\mathcal{C}_{\afp}$}
\begin{corollary} 
\label{cor:critforcafp}
Let $R \in \clg_k^{\aug}$ be a connective augmented $\einf$-algebra over $k$. Then the
following are equivalent: 
\begin{enumerate} 
\item $R$ belongs to $\mathcal{C}_{\afp}$ in the sense of
\Cref{cafp:defaxiomatic};  
\item $R$ is complete local Noetherian in the sense of \Cref{clndef}. 
\end{enumerate}
\end{corollary}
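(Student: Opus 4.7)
The plan is to unwind the definition of $\mathcal{C}_{\afp}$ from \Cref{cafp:defaxiomatic}, which demands both that $\adic(R) \in \mathcal{C}^{\fil}$ be complete and that $\cot(R) \in \mod^{\ft}_{k,\geq 0}$, and to match each of these two conditions to the input and output of the preceding structural results. All the necessary technical work has already been done in \Cref{adiceinfgeneralprop}, \Cref{grafpidentify}, \Cref{cplfilafpimpliesafp}, \Cref{noethcotap}, and \Cref{adicfiltcomplete}; the proof is essentially a bookkeeping exercise.

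For the direction $(1) \Rightarrow (2)$, I would start from $R \in \mathcal{C}_{\afp}$, so $\adic(R)$ is complete and $\cot(R) \in \mod^{\ft}_{k,\geq 0}$. By \Cref{adiceinfgeneralprop}(3), the associated graded $\gr(\adic(R))$ is the free nonunital graded $\einf$-algebra on $[\cot(R)]_1$, whose cotangent fibre is $[\cot(R)]_1 \in \grcoh_{k, \geq 0}$. Applying \Cref{grafpidentify} then shows that $\gr(\adic(R))$ is almost finitely presented as a graded $\einf$-$k$-algebra. Combined with completeness of $\adic(R)$, this means $\adic(R) \in \mathcal{C}^{\fil}_{\afp}$ in the sense of \Cref{cafpdef}. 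Then \Cref{cplfilafpimpliesafp} applied to $\adic(R)$ yields that $F^0 \adic(R)$ is complete local Noetherian; since $F^0 \adic(R) \simeq R$ by \Cref{adiceinfgeneralprop}(1), this gives (2).

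For $(2) \Rightarrow (1)$, suppose $R$ is complete local Noetherian. \Cref{noethcotap} immediately yields $\cot(R) \in \mod^{\ft}_{k,\geq 0}$, and \Cref{adicfiltcomplete} gives completeness of $\adic(R)$; together these are exactly the two conditions defining $\mathcal{C}_{\afp}$.

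There is no real obstacle here, since the hard work is packaged in the earlier lemmas. The only point worth double-checking is that the axiomatic definition of $\mathcal{C}_{\afp}$ given in \Cref{cafp:defaxiomatic} (complete adic filtration plus $\cot$ of finite type) aligns literally with the hypotheses of \Cref{cplfilafpimpliesafp} and the conclusion of \Cref{adicfiltcomplete}, which it does.
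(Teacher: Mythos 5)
Your proposal is correct and follows essentially the same route as the paper: the reverse direction is word-for-word the paper's argument (\Cref{adicfiltcomplete} plus \Cref{noethcotap}), and the forward direction is the paper's one-line appeal to \Cref{cplfilafpimpliesafp}, with the implicit verification that $\adic(R)$ is complete almost finitely presented (via \Cref{adiceinfgeneralprop}(3) and \Cref{grafpidentify}) spelled out explicitly.
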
 
\begin{proof} 
If $R$ is complete local Noetherian, then $\adic(R)$ is complete by \Cref{adicfiltcomplete} and 
 $\cot(R)$  belongs to $\mod_{k,\geq 0}^{\ft}$ by \Cref{noethcotap}.
Conversely, if  $R$ belongs to $\mathcal{C}_{\afp}$, then applying
\Cref{cplfilafpimpliesafp} to $\adic(R)$ shows that $R$ is complete local
Noetherian. 
\end{proof}

We can finally establish the following result:
\begin{proposition} 
\label{einfinitysatisfiesaxiomatic}
The datum of $\mathcal{C} = \clg_{k, \geq 0}^{\aug}, \mathcal{C}^{\gr}, \mathcal{C}^{\fil}$ specified in Construction~\ref{setupeinfinity}
specifies a filtered augmented monadic adjunction in the sense of \Cref{filteredrefinementdef}.  It therefore satisfies the 
 conditions of \Cref{thm:mainaxiomatic}. 
\end{proposition}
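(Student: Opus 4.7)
Since axioms (1)--(3) of \Cref{filteredrefinementdef} are already established in \Cref{onetothree}, the plan is to verify the Coherence axiom (4) and the Completeness axiom (5); both will follow by assembling results proved earlier in this section.

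For axiom (4)(b), the counit of the adjunction $(\cot \dashv \sqz)$ gives a natural equivalence $\cot(\sqz(V)) \simeq V$, so any $V \in \gr^{\ft}\md_{k,\geq 0}$ yields $\sqz(V) \in \mathcal{C}^{\gr}_{\afp}$ by \Cref{def:grafp}. For axiom (4)(a), we first use \Cref{grafpidentify} to identify $\mathcal{C}^{\gr}_{\afp}$ with the $\infty$-category of degreewise connective, augmented graded $\EE_\infty$-$k$-algebras $R$ with $R_0 = k$ whose total algebra is almost finitely presented in the sense of \Cref{finiteness}. Given a pullback $B = A \times_{A''} A'$ along $\pi_0$-surjective maps in $\mathcal{C}^{\gr}_{\afp}$, the $\pi_0$-surjectivity forces both projections $\pi_0(B) \to \pi_0(A)$ and $\pi_0(B) \to \pi_0(A')$ to be surjective, so $\pi_0(B)$ coincides with the classical fibre product of graded Noetherian $k$-algebras along surjections, which is itself a finitely generated graded Noetherian $k$-algebra. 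The Mayer--Vietoris long exact sequence of homotopy groups, viewed as $\pi_0(B)$-modules via these projections, then exhibits each $\pi_i(B)$ as squeezed between finitely generated $\pi_0(B)$-modules, forcing $\pi_i(B)$ itself to be finitely generated.

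For axiom (5)(a), we directly invoke \Cref{cotoffilafpiscomplete}: for any $A \in \mathcal{C}^{\fil}_{\afp}$, the cotangent fibre $\cot(A)$ lies in $\filcoh_{k,\geq 0}$ and is, in particular, complete. For axiom (5)(b), we apply \Cref{cplfilafpimpliesafp} to conclude that $F^1 A$ is complete local Noetherian in the sense of \Cref{clndef}, and then use \Cref{adicfiltcomplete} to deduce that $\adic(F^1 A)$ is a complete filtered $\EE_\infty$-$k$-algebra.

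The main obstacle among these checks is axiom (4)(a), whose verification requires combining the identification of graded almost finite presentation via cotangent fibres (\Cref{grafpidentify}) with a careful Mayer--Vietoris argument to simultaneously control Noetherianness of the $\pi_0$-pullback and finite generation of all higher homotopy groups. The remaining items are either immediate from adjunction identities or citations of already-established results, so once (4)(a) is in hand, the verification of \Cref{filteredrefinementdef} assembles directly and \Cref{thm:mainaxiomatic} then applies.
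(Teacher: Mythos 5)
Your overall structure matches the paper's proof: axioms (1)--(3) are delegated to \Cref{onetothree}, axiom (4)(a) is reduced via \Cref{grafpidentify} to the classical fact that a fibre product of finitely generated $k$-algebras along surjections is finitely generated (with the Mayer--Vietoris control of higher homotopy groups, which is what the paper means by ``follows easily''), axiom (5)(a) is \Cref{cotoffilafpiscomplete}, and axiom (5)(b) is \Cref{cplfilafpimpliesafp} combined with \Cref{adicfiltcomplete}. All of that is fine.

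However, your argument for axiom (4)(b) contains a genuine error. The counit of the adjunction $(\cot \dashv \sqz)$ is a map $\cot(\sqz(V)) \to V$, and it is emphatically \emph{not} an equivalence: $\cot \circ \sqz$ is the Koszul duality comonad $T$, whose linear dual is the (spectral) partition Lie algebra monad. If $\cot(\sqz(V)) \simeq V$ held, this comonad would be the identity and the entire duality theory of the paper would collapse; compare \Cref{concretespectralplie}, which identifies $\cot(\sqz(V))$ with $\bigoplus_n (\widetilde{C}_\bullet(\Sigma|\Pi_n|^\diamond,k)\otimes V^{\otimes n})_{h\Sigma_n}$. You have in effect confused $\sqz$ with $\free$ (it is $\cot\circ\free \simeq \id$ that holds, by the augmentation axiom). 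The correct route, and the one the paper takes, is finiteness rather than triviality: for $V \in \gr^{\ft}\md_{k,\geq 0}$ the square-zero algebra $\sqz(V)$ is manifestly Noetherian with $\pi_0$ a finitely generated $k$-algebra, so \Cref{noethcotap} (equivalently, \Cref{grafpidentify} in the graded setting) shows that $\cot(\sqz(V))$ lies in $\grcoh_{k,\geq 0}$, which is all that membership in $\mathcal{C}^{\gr}_{\afp}$ requires. With this substitution the proof goes through.
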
 
\begin{proof} 
We have already verified conditions $(1)$, $(2)$, and $(3)$ of  \Cref{filteredrefinementdef} in \Cref{onetothree}.

For part $a)$ of the coherence condition  (\ref{coherencehyp}), we must check that if 
$A, A', A''$ are graded $\einf$-algebras over $k$ (with $k$ as degree $0$ component) which are almost
finitely presented, and if $A \rightarrow A'', A'\rightarrow A''$ are maps which induce
surjections on $\pi_0$, then $A \times_{A''} A'$ is almost finitely presented. 
This follows easily from the algebraic fact that if $S, S', S''$ are finitely
generated $k$-algebras and $S \rightarrow S'', S' \rightarrow S''$ are surjections, then $S
\times_{S''} S'$ is finitely generated as a $k$-algebra, cf.\ e.g.\ \cite[Tag
08KG]{stacks-project}.  
Part $b)$ of the coherence condition  (\ref{coherencehyp}) follows from \Cref{noethcotap},  as if $V\in \gr^{\ft} \md_{k,\geq 0}$, then $\sqz(V)$ is manifestly Noetherian. 

Part $a)$ of the completeness axiom (\ref{goodfc}) follows from \Cref{cotoffilafpiscomplete}. For part $b)$ of  axiom (\ref{goodfc}), we note that if
$A\in \mathcal{C}^{\fil}_{\afp}$ is  an augmented complete $\EE_\infty$-$k$-algebra  which is complete   almost finitely presented, then the underlying 
augmented $\EE_\infty$-$k$-algebra $F^0A$ is complete local Noetherian by \Cref{cplfilafpimpliesafp}.
 \Cref{adicfiltcomplete} and \Cref{cor:critforcafp} then imply that $\adic(F^0A)$ is complete.
\end{proof}

Hence, \Cref{thm:mainaxiomatic} applies to the present setting, and we can perform \mbox{the following construction:}
\begin{definition}[Spectral partition Lie algebras] \label{spla} 
Write $\lieps: \md_k \rightarrow \md_k$ for the unique sifted-colimit-preserving monad on $\mod_k$ satisfying 
$\lieps(V) = \cot( \sqz(V^{\vee}))^{\vee}$ for all $V\in \coh_{k, \leq 0}$. Algebras
over $\lieps$ will be called \emph{spectral partition Lie algebras.}
\end{definition}  
 
In particular,  \Cref{fmpequiv} asserts an equivalence between formal moduli problems for
$\einf$-algebras  and  the $\infty$-category
$\alg_{\lieps}$\hspace{-2pt}.  
We postpone the discussion and formulation of this result to the next section (cf.\ \Cref{mostgeneral}), where 
we will also discuss generalisations to other bases (but still augmented
over $k$). \\

Instead, we will  now give a more explicit description of partition Lie algebras. 
To begin with, we check that when $k$ is a field of  characteristic zero, we recover a familiar notion. Recall that in this case, the ordinary category of differential graded Lie algebras over $k$ carries a model structure whose weak equivalences and fibrations are transported along the forgetful functor to chain complexes. Its   underlying $\infty$-category will be denoted by $\mathrm{Alg}_{\liedg} $ (cf.\ \cite[Section 13.1]{lurie2016spectral} for a detailed treatment), and we write $\liedg$ for the corresponding monad on $\md_k$.
\begin{proposition}\label{spectraldg}
If $k$ is   of characteristic zero, then the following monads  on $\md_k$ are equivalent:
$${\lieps}    \ \ \ \ \ \ \ \ \ \ \ \ \ \ \ \Sigma\circ  {\liedg}\circ \Sigma^{-1}.$$  
As a result, the $\infty$-categories of spectral partition Lie algebras and shifted differential graded Lie algebras are all equivalent.
\end{proposition}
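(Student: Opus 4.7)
The plan is to exploit the classical Koszul duality between the nonunital $\einf$-operad and the shifted Lie operad in characteristic zero. Throughout, let $k$ be of characteristic zero; the key facts I will use are that (i) for $\Sigma_n$ acting on a $k$-vector space, strict orbits and homotopy orbits agree, and (ii) the bar construction $\mathrm{Bar}(\einf^{\nou})$, computed as a cooperad in $\mod_k$, is equivalent to the shifted/signed Lie cooperad.

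First, I will compute $\cot(\sqz(W))$ for $W \in \coh_{k, \geq 0}$. By the monadic bar resolution (cf.\ \Cref{thebarconstruction}), $\cot(\sqz(W))$ is the geometric realisation of $\mathrm{Bar}_\bullet(\id, \forget\circ\free, \sqz(W))$, where $\forget\circ\free$ is the nonunital free $\einf$-monad $V \mapsto \bigoplus_{n\geq 1}(V^{\otimes n})_{h\Sigma_n}$. In characteristic zero this reduces to $\bigoplus_{n\geq 1}(V^{\otimes n})_{\Sigma_n}$, and the standard computation identifies the realisation with the free shifted Lie coalgebra on $W$; concretely,
\[
\cot(\sqz(W)) \;\simeq\; \bigoplus_{n\geq 1}\bigl(\mathrm{Lie}_n^\vee\otimes\sgn_n[1-n] \otimes W^{\otimes n}\bigr)_{\Sigma_n}.
\]
This is exactly the statement that $\mathrm{Bar}(\einf^{\nou})$ is the shifted Lie cooperad.

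Second, I will dualise. For $V\in\coh_{k,\leq 0}$, substituting $W=V^\vee$ and applying linear duality (which behaves well on finite-type objects in characteristic zero and interchanges $(-)_{\Sigma_n}$ with itself) gives
\[
\lieps(V) \;=\; \cot(\sqz(V^\vee))^\vee \;\simeq\; \bigoplus_{n\geq 1}\bigl(\mathrm{Lie}_n\otimes\sgn_n[n-1] \otimes V^{\otimes n}\bigr)_{\Sigma_n} \;\simeq\; \Sigma\,\liedg(\Sigma^{-1}V),
\]
where the last identification is the usual shift isomorphism on $\liedg$. Both monads are sifted-colimit-preserving extensions of their restrictions to $\coh_{k, \leq 0}$, which are uniquely determined by our extension machinery in Section~\ref{extending} (\Cref{critextendfun}); hence the two endofunctors agree on all of $\mod_k$.

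Finally, I will promote this to an equivalence of monads, not just endofunctors. Both monads carry unit and multiplication natural transformations, and in both cases these come from the same piece of algebraic data: the operadic composition on the shifted Lie operad. For $\Sigma\circ\liedg\circ\Sigma^{-1}$ this is tautological. For $\lieps$, the monad structure is induced by the composition $\cot\sqz\cot\sqz \to \cot\sqz$ coming from the counit of $(\cot\dashv\sqz)$; dualising the above bar computation, this is the multiplication on the free algebra over the dual cooperad, namely the shifted Lie operad. The main obstacle is this last step — producing a coherent identification of the monad structure maps, not merely their underlying natural transformations. A clean way to handle it is to work operadically throughout: one shows that the restriction of $\lieps$ to $\coh_{k,\leq 0}$ is the monad of free algebras over the operad $\mathrm{Bar}(\einf^{\nou})^\vee \simeq \mathrm{Lie}\{-1\}$ via classical Koszul duality, and then invokes the uniqueness of sifted-colimit-preserving monadic extensions (a monad version of \Cref{critextendfun}, essentially \Cref{extendmonad}) to transport this identification to all of $\mod_k$.
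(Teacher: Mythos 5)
Your proposal is correct in outline but takes a genuinely different route from the paper. You argue ``from below'': you compute $\cot(\sqz(W))$ explicitly via the monadic bar resolution, identify it with the free shifted Lie coalgebra using the classical equivalence $\mathrm{Bar}(\mathbb{E}_\infty^{\mathrm{nu}})\simeq \mathrm{Lie}^\vee\{-1\}$, dualise, and then confront the problem of upgrading an equivalence of endofunctors to an equivalence of monads. The paper instead argues ``from above'': it invokes the adjunctions $\mathfrak{D}^{\dg}\dashv C^{\dg}$ and $\forget^{\dg}\dashv\free^{\dg}$ already constructed in \cite[Section 13.3]{lurie2016spectral}, notes that their composite is $\Sigma^{-1}\circ\cot(-)^\vee \dashv \sqz(-^\vee)\circ\Sigma$ by \cite[Proposition 13.3.1.4]{lurie2016spectral}, and obtains a map of monads $\liedg\to\Sigma^{-1}\circ\cot(\sqz(-^\vee))^\vee\circ\Sigma$ for free by inserting the unit $\id\to\mathfrak{D}^{\dg}\circ C^{\dg}$. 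The coherence of the monad map — precisely the point you single out as ``the main obstacle'' — thus comes automatically from the adjunction formalism, and the fact that it is an equivalence on $\coh_{k,\leq -1}$ is a citation to \cite[Proposition 13.3.1.1]{lurie2016spectral} rather than a bar-complex computation. Both arguments conclude identically, by uniqueness of sifted-colimit-preserving extensions via \Cref{extendmonad}.

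The one substantive caveat is that your key step is sketched rather than executed: the assertion that the restriction of $\lieps$ to $\coh_{k,\leq 0}$ is, \emph{as a monad}, the free-algebra monad for $\mathrm{Bar}(\mathbb{E}_\infty^{\mathrm{nu}})^\vee\simeq\mathrm{Lie}\{-1\}$ is exactly the content of operadic bar--cobar duality in characteristic zero, and while it is classical (Hinich, Ginzburg--Kapranov, Fresse, Ching), making the identification of monad structure maps coherent is real work, not a formality that follows from matching underlying functors. If you want to complete your route you would need to either cite such a result in a form that identifies monads (not just symmetric sequences), or reproduce the paper's trick of routing the comparison through the unit of an adjunction so that coherence is automatic. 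As written, your proof stops one citation short of where the paper's proof lands.
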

\begin{proof}
Writing $C^{\dg}$ for the classical Chevalley-Eilenberg complex functor, we have a pair of adjunctions (cf.\ \cite[Section 13.3]{lurie2016spectral}) given by 
\[
\begin{tikzcd}[column sep=5em]
 \clg_k^{\aug}
	\arrow[r, shift left=0.7ex, "  \mathfrak{D}^{\dg}   "]
	\arrow[r, shift right=0.7ex, "C^{\dg}"', leftarrow]
	& (\mathrm{Alg}_{\liedg} )^{op}
	\arrow[r, shift left=0.7ex, "\forget^{\dg}"]
	\arrow[r, shift right=0.7ex, "\free^{\dg}"', leftarrow]
	& \md_{k}^{op}.
\end{tikzcd}
\]

By \cite[Proposition 13.3.1.4]{lurie2016spectral}, their composite is given by 
$$ \begin{tikzpicture}[baseline=(C.base)] \label{dualadjunction}
	\node (C) at (0,0) {$\Sigma^{-1} \circ \cot(-)^\vee \  : \ \clg_k^{\aug}$};
	\node (M) at (5,0) {$  \mod_k^{op} \ :  \ \sqz(-^\vee) \circ \Sigma \  .$};
	% adjunction arrows
	\draw[->] ([yshift=2.5pt]C.east) -- node[above] { }
	([yshift=2.5pt]M.west);
	\draw[<-] ([yshift=-2.5pt]C.east) -- node[below] { }
	([yshift=-2.5pt]M.west); 
\end{tikzpicture}\ \ $$

Abstract nonsense therefore gives rise to a natural transformation of monads
$$\liedg= \forget^{\dg}\circ \free^{\dg} \ \ \xrightarrow{\ \ \ \ \ \ \ \ \ } \ \ \Sigma^{-1} \circ  \cot\left(\sqz(-^\vee)\right)^\vee\circ \Sigma\ ,$$ which is obtained by inserting the unit $\id \rightarrow \mathfrak{D}^{\dg} \circ C^{\dg}$.
The  monad $\liedg$ preserves $\mod_{k,\leq -1}^{\ft}$, and we can therefore deduce from \cite[Proposition 13.3.1.1]{lurie2016spectral} that the above transformation is an equivalence for all $V\in \mod_{k,\leq -1}^{\ft}$. By construction of $\lieps$, we obtain an equivalence of monads 
$(\Sigma \circ \liedg \circ \Sigma^{-1})|_{\mod_{k,\leq 0}^{\ft}} \simeq \lieps|_{\mod_{k,\leq 0}^{\ft}}$.
Since both $\liedg$ and $\lieps$   preserve sifted colimits (the former by \cite[Proposition 13.1.4.4]{lurie2016spectral}, the latter by construction), we in fact obtain an equivalence of monads $\Sigma \circ \liedg  \circ \Sigma^{-1} \simeq \lieps$ on $\mod_k$, applying  
\Cref{extendmonad} above. 
\end{proof}
\begin{remark}
As $\infty$-categories, $ \alg_{\liedg} $ and $\alg_{\Sigma\circ  \liedg\circ \Sigma^{-1}}$ are equivalent via a functor  whose effect on underlying $k$-modules is simply a shift by $1$.  
\end{remark}

For fields of positive characteristic, partition Lie algebras are somewhat more complicated objects. We recall the notion of partition complexes from   \Cref{partitioncomplex} above. Write  $\Sigma |\Pi_n|^\diamond$ for the  $\Sigma_n$-space given by the
reduced suspension of the unreduced 
 suspension of the $n^{th}$ partition complex. Let
$\widetilde{C}^\bullet(\Sigma |\Pi_n|^\diamond,k)$ be the cosimplicial
$k$-vector space given by its $k$-valued (reduced) singular cochains.
\begin{proposition}\label{concretespectralplie}\INN{060@$F_X,  F_X^h$}
Given any $V\in \md_k$, there is an equivalence
$$ \lieps(V) \simeq \bigoplus_{n\geq 1} (F^h_{\Sigma |\Pi_n|^\diamond})^\vee(V).$$
Here   $(F^h_{\Sigma |\Pi_n|^\diamond})^\vee$ \INN{060@$(F^h_{\Sigma |\Pi_n|^\diamond})^\vee$} is the right-left extension (cf.\ \Cref{rlext}) of the functor $\vect_k^\omega \rightarrow \mod_k$   given by $V\mapsto (\widetilde{C}^\bullet(\Sigma |\Pi_n|^\diamond,k)  \otimes V^{\otimes n} )^{h\Sigma_n}$.
If $V\in \md_{k,\leq N}$ is truncated above,   there is  an equivalence  
$$ \lieps(V) \simeq \bigoplus_{n} \left(\widetilde{C}^\bullet(\Sigma |\Pi_n|^\diamond,k)  \otimes V^{\otimes n}\right)^{h\Sigma_n}.$$

\end{proposition}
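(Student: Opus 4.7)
The plan is to identify both sides on the subcategory $\coh_{k,\leq 0}$, where $\lieps$ is defined explicitly, and then extend everywhere using the universal property of sifted-colimit-preserving extensions from \Cref{critextendfun}.

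First I would check that the right-hand side is well-defined. For each $n\geq 1$ the assignment $V\mapsto (\widetilde{C}^\bullet(\Sigma|\Pi_n|^\diamond,k)\otimes V^{\otimes n})^{h\Sigma_n}$ on $\vect_k^\omega$ is of polynomial degree $n$ in the sense of \Cref{spf}, since it is the composite of the degree-$n$ functor $V\mapsto V^{\otimes n}$ with a sifted-colimit-preserving operation (homotopy fixed points of a bounded $\Sigma_n$-diagram). Hence \Cref{polyrightext} produces a canonical sifted-colimit-preserving extension $(F^h_{\Sigma|\Pi_n|^\diamond})^\vee: \md_k\to\md_k$, and summing gives a sifted-colimit-preserving endofunctor.

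Next I would compute the left-hand side on $V\in \coh_{k,\leq 0}$, where by definition $\lieps(V)=\cot(\sqz(V^\vee))^\vee$ with $\sqz(V^\vee)\in\clg^{\aug}_{k,\geq 0}$. The cotangent fibre is computed by the bar resolution $\mathrm{Bar}_\bullet(\id,\free_{\enu},\sqz(V^\vee))$ (cf. \Cref{thebarconstruction}), which carries a weight decomposition with $V^\vee$ placed in weight one, and in weight $n$ the bar simplicial object on a square-zero algebra recovers the simplicial $\Sigma_n$-$k$-module whose realisation is (up to suspension) the reduced chains on the partition complex. Invoking the classical identification of the bar construction on square-zero $\EE_\infty$-algebras with partition-complex chains (Arone-Mahowald, Ching, Salvatore), I would conclude
\[ \cot(\sqz(V^\vee)) \simeq \bigoplus_{n\geq 1}\bigl(\widetilde{C}_\ast(\Sigma|\Pi_n|^\diamond,k)\otimes (V^\vee)^{\otimes n}\bigr)_{h\Sigma_n}. \]
Applying $k$-linear duality, using that for $V\in\coh_{k,\leq 0}$ all objects appearing are of finite type so that Spanier-Whitehead duality exchanges $\Sigma_n$-orbits with $\Sigma_n$-fixed points, I get the desired formula on $\coh_{k,\leq 0}$. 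Since both $\lieps$ and $\bigoplus_n (F^h_{\Sigma|\Pi_n|^\diamond})^\vee$ preserve sifted colimits and are right-complete, \Cref{critextendfun} then forces them to agree on all of $\md_k$.

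Finally, for the refined statement on $V\in\md_{k,\leq N}$, I would invoke \Cref{RKEisright}: writing $V\simeq \Tot(M^\bullet)$ for some cosimplicial diagram $M^\bullet$ in $\vect_k^\omega$, the right-extension of each summand computes as the cosimplicial totalisation $\Tot\bigl((\widetilde{C}^\bullet(\Sigma|\Pi_n|^\diamond,k)\otimes (M^\bullet)^{\otimes n})^{h\Sigma_n}\bigr)$, which by Fubini for homotopy fixed points and totalisations reassembles into $(\widetilde{C}^\bullet(\Sigma|\Pi_n|^\diamond,k)\otimes V^{\otimes n})^{h\Sigma_n}$ as stated. The main obstacle will be the weight-$n$ identification with partition-complex chains: although this is classically known, tracking it carefully through the $\infty$-categorical bar construction, the shift accounted for by the unreduced-reduced suspension $\Sigma|\Pi_n|^\diamond$, and the $\Sigma_n$-equivariance, is the step that requires the most work.
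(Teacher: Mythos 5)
Your proposal follows essentially the same route as the paper: compute $\cot(\sqz(V^\vee))$ by the bar construction, identify the weight-$n$ piece with chains on $\Sigma|\Pi_n|^\diamond$ (the paper carries out the combinatorial identification explicitly via an auxiliary simplicial $\Sigma_n$-set $T(n)$ rather than citing it), dualise, and extend by sifted colimits. Two small points need tightening. First, when you dualise the infinite direct sum you a priori obtain an infinite \emph{product} of the duals; the paper identifies this product with the sum by observing that $\sqz(V^\vee)$ is Noetherian, so $\lieps(V)=\cot(\sqz(V^\vee))^\vee$ lies in $\coh_{k,\leq 0}$ and only finitely many weight summands contribute in each degree. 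Your argument skips this step. Second, your treatment of the truncated case only covers $V\in\md_{k,\leq 0}$: a general $V\in\md_{k,\leq N}$ with $N>0$ is not a totalisation of a cosimplicial diagram in $\vect_k^\omega$, so \Cref{RKEisright} does not apply directly; the paper instead notes that both sides preserve filtered colimits on $\md_{k,\leq 0}$ and finite geometric realisations, which handles arbitrary $N$. (Also, your justification that $F^h_X$ has degree $n$ should appeal to exactness of $(-)^{h\Sigma_n}$ on a stable category, not to preservation of sifted colimits, which homotopy fixed points do not enjoy in general.) None of these affects the overall strategy, which is sound.
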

\begin{proof}
For each $n \geq 0$, we define a simplicial $\Sigma_n$-set $T(n)$ \INN{200@$T(n)$} by specifying its set of $k$-simplices as 
$$ T(n)_k= \left\{ \ [\hat{0}   =  \sigma_0 \leq \sigma_1 \leq \ldots \leq \sigma_k = \hat{1}] \  \ \bigg| \ \  \sigma_i \mbox{ are partitions of } \{1,\ldots, n\} \ \right\} \  \ \coprod \ \ \{ \ast \}$$
Degeneracy maps  insert repeated partitions into chains and  fix  $\ast$. 
Face maps delete partitions from chains whenever this yields a ``legal" chain starting in $\hat{0}$ and ending in $\hat{1}$; otherwise, they \mbox{map to   $\ast$.} 

As $\cot$ preserves geometric realisations, we obtain, for any $V\in \vect_k^\omega$,  the following equivalence: 
$$\cot(\sqz(V)) \simeq  |\Barr_\bullet(\id,\Free_{ {\enu}}, V)| \simeq  \ \ \bigg|  \ \ \ldots \ \ \substack{\longrightarrow \vspace{-4pt} \\ \leftarrow   \vspace{-4pt} \ \\ \longrightarrow \vspace{-4pt} \\ \leftarrow   \vspace{-4pt} \ \\ \longrightarrow  \vspace{-4pt} \\ \leftarrow  \vspace{-4pt} \ \\ \longrightarrow} \ \ \bigoplus_{m\geq 1}   ( \bigoplus_{n\geq 1}  V^{\otimes n}_{h\Sigma_n} )^{\otimes m}_{h\Sigma_m}\ \ \substack{\longrightarrow \vspace{-4pt} \\ \leftarrow   \vspace{-4pt} \ \\ \longrightarrow  \vspace{-4pt} \\ \leftarrow  \vspace{-4pt} \ \\ \longrightarrow} \ \ \bigoplus_{n\geq 1}  V^{\otimes n}_{h\Sigma_n} \ \ \substack{  \longrightarrow  \vspace{-4pt} \\ \leftarrow  \vspace{-4pt} \ \\ \longrightarrow} \ \ V \ \ \bigg|\ .$$
For $(X,\ast)$ a pointed set, we write $k[X]$ for the free $k$-module on $X$ subject to the relation $0\simeq \ast$.
Expanding out extended powers binomially, a well-known and elementary combinatorial observation (explained for example in  \cite{ching2005bar}) shows that $\cot(\sqz(V))$ is equivalent to 
$$ \bigg|  \ \ \ldots \ \ \substack{\longrightarrow \vspace{-4pt} \\ \leftarrow
\vspace{-4pt} \ \\ \longrightarrow \vspace{-4pt} \\ \leftarrow   \vspace{-4pt} \
\\ \longrightarrow  \vspace{-4pt} \\ \leftarrow  \vspace{-4pt} \ \\
\longrightarrow} \ \ \bigoplus_{n \geq 1} (k[T(n)_2] \otimes V^{\otimes n})_{h\Sigma_n}\ \ \substack{\longrightarrow \vspace{-4pt} \\ \leftarrow   \vspace{-4pt} \ \\ \longrightarrow  \vspace{-4pt} \\ \leftarrow  \vspace{-4pt} \ \\ \longrightarrow} \ \ \bigoplus_{n\geq 1}   (k[T(n)_1] \otimes V^{\otimes n})_{h\Sigma_n}  \ \ \substack{  \longrightarrow  \vspace{-4pt} \\ \leftarrow  \vspace{-4pt} \ \\ \longrightarrow} \ \ \bigoplus_{n\geq 1}  (k[T(n)_0] \otimes V^{\otimes n})_{h\Sigma_n} \ \ \bigg|,$$
which  is equivalent to $\bigoplus_{n\geq 1}
(\widetilde{C}_\bullet(|T(n)|,k) \otimes  V^{\otimes n})_{h \Sigma_n}$. Since both functors preserve sifted colimits, we deduce that $\cot(\sqz(V))\simeq \bigoplus_{n\geq 1}
(\widetilde{C}_\bullet(|T(n)|,k) \otimes  V^{\otimes n})_{h \Sigma_n}$ for all $V\in \md_k$.
 
We now observe that $T(n)$ can be identified with the quotient of the join $\{\hat{0}\} \ast \Pi_n \ast \{\hat{1}\}$ by the simplicial subset spanned by  all chains not containing $[\hat{0} \leq \hat{1}]$ as a subchain.  The realisation $|T(n)|$ is therefore equivalent to the  reduced suspension of the unreduced   suspension of the $n^{th}$ partition complex $\Sigma |\Pi_n|^\diamond$ (cf.\ \cite[Section 2.9]{arone2018action}). \vspace{3pt}
 
If $V\in \mod_{k,\leq 0}^{\ft}$, then $\sqz(V^\vee)$ is Noetherian. Hence $\cot(\sqz(V^\vee))  \in \mod_{k,\geq 0}^{\ft}
$, which implies that 
 $\lieps(V) = \cot(\sqz(V^\vee))^\vee$ 
belongs to $\mod_{k,\leq 0}^{\ft}$  
(cf.\ \Cref{noethcotap}).
As all homotopy groups are finite-dimensional,  
 the   map  $ \bigoplus_{n} (\widetilde{C}^\bullet(\Sigma |\Pi_n|^\diamond,k)  \otimes V^{\otimes n})^{h\Sigma_n} \xrightarrow{\simeq}   \prod_{n }(\widetilde{C}^\bullet(\Sigma |\Pi_n|^\diamond,k)  \otimes V^{\otimes n})^{h\Sigma_n}$ is an equivalence; 
 this can also be seen as a consequence of the increasing connectivity of the partition complexes $|\Pi_n|$. We therefore obtain an equivalence
$$\lieps(V)  \simeq \left(\cot (\sqz(V^\vee)\right)^\vee \simeq \bigoplus_{n} \left(\widetilde{C}^\bullet(\Sigma |\Pi_n|^\diamond,k)  \otimes V^{\otimes n}\right)^{h\Sigma_n} \simeq   \bigoplus_{n\geq 1} (F^h_{\Sigma |\Pi_n|^\diamond})^\vee(V).$$ 
Since $ \lieps(V)$ and $  \bigoplus_{n\geq 1} (F^h_{\Sigma |\Pi_n|^\diamond})^\vee$ commute with sifted colimits, 
the first claim follows.

We  now observe that   both $\lieps(-)$ and $\bigoplus_{n}  (\widetilde{C}^\bullet(\Sigma |\Pi_n|^\diamond,k)  \otimes (-)^{\otimes n} )^{h\Sigma_n}$ preserve filtered colimits in $\md_{k,\leq 0}$. For the former functor, this holds by definition.
For the latter, we note that for all $m$, the functor $\tau_{\geq -m}(-)^{h\Sigma_n} :\fun(B\Sigma_n , \mod_{k,\leq 0} ) \rightarrow 
\mod_{k,\leq 0}$ is computed by a finite limit.
The above formula therefore holds for any $V\in \md_{k,\leq 0}$.

 Both functors also preserve finite geometric realisations, which implies the formula whenever $V\in \md_{k,\leq N}$ for some $N$.
\end{proof}

\newpage

\subsection{Simplicial commutative rings}\label{SCR}
We shall now explain the modifications needed 
in order to obtain a Lie algebraic description of deformations over simplicial commutative rings over \mbox{a field $k$.} 
In particular, we will  obtain a setup as in \Cref{filteredrefinementdef} of our axiomatic section. 

For this, we will need to recall the basic homotopy theory of simplicial commutative
rings, as introduced by Quillen.  
We refer to \cite[Chapter 25]{lurie2016spectral} 
for a detailed $\infty$-categorical treatment of simplicial commutative rings. 
For our axiomatic setup, we will also need  graded and filtered versions. 
We give a quick summary below:\vspace{-1pt} 

\begin{cons}[The setup for simplicial commutative rings]\

\label{setupSCR}
\begin{enumerate}[a)]
\item Let $\mathcal{D} = \SCR_k^{\aug}$ 
be the $\infty$-category of augmented simplicial commutative $k$-algebras. 
Explicitly,  $\SCR_k^{\aug}$ can be obtained as the nonabelian derived $\infty$-category
$\mathcal{P}_{\Sigma}$ (as in \cite[Section 5.5.8]{lurie2009higher})
of the category of finitely generated augmented polynomial $k$-algebras. 
\INN{040@$\mathcal{D}^{\gr}$}
\INN{040@$\mathcal{D}^{\fil}$}

\item Let
$\mathcal{D}^{\fil}$ be the 
$\infty$-category of filtered, augmented
simplicial commutative $k$-algebras $R$ with $F^0 R/F^1 R \simeq k$. 
Specifically, $\mathcal{D}^{\fil}$   can be obtained by applying $\mathcal{P}_{\Sigma}$ 
to the category of   filtered, augmented $k$-algebras
which are free on a finite-dimensional vector space $V$ equipped with a finite
filtration with $F^0 V = F^1 V$.

\item
Let $\mathcal{D}^{\gr}$ denote the $\infty$-category of graded, augmented
simplicial commutative $k$-algebras. 
More precisely, $\mathcal{D}^{\gr}$ is obtained as $\mathcal{P}_{\Sigma}$ 
of the category of finitely generated, graded augmented polynomial algebras
of the form 
$k[x_1, \dots, x_r]$ with each $x_i$ homogeneous of \textit{positive} degree.

\item 
We have  free-forgetful adjunctions 
$\md_{k, \geq 0} \rightleftarrows \mathcal{D}$, 
$\gr( \md_{k, \geq 0}) \rightleftarrows \mathcal{D}^{\gr}$,  and \INN{060@$\free$} \INN{060@$\forget$}
\mbox{$\fil( \md_{k, \geq 0}) \rightleftarrows \mathcal{D}^{\fil}$}. The forgetful
functors act as expected on the polynomial generators of the respective $\infty$-categories of algebras (i.e.\ by taking the kernel of the \INN{060@$F^1$}
augmentation). Moreover, the forgetful functors commute with sifted colimits (cf. Construction~\ref{freeSCR} below). 
 \INN{032@$\cot_{\Delta}$} \INN{190@$\sqz$}  
 The three evident square-zero functors
 {$\sqz: \md_{k, \geq 0} \rightarrow \mathcal{D}$, 
$\sqz: \gr(\md_{k, \geq 0}) \rightarrow \mathcal{D}^{\gr},$  and
$\sqz: \fil( \md_{k, \geq 0}) \rightarrow \mathcal{D}^{\fil}$} admit left adjoints \vspace{-1pt}
$$ \cot_{\Delta}: \mathcal{D} \rightarrow \md_{k, \geq 0}, 
\
\cot_{\Delta}: \mathcal{D}^{\gr} \rightarrow \gr ( \md_{k, \geq 0}), \ 
\cot_{\Delta}: \mathcal{D}^{\fil} \rightarrow \fil( \md_{k, \geq 0})
.$$
We use the subscript $\Delta$ to contrast this with the $\einf$-cotangent fibre
construction. 
\item The underlying object functor 
$F^1: \mathcal{D}^{\fil} \rightarrow \mathcal{D}$ forgets the filtration.
On the polynomial generators, it behaves  as the name indicates; in general, it is determined by commuting with sifted colimits. 
The functor  \INN{1@$\adic$}
$\adic: \mathcal{D} \rightarrow \mathcal{D}^{\fil}$ is the left adjoint 
of the underlying functor. 

\item The associated graded functor $\mathcal{D}^{\fil} \rightarrow \mathcal{D}^{\gr}$ is constructed similarly by first defining it in the evident way on polynomial generators and then extending in a sifted-colimit-preserving manner.\vspace{-1pt}
\end{enumerate}
\end{cons}

\begin{remark}
\label{containsdiscrete}
$\mathcal{D}$ contains the 
(ordinary) category of augmented $k$-algebras as a full subcategory. 
Similarly, $\mathcal{D}^{\gr}$ and $\mathcal{D}^{\fil}$ 
contain the  categories of graded and filtered augmented $k$-algebras. \vspace{-2pt} \end{remark}

\newcommand{\lsym}{\mathbb{L}\mathrm{Sym}}

\begin{cons}[The free functors]
\label{freeSCR}  \INN{120@$\L \sym$} 
We let $\lsym^*: \md_{k, \geq 0} \rightarrow \md_{k, \geq 0}$ denote the functor which
sends $V \in \md_{k, \geq 0}$ to the 
augmentation ideal   of the free simplicial commutative \mbox{$k$-algebra  on $V$} (with its natural augmentation). 
Explicitly, if $V$ is a (discrete) $k$-vector space, then 
$\lsym^*(V) = \bigoplus_{i > 0} \mathrm{Sym}^i V$ is the (usual) nonunital symmetric
algebra on $V$; in general $\lsym^*$ is defined as the nonabelian (left) derived
functor construction (Construction~\ref{derivedfun}). 

The functors 
$\lsym^*: \gr( \md_{k, \geq 0}) \rightarrow \gr( \md_{k, \geq 0})$ and 
$\lsym^*: \fil ( \md_{k, \geq 0}) \rightarrow \fil( \md_{k, \geq 0})$ are defined in a similar way; they recover
$\lsym^*$ on underlying $k$-modules,  but keep track of the additional grading
and filtration, respectively. 

We now observe  that 
$\lsym^i$ is a polynomial functor of degree $i$ (as it preserves filtered colimits and is $i$-excisive by \Cref{JMcC}). Combining this with the
finiteness properties of symmetric powers established in \cite[Section 25.2.5]{lurie2016spectral},
it follows that $\lsym^*: \gr \md_{k, \geq 0} \rightarrow \gr \md_{k, \geq 0}$ is
admissible in the sense of \Cref{admissiblefunctors}. 
\end{cons}

\begin{example}[The adic filtration of a polynomial ring]
\label{adicdiscSCR}
Unwinding the definitions, we see that applying
the functor $\adic$ to a  free simplicial commutative ring $k[x_1, \dots , x_n] \in
\SCR_k^{\aug} = \mathcal{C}$ recovers the usual $\mathfrak{m}$-adic filtration,
where $\mathfrak{m} =
(x_1, \dots, x_n)$ is  the augmentation ideal. 
In other words, one obtains the free filtered simplicial commutative ring on
$x_1, \dots, x_n$ in filtration $1$. 

Very explicitly, the adic filtration can also be defined 
as follows: on polynomial rings, it is the $\mathfrak{m}$-adic filtration 
and in general, it is defined via left Kan extension. 
\end{example}

\begin{proposition}\label{onetwothreescr}
The setup of simplicial commutative $k$-algebras in Construction~\ref{setupSCR} satisfies conditions $(1)-(3)$ of
\Cref{filteredrefinementdef}. 
\end{proposition}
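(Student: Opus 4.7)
The strategy is to parallel \Cref{onetothree} for $\mathbb{E}_\infty$-algebras, substituting the extended powers $(V^{\otimes i})_{h\Sigma_i}$ by the derived symmetric powers $\mathbb{L}\mathrm{Sym}^i(V)$ throughout. Since $\mathcal{D}, \mathcal{D}^{\gr}, \mathcal{D}^{\fil}$ are defined in \Cref{setupSCR} as nonabelian derived $\infty$-categories $\mathcal{P}_{\Sigma}$ of the appropriate polynomial algebras, essentially all the formal properties are built in by construction.

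First I would verify conditions $(1)$ and $(2)$. Monadicity of the three free-forgetful adjunctions with sifted-colimit-preserving right adjoints is automatic from the $\mathcal{P}_{\Sigma}$-presentations, and the identification of the horizontal composites with the identity is a direct check on polynomial generators (the cotangent fibre of the free augmented polynomial algebra on $V$ is $V$), extended to all objects by sifted colimits. For adjointability, the required commutations $\free \circ F^1 \simeq F^1 \circ \free$, $\cot \circ F^1 \simeq F^1 \circ \cot$, $\forget \circ \gr \simeq \gr \circ \forget$, and $\sqz \circ \gr \simeq \gr \circ \sqz$ all hold on finitely generated polynomial generators (where both sides act explicitly on the variables in the expected way) and hence on all objects by sifted-colimit extension.

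Next I would turn to condition $(3)$. The admissibility of $\forget \circ \free = \lsym^* : \gr \md_{k, \geq 0} \rightarrow \gr \md_{k, \geq 0}$ is already recorded in \Cref{freeSCR}: each $\lsym^i$ is a polynomial functor of degree $i$, hence $i$-excisive by \Cref{JMcC}, and the finiteness results of \cite[Sec.~25.2.5]{lurie2016spectral} imply each summand preserves $\pft$, with summands living in higher and higher internal degrees, exactly as in \Cref{admissibleexample}. For the tower condition I would mimic \Cref{thebarconstruction} and \Cref{adiceinfgeneralprop}: construct $\adic: \mathcal{D}^{\gr} \rightarrow \clg^{\aug}(\fil(\gr \md_{k,\geq 0}))$ by left Kan extension from polynomial generators (just as in \Cref{adicdiscSCR}), and set $A^{(i)} := \forget(A)/F^{i+1}\adic(A) \in \gr \md_{k,\geq 0}$. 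In the free case on a graded vector space $V$, the associated graded of $\adic$ is the bigraded object $\bigoplus_i \lsym^i(V)$ with $\lsym^i(V)$ in filtration degree $i$; this checks directly and then extends to all of $\mathcal{D}^{\gr}$ by sifted colimits. This yields natural identifications $A^{(1)} \simeq \sqz \circ \cot(A)$ and $\forget(A^{(i-1)})/\forget(A^{(i)}) \simeq \lsym^i(\cot(A))$, with $G_i := \lsym^i$ being $i$-excisive (hence admissible by \Cref{excadm}) and evidently $i$-increasing since $\lsym^i$ shifts internal graded degree by a factor of $i$. Convergence of the tower, i.e.\ that $\forget(A) \rightarrow \forget(A^{(i)})$ is an equivalence in internal degrees $\leq i$, follows by reducing to the free case and observing that $F^{i+1}\adic(V)$ lives in internal degrees $\geq i+1$.

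There is no genuine obstacle here: once the analogue of \Cref{consadicforeinfinitygen} and \Cref{adiceinfgeneralprop} has been set up for $\SCR^{\aug}$ in place of $\clg^{\nou}$, the argument is formally identical to the $\mathbb{E}_\infty$ case. The only mild subtlety is checking that $\lsym^i: \gr \md_{k, \geq 0} \rightarrow \gr \md_{k, \geq 0}$ is both admissible and $i$-increasing, but both properties are inherited from the facts that $\lsym^i$ is $i$-excisive, preserves $\coh_{k, \geq 0}$ (by finiteness of derived symmetric powers of perfect modules), and takes a graded object into one concentrated in internal degrees $\geq i$.
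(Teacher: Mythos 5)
Your proposal is correct and follows essentially the same route as the paper: conditions (1) and (2) are checked on polynomial generators and extended by sifted colimits, admissibility of $\lsym^*$ is quoted from Construction~\ref{freeSCR}, and the tower in condition (3) is obtained from the $\mathfrak{m}$-adic filtration on free objects (as in \Cref{adicdiscSCR}) extended by left Kan extension, with associated graded $\lsym^*(\cot_\Delta(A))$. The only quibble is notational — your $\adic$ on $\mathcal{D}^{\gr}$ should land in filtered graded simplicial commutative rings rather than $\clg^{\aug}(\fil(\gr\md_{k,\geq 0}))$ — but this does not affect the argument.
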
 
\begin{proof} 
Conditions $(1)$ and $(2)$ are straightforward to check. 

In Construction~\ref{freeSCR}, we saw that $\lsym^*: \gr \md_{k, \geq 0} \rightarrow \gr \md_{k, \geq 0}$ 
is an admissible functor.
It remains to produce the filtration for a graded, augmented simplicial
commutative ring  $A \in \mathcal{D}^{\gr}$; for this, we will follow
the discussion in \Cref{adicdiscSCR}. 
If $A$ is free   with maximal ideal
$\mathfrak{m}_A$, the $\mathfrak{m}_A$-adic filtration gives a
natural 
convergent filtration on 
$\mathfrak{m}_A$; its associated graded is given by the symmetric algebra $\sym^*(
\mathfrak{m}_A/\mathfrak{m}_A^2)$. 
By taking left Kan extension, we conclude that for any $A \in
\mathcal{D}^{\gr}$, the augmentation ideal $\mathfrak{m}_A$ is equipped with a convergent
filtration 
with associated graded $\lsym^*( \cot_{\Delta}(A))$. This immediately implies that 
condition $(3)$ of  
\Cref{filteredrefinementdef} is satisfied. 
\end{proof}

We will now verify the coherence axiom~\ref{coherencehyp}  and the completeness
axiom~\ref{goodfc} in \Cref{filteredrefinementdef}. 
These will both be deduced from the analogous assertions involving
$\einf$-rings, which we have already checked in \Cref{einfinitysatisfiesaxiomatic} above. 

\begin{cons}[Forgetting to $\einf$-algebras]\label{forgetting}
There is a natural forgetful functor from simplicial commutative rings to
$\einf$-rings. It is characterised by the properties of acting as the forgetful functor on ordinary polynomial rings
and preserving sifted colimits (cf.\ \cite[Section
25.1.2]{lurie2016spectral}).  
This construction clearly carries over to the augmented, filtered, and graded settings, 
and we therefore obtain forgetful functors
$\mathcal{D} \rightarrow \mathcal{C}$, $\mathcal{D}^{\fil} \rightarrow \mathcal{C}^{\fil}$, and
$\mathcal{D}^{\gr} \rightarrow \mathcal{C}^{\gr}$. Here we use the notation introduced
in \Cref{setupeinfinity} and \Cref{setupSCR}. 
\end{cons}

\begin{definition} \label{SCRnoet}
We say that $A \in \SCR_k^{\aug} = \mathcal{D}$ is \emph{Noetherian} (respectively
\emph{complete local Noetherian}) if   
the underlying $\einf$-algebra of $A$ is Noetherian (respectively complete local Noetherian). 
We write $\SCR^{\cN}_k$ for  the full subcategory spanned by all 
complete
local Noetherian \INN{190@$\SCR^{\cN}$} $A \in \SCR_k^{\aug}$.
\end{definition} 
The axiomatic Definitions \ref{def:grafp}  and \ref{def:filafp}   give notions  of almost finite presentation and complete almost finite presentation 
for graded and  filtered  simplicial commutative $k$-algebras, respectively. \INN{030@$\mathcal{C}^{\gr}_{\afp}$} 
\INN{030@$\mathcal{C}^{\fil}_{\afp}$} 
\begin{proposition} 
\label{whenisSCRafp} These notions are compatible with the forgetful functor to $\EE_\infty$-$k$-algebras:
\begin{enumerate}
\item A graded (augmented) simplicial commutative $k$-algebra $A \in \mathcal{D}^{\gr}$   is almost finitely presented if and only if 
the underlying graded $\einf$-$k$-algebra (in $\mathcal{C}^{\gr}$) is almost finitely presented.
\item A filtered (augmented)  simplicial commutative $k$-algebra $A \in \mathcal{D}^{\fil}$ is complete almost finitely presented 
if and only if the underlying filtered $\einf$-$k$-algebra (in $\mathcal{C}^{\fil}$)
is complete almost finitely presented. 
\end{enumerate}
\end{proposition}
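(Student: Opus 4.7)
The plan is to reduce both statements to a single analog of \Cref{grafpidentify} for simplicial commutative rings. For part (2), completeness of a filtered object is a condition on its underlying filtered $k$-module, which the forgetful functor $\mathcal{D}^{\fil} \to \mathcal{C}^{\fil}$ preserves. Moreover, by the adjointability established in the course of \Cref{onetwothreescr}, passing to the associated graded commutes with this forgetful functor, so $\gr(A^{\EE_\infty}) \simeq (\gr A)^{\EE_\infty}$ in $\mathcal{C}^{\gr}$. Combined with \Cref{def:filafp}, this reduces part (2) to part (1) applied to $\gr(A)$.

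For part (1), the key observation is that the forgetful functor $\mathcal{D}^{\gr} \to \mathcal{C}^{\gr}$ preserves the underlying graded $k$-module, so the bigraded homotopy ring $\pi_*(A) = \pi_*(A^{\EE_\infty})$ is intrinsic to $A$. In particular, the ``concrete'' almost-finite-presentation condition---namely, that $\pi_0(\bigoplus_i A_i)$ is a finitely generated graded $k$-algebra (with generators in positive grading) and each $\pi_n(\bigoplus_i A_i)$ is a finitely generated module over $\pi_0$---holds for $A$ if and only if it holds for $A^{\EE_\infty}$. By \Cref{grafpidentify}, this concrete condition is equivalent to $\cot(A^{\EE_\infty}) \in \grcoh_{k,\geq 0}$, so part (1) reduces to the corresponding statement for simplicial commutative rings: that $\cot_\Delta(A) \in \grcoh_{k,\geq 0}$ is equivalent to the concrete Noetherian condition on $\pi_*(A)$.

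I would prove this analog by the same strategy as \Cref{grafpidentify}, substituting the algebraic cotangent complex $L^\Delta_{R_\star/k}$ for the topological one. The required input is the simplicial-commutative analog of \cite[Theorem 7.4.3.18]{lurie2014higher}---a connective simplicial commutative $k$-algebra with finitely generated $\pi_0$ is almost of finite presentation if and only if $L^\Delta_{R/k}$ is almost perfect over $R$---which is standard (see for example \cite[Chapter 25]{lurie2016spectral}). Given this, the forward direction uses the cofibre sequence for the triple $k \to R_\star \to k$ to express $\cot_\Delta(R_\star) \simeq k \otimes_{R_\star} L^\Delta_{R_\star/k}$, together with \Cref{grcrit:aperf} (stated for $\EE_\infty$-algebras but applying verbatim here, as almost-perfectness is detected on underlying modules) to conclude $\cot_\Delta(R_\star) \in \grcoh$. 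Conversely, assuming $\cot_\Delta(R_\star) \in \grcoh$, the graded Nakayama lemma applied to $\pi_0(\cot_\Delta R_\star)$ exhibits $\pi_0(R_\star)$ as a finitely generated graded $k$-algebra; a second application of \Cref{grcrit:aperf} upgrades $\cot_\Delta(R_\star) \in \grcoh$ to almost-perfectness of $L^\Delta_{R_\star/k}$ over $R_\star$, which yields almost finite presentation via the SCR cotangent-complex criterion.

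The main obstacle will be the SCR cotangent-complex criterion for almost finite presentation, which plays the role that \cite[Theorem 7.4.3.18]{lurie2014higher} does in the $\EE_\infty$-setting. This is a substantial theorem that belongs to the foundations of derived algebraic geometry rather than to the present paper, but once it is in hand the remainder of the argument is formal and parallels the proof of \Cref{grafpidentify} verbatim.
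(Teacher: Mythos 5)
Your proposal is correct, but it takes a genuinely different route from the paper. The paper's proof is a two--line argument resting on \cite[Remark 25.3.3.7]{lurie2016spectral}: there is a connective associative ring spectrum $k^+$ with $\pi_0(k^+)=k$ and degreewise finite-dimensional homotopy such that $\cot(A)$ is a $k^+$-module and $\cot_\Delta(A)\simeq \cot(A)\otimes_{k^+}k$; since the two axiomatic notions of almost finite presentation are \emph{by definition} the conditions $\cot_\Delta(A)\in\grcoh_{k,\geq 0}$ and (via \Cref{grafpidentify}) $\cot(A)\in\grcoh_{k,\geq 0}$, the degreewise-finiteness of one cotangent fibre transfers directly to the other, and part (2) follows because completeness is detected on underlying modules. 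You instead route both sides through the concrete Noetherian condition on the bigraded homotopy ring $\pi_*(A)$, which is intrinsic to the underlying module: you use \Cref{grafpidentify} on the $\einf$-side and prove an SCR analogue of it on the other side, whose essential input is the simplicial-commutative version of the cotangent-complex criterion for almost finite presentation (the analogue of \cite[Theorem 7.4.3.18]{lurie2014higher}, available in \cite[Chapter 25]{lurie2016spectral}). Both arguments are sound; what the paper's approach buys is economy --- it needs only the single base-change comparison between the two cotangent fibres and never invokes the SCR finiteness criterion --- whereas your approach requires that heavier foundational input but makes transparent \emph{why} the two notions agree, namely that both are equivalent to the same homotopy-group condition that the forgetful functor manifestly preserves. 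Your treatment of part (2) (completeness on underlying modules plus compatibility of $\gr$ with the forgetful functor) matches the paper's.
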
 
\begin{proof} 
Both assertions follow straightforwardly from \cite[Remark 25.3.3.7]{lurie2016spectral}. 
More explicitly, 
this remark 
shows that 
there is  an associative ring spectrum $k^+$ with an augmentation $k^+ \to
k$ such that 
$\cot(A) $ is a $k^+$-module  and $\cot_{\Delta}(A) \simeq \cot(A)
\otimes_{k^+} k$. 
Moreover, $k^+$ is connective with $\pi_0(k^+) =k$, and its homotopy groups  
are finite-dimensional in each degree. 
This readily implies that $\cot(A)$ has finite-dimensional homotopy groups in
each degree if and only if $\cot_{\Delta}(A)$ does, hence proving $(1)$. Assertion $(2)$ follows
from $(1)$ as completeness is detected on underlying $k$-module spectra. 
\end{proof} 

\begin{proposition} \label{part5a}
If $R \in \mathcal{D}^{\fil}$ is complete almost finitely presented, then 
 $\cot_{\Delta}(R)\in \filcoh_{k, \geq 0}$.
\end{proposition}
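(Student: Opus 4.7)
The argument parallels the $\mathbb{E}_\infty$-ring case of \Cref{cotoffilafpiscomplete}, replacing the free nonunital $\einf$-algebra functor by $\lsym^{*}$. Finite type of the associated graded is immediate: by the adjointability axiom (verified in \Cref{onetwothreescr}) we have $\gr(\cot_{\Delta}(R)) \simeq \cot_{\Delta}(\gr(R))$, and the hypothesis $\gr(R) \in \mathcal{D}^{\gr}_{\afp}$ gives $\cot_{\Delta}(\gr(R)) \in \gr^{\ft}\mod_{k,\geq 0} \subset \grcoh_{k,\geq 0}$. The substantive content is completeness of the filtered object $\cot_{\Delta}(R)$.

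I first dispatch the connected case $\pi_0(\gr(R)) = k$. Here each bidegree of $\pi_{*}(\gr(R))$ is a finite-dimensional $k$-vector space and the augmentation ideal has trivial $\pi_0$ on associated graded, so $\mathfrak{m}_R \in \filcoh_{k,\geq 1}$. The bar construction (cf.\ \Cref{thebarconstruction}) realizes $\cot_{\Delta}(R) \simeq |\Barr_{\bullet}(\id, \lsym^{*}, \mathfrak{m}_R)|$, with $n$-th term an iterated application of $\lsym^{*}$ to $\mathfrak{m}_R$. Since $\lsym^i$ is a polynomial functor of degree $i$ by \Cref{freeSCR}, it is $i$-increasing and preserves finite type, so each summand $\lsym^{i_1} \circ \cdots \circ \lsym^{i_n}(\mathfrak{m}_R)$ sits in $\filcoh_{k,\geq 1}$, and summing over the indices $i_j$ remains in $\filcoh_{k,\geq 0}$ because summands live in increasing filtration. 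The realization is then complete by \Cref{geomrealcomplete}.

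For the general case I reduce to the connected one. The finitely generated graded $k$-algebra $\pi_0(\gr(R))$ has finitely many generators in positive internal degrees; lifting these to $\pi_0(R)$ in matching filtration degrees packages them into a finite-dimensional filtered vector space $V$ with $F^1 V = V$ and finite filtration, together with a map $\lsym^{*}(V) \to R$ in $\mathcal{D}^{\fil}$ surjective on $\pi_0(\gr(-))$. Since $V$ has finite filtration and $\lsym^i(V)$ lives in filtration $\geq i$, each filtration degree of $\lsym^{*}(V)$ receives contributions from only finitely many $\lsym^i$, so the filtered module underlying $\lsym^{*}(V)$ is already complete, and in particular $\cot_{\Delta}(\lsym^{*}(V)) \simeq V$. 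Form $R' = R \otimes_{\lsym^{*}(V)} k \in \mathcal{D}^{\fil}$. There is a cofibre sequence $V \to \cot_{\Delta}(R) \to \cot_{\Delta}(R')$ in $\fil \mod_k$, so completeness of $\cot_{\Delta}(R)$ reduces to that of $\cot_{\Delta}(R')$, and $\pi_0(\gr(R')) = k$ brings us back to the connected case.

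The main obstacle is then verifying that $R'$ remains complete almost finitely presented, so that the connected case applies. For this I parallel \Cref{tensorofalmostperfect}, developing a notion of almost perfect module over a complete almost fp filtered simplicial commutative ring (complete filtered module with almost perfect associated graded in the sense of \Cref{finiteness}). The map $\lsym^{*}(V) \to R$ makes $R$ almost perfect: $\gr(R)$ is finitely generated over $\lsym^{*}(\gr V)$ by the graded Nakayama lemma, using that the chosen generators surject onto $\pi_0(\gr R)$. An inductive Postnikov argument identical in form to \Cref{tensorofalmostperfect} shows that tensor products of almost perfect modules stay complete, giving completeness of $R'$. Almost finite presentation of $\gr(R')$ in turn reduces to the elementary algebraic fact that the pushout of finitely generated graded $k$-algebras along a surjection remains finitely generated. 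Hence $R' \in \mathcal{D}^{\fil}_{\afp}$, and the connected case yields $\cot_{\Delta}(R') \in \filcoh_{k,\geq 0}$, completing the proof.
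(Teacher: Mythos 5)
Your route is genuinely different from the paper's. The paper disposes of this proposition in two lines: by \Cref{whenisSCRafp} the underlying filtered $\einf$-algebra of $R$ is complete almost finitely presented, so $\cot(R)\in\filcoh_{k,\geq 0}$ by \Cref{cotoffilafpiscomplete}, and the identification $\cot_{\Delta}(R)\simeq\cot(R)\otimes_{k^+}k$ of \cite[Remark 25.3.3.7]{lurie2016spectral} transports the conclusion, since the relative tensor product is a geometric realisation and \Cref{geomrealcomplete} preserves completeness. You instead re-run the entire proof of \Cref{cotoffilafpiscomplete} inside simplicial commutative rings. That strategy is viable, but two of your completeness claims are wrong as stated, and both fail for the same reason: increasing \emph{filtration} degree does not imply completeness of an infinite direct sum. (Take $X_i$ to be $k$ placed in filtration degrees $1,\dots,i$ and $0$ above: each $X_i$ is complete with associated graded concentrated in internal degree $i$, yet $\varprojlim^1_n F^n\bigl(\bigoplus_i X_i\bigr)\neq 0$.)

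Concretely: your assertion that ``the filtered module underlying $\lsym^{*}(V)$ is already complete'' is false — for $V=k$ in filtration degree $1$ this is the polynomial ring $k[x]$ with its $x$-adic filtration, whose completion is $k[[x]]$. You must pass to $\widehat{\lsym^{*}(V)}$ (the map to $R$ factors through it because $R$ is complete), and you then owe a separate argument that $\cot_{\Delta}(\widehat{\lsym^{*}(V)})\simeq V$; this is the ``direct calculation'' carried out in \Cref{cotoffilafpiscomplete}, and it cannot simply be quoted from \Cref{filteredfree}, since that remark invokes the very completeness axiom you are proving. The same misconception infects your connected case: the bar construction terms $\bigoplus_{i_1,\dots,i_n}\lsym^{i_1}\circ\cdots\circ\lsym^{i_n}(\mathfrak{m}_R)$ are not complete merely because the summands sit in increasing filtration. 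What \Cref{connectedconvergencefilafp} actually uses is increasing \emph{homological connectivity}: the $i$-th summand $(V^{\otimes i})_{h\Sigma_i}$ is $i$-connective, so the sum agrees degreewise with the product and is therefore complete. The analogous statement for $\lsym^{i}$ — that $\L\sym^{i}$ of a $1$-connective module is $i$-connective — is true but not formal; it requires the d\'ecalage equivalence $\L\sym^{i}(\Sigma M)\simeq \Sigma^{i}\,\L\textstyle\bigwedge^{i}(M)$ or an equivalent connectivity estimate. This extra input is exactly what makes the direct argument harder for simplicial commutative rings than for $\einf$-rings, and is presumably why the paper reduces to the $\einf$-case instead.

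With those two repairs your argument does go through: the remaining steps — the cofibre sequence $V\to\cot_{\Delta}(R)\to\cot_{\Delta}(R')$, the almost-perfect-module bookkeeping parallel to \Cref{tensorofalmostperfect}, and the reduction of $R'$ to the connected case — are sound. But as written, the two completeness claims above are genuine gaps, not omissions of routine detail.
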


\begin{proof} 
This   follows by combining  \cite[Remark 25.3.3.7]{lurie2016spectral}  with 
\Cref{cotoffilafpiscomplete}. 
Namely, we already know that $\cot(R) \in \filcoh_{k, \geq 0}$, and the identification
$\cot_{\Delta}(R) \simeq \cot(R) \otimes_{k^+} k$ (which works in the filtered
category too) together with \Cref{geomrealcomplete} allow us to conclude the claim. 
\end{proof}

\begin{proposition} 
\label{simplicialadicSCRconverge}
If $R \in \SCR_k^{\aug}$ is complete local Noetherian, then the adic filtration
converges. 
\end{proposition}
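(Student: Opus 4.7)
The plan is to follow the strategy of the $\einf$-analogue \Cref{adicfiltcomplete}, adapting each step to the simplicial commutative setting. Namely, I want to show that the natural map $\adic_\Delta(R) \to \widehat{\adic_\Delta(R)}$ is an equivalence. This map is already an equivalence on associated gradeds by construction of the completion, so it suffices to check that it is an equivalence of underlying filtered $k$-modules, for which, following the template of \Cref{adicfiltcomplete}, I will show that it is an equivalence after applying $\cot_\Delta$, then pass to $F^1$ and invoke a conservativity statement for $\cot_\Delta$ on complete local Noetherian SCRs.

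Concretely, the first step is to identify $\cot_\Delta(\adic_\Delta(R))$ as the filtered object $(\ldots \to 0 \to 0 \to \cot_\Delta(R))$: this is the SCR analogue of \Cref{adiceinfgeneralprop}(2) and follows by chasing right adjoints, since the underlying-object/square-zero adjunction commutes on the nose in both filtered and unfiltered settings. Since $R$ is Noetherian, the identification $\cot_\Delta(R) \simeq \cot(R)\otimes_{k^+}k$ from \cite[Remark 25.3.3.7]{lurie2016spectral} combined with \Cref{noethcotap} shows that $\cot_\Delta(R) \in \mod^{\ft}_{k,\geq 0}$. Next, \Cref{grofadicisfree} gives $\gr(\widehat{\adic_\Delta(R)}) \simeq \gr(\adic_\Delta(R)) \simeq \lsym^*([\cot_\Delta(R)]_1)$; using that $\lsym^*$ is admissible and preserves pointwise finite type (\Cref{freeSCR}), this lies in $\gr^{\ft}\md_{k,\geq 0}$, so $\widehat{\adic_\Delta(R)}$ is complete almost finitely presented. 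By \Cref{part5a}, the target $\cot_\Delta(\widehat{\adic_\Delta(R)})$ then lies in $\filcoh_{k,\geq 0}$ and is in particular complete. The source is manifestly complete, and the map induces an equivalence on associated gradeds, hence it is an equivalence of filtered objects. Applying $F^1$, we deduce that the map $R \to F^1 \widehat{\adic_\Delta(R)}$ is an equivalence on $\cot_\Delta$.

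The main obstacle is then the final step: concluding that $R \to F^1 \widehat{\adic_\Delta(R)}$ is an equivalence from the fact that it induces an equivalence on $\cot_\Delta$ and both sides are complete local Noetherian. This requires an SCR analogue of \Cref{detectequivcafp}. I will establish this by passing to the $\einf$-setting via the forgetful functor of \Cref{forgetting}: using the fibre sequence $k^+ \otimes_R \cot(R) \simeq \cot(R) \otimes_{k^+} k \simeq \cot_\Delta(R)$ coming from \cite[Remark 25.3.3.7]{lurie2016spectral}, together with the fact that $k^+$ is connective with $\pi_0 k^+ = k$ and finite-type homotopy, a Nakayama argument (analogous to \Cref{almostperfnak}) shows that an equivalence on $\cot_\Delta$ between Noetherian augmented $k$-algebras upgrades to an equivalence on $\cot$. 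By \Cref{cplfilafpimpliesafp} applied to $\widehat{\adic_\Delta(R)}$ (whose underlying $\einf$-algebra is complete almost finitely presented by \Cref{whenisSCRafp}), both $R$ and $F^1 \widehat{\adic_\Delta(R)}$ are complete local Noetherian as $\einf$-algebras. The conclusion then follows from \Cref{detectequivcafp}, since the forgetful functor $\SCR_k^{\aug} \to \clg_k^{\aug}$ is conservative (both $\infty$-categories are generated under sifted colimits by the same discrete polynomial generators, and equivalences are detected on homotopy groups).
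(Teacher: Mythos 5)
Your proof is correct and follows essentially the same route as the paper, whose own proof of this proposition is simply ``run the argument of \Cref{adicfiltcomplete} with simplicial commutative rings in place of $\einf$-rings''. You have in addition correctly supplied the one ingredient that does not transfer verbatim — the conservativity of $\cot_\Delta$ on complete local Noetherian objects (the SCR analogue of \Cref{detectequivcafp}) — by reducing to the $\einf$-statement via the identification $\cot_\Delta(R)\simeq \cot(R)\otimes_{k^+}k$ of \cite[Remark 25.3.3.7]{lurie2016spectral} and a Nakayama argument over $k^+$, together with conservativity of the forgetful functor; this is exactly the implicit content of the paper's abbreviated proof.
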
 
\begin{proof} 
This follows by the argument used in the proof of
\Cref{adicfiltcomplete}, where we simply replace $\einf$-rings by simplicial commutative rings everywhere.\vspace{-2pt}
\end{proof}

\begin{corollary} 
The setup of simplicial commutative $k$-algebras of Construction~\ref{setupSCR} satisfies the axioms of
\Cref{filteredrefinementdef}.  
Consequently, 
\Cref{thm:mainaxiomatic} holds true in this context.
\end{corollary}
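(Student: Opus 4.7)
The plan is to verify axioms $(1)$--$(5)$ of \Cref{filteredrefinementdef} for the simplicial commutative setup, then invoke \Cref{thm:mainaxiomatic}. Axioms $(1)$--$(3)$ have already been checked in \Cref{onetwothreescr}, so it remains to treat coherence $(4)$ and completeness $(5)$. My strategy throughout will be to reduce statements about $\mathcal{D}^{\gr}, \mathcal{D}^{\fil}, \mathcal{D}$ to the corresponding statements about $\mathcal{C}^{\gr}, \mathcal{C}^{\fil}, \mathcal{C}$ established in \Cref{einfinitysatisfiesaxiomatic}, using the forgetful functor to $\einf$-algebras from \Cref{forgetting} together with the comparison of finiteness conditions in \Cref{whenisSCRafp}.

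For coherence $(4a)$, I would take $A, A', A'' \in \mathcal{D}^{\gr}_{\afp}$ and $\pi_0$-surjective maps $A \to A''$, $A' \to A''$. By \Cref{whenisSCRafp}(1), the images of these objects in $\mathcal{C}^{\gr}$ are almost finitely presented, and the induced maps there are still $\pi_0$-surjective since the forgetful functor $\mathcal{D}^{\gr} \to \mathcal{C}^{\gr}$ is conservative on $\pi_0$ (both sides have the same underlying $k$-module). Since the forgetful functor preserves limits (being a right adjoint on augmentation ideals), the fibre product $A \times_{A''} A'$ forgets to the corresponding fibre product of $\einf$-algebras, which lies in $\mathcal{C}^{\gr}_{\afp}$ by \Cref{einfinitysatisfiesaxiomatic}. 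Applying \Cref{whenisSCRafp}(1) again gives $A \times_{A''} A' \in \mathcal{D}^{\gr}_{\afp}$. Coherence $(4b)$ is equally immediate: for $V \in \gr^{\ft} \md_{k,\geq 0}$, the object $\sqz(V)$ has cotangent fibre $V$ itself, so it lies in $\mathcal{D}^{\gr}_{\afp}$ by definition.

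For completeness $(5a)$, we already have \Cref{part5a}, which asserts that $\cot_\Delta(R) \in \filcoh_{k,\geq 0}$ for any complete almost finitely presented $R \in \mathcal{D}^{\fil}$; this is a direct consequence of its $\einf$-analog \Cref{cotoffilafpiscomplete} via the base-change identity $\cot_\Delta(R) \simeq \cot(R) \otimes_{k^+} k$. For completeness $(5b)$, I would first establish a simplicial analog of \Cref{cplfilafpimpliesafp}: if $A \in \mathcal{D}^{\fil}_{\afp}$, then $F^1 A \in \mathcal{D}$ is complete local Noetherian in the sense of \Cref{SCRnoet}. This reduces directly to the $\einf$-version \Cref{cplfilafpimpliesafp} applied to the underlying $\einf$-algebra, using \Cref{whenisSCRafp}(2) and the fact that $F^1$ commutes with the forgetful functor (both send the filtered object to its underlying object in the respective $\infty$-category). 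Once this is known, \Cref{simplicialadicSCRconverge} gives completeness of $\adic(F^1 A)$, finishing axiom $(5b)$.

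With all five axioms verified, \Cref{thm:mainaxiomatic} applies and produces the desired Koszul duality functor together with its pullback-to-pushout property. The only step requiring any real care is the compatibility between the $\einf$ and simplicial commutative finiteness conditions, which is already packaged in \Cref{whenisSCRafp} via the module $k^+$ from \cite[Remark 25.3.3.7]{lurie2016spectral}; this is the one place where the two theories differ in a nontrivial way, and the main obstacle is simply to make sure that each axiom can be checked after forgetting to $\einf$-algebras without losing information.
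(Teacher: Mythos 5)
Your overall route is the same as the paper's: axioms $(1)$--$(3)$ are quoted from \Cref{onetwothreescr}, and coherence and completeness are reduced to the $\einf$-statements of \Cref{einfinitysatisfiesaxiomatic} via \Cref{whenisSCRafp}, with $(5a)$ coming from \Cref{part5a} and $(5b)$ from \Cref{simplicialadicSCRconverge} together with \Cref{cplfilafpimpliesafp}. Those parts are fine.

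There is, however, one genuine error: in your treatment of coherence $(4b)$ you assert that ``$\sqz(V)$ has cotangent fibre $V$ itself, so it lies in $\mathcal{D}^{\gr}_{\afp}$ by definition.'' This is false --- it is the \emph{free} algebra $\free(V)$ whose cotangent fibre is $V$; the cotangent fibre of the trivial square-zero extension $\sqz(V)$ is $\cot_{\Delta}(\sqz(V))$, which is (the linear dual of) a free partition Lie algebra and is a highly nontrivial object --- its computation occupies the last section of the paper. What actually needs to be checked for $(4b)$ is that this complicated object still lies in $\gr^{\ft}\md_{k,\geq 0}$ when $V$ does. The paper's argument (in the $\einf$-case, to which one reduces via \Cref{whenisSCRafp}) is that $\sqz(V)$ is manifestly Noetherian, so \Cref{noethcotap} (equivalently, the graded criterion \Cref{grafpidentify}) forces its cotangent fibre to be of finite type. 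Replacing your ``by definition'' with this Noetherianity argument closes the gap; the rest of the proposal stands.
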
 
\begin{proof} 
We have already checked axioms $(1)$-$(3)$ in \Cref{onetwothreescr}.
The coherence axiom $(4)$ follows immediately by combining 
\Cref{whenisSCRafp} with the corresponding result for $\einf$-algebras, which was established in \Cref{einfinitysatisfiesaxiomatic}.
Part $a)$ of the completeness axiom $(5)$ was proven in \Cref{part5a}, whereas part $b)$ follows\vspace{-3pt} by combining \Cref{simplicialadicSCRconverge} with \Cref{cplfilafpimpliesafp}. 
\end{proof}

In particular,  we can perform the following construction: 
\begin{definition}[Partition Lie algebras] \label{defliep}
Write $\liep: \md_k \rightarrow \md_k$ for the unique sifted-colimit-preserving monad on $\mod_k$ satisfying 
$\liep(V) = \cot_{\Delta}( \sqz(V^{\vee}))^{\vee}$ for all $V\in \coh_{k, \leq 0}$. Algebras\vspace{-1pt}
over $\liep$ will be called \emph{partition Lie algebras.}   \INN{120@$\lieps$}
\end{definition} 
Applying  \Cref{fmpequiv} to our setup, we obtain a classification of formal moduli problems for
augmented simplicial commutative rings as equivalent to the $\infty$-category of partition Lie algebras. We again
postpone stating the result formally until the next \vspace{-1pt}section (cf.
\Cref{mostgeneral}).
\begin{proposition}\label{deriveddg}
If $k$ is   of characteristic zero, then the monad ${\liep}$ is equivalent to the monads $\lieps$ and $\Sigma\circ  {\liedg}\circ \Sigma^{-1}$ building free spectral and free shifted differential graded Lie algebras.
As a result, the $\infty$-categories of partition Lie algebras, spectral partition Lie algebras, and shifted differential graded Lie algebras are equivalent.\vspace{-1pt}
\end{proposition}
\begin{proof}
Since $k$ has characteristic zero, the forgetful functor from simplicial commutative $k$-algebras to
connective $\EE_\infty$-$k$-algebras is an equivalence (cf.\ \cite[Proposition 25.1.2.2]{lurie2016spectral}). Together with \Cref{spectraldg}, this implies the claim.\vspace{-1pt}
\end{proof}

We proceed to establish a concrete description of $\lieps$.
As above, let $\widetilde{C}^\bullet(\Sigma |\Pi_n|^\diamond,k)$ denote the $k$-valued (reduced) singular cochains of the doubly suspended $n^{th}$ partition complex. The following result uses the genuine $\Sigma_n$-equivariant structure of this cosimplicial $k$-module:
\begin{proposition}\label{freeplie}
Given any $V\in \md_k$, there is an equivalence
$ \liep(V) \simeq \bigoplus_{n\geq 1} (F_{\Sigma |\Pi_n|^\diamond})^\vee.$ 
Here   $(F_{\Sigma |\Pi_n|^\diamond})^\vee$\INN{060@$F_X,  F_X^h$}
  is the right-left extension (cf.\ \Cref{rlext}) of the functor $\vect_k^\omega \rightarrow \mod_k$   given by $V\mapsto (\widetilde{C}^\bullet(\Sigma |\Pi_n|^\diamond,k)  \otimes V^{\otimes n} )^{\Sigma_n}$ (cf.\ \Cref{bredon} for a more formal definition). 

If $V\simeq \Tot(V^\bullet)\in \mod_{k,\leq 0}$ is represented by a 
cosimplicial $k$-vector space $V^\bullet$, then $$ \liep(V) \simeq \bigoplus_{n} \Tot \left(\widetilde{C}^\bullet(\Sigma |\Pi_n|^\diamond,k) \otimes (V^\bullet)^{\otimes n}\right)^{\Sigma_n}.$$ Here $\widetilde{C}^\bullet(\Sigma |\Pi_n|^\diamond,k)$ denotes the    $k$-valued cosimplices on the space $\Sigma |\Pi_n|^\diamond$, the functor 
$(-)^{\Sigma_n}$ takes strict fixed points,  and the tensor product is computed in cosimplicial $k$-modules.\vspace{-1pt}

\end{proposition}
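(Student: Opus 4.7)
The plan is to mimic the proof of \Cref{concretespectralplie}, substituting the free nonunital simplicial commutative ring monad $\lsym^\ast$ for the free nonunital $\einf$-algebra monad. The essential change is that $\lsym^i(W) = \sym^i(W)$ uses \emph{strict} $\Sigma_i$-orbits of $W^{\otimes i}$ rather than homotopy orbits; after dualization, this will produce strict fixed points $(-)^{\Sigma_n}$ in place of homotopy fixed points.

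First, I would fix $V \in \vect_k^\omega$ and express $\cot_\Delta(\sqz(V))$ via the bar construction of \Cref{thebarconstruction}, giving $\cot_\Delta(\sqz(V)) \simeq |\Barr_\bullet(\id, \lsym^\ast, \sqz(V))|$. Iterating the strict isomorphism $\sym^m(W_1 \oplus W_2) \simeq \bigoplus_{a+b=m} \sym^a(W_1) \otimes \sym^b(W_2)$ to decompose each simplicial level, and running the same combinatorial bookkeeping as in the spectral proof (cf.\ \cite{ching2005bar}), the $r$-th level of the resulting simplicial $k$-module decomposes as $\bigoplus_{n \geq 1}(k[T(n)_r] \otimes V^{\otimes n})_{\Sigma_n}$ with strict $\Sigma_n$-orbits, where $T(n)$ is the simplicial $\Sigma_n$-set introduced in the proof of \Cref{concretespectralplie}. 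Passing to geometric realizations and identifying $|T(n)| \simeq \Sigma|\Pi_n|^\diamond$ yields
\[
\cot_\Delta(\sqz(V)) \simeq \bigoplus_{n \geq 1} \bigl(\widetilde{C}_\bullet(\Sigma|\Pi_n|^\diamond, k) \otimes V^{\otimes n}\bigr)_{\Sigma_n}.
\]
Substituting $V^\vee$ for $V$ and dualizing, finite-dimensionality converts the resulting product into a direct sum, and the natural identification $(M_{\Sigma_n})^\vee \simeq (M^\vee)^{\Sigma_n}$ for finite-dimensional $\Sigma_n$-representations gives
\[
\liep(V) = \cot_\Delta(\sqz(V^\vee))^\vee \simeq \bigoplus_{n \geq 1} \bigl(\widetilde{C}^\bullet(\Sigma|\Pi_n|^\diamond, k) \otimes V^{\otimes n}\bigr)^{\Sigma_n},
\]
which equals $\bigoplus_n (F_{\Sigma|\Pi_n|^\diamond})^\vee(V)$ on $\vect_k^\omega$ by the defining formula of \Cref{duality} together with the definition of $F_X$ in \Cref{bredon}.

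To promote this to an equivalence on all of $\md_k$, I would invoke \Cref{polyrightext} to produce a sifted-colimit-preserving right-left extension of each degree-$n$ polynomial functor on $\vect_k^\omega$ to $\md_k$, yielding $(F_{\Sigma|\Pi_n|^\diamond})^\vee$ as a functor on $\md_k$. Since direct sums commute with sifted colimits, the sum $\bigoplus_n (F_{\Sigma|\Pi_n|^\diamond})^\vee$ also preserves sifted colimits. Together with the fact that $\liep$ is the unique sifted-colimit-preserving extension of the monad $V \mapsto \cot_\Delta(\sqz(V^\vee))^\vee$ from $\mod^{\ft}_{k, \leq 0}$, and applying the uniqueness clause of \Cref{critextendfun}, the first equivalence extends to all of $\md_k$. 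The cosimplicial formula for $V \simeq \Tot(V^\bullet) \in \mod_{k, \leq 0}$ then follows by applying \Cref{RKEisright} summand-wise.

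The principal technical obstacle is the combinatorial identification of the iterated bar construction for $\lsym^\ast$ with chains on partition complexes. Although the combinatorics is identical to the spectral setting, one must verify compatibility with strict, rather than homotopy, coinvariants. Because the expansion $\sym^m(\bigoplus_i W_i) \simeq \bigoplus_{(a_i): \sum a_i = m} \bigotimes_i \sym^{a_i}(W_i)$ is a strict isomorphism at the level of discrete $k$-vector spaces, the combinatorial argument from \cite{ching2005bar} carries through at each simplicial level of the bar construction without any derived correction.
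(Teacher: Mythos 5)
Your proposal is correct and follows essentially the same route as the paper: the paper's proof simply says to repeat the argument of the spectral case (\Cref{concretespectralplie}) with strict orbits in place of homotopy orbits and to deduce the cosimplicial formula from \Cref{RKEisright}, which is exactly what you carry out. Your added observation that the levelwise bar construction is discrete, so the binomial expansion of $\sym^m$ needs no derived correction, is the right justification for why the substitution of strict for homotopy orbits is harmless.
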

\begin{proof}
We apply the same argument as in   \Cref{concretespectralplie}, replacing homotopy orbits   with strict orbit, to deduce the first statement. The second statement  then follows from \Cref{RKEisright}.
\end{proof}

\newpage
\subsection{Operads}
In this section, we fix a field $k$ and an $\infty$-operad
$\mathcal{O}$  internal to $\md_k$ (cf.\ e.g.\ \cite[Definition 4.1.4]{brantnerthesis}) satisfying the following three basic properties: 
\begin{enumerate}
\item $\mathcal{O}(0)$ is contractible; 
\item $\mathcal{O}(1)$ is equivalent to $k$ via the  unit map $k \rightarrow \mathcal{O}(1)$;
\item $\mathcal{O}(i)\in  \coh_{k, \geq 0}$ is connective and of finite type for all $i \geq 0$.   
\end{enumerate}
The $\infty$-category $\mathrm{Alg}_{\mathcal{O}}$
of $\mathcal{O}$-algebras in $\md_k$, which comes with a  free-forgetful adjunction \INN{060@$\free$}
\INN{060@$\forget$}
 \begin{equation} 
\begin{tikzcd}
	\free_{\mathcal{O}}: \md_k
	\arrow[r, shift left=0.6ex]
	& \mathrm{Alg}_{\mathcal{O}} : \forget .
	\arrow[l, shift left=0.6ex]
\end{tikzcd}
\end{equation}
The free functor is given by the formula\vspace{2pt}
$\free_{\mathcal{O}}(V) \simeq \bigoplus_{i \geq 1} ( \mathcal{O}(i) \otimes
V^{\otimes i})_{h \Sigma_i}$.

The main result of this section is that when we restrict to formal moduli problems defined on \emph{connected} $\mathcal{O}$-algebras (i.e.\ $\mathcal{O}$-algebras in $\md_{k, \geq 1}$), then our  axiomatic \Cref{fmpequiv} implies a classification of formal moduli problems. 
We stress that this will not quite recover the (harder) main result of \Cref{einfdefsectiona} above due to the stronger connectedness assumption; more on this point in \Cref{restrictiveremark} below.
Since the essential features are very similar to the previous examples, we
will be brief. Compare also the result of Ching-Harper \cite{ChingHarper},
which proves the comonadicity assertion under the stronger connectedness assumption.

\begin{cons}[The setup for connected $\mathcal{O}$-algebras]\label{consOalg} Let $k$ be a field. \INN{030@$\mathcal{C}$}\INN{030@$\mathcal{C}^{\gr}$} \INN{030@$\mathcal{C}^{\fil}$} \INN{032@$\cot$}\INN{190@$\sqz$}\INN{060@$F^1$}\INN{1@$\adic$}
\begin{enumerate}[a)]
\item  Let $\mathcal{C}$ be the $\infty$-category $\mathrm{Alg}_{\mathcal{O}}(
\md_{k, \geq 1})$ of connected $\mathcal{O}$-algebras.
\item Let $\mathcal{C}^{\fil} = \mathrm{Alg}_{\mathcal{O}}( \fil \md_{k, \geq
1})$ be the $\infty$-category of filtered $\mathcal{O}$-algebras.
\item Let  $\mathcal{C}^{\gr} = \mathrm{Alg}_{\mathcal{O}}( \gr \md_{k, \geq 1})$. Denote the $\infty$-category of graded $\mathcal{O}$-algebras.
\item We have a  free-forgetful adjunction
$\free_{\mathcal{O}}: \md_{k, \geq 1} \rightleftarrows \mathcal{C}: \forget$.
The natural augmentation map from $\mathcal{O}$ to the trivial operad
induces an adjunction 
$\cot_{\mathcal{O}}: \mathcal{C} \rightleftarrows    \md_{k, \geq 1} : \sqz$.
We define free-forgetful and cotangent fibre adjunctions in the filtered and graded context in a similar way. 
 
\item The adic filtration functor $\adic: \mathcal{C} \rightarrow \mathcal{C}^{\fil}$ is right adjoint to
the underlying functor $F^1: \mathcal{C}^{\fil} \rightarrow \mathcal{C}$.
\item The associated graded functor lifts naturally to define a functor $\gr: \mathcal{C}^{\fil} \rightarrow \mathcal{C}^{\gr}$.
\end{enumerate}
\end{cons}
Because of the connectedness
 assumption, verifying the hypotheses of
\Cref{thm:mainaxiomatic} turns out to be much simpler than before, as  convergence
works more nicely. 
To verify this, we will first show that the adic filtration converges  automatically  for
connected $\mathcal{O}$-algebras. In a second step, we then show that finiteness can be detected by (and is reflected in)
$\cot_{\mathcal{O}}$.

\begin{example}[The adic filtration on a free algebra] 
\label{freealgebraOadic}
The adic filtration on  $\free_{\mathcal{O}}(V)$ 
is given by 
$$ F^n \adic ( \free_{\mathcal{O}}(V)) \simeq 
\bigoplus_{i \geq n} (\mathcal{O}(i) \otimes V^{\otimes i})_{h \Sigma_i}
.$$
In particular, if $V \in \md_{k, \geq 1}$, then the filtration converges for 
connectivity reasons. 
\end{example}

\begin{proposition}[Convergence of the adic filtration] 
Let $A \in \mathrm{Alg}_{\mathcal{O}}( \md_{k, \geq 1})$ be a connected
$\mathcal{O}$-algebra. Then the adic
filtration on $A$ converges.  
\label{adicmodOconv}
\end{proposition}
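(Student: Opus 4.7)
The plan is to show the stronger quantitative statement that $F^n \adic(A)$ is $n$-connective for every $A \in \mathcal{C} = \mathrm{Alg}_{\mathcal{O}}(\md_{k, \geq 1})$, which immediately implies convergence: in each fixed homological degree $j$, the tower $\{\pi_j (F^n \adic(A))\}_{n \geq 1}$ is eventually zero, so by the Milnor exact sequence $\varprojlim_n F^n \adic(A) \simeq 0$.

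First I would verify the claim on free algebras. If $V \in \md_{k, \geq 1}$ is $1$-connective, then $V^{\otimes i}$ is $i$-connective, and since $\mathcal{O}(i)$ is connective by assumption on the operad, each summand $(\mathcal{O}(i) \otimes V^{\otimes i})_{h \Sigma_i}$ is $i$-connective. By the explicit formula in \Cref{freealgebraOadic},
$$ F^n \adic(\free_{\mathcal{O}}(V)) \simeq \bigoplus_{i \geq n} (\mathcal{O}(i) \otimes V^{\otimes i})_{h \Sigma_i}, $$
which is therefore $n$-connective.

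Next I would bootstrap to general $A$ using the bar resolution $\Barr_\bullet(\free_{\mathcal{O}}, \free_{\mathcal{O}}, A) \to A$ from \Cref{thebarconstruction}, which expresses $A$ as the geometric realisation of a simplicial object whose term in degree $m$ is the iterated composite $\free_{\mathcal{O}}^{\circ (m+1)}(A)$. The key observation is that the free functor $\free_{\mathcal{O}}$ sends $\md_{k, \geq 1}$ to itself (since $(\mathcal{O}(i) \otimes W^{\otimes i})_{h \Sigma_i}$ is $i$-connective, hence $\geq 1$-connective for $W \in \md_{k, \geq 1}$), so each term of the bar construction is a free $\mathcal{O}$-algebra on a module in $\md_{k, \geq 1}$. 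The functor $F^n \adic$ preserves sifted colimits (the left adjoint $\adic$ preserves all colimits, and the functor $F^n : \fil \md_k \to \md_k$ preserves colimits as these are computed levelwise), so
$$ F^n \adic(A) \simeq \bigl| F^n \adic \bigl( \Barr_\bullet(\free_{\mathcal{O}}, \free_{\mathcal{O}}, A) \bigr) \bigr|. $$
By the free case, each term of this simplicial diagram is $n$-connective, and since the subcategory of $n$-connective $k$-modules is closed under colimits, the geometric realisation $F^n \adic(A)$ is $n$-connective, completing the argument.

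There is no real obstacle here: the connectedness hypothesis $A \in \md_{k, \geq 1}$ gives the crucial $i$-fold gain in connectivity from the $i$-fold tensor power, and the bar resolution propagates the estimate from free algebras to general ones without any delicate convergence or finiteness input. This is precisely why the argument is much easier than in the unbounded setting of \Cref{einfdefsectiona}, where convergence had to be arranged by working with the completed filtration and restricting to complete local Noetherian objects.
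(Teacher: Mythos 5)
Your proof is correct and follows essentially the same route as the paper: establish convergence on free algebras via the connectivity gain in \Cref{freealgebraOadic}, then propagate to all connected $\mathcal{O}$-algebras through the bar resolution. The only (harmless) difference is that where the paper invokes \Cref{geomrealcomplete} to see that completeness of connective filtered objects survives geometric realisation, you instead prove the slightly stronger quantitative statement that $F^n\adic(A)$ is $n$-connective and use closure of $n$-connective modules under colimits.
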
 
\begin{proof} 
This follows 
from 
\Cref{freealgebraOadic} and \Cref{geomrealcomplete}, since  any connected $\mathcal{O}$-algebra is the geometric realisation of free connected $\mathcal{O}$-algebras.
\end{proof} 

The next result, which is essentially already contained in  \cite{HarperHess}, shows that 
finite type conditions can be detected using the cotangent fibre functor $\cot_{\mathcal{O}}$ (under the assumption of connectedness).
\begin{proposition}[Finiteness, completeness, and $\cot_{\mathcal{O}}$]
\label{finitecompleteOalgebra}\ 
\begin{enumerate}
\item Let
$A \in \mathrm{Alg}_{\mathcal{O}}( \md_{k, \geq 1})$ be connected. Then $A \in \coh_{k, \geq 1}$ if and only if \mbox{$\cot_{\mathcal{O}}(A) \in \coh_{k,
\geq 1}$.} 
\item
Let $B \in \alg_{\mathcal{O}}( \gr \md_{k, \geq 1})$ be connected and graded. Then $B \in \grcoh_{k, \geq 1}$ if and only if $\cot_{\mathcal{O}}(B) \in \grcoh_{k, \geq 1}$. 
\item
Let $C \in \alg_{\mathcal{O}}( \fil \md_{k, \geq 1})$ be a connected, filtered
$\mathcal{O}$-algebra. 
Then $C \in \filcoh_{k, \geq 1}$ if and only if 
$C$ is complete and $\cot_{\mathcal{O}}(C) \in \filcoh_{k, \geq 1}$. 
\end{enumerate}
\end{proposition}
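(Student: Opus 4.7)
The plan is to reduce all three statements to the graded assertion (2), which will follow by adapting the proof of \Cref{testgrcoherence}. First I would record a preliminary fact: under the hypothesis $\mathcal{O}(i) \in \coh_{k, \geq 0}$, the free functor $\free_\mathcal{O}$ preserves finiteness in the connected range. Indeed, if $V \in \coh_{k, \geq 1}$, then the $i$-th summand of $\free_\mathcal{O}(V) = \bigoplus_{i \geq 1}(\mathcal{O}(i) \otimes V^{\otimes i})_{h\Sigma_i}$ lies in $\md_{k, \geq i}$ and is of finite type in each degree, so the total sum belongs to $\coh_{k, \geq 1}$; the same argument applies verbatim in the graded and filtered contexts. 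By left Kan extension from free objects, this yields (exactly as in \Cref{grofadicisfree}) a natural equivalence $\gr(\adic(A)) \simeq \free_\mathcal{O}([\cot_\mathcal{O}(A)]_1)$ for any $A \in \mathcal{C}$ and its graded/filtered analogues, whose cotangent fibre is canonically $[\cot_\mathcal{O}(A)]_1$.

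For (2), I would apply the operadic adic completion tower to $B \in \mathcal{C}^{\gr}$, producing a functorial filtration $\{B^{(i)}\}$ with quotients $B^{(i)}/B^{(i-1)} \simeq G_i(\cot_\mathcal{O}(B))$, where $G_i(-) := (\mathcal{O}(i) \otimes (-)^{\otimes i})_{h\Sigma_i}$ is admissible and $i$-increasing (by the preliminary fact and \Cref{admissibleexample}). Connectedness forces the tower to converge in each graded degree, so the proof of \Cref{testgrcoherence} applies almost verbatim: if $\cot_\mathcal{O}(B) \in \grcoh_{k, \geq 1}$, induction on $i$ shows each $B^{(i)} \in \grcoh_{k, \geq 1}$, and the degreewise convergence passes this to $B$; conversely one inducts on the graded degree $n$, noting that $B_n \simeq B^{(n)}_n$ admits a finite filtration with $\cot_\mathcal{O}(B)_n$ as one associated graded piece and contributions involving $\cot_\mathcal{O}(B)$ only in degrees $<n$ (handled by the inductive hypothesis) as the others.

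For (1), I would pass to $\adic(A) \in \mathcal{C}^{\fil}$, which is complete by \Cref{adicmodOconv}. Applying (2) to $\gr(\adic(A)) \simeq \free_\mathcal{O}([\cot_\mathcal{O}(A)]_1)$ identifies $\cot_\mathcal{O}(A) \in \coh_{k, \geq 1}$ with $\gr(\adic(A)) \in \grcoh_{k, \geq 1}$; completeness and the fact that each $F^i \adic(A) \in \md_{k, \geq 1}$ then transfer this, via a Milnor exact sequence applied degree by degree, to the assertion $A = F^1 \adic(A) \in \coh_{k, \geq 1}$. For (3), the forward direction follows by writing $\cot_\mathcal{O}(C)$ as a geometric realisation of free $\mathcal{O}$-algebras on $C$ and invoking closure of $\filcoh_{k, \geq 1}$ under such realisations (in the connected, complete setting, via \Cref{geomrealcomplete}). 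For the converse, the adjointability square gives $\gr(\cot_\mathcal{O}(C)) \simeq \cot_\mathcal{O}(\gr(C))$; part (2) then forces $\gr(C) \in \grcoh_{k, \geq 1}$, and combined with the postulated completeness of $C$ this yields $C \in \filcoh_{k, \geq 1}$. The main technical obstacle throughout is constructing the graded adic completion tower and verifying admissibility of its quotient functors $G_i$---essentially the Harper-Hess calculation carried out in the operadic $\infty$-categorical setting---but this is routine once \Cref{freealgebraOadic} and the analogue of \Cref{grofadicisfree} are in hand.
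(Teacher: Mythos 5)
Your overall architecture (prove the graded statement (2) first via the completion tower, then deduce (1) by passing to $\adic(A)$) inverts the paper's order — the paper proves (1) directly and obtains (2) for free by forgetting the grading — and this inversion creates a genuine gap in part (1). Your chain of equivalences is $\cot_{\mathcal{O}}(A)\in\coh_{k,\geq 1}\iff \gr(\adic(A))\in\grcoh_{k,\geq 1}\iff A\in\coh_{k,\geq 1}$, with the last step justified by completeness of $\adic(A)$ and a Milnor sequence. But that argument is one-directional: a complete filtration with increasingly connective, finite-type graded pieces transfers finiteness \emph{from} the associated graded \emph{to} $F^1$, whereas the implication you also need for the ``only if'' half of (1) — that $A$ of finite type forces $\gr(\adic(A))\simeq\free_{\mathcal{O}}([\cot_{\mathcal{O}}(A)]_1)$ to be of finite type — does not follow: finite-dimensionality of the abutment of the filtration spectral sequence says nothing about its $E_1$-page. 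Nor can you run your internal-degree induction from (2) here, since establishing finiteness of the weight-$n$ piece $(\mathcal{O}(n)\otimes\cot_{\mathcal{O}}(A)^{\otimes n})_{h\Sigma_n}$ already presupposes finiteness of $\cot_{\mathcal{O}}(A)$, which is what you are trying to prove. The missing direction is exactly where the paper invokes the bar resolution: $\cot_{\mathcal{O}}(A)\simeq|\Barr_\bullet(\id,\free_{\mathcal{O}},A)|$ with each term $\free_{\mathcal{O}}^{\circ n}(A)$ connected and of finite type, so the realisation is of finite type for connectivity reasons. You use precisely this device for the forward direction of (3), so the repair is immediate — but as written the step is absent.

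A secondary point: adapting the proof of \Cref{testgrcoherence} to get (2) naturally yields the \emph{pointwise} finite-type statement (each $B_n$, each $\cot_{\mathcal{O}}(B)_n$ of finite type), whereas the proposition asserts membership in $\grcoh_{k,\geq 1}$, i.e.\ finiteness of the total object $\bigoplus_n B_n$; these differ since connected graded pieces need not become increasingly connective as $n$ grows. This can be patched (the quotients $G_i$ of the completion tower are $i$-increasing, so in a fixed homological degree only $i\leq m$ contribute and each total piece $(\mathcal{O}(i)\otimes(\bigoplus_j\cot_{\mathcal{O}}(B)_j)^{\otimes i})_{h\Sigma_i}$ is of finite type), but the paper's reduction of (2) to the ungraded statement (1) sidesteps the bookkeeping entirely.
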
 
\begin{proof} 
For part (1), let $A$ be a connected $\mathcal{O}$-algebra. Suppose that $\cot_{\mathcal{O}}(A)$ is of finite type. The fact that $A$ is of finite type (as a $k$-module spectrum) follows from the adic filtration on $A$. Indeed, this filtration converges  by \Cref{adicmodOconv}, and the terms of its associated graded are each of finite type and become arbitrarily connected.
Conversely, if $A$ is of finite type, then the bar construction can be used to express 
$\cot_{\mathcal{O}}(A)$ as a geometric realisation of a simplicial $k$-module
spectrum whose terms are of the form \mbox{$\free_{\mathcal{O}} \circ \dots \circ
\free_{\mathcal{O}}(A)$.} Each of these is connected and of finite type, so that the geometric
realisation $\cot_{\mathcal{O}}(A)$ is connected of finite type as well.

Part (2)   follows directly from part
(1) by forgetting to
underlying ungraded $\mathcal{O}$-algebras. 

Part (3) follows from part (2) together with the claim that if $C$ is a
connected filtered $\mathcal{O}$-algebra whose underlying object belongs to 
$\filcoh_{k, \geq 1}$, then $\cot_{\mathcal{O}}(C)$ is automatically complete. This last
claim again
follows from the bar construction: the key observation is that
$\free_{\mathcal{O}}$ preserves $\filcoh_{k, \geq 1}$,  and that $\filcoh_{k, \geq 1}$
is closed under geometric realisations. 
\end{proof}

We can explicitly identify the full subcategories $\mathcal{C}_{\afp}, \mathcal{C}^{\fil}_{\afp},$ \vspace{2pt} and $\mathcal{C}^{\gr}_{\afp}$ which appear when  applying the axiomatic definitions from \Cref{axiomaticsec} to the setup specified in \Cref{consOalg} The following is immediate from \Cref{finitecompleteOalgebra}:
 \INN{030@$\mathcal{C}^{\fil}_{\afp}$} \INN{030@$\mathcal{C}^{\gr}_{\afp}$} \INN{030@$\mathcal{C}_{\afp}$}
\begin{corollary} \
\label{critforOalgafp}
\begin{enumerate}
\item An object $A \in \alg_{\mathcal{O}}( \md_{k, \geq 1})$ is complete almost
finitely presented if and only if the underlying $k$-module spectrum is finite
type. 
\item  
An object $B \in \alg_{\mathcal{O}}( \gr\md_{k, \geq 1} )$ is almost finitely
presented if and only if the underlying $k$-module spectrum of $B$ is of finite type. 
\item
An object $C \in \alg_{\mathcal{O}}( \fil\md_{k, \geq 1})$ is complete almost
finitely presented if and only if the underlying 
object of $\fil\md_{k, \geq 1}$ belongs to $\filcoh_{k, \geq 1}$. 
\end{enumerate}
\end{corollary}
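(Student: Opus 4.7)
The plan is to simply unwind the axiomatic definitions of (complete) almost finite presentation from \Cref{def:grafp}, \Cref{def:filafp}, and \Cref{cafp:defaxiomatic}, and then read off each assertion from \Cref{finitecompleteOalgebra}, combined with the convergence statement for the adic filtration established in \Cref{adicmodOconv}. All the substantive work has already been done; what remains is purely a matter of matching definitions.

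First I would dispose of part (2). By \Cref{def:grafp}, an object $B \in \mathcal{C}^{\gr}$ lies in $\mathcal{C}^{\gr}_{\afp}$ iff $\cot_{\mathcal{O}}(B) \in \gr^{\ft}\mod_{k,\geq 1}$, and part (2) of \Cref{finitecompleteOalgebra} identifies this condition with $B$ itself being of finite type as a graded $k$-module spectrum.

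Next, for part (3), \Cref{def:filafp} requires $C$ to be complete with $\gr(C) \in \mathcal{C}^{\gr}_{\afp}$. Applying part (2) of this corollary to $\gr(C)$ rephrases this as: $C$ is complete and its associated graded belongs to $\grcoh_{k,\geq 1}$. This is precisely the defining condition for $C \in \filcoh_{k,\geq 1}$.

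Finally, for part (1), \Cref{cafp:defaxiomatic} asserts that $A \in \mathcal{C}_{\afp}$ iff $\adic(A)$ is complete and $\cot_{\mathcal{O}}(A) \in \mod^{\ft}_{k,\geq 1}$. Completeness of the adic filtration is automatic in the connected setting by \Cref{adicmodOconv}, and the finiteness of the cotangent fibre is equivalent to $A$ itself being of finite type by part (1) of \Cref{finitecompleteOalgebra}. There is no real obstacle; the genuine content lives in \Cref{finitecompleteOalgebra} and \Cref{adicmodOconv}, both of which ultimately rely on the connectedness hypothesis to ensure that the bar construction presenting $\cot_{\mathcal{O}}$ realizes objects in $\filcoh_{k,\geq 1}$ (respectively $\grcoh_{k,\geq 1}$) as geometric realisations of simplicial diagrams with values in the same subcategory.
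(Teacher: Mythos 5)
Your proposal is correct and follows essentially the same route as the paper, which simply declares the corollary immediate from \Cref{finitecompleteOalgebra}; your unwinding of \Cref{def:grafp}, \Cref{def:filafp}, and \Cref{cafp:defaxiomatic}, together with the appeal to \Cref{adicmodOconv} for the automatic completeness of the adic filtration in part (1), is exactly the intended argument spelled out.
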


\begin{corollary} 
The above satisfies the conditions of \Cref{filteredrefinementdef}.   
\end{corollary}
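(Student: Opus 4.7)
The plan is to verify each of the five axioms of \Cref{filteredrefinementdef} in turn for the setup of \Cref{consOalg}, following the pattern of \Cref{onetothree} and \Cref{einfinitysatisfiesaxiomatic}. Throughout, the connectedness restriction to $\md_{k, \geq 1}$ will substantially simplify the convergence arguments in a manner reminiscent of \cite{ChingHarper}; in particular, the characterisations of $\mathcal{C}^{\gr}_{\afp}$, $\mathcal{C}^{\fil}_{\afp}$, and $\mathcal{C}_{\afp}$ provided by \Cref{critforOalgafp} will take care of most of the work. Axioms (1) augmented monadicity and (2) adjointability will be essentially formal: monadicity of the free-forgetful adjunctions for $\mathcal{O}$-algebras follows from Barr-Beck-Lurie with sifted-colimit-preservation of the right adjoints built into the definition, and the commutativity of the relevant squares is immediate from the fact that the forgetful functors commute with $F^1$ and $\gr$.

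For axiom (3) admissibility, the functor $\forget \circ \free_{\mathcal{O}}: \gr\md_{k, \geq 1} \to \gr\md_{k, \geq 1}$ sends $X_{\star}$ to $\bigoplus_{i \geq 1}(\mathcal{O}(i) \otimes X_{\star}^{\otimes i})_{h\Sigma_i}$. The $i$-th summand is $i$-excisive in $X_{\star}$ (so admissible by \Cref{excadm}) and, because $X_{\star}$ lives in internal gradings $\geq 1$, it is additionally $i$-increasing; summing over $i$ gives an admissible functor exactly as in \Cref{admissibleexample}. To construct the tower, I will apply $\adic$ to any $A \in \mathcal{C}^{\gr}$, obtaining a convergent filtration of the augmentation ideal of $A$ in $\mathcal{O}$-algebras in $\gr \md_{k, \geq 1}$, and set $A^{(i)} := A / F^{i+1}\adic(A)$. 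Then $A^{(1)} \simeq \sqz \circ \cot(A)$ by \Cref{cotindeg1} (applied in the graded-operadic setting), and by the computation in \Cref{freealgebraOadic} the associated graded pieces are
\[
\forget(A^{(i-1)})/\forget(A^{(i)}) \simeq G_i(\cot(A)), \qquad G_i(V) = (\mathcal{O}(i) \otimes V^{\otimes i})_{h\Sigma_i},
\]
and each $G_i$ is $i$-excisive and $i$-increasing, hence admissible and of the required form.

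Axiom (4) coherence is immediate from \Cref{critforOalgafp}(2), which identifies $\mathcal{C}^{\gr}_{\afp}$ as the full subcategory of graded $\mathcal{O}$-algebras whose underlying graded $k$-module belongs to $\grcoh_{k, \geq 1}$; this is closed under fibre products of $\pi_0$-surjective maps since the condition is pointwise, and it visibly contains $\sqz(V)$ for every $V \in \gr^{\ft}\md_{k, \geq 1}$ because $\forget \circ \sqz$ is the identity on underlying modules. For axiom (5) completeness: if $A \in \mathcal{C}^{\fil}_{\afp}$, then by \Cref{critforOalgafp}(3) the underlying filtered module of $A$ lies in $\filcoh_{k, \geq 1}$, and \Cref{finitecompleteOalgebra}(3) guarantees that $\cot(A) \in \filcoh_{k, \geq 1}$ as well (and in particular is complete), giving (5a); meanwhile $F^1 A$ is a connected $\mathcal{O}$-algebra of finite type, so the convergence of its adic filtration is supplied by \Cref{adicmodOconv}, proving (5b).

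The only genuine obstacle is the verification of the tower in axiom (3), where one must ensure that the successive quotients are of the precise form $G_i(\cot(A))$ with $G_i$ admissible and $i$-increasing. This is where the finite-type assumption on each $\mathcal{O}(i)$ is used, via \Cref{excadm}, to guarantee admissibility of the symmetric-sequence operations. All other steps—the monadicity, adjointability, coherence, and completeness axioms—reduce cleanly to \Cref{critforOalgafp}, \Cref{finitecompleteOalgebra}, and \Cref{adicmodOconv}, which have already been set up precisely so that the connected operadic case admits a short proof.
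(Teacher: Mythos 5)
Your proposal is correct and follows essentially the same route as the paper: the paper's own proof is a one-line appeal to the fact that, by \Cref{critforOalgafp}, all the finiteness and completeness conditions become purely module-theoretic, with the verification of axioms (1)--(3) via the adic-filtration tower left implicit since it mirrors \Cref{onetothree}. You have simply spelled out those implicit steps, using exactly the ingredients (\Cref{critforOalgafp}, \Cref{finitecompleteOalgebra}, \Cref{adicmodOconv}, and the admissibility of $V\mapsto\bigoplus_i(\mathcal{O}(i)\otimes V^{\otimes i})_{h\Sigma_i}$) that the surrounding text has set up for this purpose.
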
 
\begin{proof} 
Since the conditions of almost finite presentation, complete almost finite
presentation, and so forth are purely module-theoretic 
in view of \Cref{critforOalgafp}, the conditions of
\Cref{filteredrefinementdef} are evidently satisfied.
\end{proof} 

\begin{cons} 
It follows that we obtain a  monad  
$T_{\mathcal{O}}^{\vee}$ on $\md_k$ and a
Koszul duality functor 
$$ \D: \alg_{\mathcal{O}}( \md_{k, \geq 1}) \to
\alg_{T_{\mathcal{O}}^{\vee}}^{op}$$
as \Cref{thm:mainaxiomatic} and  \Cref{theadjunction}.
By construction, the monad $T_{\mathcal{O}}^{\vee}$ preserves sifted colimits and its value on  $V \in
\coh_{k, \leq -1}$ is given by $T_{\mathcal{O}}^{\vee}(V) \simeq
\cot_{\mathcal{O}}( \sqz(V^{\vee}))^{\vee}$. 
\end{cons}

\begin{remark} 
Let $\mathcal{O}^{\vee}= \Barr(\mathcal{O})$ be the Koszul dual $\infty$-operad in $\md_k$. 
Then the functor $T_{\mathcal{O}}^{\vee}$ is given, for $V \in \coh_{k, \leq -1}$,  by
the formula
$$ V \mapsto 
( \cot_{\mathcal{O}} \circ \sqz(V^{\vee}))^{\vee} =
\bigoplus_{i \geq 1} (\mathcal{O}^{\vee}(i) \otimes V^{\otimes
i})^{h \Sigma_i}. $$
Here  the product could be  interchanged with the sum for connectivity
reasons. 
Roughly speaking, we should regard 
$T_{\mathcal{O}}^{\vee}$-algebras as ``divided power'' algebras over the Koszul
dual operad $\mathcal{O}^{\vee}$. 
\end{remark}  
We introduce the following explicit definition:
\begin{definition}[Artinian $\mathcal{O}$-algebras and formal moduli problems] \label{concretedef}\ 
\begin{enumerate}
\item  
A connected $\mathcal{O}$-algebra $A$ is 
Artinian 
if $\pi_*(A)$ is a finite-dimensional $k$-vector space. 
Let $\alg_{\mathcal{O}}^{\art}$ denote the $\infty$-category of Artinian
$\mathcal{O}$-algebras. 
\item
An \emph{$\mathcal{O}$-formal moduli problem} is a functor 
$F: \alg_{\mathcal{O}}^{\art} \rightarrow \mathcal{S}$ 
such that if 
$A\rightarrow A''$, $A' \rightarrow A''$ are maps in $\alg_{\mathcal{O}}^{\art}$ inducing surjections on
$\pi_1$, then $F(A \times_{A''} A') \simeq F(A) \times_{F(A')} F(A'')$. 
\end{enumerate}
\end{definition} 
We can then use our results in \Cref{axiomaticsec}:
\begin{corollary} 
{There is an equivalence between the $\infty$-category of $\mathcal{O}$-formal moduli problems and
 the $\infty$-category $\alg_{T_{\mathcal{O}}^{\vee}}$.} 
\end{corollary}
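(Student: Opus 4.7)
The strategy is to apply the axiomatic \Cref{fmpequiv} to the setup of connected $\mathcal{O}$-algebras from \Cref{consOalg}, whose hypotheses were just verified to satisfy \Cref{filteredrefinementdef}. This yields immediately an equivalence $\alg_{T_{\mathcal{O}}^{\vee}} \simeq \moduli_{\mathcal{C}}$, where $\moduli_{\mathcal{C}}$ is the axiomatic $\infty$-category of formal moduli problems defined in \Cref{Cfmp}. It then suffices to match $\moduli_{\mathcal{C}}$ with the concretely defined $\infty$-category of $\mathcal{O}$-formal moduli problems from \Cref{concretedef}, which reduces to two comparisons: first, that the axiomatic Artinian subcategory $\mathcal{C}_{\art}$ of \Cref{axiomaticart} agrees with the explicit subcategory $\alg_{\mathcal{O}}^{\art}$, and second, that the two Mayer--Vietoris conditions are equivalent.

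For the first comparison, both subcategories contain $\ast$ and are visibly closed under pullbacks of the form $A \times_{\sqz(k[n])} \ast$, so the axiomatic $\mathcal{C}_{\art}$ is contained in $\alg_{\mathcal{O}}^{\art}$. Conversely, any Artinian object $A \in \alg_{\mathcal{O}}^{\art}$ has, by finite-dimensionality of $\pi_\ast(A)$, a finite Postnikov tower whose successive stages $\tau_{\leq n} A \to \tau_{\leq n-1} A$ are square-zero extensions classified by maps $\tau_{\leq n-1} A \to \sqz(\pi_n(A)[n+1])$ in $\mathcal{C}$. Since $\pi_n A$ is a finite-dimensional $k$-vector space, such a map is encoded by finitely many maps into $\sqz(k[n+1])$, and $\tau_{\leq n} A$ is the corresponding finite iterated pullback in $\mathcal{C}_{\art}$. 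A descending induction, starting from $\tau_{\leq N} A = \ast$ for $N$ sufficiently large, then exhibits $A$ itself as an object of $\mathcal{C}_{\art}$.

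For the second comparison, the axiomatic condition is a special case of the concrete one, since $\sqz(k[n]) \to \ast$ is $\pi_1$-surjective for $n \geq 1$. For the reverse direction, the key point is that any $\pi_1$-surjective map $A \to A''$ between objects of $\alg_{\mathcal{O}}^{\art}$ factors as a finite composite of square-zero extensions classified by maps to $\sqz(k[n])$ for various $n \geq 1$; one obtains such a factorisation by refining the Postnikov filtration of the induced map and decomposing each extension using a basis of the relevant finite-dimensional vector space. Any pullback along a $\pi_1$-surjective map is then obtained by finitely pasting elementary pullbacks, and the standard pasting lemma for pullback squares shows that a functor preserving elementary ones preserves them all.

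The hard part will be this last factorisation assertion, namely showing that every $\pi_1$-surjective map between connected Artinian $\mathcal{O}$-algebras decomposes into elementary $\sqz(k[n])$-square-zero extensions. This is a variant of Lurie's classical treatment of small extensions in the deformation-theoretic setting (compare \cite[Proposition 1.5.8]{lurie2011derivedX} and \cite[Section 7.4.1]{lurie2014higher}); here the connectedness assumption on $\mathcal{O}$-algebras forces the usual $\pi_0$-surjectivity condition to be replaced by $\pi_1$-surjectivity, but the structural argument via the relative cotangent complex and Postnikov towers goes through verbatim.
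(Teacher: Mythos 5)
Your proposal is correct and follows the same overall route as the paper: apply \Cref{fmpequiv} and then identify the concrete Artinian $\mathcal{O}$-algebras of \Cref{concretedef} with the axiomatically defined Artinian objects of \Cref{axiomaticart} by descending through the Postnikov tower. The one substantive difference lies in how the crucial sub-step is justified. You assert that each Postnikov stage $\tau_{\leq n}A \to \tau_{\leq n-1}A$ is a square-zero extension classified by a map $\tau_{\leq n-1}A \to \sqz(\pi_n(A)[n+1])$, and defer this to ``Lurie's treatment via the relative cotangent complex, which goes through verbatim.'' For a general $\infty$-operad $\mathcal{O}$ this is exactly the point that needs an argument, and the paper supplies a short direct one that avoids setting up any cotangent complex formalism for $\mathcal{O}$-algebras: form the pushout $P = \tau_{\leq n-1}A \sqcup_A 0$ in $\alg_{\mathcal{O}}$, observe that $\tau_{\leq n+1}P \simeq \sqz((\pi_n A)[n+1])$ (using that any $\mathcal{O}$-algebra concentrated in a single degree is square-zero, by the connectivity hypotheses on $\mathcal{O}$), and check that the resulting square is a pullback. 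If you want your write-up to be self-contained, you should replace the appeal to the $\einf$-case by this pushout-and-truncate construction. Conversely, your explicit discussion of why the two Mayer--Vietoris conditions agree (elementary pullbacks along maps to $\sqz(k[n])$ versus pullbacks along arbitrary $\pi_1$-surjections) fills in a step the paper dispatches with ``it then also follows,'' relying on the general small-versus-elementary comparison in Lurie's axiomatic framework; both treatments are acceptable.
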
 
\begin{proof} 
In order to apply \Cref{fmpequiv}, it suffices to verify that 
Artinian $\mathcal{O}$-algebras in the sense of \Cref{concretedef}(1)
 are exactly the Artinian objects with respect to the deformation theory given by $\alg_{\mathcal{O}}( \md_{k, \geq 1})$, i.e.\ objects which can be built up inductively by pullbacks
of trivial extensions (cf.\ \Cref{axiomaticart}). It then also follows that \Cref{concretedef}(2) is an instance of \Cref
{Cfmp} in the present context.

Observe that any $\mathcal{O}$-algebra with homotopy groups concentrated in degree
$1$ is necessarily square-zero by our connectivity assumptions on $\mathcal{O}$. 
Given an algebra $A \in \alg_{\mathcal{O}}^{\art}$, we write $\tau_{\leq n}A$ for the $n^{th}$ Postnikov truncation of $A$ (cf.\ \cite[Proposition 5.5.6.18]{lurie2009higher}). Arguing as in \cite[Proposition 7.1.3.15]{lurie2014higher}, we see that the underlying $k$-module of $\tau_{\leq n}A$ is simply given by the $k$-truncation in $\mod_k$.
It then suffices to verify that if $A \in \alg_{\mathcal{O}}^{\art}$ has top homotopy
group
in degree $n$, then there exists a pullback square
of $\mathcal{O}$-algebras
$$ \xymatrix{
A \ar[d] \ar[r] & 0 \ar[d]  \\
\tau_{\leq n-1} A \ar[r] &  \sqz( (\pi_n A)[n+1])
}.$$
In the case of $\EE_\infty$-rings, this observation appears in \cite{IK1} and \cite{MR1732625}, and  is discussed in modern language in \cite[Corollary 7.4.1.28]{lurie2009higher}.

To construct the desired pullback square, we first form the pushout 
$P = \tau_{\leq n-1} A \sqcup_{A} 0$ in $\alg_{\mathcal{O}}$ and then apply the functor
$\tau_{\leq n+1}$ to it, which implies the claim as   $\tau_{\leq n+1} P \simeq
\sqz( (\pi_n A) [n+1])$. 
\end{proof} 

\begin{remark} 
In characteristic zero, $T_{\mathcal{O}}^{\vee}$ agrees with    the free $\mathcal{O}^{\vee}$-algebra monad. In particular, the assertion is that 
$\mathcal{O}$-formal moduli problems are equivalent to
$\mathcal{O}^{\vee}$-algebras under the above assumptions. This fact is
well-known to experts, but we are not aware   it has appeared \mbox{explicitly yet.}
\end{remark}

\begin{remark}[Comparison with spectral formal moduli problems] \label{restrictiveremark}
One can apply the above results when $\mathcal{O}$ is the (nonunital)
commutative operad. This gives rise to
a classification of a variant of spectral formal moduli problems (which are only   defined on \emph{connected} Artinian \mbox{$\einf$-$k$-algebras}) in terms of the same $\infty$-category of spectral partition Lie algebras. 

In particular, it follows that spectral formal moduli problems over $k$ are
determined by their restriction to connected Artinian $\einf$-$k$-algebras; they
automatically extend to all Artinian $\einf$-$k$-algebras. 
However, the arguments are  simpler when one restricts to the connected
case (as in the present section), and apply to more general operads $\mathcal{O}$. 

The fact that, for $\mathcal{O}$ the commutative operad, 
the theory extends to  some connective (rather than connected) objects requires an
additional calculation (and does not appear to \mbox{be purely formal).}

\end{remark}

\newpage

\section{Deformations over a complete local base}
Assume that $A$ is complete local Noetherian with residue field $k$ (cf.\ \Cref{clndef}), either in the setting of \mbox{$\einf$-rings}  or in the setting of  simplicial commutative rings.  The following rings will play the role of infinitesimal thickenings of $\Spec(k)$ in this mixed setting:
\begin{definition}[Artinian  rings for $A//k$] \label{Artinianrelative}   \INN{032@$\clg_{A//k}^{\art}$}
An object $B$ of $\clg_{A//k}$ is called \emph{Artinian} if it is connective and the following conditions hold:
\begin{enumerate}
\item $\pi_0(B)$ is a local Artinian ring (with residue field $k$);
\item $\bigoplus_{i \geq 0}
\pi_i(B)$ is a finitely generated module over $\pi_0(B)$ (in particular,
$\pi_i(B) = 0$ for $i \gg 0$). \end{enumerate}
We let $\clg_{A//k}^{\art} $ denote the full subcategory of $ \clg_{A//k}$ \INN{190@$\SCR_{A//k}^{\art}$}
spanned by Artinian objects. \vspace{3pt}

An object of $\SCR_{A//k}$ is \emph{Artinian} if the
underlying object of $\clg_{A//k}$ is, and we let $\SCR_{A//k}^{\art}$ be the full subcategory of $ 
\SCR_{A//k}$ spanned by all Artinian objects. 
\end{definition} 

We can now generalise the notion of a formal moduli problem to the relative context; note that this notion also appears in \cite[Section 6.1]{lurie2011derivedXII}.

\begin{definition}[Formal moduli problems for $A//k$]\label{fmpmixed}   \INN{130@$\moduli_{A//k,\einf}, \moduli_{A//k, \Delta}$}
A \emph{spectral formal moduli problem for 
$A//k$}  is 
a functor $F: \clg_{A//k}^{\art} \rightarrow \mathcal{S}$ 
such that: 
\begin{enumerate}
\item  
$F(k) $ is contractible;
\item
if $B, B', B'' \in \clg_{A//k}^{\art}$ and we have maps $B \rightarrow B''$ and $B' \rightarrow B''$ which induce surjections on $\pi_0$, then the canonical map $ F(B \times_{B''} B') \xrightarrow{ \simeq } F(B) \times_{F(B'')} F(B')  $ is an equivalence.\vspace{-1pt}
\end{enumerate} 
We denote the $\infty$-category of spectral formal moduli problems by $\moduli_{A//k,
\einf}$. \vspace{3pt}

Similarly, a  \emph{derived formal moduli problem} for $A//k$
is a functor 
$F: \SCR_{A//k}^{\art} \rightarrow \mathcal{S}$ satisfying the analogous conditions (1)
and (2) above. 
We denote the $\infty$-category of derived formal moduli problems 
for $A//k$ by $\moduli_{A//k, \Delta}$. 
\end{definition} 

\begin{example} 
Suppose $k$ is a perfect field of characteristic $p>0$. 
Let $A=W^+(k)$ \INN{230@$W(k)$,  $W^+(k)$} denote the spherical Witt vectors of $k$ (cf.\ \cite[Example 5.2.7]{lurie2018elliptic}), so that $W^+(k)$ is a
complete local Noetherian $\einf$-ring with residue field $k$. 
Then the $\infty$-category of Artinian objects of $\clg_{A//k}$ is equivalent to
a full subcategory of $\clg_{/k}$ (namely, those which are Artinian). 
It therefore follows that we can regard spectral formal moduli problems as defined
on all Artinian $\einf$-algebras augmented over $k$;  the map from $A $ 
is therefore superfluous. 

A similar statement holds for derived formal moduli problems and the classical \mbox{Witt vectors $W(k)$.}
\end{example}

The principal goal  of this section is to generalise \Cref{einfsec} to the mixed context. Our main result is \Cref{mostgeneral}
below, which gives a   Lie algebraic description of 
spectral and derived formal moduli problems for $A//k$.

\subsection{Descent properties of modules}
Let $A$ be a  complete local Noetherian $\EE_\infty$-ring with residue field $k$.
We will now establish several  convergence results on modules. This will later allow us to reduce the proof of
\Cref{mostgeneral}
 to the case 
$A =k$, which has  been handled in  \Cref{einfsec}. 

\begin{definition}[Complete $A$-modules] \label{completemodules}  
An $A$-module spectrum $M \in \md_A$ is \emph{complete} if it is derived
$\mathfrak{m}$-complete (cf.\ \cite[Theorem 7.3.4.1]{lurie2016spectral}), where $\mathfrak{m} \subset \pi_0(A)$ is the unique
maximal ideal. 
\end{definition}

\begin{proposition}[Convergence criterion for $A$-modules] 
\label{mainconvcritmodules} 
Let $M^\bullet$ be a cosimplicial object of $\md_{A, \geq 0}$. 
Suppose that each $M^i$ is complete and that 
$\mathrm{Tot}(k  \otimes_A M^\bullet  ) \in \md_k$ is connective. 
Then: 
\begin{enumerate}
\item $\mathrm{Tot}(M^\bullet)$ is connective  and complete (as an $A$-module);  
\item if $N$ is an almost perfect $A$-module, then  
$N \otimes_A \mathrm{Tot}(M^\bullet ) \xrightarrow{\simeq} \mathrm{Tot}(N \otimes_A M^\bullet)$
is an equivalence. 
\end{enumerate}
\end{proposition}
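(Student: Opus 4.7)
The overall strategy is to reduce both assertions to the hypothesis about $\Tot(k \otimes_A M^\bullet)$ by exploiting the $\mathfrak{m}$-adic tower on $A$ together with the completeness of each $M^i$. Write $A_n := A \otimes_{\pi_0(A)} \pi_0(A)/\mathfrak{m}^n$; since $\pi_0(A)/\mathfrak{m}^n$ admits a finite filtration by powers of $\mathfrak{m}$ with graded pieces finite-dimensional $k$-vector spaces, each $M^i \otimes_A A_n$ inherits a finite filtration whose graded pieces are finite sums of $M^i \otimes_A k$. Since each $M^i$ is derived $\mathfrak{m}$-complete and connective, we moreover have $M^i \simeq \lim_n (M^i \otimes_A A_n)$ by \cite[Section 7.3]{lurie2016spectral}, and after swapping the two limits $\Tot(M^\bullet) \simeq \lim_n \Tot(M^\bullet \otimes_A A_n)$.

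For part (1), completeness of $\Tot(M^\bullet)$ is immediate since derived $\mathfrak{m}$-complete $A$-modules are closed under arbitrary limits in $\md_A$. For connectivity, the hypothesis that $\Tot(k \otimes_A M^\bullet)$ is connective, combined with the finite filtration on $A_n$ described above, yields by induction that $\Tot(M^\bullet \otimes_A A_n) \in \md_{A,\geq 0}$ for each $n$. Applying the Milnor short exact sequence to the tower $\{\Tot(M^\bullet \otimes_A A_n)\}_n$ then reduces connectivity of the limit $\Tot(M^\bullet)$ to a Mittag-Leffler check on $\{\pi_0 \Tot(M^\bullet \otimes_A A_n)\}_n$; this follows from the Noetherianness of each $\pi_0(A_n)$ together with the fact that the connecting maps in the tower are surjective on $\pi_0$, which is visible from the cofibre sequences arising from the surjections $A_{n+1} \to A_n$ with fibre a finite-dimensional $k$-vector space.

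For part (2), once (1) is established, both $N \otimes_A \Tot(M^\bullet)$ and $\Tot(N \otimes_A M^\bullet)$ are complete $A$-modules (the right-hand side by applying (1) to $N \otimes_A M^\bullet$, whose reductions modulo $\mathfrak{m}^n$ remain connective by the same filtration argument combined with almost perfectness of $N$). Both sides carry the comparison map and commute with finite cofibre sequences in $N$. Since $A$ is Noetherian, any almost perfect $N$ may be built from $A$ by a sequence of finite cofibre sequences involving finite free $A$-modules, together with a Postnikov-style limit (cf. \cite[Proposition 7.2.4.11, Proposition 7.2.4.17]{lurie2014higher}); the claim is trivial for $N = A$, propagates through finite cofibre sequences, and passes to the Postnikov limit using completeness.

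The main obstacle is the Mittag-Leffler argument at the end of the proof of (1): going from connectivity of $\Tot(M^\bullet \otimes_A A_n)$ for every $n$ to connectivity of the inverse limit requires the $\lim^1$ term for $\pi_0$ to vanish, and this  demands a careful use of the Noetherian structure of $A$ to show that the relevant tower of $\pi_0$'s is pro-zero in negative degrees. The rest of the proof is a fairly standard devissage argument once the convergence from part (1) is in place.
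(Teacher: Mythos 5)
There is a genuine gap at the very first step, and it is fatal to the argument as written. You set $A_n := A \otimes_{\pi_0(A)}\pi_0(A)/\mathfrak{m}^n$ (note there is no ring map $\pi_0(A)\to A$, so what you must mean is $\pi_0(A)/\mathfrak{m}^n$ viewed as an $A$-algebra via $A\to\pi_0(A)$) and claim that a complete connective $M^i$ satisfies $M^i\simeq\varprojlim_n M^i\otimes_A A_n$. This is false whenever $A$ has nontrivial higher homotopy: tensoring with $\pi_0(A)/\mathfrak{m}^n$ over $A$ base-changes along $A\to\pi_0(A)$ and destroys the higher homotopy of $A$. Concretely, take $A=k\oplus\Sigma k$ (trivial square-zero), which is complete local Noetherian with $\pi_0(A)=k$, so $\mathfrak{m}=0$, every $A$-module is complete, and $A_n=k$ for all $n$; then $\varprojlim_n M\otimes_A A_n=M\otimes_A k$, which is not $M$ (e.g.\ for $M=A$). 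The correct tower is the Koszul-type tower $A/(x_1^n,\dots,x_r^n)$ of iterated cofibres of multiplication by generators of $\mathfrak{m}$ \emph{on $A$ itself}; its limit after tensoring does recover a complete $M$, but its associated graded pieces are no longer finite sums of $k$ — they are almost perfect $A$-modules with $\mathfrak{m}$-power-torsion homotopy, which must themselves be devissaged into shifts of $k$ via their Postnikov towers. This is exactly what the paper does: it introduces the class $\mathcal{T}$ of connective $N$ with $\Tot(N\otimes_A M^\bullet)$ connective, shows $\mathcal{T}$ contains $k[i]$, is closed under extensions and the relevant Postnikov limits, hence contains $A/(x_1,\dots,x_r)$; then it concludes using the completeness of $\Tot(N\otimes_A M^\bullet)$ together with the Nakayama-type fact that a complete module whose reduction modulo $(x_1,\dots,x_r)$ is connective is itself connective. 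Your Mittag--Leffler worry is then absorbed into that last lemma (the transition maps of the Koszul tower have connective fibres built from $\mathcal{T}$, so the $\varprojlim^1$ on $\pi_0$ vanishes).

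Two smaller points. First, even in the discrete case your appeal to \cite[Section 7.3]{lurie2016spectral} for $M\simeq\varprojlim_n M\otimes_A A/\mathfrak{m}^n$ for a \emph{general} complete module is not a direct citation; it requires the Artin--Rees pro-isomorphism between the Koszul tower and the $\mathfrak{m}$-adic tower, so even the salvageable version of your reduction needs more justification than given. Second, in part (2) the statement that an almost perfect $N$ is ``built from $A$ by finite cofibre sequences together with a Postnikov-style limit'' is too loose: the comparison map does not obviously commute with the limit over Postnikov truncations. The paper instead chooses, for each $n$, a perfect $P\to N$ with $n$-connected cofibre $N'$ and uses the connectivity estimate $\Tot(N'\otimes_A M^\bullet)\in\md_{A,\geq n+1}$ (coming from part (1) applied to a shift of $N'$) to conclude the comparison is an equivalence on $n$-truncations for every $n$. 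You should restructure part (1) around the Koszul tower or the paper's class $\mathcal{T}$, and make the perfect-approximation step in part (2) explicit.
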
 
\begin{proof} 
We begin with statement (1). Consider the class $\mathcal{T}$ of connective $A$-modules $N$ for which
$\mathrm{Tot}(N \otimes_A M^\bullet )$ is connective. 
By assumption, $\mathcal{T} $ contains   $k[i]$  for $i
\geq 0$. Moreover, $\mathcal{T}$ is closed
under extensions: given a cofibre sequence $N_1 \rightarrow N_2 \rightarrow N_3 $ with $N_1, N_3
\in \mathcal{T}$, it follows that $N_2 \in \mathcal{T}$. 

Assume that $N'$ is an almost perfect and connective $A$-module  such that the homotopy groups
$\pi_i(N')$ are  all $\mathfrak{m}$-power torsion.
By induction, the above properties of $\mathcal{T}$ show that all
the truncations $\tau_{\leq n} N'$ 
belong to
$\mathcal{T}$. 
Passage to the limit as $n \rightarrow \infty$ now shows that $N' \in \mathcal{T}$ too.

For instance, if $x_1, \dots, x_r \in \pi_0(A)$ generate the
maximal ideal, then if $N$ is any connective, almost perfect $A$-module, we
conclude that the iterated cofibre $N/(x_1, \dots, x_r)$ belongs to $\mathcal{T}$. 
It follows that $\mathrm{Tot}( N\otimes_A M^\bullet) / (x_1, \dots, x_r)$ is a connective
$A$-module. Since each 
$ N\otimes_A M^i$ is complete, it follows that 
$\mathrm{Tot}(N \otimes_A M^\bullet)$ is a complete $A$-module. Thus, it also follows
that 
$\mathrm{Tot}( N \otimes_A M^\bullet)$
is connective itself. 
Therefore, we have shown that $\mathcal{T}$ contains every connective, almost
perfect $A$-module. 
In particular, taking $N  = A$ verifies part (1) of the theorem. 

We shall now  verify part (2). The claim is clearly true in the case where $N$ is perfect. 
Suppose that $N$ is an arbitrary almost perfect $A$-module, and assume without restriction that $N$ is 
also
connective. 
Both domain and target of the map $N\otimes_A \mathrm{Tot}(M^\bullet)  \rightarrow \mathrm{Tot}(N \otimes_A M^\bullet)$
are connective (since $N \in \mathcal{T}$ by  the previous paragraph).  
Fix $n > 0$. 
We can find a perfect $A$-module $P$ and a map $P \rightarrow N$ which induces an
equivalence on $n$-truncations, so we obtain a cofibre sequence $P \rightarrow N \rightarrow N'$
where $N' \in \md_{A, \geq n+1}$. Since $N'[-n-1] \in \mathcal{T}$, it follows that 
$\mathrm{Tot}(N' \otimes_A M^\bullet) \in \md_{A, \geq n+1}$. 
In particular, 
in the commutative square
$$ \xymatrix{
  P \otimes_A \mathrm{Tot}(M^\bullet) \ar[d]  \ar[r] &  \mathrm{Tot}(P \otimes_A M^\bullet
) \ar[d]  \\
 N \otimes_A \mathrm{Tot}(M^\bullet) \ar[r] &  \mathrm{Tot}(N \otimes_A
M^\bullet )
},$$
the vertical maps 
$ P\otimes_A  \mathrm{Tot}(M^\bullet)\rightarrow N \otimes_A \mathrm{Tot}(M^\bullet)$ 
and 
$\mathrm{Tot}(P \otimes_A M^\bullet) \rightarrow \mathrm{Tot}(N \otimes_A M^\bullet )$
are equivalences on $n$-truncations. 
Since the top horizontal map is an equivalence, it follows that 
the bottom horizontal map is an equivalence on $n$-truncations. 
Since $n$ was arbitrary, we conclude that the bottom horizontal map is an
equivalence, which implies (2). 
\end{proof}

Next, we show that for connective  complete $A$-modules, the   Adams spectral sequence
converges. 
More precisely, consider the \v{C}ech nerve  of $A \rightarrow k$, i.e.\ the  augmented cosimplicial
diagram
\begin{equation} \label{Cechnerve} 
A \longrightarrow \left( \ \  k \ \  \substack{  \longrightarrow  \vspace{-4pt} \\ \leftarrow  \vspace{-4pt} \ \\ \longrightarrow}\ \    k \otimes_A k\ \  \substack{\longrightarrow \vspace{-4pt} \\ \leftarrow   \vspace{-4pt} \ \\ \longrightarrow  \vspace{-4pt} \\ \leftarrow  \vspace{-4pt} \ \\ \longrightarrow} \ \ \ldots \  \ \right)
\end{equation} 
in the $\infty$-category of $\einf$-rings. We then have:
\begin{proposition} \label{theabove}
As before, let $A$ be a complete local Noetherian $\einf$-ring with residue field $k$.  
If  $M$ is  a connective and complete $A$-module, then the  
diagram 
$M \xrightarrow{  \simeq  } \mathrm{Tot} \left(  \  M \otimes_A k \    \substack{  \longrightarrow  \vspace{-4pt} \\ \leftarrow  \vspace{-4pt} \ \\ \longrightarrow}\    M \otimes_A  k \otimes_A k\    \substack{\longrightarrow \vspace{-4pt} \\ \leftarrow   \vspace{-4pt} \ \\ \longrightarrow  \vspace{-4pt} \\ \leftarrow  \vspace{-4pt} \ \\ \longrightarrow} \   \ldots \  \right)$
(obtained by tensoring \eqref{Cechnerve} with $M$) is a limit diagram.
\end{proposition}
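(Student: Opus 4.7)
The plan is to apply the convergence criterion of \Cref{mainconvcritmodules} to the cosimplicial $A$-module $M^\bullet$ defined by $M^i := M \otimes_A k^{\otimes_A (i+1)}$, and then to invoke derived Nakayama to promote the resulting base-change equivalence to an equivalence of complete $A$-modules. First I would check the hypotheses of that proposition: each $M^i$ carries a natural $k$-module structure, so the maximal ideal $\mathfrak{m} \subset \pi_0(A)$ acts trivially on its homotopy groups, and in particular $M^i$ is derived $\mathfrak{m}$-nilpotent, hence $\mathfrak{m}$-complete. To control the totalisation after tensoring with $k$, I would identify $k \otimes_A M^\bullet$ with the Amitsur cobar complex $(k \otimes_A M) \otimes_A k^{\otimes_A (\bullet+1)}$ associated to the morphism $A \to k$ with coefficients in the $A$-module $k \otimes_A M$. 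Because the latter already carries a $k$-module structure, the multiplication $\mu \colon k \otimes_A k \to k$ furnishes an extra codegeneracy splitting this augmented cosimplicial $k$-module; therefore $\Tot(k \otimes_A M^\bullet) \simeq k \otimes_A M$, which is connective.

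With the hypotheses verified, \Cref{mainconvcritmodules} yields that $N := \Tot(M^\bullet)$ is connective and complete as an $A$-module, and that the natural map $k \otimes_A N \to \Tot(k \otimes_A M^\bullet) \simeq k \otimes_A M$ is an equivalence (applying part (2) to the almost perfect $A$-module $k$, using that $A$ is Noetherian). The augmented structure of $M^\bullet$ over $M$ gives a canonical map $\varphi \colon M \to N$. Unwinding the constructions, the induced composite
$$ k \otimes_A M \xrightarrow{k \otimes_A \varphi} k \otimes_A N \xrightarrow{\sim} \Tot(k \otimes_A M^\bullet) \xrightarrow{\sim} k \otimes_A M $$
recovers the identity, since the final equivalence is precisely the retraction furnished by the extra-degeneracy splitting of the augmented Amitsur complex. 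In particular, $k \otimes_A \varphi$ is an equivalence.

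To finish, let $F := \mathrm{fib}(\varphi)$; as both $M$ and $N$ are complete, so is $F$, while the previous step shows $k \otimes_A F \simeq 0$. A derived version of Nakayama's lemma (cf.~\cite[\S7.3]{lurie2016spectral}) implies that a derived $\mathfrak{m}$-complete $A$-module whose base change to $k$ vanishes must itself vanish, and hence $\varphi$ is an equivalence. I expect the main obstacle to be the bookkeeping required to genuinely identify $k \otimes_A \varphi$ with the identity through the various canonical isomorphisms, together with invoking derived Nakayama in the precise form needed (since $F$ is only guaranteed to be $(-1)$-connective a priori); both points are essentially formal but require some care.
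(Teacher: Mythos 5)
Your proof is correct and follows essentially the same route as the paper: both arguments apply the convergence criterion of \Cref{mainconvcritmodules} to the tensored Čech conerve, using the fact that after a further base change along $A \to k$ the augmented cosimplicial object acquires an extra (co)degeneracy and hence splits, and then conclude by derived Nakayama for bounded-below complete modules. The extra care you flag about identifying $k \otimes_A \varphi$ with the identity and about the form of Nakayama needed is harmless; the paper passes over both points silently.
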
 
\begin{proof} 
After base-change along the map $A \rightarrow k$, the augmented cosimplicial diagram
\begin{equation} \label{Cechnerve2}   M \to  M \otimes_A  k   \  \substack{
\longrightarrow  \vspace{-4pt} \\ \leftarrow  \vspace{-4pt} \ \\
\longrightarrow}\   M \otimes_A k \otimes_A k\    \substack{\longrightarrow \vspace{-4pt} \\ \leftarrow   \vspace{-4pt} \ \\ \longrightarrow  \vspace{-4pt} \\ \leftarrow  \vspace{-4pt} \ \\ \longrightarrow}   \ \ldots   \end{equation}
admits a splitting, since it becomes the \v{C}ech nerve of the map $k
\rightarrow k \otimes_A k$, which has a section, tensored with $M \otimes_A k$.
\Cref{mainconvcritmodules} therefore applies to diagram \eqref{Cechnerve2}, which implies that 
 the totalisation of the cosimplicial diagram is connective and commutes with base-change with
any almost perfect $A$-module. 
In particular, the totalisation commutes with base-change to $k$, and since the cosimplicial
diagram becomes split after base-change to $k$, we find (also by Nakayama's
lemma) that the augmented cosimplicial diagram \eqref{Cechnerve2} is a limit
diagram. 
%From this, we deduce that $M \xrightarrow{  } \mathrm{Tot} \left(     M \otimes_A k \    \substack{  \longrightarrow  \vspace{-4pt} \\ \leftarrow  \vspace{-4pt} \ \\ \longrightarrow}\    M \otimes_A  k \otimes_A k\    \substack{\longrightarrow \vspace{-4pt} \\ \leftarrow   \vspace{-4pt} \ \\ \longrightarrow  \vspace{-4pt} \\ \leftarrow  \vspace{-4pt} \ \\ \longrightarrow}    \ldots  \right)$
%is a map of connective, complete $A$-modules which becomes an equivalence after
%applying $k\otimes_A - $. Thus, it  is an equivalence by \mbox{Nakayama's lemma.}
\end{proof} 
As a strengthening of \Cref{theabove}, we 
observe the following descent theorem
for complete connective modules (it implies \Cref{theabove} by comparing  
mapping objects on both sides).
Note that descent for all modules in the 
faithfully flat case appears in \cite[Section D.6.3]{lurie2016spectral} and in the
proper surjective case in \cite[Theorem 5.6.6.1]{lurie2016spectral}. 
We thank Bhargav Bhatt for indicating the following result to us. 
\newcommand{\cpl}{\mathrm{cpl}}
\begin{theorem}\label{bhbh}
Let $A$ be a complete local Noetherian $\einf$-ring with residue field $k$. 
Writing $\md_{A, \geq 0}^{\cpl}$ \INN{130@$\md_{A, \geq 0}^{\cpl}$}
 for the full subcategory of all complete connective $A$-modules  (cf.\ \Cref{completemodules}), the natural map 
$$ 
\md_{A, \geq 0}^{\cpl} \  \xrightarrow{ \ \ \simeq \ \  } \     \mathrm{Tot}\left( \  \md_{k, \geq 0}  \    \substack{  \longrightarrow  \vspace{-4pt} \\ \leftarrow  \vspace{-4pt} \ \\ \longrightarrow}\       \md_{k\otimes_A k, \geq 0} \    \substack{\longrightarrow \vspace{-4pt} \\ \leftarrow   \vspace{-4pt} \ \\ \longrightarrow  \vspace{-4pt} \\ \leftarrow  \vspace{-4pt} \ \\ \longrightarrow}    \ldots  \ \right)$$
is an equivalence of $\infty$-categories.
\end{theorem}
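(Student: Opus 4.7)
The plan is to prove the theorem by constructing the natural base-change functor and then verifying that it is fully faithful and essentially surjective, with both steps powered by the two convergence results \Cref{mainconvcritmodules} and \Cref{theabove}. The functor itself is induced by the cosimplicial system of base-change functors $M \mapsto k^{\otimes_A \bullet +1} \otimes_A M$; since each $k^{\otimes_A n+1}$ is connective, a connective complete $A$-module indeed produces a cosimplicial object of the totalisation.

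First I will show full faithfulness. Given $M, N \in \md_{A,\geq 0}^{\cpl}$, adjunction together with the description of mapping spaces in a totalisation rewrites the right-hand side as
\[
\mathrm{Tot}\bigl( \Map_A(M, k^{\otimes_A \bullet+1} \otimes_A N) \bigr) \simeq \Map_A\bigl(M,\ \mathrm{Tot}(k^{\otimes_A \bullet+1} \otimes_A N)\bigr).
\]
By \Cref{theabove}, the cosimplicial diagram $k^{\otimes_A \bullet+1} \otimes_A N$ totalises back to $N$ because $N$ is connective and complete. Hence this agrees with $\Map_A(M,N)$, as required.

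For essential surjectivity, let $M^\bullet$ be an object of the totalisation, so that the descent data give canonical equivalences $M^n \simeq k^{\otimes_A n+1} \otimes_k M^0$ (in particular, each $M^n$ is a connective $k$-module, which is automatically a complete $A$-module since $\mathfrak{m}$ acts by zero). Form $M := \mathrm{Tot}(M^\bullet)$ in $\md_A$. To apply \Cref{mainconvcritmodules}, I need to verify that $\mathrm{Tot}(k \otimes_A M^\bullet)$ is connective. But $k \otimes_A M^\bullet$ inherits from the descent data an extra degeneracy: it is the cobar construction of $M^0$ along $k \to k \otimes_A k$, and the multiplication $k \otimes_A k \to k$ supplies the splitting. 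Thus $\mathrm{Tot}(k \otimes_A M^\bullet) \simeq k \otimes_A M^0$, which is connective. Part (1) of \Cref{mainconvcritmodules} then ensures that $M$ is connective and complete, while part (2) applied with $N = k$ (almost perfect over $A$ since $A$ is Noetherian with $\pi_0(k) = k$) gives $k \otimes_A M \simeq \mathrm{Tot}(k \otimes_A M^\bullet) \simeq k \otimes_A M^0 \simeq M^0$, naturally in a way compatible with the cosimplicial structure, so $M$ reconstructs $M^\bullet$.

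The main obstacle will be verifying the connectivity hypothesis of \Cref{mainconvcritmodules} for the cosimplicial object $M^\bullet$, i.e.\ showing that $\mathrm{Tot}(k \otimes_A M^\bullet)$ is connective; once this is in place, both full faithfulness and essential surjectivity fall out cleanly from the two convergence results. The point is that although the totalisation of a cosimplicial diagram of connective objects is typically only bounded above, the descent structure produces an extra degeneracy after base-change to $k$, and this extra degeneracy is exactly what allows the convergence machinery of \Cref{mainconvcritmodules} to propagate connectivity back up from the residue field to the complete local base $A$.
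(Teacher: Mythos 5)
Your argument is correct in substance and uses the same two analytic inputs as the paper (\Cref{mainconvcritmodules} and the splitting of the \v{C}ech nerve of $k \to k\otimes_A k$), but it packages them differently. The paper does not verify fully faithfulness and essential surjectivity by hand; instead it reduces the whole statement to checking that $k\otimes_A-\colon \md_{A,\geq 0}^{\cpl}\to \md_{k,\geq 0}$ is comonadic (via \cite[Lemma D.3.5.7]{lurie2016spectral}), which requires only conservativity (Nakayama) plus the assertion that $\cot$-split cosimplicial objects totalise correctly — exactly \Cref{mainconvcritmodules}. Your route unpacks the two halves of that comonadicity statement: your full-faithfulness step is the unit-is-an-equivalence statement, which you correctly derive from \Cref{theabove}, and your essential-surjectivity step is the construction of the inverse functor by totalisation. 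What the abstract route buys is precisely the point you gloss over at the end: to conclude that $M=\Tot(M^\bullet)$ reconstructs the \emph{entire} descent datum $M^\bullet$, one needs the identifications $k^{\otimes_A n+1}\otimes_A M\simeq M^n$ for all $n$ (each $k^{\otimes_A n+1}$ is almost perfect over $A$, so \Cref{mainconvcritmodules}(2) does apply) \emph{together} with their compatibility under the cosimplicial structure maps; the comonadicity theorem supplies this coherence for free, whereas "naturally in a way compatible with the cosimplicial structure" is doing real work in your version. One computational slip: for a descent datum, $k\otimes_A M^\bullet$ is the cobar construction of $M^0$ along $k\to k\otimes_A k$ as you say, and it is split because that map has the multiplication as a retraction, but its totalisation is the augmentation $M^0$ itself, not $k\otimes_A M^0$ (the latter equals $(k\otimes_A k)\otimes_k M^0$, the zeroth term). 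The connectivity you need, and the final identification $k\otimes_A M\simeq M^0$, survive this correction unchanged.
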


\begin{proof} 
It suffices to show that 
the functor 
$k \otimes_A -: \md_{A, \geq 0}^{\cpl} \rightarrow \md_{k, \geq 0}$ is comonadic as
in \cite[Lemma D.3.5.7]{lurie2016spectral}. 
First, we observe that $  k\otimes_A - $ is conservative on connective and
complete $A$-modules by Nakayama's lemma. 
Next, let $M^\bullet$ be an object of $\md_{A, \geq 0}^{\cpl}$ 
such that the cosimplicial $k$-module $ k\otimes_A M^\bullet$ admits a splitting. 
It follows that 
$\mathrm{Tot}(k \otimes_A M^\bullet )$ (computed in $k$-modules) is connective. 
Thus, by \Cref{mainconvcritmodules}, 
$\mathrm{Tot}(M^\bullet)$ is connective and 
$k\otimes_A  \mathrm{Tot}(M^\bullet) \rightarrow \mathrm{Tot}(k\otimes_A M^\bullet ) $
is an equivalence. This verifies the hypotheses of the comonadicity
theorem. 
\end{proof} 

\newcommand{\aperf}{\mathrm{APerf}}
\begin{notation}\INN{013@$\aperf_R$} 
Let $\aperf_R$ be the full subcategory of $\md_R$ spanned by   almost perfect
$R$-modules. \end{notation}
As a consequence of \Cref{bhbh}, we observe also that almost perfect modules
satisfy descent, cf.\ \cite[Sec. 4]{HLP} for closely related results:
\begin{corollary}[Descent for almost perfect modules]\label{DAPM}
Let $A$ be a complete local Noetherian $\einf$-ring with residue field $k$. 
Then the diagram
$$ 
\aperf_A \  \xrightarrow{ \ \ \simeq \ \  } \     \mathrm{Tot}\left( \  \aperf_k  \    \substack{  \longrightarrow  \vspace{-4pt} \\ \leftarrow  \vspace{-4pt} \ \\ \longrightarrow}\     \aperf_{k \otimes_A k} \    \substack{\longrightarrow \vspace{-4pt} \\ \leftarrow   \vspace{-4pt} \ \\ \longrightarrow  \vspace{-4pt} \\ \leftarrow  \vspace{-4pt} \ \\ \longrightarrow} \    \ldots  \ \right)$$
 is an equivalence of symmetric monoidal $\infty$-categories. 
 This remains  true when we replace $\aperf$ by the corresponding $\infty$-categories $\perf$ of perfect modules. 
\end{corollary}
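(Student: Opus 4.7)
The plan is to deduce this corollary from \Cref{bhbh} by identifying $\aperf_A$ (resp.\ $\perf_A$) inside $\md_{A, \geq 0}^{\cpl}$ (after suitable shifts) as the full subcategory cut out by a fibrewise condition that is manifestly preserved under the equivalence.

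The first step is to check that the diagram makes sense: any almost perfect $A$-module is automatically complete, because it is bounded below with each $\pi_i$ finitely generated over the complete local Noetherian ring $\pi_0(A)$, and finitely generated modules over such a ring are derived $\mathfrak{m}$-adically complete. After shifting, it therefore suffices to produce an equivalence on connective objects, and here the totalisation on the right-hand side lands inside the totalisation of the diagram of connective complete modules appearing in \Cref{bhbh}.

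The second and main step is the fibrewise criterion: a connective complete $A$-module $M$ is almost perfect if and only if $k \otimes_A M$ is almost perfect over $k$. The forward direction is immediate since almost perfectness is preserved under base change. For the converse, one argues by an inductive Nakayama argument analogous to the proof of \Cref{grcrit:aperf}: finite-dimensionality of $\pi_0(k \otimes_A M) = \pi_0(M)/\mathfrak{m}\pi_0(M)$ lets one pick a map $P_0 \to M$ from a finite free $A$-module which is surjective on $\pi_0$; completeness of the fibre (inherited from completeness of $M$ and $P_0$) and the fact that $k \otimes_A \mathrm{fib}$ is still almost perfect allow the induction to continue. This is the place where completeness of $M$ is essential, and it is the main technical input; the rest is formal. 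Granting this criterion, the same argument applies to each term $k^{\otimes_A n} \otimes_A k$ in the \v{C}ech nerve, so objects of the totalisation of the $\aperf_{k^{\otimes_A n}}$ are exactly those descent data whose underlying object in the totalisation of $\md^{\cpl}_{-,\geq 0}$ in \Cref{bhbh} corresponds to an almost perfect $A$-module.

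The third step is the perfect variant. A connective almost perfect $A$-module $M$ is perfect if and only if it has finite Tor-amplitude, which, since $M$ is almost perfect and $A$ is complete local Noetherian, is equivalent to $k \otimes_A M$ being bounded above — i.e.\ perfect over $k$. This condition is again preserved pointwise in the cosimplicial diagram, so $\perf_A$ corresponds to the totalisation of $\perf_{k^{\otimes_A n}}$. Finally, symmetric monoidality of the equivalence is inherited from \Cref{bhbh}, since the relative tensor products $\otimes_A$ on the left and $\otimes_{k^{\otimes_A n}}$ on each term of the right-hand totalisation are already compatible in that theorem. The main obstacle is the inductive Nakayama/finite-generation argument in the second step; everything else is bookkeeping on top of \Cref{bhbh}.
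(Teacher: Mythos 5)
Your argument for the almost perfect case is essentially the paper's: both reduce, via Theorem \ref{bhbh}, to showing that a connective complete $A$-module $M$ with $k \otimes_A M \in \aperf_k$ has finitely generated homotopy groups, and both prove this by the same inductive Nakayama-style argument (choose $A^r \to M$ surjective after base change to $k$, use completeness to promote this to surjectivity on $\pi_0(M)$, pass to the fibre, and induct). Your explicit remark that almost perfect modules over a complete local Noetherian ring are automatically complete, and that one shifts to reduce to connective objects, is bookkeeping the paper leaves implicit; it is correct.

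Where you genuinely diverge is the perfect variant. You characterise perfect objects among almost perfect ones by finite Tor-amplitude, detected on the closed fibre: $M$ perfect iff $k \otimes_A M$ is bounded above, i.e.\ perfect over $k$. This is a valid route, but it imports a nontrivial commutative-algebra fact (finite Tor-amplitude of an almost perfect module over a local Noetherian base is detected after base change to the residue field, proved via a minimal-resolution induction) which you assert rather than prove, and it also requires you to check that the pointwise-perfect descent data form exactly the totalisation of the $\perf_{k^{\otimes_A \bullet}}$. The paper instead observes that perfect modules are precisely the dualisable objects of $\aperf$, so the perfect statement follows formally from the almost perfect one once the equivalence is symmetric monoidal — no further algebra needed. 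Your approach buys a concrete fibrewise criterion for perfectness; the paper's buys a one-line formal deduction, at the cost of leaning on the symmetric monoidality of the equivalence (which both treatments handle somewhat implicitly). Neither route has a gap that invalidates the argument, but if you keep your version you should supply, or at least precisely cite, the Tor-amplitude detection lemma.
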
 
\begin{proof} For the first claim, by \Cref{bhbh}, it suffices to check that if $M$ is any connective  complete $A$-module with $k
\otimes_A M\in \aperf_k$, then $M $ belongs to $\aperf_A$, which means that  $M$ has finitely generated homotopy groups.
We show inductively 
that the homotopy groups of $M$ are finitely generated. 

Indeed, if $M$ is such a module, then 
$\pi_0(k \otimes_A M ) \simeq k \otimes_{\pi_0(A)} M$ is finitely generated. 
Choose a map $A^r \rightarrow M$ which induces a surjection on $\pi_0(k \otimes_A
\cdot)$ and let $C$ be the cofibre. 
Then \cite[\href{https://stacks.math.columbia.edu/tag/09B9}{Lemma
09B9}]{stacks-project} implies that $\pi_0(A^r) \rightarrow \pi_0(M)$ is
surjective, and $\pi_0(C)=0$.
Therefore, $\pi_0(M)$ is finitely generated.

Now assume $n > 0$, and that for any connective, complete module $M$ with $k
\otimes_A M \in \aperf_k$, we have that $\pi_i(M)$ is finitely generated for $i
< n$. We will show additionally that $\pi_n(M)$ is finitely generated, which
will establish the claim by induction. 
Choose the map $A^r \rightarrow M$ as above. 
The inductive hypothesis shows that 
$\mathrm{fib}(A^r \rightarrow M)$  
has finitely generated homotopy groups $\pi_i, i < n$. 
The long exact sequence now shows that $\pi_n(M)$ is finitely generated. 

Finally, for the second claim, we observe that perfect modules are characterised
as the dualisable objects in $\aperf$, so the second claim follows from the
first. 
\end{proof}

\subsection{Constructing a deformation theory}
Let $A$ be a complete local Noetherian $\einf$-ring with residue field $k$.
Write $\clg_{A//k, \geq 0}$ \INN{032@$\clg_{A//k, \geq 0}$} for the $\infty$-category of 
connective $\einf$-$A$-algebras $B$ equipped with a map $B \rightarrow k$. 
In this subsection, we use this data to construct a deformation theory in the sense of Lurie (cf.\ \cite[Definition 12.3.3.2]{lurie2016spectral}). When $A=k$, this was done in \Cref{setupeinf} above, and we will now indicate the necessary modifications.

First, we consider the adjunction \INN{032@$\cot_A$} \INN{190@$\sqz$}  
\begin{equation} 
\begin{tikzcd}\label{cotsqzA}
	\cot_A :\ \clg_{A//k,\ge 0}
	\arrow[r, shift left=0.65ex]
	& \md_{k,\ge 0}\ : \sqz
	\arrow[l, shift left=0.65ex]
\end{tikzcd}
\end{equation}

where:
\begin{enumerate}[a)]
\item
the left adjoint $\cot_A$ sends $B \in \clg_{A//k, \geq 0}$ to $\cot_A(B)
:= k \otimes_B L_{B/A}  $;
\item  
the right adjoint $\sqz$ sends $V \in \md_{k, \geq 0}$ to 
the object $k \oplus V$, considered as a trivial square-zero $k$-algebra and equipped
with an $A$-algebra structure via $A \rightarrow k$. 
\end{enumerate}
 
\begin{remark} 
As  $\clg_{A//k, \geq 0}$ is not pointed when $A \neq k$, the mixed context does not quite fit 
 into the framework of Section~\ref{axiomaticsec}. However, it will be possible to deduce all   results from the case $A= k$. 
This is possible because the adjunction  \eqref{cotsqzA} above is the composite of   \eqref{stablecotsqz} with the adjunction
$$\clg_{A//k, \geq 0} \rightleftarrows \clg_{k//k, \geq 0}$$ given by base-change
and forgetting. 
In particular,  observe that for any $B \in \clg_{A//k, \geq 0}$, we have
\begin{equation} \label{cotAtocot} \cot_A(B) \simeq \cot( k \otimes_A B).
\end{equation}
\end{remark}

\newcommand{\cn}{\mathrm{cN}}

\begin{notation} 
Let $\clg_{A//k}^{\cn} $\INN{032@$\clg_{A//k}^{\cn} $}  
denote the full
subcategory of $ \clg_{A//k,\geq 0}$ spanned by those objects $B$ which are complete local Noetherian, i.e.\ 
such that $B$ is Noetherian (cf.\ \Cref{def:noeth}) and such that $\pi_0(B)$ is a complete local ring. 
We use similar notation $\SCR_{A//k}^{\cn}$  \INN{190@$\SCR_{A//k}^{\cn}$} when $A$ is a simplicial commutative
ring which is complete local Noetherian with residue field $k$. 
\end{notation} 

\begin{example} 
\label{freeobjectscompleted}
The completion $\widehat{A\left\{x_1, \dots, x_n\right\}}$ of a free
$\einf$-algebra in variables $x_1, \dots, x_n$ over $A$ is an object of $\clg_{A//k}^{\cn}$.
Indeed, these are the free objects of $\clg_{A//k}^{\cn}$: 
if $B \in \clg_{A//k}^{\cn}$, then  
$\hom_{\clg_{A//k}^{\cn}}( \widehat{A\left\{x_1, \dots, x_n\right\}}, B) \simeq
\Omega^\infty \mathfrak{m}_B^n$, where $\mathfrak{m}_B = \mathrm{fib}(B \rightarrow k)$
is the augmentation ideal of $B$. 
\end{example} 
We  begin by observing that any object of $\clg^{\cn}_{A//k}$ can be written
as a geometric realisation of such completed-free objects; while this will not be used in the sequel. For convenience, we state the result
as well for simplicial commutative rings. 
\begin{theorem} \label{freeresolutions}
Let $A$ be a complete local Noetherian $\einf$-algebra (resp. simplicial commutative ring)
augmented over $k$. 
Then any object of $\clg_{A//k}^{\cn}$ (resp. $\SCR_{A//k}^{\cn}$) can be
expressed as the geometric realisation of a simplicial object $X_\bullet$ in
$\clg_{A//k}^{\cn}$ (resp. $\SCR_{A//k}^{\cn}$) where each $X_i$ is the formal
completion of a free algebra over $A$ on finitely many variables in degree $0$. 
\end{theorem}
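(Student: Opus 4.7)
The plan is to build the required simplicial resolution by iterated cell attachment, treating the $\einf$- and simplicial commutative cases uniformly (both rely on the same Noetherian and Cohen-structure input). Given $B \in \clg^{\cn}_{A//k}$, I will construct a simplicial object $X_\bullet$, where each $X_n = \widehat{A\{S_n\}}$ is the completed-free $A$-algebra on a finite set $S_n$ of degree-zero variables, together with an augmentation $X_\bullet \to B$ whose geometric realisation is an equivalence. The completed-free algebras $\widehat{A\{x_1,\dots,x_n\}}$ themselves lie in $\clg^{\cn}_{A//k}$ by \Cref{freeobjectscompleted}, giving the base case.

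The zero-cells come from a standard presentation: the Noetherianity of $\pi_0(B)$ together with the Cohen structure theorem applied to $\pi_0(B) \twoheadrightarrow k$ produces finitely many generators $x_1,\dots,x_{n_0}$ of $\pi_0(\mathfrak{m}_B)$, and these lift (using completeness of $B$) to a morphism $X_0 = \widehat{A\{x_1,\dots,x_{n_0}\}} \to B$ in $\clg^{\cn}_{A//k}$ which is surjective on $\pi_0$ by Nakayama. Inductively, given an $n$-skeletal simplicial object $X_{\leq n}$ of the desired form augmented over $B$, I would extend by attaching finitely many cells (i.e.\ new generators at simplicial level $n+1$) to kill the top homotopy group of the cofibre of $|X_{\leq n}| \to B$. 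The key finiteness input is that these homotopy groups are finitely generated over $\pi_0(B)$: this follows from the almost-perfectness of the relevant cotangent complexes (\Cref{critalmostfinitetype} in the $\einf$-case and its simplicial analogue contained in \Cref{whenisSCRafp}), so finitely many new generators suffice at each stage.

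To verify that $|X_\bullet| \to B$ is an equivalence, I would check that it induces an equivalence on relative cotangent fibres $\cot_A(-)$. A relative analogue of \Cref{detectequivcafp}, extractable from the descent statements \Cref{bhbh} and \Cref{DAPM} (which reduce questions over $A$ to questions over $k$, where the arguments of \Cref{einfdefsectiona} and \Cref{SCR} apply), suffices to conclude $|X_\bullet| \simeq B$ in $\clg^{\cn}_{A//k}$. By design, the cell-attachment procedure makes $\cot_A(|X_\bullet|) \to \cot_A(B)$ an equivalence.

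The principal obstacle is making the ``cell attachment'' rigorous while guaranteeing that the intermediate simplicial stages remain within $\clg^{\cn}_{A//k}$. This rests on the closure properties of $\mathcal{C}_{\afp}$ recorded in \Cref{closureCafp}, together with the coherence axiom (4) of \Cref{filteredrefinementdef}, ensuring stability under the fibre products that arise during the construction. A secondary subtlety is that one must carefully track the relative cotangent fibre $\cot_A(|X_{\leq n}|)$ and its comparison with $\cot_A(B)$ at each stage, especially in the simplicial commutative case where the appropriate derived functors (via the $k^+$-module formalism of \cite[Remark 25.3.3.7]{lurie2016spectral}) must be handled with care.
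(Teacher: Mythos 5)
Your construction of the resolution itself is in the right spirit and matches the paper's: one resolves $B$ by completed-free algebras $\widehat{A\{x_1,\dots,x_n\}}$ on finitely many degree-zero generators, starting from generators of the maximal ideal supplied by Noetherianity and the Cohen structure theorem. The paper packages this via the general hypercovering lemma of the appendix (\Cref{hypercoveringlemma}), taking $\mathcal{F}$ to be the completed-free objects and $S$ the $\pi_0$-surjections; the point of that formalism is that it simultaneously handles the simplicial combinatorics and the convergence. Your sketch leaves both of these as genuine gaps. On the combinatorial side, ``attaching new generators at level $n+1$'' must be done relative to the latching and matching objects, and the existence of the matching objects is not automatic here: $\clg^{\cn}_{A//k}$ is not closed under general finite limits, so one has to argue (as the paper does via \Cref{colimlemma}, using that $S$ is stable under pullback along $S$) that the relevant limits exist and that the map $X_n\to M_n(X_\bullet)$ can be arranged to lie in $S$. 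Nothing in your proposal addresses this.

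The more serious gap is the verification that $|X_\bullet|\simeq B$. Your route through $\cot_A$ is both circular and unjustified: a relative analogue of \Cref{detectequivcafp} only detects equivalences between objects already known to lie in $\clg^{\cn}_{A//k}$, so you would first need to know that $|X_\bullet|$ is complete local Noetherian, which is part of what is being proved; and the claim that the cell-attachment ``by design'' makes $\cot_A(|X_\bullet|)\to\cot_A(B)$ an equivalence does not follow from killing homotopy groups of the underlying cofibre --- if the attachment really kills all homotopy of the cofibre, then $|X_\bullet|\to B$ is already an equivalence of underlying spectra and the cotangent detour is superfluous; if it does not, the cotangent argument cannot rescue it. The paper's convergence argument is different and essentially free: a completed-free object corepresents $\Omega^\infty$ of a power of the augmentation ideal (\Cref{freeobjectscompleted}), so the hypercover condition translates into a hypercover of spaces after applying these corepresentable functors, and hypercovers of spaces are colimit diagrams by \cite[Lemma 6.5.3.11]{lurie2009higher}. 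If you want to avoid the hypercover formalism, the correct substitute is a direct connectivity estimate showing that $\mathrm{sk}_n|X_\bullet|\to B$ becomes highly connected as $n\to\infty$, not an appeal to cotangent fibres or to the descent results \Cref{bhbh} and \Cref{DAPM}.
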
 

\begin{proof} 
We give the proof for $\clg_{A//k}^{\cn}$; the simplicial commutative ring case
is similar. Here we use the notation and language of \Cref{hypercoveringlemma}
below. 
We take $\mathcal{C} = \clg_{A//k}^{\cn}$ and $S$ to be the class of maps $B \to
B'$ which induce surjections on $\pi_0$. 
Note that coproducts in $\mathcal{C}$ are given by completed tensor products. 
Similarly, we take $\mathcal{F}$ to be the class of objects in
$\clg_{A//k}^{\cn}$ which are free on a finite set of generators in degree $0$. 
These play the role of free objects in $\clg_{A//k}^{\cn}$ as in
\Cref{freeobjectscompleted}, so they have the lifting property with respect to
$S$. Thus, we can apply \Cref{hypercoveringlemma} to produce an $(\sF,
S)$-hypercover $X_\bullet$ in $\mathcal{C}$ as desired. 
This is necessarily a colimit diagram, since one can check this after
applying $\Omega^\infty$, and hypercovers are colimit diagrams in
the $\infty$-category $\mathcal{S}$ \cite[Lemma 6.5.3.11]{lurie2009higher}. 
\end{proof}

If $B \in \clg_{A//k}^{\cn}$, then the augmented $\einf$-$k$-algebra $k\otimes_A B$ is complete local Noetherian. It therefore has an almost perfect cotangent fibre by \Cref{noethcotap}, which means  that $\cot_A(B)$ is almost perfect. 
We can therefore restrict \eqref{cotsqzA}  to obtain an adjunction 
 \begin{equation} \label{Afiniteadj} 
 \begin{tikzcd}
 	\cot_A : \clg_{A//k}^{\cn}
 	\arrow[r, shift left=0.65ex]
 	& \coh_{k, \geq 0}: \sqz.
 	\arrow[l, shift left=0.65ex]
 \end{tikzcd}
  \end{equation}
with associated comonad 
$T_A: \coh_{k, \geq 0} \rightarrow \coh_{k, \geq 0}$. Pre- and postcomposing with linear duality as before,  we obtain a monad  $\liepss{A}: \coh_{k, \leq 0} \rightarrow \coh_{k, \leq 0}$  \INN{120@$\liepss{A}$} \INN{040@$\mathfrak{D}_A$}
satisfying
$ \liepss{A}(V)= \cot_A ( \sqz(V^{\vee}))^{\vee}$.

\begin{example} 
Given a complete local Noetherian algebra $B \in \clg_{A//k}^{\cn}$, we set
\mbox{$\cot(B) = L_{k/B}[-1]$.} Note that if $B$ arises from an augmented $k$-algebra by restriction of scalars along $A\rightarrow k$, then $\cot(B)$ agrees with the cotangent fibre considered before.
The natural pushout square in $\clg_{A//k}^{\cn}$ \mbox{given by}
$$\xymatrix{
 B \ar[d] \ar[r]  & k \otimes_A B  \ar[d] \\
k \ar[r] &  k \otimes_A k
}$$
induces a basic cofibre sequence  
$$ \cot(B)  \rightarrow \cot_A(B) \rightarrow \cot(k \otimes_A k). $$
Taking $B = k \oplus V$, we obtain a cofibre
sequence
\begin{equation}  \label{cofibreseqAtok}
 \cot(k \otimes_A k)^{\vee}
\rightarrow \liepss{A}(V)  \rightarrow   \lieps(V)
 , \quad V \in \coh_{k, \geq 0}. 
\end{equation} 
\end{example} 
We can use this to establish a relative version of \Cref{thm:mainaxiomatic} in the context of $\EE_\infty$-rings:  
\begin{theorem} 
\label{Akcomparestuff}
Let $A$ be a complete local Noetherian $\einf$-ring with residue field $k$. 
Then: 
\begin{enumerate}
\item The adjunction \eqref{Afiniteadj} is comonadic.  
\item The monad $\liepss{A}: \coh_{k, \leq 0} \rightarrow \coh_{k, \leq 0}$ from above  extends
to a sifted-colimit-preserving monad $\liepss{A}$ on  $\md_k$.
\item 
The induced functor $\D_A: (\clg^{\cn}_{A//k})^{op} \rightarrow \mathrm{Alg}_{\liepss{A}}$ 
carries pullbacks
of diagrams \mbox{$B \rightarrow B''$}, $B' \rightarrow B''$ inducing surjections on $\pi_0$ to pushouts
of $\liepss{A}$-algebras. 
\end{enumerate}
\end{theorem}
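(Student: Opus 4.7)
The plan is to reduce all three claims to the absolute case $A = k$, already handled in \Cref{einfinitysatisfiesaxiomatic} and \Cref{thm:mainaxiomatic}, by exploiting the base-change adjunction $\clg_{A//k,\geq 0} \rightleftarrows \clg_{k//k,\geq 0}$, the identification $\cot_A(B) \simeq \cot(k \otimes_A B)$ from \eqref{cotAtocot}, the cofibre sequence \eqref{cofibreseqAtok}, and the module-level descent results \Cref{bhbh}, \Cref{DAPM}, and \Cref{theabove} established in the previous subsection.

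For part (1), I will apply the Barr--Beck--Lurie theorem \Cref{BBL} to \eqref{Afiniteadj}. For conservativity: if a map $f \colon B \to B'$ in $\clg^{\cn}_{A//k}$ induces an equivalence on $\cot_A$, then \eqref{cotAtocot} produces an equivalence of cotangent fibres over $k$, and hence an equivalence $k \otimes_A B \xrightarrow{\simeq} k \otimes_A B'$ by the absolute statement \Cref{detectequivcafp}. Since $B$ and $B'$ are complete almost perfect as $A$-modules, \Cref{almostperfnak} then forces $f$ itself to be an equivalence. For the totalisation condition, I take a $\cot_A$-split cosimplicial object $B^\bullet$ in $\clg^{\cn}_{A//k}$; the cosimplicial $k$-algebra $k \otimes_A B^\bullet$ is $\cot$-split by \eqref{cotAtocot}, so the absolute case produces a totalisation $C \in \clg^{\cn}_{k//k}$ compatible with $\cot$. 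I then lift $C$ back to $\clg^{\cn}_{A//k}$: the underlying complete $A$-module totalisation exists and is connective by \Cref{mainconvcritmodules}, the resulting $\einf$-$A$-algebra is complete local Noetherian by an $\mathfrak{m}$-adic analysis via \Cref{cplfilafpimpliesafp}, and the identification $\cot_A(\Tot B^\bullet) \simeq \Tot \cot_A(B^\bullet)$ reduces again to the absolute case after base change along $A \to k$.

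For part (2), the cofibre sequence \eqref{cofibreseqAtok} exhibits $\liepss{A}(V)$ as the fibre of a natural transformation $\lieps(V) \to \Sigma\,\cot(k \otimes_A k)^\vee$ for $V \in \coh_{k,\leq 0}$. The functor $\lieps$ extends to a sifted-colimit-preserving endofunctor of $\md_k$ by the absolute case, and the second term is constant in $V$. An application of the criterion of \Cref{critextendfun} therefore shows that $\liepss{A}$ preserves finite coconnective geometric realisations and is right-complete on $\coh_{k,\leq 0}$, hence extends uniquely to a sifted-colimit-preserving endofunctor of $\md_k$; the monad structure transports uniquely by \Cref{extendmonad}.

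For part (3), I will adapt the proof of \Cref{thm:mainaxiomatic}(3) verbatim. The square-zero case is direct, since $\mathfrak{D}_A(\sqz(V))$ is the free $\liepss{A}$-algebra on $V^\vee$ and the functor $\sqz$ preserves fibre products. The general case follows by resolving $B, B', B''$ via the canonical cobar resolution furnished by part (1) and applying the square-zero case levelwise; the convergence of the resulting filtered pullback uses \Cref{adicfiltcomplete} and \Cref{cor:critforcafp} combined with the completeness inherited from part (1). The main obstacle will be the rigorous verification of the totalisation condition in part (1): one must argue that a cosimplicial limit in $\clg^{\cn}_{A//k}$ genuinely lifts the corresponding limit computed in $k$-algebras, and this is where the descent results \Cref{bhbh} and \Cref{DAPM} for complete almost perfect $A$-modules become indispensable.
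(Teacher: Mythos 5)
Your overall strategy coincides with the paper's: reduce everything to the absolute case $A=k$ via $\cot_A(B)\simeq \cot(k\otimes_A B)$, check Barr--Beck--Lurie using \Cref{mainconvcritmodules} to lift the totalisation back from $k$ to $A$, deduce (2) from the cofibre sequence \eqref{cofibreseqAtok}, and handle (3) by the square-zero/cobar reduction (the paper packages this last step as \Cref{Ucommutewithpushouts}, i.e.\ as the statement that the forgetful functor $U$ commutes with pushouts, but the mechanism is the same).

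There is one step that, as written, does not go through: in the conservativity argument you assert that $B$ and $B'$ are ``complete almost perfect as $A$-modules'' and invoke \Cref{almostperfnak}. A complete local Noetherian object of $\clg^{\cn}_{A//k}$ is generally \emph{not} almost perfect over $A$ --- already for $A=k$ the algebra $k[[t]]$ has infinite-dimensional $\pi_0$ --- so \Cref{almostperfnak} does not apply to the cofibre of $f$. What is true, and what the paper uses (via \cite[Tag 09B9]{stacks-project}), is that $B$ and $B'$ are derived $\mathfrak{m}_A$-complete (their $\pi_0$'s are complete local rings receiving a local map from $\pi_0(A)$), so the cofibre of $f$ is a bounded-below complete $A$-module whose base change to $k$ vanishes, and derived Nakayama for \emph{complete} (rather than almost perfect) modules forces it to vanish. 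With that substitution the conservativity argument, and hence the rest of your proof, matches the paper's.
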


\begin{proof} 
For $A=k$, we have  verified the claim in
\Cref{einfinitysatisfiesaxiomatic}. We will now reduce to this case. 

For part (1), we verify the hypotheses of the comonadicity theorem (cf.\ \Cref{BBL} above). 
First, we observe that $\cot_A$ is conservative. Indeed, $\cot_A$ is the composite of the base-change functor $\clg_{A//k}^{\cn } \rightarrow \clg_{k//k}^{\cn}$ with the cotangent fibre functor $\cot:\clg_{k//k}^{\cn } \rightarrow \md_k$, both of which are  conservative by \cite[\href{https://stacks.math.columbia.edu/tag/09B9}{Lemma 09B9}]{stacks-project} and    \Cref{detectequivcafp}.

Let $B^\bullet$ be a cosimplicial object in $\clg_{A//k}^{\cn}$ such that
$\cot_A(B^\bullet)$ admits a splitting. 
Using the equivalence
$\cot_A(B^\bullet) \simeq \cot(k \otimes_A   B^\bullet)$, we conclude that
$ k \otimes_A B^\bullet$ defines a cosimplicial object of 
$\clg_{k//k}^{\cn }$ such that $\cot( k \otimes_A B^\bullet)$ is split. 
By the comonadicity already proved when $A = k$, it follows that
$\mathrm{Tot}(k \otimes_A B^\bullet)$ is  
complete local Noetherian), and the natural map 
$$\cot( \mathrm{Tot}(k \otimes_A B^\bullet ))
\xrightarrow{ \ \ \simeq \ \ } \mathrm{Tot}( \cot(k \otimes_A B^\bullet ))
$$
is an equivalence. 
Using \Cref{mainconvcritmodules}, we can conclude that
$\mathrm{Tot}(B^\bullet)$ is connective  and that 
$k \otimes_A \mathrm{Tot}(B^\bullet ) \xrightarrow{\simeq} \mathrm{Tot}(k \otimes_A B^\bullet)$
is an equivalence.
Therefore, $\mathrm{Tot}(B^\bullet)$ is also complete local Noetherian and 
$\cot_A( \mathrm{Tot}(B^\bullet)) \xrightarrow{\simeq} \mathrm{Tot}( \cot_A(B^\bullet))$ is an
equivalence. 

Part (2) follows from the case $A =k$ and the cofibre sequence
\eqref{cofibreseqAtok} constructed above.

Finally, part (3) will be proved in \Cref{Ucommutewithpushouts} below. 
\end{proof} 

We can use \Cref{Akcomparestuff} to generalise \Cref{spla} to the mixed setting:
\begin{definition}[Mixed spectral partition Lie algebras] \label{splamixed}
 Given a complete local Noetherian $\EE_\infty$-ring $A$ with residue field $k$, an
 \textit{$(A,k)$-spectral partition Lie algebra} is an algebra over 
$\liepss{A}$.
\end{definition}

\begin{cons}[The forgetful functor from $\liepss{A}$-algebras to
$\lieps$-algebras]\label{forgetfulrelative}
The base-change functor $\clg_{A//k}^{\cn} \rightarrow \clg_{k//k}^{\cn}$ given by $B \mapsto k \otimes_A B $ induces, by using the anti-equivalences established in \Cref{Akcomparestuff}, a functor
$U:  \alg_{\liepss{A}}( \coh_{k, \leq 0}) \longrightarrow \alg_{\lieps}( \coh_{k, \leq
0}).$
By construction, $U$ preserves the forgetful functors to $\coh_{k, \leq 0}$.
We extend this to a functor
$$U: \alg_{\liepss{A}} \rightarrow \alg_{\lieps}. $$
which commutes with the forgetful functor to $\md_k$. For this, we simply 
 left Kan extend from free
$\liepss{A}$-algebras on objects of $\perf_{k, \leq 0}$ (using \Cref{hypTalgebra}).
\end{cons}
\begin{proposition} 
\label{Ucommutewithpushouts}
The forgetful functor $U: \alg_{\liepss{A}} \rightarrow \alg_{\lieps}$ from \Cref{forgetfulrelative}
commutes with pushouts. 
\end{proposition}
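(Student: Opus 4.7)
The strategy is to reduce preservation of pushouts in $\alg_{\liepss{A}}$ to an explicit computation within the coconnective finite-type subcategory, where the Koszul duality equivalence of \Cref{Akcomparestuff} applies. By its construction via left Kan extension from free $\liepss{A}$-algebras on $\perf_{k, \leq 0}$, the functor $U$ preserves sifted colimits and commutes with the forgetful functor to $\md_k$. Any pushout $\mathfrak{g}'' \sqcup_{\mathfrak{g}} \mathfrak{g}'$ in $\alg_{\liepss{A}}$ can be expressed as a sifted colimit of pushouts of free $\liepss{A}$-algebras on objects of $\perf_{k, \leq 0}$, via simultaneous bar resolutions of the three algebras followed by further sifted approximations of the generating modules by perfect ones. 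It therefore suffices to verify that $U$ preserves pushouts among such free algebras.

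By \Cref{noethcotap} combined with the Noetherianness of the trivial square-zero extensions $\sqz_A(V^\vee)$ for $V \in \perf_{k, \leq 0}$, the free $\liepss{A}$-algebras $\free_{\liepss{A}}(V)$ have underlying modules in $\coh_{k, \leq 0}$. Pushouts of such free algebras along $\pi_0$-injective diagrams again lie in $\alg_{\liepss{A}}(\coh_{k, \leq 0})$ by \Cref{pushoutproperty}. Within this subcategory, \Cref{Akcomparestuff} identifies pushouts with $\pi_0$-surjective pullbacks in $\clg_{A//k}^{\cn}$, and identifies the restriction of $U$ with the functor induced by the base change $k \otimes_A -:\clg_{A//k}^{\cn} \to \clg_{k//k}^{\cn}$. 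The problem thus reduces to showing that $k \otimes_A -$ preserves such $\pi_0$-surjective pullbacks. This holds because any $\pi_0$-surjective pullback of connective augmented $A$-algebras can be computed at the level of underlying $A$-modules as the fibre of the induced map from the direct sum to the base, and the derived base change $k \otimes_A -$ is exact on $A$-modules and therefore preserves these fibres; the resulting $k$-algebra structures match because both pullbacks are determined by the multiplication on the common augmentation ideals under base change.

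The main obstacle in executing this plan is reducing arbitrary pushouts to pushouts of free algebras along $\pi_0$-injective maps, since general pushouts in $\alg_{\liepss{A}}$ need not satisfy this injectivity condition required by \Cref{pushoutproperty}. One addresses this by working with a double bar-style interleaving: each map in a pushout diagram is replaced by a factorisation through a mapping cylinder of free algebras, so that each simplicial level of the resulting resolution produces $\pi_0$-injective maps between coconnective finite-type objects in the Koszul duality range. Since $U$ preserves sifted colimits, the additional layers of resolution interact correctly with the argument, and the collection of pushouts preserved by $U$ closes under the relevant operations to yield the full result.
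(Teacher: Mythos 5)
Your overall skeleton --- reduce to pushouts of free algebras by writing everything as a sifted colimit of free $\liepss{A}$-algebras and using that $U$ preserves sifted colimits --- is exactly the paper's first and last step. The problem is the middle. To handle the free case you invoke \Cref{pushoutproperty} for $\liepss{A}$-algebras and the statement that the equivalence of \Cref{Akcomparestuff} ``identifies pushouts with $\pi_0$-surjective pullbacks in $\clg_{A//k}^{\cn}$''. That identification is precisely part (3) of \Cref{Akcomparestuff}, and the paper proves that part \emph{from} the present proposition; likewise \Cref{pushoutproperty} is only established in the axiomatic setting (essentially the case $A=k$), and its relative analogue again rests on \Cref{Akcomparestuff}(3). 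As written, your argument is therefore circular. The non-circular route, which is what the paper does, avoids transporting general pushouts through the duality: since $\free_{\liepss{A}}$ is a left adjoint, the pushout of free algebras is free on $V\sqcup_{V''}V'$, and $U(\free_{\liepss{A}}(W))\simeq \D\bigl(k\otimes_A(k\oplus W^\vee)\bigr)$; the functor $k\oplus(-)^\vee$ turns the pushout of generators into a pullback of square-zero $A$-algebras, $k\otimes_A-$ preserves that pullback (your observation, which is correct), and then one only needs that the absolute Koszul duality functor $\D$ carries the resulting pullback of complete local Noetherian augmented $k$-algebras to a pushout of $\lieps$-algebras --- which is \Cref{thm:mainaxiomatic}(3) in the already-established case $A=k$, not the relative statement you cite.

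Separately, the closing paragraph about repairing $\pi_0$-injectivity by ``a double bar-style interleaving'' with ``mapping cylinders of free algebras'' is not an argument: no construction is given, and it is not explained why the resulting levelwise maps would satisfy the required condition, nor why the extra layers of resolution would still compute the original pushout. On the route above this device is unnecessary, since the only input beyond the free-algebra bookkeeping is \Cref{thm:mainaxiomatic}(3) for $k$.
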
 
\begin{proof}
Suppose that we are given maps $V'' \rightarrow V$ and $V'' \to
V'$ with $V, V', V'' \in \coh_{k, \leq 0}$ such that the induced maps $\pi_0(V'') \rightarrow \pi_0(V)$ and $\pi_0(V'') \to
\pi_0(V')$ are injective. 
Consider the $\liepss{A}$-algebras $\mathfrak{g}'' \hspace{-1pt}\hspace{-1pt}=\hspace{-1pt}
\free_{\liepss{A}}\hspace{-1pt}(V''), \hspace{+1pt}
\mathfrak{g} \hspace{-1pt}\hspace{-1pt}= \hspace{-1pt}\free_{\liepss{A}}\hspace{-1pt}(V),$ and $\mathfrak{g}'\hspace{-1pt}\hspace{-1pt} =\hspace{-1pt} \free_{\liepss{A}}\hspace{-1pt}(V')$. The diagram 
$$ \xymatrix{
U(\free_{\liepss{A}}(V'')) \ar[d] \ar[r] &  U(\free_{\liepss{A}}(V)) \ar[d]  \\
U(\free_{\liepss{A}}(V')) \ar[r] &  
U(\free_{\liepss{A}}(V \sqcup_{V''} V'))
}$$
is a pushout of $\lieps$-algebras, as it is by 
construction equivalent to
$$ 
\xymatrix{
\D(k  \otimes_A  (k \oplus V''^{\vee}) ) \ar[d] \ar[r] & \D( k \otimes_A (k \oplus V^{\vee})
) \ar[d]  \\
\D( k \otimes_A (k \oplus V'^{\vee}) ) \ar[r] &  \D( k \otimes_A (k \oplus (V \sqcup_{V''}
V')^{\vee}) )
},
$$
which is a pushout by \Cref{thm:mainaxiomatic} (applied to the case
of augmented $\einf$-$k$-algebras). Here we have used that   $ \pi_0(k  \otimes_A  (k \oplus V^{\vee})) \rightarrow \pi_0(k  \otimes_A  (k \oplus V''^{\vee}))$ and $ \pi_0(k  \otimes_A  (k \oplus V'^{\vee})) \rightarrow \pi_0(k  \otimes_A  (k \oplus V''^{\vee}))$ are surjective.
As we  can write every span in $\alg_{\liepss{A}}$
as a sifted colimit of spans of the above form, the claim follows as 
$U$ preserves sifted colimits.
\end{proof} 
We now shift attention to the context of simplicial commutative rings, where we fix a 
complete local Noetherian simplicial commutative ring $A$ with residue field $k$.
Since  the arguments will be precisely analogous to the ones given in the previous paragraphs, we will simply state the results. 
\begin{definition} 
Let $\SCR_{A//k}$ be the $\infty$-category of simplicial commutative $A$-algebras $B$ with a map to $k$
factoring the augmentation $A \rightarrow k$. 
Write $\SCR_{A//k}^{\cn}  $ for  the 
full subcategory of $\SCR_{A//k}$ spanned by all complete local Noetherian objects. 
\end{definition} 

\begin{cons}\label{pliemixed}
Consider the adjunction \INN{032@$\cot_{A, \Delta}$} \INN{190@$\sqz$}
$\cot_{A, \Delta}: \SCR_{A//k} \rightleftarrows \md_{k, \geq 0}: \sqz$, whose
the right adjoint is the square-zero functor given by $V \mapsto k \oplus V$. 
This induces a comonad  on $\md_{k, \geq 0}$ and, by dualisation, a monad  \INN{120@$\liepr{A}$}
$\liepr{A}: \coh_{k, \leq 0} \rightarrow \coh_{k, \leq 0}$ satisfying $\liepr{A}(V)=\cot_{A, \Delta} ( \sqz(V^{\vee}))^{\vee}$.
\end{cons}

\begin{theorem} 
\label{AkcomparestuffDelta}
Let $A$ be a complete local Noetherian simplicial commutative ring with residue
field $k$. Then: 
\begin{enumerate}
\item  
The adjunction 
$\cot_{A, \Delta}: \SCR_{A//k}^{\cn} \rightleftarrows \coh_{k, \geq 0}:\sqz$ is
comonadic. 
\item
The induced monad $\liepr{A}: \coh_{k, \leq 0} \rightarrow \coh_{k, \leq 0}$ extends
to a sifted-colimit-preserving monad on $\md_k$. 
\item 
The induced functor
$\D_{A, \Delta}: \SCR_{A//k}^{\cn} \rightarrow \alg_{\liepr{A}}^{op}$ carries pullbacks 
of diagrams  {$B \rightarrow B''$}, $B' \rightarrow B''$ inducing surjections on $\pi_0$ to pushouts
of 
$\liepr{A}$-algebras. 
\end{enumerate}
\end{theorem}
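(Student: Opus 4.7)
The plan is to follow the proof of \Cref{Akcomparestuff} almost verbatim, replacing $\EE_\infty$-rings by simplicial commutative rings and replacing the Koszul duality established for augmented $\EE_\infty$-$k$-algebras by the parallel statement for augmented simplicial commutative $k$-algebras established in \Cref{SCR}. The entire argument hinges on the base change adjunction
$$(k \otimes_A^{\Delta} -) : \SCR_{A//k}^{\cn} \rightleftarrows \SCR_{k//k}^{\cn} : \forget,$$
which lets us reduce every convergence claim to the case $A = k$ and the purely module-theoretic convergence results of \Cref{mainconvcritmodules}. The crucial identity is $\cot_{A,\Delta}(B) \simeq \cot_{\Delta}(k \otimes_A^{\Delta} B)$, which follows from flat base change for the cotangent complex of simplicial commutative rings applied to the triangle $A \to B \to k$.

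For part (1), we verify both conditions of Barr--Beck--Lurie. Conservativity: if $B \to B'$ in $\SCR_{A//k}^{\cn}$ induces an equivalence on $\cot_{A,\Delta}$, then $k \otimes_A^{\Delta} B \to k \otimes_A^{\Delta} B'$ induces an equivalence on $\cot_{\Delta}$; by the SCR analogue of \Cref{detectequivcafp} (which is implicit in the proof of \Cref{simplicialadicSCRconverge} via the adic filtration), this map is an equivalence, and Nakayama's lemma on underlying $A$-modules then implies $B \to B'$ is itself an equivalence. For the split totalisation condition, let $B^\bullet$ be a cosimplicial object of $\SCR_{A//k}^{\cn}$ with $\cot_{A,\Delta}(B^\bullet)$ split. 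Applying the case $A = k$ of  \Cref{SCR} to $k \otimes_A^{\Delta} B^\bullet$ shows that $\Tot(k \otimes_A^{\Delta} B^\bullet) \in \SCR_{k//k}^{\cn}$ and that $\cot_{\Delta}$ preserves this totalisation. \Cref{mainconvcritmodules}, applied to the underlying $A$-modules (the forgetful functor from $\SCR_{A}$ to $\md_A$ preserves limits), gives that $\Tot(B^\bullet)$ is connective and that $k \otimes_A^{\Delta} \Tot(B^\bullet) \xrightarrow{\simeq} \Tot(k \otimes_A^{\Delta} B^\bullet)$. Combining these shows that $\Tot(B^\bullet)$ is Noetherian with $\pi_0$ complete local (hence lies in $\SCR_{A//k}^{\cn}$) and that $\cot_{A,\Delta}$ commutes with this totalisation.

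For part (2), we use the transitivity cofibre sequence for the SCR cotangent complex associated to $A \to B \to k$, namely $\cot_{A,\Delta}(B) \to L_{k/A}^{\Delta} \to L_{k/B}^{\Delta}$. Taking $B = \sqz(V^\vee)$ and dualising, we obtain a fibre sequence
$$\liep(V) \longrightarrow \liepr{A}(V) \longrightarrow \cot_{\Delta}(k \otimes_A^{\Delta} k)^\vee$$
in $\md_k$. The third term is a fixed object of $\md_k$, independent of $V$, and $\liep$ extends to a sifted-colimit-preserving monad on $\md_k$ by \Cref{defliep}. Thus $\liepr{A}$ admits the desired extension, and its monad structure is inherited from the adjunction of \Cref{pliemixed}. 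For part (3), we mimic \Cref{forgetfulrelative,Ucommutewithpushouts}: the forgetful functor $U : \alg_{\liepr{A}} \to \alg_{\liep}$ is defined on the subcategory of $\liepr{A}$-algebras with underlying module in $\coh_{k,\leq 0}$ via base change $\SCR_{A//k}^{\cn} \to \SCR_{k//k}^{\cn}$ (using the anti-equivalence of part (1)), then extended by left Kan extension. Since $U$ preserves sifted colimits, and since the pushout of free $\liepr{A}$-algebras is computed by the $\EE_\infty$-case already proven (via the base change, which preserves pushouts as a left adjoint, and the SCR case of \Cref{thm:mainaxiomatic} applied over $k$), the claim propagates to general $\liepr{A}$-algebras.

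The main obstacle will be a careful bookkeeping issue: verifying that totalisations of cosimplicial objects in $\SCR_{A//k}^{\cn}$ agree on underlying $A$-modules with the corresponding totalisations in $\md_A$, so that \Cref{mainconvcritmodules} can legitimately be applied to control them. This follows because the forgetful functor $\SCR_{A//k} \to \md_A$ preserves all limits, but one has to confirm that the Noetherian and complete local properties of the totalisation (which are detected on $\pi_*$) are preserved, and that the SCR structure assembles correctly---both of which are transparent from the identification $\Tot(B^\bullet) \simeq \lim B^\bullet$ in $\md_A$ combined with the $A = k$ case.
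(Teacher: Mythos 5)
Your proposal is correct and takes essentially the same route as the paper, which in fact gives no separate proof of this theorem: it states only that the arguments are precisely analogous to the $\einf$-case (\Cref{Akcomparestuff}), and your write-up is exactly that translation, with the convergence input correctly located in \Cref{mainconvcritmodules} and the $A=k$ case of the SCR Koszul duality. The one slip is the orientation of the (co)fibre sequence in part (2) — the transitivity triangle for $A \to \sqz(V^{\vee}) \to k$ dualises to $\cot_{\Delta}(k \otimes_A k)^{\vee} \to \liepr{A}(V) \to \liep(V)$, with the constant term as the fibre rather than the cofibre as you wrote — but since in either orientation $\liepr{A}$ differs from $\liep$ by a functor constant in $V$, the extension argument via \Cref{critextendfun} is unaffected.
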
 
We can  therefore  generalise \Cref{defliep} as follows:
\begin{definition}[Mixed partition Lie algebras] \label{splamixed} Given a complete local Noetherian simplicial commutative ring $A$ with residue field $k$, an \textit{$(A,k)$-partition Lie algebra} is an algebra over 
$\liepr{A}$.
\end{definition}

\subsection{Formal moduli problems in mixed characteristic}
Finally, we can  prove that formal moduli problems in mixed characteristic are governed by (possibly spectral) partition Lie algebras. 
\newcommand{\Ds}[1]{\mathfrak{D}_{#1, \Delta}}
For this, we fix a complete local Noetherian $\einf$-ring (respectively simplicial commutative ring) 
 $A$ with residue field $k$. We define a version of the Chevalley-Eilenberg cochains functor in this context:
\begin{cons}[The adjunction $(\D_A, C_A^*)$]
The colimit-preserving functor 
$$\D_A: \clg_{A//k} \rightarrow \mathrm{Alg}_{\liepss{A}}^{op}$$ defined by 
$\D_A(B) := \cot_A(B)^{\vee}$ admits a right adjoint 
$ C^*_A: \mathrm{Alg}_{\liepss{A}}^{op}\rightarrow \clg_{A//k}. $

Similarly, if $A$ is a simplicial commutative ring, the 
cocontinuous functor
$$ \Ds{A}: \SCR_{A//k} \rightarrow \alg_{\liepr{A}}^{op}  $$
defined by  $\Ds{A}(B) := \cot_{ A, \Delta}(B)^{\vee}$ admits a right adjoint
$C^*_{ A, \Delta}: \alg_{\liepr{A}}^{op} \rightarrow \SCR_{A//k}. $
\end{cons}

\begin{theorem} Under the above assumptions, the following statements hold: \INN{040@$\D$} \INN{032@$C^*$}
\begin{enumerate}
\item  
The adjunction $(\D_A, C_A^*)$ restricts to an equivalence
$\clg_{A//k}^{\cn} \simeq \mathrm{Alg}_{\liepss{A}}(\coh_{k, \leq 0})^{op}$. 
\item
If $A$ is additionally a simplicial commutative ring, the 
adjunction $(\Ds{A}, C^*_{\Delta, A})$ restricts to an equivalence
$\SCR_{A//k}^{\cn} \simeq \alg_{\liepr{A}}( \coh_{k, \leq 0})^{op}$. 
\end{enumerate}
\label{DAcompareequiv}
\end{theorem}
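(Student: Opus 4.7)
The plan is to mirror the proof of \Cref{KDequivalences}, substituting \Cref{thm:mainaxiomatic} with its mixed-characteristic refinements \Cref{Akcomparestuff} (for part (1)) and \Cref{AkcomparestuffDelta} (for part (2)). As the two cases are formally identical, I will describe the argument only for (1); part (2) is obtained by replacing $\clg$ with $\SCR$, $\cot_A$ with $\cot_{A,\Delta}$, and $\liepss{A}$ with $\liepr{A}$.

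First I would record the defining identifications on free objects. For $V \in \coh_{k,\geq 0}$ and $W \in \md_k$, one has
$$\mathfrak{D}_A(\sqz(V)) \simeq \free_{\liepss{A}}(V^\vee), \qquad C_A^*(\free_{\liepss{A}}(W)) \simeq \sqz(\tau_{\geq 0}(W^\vee)),$$
directly from the construction $\liepss{A}(W) = \cot_A(\sqz(W^\vee))^\vee$ together with the constituent adjunctions. In particular, the unit $B \to C_A^*\mathfrak{D}_A(B)$ is an equivalence when $B$ is a trivial square-zero extension $\sqz(V)$ with $V \in \coh_{k,\geq 0}$, and the counit is an equivalence on every free $\liepss{A}$-algebra on an object of $\coh_{k,\leq 0}$. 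It also follows that $\mathfrak{D}_A$ restricts to a functor $\clg_{A//k}^{\cn} \to \alg_{\liepss{A}}(\coh_{k,\leq 0})^{op}$, since for $B \in \clg_{A//k}^{\cn}$ the cotangent fibre $\cot_A(B) \simeq \cot(k \otimes_A B)$ lies in $\coh_{k,\geq 0}$ by \Cref{noethcotap}, and hence $\mathfrak{D}_A(B) = \cot_A(B)^\vee$ lies in $\coh_{k,\leq 0}$.

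Next I would extend the unit equivalence to all $B \in \clg_{A//k}^{\cn}$ via the cobar resolution. By comonadicity (\Cref{Akcomparestuff}(1)), $B$ is the totalisation of a canonical cosimplicial object $B^\bullet$ of trivial square-zero extensions $B^n = (\sqz \circ \cot_A)^{\circ (n+1)}(B)$, and this cosimplicial diagram is $\cot_A$-split as part of the comonadic datum. Consequently $\cot_A$ preserves its totalisation, and linear duality converts this into the equivalence $\mathfrak{D}_A(B) \simeq |\mathfrak{D}_A(B^\bullet)|$ in $\alg_{\liepss{A}}^{op}$. Since $C_A^*$ preserves totalisations as a right adjoint, and the unit is already known to be an equivalence at each $B^n$, we conclude that $B \xrightarrow{\simeq} C_A^*\mathfrak{D}_A(B)$ for every $B \in \clg_{A//k}^{\cn}$.

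For the counit and essential surjectivity, I would use that linear duality $(-)^\vee \colon \coh_{k,\geq 0} \simeq \coh_{k,\leq 0}^{op}$ identifies the comonad $T_A = \cot_A \circ \sqz$ with the monad $\liepss{A}$. Combined with the comonadicity equivalence $\clg_{A//k}^{\cn} \simeq \mathrm{coAlg}_{T_A}(\coh_{k,\geq 0})$ of \Cref{Akcomparestuff}(1), this presents every $\mathfrak{g} \in \alg_{\liepss{A}}(\coh_{k,\leq 0})$ as $\mathfrak{D}_A(B)$ for some $B \in \clg_{A//k}^{\cn}$; the counit at $\mathfrak{g}$ is then $\mathfrak{D}_A$ applied to the unit at $B$ already verified in the previous paragraph, and is therefore an equivalence. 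The main obstacle in this approach is the non-automatic preservation by the left adjoint $\mathfrak{D}_A$ of the cobar totalisation of $B$; we circumvent it by invoking the $\cot_A$-splitting intrinsic to the comonadic adjunction and passing through linear duality, so that no direct colimit computation inside $\alg_{\liepss{A}}$ is required.
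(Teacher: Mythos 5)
Your proposal is correct and follows essentially the same route as the paper, which explicitly runs the argument of Proposition \ref{KDequivalences} with Theorem \ref{Akcomparestuff} (resp. Theorem \ref{AkcomparestuffDelta}) in place of Theorem \ref{thm:mainaxiomatic}: the unit is checked on trivial square-zero extensions, extended to all of $\clg_{A//k}^{\cn}$ via the $\cot_A$-split cobar resolution, and essential surjectivity is read off from comonadicity. Your added remarks on why duality converts the preserved totalisation into a geometric realisation of $\liepss{A}$-algebras are details the paper leaves implicit, but they do not change the argument.
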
 
\begin{proof} In both cases we will follow the argument in \Cref{KDequivalences}. 
We will only prove (1) in detail;  assertion (2) can be established by a parallel argument.

The claim essentially follows from the comonadicity established in
\Cref{Akcomparestuff}.  
To show that $\D_A$ is fully faithful on $\clg^{\cn}_{A//k}$, it suffices to check that for any $B \in \clg^{\cn}_{A//k}$, the unit
\begin{equation} \label{compmapDCA} B \rightarrow C_A^*(\D_A(B)) \end{equation} is an equivalence. We first observe that by construction, we have 
$C_A^*( \free_{\liepss{A}}(V))  = \sqz(V^{\vee})$ for all
$V \in \coh_{k, \leq 0}$. 
Thus, the map \eqref{compmapDCA} is an equivalence for $B = \sqz(V)$. Using the comonadicity claim in \Cref{Akcomparestuff}, we can 
write any $B$ as a totalisation of  a cosimplicial diagram of square-zero
extensions via the cobar resolution. It follows that  \eqref{compmapDCA} is an
equivalence  in general. 
This shows that $\D_A$ is fully faithful; since it is also essentially
surjective by \Cref{Akcomparestuff}(1), it follows that $\D_A$ is an equivalence. 
\end{proof} 

We can now prove  that one
obtains a deformation theory in the sense of Lurie (cf.\ \cite[Definition 12.3.3.2]{lurie2016spectral})
for simplicial commutative rings and $\einf$-rings with respect to a complete
base:
\begin{theorem} \label{maina} \ 
\begin{enumerate} 
\item 
Let $A$ be a complete local Noetherian $\einf$-ring with residue field $k$. 
The $\infty$-category $\clg_{A//k}$, 
the infinite loop object  $\{\sqz(k[n]) \in \mathrm{Stab}( \clg_{A//k})\}_{n \geq 0}$, 
the adjunction $(\D_A, C_A^*)$, and the full subcategory 
$\mathrm{Alg}_{\liepss{A}}( \coh_{k, \leq 0}) \subset \mathrm{Alg}_{\liepss{A}}$
form a deformation theory. 
\item 
Let $A$ be a complete local Noetherian simplicial commutative ring, The $\infty$-category $\SCR_{A//k}$, 
the infinite loop object $\{\sqz(k[n]) \in \mathrm{Stab}( \SCR_{A//k})\}_{n \geq
0}$, the adjunction $(\D_{A,\Delta}, C_{A, \Delta}^*)$, and 
the full  subcategory
$\mathrm{Alg}_{\liepp{A}}( \coh_{k, \leq 0}) \subset \mathrm{Alg}_{\liepp{A}}$
form a deformation theory. 
\end{enumerate}
\end{theorem}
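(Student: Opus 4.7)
The plan is to mimic the $A = k$ argument in \Cref{isdeformationtheory}, systematically replacing inputs by their mixed-characteristic counterparts provided by \Cref{Akcomparestuff}, \Cref{AkcomparestuffDelta}, and \Cref{DAcompareequiv}. I give the proof for (1); (2) is entirely parallel.

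First, I would fix the data of a candidate deformation theory. Take $\mathcal{A} = \clg_{A//k}$, the stabilisation object given by $\{\sqz(k[n])\}_{n\geq 0}$ (indexed by the single ``generator'' $\alpha = \ast$), the adjunction $(\mathfrak{D}_A, C^*_A)$, and let $\mathcal{B}_0 \subset \mathrm{Alg}_{\liepss{A}}$ be the full subcategory of $\liepss{A}$-algebras whose underlying $k$-module belongs to $\coh_{k, \leq 0}$. For the distinguished objects in $\mathcal{B}_0$, I would take $K_n := \free_{\liepss{A}}(k[-n])$ for $n \geq 1$; note that the initial object $\emptyset \in \mathrm{Alg}_{\liepss{A}}$ is $\free_{\liepss{A}}(0)$, which trivially belongs to $\mathcal{B}_0$.

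Next, I verify the four axioms of \Cref{defth} in turn. Condition~(a), asserting that $B \to \mathfrak{D}_A C^*_A(B)$ is an equivalence for $B \in \mathcal{B}_0$, is precisely the content of \Cref{DAcompareequiv}(1). For condition~(b), one unwinds the adjunction to observe that $C^*_A(\free_{\liepss{A}}(V)) \simeq \sqz(\tau_{\geq 0}(V^\vee))$ for any $V \in \md_k$; specialising to $V = k[-n]$ yields $C^*_A(K_n) \simeq \sqz(k[n]) = \Omega^{\infty - n} E_\ast$, as required. For condition~(c), suppose $K \in \mathcal{B}_0$ and we are given a map $K_n \to K$. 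Under the anti-equivalence of \Cref{DAcompareequiv}, this corresponds to a map $B \to \sqz(k[n])$ in $\clg_{A//k}^{\cn}$; the pushout $K \sqcup_{K_n} \emptyset$ therefore corresponds to the fibre product $B \times_{\sqz(k[n])} (\ast)$, which lies in $\clg_{A//k}^{\cn}$ since $\sqz(k[n]) \to \ast$ induces a surjection on $\pi_0$. Applying \Cref{Akcomparestuff}(3) then shows that $\mathfrak{D}_A$ carries this pullback to the pushout $K \sqcup_{K_n} \emptyset$ of $\liepss{A}$-algebras, establishing that this pushout lies in $\mathcal{B}_0$. Finally, condition~(d) holds because the functor $f_\ast : \mathrm{Alg}_{\liepss{A}} \to \mathrm{Sp}$ with $\Omega^{\infty-n} f_\ast(X) = \Map_{\mathrm{Alg}_{\liepss{A}}}(K_n, X)$ is (by the free--forget adjunction) the suspension spectrum associated to the underlying $k$-module of $X$, and the forgetful functor $\mathrm{Alg}_{\liepss{A}} \to \md_k$ preserves sifted colimits since $\liepss{A}$ does by construction (\Cref{Akcomparestuff}(2)).

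For part~(2), the same argument applies verbatim using \Cref{AkcomparestuffDelta} in place of \Cref{Akcomparestuff} and \Cref{DAcompareequiv}(2) in place of \Cref{DAcompareequiv}(1); here one takes $K_n = \free_{\liepr{A}}(k[-n])$ and uses that $C^*_{A,\Delta}(\free_{\liepr{A}}(V)) \simeq \sqz(\tau_{\geq 0}(V^\vee))$. The only nontrivial input is the pushout--pullback compatibility, which is exactly \Cref{AkcomparestuffDelta}(3); I expect this to be the main obstacle, but it is already handled in \Cref{Akcomparestuff}(3) via \Cref{Ucommutewithpushouts} and its simplicial analogue, so the proof is complete.
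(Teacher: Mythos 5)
Your proposal is correct and follows exactly the route the paper takes: its own proof is the one-line instruction to combine \Cref{DAcompareequiv} with \Cref{Akcomparestuff} (resp.\ \Cref{AkcomparestuffDelta}), and your verification of conditions (a)--(d) is the evident transcription of the $A=k$ argument from \Cref{isdeformationtheory}. The only blemish is the justification in condition (c), where the relevant $\pi_0$-surjectivity is that of the maps $B \to \sqz(k[n])$ and $k \to \sqz(k[n])$ (automatic for $n \geq 1$ since $\pi_0(\sqz(k[n])) = k$), not of a map ``$\sqz(k[n]) \to \ast$''.
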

\begin{proof} 
Combine 
\Cref{DAcompareequiv} and \Cref{Akcomparestuff} (or \Cref{AkcomparestuffDelta}). 
\end{proof} 

Using the argument of \cite[Proposition 12.1.2.9]{lurie2016spectral}, it follows that the Artinian objects of $\clg_{A//k}$ (respectively $\SCR_{A//k}$) from \Cref{Artinianrelative} are exactly the ones which are Artinian in the  axiomatic deformation theory setup of \cite[Definition 12.1.2.4]{lurie2016spectral}. In other words, they are those
which can be   built from a point by taking iterated fibres of maps to square-zero extensions $\sqz(
k[n])$ \mbox{with $n > 0$.} Arguing as in \cite[Proposition 6.1.4]{lurie2011derivedXII}, we see that a morphism between two Artinian objects is small in the axiomatic sense of  \cite[Definition 12.1.2.4]{lurie2016spectral}
if and only if it is surjective on $\pi_0$. 
This allows us to conclude that \Cref{fmpmixed} agrees with the axiomatic notion of a formal moduli problem
attached to the above deformation problem  (cf.\ \cite[Definition 12.1.3.1, Proposition 12.1.3.2(3)]{lurie2016spectral}).

\begin{cons}[The tangent complex]   \INN{200@$T_X$}
Given a formal moduli problem $X$, we can construct its tangent complex $T_X \in
\md_k$ (cf.\ \cite[Definition 12.2.2.1]{lurie2016spectral}); its underlying spectrum satisfies
$\Omega^{\infty -n}T_X = X( \sqz(
k[n]))$ for all $n \geq 0$. 
\end{cons}
\hspace{-1pt}Combining \Cref{maina} with Lurie's axiomatic  \cite[Theorem 12.3.3.5]{lurie2016spectral}, we can finally deduce:
\begin{theorem} \label{mostgeneral}\ 

\begin{enumerate}
\item Let $A$ be a complete local Noetherian $\einf$-ring. There is an equivalence of $\infty$-categories
$\moduli_{A//k, \einf} \simeq \alg_{\liepss{A}}$. 
\item
Let $A$ be a complete local Noetherian simplicial commutative ring. There is an equivalence of $\infty$-categories
$\moduli_{A//k, \Delta} \simeq \alg_{\liepr{A}}$. 
\end{enumerate}
On underlying objects in $\md_k$, both equivalences send  a formal moduli problem
$X \in \moduli_{A//k}$ to its tangent complex $T_X$. 
\end{theorem}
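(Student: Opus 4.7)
The plan is to apply Lurie's axiomatic classification of formal moduli problems (\cite[Theorem 12.3.3.5]{lurie2016spectral}) to the deformation theories assembled in \Cref{maina}. That theorem guarantees that for any deformation theory $(\mathcal{A}, \{E_\alpha\}, \mathfrak{D}\colon \mathcal{A} \rightleftarrows \mathcal{B}^{op}\colon C^\ast)$ in the sense of \Cref{defth}, the right adjoint $C^\ast$ factors through an equivalence between $\mathcal{B}$ and the $\infty$-category of formal moduli problems attached to the Artinian objects of $\mathcal{A}$. In our situation, $\mathcal{A}$ is $\clg_{A//k}$ (resp.\ $\SCR_{A//k}$), $\mathcal{B} = \alg_{\liepss{A}}$ (resp.\ $\alg_{\liepr{A}}$), and the adjunction is $(\D_A, C^\ast_A)$ (resp.\ $(\D_{A,\Delta}, C^\ast_{A,\Delta})$); these data were verified to form a deformation theory in \Cref{maina}.

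The first task is to reconcile the concrete definitions of Artinian objects and formal moduli problems used in \Cref{Artinianrelative} and \Cref{fmpmixed} with the axiomatic versions appearing in Lurie's framework. Following the reasoning in \cite[Proposition 12.1.2.9]{lurie2016spectral}, I would prove by induction on the Postnikov tower that any $B \in \clg_{A//k}^{\art}$ can be built from the terminal object $k$ by a finite sequence of pullbacks against square-zero maps $0 \to \sqz(k[n])$ with $n > 0$: if $\pi_n(B)$ is the top nonzero homotopy group, there is a square
$$\xymatrix@R=12pt@C=18pt{B \ar[r]\ar[d] & 0 \ar[d]\\ \tau_{\leq n-1}B \ar[r] & \sqz(\pi_n(B)[n+1])}$$
in $\clg_{A//k}$, and $\pi_n(B)$ is itself a finite iterated extension of copies of $k$ (being an Artinian $\pi_0(B)$-module killed by the maximal ideal after enough steps). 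This identifies $\clg_{A//k}^{\art}$ with the axiomatic class of Artinian objects. Arguing as in \cite[Proposition 6.1.4]{luriedagXII} one then identifies the axiomatic ``small morphisms'' with the $\pi_0$-surjective maps, so the pullback condition in \Cref{fmpmixed} coincides with the axiomatic one; the corresponding statement in the simplicial context follows similarly via the forgetful functor to $\einf$-rings.

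With these identifications in place, the asserted equivalences $\moduli_{A//k,\einf} \simeq \alg_{\liepss{A}}$ and $\moduli_{A//k,\Delta} \simeq \alg_{\liepr{A}}$ fall out directly from \cite[Theorem 12.3.3.5]{lurie2016spectral}. Under the equivalence, a formal moduli problem $X$ corresponds to the (spectral or derived) partition Lie algebra $\mathfrak{g}$ characterised by $X(R) \simeq \Map(\D_A(R), \mathfrak{g})$. To compute the underlying $k$-module, I would specialise to $R = \sqz(k[n])$; by the construction of $(\D_A, C^\ast_A)$ in \Cref{theadjunction} combined with \Cref{KDadj}, we have $\D_A(\sqz(k[n])) \simeq \free_{\liepss{A}}(k[-n])$, whence $X(\sqz(k[n])) \simeq \Omega^{\infty-n}\mathfrak{g}$. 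This matches the defining property of the tangent complex $T_X$, proving the last assertion.

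The main obstacle has really been absorbed into the preceding sections: checking that $(\D_A, C^\ast_A)$ together with the chosen infinite loop objects and the subcategory $\alg_{\liepss{A}}(\coh_{k,\leq 0})$ satisfies the comonadicity and pushout-preservation conditions of \Cref{defth}, which was the content of \Cref{Akcomparestuff}, \Cref{AkcomparestuffDelta}, and \Cref{DAcompareequiv}. The only subtlety at this final stage is the matching of ``small/Artinian'' in the two frameworks, which, as noted above, reduces to standard Nakayama-type arguments using that base-change to $k$ is conservative on connective complete $A$-modules (cf.\ \cite[\href{https://stacks.math.columbia.edu/tag/09B9}{Tag 09B9}]{stacks-project} and \Cref{DAPM}).
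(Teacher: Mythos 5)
Your proposal is correct and follows essentially the same route as the paper: reconcile the concrete Artinian objects and formal moduli problems of \Cref{Artinianrelative} and \Cref{fmpmixed} with Lurie's axiomatic versions (via the Postnikov-tower argument of \cite[Proposition 12.1.2.9]{lurie2016spectral} and the identification of small morphisms with $\pi_0$-surjections as in \cite[Proposition 6.1.4]{luriedagXII}), and then apply \cite[Theorem 12.3.3.5]{lurie2016spectral} to the deformation theory established in \Cref{maina}. Your identification of the tangent complex by evaluating on $\sqz(k[n])$ and using $\D_A(\sqz(k[n])) \simeq \free_{\liepss{A}}(k[-n])$ also matches the paper's treatment.
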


\newpage
\section{The homology of  partition Lie algebras}
Away from characteristic zero, partition Lie algebras 
display additional subtleties: 
\begin{example}
For $A\in \SCR_{\FF_p}^{\aug}$  complete local Noetherian, the Frobenius $(x\mapsto x^p)$ on $A$ induces an endomorphism $\phi$ on the partition Lie algebra $ \cot(A)^\vee$.
While $\phi$ is zero as a map of $\FF_p$-modules (as $px^{p-1}=0$), \Cref{DAcompareequiv}  shows that $\phi$  is generally nonzero as a map of partition Lie algebras.
\end{example}
To get a better handle on our Lie algebras, we may wish to consider  Dyer-Lashof-like operations  on their homotopy groups. These are parametrised by the homotopy groups of free Lie algebras:
\begin{cons}
Given a class $\alpha \in \pi_j \left( \Lie_{\FF_p,\Delta}^\pi(\Sigma^{\ell_1} \FF_p\oplus \ldots \oplus \Sigma^{\ell_n} \FF_p)\right)$, we define a universal $n$-ary operation acting on the homotopy groups of any partition Lie algebra $\mathfrak{g}$. 
For this, we send a tuple $(x_1 \in \pi_{\ell_1}(\mathfrak{g}), \ldots , x_n \in \pi_{\ell_n}(\mathfrak{g}))$ to the element $\alpha(x_1,\ldots,x_n) \in \pi_j(\mathfrak{g})$ represented by
$$
\Sigma^{j} \FF_p \xrightarrow{\alpha}\Lie_{\FF_p,\Delta}^\pi(\Sigma^{\ell_1} \FF_p\oplus \ldots \oplus \Sigma^{\ell_n} \FF_p) \xrightarrow{\Lie_{\FF_p,\Delta}^\pi(x_1,\ldots,x_n)} \Lie_{\FF_p,\Delta}^\pi(\mathfrak{g}) \rightarrow \mathfrak{g}.
$$ 
There is a similar construction for spectral partition Lie algebras.
\end{cons}
We will now compute the homotopy groups of free (possibly spectral) partition Lie algebras  \mbox{over $\FF_p$.} Write $B(n_1,\ldots,n_m)$  \INN{020@$B(n_1,\ldots,n_k)$} for the set of Lyndon words in $m$ letters involving the $i^{th}$ letter $n_i$ times (cf.\ \Cref{Lyndon word}).
Given integers $\ell_1,\dots , \ell_m$, we   have the following \vspace{-2pt}results:
\begin{theorem}\label{finalthecohomology} 
The $\FF_p$-vector space $\pi_\ast(\Lie_{\FF_p,\Delta}^\pi(\Sigma^{\ell_1} \FF_p  \oplus \ldots \oplus \Sigma^{\ell_m} \FF_p  ))$ 
 has a basis indexed by sequences $(i_1, \ldots, i_k, e,w)$. Here $w\in B(n_1,\ldots,n_m)$ is a  Lyndon word. We have $e \in \{0,\epsilon\}$, where
 $\epsilon = 1$ if $p$ is odd and $\deg(w):= \sum_i (\ell_i-1)n_i+1$ is even. Otherwise,   $\epsilon = 0$.
  The integers $i_1, \ldots, i_k$ satisfy: 
\begin{enumerate}
\item each $|i_j|$ is congruent to $0$ or $1$ modulo $2(p-1)$; \label{congruence}
\item for all $1\le j<k$, we have   
$p i_{j+1} <i_j < -1$    or $  0 \leq i_j < pi_{j+1}$;
\item we have  $ (p-1)(1+e)\deg(w)-\epsilon \leq i_k < -1$ or $0 \leq i_k \leq (p-1) (1+e)\deg(w) -\epsilon$.
\end{enumerate}
The  sequence $(i_1, \ldots, i_k, e,w)$ sits  in homological degree  $\left((1+e)\deg(w) -e\right) + i_1+\cdots +i_k-k$ and  \mbox{multi-weight $(n_1 p^k (1+e) ,\ldots,n_m  p^k (1+e))$.} \vspace{-2pt}
\end{theorem}
  
 \begin{theorem}\label{finalthecohomologyspectral} 
The $\FF_p$-vector space $\pi_\ast(\Lie_{\FF_p,\EE_\infty}^\pi(\Sigma^{\ell_1} \FF_p  \oplus \ldots \oplus \Sigma^{\ell_m} \FF_p  ))$ 
 has a basis indexed \mbox{by sequences} $(i_1, \ldots, i_k, e,w)$. Here $w\in B(n_1,\ldots,n_m)$ is a  Lyndon word. We have $e \in \{0,\epsilon\}$, where
 $\epsilon = 1$ if $p$ is odd and $\deg(w):= \sum_i (\ell_i-1)n_i+1$ is even. Otherwise,   $\epsilon = 0$.
  The integers $i_1, \ldots, i_k$ satisfy: 
\begin{enumerate}
\item\hspace{-5pt}' \ each $i_j$ is congruent to $0$ or $1$ modulo $2(p-1)$; \label{congruence}
\item\hspace{-5pt}' \ for all $1\le j<k$, we have   $i_j<p i_{j+1}$;
\item\hspace{-5pt}' \  we have  $i_k \leq (p-1) (1+e)\deg(w) -\epsilon $.
\end{enumerate}
The homological degree of $(i_1, \ldots, i_k,e,w)$ is $\left((1+e)\deg(w) -e \right)+i_1+\cdots+i_k-k$ and its   multi-weight is \vspace{-2pt}$(n_1 p^k (1+e) ,\ldots,n_m  p^k (1+e))$. \end{theorem}

Our strategy will closely follow the proof of \cite[Theorem 8.14]{arone2018action}, which essentially computes the homotopy groups of free coconnective partition Lie algebras (for $p=2$, \cite[Theorem 8.14]{arone2018action} also follows from \cite{goerss1990andre}). Our computation relies on many  classical ingredients and insights, which we will reference in detail below. Broadly speaking, we will proceed in three steps:
\begin{enumerate}
\item First, we compute the homotopy groups of a free Lie algebra on an odd class. We use  a  bar spectral sequence and 
the known homotopy groups of symmetric or extended powers.
\item In a second step, we express the homotopy groups of a free Lie algebra  on an  even class in terms of the odd case $(1)$. We rely on the Takayasu cofibration sequence and\mbox{ its strict cousin.}
\item Finally, we give a Hilton-Milnor decomposition for free Lie algebras on many generators, thereby reducing the computation of their homotopy groups to the cases $(1)$ and $(2)$. We rely on a certain splitting of the restriction of partition complexes to Young subgroups.
\end{enumerate}

\begin{remark}
Our additive computation of the operations on spectral partition Lie algebras was later refined by Zhang \cite{zhang2022operations}, who determined their composition structure (using a general method from \cite{brantnerthesis}).
For partition Lie algebras, however, this computation has not yet been carried out.
\end{remark}

\subsection{Free partition Lie algebras on an odd generator}
\label{degeneration} 
The principal aim of this subsection is to compute the homotopy groups of free Lie algebras on a single odd class. We  will establish the following results:
\begin{theorem}\label{odda}
Let $\ell$ be an integer, assumed to be odd if $p$ is. 
Then $\Lie_{\FF_p,\Delta}^\pi(\Sigma^{\ell} \FF_p)$ has a basis given by  all sequences $(i_1, i_2,\ldots i_k)$ satisfying the following conditions:
\begin{enumerate}
\item each $|i_j|$ is congruent to $0$ or $1$ modulo $2(p-1)$;
\item for all $1 \leq j<k$ we have $pi_{j+1}<  i_j  < -1$ or $0 \leq  i_j < p i_{j+1}$;
\item we have $ (p-1) \ell \leq  i_k < -1$ or $0 \leq i_k \leq (p-1) \ell $.
\end{enumerate}
The sequence $(i_1,i_2\ldots i_k)$ lies in homological degree $\ell+i_1+i_2+\ldots + i_k-k$ and weight $p^k$.
\end{theorem}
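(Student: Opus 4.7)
The plan is to transport the problem into a computation about free simplicial and cosimplicial commutative $\FF_p$-algebras, for which classical computations are available. By \Cref{defliep}, $\liep(V)\simeq\cot_\Delta(\sqz(V^\vee))^\vee$ whenever $V\in\coh_{\FF_p,\leq 0}$, so for $\ell\leq 0$ odd the theorem is equivalent to a statement about $\pi_*\cot_\Delta(\sqz(\Sigma^{-\ell}\FF_p))$, which is precisely the content of \cite[Theorem 8.14]{arone2018action}. For $\ell>0$ odd the axiomatic identity no longer applies directly, but the weight decomposition $\liep\simeq\bigoplus_n\liep_n$ together with the explicit formula of \Cref{freeplie} expresses each $\liep_n(\Sigma^\ell\FF_p)$ as a totalisation of strict $\Sigma_n$-fixed points of $\widetilde{C}^\bullet(\Sigma|\Pi_n|^\diamond;\FF_p)\otimes (V^\bullet)^{\otimes n}$ for a cosimplicial model $V^\bullet$ of $\Sigma^\ell\FF_p$; this puts the connective case into essentially the same framework, with a cobar spectral sequence replacing the bar one.

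The required inputs are the classical computations of $\pi_*\lsym^n(\Sigma^m\FF_p)$ and $\pi_*\Gamma^n(\Sigma^m\FF_p)$ due to Dold, Nakaoka, Milgram, and Priddy, which give bases of admissible Dyer-Lashof monomials $Q^I(\iota)$ indexed by sequences $I=(i_1,\ldots,i_k)$ subject to the congruence $|i_j|\equiv 0,1\pmod{2(p-1)}$, admissibility $i_j\leq pi_{j+1}$, and an excess bound relative to $m$. A first observation is that the restriction of multi-weights to $n=p^k$ is automatic: for $\ell$ odd, $\Sigma_n$ acts on $V^{\otimes n}$ by the sign representation, and the $\Sigma_n$-equivariant cohomology of $\Sigma|\Pi_n|^\diamond$ with sign coefficients vanishes unless $n$ is a prime power, a classical fact about complexes of Young subgroups also explicit in the Arone-Brantner framework. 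Feeding the symmetric-power computations into the (co)bar spectral sequence for $\cot_\Delta\circ\sqz$ produces an $E^2$-page indexed by iterated admissible sequences, with one bar shift per iteration accounting for the $-k$ term in the homological degree $\ell+i_1+\cdots+i_k-k$; the mixed inequalities $pi_{j+1}<i_j<-1$ or $0\leq i_j<pi_{j+1}$ arise from alternating bar and cobar constructions and from dualising indices when passing from $\cot_\Delta\circ\sqz$ to $\liep$.

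The heart of the argument will be showing that this spectral sequence degenerates at $E^2$. Following the strategy of \cite[Theorem 8.14]{arone2018action} and ultimately of Arone-Mahowald \cite{arone1999goodwillie}, I would first use discrete Morse theory on the partition complex to pin down the $\Sigma_n$-equivariant cochain complex $\widetilde{C}^*(\Sigma|\Pi_n|^\diamond;\FF_p)$ up to an explicit model, and then exploit the oddness of $\ell$ to place every weight-$p^k$ summand of the $E^2$-page into a single homological degree modulo the bar shift, so that the Dyer-Lashof congruence leaves no room for non-trivial differentials. This degeneration is the main obstacle: matching the combinatorics of the Morse decomposition with the admissibility conditions $(1)$-$(3)$ requires careful bookkeeping of how the $\Sigma_{p^k}$-equivariant structure interacts with the congruence $|i_j|\equiv 0,1\pmod{2(p-1)}$. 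Once it is in place, the $E^2$-page is exactly the claimed basis, with the final bound $(p-1)\ell\leq i_k<-1$ or $0\leq i_k\leq(p-1)\ell$ appearing as the excess condition for the last operation applied to the initial degree-$\ell$ generator.
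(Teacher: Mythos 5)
Your skeleton is the right one — reduce by duality to computing $\pi_*\bigl(F_{\Sigma|\Pi_n|^\diamond}(\Sigma^{-\ell}\FF_p)\bigr)$ and $\pi_*\bigl(F^h_{\Sigma|\Pi_n|^\diamond}(\Sigma^{-\ell}\FF_p)\bigr)$, feed in the Dold--Nakaoka--Milgram--Priddy computations of symmetric/divided powers, and run the spectral sequence of the skeletal (equivalently bar) filtration. But the two steps you flag as "the heart" and "the main obstacle" are exactly where your mechanism diverges from what actually works. The degeneration in the paper is \emph{not} obtained by a sparsity argument from the congruence $|i_j|\equiv 0,1\pmod{2(p-1)}$: that congruence is vacuous at $p=2$, so your proposed mechanism cannot be the right one. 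Instead, the paper invokes the Arone--Dwyer--Lesh theorem on Bredon homology of partition complexes, which says that $\widetilde{\HH}^{\Br}_s(|\Pi_n|^\diamond,\mu)$ vanishes unless $n=p^k$ and $s=k-1$; concentration in a \emph{single filtration degree} is what kills all differentials (and simultaneously gives the restriction of weights to $p^k$, which you instead attribute to a vanishing of sign-coefficient equivariant cohomology that is neither stated correctly nor the argument used). The oddness of $\ell$ enters only in verifying the hypotheses of that theorem for the Mackey functors $X\mapsto\pi_t(F_X(\Sigma^{-\ell}\FF_p))$ — specifically that odd involutions in centralisers of elementary abelian subgroups act by $-1$ — via an Elmendorf-style analysis of the genuine $\Sigma_n$-space $(S^{n\ell})^{\wedge\,\cdot}$; this is Proposition 7.5 in the paper, and nothing in your proposal supplies it.

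The second gap is the identification of the answer once degeneration is known. You assert that "the $E^2$-page is exactly the claimed basis," but the $E^1$-term (iterated admissible Dyer--Lashof monomials, i.e.\ the stabiliser contributions indexed by $p$-enhancements of chains in $\Pi_{p^k}$) is much larger than the answer, and one must compute the homology of the Bredon chain complex. The paper does this by an Euler-characteristic argument: since the homology is concentrated in Bredon degree $k-1$, its dimension equals the Euler characteristic in the Bredon direction, which is computed by restricting to \emph{pure} $p$-enhancements and then running a telescoping binomial cancellation $\sum_{s=r}^{k}(-1)^{s-1}\binom{k-r}{s-r}$ over refinements of ordered partitions of $k$. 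Only the \emph{completely inadmissible} sequences — those violating admissibility at every step, which after reindexing are exactly your conditions (1)--(3) — survive this cancellation. Without this matching argument (or a substitute for it), your proof does not produce the stated basis.
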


\begin{theorem}\label{oddb}
Let $\ell$ be an integer, assumed to be odd if $p$ is. 
Then $\Lie_{\FF_p,\EE_\infty}^\pi(\Sigma^{\ell} \FF_p)$ has a basis given by  all sequences $(i_1, i_2,\ldots i_k)$ satisfying the following conditions:
\begin{enumerate}
\item\hspace{-5pt}' each $i_j$ is congruent to $0$ or $1$ modulo $2(p-1)$;
\item\hspace{-5pt}' for all $1 \leq j<k$ we have $ i_j<p i_{j+1} $;
\item\hspace{-5pt}' we have $  i_k\leq (p-1) \ell$.
\end{enumerate}
The sequence $(i_1,i_2\ldots i_k)$ lies in homological degree $\ell+i_1+i_2+\ldots + i_k-k$ and weight $p^k$.
\end{theorem}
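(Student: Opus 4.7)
The plan is to compute $\lieps(\Sigma^\ell \FF_p)$ via the monadic bar construction applied to the trivial square-zero extension $\sqz(\Sigma^{-\ell}\FF_p) = \FF_p \oplus \Sigma^{-\ell}\FF_p$. When $\ell \leq 0$, the relevant objects are coconnective of finite type, so \Cref{concretespectralplie} identifies $\lieps(\Sigma^\ell\FF_p)$ with the dual of the $\EE_\infty$-cotangent fibre $\cot(\sqz(\Sigma^{-\ell}\FF_p))$. I would then write this cotangent fibre as the geometric realisation $|\Barr_\bullet(1, \free_{\enu}, \Sigma^{-\ell}\FF_p)|$ and dualise to obtain a cobar spectral sequence converging to $\pi_*\lieps(\Sigma^\ell\FF_p)$. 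Extending to general $\ell$ then follows because $\lieps$ preserves sifted colimits (\Cref{spla}) and the computation concerns a single free algebra, allowing a reduction by suitable truncations.

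The input to this spectral sequence is the classical computation of $\pi_*\free_{\enu}(\Sigma^d \FF_p)$ due to Dyer-Lashof \cite{MR0141112} and refined by May and Steinberger \cite{MR836132}: it is the free graded-commutative $\FF_p$-algebra on admissible Dyer-Lashof monomials $Q^{i_1}\cdots Q^{i_k}\iota$ with $\iota$ the generator in degree $d = -\ell$, subject to admissibility $i_j \leq p\,i_{j+1}$, the parity condition $i_j \equiv 0, 1 \pmod{2(p-1)}$ dictated by the internal structure of the Dyer-Lashof algebra, and an excess bound corresponding to vanishing of low-index operations. Since the bar construction computes derived indecomposables, the polynomial part collapses and the admissibility sharpens to the strict inequality $i_j < p\,i_{j+1}$ of condition (2)$'$; the excess bound gets reindexed to $i_k \leq (p-1)\ell$ of condition (3)$'$. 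The homological degree $\ell + i_1 + \cdots + i_k - k$ and weight $p^k$ then follow by additivity of $|Q^i(x)| = |x| + 2i(p-1)$ together with the single suspension shift contributed at each level of the bar construction.

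The main obstacle is showing that this spectral sequence collapses. The parity hypothesis (odd $\ell$ if $p$ is odd) is precisely what ensures that all generators live in a single parity, forcing the potential differentials to vanish for degree reasons; this is the same mechanism underlying the proof of \cite[Theorem 8.14]{arone2018action} for the coconnective partition Lie algebra case. In our spectral setting the argument is in fact conceptually cleaner than in loc.\ cit., because homotopy fixed points $(-)^{h\Sigma_n}$ are exact and the Dyer-Lashof operations act transparently, so we can bypass the discrete Morse theoretic analysis of the partition complex that was needed there to handle strict fixed points. Once collapse is established, the remaining task is a careful bookkeeping exercise matching the May-Steinberger indexing conventions (with their sign and parity shifts) against the grading conventions of the statement, entirely parallel to the computation underlying \Cref{finalthecohomology}.
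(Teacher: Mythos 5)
Your overall frame — dualise, run the bar/cobar spectral sequence for $\cot(\sqz(\Sigma^{-\ell}\FF_p))$ with Dyer--Lashof input, and extract the totally inadmissible monomials — is the right shape, and it is essentially the paper's computation after binomial expansion (the bar construction literally produces the partition complex model of \Cref{concretespectralplie}). But there is a genuine gap at the decisive step: the collapse of the spectral sequence.

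The degeneration is \emph{not} a degree argument forced by the parity of $\ell$. The $E^2$ page you describe contains classes in both parities of total degree (the admissible monomials $Q^{i_1}\cdots Q^{i_k}\iota$ with $i_j\equiv 0$ versus $i_j\equiv 1 \bmod 2(p-1)$, i.e.\ operations and their Bocksteins, spread over all simplicial degrees of the bar construction), so differentials are not excluded for degree reasons. What the paper actually proves is that, in weight $p^k$, the $E^2$ page is concentrated in a \emph{single} simplicial degree $s=k-1$; this is \Cref{BSSdegenerates}, which is an application of the Arone--Dwyer--Lesh theorem on Bredon homology of $|\Pi_n|^\diamond$. The parity hypothesis on $\ell$ enters only as one of the hypotheses of that theorem (odd involutions in centralisers must act by $-1$), not as the mechanism of collapse. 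Crucially, the paper applies this theorem to \emph{both} the strict-orbit functor $F_X$ and the homotopy-orbit functor $F^h_X$: the equivariant analysis of the partition complex is not an artefact of strict fixed points that exactness of $(-)^{h\Sigma_n}$ lets you bypass. Without the concentration statement, your Euler-characteristic-style bookkeeping (cancelling partially admissible monomials to leave the totally inadmissible ones, sharpening $i_j\le p\,i_{j+1}$ to $i_j<p\,i_{j+1}$) only computes the Euler characteristic of the answer in each weight and internal degree, not the homotopy groups themselves.

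A secondary, smaller issue: your reduction of general $\ell$ to $\ell\le 0$ "by suitable truncations" is not automatic. The paper handles all $\ell$ uniformly by working with the right-left–extended functors $F^h_{\Sigma|\Pi_n|^\diamond}$ and checking the Mackey-functor hypotheses of Arone--Dwyer--Lesh for $M=\Sigma^\ell\FF_p$ with either sign of $\ell$ (using Nakaoka/Dyer--Lashof input for $\ell>0$ and the Priddy-type dual computation for $\ell\le 0$). If you want to keep your route, you would need to supply the concentration of the $E^2$ page by some argument of comparable strength.
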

We  carry out these two parallel computations ``weight-by-weight'' by generalising the   argument provided in  \cite[Section 9]{arone2018action}, which is  inspired by \cite[Section 3]{arone1999goodwillie}.
We outline the main steps:
\begin{enumerate}[a)]
\item By duality, it suffices to compute the homotopy groups $\pi_\ast(F_{\Sigma |\Pi_n|^\diamond}(\Sigma^{\ell} \FF_p))$ and  $\pi_\ast(F^h_{\Sigma |\Pi_n|^\diamond}(\Sigma^{\ell} \FF_p))$. The functors \INN{060@$F_X,  F_X^h$}
$  F_{\Sigma |\Pi_n|^\diamond}$ and $F_{\Sigma |\Pi_n|^\diamond}^h$ were constructed in \Cref{bredon}.
\item In a second step, we show that whenever $\ell$ is odd or $p=2$, the Bredon spectral sequences for $\pi_\ast(F_{\Sigma |\Pi_n|^\diamond}(\Sigma^{\ell} \FF_p))$ and  $\pi_\ast(F^h_{\Sigma |\Pi_n|^\diamond}(\Sigma^{\ell} \FF_p))$ degenerate. This follows by applying a result of Arone, Dwyer, and Lesh (cf.\ \cite[Theorem 1.1]{arone2016bredon}). 
\item We then use the known homotopy of symmetric and extended powers to describe 
$\pi_\ast(F_{\Sigma_n/H_+}(\Sigma^{\ell} \FF_p))$ and $\pi_\ast(F_{\Sigma_n/H_+}(\Sigma^{\ell} \FF_p))$ for all subgroups $H\subset \Sigma_n$ arising as stabilisers of points in  $|\Pi_n|^\diamond$.
\item This allows us to compute the above $E^2$-page by applying a combinatorial matching argument.
\end{enumerate}
We will now provide the details of our computation.
\subsubsection*{Duality}\label{du}
Recall that given a genuine pointed $\Sigma_n$-space $X$, we have defined  functors $$  F_X, F_X^h:\mod_{\FF_p}\rightarrow \mod_{\FF_p} $$ which extend the assignments $M \mapsto (\FF_p[X] \otimes M^{\otimes n})_{\Sigma_n}$ and $M \mapsto (\FF_p[X] \otimes M^{\otimes n})_{h\Sigma_n}$ from discrete $\FF_p$-modules to all $\FF_p$-modules in a sifted-colimit-preserving way (cf.\ \Cref{bredon}). 

Combining \Cref{concretespectralplie}, \Cref{freeplie}, and \Cref{duality}, we see that in order to prove \Cref{odda} and \Cref{oddb}, it suffices to compute $\pi_\ast(F_{  \Sigma|\Pi_n|^\diamond}(\Sigma^{\ell} \FF_p))$ and $\pi_\ast(F^h_{  \Sigma|\Pi_n|^\diamond}(\Sigma^{\ell} \FF_p))$ for all $n$, where 
$\ell$ is odd or $p=2$.

\subsubsection*{Degeneration of the Bredon Spectral Sequence}
As explained in \Cref{bredon}, the skeletal filtration on the pointed simplicial $\Sigma_n$-set $  |\Pi_n|^\diamond$ gives rise to spectral sequences converging to $\pi_\ast(F_{ |\Pi_n|^\diamond}(M))$ and $\pi_\ast(F^h_{ |\Pi_n|^\diamond}(M))$. Their $E^2$-pages are given by the  reduced Bredon homology groups of $ |\Pi_n|^\diamond$ with respect to the \INN{130@$\mu_\ast^M, \mu_\ast^{M,h}$}
  graded Mackey functors $$\mu_\ast^M, \mu_\ast^{M,h} :\mathcal{S}_\ast^{\Sigma_n} \rightarrow \vect_{\FF_p \ast}$$ sending  a   $
 \Sigma_n$-set $X$ to the graded $\FF_p$-vector spaces $\pi_\ast(F_X(M))$ and $\pi_\ast(F_X^h(M))$, respectively.

To establish degeneration of the Bredon spectral sequence, we will apply Arone-Dwyer-Lesh's \cite[Theorem 1.1]{arone2016bredon}. We begin by checking that the conditions of this theorem are satisfied:
\begin{proposition}
 For $M=\Sigma^{\ell} \FF_p$ with $\ell$   odd or $p=2$, the   functors $\mu_\ast^M, \mu_\ast^{M,h}$  satisfy:
\begin{enumerate}
\item For any Sylow $p$-subgroup $P\subset \Sigma_n$, projection induces split epimorphisms $$\mu_\ast^M(\Sigma_n/P \times -) \rightarrow \mu_\ast^M(-) \ \ \ \ \mbox{and}    \ \ \ \ \mu_\ast^{M,h}(\Sigma_n/P \times -) \rightarrow \mu_\ast^{M,h}(-)\ .$$
\item If $D\subset \Sigma_n$ is an  elementary abelian $p$-subgroup acting freely and non-transitively, then $\ker\left(C_{\Sigma_n}(D) \rightarrow \pi_0 C_{\GL_n(\RR)}(D)\right)$ acts trivially on $\mu^M_\ast(\Sigma_n/D)$ and $\mu^{M,h}_\ast(\Sigma_n/D)$, \mbox{respectively.}
\item If $p$ is odd, the odd involution  in $C_{\Sigma_n}(D)$ acts as $(-1)$ on $\mu^M_\ast(\Sigma_n/D)$ and $\mu^{M,h}_\ast(\Sigma_n/D)$.
\end{enumerate}
Here $C_G(D)$ denotes the centraliser of $D$ in $G$,  for $D$  a subgroup of $G$.
\end{proposition}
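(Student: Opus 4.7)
The plan is to reduce all three conditions to elementary facts about the sign character $\sgn\colon \Sigma_n \to \{\pm 1\}$, using the standard identifications $F_{(\Sigma_n/H)_+}(M) \simeq (M^{\otimes n})_H$ and $F^h_{(\Sigma_n/H)_+}(M) \simeq (M^{\otimes n})_{hH}$ coming from the induction-restriction adjunction. For $M = \Sigma^\ell \FF_p$, the symmetric monoidal structure on $\mod_{\FF_p}$ gives $M^{\otimes n} \simeq \Sigma^{n\ell}\FF_p$ on which $\Sigma_n$ acts by $\sgn^\ell$; this action is trivial whenever $\ell$ is even or $p = 2$, which is precisely our standing hypothesis.

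For condition (1), I would use an averaging/transfer argument. Setting $r = [\Sigma_n : P]$, which is coprime to $p$ since $P$ is Sylow, the projection $\Sigma_n/P \times X \to X$ induces, via the identification above, the ``norm'' map $(\FF_p[X] \otimes M^{\otimes n})_P \to (\FF_p[X] \otimes M^{\otimes n})_{\Sigma_n}$. Choosing coset representatives $g_1,\dots,g_r$ for $\Sigma_n/P$, the formula $[x]_{\Sigma_n} \mapsto r^{-1}\sum_i [g_i x]_P$ defines a natural section at the spectrum level, and the same construction lifts verbatim to homotopy orbits, yielding the required splitting for both $\mu_\ast^M$ and $\mu_\ast^{M,h}$.

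For conditions (2) and (3), the crucial input is that any freely acting elementary abelian $p$-subgroup $D \subset \Sigma_n$ is contained in $\ker(\sgn)$: for odd $p$, every nontrivial element of $D$ is a product of disjoint $p$-cycles, each of which decomposes into $p - 1$ (an even number of) transpositions; for $p = 2$, the sign character vanishes modulo $p$. Therefore $D$ acts trivially on $M^{\otimes n}$, and I obtain
\[
(M^{\otimes n})_D \simeq \Sigma^{n\ell}\FF_p, \qquad (M^{\otimes n})_{hD} \simeq \Sigma^{n\ell}\FF_p \otimes_{\FF_p} C_\ast(BD;\FF_p).
\]
Since $C_{\Sigma_n}(D)$ commutes with $D$ by definition, its conjugation action on $BD$ is trivial, so its action on both expressions above is detected purely by the sign character on the first tensor factor. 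Condition (3) is then immediate: an odd involution $g \in C_{\Sigma_n}(D)$ has $\sgn(g) = -1$ and therefore acts by $-1$ on both $\mu_\ast^M(\Sigma_n/D)$ and $\mu_\ast^{M,h}(\Sigma_n/D)$.

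For condition (2), the remaining step is the observation that any $g \in C_{\Sigma_n}(D) \subset \GL_n(\RR)$ mapping to the identity in $\pi_0 C_{\GL_n(\RR)}(D)$ lies in the identity component of $C_{\GL_n(\RR)}(D)$, hence satisfies $\det(g) > 0$, and is therefore an even permutation with $\sgn(g) = 1$. The main technical subtlety I anticipate is ensuring that the identifications of Mackey functor values above are compatible with the respective $C_{\Sigma_n}(D)$-actions at the level of $\FF_p$-module spectra rather than just on $\pi_\ast$, and in particular that the homotopy-orbit description genuinely decouples the sign action from the conjugation action on $BD$; once this bookkeeping is settled, everything reduces to the triviality of $\sgn$ on the identity component of $\GL_n(\RR)$.
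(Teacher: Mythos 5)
Your argument for condition (1) is essentially the paper's: the transfer $[x]_{\Sigma_n}\mapsto\sum_i[g_ix]_P$ composed with the projection is multiplication by $[\Sigma_n:P]$, which is prime to $p$, and this natural transformation of degree-$n$ functors on $\vect^\omega_{\FF_p}$ extends to the right-left extensions. That part is fine.

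Conditions (2) and (3), however, rest on an identification that is false. You write $\mu^M_\ast(\Sigma_n/D)=\pi_\ast\bigl((M^{\otimes n})_D\bigr)\cong\pi_\ast(\Sigma^{n\ell}\FF_p)$ on the grounds that $D\subset\ker(\sgn)$ acts trivially on $M^{\otimes n}$ up to homotopy. But $F_{\Sigma_n/D}$ is the (right-left extended) \emph{derived strict orbits} functor $V\mapsto(V^{\otimes n})_D$, which is not a homotopy-invariant functor of the naive $D$-action on $M^{\otimes n}$; its homotopy is large (already for $D=\Sigma_n$ it is $\L\sym^n$, computed by Nakaoka/Priddy in \Cref{priddys}, and the same phenomenon occurs for any nontrivial $D$). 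The analogous claim for $\mu^{M,h}$ also fails in the coconnective range, where $F^h_{\Sigma_n/D}(M)$ is a totalisation of levelwise homotopy orbits rather than the homotopy orbits of $\Sigma^{n\ell}\FF_p$. Consequently the action of $C_{\Sigma_n}(D)$ on these values is \emph{not} ``detected purely by the sign character,'' and your reduction of condition (2) to ``$\sgn(g)=+1$'' proves too much: it would show that every even permutation centralising $D$ acts trivially, which is not what condition (2) asserts and is not true. The paper's proof instead factors $A\mapsto F_{\Sigma_n/D}(\widetilde C_\ast(A,\FF_p))$ through the \emph{genuine} $D$-space $A^{\wedge n}$, so that an element in the identity component of $C_{\GL_n(\RR)}(D)$ acts on $(S^\ell)^{\wedge n}$ by a map that is $D$-equivariantly homotopic to the identity; condition (3) likewise requires checking that $\tau$ acts by $-1$ on $\widetilde H_\ast\bigl((S^{n\ell})^A\bigr)$ for \emph{every} $A\subset D$ and then assembling via Elmendorf's theorem. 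None of this genuine-equivariant input appears in your argument, and it cannot be supplied by sign considerations alone.

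There is also a parity slip that signals the same confusion: you assert the $\Sigma_n$-action on $M^{\otimes n}$ is trivial ``whenever $\ell$ is even or $p=2$, which is precisely our standing hypothesis.'' The hypothesis is $\ell$ \emph{odd} or $p=2$, and for condition (3) one needs the action to be through the (nontrivial) sign character, so that odd involutions can act by $-1$; a trivial action would make condition (3) fail. Your later deduction of (3) from $\sgn(g)=-1$ contradicts your own earlier claim of triviality.
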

\begin{proof} We follow  \cite[Proposition 9.3]{arone2018action}\cite[Example 11.5]{arone2016bredon}, which imply the result \vspace{5pt} for $\ell\geq 0$.

If $X$ is a   finite pointed $\Sigma_n$-set, then  the  transformations of functors $\vect_{\FF_p}^\omega \rightarrow \mod_{\FF_p}$ \vspace{-3pt}   \mbox{given by}
$$
( \FF_p[X] \otimes (-)^{\otimes n})_{\Sigma_n} 
 \xrightarrow{ (\tr\otimes \id^{\otimes n})_{\Sigma_n} \ }   (\FF_p[\Sigma_n/P \times X] \otimes (-)^{\otimes n})_{\Sigma_n} \ 
 \xrightarrow{\ \ \ \ \ \ } \vspace{-3pt}   (\FF_p[X] \otimes (-)^{\otimes n})_{\Sigma_n}
 $$
 $$
( \FF_p[X] \otimes (-)^{\otimes n})_{h\Sigma_n} 
 \xrightarrow{ (\tr\otimes \id^{\otimes n})_{h\Sigma_n}}   (\FF_p[\Sigma_n/P \times X] \otimes (-)^{\otimes n})_{h\Sigma_n} 
  \xrightarrow{\ \ \ \ \ }  \vspace{3pt}   (\FF_p[X] \otimes (-)^{\otimes n})_{h\Sigma_n}
 $$
induce  multiplication by $|\Sigma_n/P|$ on homotopy groups. They are therefore \vspace{2pt} equivalences.

Taking right-left extensions  of these degree $n$ functors (cf.\ \Cref{polyrightext}), we obtain  transformations 
$F_{X} \rightarrow F_{\Sigma_n/P \times X}$ and $F^h_{X} \rightarrow F^h_{\Sigma_n/P \times X}$ such that the two composites
$F_{X} \rightarrow F_{\Sigma_n/P \times X} \rightarrow F_X $ and $F_{X}^h \rightarrow F_{\Sigma_n/P \times X}^h \rightarrow F_X^h $
are equivalences, which \vspace{5pt} clearly implies $(1)$.

For $(2)$, we begin with the  diagram drawn on the left. Its rightmost arrow takes $D$-orbits or $D$-homotopy orbits, respectively.
Freely adding  sifted colimits, we obtain the diagram on the \vspace{4pt} right.
\[ \ \hspace{-10pt}
\begin{tikzcd}
	\Set^{\Fin}_\ast
	\arrow[r, "{A \mapsto A^{\wedge n}}"]
	\arrow[d, "{A \mapsto \FF_p[A]}"']
	& (\Set^{\Fin}_\ast)^D
	\arrow[d, "{A \mapsto \FF_p[A]}"]
	\\
	\vect_{\FF_p}^{\omega}
	\arrow[r, "{V \mapsto V^{\otimes n}}"']
	& (\vect_{\FF_p}^{\omega})^D
	\arrow[r]
	& \mod_{\FF_p}
\end{tikzcd}
\qquad
\begin{tikzcd}
	\mathcal{S}_\ast
	\arrow[r, "{A \mapsto A^{\wedge n}}"]
	\arrow[d, "{A \mapsto \widetilde{C}_\ast(A)}"']
	& P_\Sigma\big((\Set^{\Fin}_\ast)^D\big)
	\arrow[d, "{A \mapsto \widetilde{C}_\ast(A)}"]
	\\
	\mod_{k,\geq 0}
	\arrow[r, "{V \mapsto V^{\otimes n}}"']
	& P_\Sigma\big((\vect_{\FF_p}^{\omega})^D\big)
	\arrow[r]
	& \mod_{\FF_p}
\end{tikzcd}\]\vspace{2pt} 

The lower composite is equivalent to  $F_{\Sigma_n/D}(-)$ or $F_{\Sigma_n/D}^h(-)$, respectively. Hence, the assignment $A \mapsto F_{ \Sigma_n/D}( \widetilde{C}_\ast(A,\FF_p))$ factors through the functor 
sending  a space $A$ to the genuine \mbox{$D$-space $A^{\wedge n}$.}
Replacing   $A\mapsto \FF_p[A]$ by the functor $A \mapsto \FF_p[A]^\vee$ and using \Cref{RKEisright}, a similar argument shows that   $A \mapsto F_{ \Sigma_n/D}( \widetilde{C}^\ast(A,\FF_p))$ factors through the functor  sending  $A$ to the  genuine \mbox{$D$-space $A^{\wedge n}$.}

We can write each $M=\Sigma^{\ell} \FF_p$ as the singular chains or the singular cochains of a sphere $X=S^\ell$ (depending on whether $\ell$ is positive or negative). The above observations therefore show that in order to prove $(2)$, it suffices to check that any  $\sigma \in \ker(C_{\Sigma_n}(D) \rightarrow \pi_0 C_{\GL_n(R)}(D))$  acts on the genuine $\Sigma_n$-space $(S^\ell)^{\wedge n}$ by a map that is $D$-equivariantly homotopic to the identity. This is clear \vspace{5pt} because  any such $\sigma$ lies in the connected component of the identity in $C_{\GL_n(R)}(D)$.

For $(3)$, we first recall that if $X$ is a spectrum with $2$ invertible in $\pi_\ast(X)$, then  \mbox{$\tau:X\rightarrow X$} acts as $(-1)$ on $\pi_\ast(X)$ if and only if $\tau-1$ is an equivalence (cf.\ \cite[Proposition 11.4]{arone2016bredon} and its proof).
Observe that if $p$ is an odd prime and $\tau \in C_{\Sigma_n}(D)$  is an odd permutation of order $2$ \mbox{centralising $D$,} then $\tau$ acts by $(-1)$ on $ \widetilde{H}_\ast\left( (S^{n\ell})^{A},k\right)$ for any subgroup $A\subset D$. This implies that $\tau-1$ induces a quasi-isomorphism on the $\FF_p$-modules  $\widetilde{C}_\ast((S^{ n \ell})^{A},\FF_p)$ and $\widetilde{C}^\ast((S^{ n \ell})^{A},\FF_p)$.  Elmendorf's theorem expresses the $D$-space $S^{n\ell}$ as a homotopy colimit of $D$-spaces of the form $(D/B)_+ \wedge ((S^{n\ell})^{A}$. By the functoriality established in the proof of $(2)$, we can therefore  express $F_{\Sigma_n/D}(M)$ and $F_{\Sigma_n/D}^h(M)$ as   homotopy colimits of $\FF_p$-modules $\widetilde{C}_\ast((S^{n\ell})^{A},\FF_p)$  if $\ell>0$ or of $\FF_p$-modules 
$\widetilde{C}^\ast((S^{n\ell})^{A}, \FF_p)$ if $\ell<0$. Hence $\tau-1$ acts as an equivalence on $F_{\Sigma/D}(M)$ and $F^h_{\Sigma/D}(M)$, which implies the thrid claim.
\end{proof}
Hence, we can apply \cite[Theorem 1.1., Corollary 1.2]{arone2016bredon} to conclude:
\begin{corollary}\label{BSSdegenerates} For $M=\Sigma^{\ell} \FF_p$ with $\ell$ even or $p=2$, the Bredon homology groups 
$$E^2_{s,t}  = \hml^{\Br}_s(   |\Pi_n|^\diamond, \mu_t^M) \ \ \ \ \ \ \ \  E^{2,h}_{s,t}  = \hml^{\Br}_s(  \Sigma|\Pi_n|^\diamond, \mu_t^{M,h})$$ vanish
unless $n=p^k$ for some $k$ and $s=k-1$.  In particular, the spectral sequence 
degenerates and  
$$\pi_{\ast }(F_{\Sigma |\Pi_n|^\diamond}(M)) 
=\pi_{\ast -1}(F_{  |\Pi_n|^\diamond}(M)) = \begin{cases} \ \  \widetilde{\hml}^{\Br}_{k-1}(|\Pi_{p^k}|^\diamond; \mu_{\ast-k}^M)  & \mbox{ if } n = p^k \\
\ \ 0 & \mbox{ else}\end{cases}$$
$$\pi_{\ast }(F^h_{\Sigma |\Pi_n|^\diamond}(M)) 
=\pi_{\ast -1}(F^h_{  |\Pi_n|^\diamond}(M)) = \begin{cases} \ \   \widetilde{\hml}^{\Br}_{k-1}(|\Pi_{p^k}|^\diamond; \mu_{\ast-k}^{M,h}) & \mbox{ if } n = p^k \\
\ \  0 & \mbox{ else}\end{cases}.$$
\end{corollary}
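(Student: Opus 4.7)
The strategy is to invoke \cite[Theorem 1.1]{arone2016bredon} (equivalently its \cite[Corollary 1.2]{arone2016bredon}) directly, using as input the three hypotheses about $\mu_\ast^M$ and $\mu_\ast^{M,h}$ that were just verified in the preceding proposition. The Arone--Dwyer--Lesh result asserts that, whenever a graded Mackey-functor coefficient system on $\Sigma_n$-sets satisfies (i) a split transfer along a Sylow $p$-subgroup, (ii) trivial action of the kernel $\ker(C_{\Sigma_n}(D)\to\pi_0 C_{\GL_n(\RR)}(D))$ for every elementary abelian free non-transitive $D$, and (iii) sign-action of odd involutions when $p$ is odd, the reduced Bredon homology $\widetilde{\HH}^{\Br}_\ast(|\Pi_n|^\diamond,\mu)$ is concentrated in a single simplicial degree and vanishes identically unless $n=p^k$ for some $k\geq 1$.

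First, I would check that the preceding proposition applies termwise in $t$ to the coefficient systems $\mu^M_t$ and $\mu^{M,h}_t$; this is immediate since both the proposition and the corollary concern the same pair of Mackey functors. Given this, I would conclude that the $E^2$-page of the Bredon spectral sequence constructed in \Cref{bredon} from the skeletal filtration of $|\Pi_n|^\diamond$ is concentrated in the single column $s=k-1$ when $n=p^k$, and is identically zero otherwise. Because only one column survives, every differential $d^r$ for $r\geq 2$ must vanish for bidegree reasons, so the spectral sequence degenerates at $E^2$ and the associated filtration on $\pi_\ast(F_{|\Pi_n|^\diamond}(M))$ and $\pi_\ast(F^h_{|\Pi_n|^\diamond}(M))$ is concentrated in a single filtration degree.

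The remaining step is to translate from $|\Pi_n|^\diamond$ to the unreduced--reduced suspension $\Sigma|\Pi_n|^\diamond$. Since $F_{(-)}$ and $F^h_{(-)}$ are sifted-colimit-preserving functors $\mathcal{S}^{\Sigma_n}_\ast\to \End^n_\Sigma(\mod_k)$ (cf.\ \Cref{bredon}), smashing with $S^1$ shifts homotopy by one, giving the two identities $\pi_\ast(F_{\Sigma|\Pi_n|^\diamond}(M))=\pi_{\ast-1}(F_{|\Pi_n|^\diamond}(M))$ and $\pi_\ast(F^h_{\Sigma|\Pi_n|^\diamond}(M))=\pi_{\ast-1}(F^h_{|\Pi_n|^\diamond}(M))$. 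Combining this with the concentration in column $s=k-1$ and the identification of the $E^2$-entry with the Bredon homology in simplicial degree $k-1$ and coefficient degree $\ast-k$ yields the stated formulas.

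The only genuine obstacle I anticipate is careful bookkeeping of indices: one must align the simplicial/Bredon filtration degree, the internal $t$-grading of the Mackey functors, the shift from the reduced--unreduced suspension, and the convention that nonzero reduced Bredon homology lives in degree $k-1$ rather than $k$. Once these shifts are tracked correctly, no further computation is needed; the content of the corollary reduces entirely to the cited Arone--Dwyer--Lesh vanishing theorem applied to the Mackey functors whose hypotheses were verified in the preceding proposition.
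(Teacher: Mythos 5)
Your proposal is correct and follows exactly the paper's route: the paper deduces the corollary by directly invoking \cite[Theorem 1.1, Corollary 1.2]{arone2016bredon} for the Mackey functors $\mu_t^M$, $\mu_t^{M,h}$, whose hypotheses were verified in the preceding proposition, with the single-column degeneration and the one-degree shift $\pi_\ast(F_{\Sigma|\Pi_n|^\diamond}(M))\simeq\pi_{\ast-1}(F_{|\Pi_n|^\diamond}(M))$ handled just as you describe. Your extra care with the index bookkeeping ($s=k-1$, $t=\ast-k$) is exactly the right thing to watch.
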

Hence, it suffices to compute these Bredon homology \vspace{-2pt}groups to establish Theorems \ref{odda} and \ref{oddb}.

\subsubsection*{The Bredon Homology of Stabilisers}
Next, we compute $\pi_\ast(F_{\Sigma_n/H}(M))$ and $\pi_\ast(F^h_{\Sigma_n/H}(M))$ for $H$ the stabiliser of a point in the partition complex $|\Pi_n|$  and $M\in \md_{\FF_p}$  any $\FF_p$-module. This extends  \cite[Section 9.2]{arone2018action}, which is inspired by \cite[Section 3]{arone1999goodwillie}, 
to the coconnective setting. 

We  need several auxiliary additive  \vspace{-2pt}functors:\INN{060@$\mathcal{F}_k,  \mathcal{F}_k^h$}
\begin{definition}\label{fk}
Given  $k\geq 0$,  the functor $\mathcal{F}_k$  sends a graded $\FF_p$-vector space $V$ to the graded $\FF_p$-vector space $\mathcal{F}_k(V)$  generated by symbols   $(i_1, . . . , i_k; v)$, where $v$ is a homogeneous element of $V$ and $i_1, \ldots,  i_k  $ are  integers satisfying the following conditions:
\begin{enumerate}
\item  each $|i_j|$ is congruent to $0$ or $1$ modulo  $2(p-1)$;
\item $i_j \geq p i_{j+1}>p$ \textit{or}  $i_j \leq p i_{j+1} \leq 0$    for all $1\leq j<k$;
\item  If $p$ is odd, \ then $1<i_1 <(p-1)(|v|+i_2+\ldots +i_k)$ \textit{or} 
$ 0 \geq i_1 > (p-1)(|v|+i_2+\ldots +i_k)$;\\ 
If $p$ is even,   then $1<i_1 \leq(p-1)(|v|+i_2+\ldots +i_k)$ \textit{or} $ 0 
\geq i_1  \geq (p-1)(|v|+i_2+\ldots +i_k)$. 
\end{enumerate}
We divide out by the relation $(i_1,...,i_k;u)+(i_1,...,i_k;v)=(i_1,...,i_k;u+v)$. There is  a homological grading on $\mathcal{F}_k(V)$, which puts $(i_1, \ldots, i_k; v)$ in degree $|v| + i_1 + \ldots + i_k$ whenever $v$ is homogeneous of degree $|v|$. Moreover, there is a weight grading putting $(i_1, \ldots, i_k; v)$ in weight $p^k$. 
\end{definition}

\begin{remark}
Observe that either all $i_j$ are strictly larger  than $1$  or all $i_j$ are nonpositive.
\end{remark}

\begin{definition}\label{fhk}
For $k\geq 0$,  the functor $\mathcal{F}_k^h$  sends a graded $\FF_p$-vector space $V$ to the graded $\FF_p$-vector space $\mathcal{F}_k^h(V)$  generated by symbols   $(i_1, . . . , i_k; v)$, where $v$ is a homogeneous element of $V$ and $i_1, \ldots,  i_k  $ are  integers satisfying the following conditions:
\begin{enumerate}
\item\hspace{-5pt}'  each $i_j$ is congruent to $0$ or $-1$ modulo $2(p-1)$;
\item\hspace{-5pt}'   $i_j \leq p i_{j+1} $    for all $1\leq j<k$;
\item\hspace{-5pt}'  if $p$ is odd, \ then $ i_1 >(p-1)(|v|+i_2+\ldots +i_k)$;\\ 
If $p$ is even,   then $ i_1 \geq(p-1)(|v|+i_2+\ldots +i_k)$. 
\end{enumerate}
We again divide out by the relation $(i_1,...,i_k;u)+(i_1,...,i_k;v)=(i_1,...,i_k;u+v)$. The homological grading and the weight grading are  as in \Cref{fk}. \INN{190@$S,  S_n$}
\end{definition}
\begin{definition}Given a homologically graded $\FF_p$-vector space, let $S(V) = \bigoplus_{n\geq 0} S_n(V) $ be the free graded-commutative algebra on $V$ if $p$ is odd and   the free exterior algebra on $V$ if $p=2$. 

Observe that if $V$ is equipped with an additional weight grading, then $S(V)$ is   naturally bigraded.\end{definition}

We will now use the functors  $\mathcal{F}_k$,  $\mathcal{F}_k^h$, and $S$ to give a simple formula for homotopy groups of the symmetric and exterior powers of a given $M\in \md_k$, thereby summarising 
computations of Dold \cite{dold1958homology},  Nakaoka \cite{nakaoka1957cohomology} \cite{nakaoka1957cohomology2}, Milgram \cite{milgram1969homology}, and  Priddy \cite{priddy1973mod} in the ``strict'' case, as well as of Adem \cite{MR0050278}, Serre \cite{MR0060234}, Araki--Kudo \cite{kudo1956topology}, Cartan \cite{MR0065161}  \cite{MR0068219}, Dyer-Lashof \cite{MR0141112}, May, and Steinberger \cite{MR836132} in the ``homotopy orbits" case:

\begin{proposition}\label{priddys} For any   $M\in \md_k$,  there are (unnatural) isomorphisms
$$ \pi_\ast\left( \bigoplus_n F_{\Sigma_n/\Sigma_n}(M)\right) \cong  S\left(\bigoplus_k \mathcal{F}_k(\pi_\ast(M))\right) \ \ \ \ \ \ \ \ \  \pi_\ast\left( \bigoplus_n F^h_{\Sigma_n/\Sigma_n}(M)\right) \cong  S\left(\bigoplus_k \mathcal{F}^h_k(\pi_\ast(M))\right)$$
which respect   the homological   and the weight grading. Here $\mathcal{F}_k(\pi_\ast(M)), \mathcal{F}_k^h(\pi_\ast(M))$ sit in \mbox{weight $p^k$.} 
\end{proposition}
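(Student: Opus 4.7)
The plan is to reduce the assertion to the case $M=\Sigma^{\ell}\FF_p$ and then invoke the classical computations cited in the statement.

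For the reduction I would exploit multiplicativity under direct sums. The assembly $\bigoplus_n F_{\Sigma_n/\Sigma_n}(M)$ is the augmentation ideal of the free nonunital simplicial commutative $\FF_p$-algebra on $M$, and $\bigoplus_n F^h_{\Sigma_n/\Sigma_n}(M)$ is the augmentation ideal of the free nonunital connective $\EE_\infty$-$\FF_p$-algebra on $M$. Both functors are symmetric monoidal from $(\md_{\FF_p},\oplus)$ to $(\md_{\FF_p},\otimes)$, so they carry direct sums of modules to tensor products of algebras; the Künneth formula (trivial over a field) turns this into multiplicativity on $\pi_\ast$. The right hand sides of the two asserted isomorphisms are also multiplicative in $V=\pi_\ast(M)$, since $\mathcal{F}_k$ and $\mathcal{F}^h_k$ are additive in $V$ by inspection of Definitions~\ref{fk}--\ref{fhk}, and $S$ converts direct sums into tensor products by the exponential law for free graded-commutative (or exterior) algebras. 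Since every object of $\md_{\FF_p}$ splits as a direct sum of shifted copies of $\FF_p$, both asserted isomorphisms reduce to the case $M=\Sigma^{\ell}\FF_p$.

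For $\ell\ge 0$ the first isomorphism is the Dold--Cartan--Bousfield--Dwyer computation of the homotopy of the free nonunital simplicial commutative $\FF_p$-algebra on a generator in degree $\ell$, while the second is the Dyer--Lashof--May--Steinberger computation of the mod $p$ homology of the free nonunital connective $\EE_\infty$-$\FF_p$-algebra on $\Sigma^{\ell}\FF_p$. Both answers are free graded-commutative (or exterior at $p=2$) algebras on admissible iterated operations subject to explicit excess bounds in terms of $\ell$, and the only substantive step is to verify that the inequalities of Definitions~\ref{fk} and \ref{fhk} translate the classical admissibility and excess conditions under our indexing convention.

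For $\ell<0$ I would invoke the duality of \Cref{duality}. Since $F_{\Sigma_n/\Sigma_n}$ and $F^h_{\Sigma_n/\Sigma_n}$ are polynomial functors of finite degree, hence right-extendable by \Cref{polyrightext}, their right-left extensions on coconnective perfect modules satisfy $F(\Sigma^{\ell}\FF_p)\simeq F^\vee(\Sigma^{-\ell}\FF_p)^\vee$, where $F^\vee$ is obtained by replacing strict orbits $(-)_{\Sigma_n}$ by strict fixed points $(-)^{\Sigma_n}$ in the first case, and homotopy orbits by homotopy fixed points in the second. Dualising the classical formulas of the previous paragraph, which in the first case gives the homotopy of free \emph{divided-power} commutative algebras and in the second the output of the Tate construction for smash powers, yields exactly the pieces indexed by $\mathcal{F}_k$ and $\mathcal{F}^h_k$ with negative-degree generators. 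The main obstacle will be the combinatorial bookkeeping needed to match the classical admissibility and excess inequalities with the piecewise conditions of Definitions~\ref{fk} and \ref{fhk}; the sign flip as $|v|$ changes sign is precisely what produces the two disjoint regimes $i_j\ge pi_{j+1}>p$ versus $i_j\le pi_{j+1}\le 0$ in \Cref{fk} and the analogous asymmetry in \Cref{fhk}.
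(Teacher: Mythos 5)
Your reduction to $M=\Sigma^{\ell}\FF_p$ and your treatment of $\ell\ge 0$ match the paper, which likewise splits $M$ into shifted copies of $\FF_p$ using that both functors commute with filtered colimits and send finite direct sums to tensor products, and then quotes Nakaoka for the strict case and Dyer--Lashof/May--Steinberger for the homotopy case. The divergence, and the gap, is in the coconnective case.

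For $\ell<0$ you propose to conclude by ``dualising the classical formulas of the previous paragraph''. This dualises the wrong functor. As you correctly set up via \Cref{duality}, the right-left extension satisfies $\widetilde{\sym^n}(\Sigma^{\ell}\FF_p)\simeq \bigl(\L\Gamma^n(\Sigma^{-\ell}\FF_p)\bigr)^{\vee}$ and $F^h_{\Sigma_n/\Sigma_n}(\Sigma^{\ell}\FF_p)\simeq \bigl(((\Sigma^{-\ell}\FF_p)^{\otimes n})^{h\Sigma_n}\bigr)^{\vee}$; the required inputs are therefore the left-derived \emph{divided} powers and the homotopy \emph{fixed points} of smash powers on a \emph{positive} generator. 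The previous paragraph computed $\L\sym^n$ and $(-)^{\otimes n}_{h\Sigma_n}$ on a positive generator; dualising those gives $\widetilde{\Gamma^n}$ and homotopy fixed points evaluated on a \emph{negative} generator, which is not the functor in question. The answers genuinely differ: in the strict case the two regimes of \Cref{fk} (all $i_j>1$ with $i_j\ge pi_{j+1}$, versus all $i_j\le 0$ with $i_j\le pi_{j+1}$) index different sets of sequences, and in the homotopy case the norm map $(M^{\otimes n})_{h\Sigma_n}\to (M^{\otimes n})^{h\Sigma_n}$ has nonvanishing Tate cofibre over $\FF_p$, so the dimensions cannot agree degreewise. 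You thus still need an independent classical input for the coconnective regime. The paper supplies it directly: Priddy's computation of the right derived functors of $\sym^n$ is literally the value $\Tot(\sym^n(V^\bullet))$ of the right Kan extension on a coconnective module, and the homotopy-fixed-point analogue is taken from May--Steinberger and Cohen--Lada--May. Either cite these, or make the duality route honest by separately computing $\L\Gamma^n(\Sigma^m\FF_p)$ (e.g. via the d\'ecalage equivalences relating $\L\Gamma^n$, $\L\bigwedge^n$ and $\L\sym^n$ under suspension) and $((\Sigma^m\FF_p)^{\otimes n})^{h\Sigma_n}$ for $m\ge 0$.
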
 
\begin{remark}Note that the functor  $ F_{\Sigma_n/\Sigma_n}(M)$ computes the (suitably derived) $n^{th}$ symmetric power of $M$, whereas $F^h_{\Sigma_n/\Sigma_n}(M)$ computes its $n^{th}$ extended power.
\end{remark}
\begin{warning}
These are isomorphisms of bigraded vector spaces; they \textit{do  not} respect the multiplicative structure. In fact, they are not even functorial in $M$, as we should really use divided power functors on the left. However, this will not cause any problems for us, since we will only need  a dimension count of the weighted pieces. We therefore adopt this simpler approach for notational convenience.   
\end{warning}
\begin{proof}[Proof of Proposition  \ref{priddys}]
After picking a basis, we may identify $M$ with a direct sum of shifts of $\FF_p$. Since all functors commute with filtered colimits and send finite direct sums to tensor products, it   suffices to check the claim for $\FF_p$-module spectra of the form $M=\Sigma^{\ell} \FF_p$, where $\ell$ is any integer.

For $\ell> 0$,  the vector space $\mathcal{F}_k(\Sigma^{\ell} \FF_p)$ is the $k^{th}$ summand of the free simplicial commutative \mbox{$\FF_p$-algebra} on one generator in degree $\ell$. The work of Nakaoka (cf.\ \cite{nakaoka1957cohomology} \cite{nakaoka1957cohomology2})  therefore shows that it has a basis given by all sequences $(i_1,\ldots,i_k)$ satisfying the following conditions: 
\begin{enumerate}
\item  each $i_j$ is congruent to $0$ or $1$ modulo $2(p-1)$;
\item $i_j \geq p i_{j+1}>p$;
\item  If $p$ is odd, \ then $1<i_1 <(p-1)(v+i_2+\ldots +i_k)$;\\
If $p$ is even,   then $1<i_1 \leq(p-1)(v+i_2+\ldots +i_k)$. 
\end{enumerate} 

For $\ell \leq   0$, the $\FF_p$-vector space $\mathcal{F}_k(\Sigma^{\ell} \FF_p)$ agrees with the $k^{th}$ summand in the free cosimplicial $\FF_p$-vector space on a generator in degree $\ell$. The work of Priddy (cf.\  \cite[Theorem 4.1]{priddy1973mod}) 
therefore shows that $\mathcal{F}_k(\Sigma^{\ell} \FF_p)$ has a basis given by  all sequences $(i_1,\ldots,i_k)$ satisfying the following conditions: 
\begin{enumerate}
\item  each $i_j$ is congruent to $0$ or $-1$ modulo $2(p-1)$;
\item   $i_j \leq p i_{j+1} \leq 0$    for all $1\leq j<k$;
\item  if $p$ is odd, \ then  
$ 0 \geq i_1 > (p-1)(v+i_2+\ldots +i_k)$;\\ 
If $p$ is even,   then $ 0 
\geq i_1  \geq (p-1)(v+i_2+\ldots +i_k)$. 
\end{enumerate}

Corresponding statements for  $G^h$ are described on p.298 of \cite{MR836132} and \mbox{p.16 of \cite{cohen1978homology}.}
\end{proof}
\begin{remark}
Note that for $p=2$, the cited sources state the result in a slightly different,
yet equivalent, form, which uses a strict inequality for the excess and the free symmetric algebra functor.
\end{remark}
Write $P(n)$ \INN{160@$P(n)$}for the set of sequences $(a_0,a_1, \dots)$  of natural numbers satisfying  $n= \sum_{k\geq 0} a_k p^k$. Restricting  attention to a specific weight and expanding \Cref{priddys} binomially, we deduce:
\begin{corollary}\label{sf}
For each $n\geq 0$ and any $M\in \md_k$, there are isomorphisms
$$ \pi_\ast\left( F_{\Sigma_n/\Sigma_n}(M)\right) \cong  \bigoplus_{(a_0,a_1,\dots) \in P(n)} \left(\bigotimes_{k\geq 0} S_{a_k} \left(  \mathcal{F}_k(\pi_\ast(M))\right)\right)$$
$$ \pi_\ast\left( F^h_{\Sigma_n/\Sigma_n}(M)\right) \cong  \bigoplus_{(a_0,a_1,\dots) \in P(n)}\left( \bigotimes_{k\geq 0} S_{a_k} \left(  \mathcal{F}^h_k(\pi_\ast(M))\right)\right).$$
\end{corollary}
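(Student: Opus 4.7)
The plan is to extract \Cref{sf} as a direct consequence of \Cref{priddys} by a weight-piece bookkeeping. The starting point is the isomorphism
\[
\pi_\ast\Bigl( \bigoplus_n F_{\Sigma_n/\Sigma_n}(M)\Bigr) \;\cong\; S\Bigl(\bigoplus_k \mathcal{F}_k(\pi_\ast M)\Bigr),
\]
which respects both the homological grading and the weight grading; a completely parallel formula holds for $F^h_{\Sigma_n/\Sigma_n}$ with $\mathcal{F}_k$ replaced by $\mathcal{F}^h_k$. Since the grading by $n$ on the left matches the weight grading on the right, it suffices to isolate the weight-$n$ summand of the right-hand side.

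For this, I would first use that, up to this non-natural isomorphism, $S$ is either a free graded-commutative algebra (for $p$ odd) or a free exterior algebra (for $p=2$) on the displayed direct sum. In both cases, $S$ converts direct sums into tensor products, so
\[
S\Bigl(\bigoplus_k \mathcal{F}_k(\pi_\ast M)\Bigr) \;\cong\; \bigotimes_{k\geq 0} S\bigl(\mathcal{F}_k(\pi_\ast M)\bigr) \;\cong\; \bigotimes_{k\geq 0}\;\bigoplus_{a_k\geq 0} S_{a_k}\bigl(\mathcal{F}_k(\pi_\ast M)\bigr).
\]
By construction, $\mathcal{F}_k(\pi_\ast M)$ is concentrated in weight $p^k$, so $S_{a_k}(\mathcal{F}_k(\pi_\ast M))$ sits in weight $a_k p^k$. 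The total weight of an elementary tensor indexed by $(a_0, a_1, \ldots)$ is therefore $\sum_{k\geq 0} a_k p^k$.

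Collecting the summands of total weight $n$ gives precisely the indexing set $P(n) = \{(a_0,a_1,\ldots)\mid n = \sum a_k p^k\}$, yielding
\[
\pi_\ast\bigl(F_{\Sigma_n/\Sigma_n}(M)\bigr) \;\cong\; \bigoplus_{(a_0,a_1,\ldots)\in P(n)}\;\bigotimes_{k\geq 0} S_{a_k}\bigl(\mathcal{F}_k(\pi_\ast M)\bigr),
\]
and the same argument with $\mathcal{F}^h_k$ in place of $\mathcal{F}_k$ handles the homotopy orbit case. There is no genuine obstacle here: the only point to keep in mind is that the ambient isomorphism of \Cref{priddys} is not multiplicative or natural in $M$, but this is irrelevant for the additive identification of weighted pieces, which is all that is claimed.
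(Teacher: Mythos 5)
Your argument is correct and is exactly the paper's own deduction: the paper obtains the corollary from \Cref{priddys} by "restricting attention to a specific weight and expanding binomially," which is precisely your use of $S(\bigoplus_k V_k)\cong\bigotimes_k S(V_k)$ together with the fact that $\mathcal{F}_k(\pi_\ast M)$ sits in weight $p^k$. Your remark that the non-naturality of the isomorphism in \Cref{priddys} is harmless for this additive bookkeeping is also the right point to flag.
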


We   compute $\pi_\ast\left( F_{\Sigma_n/K_\sigma}(M)\right)$ and $\pi_\ast\left( F_{\Sigma_n/K_\sigma}^h(M)\right)$ for $K_\sigma$ the stabiliser of any simplex $$\sigma = [\ \hat{0}<x_1<\ldots<x_i<\hat{1}\ ]$$ in the doubly suspended partition complex  $\Sigma |\Pi_n|^\diamond$, where the integer $n \geq 1$ is fixed throughout. We will   use \cite[Definition 9.12]{arone2018action}, which is a variant of \cite[Definition 1.10]{arone1999goodwillie}:

\begin{definition}
A \textit{$p$-enhancement} of a chain of partitions  $\sigma = [ \ \hat{0}<x_1<\ldots<x_i<\hat{1}\ ]$  consists of a refining chain 
$$ \Theta= [\ \hat{0} \leq e_1  \leq x_1 \leq \ldots \leq e_i \leq x_i \leq e_{i+1} \leq \hat{1}\ ] $$
such that the following two conditions hold true: 
\begin{enumerate}
\item The number of $x_a$-classes contained in a given $e_{a+1}$-class is a power of $p$.
\item Given   $x_a$-classes $S_1$ and $S_2$, we can define chains of partitions of $S_1$ and $S_2$ by \mbox{restricting $\Theta$.} If $S_1,S_2$ lie in the same $e_{a+1}$-class, then these restricted chains are isomorphic, by which we mean that they lie in the same $\Sigma_n$-orbit.
\end{enumerate}
\end{definition}

Two $p$-enhancements   are said to be \textit{isomorphic} if they lie in the same $\Sigma_n$-orbit.
We can then define endofunctors from  enhancements as follows (cf.\ \cite[Definition 9.13]{arone2018action}):
\begin{definition}
Assume  we are given a chain $\sigma = [\ \hat{0}<x_1<\ldots<x_i<\hat{1}\ ]$ and an isomorphism class of \mbox{$p$-enhancements} of $\sigma$ represented by 
  $\Theta =    [\ \hat{0} \leq e_1  \leq x_1 \leq \ldots \leq e_i \leq x_i \leq e_{i+1} \leq \hat{1}\ ] $. 
\INN{200@$[\Theta],  [\Theta]^h$}
We define  endofunctors $[\Theta]$ and $[\Theta]^h$  
on graded $\FF_p$-vector spaces by the following rules:
\begin{itemize}
\item If $i=0$ and $[\Theta] = [\ \hat{0} \leq e_1 \leq \hat{1}\ ]$ with $e_1$ having $a_j$ classes of size $p^j$ for all  $j\geq 0$, we  set $$ [\Theta](C) := \bigotimes_j S_{a_j}(\mathcal{F}_j(V)) \  \ \ \  \  \ \ \ \ \ \ \ \ \ \ \ \ \ \ \ [\Theta]^h(C) := \bigotimes_j S_{a_j}(\mathcal{F}_j^h(V)). \ \ \ \ \  \  \ \ \ \  $$
\item If $i>0$,  assume that restricting the chain $\Theta$ to the classes of $e_{i+1}$ gives $a_1$ chains of isomorphism type $1$, $a_2$ chains of isomorphism type $2$, etc\ldots .
Suppose that each $e_{i+1}$-class of type $t$ contains $p^{b_t}$ many $x_i$-classes, and write $\Theta_t$ for the restriction of $\Theta$ to any $x_i$-class contained in an $e_{i+1}$-class of type $t$. We then define $$[\Theta](V) := \bigotimes_t S_{a_t}( \mathcal{F}_{b_t}([\Theta_t](V)))
\ \ \ \ \ \  \ \ \ \  \ \ [\Theta]^h(V) := \bigotimes_t S_{a_t}( \mathcal{F}^h_{b_t}([\Theta_t]^h(V))) .\ \ $$

\end{itemize}
The functors $[\Theta]$ and $[\Theta]^h$ are well-defined as the above construction only depends on the isomorphism class of the $p$-enhancement $\Theta$.
\end{definition}

Using this notation, we can generalise \cite[Proposition 9.14]{arone2018action} and describe the Bredon homology of stabilisers in the partition complex:
\begin{proposition}\label{stabiliserdec}
Let $K_\sigma\subset \Sigma_n$ be  the stabiliser of a simplex $\sigma = [\ \hat{0}<x_1<\dots<x_i<\hat{1}\ ]$ in $\Sigma |\Pi_n|^\diamond$ and write $E[\sigma]$ for the set of isomorphism classes of its $p$-enhancements.

For any $M\in \md_k$, there are isomorphisms
$$ \pi_\ast(F_{\Sigma_n/K_\sigma}(M)) = \bigoplus_{[\Theta] \in E[\sigma]} [\Theta](\pi_\ast(M)) \ \ \ \ \ \ \ \ \ \ \ \ \ \  \pi_\ast(F^h_{\Sigma_n/K_\sigma}(M)) = \bigoplus_{[\Theta] \in E[\sigma]} [\Theta]^h(\pi_\ast(M))$$
\end{proposition}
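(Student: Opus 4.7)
The plan is to argue by induction on the length $i$ of the chain $\sigma = [\hat{0}<x_1<\dots<x_i<\hat{1}]$, exploiting the recursive structure of the stabiliser $K_\sigma$ together with the explicit homotopy computations for symmetric and extended powers recorded in \Cref{priddys}. Throughout, I will treat the strict case $F_{\Sigma_n/K_\sigma}$; the homotopy orbits variant runs in exact parallel with $\mathcal{F}_k$ replaced by $\mathcal{F}_k^h$ and ordinary orbits replaced by homotopy orbits.

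First, I would make the following recursive description of $K_\sigma$ explicit. Let the $x_i$-classes fall into isomorphism types under the restriction of $\sigma$: say there are $a_t$ classes of type $t$, each of size $s_t$, on which $\sigma$ restricts to a chain $\sigma_t$ of length $i-1$ with stabiliser $K_{\sigma_t}\subset \Sigma_{s_t}$. Then
\[
K_\sigma \;\cong\; \prod_t \bigl(K_{\sigma_t}\wr \Sigma_{a_t}\bigr),
\]
and the projection formula together with the identification of $(M^{\otimes n})_{G}$ with $F_{\Sigma_n/G}(M)$ on finite-dimensional $M$ yields an isomorphism of functors $\vect_{\FF_p}^\omega\to\mod_{\FF_p}$
\[
F_{\Sigma_n/K_\sigma}(-)\;\cong\;\bigotimes_t F_{\Sigma_{a_t}/\Sigma_{a_t}}\!\left(F_{\Sigma_{s_t}/K_{\sigma_t}}(-)\right).
\]
Both sides are degree-$n$ polynomial functors, so by the universal property in \Cref{polyrightext} the same isomorphism of functors persists after right-left extension to all of $\mod_{\FF_p}$.

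Second, I would combine this with the inductive hypothesis. By induction, $\pi_\ast(F_{\Sigma_{s_t}/K_{\sigma_t}}(M))\cong \bigoplus_{[\Theta_t]\in E[\sigma_t]}[\Theta_t](\pi_\ast(M))$ as bigraded $\FF_p$-vector spaces. Applying \Cref{sf} to the outer symmetric power in each factor gives
\[
\pi_\ast(F_{\Sigma_{a_t}/\Sigma_{a_t}}(F_{\Sigma_{s_t}/K_{\sigma_t}}(M))) \;\cong\; \bigoplus_{(b_0,b_1,\dots)\in P(a_t)}\bigotimes_{k\geq 0} S_{b_k}\!\left(\mathcal{F}_k\!\left(\bigoplus_{[\Theta_t]\in E[\sigma_t]}[\Theta_t](\pi_\ast(M))\right)\right),
\]
and a Künneth expansion converts the outer tensor over $t$ into a direct sum indexed by tuples (one partition of $a_t$ into power-of-$p$ blocks for each type $t$, plus a choice of $p$-enhancement of $\sigma_t$ on each of the resulting blocks).

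The remaining step is the combinatorial matching: I claim that these indexing tuples are in canonical bijection with $E[\sigma]$. A $p$-enhancement $\Theta = [\hat{0}\leq e_1\leq x_1\leq \dots\leq e_i\leq x_i\leq e_{i+1}\leq \hat{1}]$ of $\sigma$ is precisely the data of (i) a partition $e_{i+1}$ of the set of $x_i$-classes into blocks of power-of-$p$ size, compatible with isomorphism type as demanded, together with (ii) a common $p$-enhancement of $\sigma_t$ on all $x_i$-classes lying in a given $e_{i+1}$-block. Condition (i) encodes the $P(a_t)$-datum together with the choice of $b_k$'s recording how many $e_{i+1}$-classes have $p^k$ many $x_i$-classes in them, while (ii) is exactly the choice of $[\Theta_t]\in E[\sigma_t]$ per block. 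Chasing definitions, the summand of the Künneth expansion indexed by this tuple matches $[\Theta](\pi_\ast(M))$ on the nose.

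The main obstacle will be the bookkeeping for the combinatorial bijection in the last step: one has to make sure that the grouping of $x_i$-classes by isomorphism type (which governs the $K_\sigma$ decomposition) is compatible with the grouping by $e_{i+1}$-class size and enhanced type (which governs the $E[\sigma]$ indexing), and that the numerical factors $a_t$, $b_k$, and the assignments of $[\Theta_t]$ match across the two sides without over- or undercounting. Once the bijection is set up correctly, the identification of summands is tautological from the inductive hypothesis and \Cref{sf}, and the homotopy-orbits case follows line-by-line with $\mathcal{F}_k^h$ in place of $\mathcal{F}_k$.
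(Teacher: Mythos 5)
Your proposal is correct and is essentially the argument the paper has in mind: the paper's proof simply defers to the proof of \cite[Proposition 9.14]{arone2018action}, which proceeds exactly as you describe, via the wreath-product decomposition of the stabiliser $K_\sigma$, the resulting identification of $F_{\Sigma_n/K_\sigma}$ as a tensor product of symmetric powers of smaller such functors (persisting under right-left extension), \Cref{sf} applied inductively, and the combinatorial reindexing by $p$-enhancements. The bookkeeping you flag at the end is indeed the only real content, and your description of the bijection (grouping $x_i$-classes by unenhanced type, then by $e_{i+1}$-block size and choice of enhancement per block, matching the paper's grouping by isomorphism type of restricted enhanced chain) is the right one.
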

\begin{proof}
This follows formally   from \Cref{sf} by precisely the same argument as used in the proof of  \cite[Proposition 9.14]{arone2018action}.
\end{proof} 

\subsubsection*{The Bredon Homology of the Partition Complex}
Let $\mathcal{P}_n$ be the poset of partitions of $\{1,\ldots,n\}$. Observe that
$|\Pi_n|^\diamond$ is $\Sigma_n$-equivariantly equivalent to the  realisation of the pointed simplicial set 
 $$T_\bullet := N_\bullet(\mathcal{P}_n- \{\hat{0}\})/N_\bullet(\mathcal{P}_n- \{\hat{0},\hat{1}\}).$$
Its nondegenerate $i$-simplices are either the basepoint or correspond to chains of partitions $$\sigma = [\ \hat{0}< x_1<\ldots<x_i < \hat{1}\ ].$$

The  groups $\widetilde{\hml}^{\Br}_\ast(|\Pi_n|^\diamond, \mu_t^M)$ and $\widetilde{\hml}^{\Br}_\ast(|\Pi_n|^\diamond, \mu_t^{M,h})$ are   given by  
 the homology  of the normalised chain complexes  $\widetilde{C}^{\Br}_\ast(|\Pi_n|^\diamond, \mu_t^M)$, $\widetilde{C}^{\Br}_\ast(|\Pi_n|^\diamond, \mu_t^{M,h})$ of the  
  simplicial abelian groups $ \mu_t^M(T_\bullet)$, $ \mu_t^{M,h}(T_\bullet)$.
  
The $i^{th}$ degree of the chain complexes $\widetilde{C}^{\Br}_\ast(|\Pi_n|^\diamond , \mu_t^M)$ and $\widetilde{C}^{\Br}_\ast(|\Pi_n|^\diamond, \mu_t^{M,h})$ can be decomposed with the help of  \Cref{stabiliserdec} as a direct sum   indexed by isomorphism classes of $p$-enhancements $$\Theta =  [\ \hat{0} \leq e_1  \leq x_1 \leq \ldots \leq e_i \leq x_i \leq e_{i+1} \leq \hat{1}\ ].$$

 In fact, we can discard most $p$-enhancements. Let us  call a $p$-enhancement as above   \textit{pure} if $e_j = x_j$ for all $1\leq j \leq i+1$.
Extending \cite[Proposition 9.19]{arone2018action} to our setting, we have:
\begin{proposition}\label{puresummands} For each $t$, the  Euler characteristics
of $\widetilde{\hml}^{\Br}_\ast(|\Pi_n|^\diamond, \mu_t^M)$  and
$\widetilde{\hml}^{\Br}_\ast(|\Pi_n|^\diamond, \mu_t^{M,h})$ agrees with the
Euler characteristic of the submodules of
$\widetilde{C}^{\Br}_\ast(|\Pi_n|^\diamond, \mu_t^M)$ and
$\widetilde{C}^{\Br}_\ast(|\Pi_n|^\diamond, \mu_t^{M,h})$ spanned by all
summands corresponding to pure $p$-enhancements.
\end{proposition}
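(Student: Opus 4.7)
The plan is to construct a sign-reversing matching (a discrete Morse-type pairing) on the set of non-pure $p$-enhancements which changes simplicial degree by $\pm 1$ and preserves the associated functor $[\Theta]$ (respectively $[\Theta]^h$) up to isomorphism. Summing the dimensions of the corresponding summands of $\widetilde{C}^{\Br}_\ast$ with alternating signs then forces them to cancel pairwise, leaving only the contribution of pure $p$-enhancements to the Euler characteristic. This is the direct adaptation of the argument of \cite[Proposition 9.19]{arone2018action} to our coconnective/truncated-below setting, and the essential point is that Propositions \ref{stabiliserdec} and \ref{priddys} already express everything in combinatorial terms that are oblivious to the sign of $\ell$, so the same matching works verbatim in both the strict and the homotopy orbit case.

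More concretely, given a non-pure $p$-enhancement
\[
\Theta \;=\; [\ \hat{0} \leq e_1 \leq x_1 \leq \ldots \leq e_i \leq x_i \leq e_{i+1} \leq \hat{1}\ ]
\]
of a chain $\sigma = [\hat{0} < x_1 < \ldots < x_i < \hat{1}]$, let $j$ be the smallest index with $e_j \neq x_j$. I would distinguish two cases. If $e_j > x_{j-1}$ strictly, insert $e_j$ into $\sigma$ to obtain a longer chain $\sigma'$ (with $i+1$ partitions) equipped with the unique $p$-enhancement $\Theta'$ that repeats $e_j$ at the new position and agrees with $\Theta$ elsewhere; this is again non-pure and its smallest bad index is still $j$, but now with $e_j = x_{j-1}'$ on the new, refined chain. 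If $e_j = x_{j-1}$, delete $x_{j-1}$ from $\sigma$ to get a shorter chain $\sigma''$ and the induced enhancement $\Theta''$. One checks (exactly as in the Arone reference) that these two moves are inverse to one another, so they define a fixed-point-free involution on non-pure enhancements which increments/decrements the simplicial degree by $1$.

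The key verification is that the functors attached to $\Theta$ and its partner agree: both $[\Theta]$ and $[\Theta]^h$ are built recursively from the block-size data of the enhancement layers $(e_j \leq x_j \leq e_{j+1})$, and the insert/delete move above precisely shuffles a trivial layer (in which $e_j = x_{j-1}$, contributing an $\mathcal{F}_0$ or $\mathcal{F}_0^h$ factor which reduces to an ordinary symmetric or exterior product on the underlying data) in or out without altering any of the relevant block decompositions. Consequently $[\Theta](V) \cong [\Theta'](V)$ and $[\Theta]^h(V) \cong [\Theta']^h(V)$ as bigraded vector spaces, and the two matched summands contribute with opposite signs to the Euler characteristic of the chain complex.

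The main obstacle will be a careful bookkeeping of the boundary cases $j=1$ and $j=i+1$, where one must ensure that the insert/delete operation still lands in the poset of $p$-enhancements (in particular that the ``power-of-$p$'' and ``isomorphic restriction'' conditions remain satisfied) and that one genuinely has an involution rather than only a bijection onto another proper subset. Once this combinatorial matching is set up, the proposition follows formally, because the Euler characteristic of any bounded chain complex equals that of any subcomplex whose complement supports such a degree-shifting dimension-preserving involution.
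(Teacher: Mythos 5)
Your overall strategy --- a degree-shifting, functor-preserving pairing on the non-pure enhancements --- is indeed the mechanism behind the result (the paper itself gives no proof beyond deferring to \cite[Proposition 9.19]{arone2018action}), and your insert/delete moves do correctly pair an enhancement whose first offending layer has $x_{j-1}<e_j<x_j$ strictly with an enhancement on an adjacent chain whose offending layer repeats a \emph{proper} chain element. But there is a genuine gap: your involution cannot be defined on all non-pure enhancements, and the leftover ones do not cancel --- they must \emph{vanish}, for a reason your sketch never invokes. Concretely, take $n=p$ and the unique non-basepoint $0$-simplex, the chain $[\hat 0<\hat 1]$, with the non-pure enhancement $e_1=\hat 0$. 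Its summand is $S_p(\mathcal{F}_0(\pi_\ast M))=S_p(\pi_\ast M)$, and the element it ``repeats'' is the endpoint $\hat 0$, which is not a deletable chain element; more generally every non-pure enhancement whose first bad index is $j=1$ with $e_1=\hat 0$ is unpaired under your moves. These summands contribute zero only because $M=\Sigma^{\ell}\mathbb{F}_p$ is one-dimensional with $\ell$ odd (or $p=2$), so that $S_a$ of a single odd-degree class vanishes for $a\ge 2$. For a general $M$ the statement is simply false: already for $n=2$, $p=2$ and $\dim_{\mathbb{F}_p}\pi_\ast M=2$, the chain complex is concentrated in a single degree and the non-pure summand $\Lambda^2(\pi_\ast M)\neq 0$ has nothing to cancel against. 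So your claim that the argument is ``oblivious to the sign of $\ell$'' and ``works verbatim'' misses the one place where the standing parity/one-dimensionality hypothesis on $M$ is essential, and your closing principle (``the Euler characteristic equals that of any subcomplex whose complement supports such an involution'') does not apply, because the complement does not support one.

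Two smaller points. First, your description of the insertion move is off by one: inserting $e_j$ as a new chain element makes the new position $j$ pure and shifts the first bad index to $j+1$, now of repeat type $e'_{j+1}=x'_j$; as stated (``smallest bad index is still $j$, with $e_j=x'_{j-1}$'') the move is not even consistent with $e_j>x_{j-1}$. Second, and more importantly, the $j=1$ issue above is not the ``careful bookkeeping of boundary cases'' you anticipate at the end: it requires a separate vanishing argument using the hypothesis on $M$, not a repaired bijection. With that ingredient added (pairing for the offending layers that repeat a proper partition, vanishing of $S_a$, $a\ge 2$, on a single odd generator for the rest), the proof is complete and agrees with the cited argument.
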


We can now establish the main results of this section:
\begin{proof}[Proof of   \Cref{odda} and \Cref{oddb}]
By the observations on duality made in the beginning of this section on page \pageref{du}, it suffices to compute  $\pi_t(F_{\Sigma|\Pi_n|^\diamond}(\Sigma^{l} \FF_p))$ and $\pi_t(F^h_{\Sigma|\Pi_n|^\diamond}(\Sigma^{l} \FF_p))$ for $l = -\ell$.

By \Cref{BSSdegenerates}, these groups vanish if $n$ is not a power of $p$. If $n=p^k$, then the dimension of these groups is given by  the Euler characteristics of 
$H^{\Br}_\ast(|\Pi_{p^k}|^\diamond, \mu_{t-k}^M)$  and $\hml^{\Br}_\ast(|\Pi_{p^k}|^\diamond, \mu_{t-k}^{M,h})$.

\Cref{puresummands} shows that these dimensions agree with the Euler
characteristics of the \textit{submodules}   of
$\widetilde{C}^{\Br}_\ast(|\Pi_{p^k}|^\diamond, \mu_{t-a}^M)$ and
$\widetilde{C}^{\Br}_\ast(|\Pi_{p^k}|^\diamond, \mu_{t-a}^{M,h})$ spanned by all
summands corresponding to pure $p$-enhancements. 
As in \cite[Theorem 9.1]{arone2018action}, we are therefore reduced to computing the Euler characteristics of the following bigraded abelian groups in  ``Bredon-direction''.\vspace{5pt}

\noindent
\begin{tikzcd}[column sep=2.2em]
	\mathbf{(1)} &
	\mathcal{F}_k(\Sigma^{l}\FF_p) &
	\bigoplus_{k_1+k_2=k} \mathcal{F}_{k_1}\mathcal{F}_{k_2}(\Sigma^{l}\FF_p) &
	\cdots &
	\mathcal{F}_{1}\cdots\mathcal{F}_{1}(\Sigma^{l}\FF_p)
\end{tikzcd}

\noindent
\begin{tikzcd}[column sep=2.2em]
	\mathbf{(2)} &
	\mathcal{F}^h_k(\Sigma^{l}\FF_p) &
	\bigoplus_{k_1+k_2=k} \mathcal{F}^h_{k_1}\mathcal{F}^h_{k_2}(\Sigma^{l}\FF_p) &
	\cdots &
	\mathcal{F}^h_{1}\cdots\mathcal{F}^h_{1}(\Sigma^{l}\FF_p)
\end{tikzcd}

To begin with, we use \Cref{fk} to see that the summand $\mathcal{F}_{k_1}\ldots \mathcal{F}_{k_r}(\Sigma^{l} \FF_p)$ in $\mathbf{(1)}$   has a basis consisting of all sequences $(i_1,\ldots, i_k)$ satisfying the following properties:
\begin{enumerate}
\item  each $|i_j|$ is congruent to $0$ or $1$ modulo $2(p-1)$;
\item $i_j \geq p i_{j+1}>p$ \textit{or}  $i_j \leq p i_{j+1} \leq 0$    for all $1\leq j<k$ with $j \neq k_1, k_1+k_2, \ldots$;
\item  if $p$ is odd,  then for $t=0,\ldots, r-1$, we have \\
 $_{\ }$ \  \  $1<i_{k_1+\ldots + k_t+1} <(p-1)(l+i_{k_1+\ldots + k_t+2}+\ldots +i_{k_1+\ldots + k_r})$\\ \textit{or} 
$ 0 \geq i_{k_1+\ldots + k_t+1} > (p-1)(l+i_{k_1+\ldots + k_t+2}+\ldots +i_{k_1+\ldots + k_r})$;\\ 
If $p$ is even,  then for $t=0,\ldots, r-1$, we have \\
 $_{\ }$ \  \     $1<i_{k_1+\ldots + k_t+1} \leq(p-1)(l+i_{k_1+\ldots + k_t+2}+\ldots +i_{k_1+\ldots + k_r})$\\ \textit{or} $ 0 
\geq i_{k_1+\ldots + k_t+1}  \geq (p-1)(l+i_{k_1+\ldots + k_t+2}+\ldots +i_{k_1+\ldots + k_r})$.
\end{enumerate}
Observe that if $l > 0$, then $i_j  > 1$ for all $j$, whereas if $l \leq 0$, then $i_j\leq 0$ for all $j$.\\

Fix a sequence $(i_1,\ldots,i_k)$ satisfying the  conditions above, but such that for  $j = k_1, k_1+k_2, \ldots$, we have $1< i_j < p i_{j+1} $ \textit{or}  $0 \geq i_j > p i_{j+1}$. Informally speaking, $(2)$ is violated whenever possible.

This sequence $(i_1,\ldots,i_k)$ appears exactly once as a basis element in
$\mathcal{F}_{m_1} \ldots \mathcal{F}_{m_s}(\Sigma^{l} \FF_p)$ for any ordered
partition $m_1+\ldots +m_s = k$ of the number $k$ refining the ordered partition
$k_1+\ldots+k_r=k$.  Counting the number of such refinements, we see that the
total contribution of  $(i_1,\ldots,i_k)$ to  the Euler characteristic in
``Bredon direction'' is $$\sum_{s=r}^k (-1)^{s-1} {{k-r}\choose{s-r}}.$$ This
alternating sum is zero  for $k\neq r$. For $k=r$, it is equal to $(-1)^{k-1}$.
In this case,  $(i_1,\ldots,i_k)$ indexes a basis element in $\mathcal{F}_1
\ldots \mathcal{F}_1 (\Sigma^{l} \FF_p)$. Hence, the absolute value of the Euler
characteristic ``in Bredon direction" is  equal to the number of sequences $(i_1,\ldots,i_k)$
 satisfying the following:
 \begin{enumerate}
\item  each $|i_j|$ is congruent to $0$ or $1$ modulo $2(p-1)$;
\item $1<i_j < p i_{j+1} $ \textit{or}  $0 \geq i_j > p i_{j+1} $    for all $1\leq j<k$;
\item  if $p$ is odd,  then for $t=0,\ldots, k-1$, we have \\
 $_{\ }$ \  \  $1<i_{t+1} <(p-1)(l+i_{t+2}+\ldots +i_{k})$ \textit{or} 
$ 0 \geq i_{t+1} > (p-1)(l+i_{t+2}+\ldots +i_{k})$;\\ 
If $p$ is even,  then for $t=0,\ldots, k-1$, we have \\
 $_{\ }$ \  \     $1<i_{t+1} \leq(p-1)(l+i_{t+2}+\ldots +i_{k})$  \textit{or} $ 0 
\geq i_{t+1}  \geq (p-1)(l+i_{t+2}+\ldots +i_{k})$.
\end{enumerate}
We observe that   these conditions can be rephrased as follows:
\begin{enumerate}
\item each $|i_j|$ is congruent to $0$ or $1$ modulo $2(p-1)$;
\item for all $1 \leq j<k$ we have $1 < i_j < pi_{j+1}$ or $p i_{j+1} < i_j \leq 0$;
\item we have $1 < i_k \leq  (p-1) l$ or $ (p-1) l \leq  i_k\leq 0$.
\end{enumerate} 
 \Cref{odda} follows by replacing each $i_j$ by its   inverse for the sake of notational convenience.

 \vspace{5pt}
We compute the ``Bredon Euler characteristic'' of the complex  $\mathbf{(2)}$  above by a similar method. Using   \Cref{fhk}, we see that the summand $\mathcal{F}^h_{k_1}\ldots \mathcal{F}^h_{k_r}(\Sigma^{l} \FF_p)$  has a basis consisting of all sequences $(i_1,\ldots, i_k)$ satisfying the following properties:
\begin{enumerate}
 \item\hspace{-5pt}'  each $i_j$ is congruent to $0$ or $-1$ modulo $2(p-1)$;
\item\hspace{-5pt}' $i_j \leq p i_{j+1}$     for all $1\leq j<k$ with $j \neq k_1, k_1+k_2, \ldots$;
\item\hspace{-5pt}'  if $p$ is odd,  then for $t=0,\ldots, r-1$, we have \\
 $_{\ }$ \  \  $i_{k_1+\ldots + k_t+1} > (p-1)(l+i_{k_1+\ldots + k_t+2}+\ldots +i_{k_1+\ldots + k_r})$;\\ 
if $p$ is even,  then for $t=0,\ldots, r-1$, we have \\
 $_{\ }$ \  \     $i_{k_1+\ldots + k_t+1} \geq(p-1)(l+i_{k_1+\ldots + k_t+2}+\ldots +i_{k_1+\ldots + k_r})$.
\end{enumerate}

We fix a sequence $(i_1,\ldots,i_k)$ satisfying the four conditions above,  such that for  $j = k_1, k_1+k_2, \ldots$, we have $i_j > p i_{j+1}$. Again, the sequence 
  appears  exactly once as a basis element in $\mathcal{F}^h_{m_1} \ldots \mathcal{F}^h_{m_s}(\Sigma^{l} \FF_p)$ for any ordered partition $m_1+\ldots +m_s = k$ of $k$ refining $k_1+\ldots+k_r=k$. As above, we see that   these copies have vanishing contribution to the Euler characteristic unless $k=r$, in which case they contribute  $(-1)^{k-1}$. Hence, the 
absolute value of the Euler characteristic in Bredon direction is equal to the number of sequences $(i_1,\ldots,i_k)$
 satisfying:
 \begin{enumerate}
\item\hspace{-5pt}' each $i_j$ is congruent to $0$ or $-1$ modulo $2(p-1)$;
\item\hspace{-5pt}' $ i_j > p i_{j+1} $ for all $1\leq j<k$;
\item\hspace{-5pt}' if $p$ is odd,  then for $t=0,\ldots, k-1$, we have 
  $i_{ t+1} > (p-1)(l+i_{  t+2}+\ldots +i_{k})$;\\ 
if $p$ is even,  then for $t=0,\ldots,k-1$, we have 
     $i_{t+1} \geq(p-1)(l+i_{t+2}+\ldots +i_{k})$.
\end{enumerate}
To conclude the proof of \Cref{oddb}, we check that   these conditions are equivalent to \mbox{the following:}
\begin{enumerate}
\item\hspace{-5pt}'  each $i_j$ is congruent to $0$ or $-1$ modulo $2(p-1)$;
\item\hspace{-5pt}'  for all $1 \leq j<k$ we have $p i_{j+1} < i_j $;
\item\hspace{-5pt}'  we have $ (p-1) l \leq  i_k$.\vspace{-10pt}
\end{enumerate}  
\end{proof} 
\subsection{Free partition Lie algebras on an even generator}\label{evengenerator}
In the last section, we have   computed the homotopy groups of free  partition Lie algebras on an {odd} \mbox{generator (cf.\ Theorems \ref{odda}, \ref{oddb}).}\vspace{3pt}

 We will now shift attention to the even degree case. For this, recall that given a  pointed space $X$ and a positive \mbox{integer $d$,} Theorem $8.5.$ in  \cite{arone2018action} constructs a\vspace{-1pt} natural sequence of spaces 
\begin{equation} \label{EHP}\Sigma^2 |\Pi_{\frac{d}{2}}|^\diamond \mywedge{\Sigma_{\frac{d}{2}}} (\Sigma X^{\wedge 2})^{\wedge \frac{d}{2}} 
\rightarrow  \Sigma^2 |\Pi_d|^\diamond \mywedge{\Sigma_d} X^{\wedge d} \rightarrow  \Sigma |\Pi_d|^\diamond \mywedge{\Sigma_d} (\Sigma X)^{\wedge d} \vspace{-1pt}\end{equation}
which varies naturally in $X$.
 If $X=S^n$ is an even-dimensional sphere, then this sequence is in fact a  \textit{cofibration sequence}.
 Applying $\FF_p$-valued cohomology to this sequence, we can decompose the free partition Lie algebra on a  class in negative  even degree $-n $ in terms of free  partition Lie algebras on  odd classes $-n-1$ and $-2n-1$ (using  \Cref{freeplie}).

To extend this decomposition to \textit{all} even integers, we will need to mildly generalise the above sequence (\ref{EHP}) and construct it naturally in topological vector spaces rather than just spaces. 
A minor modification of our argument will also allow us to decompose the free \textit{spectral} partition Lie algebras on an even class (using \Cref{concretespectralplie}), thereby reproving   the classical Takayasu cofibration sequence (cf.\ \cite{takayasu1999stable}) and its ``dual'' (cf \cite[Theorem 3.2]{arone2006note}) by a \vspace{-1pt}new argument.
 
\subsubsection*{Topological  Vector Spaces}
Let $\mathbf{Top}$ \INN{200@$\mathbf{Top}$}
be the category of compactly generated topological spaces (henceforth simply called ``spaces'') with its Quillen model structure. This is a well-fibred topological cartesian closed category over sets in the sense of \cite[Definitions 21.7., 27.20]{adamek2004abstract}. By \cite[Proposition 2.2, Proposition 4.6]{seal2005cartesian}, we can therefore lift  the usual tensor product on $\FF_p$-vector spaces  to a closed symmetric monoidal structure $\otimes$ on the category  $\mathbf{tMod}_{\FF_p}$ \INN{200@$\mathbf{tMod_{\FF_p}},  \mathbf{tMod}_{\FF_p}^J$}
 \INN{060@$\FF_p\{X\}$}     
of (compactly generated) topological $\FF_p$-vector spaces. This topological tensor product satisfies the expected universal property with respect to continuous bilinear maps.

Given a pointed completely regular space $(X,x)$, we can form the \textit{free topological   $\FF_p$-vector space} $\FF_p\{X\}$ on $X$ with $x = 0$ satisfying the obvious universal property and containing $X$ as a closed subset (cf.\ \cite[Theorem 6.2.2]{arnautov1996introduction}).  The underlying $\FF_p$-vector space of $\FF_p\{X\}$ is simply given by the free $\FF_p$-vector space on $X$ \vspace{-1pt}with $x=0$.  

\subsubsection*{Gradings}
For $J$   (commutative) indexing monoid  in sets, work of Schw{\"a}nzl-Vogt \cite{schwanzl1991categories} shows that  the category $\mathbf{tMod}_{\FF_p}^J$ of functors   from $J$ to (compactly generated) topological $\FF_p$-vector spaces carries  a cofibrantly generated  model structure. Its underlying fibrations and weak equivalences are given by pointwise fibrations and weak equivalences on underlying spaces. 

Moreover,   $\mathbf{tMod}_{\FF_p}^J$ has  a symmetric monoidal structure given by  Day convolution. It sends $V,W $ to the $J$-graded topological $\FF_p$-vector space with
$(V\otimes W)_k  : = \bigoplus_{a+b = k} (V_a \otimes  W_b)$.
For $(X,x)$ a  pointed completely regular space,  we equip $\FF_p\{X\}$ with a grading \vspace{-1pt}concentrated\mbox{ in degree $0$.}

\subsubsection*{Topological  Algebras}\label{topologicalalgebras}
Write $\mathbf{tAlg}_{\FF_p}^J$ for the category of  \INN{200@$\mathbf{tAlg}_{\FF_p}^J,  \mathbf{tAlg}^{aug,J}_{\FF_p}$}
 commutative algebra objects in the symmetric monoidal category $\mathbf{tMod}_{\FF_p}^J$; these are
   $\textit{J}$-graded  (compactly generated) topological  commutative  $\FF_p$-algebras.
Again,   the work of Schw{\"a}nzl-Vogt \cite{schwanzl1991categories} equips $\mathbf{tAlg}_{\FF_p}^J$ with a cofibrantly generated model structure in which  a map is a fibration or weak equivalence if the underlying map in $\mathbf{tMod}_{\FF_p}^J$ has the corresponding property. We denote the augmented variant   by $\mathbf{tAlg}_{\FF_p}^{J,aug}$.
There is a natural functor $  \mathbf{tMod}_{\FF_p}^J \rightarrow \mathbf{tAlg}^{aug,J}_{\FF_p}$ sending  $V$ to the trivial square-zero extension $\FF_p\oplus V$ on $V$.  
\subsubsection*{Simplicial Variants}\label{simplicialvariants}
We can also 
 define model categories $\mathbf{sMod}_{\FF_p}^J$ and $\mathbf{sAlg}_{\FF_p}^J$ of $J$-graded simplicial $\FF_p$-modules and simplicial commutative $\FF_p$-algebras, respectively.
The standard Quillen equivalence $|-|: \sSet \leftrightarrows  \mathbf{Top}: \Sing$ preserves finite products and therefore  induces Quillen equivalences 
$$|-|: \mathbf{sMod}_{\FF_p}^J \leftrightarrows \mathbf{tMod}_{\FF_p}^J: \Sing \mbox{ \ \ \ and \ \ \ }  |-|:  \mathbf{sAlg}_{\FF_p}^J \leftrightarrows \mathbf{tAlg}_{\FF_p}^J:\Sing.$$

Given two $J$-graded simplicial $\FF_p$-vector spaces $V_\bullet$ and $W_\bullet$, it is straightforward to check that there is an isomorphism $|V_\bullet|\otimes |W_\bullet| \cong |V_\bullet \otimes  W_\bullet|$.
Geometric realisation  intertwines  square-zero extensions   in $\mathbf{sAlg}_{\FF_p}^{J,aug}$ with the corresponding construction in $\mathbf{tAlg}_{\FF_p}^{J,aug}$.
Finally, the functor $|-|$  sends free simplicial $\FF_p$-modules to   free topological $\FF_p$-modules.

\subsubsection*{Homotopy Pushouts of Algebras}
As expected, pushouts in $\mathbf{tAlg}_{\FF_p}^J$  are simply  computed by relative tensor products. More precisely, given a span  $B \leftarrow A \rightarrow C$ of $J$-graded topological $\FF_p$-algebras, the pushout is given by the coequaliser  $B\myotimes{A} C := \coequ(B\otimes A \otimes C \rightrightarrows B\otimes C)$.

\begin{cons}\label{suspensionconstruction}
We  describe an explicit model for the \textit{homotopy pushout} of   $B \leftarrow A \rightarrow C$ in $\mathbf{tAlg}_{\FF_p}^J$. For  $k\geq 0$, the topological $\FF_p$-vector space 
$\FF_p\{\Delta^k\}  \otimes B \otimes A^{\otimes n} \otimes  C$ is generated by symbols
\[
\Biggl(
\begin{tikzpicture}[baseline=(m.center)]
	\matrix (m) [matrix of math nodes,
	row sep=1pt,
	column sep=4pt] {
		0 & \leq & t_1 & \leq & t_2 & \leq & \dots & \leq & t_k & \leq & 1 \\
		b & \otimes & a_1 & \otimes & a_2 & \otimes & \ldots & \otimes & a_k & \otimes & c \\
	};
\end{tikzpicture}
\Biggr)
\]
with $t_i \in [0,1]$, $a_i \in A$, $b\in B$, and $c\in C$, subject to the standard multilinear relations.

Define an object in $ \mathbf{tMod}_{\FF_p}^J$ by $B \myotimestwo{A}{\hobased} C = |\Barr_\bullet(B,A,C)| =\bigg( \bigoplus_{k \geq 0}\FF_p\{\Delta^k\}  \otimes  B \otimes A^{\otimes n} \otimes  C   \bigg)\bigg/\hspace{-2pt}\sim,$ where $\sim$ denotes the quotient by the $\FF_p$-linear subspace generated\vspace{-10pt}  by the following relations: \\ 
 \[
 \Biggl(
 \begin{tikzpicture}[baseline=(m.center)]
 	\matrix (m) [matrix of math nodes, row sep=2pt, column sep=4pt] {
 		0 & \leq & \dots & \leq & t_i & = & t_{i+1} & \leq & \dots \\
 		b & \otimes & \dots & \otimes & a_i & \otimes & a_{i+1} & \otimes & \dots \\
 	};
 \end{tikzpicture}
 \Biggr)
 \ \sim\
 \Biggl(
 \begin{tikzpicture}[baseline=(m.center)]
 	\matrix (m) [matrix of math nodes, row sep=2pt, column sep=4pt] {
 		0 & \leq & \dots & \leq & t_i & \leq & t_{i+2} & \leq & \dots \\
 		b & \otimes & \dots & \otimes & a_i \cdot a_{i+1} & \otimes & a_{i+2} & \otimes & \dots \\
 	};
 \end{tikzpicture}
 \Biggr)
 \] 
 \[
 \Biggl(
 \begin{tikzpicture}[baseline=(m.center)]
 	\matrix (m) [matrix of math nodes, row sep=2pt, column sep=4pt] {
 		0 & \leq & \dots & \leq &\hspace{2pt}  t_i \hspace{2pt} & \leq & t_{i+1} & \leq & \dots \\
 		b & \otimes & \dots & \otimes & a_i & \otimes & 1 & \otimes & \dots \\
 	};
 \end{tikzpicture}
 \Biggr)
 \ \sim\
 \Biggl(
 \begin{tikzpicture}[baseline=(m.center)]
 	\matrix (m) [matrix of math nodes, row sep=2pt, column sep=4pt] {
 		0 & \leq & \dots & \leq & \ \   \ \  t_i  \ \   \ \ & \leq & t_{i+2} & \leq & \dots \\
 		b & \otimes & \dots & \otimes & a_i & \otimes & a_{i+2} & \otimes & \dots \\
 	};
 \end{tikzpicture}
 \Biggr).
 \]
We endow $B \myotimestwo{A}{\hobased} C$ with  the unique multiplication \ {$  (B \myotimestwo{A}{\hobased} C)\otimes ( B \myotimestwo{A}{\hobased} C ) \rightarrow (B \myotimestwo{A}{\hobased} C)$ \mbox{satisfying}}
\\
\\ 
\\
\[
\Bigg(
\begin{tikzpicture}[baseline=(current bounding box.center)]
	\matrix (m1) [matrix of math nodes,row sep=2pt,column sep=3pt]
	{
		0   & \leq & t_{i_1} & \leq & \dots & \leq & t_{i_n} & \leq & 1 \\
		b_1 & \otimes & a_{i_1} & \otimes & \dots & \otimes & a_{i_n} & \otimes & c_1 \\
	};
\end{tikzpicture}
\Bigg)
\;\cdot\;
\Bigg(
\begin{tikzpicture}[baseline=(current bounding box.center)]
	\matrix (m2) [matrix of math nodes,row sep=2pt,column sep=3pt]
	{
		0   & \leq & t_{j_1} & \leq & \dots & \leq & t_{j_m} & \leq & 1 \\
		b_2 & \otimes & a_{j_1} & \otimes & \dots & \otimes & a_{j_m} & \otimes & c_2 \\
	};
\end{tikzpicture}\vspace{-8pt}
\Bigg)\] \[ \hspace{173pt} 
\;=\;
\Bigg(
\begin{tikzpicture}[baseline=(current bounding box.center)]
	\matrix (m3) [matrix of math nodes,row sep=2pt,column sep=3pt]
	{
		0        & \leq & t_{1} & \leq & \dots & \leq & t_{n+m} & \leq & 1 \\
		b_1 b_2  & \otimes & a_{1} & \otimes & \dots & \otimes & a_{n+m} & \otimes & c_1 c_2 \\
	};
\end{tikzpicture}
\Bigg)
\]
for any disjoint union 
$\{1<\dots< m+n\} \   \ =\  \  \{i_1 < \dots < i_n\} \coprod \{j_1 < \dots < j_m\}  $.
Simple checks show that  this multiplication is well-defined, commutative and associative.
\end{cons}
The following  is proven by an argument entirely parallel\vspace{-2pt} to \mbox{the proof of \cite[Proposition 7.15]{arone2018action}:}
\begin{proposition}\label{explicitconstructionworks}
If the unit $\FF_p\rightarrow A$ is a cofibration, then $B \myotimestwo{A}{\hobased} C$ is a homotopy pushout of the span $B \leftarrow A \rightarrow C$ of topological $J$-graded $\FF_p$-algebras.\vspace{-2pt}
\end{proposition}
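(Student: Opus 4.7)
The plan is to recognise $B \myotimestwo{A}{\hobased} C$ as the geometric realisation of the two-sided simplicial bar construction $\mathrm{Bar}_\bullet(B, A, C)$ in the model category $\mathbf{tAlg}^{J}_{\FF_p}$, and then to invoke standard facts about Reedy cofibrant resolutions to conclude that this realisation computes the homotopy pushout $B \sqcup_{A}^{\L} C$.

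First, I would pass to the Quillen equivalent simplicial setting $\mathbf{sAlg}^{J}_{\FF_p}$ (cf.\ \Cref{simplicialvariants}), where the bar construction $\mathrm{Bar}_\bullet(B,A,C)$ is manifestly a simplicial object with $\mathrm{Bar}_k = B \otimes A^{\otimes k} \otimes C$, face maps built from the multiplication $A \otimes A \to A$ together with the actions $B \otimes A \to B$ and $A \otimes C \to C$, and degeneracies inserting the unit $\FF_p \to A$. Its realisation in $\mathbf{sAlg}^{J}_{\FF_p}$, computed on underlying graded simplicial $\FF_p$-modules, is the coend $\int^{[k] \in \Delta} \Delta^k \otimes (B \otimes A^{\otimes k} \otimes C)$, which after applying $|-|$ matches the explicit quotient $\bigoplus_k \FF_p\{\Delta^k\} \otimes B \otimes A^{\otimes k} \otimes C / \sim$ of Construction~\ref{suspensionconstruction}. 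Transporting the algebra structure across this identification recovers the shuffle product defined there: a product of generating simplices of degrees $n$ and $m$ decomposes via the Eilenberg--Zilber shuffle decomposition of $\Delta^n \times \Delta^m$, producing exactly the sum indexed by disjoint partitions $\{1, \dots, n+m\} = \{i_1 < \dots < i_n\} \sqcup \{j_1 < \dots < j_m\}$ appearing in the displayed formula.

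Second, I would verify that when $\FF_p \to A$ is a cofibration in $\mathbf{tAlg}^{J}_{\FF_p}$ (equivalently, in $\mathbf{sAlg}^{J}_{\FF_p}$ after applying $\Sing$), the bar construction is Reedy cofibrant. The latching map at simplicial level $k$ is obtained as an iterated pushout-product of the cofibration $\FF_p \to A$ acting on the $k$ interior $A$-factors, and is therefore itself a cofibration. The realisation of a Reedy cofibrant simplicial algebra is then its homotopy colimit, and the homotopy colimit of $\mathrm{Bar}_\bullet(B,A,C)$ is by design the homotopy pushout of the cospan $B \leftarrow A \rightarrow C$.

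The main obstacle will be the careful matching of the shuffle product of Construction~\ref{suspensionconstruction} with the algebra structure induced on the realisation by the levelwise multiplication of $\mathrm{Bar}_\bullet(B,A,C)$. This is precisely the content of the analogous bookkeeping in the proof of Proposition~7.15 of \cite{arone2018action}: one must check that the product formula given in terms of shuffles is exactly the Day-convolution multiplication transferred across the coend identification. Once this compatibility is verified, the remaining model-categorical input is formal.
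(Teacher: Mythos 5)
Your overall strategy --- recognise $B \myotimestwo{A}{\hobased} C$ as the realisation of the two-sided bar construction and deduce the homotopy-pushout property from cofibrancy of the unit --- is the same one the paper intends (it defers to the parallel argument for commutative monoid spaces in \cite[Proposition 7.15]{arone2018action}), and the identification of the shuffle product with the multiplication induced on the realisation is, as you say, bookkeeping. The gap is in the model-categorical step, which is where the actual content lives. The latching map you describe as an iterated pushout-product of $\FF_p \to A$ is the latching map computed in $(\mathbf{tMod}_{\FF_p}^J, \otimes)$, not in $\mathbf{tAlg}_{\FF_p}^J$: in commutative algebras the coproduct \emph{is} $\otimes$, the subalgebra generated by the degenerate simplices of $B \otimes A^{\otimes k} \otimes C$ is all of it, so the algebra-level latching maps are essentially isomorphisms and ``Reedy cofibrant as a simplicial algebra'' carries no information; moreover the realisation in the simplicial model category of algebras (formed with the algebra tensoring over spaces) is a different object from $B \myotimestwo{A}{\hobased} C$, which is by construction the realisation of underlying modules. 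What module-level Reedy cofibrancy (properness) buys you is only homotopy invariance of $|\Barr_\bullet(-,-,-)|$ --- and even for that one needs $B \otimes (-) \otimes C$ to preserve cofibrations, which the hypothesis on $A$ alone does not supply.

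Second, ``the homotopy colimit of $\Barr_\bullet(B,A,C)$ is by design the homotopy pushout'' is precisely the point that must be proved: the strict $\Delta^{op}$-colimit of $\Barr_\bullet(B,A,C)$ is the \emph{strict} pushout $B \otimes_A C$, and upgrading this to a homotopy pushout requires either (a) showing that each level $B \otimes A^{\otimes k} \otimes C$ presents the derived tensor product, so that the point-set bar construction presents the $\infty$-categorical one, whose colimit computes the pushout of algebras; or (b) the direct argument: $\Barr_\bullet(B,A,C) \cong \Barr_\bullet(B,A,A) \otimes_A C$, the augmentation $|\Barr_\bullet(B,A,A)| \to B$ is a weak equivalence by the extra degeneracy, and the skeletal filtration of $|\Barr_\bullet(B,A,A)|$ --- whose layers are controlled by the iterated pushout-products of $\FF_p \to A$, which is exactly where the cofibrancy of the unit enters --- exhibits $A \to |\Barr_\bullet(B,A,A)|$ as a cofibration, so that the strict pushout $|\Barr_\bullet(B,A,A)| \otimes_A C$ already computes the homotopy pushout. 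You need to supply one of these; (b) is essentially the argument the cited proof runs.
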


\begin{definition}\label{suspension1}
The \textit{suspension} $\Sigma^{\otimes} A$ of some  $A\in \mathbf{tAlg}_{\FF_p}^{J, aug}$ is given by $ \FF_p \myotimestwo{A}{\hobased} \FF_p$.\vspace{-2pt} \INN{190@$\Sigma^{\otimes}$}\INN{270@$\Omega^{\otimes}$}
\end{definition}

\subsubsection*{Homotopy Pullbacks of Algebras}
A much simpler construction gives us  explicit models for homotopy pullbacks of $J$-graded topological $\FF_p$-algebras. For this, let $D^I = \Map_{\mathbf{Top}}([0,1], D)$ denote the space of paths in a\vspace{-3pt} given space $D$.

\begin{definition} If $B \xrightarrow{f} A \xleftarrow{g} C$ is a diagram of $J$-graded  topological  commutative $\FF_p$-algebras, we equip the $J$-graded space $B\mytimestwo{A}{\hobased} C $ determined by\vspace{-4pt}
$$(B\mytimestwo{A}{\hobased} C)_j := \left\{(b, \upalpha, c) \in B_j\times A^I_j \times C_j \  |  \    \upalpha(0) = f(b), \upalpha(1) = g(c) \right\}\vspace{-4pt} $$
with an $\FF_p$-algebra structure by setting \vspace{-2pt}
$$\lambda_1 (b_1, \upalpha_1, c_1) +  \lambda_2(b_2, \upalpha_2, c_2) = (\lambda_1 b_1 + \lambda_2 b_2, \lambda_1\upalpha_1  + \lambda_2\upalpha_2, \lambda_1c_1 + \lambda_2 c_2) \vspace{-2pt}$$ 
$$(b_1, \upalpha_1, c_1) \cdot (b_2, \upalpha_2, c_2) = (b_1 b_2, \upalpha_1 \upalpha_2, c_1c_2), $$
where the paths $\lambda_1\upalpha_1  + \lambda_2\upalpha_2$ and $\upalpha_1 \upalpha_2 $ are defined using pointwise operations.    
\end{definition} 
An entirely parallel argument  to\vspace{-5pt} \mbox{the proof of \cite[Proposition 7.24]{arone2018action} then shows:}
\begin{proposition}
The homotopy pullback of a diagram $B \xrightarrow{f} A \xleftarrow{g} C$ in $\mathbf{tAlg}_{\FF_p}^J$\vspace{-3pt}
 \mbox{is given by $B\mytimestwo{A}{\hobased} C$.}
\end{proposition}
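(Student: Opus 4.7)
The plan is to mimic the proof of \cite[Proposition 7.24]{arone2018action} by realising $B\mytimestwo{A}{\hobased} C$ as a strict fibre product after replacing one of the two legs by a fibration. Since the model structure on $\mathbf{tAlg}_{\FF_p}^J$ constructed in \cite{schwanzl1991categories} has its fibrations and weak equivalences detected on underlying objects of $\mathbf{tMod}_{\FF_p}^J$ (equivalently, on underlying $J$-graded spaces), the question reduces to exhibiting a suitable path-object factorisation internal to $\mathbf{tAlg}_{\FF_p}^{J}$.

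First, I would check that for any $A \in \mathbf{tAlg}_{\FF_p}^{J}$, the path object $A^I$ with pointwise addition and multiplication is again a $J$-graded topological commutative $\FF_p$-algebra, and that the two projections $A^I \to A$ (evaluation at $0$ and $1$) together with the diagonal $A \to A^I$ (sending $a$ to the constant path) all live in $\mathbf{tAlg}_{\FF_p}^{J}$. The crucial point is that pointwise operations on continuous paths preserve the algebra laws (commutativity, associativity, distributivity), which is immediate since these are equations of continuous maps. Then the map $(\mathrm{ev}_0,\mathrm{ev}_1):A^I\to A\times A$ is a fibration of underlying spaces because it is obtained from a cofibration of pointed simplicial sets (or spaces) $\partial\Delta^1 \hookrightarrow \Delta^1$, and the constant-path map $A\to A^I$ is a weak equivalence (it admits either evaluation map as a deformation retract in $\mathbf{Top}$). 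Thus $A \to A^I \twoheadrightarrow A \times A$ is a path-object factorisation in $\mathbf{tAlg}_{\FF_p}^{J}$.

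Next, I would form the strict pullback $P := B \times_A A^I$ of $f \colon B \to A$ along $\mathrm{ev}_0 \colon A^I \to A$; this is $\{(b,\alpha)\in B\times A^I \mid \alpha(0)=f(b)\}$ equipped with the algebra structure induced componentwise. The projection $P \to B$ is a weak equivalence (being a pullback of the trivial fibration $A^I \to A$ given by $\mathrm{ev}_0$), and the composite $P \to A^I \xrightarrow{\mathrm{ev}_1} A$ is a fibration (as the composite of a pullback of a fibration with a fibration). Therefore the strict pullback of $P \to A \xleftarrow{g} C$ computes the homotopy pullback of the original diagram, and by inspection this strict pullback is nothing other than
\[
\bigl\{(b,\alpha,c)\in B\times A^I\times C \;\big|\; \alpha(0)=f(b),\ \alpha(1)=g(c)\bigr\} = B\mytimestwo{A}{\hobased} C
\]
with precisely the algebra structure specified in the definition.

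The main technical point to verify — and the only step where one has to resist being perfunctory — is that the path-object factorisation really lives in $\mathbf{tAlg}_{\FF_p}^{J}$ rather than merely in $\mathbf{tMod}_{\FF_p}^{J}$, in particular that forming path spaces is compatible with the commutative algebra structure and the grading. Once this is in place, the rest of the argument is purely formal model-categorical manipulation, and the identification of the strict pullback with the explicit formula is trivial.
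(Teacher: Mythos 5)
Your proposal is correct and is essentially the same path-object argument that the paper invokes by deferring to the parallel proof of \cite[Proposition 7.24]{arone2018action}: verify that $A^I$ with pointwise operations is a path object in $\mathbf{tAlg}_{\FF_p}^J$ (using that fibrations and weak equivalences are detected on underlying graded spaces), replace $f$ by the fibration $\mathrm{ev}_1\colon P=B\times_A A^I\to A$, and identify the resulting strict pullback with $B\mytimestwo{A}{\hobased}C$. One small repair: $P\to A^I$ is a pullback of $f$ (not of a fibration), so the correct justification that $\mathrm{ev}_1\colon P\to A$ is a fibration is that $P\to B\times A$ is the pullback of the fibration $A^I\to A\times A$ along $f\times\mathrm{id}$, composed with the projection $B\times A\to A$, which is a fibration because every object is fibrant.
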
 
\begin{definition}
The \textit{loop space} of some $A\in \mathbf{tAlg}_{\FF_p}^J$ is given by 
$\Omega^{\otimes} A:= \FF_p\mytimestwo{A}{\hobased }\FF_p  $.\vspace{-2pt}
\end{definition} 
\begin{remark}The underlying space of $\Omega^{\otimes} A$ is given by the space of all paths $[0,1]\rightarrow A$ which start and end at the same point in $\FF_p \subset A$.
\end{remark}

\subsubsection*{Suspension-Loops Adjunction}
We can link the two constructions above by  setting up an adjunction
$$\Sigma^{\otimes} : \mathbf{tAlg}_{\FF_p}^J \leftrightarrows \mathbf{tAlg}_{\FF_p}^J : \Omega^{\otimes}.$$
Its unit $\eta $ is defined
by the following explicit formula:\vspace{-5pt}
$$
 A \xrightarrow{\eta_A} \Omega^{\otimes} \Sigma^{\otimes} A
 \ \ \text{ sends } \ \
 a \in A
 \ \ \text{ to } \ \
 \upalpha_a :=
 \Bigg(
 (0 \le s \le 1) \mapsto
 \Bigg(
 \tikz[baseline=(m.center)]{
 	\matrix (m) [matrix of math nodes, row sep=0.3em, column sep=0.6em]{
 		0 & \le & s & \le & 1 \\
 		1 & \otimes & a & \otimes & 1 \\
 	};
 }
 \Bigg)
 \Bigg). \vspace{-6pt}
 $$
Here $s\in [0,1]$ denotes a parameter for a loop, and it is not hard to check that the   map $\eta_A$ respects the grading and is both linear and multiplicative. The unit $\epsilon$   is specified as \vspace{-2pt} follows:
\[ 
\Sigma^{\otimes} \Omega^{\otimes} A \xrightarrow{\epsilon_A} A
\ \text{ sends }\ 
\Bigg(
\tikz[baseline=(m.center)]{
	\matrix (m) [matrix of math nodes, row sep=0.3em, column sep=0.25em]{
		0 & \le & t_1 & \le & \dots & \le & t_n & \le & 1 \\
		\lambda & \otimes & \upalpha_1 & \otimes & \dots & \otimes & \upalpha_n & \otimes & \mu \\
	};
}
\Bigg)
\in \Sigma^{\otimes} \Omega^{\otimes} A
\ \text{ to }\ 
\lambda \cdot \upalpha_1(t_1) \cdot \ldots \cdot \upalpha_n(t_n) \cdot \mu.
\]

Here $\alpha_1,\ldots,\alpha_n \in \Omega^{\otimes}A$ are given paths, and we it is again straightforward to check that this assignment gives a map in $\mathbf{tAlg}_{\FF_p}^J$. Observe that $ \epsilon_{ \Sigma^{\otimes}} \circ \Sigma^{\otimes}\eta $ and $\Omega^{\otimes} \epsilon \circ \eta_{\Omega^{\otimes}}$ are indeed given by the identity transformations, and we have therefore defined an adjunction.

The following result is proven by an argument parallel to the proof of \cite[Proposition 7.28]{arone2018action}:
\begin{lemma} 
The adjunction $ \Sigma^{\otimes} : \mathbf{tAlg}_{\FF_p}^J \leftrightarrows \mathbf{tAlg}_{\FF_p}^J : \Omega^{\otimes} $ 
is Quillen. 
\end{lemma}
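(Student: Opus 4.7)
The plan is to verify that the right adjoint $\Omega^{\otimes}$ preserves fibrations and trivial fibrations; since the model structure on $\mathbf{tAlg}_{\FF_p}^{J,\aug}$ is transferred from $\mathbf{Top}^J$ (with fibrations and weak equivalences detected on underlying spaces by the work of Schw\"anzl--Vogt recalled in \Cref{topologicalalgebras}), it suffices to check this at the level of underlying $J$-graded spaces.

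First, I would show that the underlying $J$-graded space of $\Omega^{\otimes} A$ is canonically homeomorphic to the iterated pullback
\[
\Omega^{\otimes} A \cong \FF_p \times_{A} A^{[0,1]} \times_{A} \FF_p,
\]
where the maps $A^{[0,1]} \to A$ are evaluation at $0$ and $1$ respectively, and the maps $\FF_p \to A$ are the unit. This is immediate from the explicit description of $\Omega^{\otimes} A$ as paths $\upalpha\colon [0,1]\to A$ with $\upalpha(0),\upalpha(1)\in \FF_p$. Next, I would verify that this pullback computes the homotopy pullback: since $[0,1]$ is contractible, both evaluation maps $A^{[0,1]} \to A$ are trivial Hurewicz fibrations, so this is a homotopy limit.

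Now suppose $f\colon A \to B$ is a (trivial) fibration in $\mathbf{tAlg}_{\FF_p}^{J,\aug}$. The induced map $A^{[0,1]} \to B^{[0,1]}$ is a (trivial) fibration of $J$-graded spaces, by the standard cotensoring of the model structure on $\mathbf{Top}^J$ with the cofibrant object $[0,1]$. The evaluation maps $A^{[0,1]}\to A\times A$ and $B^{[0,1]}\to B\times B$ are fibrations, and one obtains $\Omega^{\otimes} f$ by pulling back the map of arrows $A^{[0,1]}\to B^{[0,1]}$ along the map $\FF_p\times \FF_p \to B\times B$ (augmentation), noting that the restriction factors through $A\times A$. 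Since pullbacks of (trivial) fibrations along arbitrary maps are (trivial) fibrations in $\mathbf{Top}^J$, we conclude that $\Omega^{\otimes} f$ is a (trivial) fibration.

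The main subtlety to watch is the interaction between the topological algebra structure and the pullback: one must check that the algebra operations induced on the pullback agree with those of the definition of $\Omega^{\otimes} A$ (in particular that pointwise multiplication and addition of paths are continuous in the relevant topology). This is routine, but it is the point where the closedness of the symmetric monoidal structure on $\mathbf{tMod}_{\FF_p}$ and its compatibility with the compactly generated topology (as arranged via Seal's framework) is used. Once this is in place, the argument is formally parallel to \cite[Proposition 7.28]{arone2018action}.
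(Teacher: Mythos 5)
Your overall strategy is correct and is essentially the paper's own: the paper gives no independent argument, deferring to the parallel statement for commutative monoid spaces in Arone--Brantner, and that argument is exactly the one you outline — fibrations and weak equivalences in $\mathbf{tAlg}_{\FF_p}^J$ are created on underlying $J$-graded spaces, the forgetful functor commutes with the limits defining $\Omega^{\otimes}$, and one checks that the path-space pullback preserves (trivial) fibrations. Since the unit and counit of $(\Sigma^{\otimes},\Omega^{\otimes})$ are constructed explicitly in the paper, verifying that the right adjoint preserves fibrations and trivial fibrations is indeed all that is required.

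One step, as written, does not go through. Writing $\Omega^{\otimes}A \cong (\FF_p\times\FF_p)\times_{A\times A}A^{I}$, the commutative square with vertices $\Omega^{\otimes}A$, $A^{I}$, $\Omega^{\otimes}B$, $B^{I}$ is \emph{not} a pullback: a path $\upalpha$ in $A$ lying over a point of $\Omega^{\otimes}B$ only has endpoints in $f^{-1}(\FF_p)$, not in $\FF_p\subset A$. Hence $\Omega^{\otimes}f$ is not a base change of $f^{I}\colon A^{I}\to B^{I}$, and the principle ``pullbacks of (trivial) fibrations are (trivial) fibrations'' does not apply in the form you invoke it. The standard repair is to use the corner map $A^{I}\to (A\times A)\times_{B\times B}B^{I}$ associated to the cofibration $\{0,1\}\hookrightarrow[0,1]$ and the (trivial) fibration $f$; this corner map is a fibration, trivial when $f$ is, by the pullback--power axiom in $\mathbf{Top}$ applied degreewise. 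Because $f$ is a unital algebra map, $\FF_p\times\FF_p\to B\times B$ factors through $A\times A$, and $\Omega^{\otimes}f$ is precisely the base change of the corner map along $\FF_p\times\FF_p\to A\times A$, hence a (trivial) fibration. With this substitution your proof is complete; the continuity of the algebra operations on the pullback, which you flag as the main subtlety, plays no role in the model-categorical verification, since everything is checked on underlying spaces.
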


\subsubsection*{The $\EHP$-sequence for Topological Vector Spaces.}
We proceed to generalise the $\EHP$-sequence for (strictly commutative) monoid spaces (cf.\ \cite[Definition 7.42]{arone2018action}) to the setting of topological $\FF_p$-algebras. We begin with the following observation, which is immediate from \Cref{suspension1}:
\begin{proposition}
Given a $J$-graded topological $\FF_p$-vector space $V$, we let $\FF_p \oplus V \in \mathbf{tAlg}_{\FF_p}^J$ denote the trivial square-zero extension of $\FF_p$ by $V$. There is an isomorphism  of $J$-graded topological $\FF_p$-vector spaces
$$\Sigma^{\otimes}(\FF_p \oplus V) \simeq \bigoplus_k \FF_p\{S^k\} \otimes V^{\otimes k}.$$
\end{proposition}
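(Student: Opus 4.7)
The plan is to apply Construction~\ref{suspensionconstruction} directly to $A = \FF_p \oplus V$ with $B = C = \FF_p$, and then carefully inspect how the relations collapse to yield spheres.

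First I would observe that $\Sigma^{\otimes}(\FF_p \oplus V) = \FF_p \myotimestwo{\FF_p \oplus V}{\hobased} \FF_p$ by Definition~\ref{suspension1}. Assuming the unit $\FF_p \to \FF_p \oplus V$ is a cofibration (which one checks by the usual small-object argument in $\mathbf{tAlg}_{\FF_p}^J$, or by replacing $\FF_p \oplus V$ by a cofibrant replacement if necessary, noting that both sides of the proposed equivalence respect weak equivalences in $V$), Proposition~\ref{explicitconstructionworks} identifies $\Sigma^{\otimes}(\FF_p \oplus V)$ with $|\Barr_\bullet(\FF_p, \FF_p \oplus V, \FF_p)|$. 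Using $B = C = \FF_p$ and the unit relations, the $k$-simplices of the bar construction reduce to $\FF_p\{\Delta^k\} \otimes (\FF_p \oplus V)^{\otimes k}$ modulo the two families of relations specified in Construction~\ref{suspensionconstruction}.

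Next I would decompose $(\FF_p \oplus V)^{\otimes k}$ using multilinearity and the splitting $\FF_p \oplus V = \FF_p \cdot 1 \oplus V$ into summands indexed by subsets $S \subseteq \{1,\dots,k\}$, where the slot $i$ carries a factor of $V$ if $i \in S$ and a factor of $1$ otherwise. The second relation (the ``unit'' relation involving a factor equal to $1$) precisely identifies any summand with $|S| < k$ with a contribution already present in lower simplicial degree via a degeneracy; in the normalised geometric realisation these summands are absorbed, leaving $\FF_p\{\Delta^k\} \otimes V^{\otimes k}$ in each degree. Now I apply the first (``multiplicative'') relation at an internal face $t_i = t_{i+1}$: this multiplies adjacent factors $v_i, v_{i+1} \in V$, but $V \cdot V = 0$ inside $\FF_p \oplus V$ because the extension is square-zero. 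Similarly, the boundary faces $t_1 = 0$ and $t_k = 1$ multiply the outermost $V$-factor into the copy of $\FF_p$ via the augmentation, which also sends $V$ to zero. Hence every face of $\Delta^k$ maps to zero in the $k$-th summand.

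Consequently, the contribution of simplicial degree $k$ to $|\Barr_\bullet(\FF_p, \FF_p \oplus V, \FF_p)|$ is exactly $\FF_p\{\Delta^k/\partial\Delta^k\} \otimes V^{\otimes k} \cong \FF_p\{S^k\} \otimes V^{\otimes k}$, where the grading is inherited from that of $V^{\otimes k}$ under Day convolution. Summing over $k$ gives the desired equivalence
\[
\Sigma^{\otimes}(\FF_p \oplus V) \simeq \bigoplus_{k \geq 0} \FF_p\{S^k\} \otimes V^{\otimes k}.
\]
The only potentially delicate step is the bookkeeping that the quotient by the two relations in Construction~\ref{suspensionconstruction} really implements normalisation followed by the collapse of $\partial \Delta^k$; this is most cleanly verified by passing through the simplicial variant discussed in Section~\ref{simplicialvariants} (where geometric realisation intertwines square-zero extensions), reducing to the standard computation of the bar construction of a simplicial trivial square-zero extension, and then applying $|-|$.
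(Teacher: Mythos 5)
Your proof is correct and is precisely the verification the paper omits: the paper states this proposition as ``immediate from'' the suspension construction, and your argument — identify $\Sigma^{\otimes}(\FF_p\oplus V)$ with the two-sided bar construction, normalise away the unit summands, and observe that all faces vanish on $V^{\otimes k}$ (inner faces by the square-zero multiplication, outer faces by the augmentation), so each simplicial degree contributes $\FF_p\{\Delta^k/\partial\Delta^k\}\otimes V^{\otimes k}\cong \FF_p\{S^k\}\otimes V^{\otimes k}$ — is exactly the intended one. Your caveat about the unit $\FF_p\to\FF_p\oplus V$ being a cofibration is a reasonable point of care; in the paper's actual applications $V$ is discrete and finite-dimensional in a single weight, where this is automatic.
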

Using this splitting, we define  a natural transformation of functors $\mathbf{tMod}_{\FF_p}^J  \rightarrow \mathbf{tAlg}_{\FF_p}^J$ as follows:
\begin{definition}\label{Einhaengung}
Given $V\in \mathbf{tMod}_{\FF_p}^J$, the \textit{Einh{\"a}ngung} $E_V: \Sigma^{\otimes}( \FF_p \oplus V) \rightarrow  \FF_p \oplus \left(\FF_p\{S^1\} \otimes V\right)$ is the map of $J$-graded topological $\FF_p$-algebras obtained by projecting to the first two summands.
\end{definition}
The construction of the Hopf map is somewhat more interesting. To this date, we do not know of a definition staying in the realm of higher category theory, and this is in fact the reason why we had to work with strict models.

First, given any $V\in \mathbf{tMod}_{\FF_p}^J$, we define a map of $J$-graded topological $\FF_p$-vector spaces 
$$\Phi : \FF_p\{S^1\} \otimes V^{\otimes 2} \ \ \   \longrightarrow \ \ \    \Omega^{\otimes} \Sigma^{\otimes}( \FF_p \oplus V) \ \ \   \ \ \   \ \ \   \ \ \    \ \ \   \ \ \   \ \ $$  
\[ \ \ \   \ \ \    \ \ \   \ \ \   \ \ \   \hspace{0.5pt}
\bigl(0 \le t \le 1\bigr) \otimes v \otimes w
\;\longmapsto\;
\Bigg(
s \mapsto
\Bigg(
\tikz[baseline=(m.center)]{
	\matrix (m) [
	matrix of math nodes,
	row sep=0.25em,
	column sep=0.6em
	]{
		0 & \leq & ts & \leq & s & \leq & 1 \\
		1 & \otimes & v & \otimes & w & \otimes & 1 \\
	};
}
\Bigg)
\Bigg).
\]

Since $\Phi(x)\cdot \Phi(y) = 0$ for all $x,y$ by inspection, we obtain a map of $J$-graded topological $\FF_p$-algebras
$$\FF_p \oplus (\FF_p\{S^1\} \otimes V^{\otimes 2}) \longrightarrow  \Omega^{\otimes}\Sigma^{\otimes}( \FF_p \oplus V).$$
\begin{definition}\label{Hopfmap}
For $V\in \mathbf{tMod}_{\FF_p}^J$, the \textit{Hopf map} $H_V: \Sigma^{\otimes}( \FF_p \oplus (\FF_p\{S^1\} \otimes V^{\otimes 2})) \longrightarrow \Sigma^{\otimes}( \FF_p \oplus V) $ is adjoint to the map $\FF_p \oplus (\FF_p\{S^1\} \otimes V^{\otimes 2})  \longrightarrow  \Omega^{\otimes}\Sigma^{\otimes}( \FF_p \oplus V) $ specified above.
\end{definition}
The Hopf map varies naturally in the $\FF_p$-module $V$.\vspace{3pt}

We now fix the   indexing monoid
of nonnegative integers  $J = \NN$, considered under addition. Observe that the $\infty$-category $\mathcal{D}^{\gr}$ from  \Cref{setupSCR} arises as a full subcategory of the underlying $\infty$-category of $ \mathbf{tAlg}_{\FF_p}^{\NN}$. Namely, it is spanned by all algebras which are equal to $\FF_p$ in weight $0$.

Given a finite-dimensional discrete $\FF_p$-vector space $V\in \vect_{\FF_p}^\omega$, we write $V_1$ for the 
 $\NN$-graded topological $\FF_p$-vector space consisting of $V$ concentrated in degree $1$.
Combining \Cref{Einhaengung} and \Cref{Hopfmap}, we obtain a sequence of $\NN$-graded topological $\FF_p$-algebras
\begin{equation}\Sigma^{\otimes}( \FF_p \oplus (\FF_p\{S^1\} \otimes V_1^{\otimes 2})) \xrightarrow{H}\Sigma^{\otimes}( \FF_p \oplus V_1) \xrightarrow{E}  \FF_p \oplus \left(\FF_p\{S^1\} \otimes V_1\right).\label{EHPmodel}\end{equation}
which  varies naturally in $V$. Inverting weak equivalences in $ \mathbf{tMod}_{\FF_p}^{\NN}$, we obtain:

\begin{proposition}\label{strictsequence}
There is a natural sequence of functors 
$\vect_{\FF_p}^{\omega} \rightarrow \mathcal{D}^{\gr}$ (cf.\ \Cref{setupSCR}) sending $V\in \vect_k^{\omega}$ to a sequence 
$$ \Sigma^{\otimes}(\sqz(\Sigma V_1^{\otimes 2}))  \xrightarrow{H} \Sigma^{\otimes}(\sqz(V_1)) \xrightarrow{E} \sqz(\Sigma V_1)
$$
Here $\Sigma^{\otimes}$ denotes the suspension functor in the pointed $\infty$-category $\mathcal{D}^{\gr}$, whereas $\Sigma$ denotes the suspension functor in $\md_k$, i.e.\ the shift in $\md_k$.
\end{proposition}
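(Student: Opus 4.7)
The plan is to transfer the strict sequence \eqref{EHPmodel} of $\NN$-graded topological $\FF_p$-algebras into the $\infty$-category $\mathcal{D}^{\gr}$ of augmented graded simplicial commutative $\FF_p$-algebras $A$ with $A_0 \simeq \FF_p$. First, I would use the Quillen equivalence $|-| : \mathbf{sAlg}_{\FF_p}^{\NN} \leftrightarrows \mathbf{tAlg}_{\FF_p}^{\NN} : \Sing$ recorded in the paragraph on simplicial variants (page \pageref{simplicialvariants}) to identify the underlying $\infty$-category of $\mathbf{tAlg}_{\FF_p}^{\NN, \aug}$ with that of $\mathbf{sAlg}_{\FF_p}^{\NN, \aug}$, so that the full subcategory of objects with $A_0 \simeq \FF_p$ corresponds precisely to $\mathcal{D}^{\gr}$ as specified in Construction~\ref{setupSCR}. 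Since geometric realisation preserves the monoidal structure, trivial square-zero extensions, and free topological/simplicial $\FF_p$-vector spaces, each constituent of the sequence \eqref{EHPmodel} has a clean interpretation in $\mathcal{D}^{\gr}$.

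Next I would match the individual terms. By Proposition \ref{explicitconstructionworks}, the strict suspension $\Sigma^{\otimes} A = \FF_p \myotimestwo{A}{\hobased} \FF_p$ computes the homotopy pushout $\FF_p \sqcup_A^h \FF_p$ whenever $A$ is cofibrant, and therefore presents the $\infty$-categorical suspension functor on pointed $\mathcal{D}^{\gr}$. Thus $\Sigma^{\otimes}(\FF_p \oplus V_1)$ and $\Sigma^{\otimes}(\FF_p \oplus (\FF_p\{S^1\}\otimes V_1^{\otimes 2}))$ represent $\Sigma^{\otimes}\sqz(V_1)$ and $\Sigma^{\otimes}\sqz(\Sigma V_1^{\otimes 2})$ respectively, once we observe that $\FF_p\{S^1\}$ is a topological model for $\Sigma \FF_p$ as a free topological $\FF_p$-module on a pointed circle, so that $\FF_p\{S^1\}\otimes V^{\otimes 2} \simeq \Sigma V^{\otimes 2}$ in $\gr\md_{\FF_p}$ (and analogously for $\FF_p\{S^1\}\otimes V_1$ on the right-hand term). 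Applying the trivial square-zero extension functor preserves weak equivalences in our setting, giving the claimed identification of the target $\FF_p \oplus (\FF_p\{S^1\}\otimes V_1)$ with $\sqz(\Sigma V_1)$.

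The Einh\"angung $E$ and the Hopf map $H$ from Definitions \ref{Einhaengung} and \ref{Hopfmap} are natural in $V \in \vect_{\FF_p}^{\omega}$ by construction. To promote the diagram to a natural transformation of $\infty$-functors $\vect_{\FF_p}^{\omega} \to \mathcal{D}^{\gr}$, I would take a functorial cofibrant replacement of $V \mapsto \FF_p \oplus V_1$ and of $V \mapsto \FF_p \oplus (\FF_p\{S^1\}\otimes V_1^{\otimes 2})$ in the model structure on $\mathbf{tAlg}_{\FF_p}^{\NN, \aug}$ recorded in the paragraph on topological algebras (page \pageref{topologicalalgebras}); since $V$ is finite-dimensional and discrete, the ensuing functors land in a full subcategory of cofibrant objects, and both $H$ and $E$ preserve this naturality because they are specified by explicit formulas that are obviously functorial in $V$. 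Localising the resulting diagram in $\fun(\vect_{\FF_p}^{\omega}, \mathbf{tAlg}_{\FF_p}^{\NN, \aug})$ at weak equivalences produces the claimed natural sequence of $\infty$-functors.

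The main technical obstacle will be the bookkeeping required to show that these strict constructions genuinely model the $\infty$-categorical ones at the level of functors (not just of individual objects), since the suspension and loop functors on topological algebras are defined using the cofibrant bar resolution $\Barr_\bullet(\FF_p, A, \FF_p)$, and one must verify that the naturality in $V$ lifts through the geometric realisation $|\Barr_\bullet(-)|$ in a way compatible with the Quillen equivalence to simplicial commutative rings. Once this is in place the proposition follows, while the fact that the sequence becomes a fibre sequence upon evaluating on $V$ whose appropriate shift is an even sphere will be established subsequently via the arguments of \cite{arone2018action}.
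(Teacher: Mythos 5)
Your proposal is correct and follows essentially the same route as the paper: identify the underlying $\infty$-category of $\mathbf{tAlg}_{\FF_p}^{\NN,\aug}$ with $\mathcal{D}^{\gr}$ via the Quillen equivalence with simplicial algebras, observe that $\FF_p\{S^1\}\otimes V_1$ models $\Sigma V$ in weight one (the paper does this by realising the free simplicial module on the simplicial circle), and invoke \Cref{explicitconstructionworks} --- noting that the relevant units are cofibrations --- so that $\Sigma^{\otimes}$ presents the $\infty$-categorical suspension; naturality in $V$ is carried along because $E$ and $H$ are given by explicit formulas functorial in $V$.
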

\begin{proof}
Writing $\FF_p\{S^1_\bullet\}$ for the free simplicial $\FF_p$-module on the  simplicial circle $S^1_\bullet$ with the basepoint equal to $0$, we verify the universal property to deduce that $|\FF_p\{S^1_\bullet\}|\cong \FF_p\{S^1\}$. Since the geometric realisation functor $|-|: \mathbf{sMod}_{\FF_p} \rightarrow \mathbf{tMod}_{\FF_p}$ also respects tensor products, we deduce that $|\FF_p\{S^1_\bullet\} \otimes V_1| \cong \FF_p\{S^1\}\otimes V_1$ is equivalent to the chain complex $\Sigma V\in \md_k$, concentrated in weight $1$. 
The claim  follows from  \Cref{explicitconstructionworks}, since all appearing units   are  cofibrations.
\end{proof}

\subsubsection*{Decomposing  Lie Algebras on an Even Class.}
The  preceding section allows us to decompose even Lie algebras in terms of odd ones. In the terminology of \Cref{bredon} and  \Cref{partitioncomplex}, we obtain:
\begin{proposition}\label{partitionsequence}
For  $w\geq 0$, there is a  natural  sequence of functors  $\vect_k^{\omega} \rightarrow \md_k$  \mbox{sending $V$ to}
$$\Sigma F_{\Sigma |\Pi_{\frac{w}{2}}|^\diamond}(\Sigma V^{\otimes 2}) \longrightarrow 
 \Sigma F_{\Sigma |\Pi_{w}|^\diamond}( V)  \longrightarrow F_{\Sigma |\Pi_{w}|^\diamond} (\Sigma V),$$
where the leftmost module is interpreted as zero whenever $w$ is odd.  
\end{proposition}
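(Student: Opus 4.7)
The plan is to obtain the desired sequence by applying the cotangent fibre functor $\cot_\Delta: \mathcal{D}^{\gr} \to \gr(\md_{k,\geq 0})$ to the sequence of Proposition \ref{strictsequence} and then extracting the weight-$w$ component. Since $\cot_\Delta$ is a left adjoint in the graded setting, it commutes with the colimit $\Sigma^{\otimes}$ (modelled as $\FF_p \myotimestwo{A}{\hobased} \FF_p$ in Construction \ref{suspensionconstruction}), so that $\cot_\Delta(\Sigma^{\otimes} A) \simeq \Sigma\, \cot_\Delta(A)$ for every $A \in \mathcal{D}^{\gr}$.

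The key input I would set up is a graded analogue of the computation in Proposition \ref{freeplie}. Namely, for a finite-dimensional $V \in \vect_k^\omega$ placed in internal weight $m$, the bar description of Remark \ref{thebarconstruction} expresses $\cot_\Delta(\sqz(V_m))$ as the geometric realisation $|\Barr_\bullet(\id,\lsym^*,V_m)|$. Expanding this in the graded setting exactly as in the proof of Proposition \ref{freeplie} (i.e.\ expressing the strict symmetric powers binomially and recognising the combinatorial identification $|T(n)| \simeq \Sigma |\Pi_n|^\diamond$ from \cite[Section 2.9]{arone2018action}), one finds a natural splitting
$$\cot_\Delta(\sqz(V_m)) \;\simeq\; \bigoplus_{n\geq 1} F_{\Sigma|\Pi_n|^\diamond}(V),$$
with the $n$-th summand sitting in internal weight $nm$. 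This decomposition is natural in $V$ (viewed as a functor from $\vect_k^\omega$) and, by the universal property of $\sqz$, in the underlying graded module as well.

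Applying this identification to each of the three terms of Proposition \ref{strictsequence} yields:
\begin{itemize}
\item $\cot_\Delta(\Sigma^{\otimes}\sqz(V_1))$ in weight $w$ is $\Sigma F_{\Sigma|\Pi_w|^\diamond}(V)$;
\item $\cot_\Delta(\sqz(\Sigma V_1))$ in weight $w$ is $F_{\Sigma|\Pi_w|^\diamond}(\Sigma V)$;
\item $\cot_\Delta(\Sigma^{\otimes}\sqz(\Sigma V_1^{\otimes 2}))$ in weight $w$ is $\Sigma F_{\Sigma|\Pi_{w/2}|^\diamond}(\Sigma V^{\otimes 2})$, since the input $\Sigma V_1^{\otimes 2}$ sits in weight $2$ and only contributes to even total weights $w = 2n$; for odd $w$ this summand is zero.
\end{itemize}
The induced maps between these weight-$w$ pieces, arising from $\cot_\Delta(E)$ and $\cot_\Delta(H)$, then assemble into the required natural sequence of functors $\vect_k^\omega \to \md_k$.

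The main subtlety I expect is the verification of the bar-theoretic identification in the second paragraph, specifically keeping track of the grading and the $\Sigma_n$-action throughout so that the strict orbits $(\widetilde{C}_*(\Sigma|\Pi_n|^\diamond,k)\otimes V^{\otimes n})_{\Sigma_n}$ appear (and not the homotopy orbits, which would be the $\EE_\infty$ analogue governing $\lieps$). This amounts to checking that $\cot_\Delta$ in the graded setting really is computed by the simplicial resolution whose terms involve $\lsym^*$ rather than the free $\EE_\infty$-algebra functor, and is essentially the graded version of the argument given for Proposition \ref{freeplie} in the SCR case; the only new ingredient beyond that proof is the bookkeeping of internal weight, which is straightforward since both $\cot_\Delta$ and $\sqz$ respect the grading by Construction \ref{setupSCR}(d).
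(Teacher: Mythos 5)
Your proposal is correct and follows essentially the same route as the paper's proof: apply $\cot_\Delta$ to the sequence of Proposition \ref{strictsequence}, use that the left adjoint $\cot_\Delta$ commutes with $\Sigma^{\otimes}$, and then evaluate $\cot_\Delta$ on trivial square-zero extensions as in Proposition \ref{freeplie} while tracking the internal weight. The weight bookkeeping you spell out (the $\Sigma V_1^{\otimes 2}$ input sitting in weight $2$, so contributing only to even $w$) and the strict-orbits caveat are exactly the points the paper relies on.
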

\begin{proof}
First, we apply the cotangent fibre functor $\cot$ (for simplicial commutative rings) to the sequence appearing in \Cref{strictsequence}. In a second step, we note that since the left adjoint $\cot_{\Delta}$ preserves colimits, there is a natural equivalence $\Sigma \circ \cot_{\Delta} \simeq \cot_{\Delta} \circ \Sigma^{\otimes}$. Finally, we proceed as in the proof of \Cref{freeplie} to evaluate the functor $\cot_{\Delta}$ on a trivial square-zero extension, thereby keeping track of the weights.
\end{proof}
By  \Cref{polyrightext}, we can in fact take the right-left Kan extension and obtain a sequence of $w$-excisive functors $\mod_k \rightarrow \mod_k$ sending $V$ to \begin{equation}\label{cofseq}\Sigma F_{\Sigma |\Pi_{\frac{w}{2}}|^\diamond}(\Sigma V^{\otimes 2}) \rightarrow 
 \Sigma F_{\Sigma |\Pi_{w}|^\diamond}( V)  \rightarrow F_{\Sigma |\Pi_{w}|^\diamond} (\Sigma V).\end{equation}

Applying the reduced singular chains functor $ \widetilde{C}_\ast( - , \FF_p)$ to \cite[Theorem 8.5]{arone2018action}, we see that  \eqref{cofseq} is a cofibre sequence   when evaluated on modules $V=\Sigma^n \FF_p$ with $n\geq 0 $ even.
By \cite[Proposition 4.6]{arone1999goodwillie}, this implies that \eqref{cofseq}  is in fact a cofibre sequence  on 
all modules of the form $V=\Sigma^n \FF_p$ with $n$ an even integer.
Applying linear duality and using \Cref{freeplie}, we deduce:
 \begin{theorem}\label{ehpa}
For all even integers $n$  and all weights $w\geq 0$, there is a cofibre
sequence in $\md_k$
$$ \Sigma \Free_{\Lie_{\FF_p,\Delta}^\pi}[w](\Sigma^{n-1}\FF_p )\rightarrow  \Free_{\Lie_{\FF_p,\Delta}^\pi}[w](\Sigma^n \FF_p)\rightarrow  \Free_{\Lie_{\FF_p,\Delta}^\pi}\left[\frac{w}{2}\right](\Sigma^{2n-1}\FF_p ).$$
\end{theorem}
The forgetful functor $\mathcal{D}^{\gr} \rightarrow \mathcal{C}^{\gr}$ from graded simplicial algebras to graded $\EE_\infty$-algebras described in \Cref{forgetting}  preserves  pushouts and trivial square-zero extensions. We may therefore   interpret \Cref{strictsequence} as a natural sequence of $\EE_\infty$-algebras. Repeating the argument in the proof of \Cref{strictsequence} in this context, we conclude:
  \begin{theorem}\label{ehpb}
For all even integers $n$   and all weights $w\geq 0$, there is a cofibre
sequence in $\md_k$
$$ \Sigma \Free_{\Lie_{\FF_p,\EE_\infty}^\pi}[w](\Sigma^{n-1}\FF_p )\rightarrow  \Free_{\Lie_{\FF_p,\EE_\infty}^\pi}[w](\Sigma^n \FF_p)\rightarrow  \Free_{\Lie_{\FF_p,\EE_\infty}^\pi}
\left[\frac{w}{2}\right](\Sigma^{2n-1}\FF_p ).$$\end{theorem} 
 \subsection{Free partition Lie algebras on many generators}\label{freemany}
We can express free Lie algebras on many classes in terms of free Lie algebras on  a single generator.
Recall the following   terminology:
\begin{definition}\label{Lyndonword}  \INN{020@$B(n_1,\ldots,n_k)$}
A \textit{Lyndon word} in  letters $x_1,\ldots,x_k$ is a word   which is lexicographically (strictly) minimal among all its cyclic rotations. Write $B_k$ for the set of Lyndon words in $k$ letters and let $B(m_1, \ldots, m_k)\subset B_k$ be the subset consisting of all   words involving each $x_i$ precisely $m_i$ times.

Given a Lyndon word $w\in B_k$, we write $|w|_i$ for the number of occurrences of the letter $x_i$ in $w$.
\end{definition}
Our decomposition will follow from   \cite[Theorem 5.10]{arone2018action}, which we will now recall:
\begin{theorem}\label{AB} Given a decomposition $n=n_1+ \ldots + n_k$, there is a $\Sigma_{n_1} \times \ldots \times \Sigma_{n_k}$-equivariant (simple) homotopy equivalence 
$$\Sigma|\Pi_n|^\diamond \xrightarrow{\ \ \  \simeq \ \ } \bigvee_{\substack{d | \gcd(n_1,\ldots,n_k)\\ w\in B(\frac{n_1}{d}, \ldots, \frac{n_k}{d})} }
\Ind_{\Sigma_d}^{\Sigma_{n_1} \times \ldots \times \Sigma_{n_k}} \left( (S^{\frac{n}{d}-1})^{\wedge d} \wedge \Sigma|\Pi_d|^\diamond\right).$$ 
\end{theorem}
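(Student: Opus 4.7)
The plan is to explicitly stratify $|\Pi_n|^\diamond$ by a ``block refinement type'' relative to the Young subgroup, and match each resulting stratum with a wedge summand on the right-hand side. Fix the block partition $\pi_0$ of $\{1,\ldots,n\}$ into parts of sizes $n_1,\ldots,n_k$ whose stabiliser is $Y = \Sigma_{n_1} \times \cdots \times \Sigma_{n_k}$. For a chain $\sigma = [\hat{0} < \sigma_1 < \cdots < \sigma_r < \hat{1}]$ in $\Pi_n$, I would define its \emph{external type} to be the coarsest super-partition whose blocks are pairwise $Y$-conjugate and each of which restricts to an isomorphic chain of partitions. This associates, to each divisor $d$ of $\gcd(n_1,\ldots,n_k)$ and each $Y$-orbit $[w]$ of super-partitions into $d$ equal blocks carrying $(n_1/d,\ldots,n_k/d)$ letters of each type, a sub-complex $S_{d,[w]} \subset |\Pi_n|^\diamond$. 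Using Quillen's fibre lemma in the form pioneered in \cite[Section 5]{arone2018action}, one should then argue that $\Sigma|\Pi_n|^\diamond$ is $Y$-equivariantly equivalent to the wedge over $(d,[w])$ of the associated suspended strata.

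For each stratum, I would use the standard join decomposition of the partition poset (applicable here because the sub-posets below and above the external-type partition split as products) to write $\Sigma S_{d,[w]}$ as a smash product of an \emph{external} piece, built from chains of partitions of the set of $d$ super-blocks, and an \emph{internal} piece, built from chains of partitions strictly inside each super-block. The external piece is canonically $\Sigma|\Pi_d|^\diamond$ as a $\Sigma_d$-space. The internal piece consists of $d$ equal factors coming from chains inside the super-blocks; each factor is a restricted partition complex of an $n/d$-element set carrying the letter distribution prescribed by $w$. A $\Sigma_d$-equivariant discrete Morse matching, modelled on the arguments already developed in \cite{arone2018action}, should collapse each internal factor onto $S^{n/d-1}$, producing the asserted $(S^{n/d-1})^{\wedge d}$ together with the cyclic $\Sigma_d$-action permuting super-blocks.

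Finally, I would enumerate the $Y$-orbits of super-partitions of external type of size $d$. These orbits are canonically in bijection with sequences of length $d$ in the letter alphabet $\{1,\ldots,k\}$ in which letter $i$ appears $n_i/d$ times, taken modulo cyclic rotation (i.e.\ necklaces); the classical Chen--Fox--Lyndon correspondence identifies the set of necklaces with the set of Lyndon words $w \in B(n_1/d,\ldots,n_k/d)$. The stabiliser of a fixed representative is precisely $\Sigma_d \subset Y$ acting by cyclic rotation, yielding the induction $\Ind_{\Sigma_d}^{Y}$. Summing over all pairs $(d,w)$ then gives the claimed wedge decomposition.

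The hard part will be executing the internal Morse collapse in an equivariant and simple-homotopy-coherent manner: the collapse must produce $(S^{n/d-1})^{\wedge d}$ as a genuine $\Sigma_d$-space with the correct permutation action, not merely abstractly up to homotopy. This is the technical heart of the argument and requires constructing an explicit $\Sigma_d$-equivariant acyclic matching compatible with the group action, generalising the classical collapse of $|\Pi_n|$ onto its top sphere via Bj\"orner--Welker style techniques. A secondary subtlety is ensuring that the stratification by external type actually yields a wedge decomposition (rather than only a filtration with associated graded the claimed wedge), which should follow from an equivariant retraction that pushes each stratum off the lower strata, again using the join structure of the partition poset.
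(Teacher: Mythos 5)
First, a point of reference: the paper does not prove this statement at all — Theorem~\ref{AB} is quoted verbatim from \cite[Theorem 5.10]{arone2018action} — so your sketch has to be measured against the proof in that reference, which runs through a global $Y$-equivariant discrete Morse argument (``complementary collapse'') rather than a stratification. Your proposal assembles the right ingredients (join decomposition of intervals in the partition lattice, a single sphere per block, $\Sigma\lvert\Pi_d\rvert^\diamond$ from the upper interval, Lyndon words), but the mechanism you propose for producing the splitting does not work. The ``stratification by external type'' is not well defined: a generic chain in $\Pi_n$ admits no nontrivial super-partition with pairwise $Y$-conjugate, chain-isomorphic blocks, the assignment is not compatible with face maps, and so the putative $S_{d,[w]}$ are not subcomplexes of $\lvert\Pi_n\rvert^\diamond$. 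Moreover Quillen's fibre lemma turns poset maps with contractible fibres into equivalences; it does not turn stratifications into wedge decompositions. In the actual argument the logic is reversed: one first collapses $\lvert\Pi_n\rvert^\diamond$ equivariantly onto a small subcomplex determined by the fixed Young partition, and the pairs $(d,w)$ parametrise the surviving critical cells, not strata of the original space.

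The second gap is the group theory at the end, which is where $\Ind_{\Sigma_d}^{Y}$ has to be earned. The stabiliser in $Y$ of a super-partition into $d$ isomorphic blocks is the wreath product $(\Sigma_{n_1/d}\times\cdots\times\Sigma_{n_k/d})\wr\Sigma_d$, not $\Sigma_d$; the reduction to $\Sigma_d$ happens only after the internal collapse rigidifies each block (the bracketing attached to a Lyndon word has trivial automorphisms), and the residual group is the full symmetric group permuting the $d$ identical rigidified blocks — not a cyclic group, so ``$\Sigma_d$ acting by cyclic rotation'' is internally inconsistent, and the necklace count, while it happens to enumerate the index set $\{(d,w)\}$ correctly, produces the wrong stabilisers. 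Relatedly, the claim that ``each internal factor collapses onto $S^{n/d-1}$'' cannot be executed block by block: the restricted partition complex of a single block is itself a large wedge of spheres (by the theorem applied inductively), and selecting exactly one sphere per block, coherently across blocks and compatibly with the $\Sigma_d$-action, is precisely the content of the global matching. You correctly identify this as the technical heart, but the surrounding scaffolding would not assemble into a proof even if that step were granted.
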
 
From this, we can  obtain the following decomposition:
\begin{proposition}\label{frommanytoone}
Given integers $\ell_1,\ldots, \ell_m$, there are isomorphisms of $\NN^m$-graded $\FF_p$-modules
$$\bigoplus_{w \in B_m} \Free_{\Lie_{\FF_p,\Delta}^\pi}\left(\Sigma^{1+\sum_i (\ell_i-1) |w|_i}(\FF_p) \right)
\cong 
\Free_{\Lie_{\FF_p,\Delta}^\pi}\left( {\Sigma^{\ell_1}\FF_p  \oplus \ldots \oplus \Sigma^{\ell_m}\FF_p  }\right)  $$
$$ \bigoplus_{w \in B_m} \Free_{\Lie_{\FF_p,\EE_\infty}^\pi}\left(\Sigma^{1+\sum_i (\ell_i-1) |w|_i}(\FF_p) \right)
  \cong
\Free_{\Lie_{\FF_p,\EE_\infty}^\pi}\left( {\Sigma^{\ell_1}\FF_p  \oplus \ldots \oplus \Sigma^{\ell_m}\FF_p  }\right)$$
The ``multinomial" grading by $\NN^m$ will be constructed in the course of the proof.
\end{proposition}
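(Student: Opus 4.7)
The plan is to apply the equivariant wedge decomposition of \Cref{AB} to the formulas for $\liep(V)$ and $\lieps(V)$ from \Cref{freeplie} and \Cref{concretespectralplie}, twice invoking Frobenius reciprocity to match summands on the two sides. First I would set up the $\NN^m$-multigrading: on the right-hand side, the decomposition $V = \bigoplus_i V_i$ with $V_i = \Sigma^{\ell_i} \FF_p$ induces a grading on tensor powers $V^{\otimes n} = \bigoplus_{|\alpha| = n} V^{\otimes \alpha}$, whose $\alpha = (n_1, \ldots, n_m)$ component is $\Ind_{H_\alpha}^{\Sigma_n}\!\bigl(\bigotimes_i V_i^{\otimes n_i}\bigr)$ with $H_\alpha := \Sigma_{n_1} \times \cdots \times \Sigma_{n_m}$; by functoriality this extends to gradings on $\liep(V)$ and $\lieps(V)$. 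On the left-hand side, the weight-$d$ piece of the summand indexed by a Lyndon word $w \in B_m$ is placed in multi-weight $(d|w|_1, \ldots, d|w|_m)$.

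Next, fix a multi-weight $\alpha = (n_1, \ldots, n_m)$ and set $n := \sum n_i$. Using \Cref{freeplie} and Frobenius reciprocity for the inclusion $H_\alpha \hookrightarrow \Sigma_n$, I would rewrite the $\alpha$-component of $\liep(V)$ as the $H_\alpha$-fixed points of $\widetilde{C}^\bullet(\Sigma |\Pi_n|^\diamond, \FF_p) \otimes \bigotimes_i V_i^{\otimes n_i}$. Restricting \Cref{AB} to $H_\alpha$ and applying Frobenius reciprocity again to each $\Ind_{\Sigma_d}^{H_\alpha}$-summand identifies this with
\[
\bigoplus_{d \mid \gcd(n_1, \ldots, n_m)} \ \bigoplus_{w \in B(n_1/d, \ldots, n_m/d)} \bigl(\widetilde{C}^\bullet(\Sigma |\Pi_d|^\diamond, \FF_p) \otimes \Sigma^{d - n + \sum \ell_i n_i}\FF_p\bigr)^{\Sigma_d},
\]
where the $\Sigma_d$-action on the sphere factor is the product of the sign twist $\sgn_d^{n/d-1}$ from permuting the $d$ smash factors of $(S^{n/d-1})^{\wedge d}$ and the sign twist $\sgn_d^{\sum_i \ell_i |w|_i}$ coming from the block-permutation embedding $\Sigma_d \hookrightarrow H_\alpha$ attached to $w$ (since swapping two length-$|w|$ blocks simultaneously swaps $|w|_i$ pairs of copies of $\Sigma^{\ell_i}\FF_p$ for each $i$).

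The last step is to identify each $(d, w)$-summand with the weight-$d$ piece of $\Free_{\liep}(\Sigma^{\ell_w}\FF_p)$ for $\ell_w := 1 + \sum_i (\ell_i - 1)|w|_i$. By \Cref{freeplie}, the latter equals $\bigl(\widetilde{C}^\bullet(\Sigma |\Pi_d|^\diamond, \FF_p) \otimes (\Sigma^{\ell_w}\FF_p)^{\otimes d}\bigr)^{\Sigma_d}$, with $\Sigma_d$ acting on $(\Sigma^{\ell_w}\FF_p)^{\otimes d}$ by $\sgn_d^{\ell_w}$. A direct computation gives $d \ell_w = d - n + \sum \ell_i n_i$, matching the underlying degrees, and a short parity check shows $\ell_w \equiv (n/d - 1) + \sum_i \ell_i |w|_i \pmod 2$, so the two sign characters of $\Sigma_d$ coincide. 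The argument for $\lieps$ is parallel, using \Cref{concretespectralplie} and homotopy fixed points in place of \Cref{freeplie} and strict fixed points; Frobenius reciprocity remains valid in either setting since $\Sigma_n$ and $H_\alpha$ are finite. The main obstacle I anticipate is the careful bookkeeping of signs in the block-permutation action attached to $w$, since a miscounted sign would break the identification between the two sides.
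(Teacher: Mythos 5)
Your proposal is correct and follows essentially the same route as the paper: a multinomial decomposition of the tensor powers over Young subgroups, the equivariant splitting of \Cref{AB} restricted to $\Sigma_{n_1}\times\cdots\times\Sigma_{n_m}$, and Frobenius reciprocity (the projection formula) to collapse each $(d,w)$-summand to a single-generator free Lie algebra — the paper packages this as a "binomial" identity for the functors $F_X$, $F_X^h$ followed by right-left extension, and dualises at the end rather than working with fixed points throughout. The sign/parity check you flag as the main obstacle is indeed needed and is left implicit in the paper's identification of $F_{\Ind_{\Sigma_d}^{\Sigma_n}((S^{n/d-1})^{\wedge d}\wedge\Sigma|\Pi_d|^\diamond)}$ with a suspended single-variable free Lie algebra; your verification of it is consistent.
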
 
\begin{proof} Recall the colimit-preserving functors  \mbox{$F_{(-)} , F^h_{(-)} : \mathcal{S}_\ast^{\Sigma_n} \rightarrow \End_\Sigma^n(\mod_{\FF_p})$} from  \Cref{bredon}. \\
For  $X\in \Set^{\Fin}_\ast$, $V\in \md_k^{\omega}$, and $\ell_1, \ldots, \ell_m \in \ZZ$, 
expanding ``binomially" gives a natural equivalence
$$ \FF_p[X] \myotimes{\Sigma_n} (\Sigma^{-\ell_1} V \oplus \ldots \oplus \Sigma^{-\ell_m} V)^{\otimes n} 
\simeq \bigoplus_{n_1 + \ldots + n_k = n}\Sigma^{-\ell_1 n_1 - \ldots  - \ell_m n_m}  \left(\FF_p[X] \myotimes{\Sigma_{n_1} \times \ldots \times \Sigma_{n_m}} V^{\otimes n}\right).$$
Taking the right-left extension of these degree $n$ functors (cf.\ \Cref{polyrightext}), we obtain
\begin{equation}\label{abcdefg}  F_{X}(\Sigma^{-\ell_1} V \oplus \ldots \oplus \Sigma^{-\ell_m} V) \simeq 
\bigoplus_{n_1 + \ldots + n_k = n}  \Sigma^{-\ell_1 n_1 - \ldots  - \ell_m n_m}
 F_{ \Ind^{\Sigma_n}_{\Sigma_{n_1} \times \ldots \times \Sigma_{n_k}} (X)}(V).\end{equation}
Since $F_{(-)}: \mathcal{S}_\ast^{\Sigma_n} \rightarrow \End_\Sigma^n(\mod_{\FF_p})$ was defined by freely extending from finite $\Sigma_n$-sets to genuine $\Sigma_n$-spaces under sifted colimits, this equivalence in fact holds for general $\Sigma_n$-spaces $X$ in $\mathcal{S}_\ast^{\Sigma_n}$.

We will further analyse the right hand side of the above equivalence in the case $X = \Sigma |\Pi_n|^\diamond$. Here,
\Cref{AB}   gives rise to an equivalence  of $\Sigma_n$-spaces 
$$ \Ind^{\Sigma_n}_{\Sigma_{n_1} \times \ldots \times \Sigma_{n_k}} (\Sigma |\Pi_n|^\diamond) \simeq 
\bigvee_{\substack{d | \gcd(n_1,\ldots,n_k)\\ w\in B(\frac{n_1}{d}, \ldots, \frac{n_k}{d})} }
\Ind_{\Sigma_d}^{\Sigma_n} \left( (S^{\frac{n}{d}-1})^{\wedge d} \wedge \Sigma|\Pi_d|^\diamond\right)$$

Plugging this equivalence into \eqref{abcdefg}, we obtain an identification
$$ \ \ \ \ \ \ \F_{\Sigma | \Pi_n|^\diamond}(\Sigma^{-\ell_1} V \oplus \ldots \oplus \Sigma^{-\ell_m} V)   \ \simeq  \  \displaystyle   \bigoplus_{\substack{n_1+\ldots + n_m = n\\d | \gcd(n_1,\ldots,n_k)\\ w\in B(\frac{n_1}{d}, \ldots, \frac{n_k}{d})} } F_{\Sigma |\Pi_d|^\diamond}\left(\Sigma^{1+(1-\ell_1)\frac{n_1}{d} +\ldots + (1-\ell_m)\frac{n_m}{d} }V  \right).\ \ \ \ \   \ \ \ \ \ \ \ \ \   \ \ \   \ \   \ \ \ \ \ \ \ \ \   \ \ \ \ \vspace{5pt}$$ 
Combining this equivalence with  \Cref{freeplie}, we can deduce the first claim:   the \mbox{weight $n$ piece of}   \mbox{$\Free_{\Lie_{\FF_p,\Delta}^\pi}\left( {\Sigma^{\ell_1}\FF_p  \oplus \ldots \oplus \Sigma^{\ell_m}\FF_p  }\right)$ is }  $\left( F_{\Sigma | \Pi_n|^\diamond}(\Sigma^{-\ell_1} \FF_p \oplus \ldots \oplus \Sigma^{-\ell_m} \FF_p)\right)^\vee$,
whereas the \mbox{weight $n$ piece} of $\displaystyle \bigoplus_{w \in B_m} \Free_{\Lie_{\FF_p,\Delta}^\pi}\left(\Sigma^{1+\sum_i (\ell_i-1) |w|_i}(\FF_p ) \right)$ is  $\displaystyle \bigoplus_{\substack{n_1+\ldots + n_m = n\\d | \gcd(n_1,\ldots,n_k)\\ w\in B(\frac{n_1}{d}, \ldots, \frac{n_k}{d})} } \left(F_{\Sigma |\Pi_d|^\diamond}\left(\Sigma^{1+(1-\ell_1)\frac{n_1}{d} +\ldots + (1-\ell_m)\frac{n_m}{d} }\FF_p   \right)\right)^\vee.$\\  \\
The second claim follows by a parallel argument using the construction $F^h_{(-)}$ instead of $F_{(-)}$.
\end{proof}\ 
 We  combine  our results to prove the main claim of this section.
 \begin{proof}[Proof of \Cref{finalthecohomology}  and \Cref{finalthecohomologyspectral}]
We first consider the case $m=1$. \\
If $p=2$ or $\ell_1$ odd, both statements   can be read off from \Cref{odda} and \Cref{oddb}, respectively.\vspace{5pt}
If $p$ is odd and $\ell_1$ is even, we recall the two cofibre sequences of weight graded $\FF_p$-module spectra established in \Cref{ehpa} and \Cref{ehpb}:
$$ \Sigma \Free_{\Lie_{\FF_p,\Delta}^\pi}[w](\Sigma^{{\ell_1}-1}\FF_p )\rightarrow  \Free_{\Lie_{\FF_p,\Delta}^\pi}[w](\Sigma^{\ell_1} \FF_p)\rightarrow  \Free_{\Lie_{\FF_p,\Delta}^\pi}\left[\frac{w}{2}\right](\Sigma^{2{\ell_1}-1}\FF_p ).$$
$$ \Sigma \Free_{\Lie_{\FF_p,\EE_\infty}^\pi}[w](\Sigma^{{\ell_1}-1}\FF_p )\rightarrow  \Free_{\Lie_{\FF_p,\EE_\infty}^\pi}[w](\Sigma^{\ell_1} \FF_p)\rightarrow  \Free_{\Lie_{\FF_p,\EE_\infty}^\pi}
\left[\frac{w}{2}\right](\Sigma^{2\ell_1-1}\FF_p ).$$

If $w=p^k$ for some $k$, then the right terms vanish. This implies by the ``odd case'' that in both cases, the middle terms have a basis consisting of all sequences $(i_1,\ldots, i_k)$ satisfying conditions $(1), (2)$ and $(1)', (2)'$ respectively,  together with the respective conditions
\begin{enumerate} \setcounter{enumi}{2}
\item  $(p-1) (\ell_1-1) \leq i_k <-1$      or   $ 0 \leq i_k \leq (p-1)   (\ell_1-1)$  ;
\end{enumerate}
\begin{enumerate} \setcounter{enumi}{2}
\item\hspace{-5pt}'   $ i_k \leq  (p-1)  (\ell_1-1)$.
\end{enumerate}
Since $\ell_1$ is even, these conditions are (in light of the congruences $(1)$ or $(1)'$) equivalent to 
\begin{enumerate} \setcounter{enumi}{2}
\item $(p-1) \ell_1-1 \leq i_k < -1$ or $0 \leq i_k \leq (p-1) \ell_1-1 $;
\end{enumerate}
 
\begin{enumerate} \setcounter{enumi}{2}
\item\hspace{-5pt}' $  i_k \leq   (p-1) \ell_1-1$.
\end{enumerate}
This agrees with the assertions made in the two theorems (where $\epsilon = 1$ and $e=0$ in this case).\vspace{5pt}

If $w = 2p^k$ for some $k$, then the respective left terms in the above cofibre sequences vanish.
By the ``odd cases'', the middle terms have a basis consisting of all sequences $(i_1,\ldots, i_k)$ satisfying conditions $(1), (2)$ or $(1)', (2)'$ ,  together with the respective conditions
\begin{enumerate} \setcounter{enumi}{2}
\item  $(p-1) (2\ell_1-1) \leq i_k<-1$ or $   0 \le i_k \leq (p-1)   (2\ell_1-1)$;
\end{enumerate}
 
\begin{enumerate} \setcounter{enumi}{2}
\item\hspace{-5pt}'   $i_k \leq (p-1)  (2\ell_1-1)   $.
\end{enumerate}
In light of the congruence conditions $(1)$ or $(1)'$, these conditions are  in turn equivalent to 
\begin{enumerate} \setcounter{enumi}{2}
\item $(p-1) (2\ell_1)-1\leq i_k <-1$ or $0 \leq i_k \leq (p-1)   ( 2\ell_1)  - 1$;
\end{enumerate}
 
\begin{enumerate} \setcounter{enumi}{2}
\item\hspace{-5pt}' $i_k \leq  (p-1) (2\ell_1)-1$.
\end{enumerate}
Again, this agrees with the assertions made in the two theorems (with $\epsilon = 1$ and $e=1$ in this case).

If $w\neq p^k, 2p^k$ for all $k$, then the outer summands in the above cofibre sequences vanish, which implies that the middle term must also vanish. This verifies the two claims in these weights.
We have finally verified the two theorems whenever there is just a single generator.

The statement for $m>1$ follows immediately from the single generator case by  the direct sum decomposition established in \Cref{frommanytoone}.
 \end{proof}

\newpage

\section{Appendix: Hypercoverings and Kan extensions}
In higher algebra, simplicial resolution arguments often  proceed by writing a given object $X$ (which we want to control) as a geometric realisation  of a simplicial diagram $X_\bullet$ consisting of  simpler objects   (which we can control). 
The theory of hypercoverings gives a general tool for building such simplicial resolutions. It goes back to Verdier's Expos\'{e} V  in   \cite{artin1972theorie}, and was studied in a higher categorical context by Dugger-Hollander-Isaksen \cite{dugger2004hypercovers}, To{\"e}n-Vezzosi \cite[Section 3.2]{toen2005homotopical}, Lurie \cite[Section 6.5.3]{lurie2009higher} \cite[Section Prop. 7.2.1]{lurie2014higher}, and many others.

In this appendix, we will develop a variant  of these ideas  which will allow us to  construct completed-free resolutions of  complete Noetherian algebras in \Cref{freeresolutions} above.  Moreover, we will   use hypercoverings to explicitly describe 
certain left Kan extensions of algebras; this technical result is needed in \Cref{forgetfulrelative} in the main body of this article.

\subsection{Construction of hypercoverings}
We begin by recalling the following classical definition:
\begin{definition}[Matching and latching objects]   
Let $X_\bullet$ be a simplicial object in an \mbox{$\infty$-category $\mathcal{C}$.}
\begin{enumerate} \INN{130@$M_n(X_\bullet)$}\INN{120@$L_n(X_\bullet)$}
\item  
The $n$th \emph{matching object} $M_n(X_\bullet)$ is given by the limit 
$M_n(X_\bullet) =  \varprojlim_{[m] \rightarrow [n], m < n} X_m$, if this limit exists.  
The limit is taken over the opposite
of the full  subcategory of $\Delta^{}_{/[n]}$ spanned by arrows
$[m] \rightarrow [n]$ with $m < n$. 
Equivalently, by a classical cofinality argument, we can take the limit over the opposite of the
poset
of proper \vspace{3pt} subsets of $[n]$. 
\item The $n$th \emph{latching object} $L_n(X_\bullet)$ is given by the
colimit $\varinjlim_{[n] \rightarrow [m], m < n} X_m$, \vspace{2pt}if it exists. 
By cofinality, we  can also take 
the colimit over the poset of surjections $[n] \twoheadrightarrow [m]$ with  $m <
n$. 
\end{enumerate}
For each $n$, we have natural maps $L_n(X_\bullet) \rightarrow X_n \rightarrow M_n(X_\bullet).$

\end{definition} 
We recall a criterion for contractibility, together with its relative variant:
\begin{example} 
\label{hypspace0}
Let $X_\bullet$ be a simplicial space. Suppose the map 
$X_n \rightarrow M_n(X_\bullet)$ induces a surjection on $\pi_0$ for all $n\geq 0$. Then
$|X_\bullet|$ is contractible. 
This is proven in  \cite[Lemma 6.5.3.11]{lurie2009higher}. 
\end{example}  
\begin{example} 
\label{hypspace}
Let $X_\bullet$ be a simplicial space augmented over a space $Z$, i.e.\ a  simplicial \mbox{object of $\mathcal{S}_{/Z}$.}
Consider the $n$th mapping object $M_n(X_\bullet)$ in $\mathcal{S}_{/Z}$
(by computing the relevant limit internal to $\mathcal{S}_{/Z}$). 
If the map $X_n \rightarrow M_n(X_\bullet)$ induces a surjection on $\pi_0$ for all $n\geq 0$, then 
 $|X_\bullet| \simeq Z$. This reduces to the previous example by taking
homotopy fibre products over points of $Z$. 
\end{example}

The theory of hypercoverings provides a generalisation of the last example:  we will look for (possibly augmented) simplicial objects
such that the map $X_n \rightarrow M_n(X_\bullet)$ has some type of
surjectivity. We will study hypercoverings in the following general context:

\begin{definition} \label{wop} 
Let $\mathcal{C}$ be an $\infty$-category which admits (finite
nonempty) coproducts and a
terminal object $\ast$,  $S$ a class of morphisms in
$\mathcal{C}$, and $\mathcal{F} \subset \mathcal{C}$ a class of objects. 
We say that $(\mathcal{F}, S)$ forms a \emph{weakly orthogonal pair} if:
\begin{enumerate}
\item $S$ is  
closed under composition and contains all equivalences. Moreover, we have the
following property:
given composable arrows $g, f$ with $g \circ f \in S$, we have $g \in
S$ too. 
\item
 Pullbacks of morphisms
 in $S$
 along morphisms in $S$ 
exist and belong to $S$.  
\item $\mathcal{F}$ is closed under coproducts. 
\item For each $F \in \mathcal{F}$, the map $F \to \ast$ belongs to $S$. 
\item Given  $F \in \mathcal{F}$ and a morphism $f: Y \rightarrow Y'$ in $S$, the map 
\mbox{$\Map_{\mathcal{C}}(F, Y) \rightarrow \Map_{\mathcal{C}}(F, Y')$} is surjective on
$\pi_0$. Hence objects in $F$ have the left lifting property with respect to
$S$. 
\item Given an object $Y \in \mathcal{C}$, there exists a map $f: F \rightarrow Y$ in $S$ with $F
\in \mathcal{F}$. \label{factcondition}
\end{enumerate}

\end{definition}

\begin{remark} 
Let $\mathcal{C}, \mathcal{F}, S$ be as in \Cref{wop} and fix an object $Z \in \mathcal{C}$. 
Consider the full subcategory $(\mathcal{C}_Z)' \subset \mathcal{C}_{/Z}$ 
consisting of those maps $Y \to Z$ which belong to $S$. 
Our assumptions imply that $(\mathcal{C}_Z)'$ contains a terminal object as well
as finite nonempty coproducts. 
Then 
$(\mathcal{C}_Z)'$ 
admits a weakly orthogonal pair $(\mathcal{F}_Z, S_Z)$
as follows. 
The class $\mathcal{F}_Z$ consists of those objects in $(\mathcal{C}_{/Z})'$  
whose underlying object of $\mathcal{C}$ belongs to $\mathcal{F}$. 
The class
$S_{/Z}$ consists of those morphisms whose underlying morphism in $\mathcal{C}
$ belongs to $S$.
\end{remark} 

We will now define the notion of a hypercovering and prove an existence statement.
This is essentially a classical result  from  SGA4; the $\infty$-categorical
treatment is a slight modification of \cite[Proposition 7.2.1.5]{lurie2014higher}, except that we do not assume the
existence of finite limits. 

\begin{lemma}[General hypercovering lemma] 
\label{hypercoveringlemma}
Assume that $( \mathcal{F}, S)$ is a weakly orthogonal pair in an
$\infty$-category $\mathcal{C}$ which admits finite nonempty coproducts and a terminal object $\ast$.
Then there exists a simplicial object $X_\bullet $ such that for all $n\geq 0$, we have:
\begin{enumerate}
\item  the object $X_n$ belongs to $\mathcal{F}$;
\item  the 
matching object $M_n(X_\bullet)$  exists in
$\mathcal{C}_{}$;
\item the latching object $L_n(X_\bullet)$ exists in $\mathcal{C}_{}$; 
\item the map $X_n \rightarrow M_n(X_\bullet)$ belongs to $S$
(When $n = 0$, this is the map $X_0 \rightarrow \ast$);
\item the map $L_n(X_\bullet) \rightarrow X_n$ 
expresses $X_n$ as a coproduct of the source and an object in $\mathcal{F}$. 
\end{enumerate}
\label{existenceofhyp}
\end{lemma}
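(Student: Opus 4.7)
The plan is to construct $X_\bullet$ by induction on the skeletal dimension, at each stage extending an $(n-1)$-truncated simplicial object satisfying conditions (1)--(5) to an $n$-truncated one. The base case $n=0$ is immediate: apply axiom (6) to the terminal object $\ast$ to choose $X_0 \in \mathcal{F}$ with $X_0 \to \ast$ in $S$; since $L_0$ is empty (or initial by convention) and $M_0 \simeq \ast$, all conditions hold trivially.

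For the inductive step, the key technical point is that both the matching object $M_n(X_\bullet)$ and the latching object $L_n(X_\bullet)$ exist in $\mathcal{C}$, even though $\mathcal{C}$ is not assumed to have all finite limits or colimits. For $M_n$, we observe that the limit over the opposite of the poset of proper subsets of $[n]$ can be computed inductively as an iterated fibre product of the maps $X_m \to M_m(X_\bullet)$, all of which lie in $S$ by the inductive hypothesis; axioms (1) and (2) then guarantee that this iterated pullback exists and that all its structure maps belong to $S$. For $L_n$, the inductive hypothesis expresses each $X_m$ ($m < n$) as a coproduct of the corresponding latching object and an $\mathcal{F}$-object. Unravelling this yields a description of $L_n$ as a finite coproduct of $\mathcal{F}$-summands indexed by (iterated) degeneracies of such pieces; hence $L_n$ exists by axiom (3), and in fact lies in $\mathcal{F}$.

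The natural map $L_n \to M_n$ arising from the simplicial identities lies in $S$: it suffices to check this on each $\mathcal{F}$-summand of $L_n$, where it factors through some $X_m$ via a composite of unit maps and then into $M_n$; using axioms (1) and (5), combined with two-out-of-three, one reduces to a lifting problem that is solvable. Now apply axiom (6) to find $F \in \mathcal{F}$ and a map $F \to M_n$ in $S$, and set $X_n := L_n \sqcup F$. Condition (5) then holds by construction; condition (4) follows because the induced map $X_n = L_n \sqcup F \to M_n$ has both restrictions in $S$, and such coproducts lie in $S$ by a diagram chase using axioms (1)--(3). Finally, the universal property of the matching object furnishes the required face maps extending our $(n-1)$-truncation to an $n$-truncation.

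The main obstacle will be the bookkeeping needed to verify that the data assembled inductively really does satisfy all simplicial identities coherently in the $\infty$-categorical sense. The cleanest way to handle this is to phrase the construction in terms of Reedy-type extensions: at each stage one is solving a lifting problem against the boundary inclusion of truncated simplicial objects, and the choice $X_n = L_n \sqcup F$ with $F \to M_n$ in $S$ is precisely a solution. Taking the colimit over $n$ produces the desired $X_\bullet$.
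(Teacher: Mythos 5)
Your overall strategy coincides with the paper's: build $X_\bullet$ skeleton by skeleton, prove that $M_n$ and $L_n$ exist by induction (the paper packages this as a separate poset-induction lemma, \Cref{colimlemma}, applied once with $T=S$ for matching objects and once, dually, with $T$ the class of coproduct inclusions for latching objects), and set $X_n = L_n \sqcup F$ with $F\to M_n$ chosen in $S$ via axiom (6), invoking \cite[Proposition A.2.9.15]{lurie2009higher} for the Reedy-style extension step. Up to the level of detail expected in a sketch, that part is fine.

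There is, however, a genuine flaw in your verification of condition (4). You argue that $X_n = L_n\sqcup F \to M_n$ lies in $S$ because "both restrictions lie in $S$" and "such coproducts lie in $S$ by a diagram chase using axioms (1)--(3)." Neither half of this survives scrutiny. First, no axiom asserts that a map out of a coproduct whose restrictions lie in $S$ is itself in $S$; axioms (1)--(3) concern composition, pullbacks, and closure of $\mathcal{F}$ under coproducts, and say nothing of the sort. Second, the sub-claim that the canonical map $L_n \to M_n$ lies in $S$ is false in general: already for $n=1$ it is the diagonal $X_0 \to X_0\times X_0$, which has no reason to admit the required lifts. The correct argument is shorter and needs neither claim: the composite $F \hookrightarrow L_n\sqcup F \to M_n$ is the chosen map $F\to M_n$, which lies in $S$, so the two-out-of-three property in axiom (1) (if $g\circ f\in S$ then $g\in S$) immediately gives $X_n\to M_n\in S$. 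With that substitution your proof goes through and matches the paper's.
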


\begin{definition}[$(\sF, S)$-hypercoverings] 
Fix a weakly orthogonal pair $(\mathcal{F}, S)$ on an $\infty$-category $\mathcal{C}$.
\begin{enumerate}
\item A simplicial object $X_\bullet$ is said to be an \emph{$(\sF, S)$-hypercovering} of the terminal object
$\ast$ if it satisfies  conditions (1)--(5) of \Cref{hypercoveringlemma}.
\item An augmented simplicial object $X_\bullet \rightarrow Z$
is called an \emph{$(\sF, S)$-hypercovering} of $Z$ if each $X_i \to Z$
belongs to $S$, and, when  considered as a
simplicial object of $(\mathcal{C}_{/Z})'$, it is an $(\mathcal{F}_Z, S_Z)$ hypercovering of the
terminal object. 
\end{enumerate}
\end{definition} 

To prove  \Cref{hypercoveringlemma}, we will need the following technical result:\vspace{-1pt}
\begin{proposition} 
\label{colimlemma}
Let $\mathcal{P}$ be a finite poset. Let $\mathcal{D}$ be an
$\infty$-category containing an initial object and suppose that $T$ is a class of morphisms in $\mathcal{D}$ which is closed under
composition and contains all equivalences.  Let $G: \mathcal{P} \rightarrow \mathcal{D}$ be any functor.
Suppose that: 
\begin{enumerate}
\item  
pushouts of morphisms in $T$ along morphisms in $T$ exist, and remain in $T$; 
\item
for any $x \in \mathcal{P}$, the functor $G|_{\mathcal{P}_{< x}} 
: \mathcal{P}_{< x}
\rightarrow \mathcal{D}$ admits a colimit in $\mathcal{D}$; 
\item for any $x \in \mathcal{P}$, the morphism 
$\varinjlim_{y\in \mathcal{P}_{< x}} G(y) \rightarrow G(x)$ belongs to $T$. 
\end{enumerate}
Then $G$ admits a colimit in $\mathcal{D}$, and  the canonical map from the initial object to $\varinjlim_{\mathcal{P}} G$\vspace{-1pt}
 \mbox{belongs to $T$.}
\end{proposition}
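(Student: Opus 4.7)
The plan is to proceed by induction on $|\mathcal{P}|$, establishing the following strengthening of the statement: whenever hypotheses $(1)$--$(3)$ hold, for every pair of downward-closed subsets $\mathcal{Q}_1 \subset \mathcal{Q}_2$ of $\mathcal{P}$, both restricted colimits $\varinjlim_{\mathcal{Q}_i} G$ exist and the canonical comparison map $\varinjlim_{\mathcal{Q}_1} G \to \varinjlim_{\mathcal{Q}_2} G$ lies in $T$. Taking $\mathcal{Q}_1 = \emptyset$ and $\mathcal{Q}_2 = \mathcal{P}$ then recovers the stated conclusion. The base case $\mathcal{P} = \emptyset$ is immediate, since the only such inclusion is the identity on the initial object, which lies in $T$ by assumption.

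For the inductive step, I would pick a maximal element $x \in \mathcal{P}$ and set $\mathcal{P}' := \mathcal{P} \setminus \{x\}$. Since $x$ is maximal, $\mathcal{P}_{<y} = \mathcal{P}'_{<y}$ for every $y \in \mathcal{P}'$, so $G|_{\mathcal{P}'}$ inherits hypotheses $(1)$--$(3)$ and the strengthened inductive hypothesis applies to it. For a downward-closed $\mathcal{Q} \subset \mathcal{P}$ containing $x$, write $\mathcal{Q}' := \mathcal{Q} \setminus \{x\}$, which is downward-closed in $\mathcal{P}'$ and contains $\mathcal{P}_{<x}$. A direct cofinality argument, using the covering of $\mathcal{Q}$ by the downward-closed pieces $\mathcal{Q}'$ and $\mathcal{P}_{\leq x}$ whose intersection is $\mathcal{P}_{<x}$, identifies $\varinjlim_{\mathcal{Q}} G$ with the pushout $\varinjlim_{\mathcal{Q}'} G \sqcup_{D} G(x)$, where $D := \varinjlim_{\mathcal{P}_{<x}} G$ exists by $(2)$.

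Both legs of the pushout span lie in $T$: the map $D \to G(x)$ by hypothesis $(3)$, and the map $D \to \varinjlim_{\mathcal{Q}'} G$ by the strengthened hypothesis applied to the inclusion $\mathcal{P}_{<x} \subset \mathcal{Q}'$ within $\mathcal{P}'$. Hypothesis $(1)$ then ensures the pushout exists and that both of its pushout legs are again in $T$. For the comparison assertion on a general inclusion $\mathcal{Q}_1 \subset \mathcal{Q}_2$, I would split into the cases $x \notin \mathcal{Q}_2$, $x \in \mathcal{Q}_2 \setminus \mathcal{Q}_1$, and $x \in \mathcal{Q}_1$, and use pasting of pushouts to rewrite $\varinjlim_{\mathcal{Q}_1} G \to \varinjlim_{\mathcal{Q}_2} G$ as a composite of pushouts of $T$-morphisms along $T$-morphisms; closure of $T$ under composition then closes the induction.

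The main obstacle is purely one of bookkeeping. The class $T$ is \textbf{not} assumed to satisfy two-out-of-three, so from knowing $\emptyset \to D \in T$ and $\emptyset \to \varinjlim_{\mathcal{Q}'} G \in T$ one cannot directly deduce that $D \to \varinjlim_{\mathcal{Q}'} G$ lies in $T$. This forces the relative formulation of the inductive hypothesis: without it, the pushout step fails. Once the stronger statement is in place, the argument is largely mechanical manipulation of pushout diagrams and closure properties of $T$.
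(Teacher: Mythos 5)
Your proof is correct and follows essentially the same route as the paper's: both arguments run an induction over downward-closed subsets with the relative strengthening (comparison maps between nested downward-closed subsets lie in $T$), and both decompose $\varinjlim_{\mathcal{Q}} G$ as the pushout $\varinjlim_{\mathcal{Q}\setminus\{x\}} G \sqcup_{\varinjlim_{\mathcal{P}_{<x}} G} G(x)$ coming from the (homotopy) pushout of nerves. The only cosmetic difference is that you remove a maximal element and induct on $|\mathcal{P}|$, whereas the paper adjoins a minimal missing element to a maximal "good" downward-closed subset; your observation that the absence of two-out-of-three forces the relative formulation is exactly the point.
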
 
\begin{proof} 
Let $\mathcal{Q} \subset \mathcal{P}$ be a  downward-closed subset; this means that 
 if $y \leq x$, then $x \in \mathcal{Q}$ implies  $y \in
\mathcal{Q}$. 
We claim that if $\mathcal{Q}' \subset \mathcal{Q}$ is 
downward closed, then the colimits of $G$ over
$\mathcal{Q}, \mathcal{Q}'$ exist, and the morphism
$\varinjlim_{\mathcal{Q}'} G\rightarrow \varinjlim_{\mathcal{Q}} G$ belongs to $T$. 
Taking $\mathcal{Q} = \mathcal{P}$ (and $\mathcal{Q}' = \emptyset$)   then implies the result.

Suppose $\mathcal{Q}$ is maximal among downward closed subsets for which the
above claim  holds true.\\ If $\mathcal{Q} \neq \mathcal{P}$, let $z \in \mathcal{P}$ be an element minimal subject to the condition that $z
\notin \mathcal{Q}$; this means that any $z'$ with  $z' < z$ belongs to $\mathcal{Q}$. 
In particular, $\widetilde{\mathcal{Q}} := \mathcal{Q} \cup \left\{z\right\}$ is a downward
closed subset as well.
The poset $\widetilde{\mathcal{Q}}$ is the union of $\mathcal{Q}$ and $
\mathcal{P}_{\leq z}$, with common intersection being given by $\mathcal{P}_{< z}$. 
Moreover, we have
a pushout, and in fact a homotopy pushout in the Joyal model structure, 
of simplicial sets
$$ \N(\mathcal{\widetilde{Q}}) = \N( \mathcal{Q}) \sqcup_{\N( \mathcal{P}_{< z})}
\N(\mathcal{P}_{\leq z}).  $$
By assumption, the colimit $\varinjlim_{y\in \mathcal{P}_{< z}} G(y)$ exists, and  
$\varinjlim_{y\in \mathcal{P}_{< z}} G(y) \rightarrow G(z)  = \varinjlim_{y\in \mathcal{P}_{\leq  z}} G(y)
$ belongs to $T$. 
By the defining hypothesis on $\mathcal{Q}$, we know that
$\varinjlim_{y\in  \mathcal{P}_{<z}} G(y) \rightarrow \varinjlim_{y\in \mathcal{Q}} G(y)$ belongs
to $T$ as well. 
Using \cite[Corollary 4.2.3.10]{lurie2009higher}, we deduce that $G|_{\widetilde{\mathcal{Q}}}$
admits a colimit as desired, which is given as the pushout of the restricted colimits. 
It follows from (1) that both
$\varinjlim_{y\in \mathcal{Q}} G(y) \rightarrow \varinjlim_{y\in\mathcal{\widetilde{Q}}} G(y)$  and $\varinjlim_{y\in\mathcal{P}_{\leq z}}  G(y) \rightarrow
\varinjlim_{y\in\widetilde{\mathcal{Q}}} G(y)$ belong to $T$. 
Since any proper subposet $\mathcal{Q}''$ of $\mathcal{\widetilde{Q}}$ is contained in either
$\mathcal{P}_{\leq z}$ or $\mathcal{Q}$, we conclude\vspace{1pt} that
$\varinjlim_{y\in \mathcal{Q}''} G(y) \rightarrow \varinjlim_{y \in \widetilde{\mathcal{Q}}} G(y)$
belongs to $T$ by  using the defining hypothesis of $\mathcal{Q}$ and the fact that $T$ is closed under
composition. 
\end{proof} 
We are now in a position to construct hypercoverings:\vspace{-2pt}
\begin{proof}[Proof of \Cref{existenceofhyp}] 
Using condition (5) in \Cref{wop}, we can chose an object  $X_0 \in \mathcal{F}$ such that  $X_0 \rightarrow \ast$
belongs to $S$. We will now construct a simplicial object  by a recursive construction.

Suppose that we have defined $X$ on $\Delta^{op}_{\leq r}$ so that it \vspace{1pt}satisfies 
conditions (1)--(5) of \Cref{existenceofhyp}, 
for all $n \leq r$. 
In order to extend $X$ to $\Delta^{op}_{\leq r+1}$, we first observe that the \vspace{1pt}colimit
$\varinjlim_{[r+1] \rightarrow [m], m < r+1} X_m$ (i.e.\ the {latching} object, which
is already defined for the $r$-truncated \vspace{1pt} simplicial object) and 
the limit 
$\varprojlim_{ [m] \rightarrow [r+1], m < r+1} X_m$
(i.e.\ the \vspace{1pt}corresponding {matching} object)  both exist. 
Furthermore, we claim that the latching object belongs to $\mathcal{F}$.
To verify these claims,  we   apply Proposition~\ref{colimlemma} as follows:
\begin{enumerate}
\item  
The matching object $M_{r+1}(X_\bullet)$ (if it exists) can be computed as the  limit
$\varprojlim_{U \subsetneq [r+1]} X_U$, taken over  \vspace{1pt}
the opposite of 
the poset of proper subsets $U \subsetneq [r+1]$. 
Given a proper  \vspace{1pt} subset $U  \subsetneq [r+1]$, say $U = [m]$, the limit
$\varprojlim_{U' \subsetneq [m]} X_{U'}$  \vspace{1pt} exists and $X_{m} \rightarrow \varprojlim_{U' \subsetneq [m]} X_{U'}$  \vspace{1pt} belongs to $S$
by the inductive hypothesis.  
Therefore, the matching object exists \vspace{2pt}  by \Cref{colimlemma}.
\item
We apply a dual argument for the latching object. Indeed,  define $T$  to be the class of 
 morphisms which are equivalent to $Y \rightarrow Y \sqcup X$ with  $
X \in \mathcal{F}$. By \Cref{colimlemma}, it then follows that the latching object exists and belongs to
$\mathcal{F}$. 
\end{enumerate}

To construct $X$ on $\Delta^{op}_{\leq r+1}$, 
by \cite[Proposition A.2.9.15]{lurie2009higher} and the surrounding discussion, it suffices to provide an object $X_{r+1}$ and a factorisation 
$$\varinjlim_{[r+1] \rightarrow [m], m < r+1} X_m \ \ \ \longrightarrow \ \ \ 
X_{r+1} \ \ \ \longrightarrow \ \ \ 
\varprojlim_{[m] \rightarrow [n], m < n} X_m
.$$
Define $X_{r+1}$ as the coproduct of the left-hand-side
with an object $F \in \mathcal{F}$ with a map $F \rightarrow \varprojlim_{[m] \rightarrow [n], m < n} X_m
$ in $S$. 
Then, $X_{r+1} \to \varprojlim_{[m] \rightarrow [n], m < n} X_m$ lies in $S$
by the two-out-of-three property.
This extends $X$ to $\Delta^{op}_{\leq r+1}$,  and\vspace{2pt} we observe  that the conditions (1)--(5) hold for all $n \leq r+1$. 
\end{proof}

\subsection{Kan extensions}
Hypercoverings will allow us to compute certain 
left Kan extensions via geometric realisations.  For this, we will  need a general way of producing weakly orthogonal pairs:
\begin{cons} \label{newwops}
Let $\mathcal{C}$ be a presentable $\infty$-category, and assume that \INN{060@$\mathcal{F}^0$}
$\mathcal{F}^0$ is a set of objects which is  closed under finite coproducts. 
\begin{enumerate}[a)]
\item  
Let $\sF$ denote the class of objects of $\mathcal{C}$ which are (possibly infinite) coproducts of objects in
$\mathcal{F}^0$. 
\item
Let $S$ denote
the class of morphisms $f: X \rightarrow Y$ in $\mathcal{C}$ such that for all $F \in \mathcal{F}^0$, 
the map of sets
$\pi_0\Map_{\mathcal{C}}(F, X) \rightarrow \pi_0\Map_{\mathcal{C}}(F, Y)$ is surjective. 
\end{enumerate}
It is then straightforward to check that $(\sF, S)$ forms a weakly orthogonal pair in $\mathcal{C}$. 
Part \ref{factcondition} of \Cref{wop} follows from a compactness argument. 
\end{cons}

\begin{proposition} \label{realLKE}
Suppose $\mathcal{C}$ and $( \sF, S)$ are specified as in \Cref{newwops}. 
Let $\mathcal{D}$ be a presentable $\infty$-category and assume that  $G: \mathcal{C} \to
\mathcal{D}$ is a functor which is left Kan extended from
$\mathcal{F}^0$. Given any $(\sF, S)$-hypercovering $X_\bullet$ of an object $Y \in
\mathcal{C}$, we have $|G(X_\bullet)| \simeq G(Y).$
\end{proposition}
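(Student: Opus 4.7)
The plan is to reduce the statement to the classical fact about simplicial spaces recorded in \Cref{hypspace}, via the restricted Yoneda embedding. Let $y: \mathcal{F}^0 \hookrightarrow \mathcal{P}(\mathcal{F}^0)$ be the Yoneda embedding into presheaves of spaces on $\mathcal{F}^0$, and let $h: \mathcal{C} \to \mathcal{P}(\mathcal{F}^0)$ be the restricted Yoneda functor $Y \mapsto \Map_{\mathcal{C}}(-, Y)|_{\mathcal{F}^0}$. Since $\mathcal{P}(\mathcal{F}^0)$ is the free cocompletion of $\mathcal{F}^0$, the restriction $G_0 = G|_{\mathcal{F}^0}$ admits an essentially unique colimit-preserving extension $\widetilde{G}: \mathcal{P}(\mathcal{F}^0) \to \mathcal{D}$. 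Since $G$ is left Kan extended from $\mathcal{F}^0$ and both $G$ and $\widetilde{G} \circ h$ agree with $G_0$ on representables and are computed by the same colimit formula indexed by $(\mathcal{F}^0)_{/Y}$, one obtains a canonical equivalence $G \simeq \widetilde{G} \circ h$.

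Because $\widetilde{G}$ preserves colimits, the statement $|G(X_\bullet)| \simeq G(Y)$ reduces to checking that $|h(X_\bullet)| \simeq h(Y)$ as presheaves on $\mathcal{F}^0$. Colimits in the presheaf category are computed pointwise on $\mathcal{F}^0$, so this in turn reduces to checking, for every $F \in \mathcal{F}^0$, the equivalence of spaces
$$|\Map_{\mathcal{C}}(F, X_\bullet)| \xrightarrow{\ \simeq\ } \Map_{\mathcal{C}}(F, Y).$$

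To verify this, I would apply \Cref{hypspace} to the simplicial space $\Map_{\mathcal{C}}(F, X_\bullet)$ regarded as an augmented simplicial object of $\mathcal{S}_{/\Map_{\mathcal{C}}(F, Y)}$. The matching objects of this diagram (relative to its augmentation) are computed as limits in $\mathcal{S}_{/\Map_{\mathcal{C}}(F, Y)}$, which coincide with the underlying limits in $\mathcal{S}$; since $\Map_{\mathcal{C}}(F, -)$ preserves limits and the matching object in $\mathcal{C}_{/Y}$ agrees with the one computed in $\mathcal{C}$, there is a natural identification $M_n(\Map_{\mathcal{C}}(F, X_\bullet)) \simeq \Map_{\mathcal{C}}(F, M_n(X_\bullet))$. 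The hypercovering condition tells us that the map $X_n \to M_n(X_\bullet)$ lies in $S$, which by definition of $S$ in \Cref{newwops} means exactly that $\pi_0 \Map_{\mathcal{C}}(F, X_n) \to \pi_0 \Map_{\mathcal{C}}(F, M_n(X_\bullet))$ is surjective for all $F \in \mathcal{F}^0$. This is precisely the hypothesis needed in \Cref{hypspace}.

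The argument is essentially formal once the factorisation through $\mathcal{P}(\mathcal{F}^0)$ is set up, so there is no real obstacle; the only substantive point is the compatibility of the matching object with the functor $\Map_{\mathcal{C}}(F, -)$ both in the unsliced and sliced settings, which follows from the fact that the matching object is a finite limit computed on underlying objects. I would write the proof in this order: first establish $G \simeq \widetilde{G} \circ h$, then pass to pointwise presheaf values, then invoke \Cref{hypspace}.
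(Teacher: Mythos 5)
Your proof is correct, but it takes a genuinely different route from the paper's. The paper works directly with the colimit formula for the left Kan extension: it chooses a small intermediate subcategory $\mathcal{F}^0 \subset \mathcal{F}^1 \subset \mathcal{F}$ containing the image of $X_\bullet$, and shows that $\Delta^{op} \to \mathcal{F}^1_{/Y}$ is left cofinal via Quillen's Theorem A, which reduces to the contractibility of $|\hom_{\mathcal{F}^1_{/Y}}(Y_1, X_\bullet)|$ for each $Y_1 \in \mathcal{F}^1_{/Y}$; this is then deduced from \Cref{hypspace0}. You instead factor $G$ as $\widetilde{G} \circ h$ through the free cocompletion $\mathcal{P}(\mathcal{F}^0)$ and check the colimit pointwise, invoking the relative criterion \Cref{hypspace}. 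Your route buys two things: it sidesteps the auxiliary subcategory $\mathcal{F}^1$ (needed in the paper only for smallness, since $\mathcal{F}^0$ is already a set), and it only ever tests against objects of $\mathcal{F}^0$ rather than against arbitrary coproducts $Y_1 \in \mathcal{F}^1$ equipped with maps to $Y$ — so you never have to discuss mapping spaces in slice categories out of infinite coproducts. The paper's route, conversely, avoids setting up the factorisation through presheaves and stays entirely inside $\mathcal{C}$. Both arguments bottom out in the same hypercover-of-spaces input. One imprecision to fix when writing this up: the relative matching object of $\Map_{\mathcal{C}}(F, X_\bullet)$ in $\mathcal{S}_{/\Map_{\mathcal{C}}(F,Y)}$ does \emph{not} coincide with the matching object of the underlying simplicial space in $\mathcal{S}$ (the augmentation $\Map_{\mathcal{C}}(F,Y)$ genuinely enters the limit diagram as a cone point, and the indexing poset is not weakly contractible). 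What is true, and what you actually need, is that $\Map_{\mathcal{C}}(F,-)$ carries the relative matching object computed in $\mathcal{C}_{/Y}$ to the relative matching object computed in $\mathcal{S}_{/\Map_{\mathcal{C}}(F,Y)}$, because both are limits of the same augmented shape and $\Map_{\mathcal{C}}(F,-)$ preserves limits; condition (4) of \Cref{hypercoveringlemma} together with the definition of $S$ in \Cref{newwops} then supplies exactly the $\pi_0$-surjectivity hypothesis of \Cref{hypspace}.
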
 
\begin{proof} 
Let $\mathcal{F}^1$  be a small subcategory 
with $\mathcal{F}^0 \subset \mathcal{F}^1  \subset \mathcal{F}$ such that the image of
$X_\bullet$ is contained in $\mathcal{F}^1$. By assumption, the functor $G$ is left Kan
extended from $\mathcal{F}^1$ too; in fact, the sole purpose of introducing $\mathcal{F}^1$ is  to
avoid discussing Kan extensions from non-small subcategories.

Recall (cf.\ \cite[Section 4.3.2]{lurie2009higher}) that the left Kan extension can be computed by the formula
$$ G(Y) \simeq \varinjlim_{Z \in \mathcal{F}^1_{/Y}} G(Z).  $$
We have a natural functor 
$\Delta^{op} \rightarrow \mathcal{F}^1_{/Y}$ given by the simplicial object $X_\bullet$,
and it therefore suffices to check  that this functor is left cofinal. 

Using the $\infty$-categorical version of Quillen's Theorem A 
\cite[Theorem 4.1.3.1]{lurie2009higher}, we are reduced to proving that 
 for any $Y_1 \in \mathcal{F}^1_{/Y}$, the 
homotopy pullback 
$\Delta^{op} \times_{\mathcal{F}^1_{/Y}} \mathcal{F}^1_{Y_1//Y}$ has a weakly
contractible nerve. By the Grothendieck construction, it in fact suffices to show that the geometric
realisation of the simplicial space
$\hom_{\mathcal{F}^1_{/Y}}(Y_1, X_\bullet)$ is weakly contractible. 
This is true because  $X_\bullet$ being an $(\sF,
S)$-hypercovering of $Y$  implies that  
$\hom_{\mathcal{F}^1_{/Y}}(Y_1, X_\bullet)$
satisfies the conditions of \Cref{hypspace0}. 
\end{proof} 
We now illustrate \Cref{realLKE}  in two examples of interest:
\begin{example}[Left Kan extension from $\perf_{k, \leq 0}$] 
\label{hypmodules}
Suppose that $k$ is a field. We can then take $\mathcal{C}$ to be the \mbox{$\infty$-category} $\md_k$ and  $\sF_0$ to be the full  subcategory $\perf_{k, \leq 0}$.
It is then  not hard to check that $S$ becomes the class of
morphisms in $\perf_{k, \leq 0}$ which induce surjections on $\pi_i$ for all $i \leq 0$. 

We now claim that any 
$(\sF, S)$-hypercovering $X_\bullet$ of $Y \in \md_k$ is a colimit diagram. 
Indeed, applying the functors $\hom_{\md_k}( k[-n], -)$, and combining condition (4) of \Cref{hypercoveringlemma} with 
\Cref{hypspace}, we see  that $| \Omega^{\infty-n} X_\bullet| \simeq \Omega^{\infty - n} Y $ is an equivalence 
for all $n \geq 0$.
As we can write any spectrum $Z$ as a canonical colimit $Z \simeq \varinjlim_{n}
\Sigma^{\infty -n} \Omega^{\infty - n} Z$, we deduce\vspace{2pt} our claim. 

We can therefore explicitly describe  the 
procedure of left Kan extension along the inclusion $\perf_{k, \leq 0} \to
\md_k$, even for functors which do not preserve finite coconnective geometric
realisations. 
For this, let $G_0: \perf_{k, \leq 0} \rightarrow \mathcal{D}$ be any functor. To compute its  left Kan
extension $G: \md_k \rightarrow \mathcal{D} $, we first extend $G_0$ in a filtered-colimit-preserving way to 
a functor $G_1: \md_{k, \leq 0} \rightarrow \mathcal{D}$.

Given an arbitrary $k$-module $Y \in \md_k$, we can pick an $(\sF, S)$-hypercovering $X_\bullet \rightarrow Y$ by applying
\Cref{hypercoveringlemma}. By construction this means that each $X_i $ belongs to $\md_{k, \leq 0}$. By \Cref{realLKE}, we obtain an equivalence  $G(Y) \simeq |G_1(X_\bullet)|$. Note in particular that while $G$ need
not preserve all geometric realisations,   it can still be computed in this fashion. 
\end{example}  
\begin{example}[Left Kan extension for $\infty$-categories of algebras] 
\label{hypTalgebra}
Let $T: \md_k \rightarrow \md_k$ be a monad which preserves sifted colimits. 
Write $\sF_0 \subset \alg_T$ for the full subcategory 
spanned by all  free $T$-algebras of the form $T(V)$ with $V \in \perf_{k, \leq 0}$. 
It is again not difficult to check that the associated class  $S$ consists of those maps of $T$-algebras
which induce surjections on $\pi_i$ for all $i \leq 0$.

Let now $\mathcal{D}$ be a presentable $\infty$-category, and suppose that we are given a
functor $G_0: \mathcal{F}_0 \rightarrow \mathcal{C}.$
We can then ask: what is the left Kan extension 
$G: \alg_T \rightarrow \mathcal{C}$ of $G_0$ to all of $\alg_T$?

We observe that the  associated $\infty$-category $\sF$ is spanned by all free $T$-algebras of the form $T(W)$
with  $W \in \md_{k, \leq 0}$. 
Since $G$ is left Kan extended from its values on compact objects, it follows
that $G$ commutes with filtered colimits, which determines its values on all objects in  $\sF$. 

Given an arbitrary $T$-algebra $A$, we  use \Cref{hypercoveringlemma} to
find an $(\sF, S)$-hypercovering \mbox{$X_\bullet$ of
$A$}. 
It follows as in
\Cref{hypmodules} that $|X_\bullet| \simeq A$
in $T$-algebras,  and \Cref{realLKE} gives an equivalence
$ |G(X_\bullet)|\simeq G(A).$
For each $i$,  $G(X_i)$ is determined  as $X_i$ is free on a coconnective
$k$-module spectrum. 

As a simple consequence, we deduce that any sifted-colimit-preserving functor $\alg_T \rightarrow \mathcal{C}$
is left Kan extended from $\mathcal{F}_0$. The forgetful functor
$\alg_T \rightarrow \md_k$ is therefore left Kan extended \mbox{from $\mathcal{F}_0$.}
\end{example}

\printindex 
 \newpage
\bibliographystyle{amsalpha}
\bibliography{There}

\end{document}